\documentclass[12pt]{article}

\usepackage{amsmath, amsthm, amssymb, accents}
\usepackage{enumitem}
\usepackage{pdflscape}
\usepackage{caption}

\usepackage{ifpdf}
\ifpdf
\usepackage[pdftex]{graphicx}
\else
\usepackage[dvips]{graphicx}
\fi
\usepackage{tikz}
 	 \usetikzlibrary{arrows,backgrounds}
\usepackage[all]{xy}

\usepackage{multicol}
\usepackage{tocvsec2}

\input xy
\xyoption{all}

\usepackage[pdftex,plainpages=false,hypertexnames=false,pdfpagelabels]{hyperref}
\newcommand{\arxiv}[1]{\href{http://arxiv.org/abs/#1}{\tt arXiv:\nolinkurl{#1}}}
\newcommand{\arXiv}[1]{\href{http://arxiv.org/abs/#1}{\tt arXiv:\nolinkurl{#1}}}

\newcommand{\googlebooks}[1]{(preview at \href{http://books.google.com/books?id=#1}{google books})}

\usepackage{xcolor}
\definecolor{dark-red}{rgb}{0.7,0.25,0.25}
\definecolor{dark-blue}{rgb}{0.15,0.15,0.55}
\definecolor{medium-blue}{rgb}{0,0,.8}
\definecolor{DarkGreen}{RGB}{0,150,0}
\definecolor{rho}{named}{red}
\hypersetup{
   colorlinks, linkcolor={purple},
   citecolor={medium-blue}, urlcolor={medium-blue}
}
\definecolor{wString}{named}{orange}
\definecolor{xString}{named}{red}
\definecolor{yString}{named}{blue}
\definecolor{zString}{named}{DarkGreen}

\usepackage{longtable}
\usepackage{fullpage}

\setlength\topmargin{-.25in}
\setlength\headheight{0in}
\setlength\headsep{.2in}
\setlength\textheight{9in}
\setlength\parindent{0.25in}

\theoremstyle{plain}
\newtheorem{thm}{Theorem}[section]
\newtheorem*{thm*}{Theorem}
\newtheorem{thmalpha}{Theorem}

\newtheorem{mainthm}[thm]{Main Theorem}

\newtheorem*{cor*}{Corollary}

\newtheorem*{conj*}{Conjecture}
\newtheorem{lem}[thm]{Lemma}
\newtheorem{prop}[thm]{Proposition}

\newtheorem*{quest*}{Question}
\newtheorem*{claim*}{Claim}

\theoremstyle{definition}

\newtheorem{defn}[thm]{Definition}
\newtheorem{alg}[thm]{Algorithm}

\newtheorem{ex}[thm]{Example}
\newtheorem{sub-ex}[thm]{Sub-Example}
\newtheorem{rem}[thm]{Remark}
\newtheorem{remark}[thm]{Remark}
\newtheorem{rems}[thm]{Remarks}


\DeclareMathOperator{\coev}{coev}

\DeclareMathOperator{\ev}{ev}
\DeclareMathOperator{\Hom}{Hom}

\DeclareMathOperator{\id}{id}

\DeclareMathOperator{\Tr}{Tr}


\newcommand{\comment}[1]{}

\newcommand{\be}{\begin{enumerate}[label=(\arabic*)]}
\newcommand{\ee}{\end{enumerate}}

\def\semicolon{;}
\def\applytolist#1{
    \expandafter\def\csname multi#1\endcsname##1{
        \def\multiack{##1}\ifx\multiack\semicolon
            \def\next{\relax}
        \else
            \csname #1\endcsname{##1}
            \def\next{\csname multi#1\endcsname}
        \fi
        \next}
    \csname multi#1\endcsname}

\def\calc#1{\expandafter\def\csname c#1\endcsname{{\mathcal #1}}}
\applytolist{calc}QWERTYUIOPLKJHGFDSAZXCVBNM;
\def\bbc#1{\expandafter\def\csname bb#1\endcsname{{\mathbb #1}}}
\applytolist{bbc}QWERTYUIOPLKJHGFDSAZXCVBNM;
\def\bfc#1{\expandafter\def\csname bf#1\endcsname{{\mathbf #1}}}
\applytolist{bfc}QWERTYUIOPLKJHGFDSAZXCVBNM;
\def\sfc#1{\expandafter\def\csname s#1\endcsname{{\sf #1}}}
\applytolist{sfc}QWERTYUIOPLKJHGFDSAZXCVBNM;

\newcommand{\Mod}{{\sf ModTens}}
\newcommand{\APA}{{\sf APA}}
\renewcommand{\Vec}{{\sf Vec}}

\newcommand{\noshow}[1]{}
\newcommand{\MR}[1]{}


\def\anchor{\tikz[baseline=0, scale=.1]{\fill[red, line width=.1](-.3,-1.35) arc(-180:0:.15) arc(-99:-15:.73) -- ++(.15,-.01) -- ++(-.05,.4) -- ++(-.3,-.2) -- ++(.05,-.07) arc(-15:-90:.6) -- ++(-.12,.12) -- ++(0,1.73) -- ++(.5,0) -- ++(0,.13) -- ++(-.35,0) arc(0:180:.27) -- ++(-.35,0) -- ++(0,-.13) -- ++(.5,0) -- ++(0,-1.73) -- ++(-.12,-.12) arc(270:180+15:.6) -- ++(.05,.07) -- ++(-.3,.2) -- ++(-.05,-.4) -- ++(.15,.01) arc(180+15:270:.73) -- cycle;}}

\usetikzlibrary{shapes}
\usetikzlibrary{backgrounds}
\usetikzlibrary{decorations,decorations.pathreplacing,decorations.markings}
\usetikzlibrary{fit,calc,through}
\usetikzlibrary{external}
\tikzset{
	super thick/.style={line width=3pt}
}
\tikzstyle{shaded}=[fill=red!10!blue!20!gray!30!white]
\tikzstyle{unshaded}=[fill=white]
\tikzstyle{empty box}=[circle, draw, thick, fill=white, opaque, inner sep=2mm]
\tikzstyle{annular}=[scale=.7, inner sep=1mm, baseline]
\tikzstyle{rectangular}=[scale=.75, inner sep=1mm, baseline=-.1cm]
\tikzstyle{mid>}=[decoration={markings, mark=at position 0.5 with {\arrow{>}}}, postaction={decorate}]
\tikzstyle{mid<}=[decoration={markings, mark=at position 0.5 with {\arrow{<}}}, postaction={decorate}]
\tikzstyle{over}=[double, draw=white, super thick, double=]

\newcommand{\roundNbox}[6]{
	\draw[rounded corners=5pt, very thick, #1] ($#2+(-#3,-#3)+(-#4,0)$) rectangle ($#2+(#3,#3)+(#5,0)$);
	\coordinate (ZZa) at ($#2+(-#4,0)$);
	\coordinate (ZZb) at ($#2+(#5,0)$);
	\node at ($1/2*(ZZa)+1/2*(ZZb)$) {#6};
}

\newcommand{\nbox}[6]{
	\draw[very thick, #1] ($#2+(-#3,-#3)+(-#4,0)$) rectangle ($#2+(#3,#3)+(#5,0)$);
	\coordinate (ZZa) at ($#2+(-#4,0)$);
	\coordinate (ZZb) at ($#2+(#5,0)$);
	\node at ($1/2*(ZZa)+1/2*(ZZb)$) {#6};
}

\newcommand{\ncircle}[5]{
	\draw[very thick, #1] #2 circle (#3);
	\node at #2 {#5};
	\filldraw[red] ($#2+(#4:#3cm)$) circle (.05cm);
}

\newcommand{\Mbox}[4]{
	\pgfmathsetmacro{\planeWidth}{#2};
	\pgfmathsetmacro{\planeDepth}{#3};
	
	\draw[very thick, unshaded] ($ #1 + (-\planeDepth,\planeDepth) $) -- #1 -- ($ #1 + (\planeWidth,0) $) -- ($ #1 + (\planeWidth,0) + (-\planeDepth,\planeDepth) $) -- ($ #1 + (-\planeDepth,\planeDepth) $);

	\node at ($#1+1/2*(#2,0)+1/2*(-#3,#3)$) {\rotatebox{-75}{#4}};
}

\newcommand{\CMbox}[6]{
	\coordinate (#1) at #2;
	\pgfmathsetmacro{\boxWidth}{#3};
	\pgfmathsetmacro{\boxHeight}{#4};
	\pgfmathsetmacro{\boxDepth}{#5};
	\draw[unshaded, very thick] ($(#1) + (\boxWidth,\boxHeight) $) -- ($(#1) + (\boxWidth,\boxHeight) - (\boxDepth,-\boxDepth) $) -- ($(#1) + (0,\boxHeight) - (\boxDepth,-\boxDepth) $) -- ($(#1) - (\boxDepth,-\boxDepth) $);
	\draw[unshaded, very thick] ($(#1) + (0,\boxHeight) $)  -- ($(#1) + (\boxWidth,\boxHeight) $) -- ($(#1) + (\boxWidth,0) $) -- (#1) -- ($(#1) - (\boxDepth,-\boxDepth) $);
	\draw[very thick] ($(#1) + (0,\boxHeight) - (\boxDepth,-\boxDepth) $) -- ($(#1) + (0,\boxHeight) $) -- (#1);
	\node at ($(#1) + 1/2*(\boxWidth,\boxHeight) $) {#6}; 
}

\newcommand{\halfDottedEllipse}[3]{
	\draw[thick] #1 arc(-180:0:{#2} and {#3});
	\draw[thick, dotted] ($ #1 + 2*(#2,0)$) arc(0:180:{#2} and {#3});
}

\newcommand{\curvedTubeNoString}[4]{
	\filldraw[white, super thick, fill=white] #1 arc (-90:90:.2cm) arc (270:180:{#2 - .2}) -- ($#1 -(#2,0) - (.2,0) + (0,#2) + (0,.2) $) arc (180:270:{#2+.2});
	\filldraw[unshaded, thick] #1 arc (-90:90:.2cm) arc (270:180:{#2 - .2}) -- ($#1 -(#2,0) - (.2,0) + (0,#2) + (0,.2) $) arc (180:270:{#2+.2});
	\draw ($ #1 + (.18,.3) $) -- ($ #1 + (0,.3) $) arc (270:180:{#2 - .1});
	\draw ($ #1+ (.18,.1) $) -- ($ #1+ (0,.1) $) arc (270:180:{#2+.1});
	\roundNbox{unshaded}{($#1 - (#2,0) + (0,#2) + (0,.6) $)}{.4}{#3}{#3}{#4};
}

\newcommand{\straightTubeNoString}[3]{
	\coordinate (ZZq) at #1;
	\pgfmathsetmacro{\tubeLength}{#3};
	\pgfmathsetmacro{\tubeWidth}{#2};
	\pgfmathsetmacro{\buffer}{.05};	

	\fill[unshaded] ($ (ZZq) + (-\tubeLength,0) + 2*(0,-\buffer) $) -- ($ (ZZq) + 2*(0,-\buffer) $) arc(-90:90:{\tubeWidth+\buffer} and {2*(\tubeWidth+\buffer)}) -- ($ (ZZq) + (-\tubeLength,0) + 4*(0,\tubeWidth) + 2*(0,\buffer) $) arc(90:270:{\tubeWidth+\buffer} and {2*(\tubeWidth+\buffer)}) ;
	\draw[unshaded, thick]  ($ (ZZq) + (-\tubeLength,0) $) -- (ZZq) arc(-90:90:{\tubeWidth} and {2*\tubeWidth}) -- ($ (ZZq) + (-\tubeLength,0) + 4*(0,\tubeWidth) $) ;
	\draw[thick] ($ (ZZq) + (-\tubeLength,0) $) arc(-90:90:{\tubeWidth} and {2*\tubeWidth}) arc(90:270:{\tubeWidth} and {2*\tubeWidth});
}

\newcommand{\straightCappedTube}[3]{
	\coordinate (ZZq) at #1;
	\pgfmathsetmacro{\tubeLength}{#3};
	\pgfmathsetmacro{\tubeWidth}{#2};
	\pgfmathsetmacro{\buffer}{.05};	

	\fill[unshaded] ($ (ZZq) + (-\tubeLength,0) + 2*(0,-\buffer) $) -- ($ (ZZq) + 2*(0,-\buffer) $) arc(-90:90:{\tubeWidth+\buffer} and {2*(\tubeWidth+\buffer)}) -- ($ (ZZq) + (-\tubeLength,0) + 4*(0,\tubeWidth) + 2*(0,\buffer) $) arc(90:270:{\tubeWidth+\buffer} and {2*(\tubeWidth+\buffer)}) ;
	\draw[unshaded, thick]  ($ (ZZq) + (-\tubeLength,0) $) -- (ZZq) arc(-90:90:{\tubeWidth} and {2*\tubeWidth}) -- ($ (ZZq) + (-\tubeLength,0) + 4*(0,\tubeWidth) $) ;
	\draw[thick] ($ (ZZq) + (-\tubeLength,0) $) arc(270:90:{2*\tubeWidth});
}

\newcommand{\straightTubeWithString}[4]{
	\coordinate (ZZq) at #1;
	\pgfmathsetmacro{\tubeLength}{#3};
	\pgfmathsetmacro{\tubeWidth}{#2};
	\pgfmathsetmacro{\buffer}{.05};	

	\fill[unshaded] ($ (ZZq) + (-\tubeLength,0) + 2*(0,-\buffer) $) -- ($ (ZZq) + 2*(0,-\buffer) $) arc(-90:90:{\tubeWidth+\buffer} and {2*(\tubeWidth+\buffer)}) -- ($ (ZZq) + (-\tubeLength,0) + 4*(0,\tubeWidth) + 2*(0,\buffer) $) arc(90:270:{\tubeWidth+\buffer} and {2*(\tubeWidth+\buffer)}) ;
	\draw[unshaded, thick]  ($ (ZZq) + (-\tubeLength,0) $) -- (ZZq) arc(-90:90:{\tubeWidth} and {2*\tubeWidth}) -- ($ (ZZq) + (-\tubeLength,0) + 4*(0,\tubeWidth) $) ;
	\draw[thick] ($ (ZZq) + (-\tubeLength,0) $) arc(-90:90:{\tubeWidth} and {2*\tubeWidth}) arc(90:270:{\tubeWidth} and {2*\tubeWidth});
	\draw[thick, #4] ($(ZZq) + (\tubeWidth,0) + 2*(0,\tubeWidth) $) -- ($ (ZZq) + (-\tubeLength,0) + 2*(0,\tubeWidth) + (\tubeWidth,0)$);
}

\newcommand{\straightTubeTwoStrings}[4]{
	\fill[unshaded] ($ #1 + (-#2,-.1) $) -- ($ #1 + (0,-.1) $) arc(-90:90:.15cm and .3cm) -- ($ #1 + (-#2,.5) $) ;
	\draw[unshaded, thick]  ($ #1 + (-#2,0) $) -- #1 arc(-90:90:.1cm and .2cm) -- ($ #1 + (-#2,.4) $) ;
	\draw[thick] ($ #1 + (-#2,0) $) arc(-90:90:.1cm and .2cm) arc(90:270:.1cm and .2cm);
	\draw[thick, #3] ($#1 + (.08,.27) $) -- ($ #1 + (-#2,.27) + (.08,0)$);
	\draw[thick, #4] ($#1 + (.08,.13) $) -- ($ #1 + (-#2,.13) + (.08,0)$);
}

\newcommand{\pairOfPants}[2]{
	\draw[thick] #1 .. controls ++(90:.8cm) and ++(270:.8cm) .. ($ #1 + (.7,1.5) $);
	\draw[thick] ($ #1 + (2,0) $) .. controls ++(90:.8cm) and ++(270:.8cm) .. ($ #1 + (2,0) + (-.7,1.5) $);
	\draw[thick] ($ #1 + (.6,0) $).. controls ++(90:.8cm) and ++(90:.8cm) .. ($ #1 + (1.4,0) $); 
	\halfDottedEllipse{($ #1 + (.7,1.5) $)}{.3}{.1}
	\halfDottedEllipse{#1}{.3}{.1}
	\halfDottedEllipse{($ #1 + (1.4,0) $)}{.3}{.1}
}

\newcommand{\topPairOfPants}[2]{
	\draw[thick] #1 .. controls ++(90:.8cm) and ++(270:.8cm) .. ($ #1 + (.7,1.5) $);
	\draw[thick] ($ #1 + (2,0) $) .. controls ++(90:.8cm) and ++(270:.8cm) .. ($ #1 + (2,0) + (-.7,1.5) $);
	\draw[thick] ($ #1 + (.6,0) $).. controls ++(90:.8cm) and ++(90:.8cm) .. ($ #1 + (1.4,0) $); 
	\draw[thick] ($ #1 + (1,1.5) $) ellipse (.3cm and .1cm);
	\halfDottedEllipse{#1}{.3}{.1}
	\halfDottedEllipse{($ #1 + (1.4,0) $)}{.3}{.1}

}

\newcommand{\pairOfPantsNoCircles}[2]{
	\draw[thick] #1 .. controls ++(90:.8cm) and ++(270:.8cm) .. ($ #1 + (.7,1.5) $);
	\draw[thick] ($ #1 + (2,0) $) .. controls ++(90:.8cm) and ++(270:.8cm) .. ($ #1 + (2,0) + (-.7,1.5) $);
	\draw[thick] ($ #1 + (.6,0) $).. controls ++(90:.8cm) and ++(90:.8cm) .. ($ #1 + (1.4,0) $); 
}

\newcommand{\invertedPairOfPants}[2]{
	\draw[thick] #1 .. controls ++(270:.8cm) and ++(90:.8cm) .. ($ #1 + (.7,-1.5) $);
	\draw[thick] ($ #1 + (2,0) $) .. controls ++(270:.8cm) and ++(90:.8cm) .. ($ #1 + (2,0) + (-.7,-1.5) $);
	\draw[thick] ($ #1 + (.6,0) $).. controls ++(270:.8cm) and ++(270:.8cm) .. ($ #1 + (1.4,0) $); 
	\halfDottedEllipse{($ #1 + (.7,-1.5) $)}{.3}{.1}

}

\newcommand{\topCylinder}[3]{
	\draw[thick] #1 -- ($ #1 + (0,#3) $);
	\draw[thick] ($ #1 + 2*(#2,0) $) -- ($ #1 + (0,#3) + 2*(#2,0) $);
	\draw[thick] ($ #1 + (#2,#3) $) ellipse (#2 and {1/3*#2});
}

\newcommand{\bottomCylinder}[3]{
	\draw[thick] #1 -- ($ #1 + (0,#3) $);
	\draw[thick] ($ #1 + 2*(#2,0) $) -- ($ #1 + 2*(#2,0) + (0,#3) $);
	\halfDottedEllipse{#1}{#2}{{1/3*#2}}	
}

\newcommand{\emptyCylinder}[3]{
	\draw[thick] #1 -- ($ #1 + (0,#3) $);
	\draw[thick] ($ #1 + 2*(#2,0) $) -- ($ #1 + 2*(#2,0) + (0,#3) $);	
}

\newcommand{\RightSlantCylinder}[2]{
	\draw[thick] #1 .. controls ++(90:.8cm) and ++(270:.8cm) .. ($ #1 + (.7,1.5) $);
	\draw[thick] ($ #1 + (.6,0) $).. controls ++(90:.8cm) and ++(270:.8cm) .. ($ #1 + (1.3,1.5) $); 
	\halfDottedEllipse{($ #1 + (.7,1.5) $)}{.3}{.1}
	\halfDottedEllipse{#1}{.3}{.1}
}

\newcommand{\LeftSlantCylinder}[2]{
	\draw[thick] #1 .. controls ++(90:.8cm) and ++(270:.8cm) .. ($ #1 + (-.7,1.5) $);
	\draw[thick] ($ #1 + (.6,0) $).. controls ++(90:.8cm) and ++(270:.8cm) .. ($ #1 + (-.1,1.5) $); 
	\halfDottedEllipse{($ #1 + (-.7,1.5) $)}{.3}{.1}
	\halfDottedEllipse{#1}{.3}{.1}
}

\newcommand{\braid}[3]{

	\coordinate (ZZz) at #1;
	\pgfmathsetmacro{\tubeHeight}{#3};
	\pgfmathsetmacro{\tubeRadius}{#2};
	\pgfmathsetmacro{\buffer}{.2};	

	\draw[thick] ($ (ZZz) +(\tubeHeight,0) $) .. controls ++(90:.7cm) and ++(270:.7cm) .. ($ (ZZz) + 2*(\tubeRadius,0) + (0,\tubeHeight) $);
	\draw[thick] ($ (ZZz) + (\tubeHeight,0) - 2*(\tubeRadius,0) $) .. controls ++(90:.7cm) and ++(270:.7cm) .. ($ (ZZz) + (0,\tubeHeight) $);

	\fill[unshaded] ($ (ZZz) - (\buffer,0) $) .. controls ++(90:.8cm) and ++(270:.8cm) .. ($ (ZZz) + (\tubeHeight,\tubeHeight) - 2*(\tubeRadius,0) - (\buffer,0) $) -- ($ (ZZz) + (\tubeHeight,\tubeHeight) + (\buffer,0) $)  .. controls ++(270:.8cm) and ++(90:.8cm) .. ($ (ZZz) + 2*(\tubeRadius,0) + (\buffer,0) $);
	\draw[thick] (ZZz) .. controls ++(90:.7cm) and ++(270:.7cm) .. ($ (ZZz) + (\tubeHeight,\tubeHeight) - 2*(\tubeRadius,0) $);
	\draw[thick] ($ (ZZz) + 2*(\tubeRadius,0) $) .. controls ++(90:.7cm) and ++(270:.7cm) .. ($ (ZZz) + (\tubeHeight,\tubeHeight) $);
}

\newcommand{\plane}[3]{
	\pgfmathsetmacro{\planeWidth}{#2};
	\pgfmathsetmacro{\planeDepth}{#3};
	
	\draw[thick] ($ #1 + (-\planeDepth,\planeDepth) $) -- #1 -- ($ #1 + (\planeWidth,0) $) -- ($ #1 + (\planeWidth,0) + (-\planeDepth,\planeDepth) $) -- ($ #1 + (-\planeDepth,\planeDepth) $);
}


\newcommand{\multiplication}[5]{
	\draw[rounded corners=5pt, very thick, unshaded] ($ #1 - (#2,#2) $) rectangle ($ #1 + (#2,#2) $);
	\draw ($ #1 + 5/6*(0,#2) $) -- ($ #1 - 5/6*(0,#2) $);
	\draw[thick, red] ($ #1 + 1/3*(0,#2) - 1/5*(#2,0) $) -- ($ #1 - 2/3*(#2,0) $);
	\draw[thick, red] ($ #1 - 1/3*(0,#2) - 1/5*(#2,0) $) -- ($ #1 - 2/3*(#2,0) $);
	\draw[very thick] #1 ellipse ({2/3*#2} and {5/6*#2});
	\filldraw[very thick, unshaded] ($ #1 + 1/3*(0,#2) $) circle (1/5*#2);
	\filldraw[very thick, unshaded] ($ #1 - 1/3*(0,#2) $) circle (1/5*#2);
	\node at ($ #1 + (.2,0) + 5/8*(0,#2)$) {\scriptsize{$#5$}};
	\node at ($ #1 + (.2,0) $) {\scriptsize{$#4$}};
	\node at ($ #1 + (.2,0) - 5/8*(0,#2)$) {\scriptsize{$#3$}};
}


\newcommand{\tensor}[6]{
	\draw[rounded corners=5pt, very thick, unshaded] ($ #1 - (#2,#2) $) rectangle ($ #1 + (#2,#2) $);
	\draw ($ #1 + 14/23*(0,#2) - 1/3*(#2,0) $) -- ($ #1 - 14/23*(0,#2) - 1/3*(#2,0) $);
	\draw ($ #1 + 14/23*(0,#2) + 1/3*(#2,0) $) -- ($ #1 - 14/23*(0,#2) + 1/3*(#2,0) $);
	\draw[thick, red] ($ #1 - 1/3*(#2,0) - 1/5*(#2,0) $) -- ($ #1 - 5/6*(#2,0) $);
	\draw[thick, red] ($ #1 + 1/3*(#2,0) - 1/5*(#2,0) $) .. controls ++(180:.2cm) and ++(0:.2cm) .. ($ #1 - 1/3*(#2,0) + 2/5*(0,#2) $) .. controls ++(180:.2cm) and ++(0:.2cm) .. ($ #1 - 5/6*(#2,0) $);
	\draw[very thick] #1 ellipse ( {5/6*#2} and {2/3*#2});
	\filldraw[very thick, unshaded] ($ #1 + 1/3*(#2,0) $) circle (1/5*#2);
	\filldraw[very thick, unshaded] ($ #1 - 1/3*(#2,0) $) circle (1/5*#2);
	\node at ($ #1 + (.2,0) - 1/3*(#2,0) - 1/3*(0,#2) $) {\scriptsize{$#3$}};
	\node at ($ #1 + (.2,0) - 1/3*(#2,0) + 1/3*(0,#2) $) {\scriptsize{$#4$}};
	\node at ($ #1 + (.2,0) + 1/3*(#2,0) - 1/3*(0,#2) $) {\scriptsize{$#5$}};
	\node at ($ #1 + (.2,0) + 1/3*(#2,0) + 1/3*(0,#2) $) {\scriptsize{$#6$}};
}

\newcommand{\tensorRightId}[5]{
	\draw[rounded corners=5pt, very thick, unshaded] ($ #1 - (#2,#2) $) rectangle ($ #1 + (#2,#2) $);
	\draw ($ #1 + 14/23*(0,#2) - 1/3*(#2,0) $) -- ($ #1 - 14/23*(0,#2) - 1/3*(#2,0) $);
	\draw ($ #1 + 14/23*(0,#2) + 1/3*(#2,0) $) -- ($ #1 - 14/23*(0,#2) + 1/3*(#2,0) $);
	\draw[thick, red] ($ #1 - 1/3*(#2,0) - 1/5*(#2,0) $) -- ($ #1 - 5/6*(#2,0) $);
	\draw[very thick] #1 ellipse ( {5/6*#2} and {2/3*#2});
	\filldraw[very thick, unshaded] ($ #1 - 1/3*(#2,0) $) circle (1/5*#2);
	\node at ($ #1 + (.2,0) - 1/3*(#2,0) - 1/3*(0,#2) $) {\scriptsize{$#3$}};
	\node at ($ #1 + (.2,0) - 1/3*(#2,0) + 1/3*(0,#2) $) {\scriptsize{$#4$}};
	\node at ($ #1 + (.2,0) + 1/3*(#2,0) $) {\scriptsize{$#5$}};
}

\newcommand{\tensorRightIdZero}[4]{
	\draw[rounded corners=5pt, very thick, unshaded] ($ #1 - (#2,#2) $) rectangle ($ #1 + (#2,#2) $);
	\draw ($ #1 + 14/23*(0,#2) - 1/3*(#2,0) $) -- ($ #1 - 14/23*(0,#2) - 1/3*(#2,0) $);
	\draw[thick, red] ($ #1 - 1/3*(#2,0) - 1/5*(#2,0) $) -- ($ #1 - 5/6*(#2,0) $);
	\draw[very thick] #1 ellipse ( {5/6*#2} and {2/3*#2});
	\filldraw[very thick, unshaded] ($ #1 - 1/3*(#2,0) $) circle (1/5*#2);
	\node at ($ #1 + (.2,0) - 1/3*(#2,0) - 1/3*(0,#2) $) {\scriptsize{$#3$}};
	\node at ($ #1 + (.2,0) - 1/3*(#2,0) + 1/3*(0,#2) $) {\scriptsize{$#4$}};
}

\newcommand{\tensorRightIdZeroZero}[2]{
	\draw[rounded corners=5pt, very thick, unshaded] ($ #1 - (#2,#2) $) rectangle ($ #1 + (#2,#2) $);
	\draw[thick, red] ($ #1 - 1/3*(#2,0) - 1/5*(#2,0) $) -- ($ #1 - 5/6*(#2,0) $);
	\draw[very thick] #1 ellipse ( {5/6*#2} and {2/3*#2});
	\filldraw[very thick, unshaded] ($ #1 - 1/3*(#2,0) $) circle (1/5*#2);
}

\newcommand{\tensorLeftRightId}[2]{
	\draw[rounded corners=5pt, very thick, unshaded] ($ #1 - (#2,#2) $) rectangle ($ #1 + (#2,#2) $);
	\draw ($ #1 + 7/13*(0,#2) - 1/2*(#2,0) $) -- ($ #1 - 7/13*(0,#2) - 1/2*(#2,0) $);
	\draw ($ #1 + 2/3*(0,#2) $) -- ($ #1 - 2/3*(0,#2) $);
	\draw ($ #1 + 7/13*(0,#2) + 1/2*(#2,0) $) -- ($ #1 - 7/13*(0,#2) + 1/2*(#2,0) $);
	\draw[thick, red] ($ #1 - 1/5*(#2,0) $) -- ($ #1 - 5/6*(#2,0) $);
	\draw[very thick] #1 ellipse ( {5/6*#2} and {2/3*#2});
	\filldraw[very thick, unshaded] ($ #1 $) circle (1/5*#2);
}

\newcommand{\tensorLeftRightIdEv}[2]{
	\draw[rounded corners=5pt, very thick, unshaded] ($ #1 - (#2,#2) $) rectangle ($ #1 + (#2,#2) $);
	\draw ($ #1 + 7/13*(0,#2) - 1/2*(#2,0) $) -- ($ #1 - 7/13*(0,#2) - 1/2*(#2,0) $);
	\draw ($ #1 $) -- ($ #1 - 2/3*(0,#2) $);
	\draw ($ #1 + 7/13*(0,#2) + 1/2*(#2,0) $) -- ($ #1 - 7/13*(0,#2) + 1/2*(#2,0) $);
	\draw[thick, red] ($ #1 - 1/5*(#2,0) $) -- ($ #1 - 5/6*(#2,0) $);
	\draw[very thick] #1 ellipse ( {5/6*#2} and {2/3*#2});
	\filldraw[very thick, unshaded] ($ #1 $) circle (1/5*#2);
}

\newcommand{\tensorLeftRightIdCoev}[2]{
	\draw[rounded corners=5pt, very thick, unshaded] ($ #1 - (#2,#2) $) rectangle ($ #1 + (#2,#2) $);
	\draw ($ #1 + 7/13*(0,#2) - 1/2*(#2,0) $) -- ($ #1 - 7/13*(0,#2) - 1/2*(#2,0) $);
	\draw ($ #1 + 2/3*(0,#2) $) -- ($ #1 $);
	\draw ($ #1 + 7/13*(0,#2) + 1/2*(#2,0) $) -- ($ #1 - 7/13*(0,#2) + 1/2*(#2,0) $);
	\draw[thick, red] ($ #1 - 1/5*(#2,0) $) -- ($ #1 - 5/6*(#2,0) $);
	\draw[very thick] #1 ellipse ( {5/6*#2} and {2/3*#2});
	\filldraw[very thick, unshaded] ($ #1 $) circle (1/5*#2);
}

\newcommand{\tensorRightIdEv}[4]{
	\draw[rounded corners=5pt, very thick, unshaded] ($ #1 - (#2,#2) $) rectangle ($ #1 + (#2,#2) $);
	\draw ($ #1 - 1/3*(#2,0) $) -- ($ #1 - 14/23*(0,#2) - 1/3*(#2,0) $);
	\draw ($ #1 + 14/23*(0,#2) + 1/3*(#2,0) $) -- ($ #1 - 14/23*(0,#2) + 1/3*(#2,0) $);
	\draw[thick, red] ($ #1 - 1/3*(#2,0) - 1/5*(#2,0) $) -- ($ #1 - 5/6*(#2,0) $);
	\draw[very thick] #1 ellipse ( {5/6*#2} and {2/3*#2});
	\filldraw[very thick, unshaded] ($ #1 - 1/3*(#2,0) $) circle (1/5*#2);
	\node at ($ #1 + (.2,0) - 1/3*(#2,0) - 1/3*(0,#2) $) {\scriptsize{$#3$}};
	\node at ($ #1 + (.2,0) + 1/3*(#2,0) $) {\scriptsize{$#4$}};
}

\newcommand{\tensorRightIdCoev}[4]{
	\draw[rounded corners=5pt, very thick, unshaded] ($ #1 - (#2,#2) $) rectangle ($ #1 + (#2,#2) $);
	\draw ($ #1 + 14/23*(0,#2) - 1/3*(#2,0) $) -- ($ #1 - 1/3*(#2,0) $);
	\draw ($ #1 + 14/23*(0,#2) + 1/3*(#2,0) $) -- ($ #1 - 14/23*(0,#2) + 1/3*(#2,0) $);
	\draw[thick, red] ($ #1 - 1/3*(#2,0) - 1/5*(#2,0) $) -- ($ #1 - 5/6*(#2,0) $);
	\draw[very thick] #1 ellipse ( {5/6*#2} and {2/3*#2});
	\filldraw[very thick, unshaded] ($ #1 - 1/3*(#2,0) $) circle (1/5*#2);
	\node at ($ #1 + (.2,0) - 1/3*(#2,0) + 1/3*(0,#2) $) {\scriptsize{$#3$}};
	\node at ($ #1 + (.2,0) + 1/3*(#2,0) $) {\scriptsize{$#4$}};
}

\newcommand{\tensorLeftId}[5]{
	\draw[rounded corners=5pt, very thick, unshaded] ($ #1 - (#2,#2) $) rectangle ($ #1 + (#2,#2) $);
	\draw ($ #1 + 14/23*(0,#2) - 1/3*(#2,0) $) -- ($ #1 - 14/23*(0,#2) - 1/3*(#2,0) $);
	\draw ($ #1 + 14/23*(0,#2) + 1/3*(#2,0) $) -- ($ #1 - 14/23*(0,#2) + 1/3*(#2,0) $);
	\draw[thick, red] ($ #1 + 1/3*(#2,0) - 1/5*(#2,0) $) -- ($ #1 - 5/6*(#2,0) $);
	\draw[very thick] #1 ellipse ( {5/6*#2} and {2/3*#2});
	\filldraw[very thick, unshaded] ($ #1 + 1/3*(#2,0) $) circle (1/5*#2);
	\node at ($ #1 - (.2,0) - 1/3*(#2,0) + (0,.2) $) {\scriptsize{$#3$}};
	\node at ($ #1 - (.2,0) + 1/3*(#2,0) - 1/3*(0,#2) $) {\scriptsize{$#4$}};
	\node at ($ #1 - (.2,0) + 1/3*(#2,0) + 1/3*(0,#2) $) {\scriptsize{$#5$}};
}

\newcommand{\tensorLeftIdZero}[4]{
	\draw[rounded corners=5pt, very thick, unshaded] ($ #1 - (#2,#2) $) rectangle ($ #1 + (#2,#2) $);
	\draw ($ #1 + 14/23*(0,#2) + 1/3*(#2,0) $) -- ($ #1 - 14/23*(0,#2) + 1/3*(#2,0) $);
	\draw[thick, red] ($ #1 + 1/3*(#2,0) - 1/5*(#2,0) $) -- ($ #1 - 5/6*(#2,0) $);
	\draw[very thick] #1 ellipse ( {5/6*#2} and {2/3*#2});
	\filldraw[very thick, unshaded] ($ #1 + 1/3*(#2,0) $) circle (1/5*#2);
	\node at ($ #1 - (.2,0) + 1/3*(#2,0) - 1/3*(0,#2) $) {\scriptsize{$#3$}};
	\node at ($ #1 - (.2,0) + 1/3*(#2,0) + 1/3*(0,#2) $) {\scriptsize{$#4$}};
}

\newcommand{\tensorLeftIdZeroZero}[2]{
	\draw[rounded corners=5pt, very thick, unshaded] ($ #1 - (#2,#2) $) rectangle ($ #1 + (#2,#2) $);
	\draw[thick, red] ($ #1 + 1/3*(#2,0) - 1/5*(#2,0) $) -- ($ #1 - 5/6*(#2,0) $);
	\draw[very thick] #1 ellipse ( {5/6*#2} and {2/3*#2});
	\filldraw[very thick, unshaded] ($ #1 + 1/3*(#2,0) $) circle (1/5*#2);
}


\newcommand{\tensorLeftIdEv}[4]{
	\draw[rounded corners=5pt, very thick, unshaded] ($ #1 - (#2,#2) $) rectangle ($ #1 + (#2,#2) $);
	\draw ($ #1 + 14/23*(0,#2) - 1/3*(#2,0) $) -- ($ #1 - 14/23*(0,#2) - 1/3*(#2,0) $);
	\draw ($ #1 + 1/3*(#2,0) $) -- ($ #1 - 14/23*(0,#2) + 1/3*(#2,0) $);
	\draw[thick, red] ($ #1 + 1/3*(#2,0) - 1/5*(#2,0) $) -- ($ #1 - 5/6*(#2,0) $);
	\draw[very thick] #1 ellipse ( {5/6*#2} and {2/3*#2});
	\filldraw[very thick, unshaded] ($ #1 + 1/3*(#2,0) $) circle (1/5*#2);
	\node at ($ #1 - (.2,0) - 1/3*(#2,0) + (0,.2) $) {\scriptsize{$#3$}};
	\node at ($ #1 - (.2,0) + 1/3*(#2,0) - 1/3*(0,#2) $) {\scriptsize{$#4$}};
}

\newcommand{\tensorLeftIdCoev}[4]{
	\draw[rounded corners=5pt, very thick, unshaded] ($ #1 - (#2,#2) $) rectangle ($ #1 + (#2,#2) $);
	\draw ($ #1 + 14/23*(0,#2) - 1/3*(#2,0) $) -- ($ #1 - 14/23*(0,#2) - 1/3*(#2,0) $);
	\draw ($ #1 + 14/23*(0,#2) + 1/3*(#2,0) $) -- ($ #1 + 1/3*(#2,0) $);
	\draw[thick, red] ($ #1 + 1/3*(#2,0) - 1/5*(#2,0) $) -- ($ #1 - 5/6*(#2,0) $);
	\draw[very thick] #1 ellipse ( {5/6*#2} and {2/3*#2});
	\filldraw[very thick, unshaded] ($ #1 + 1/3*(#2,0) $) circle (1/5*#2);
	\node at ($ #1 - (.2,0) - 1/3*(#2,0) + (0,.2) $) {\scriptsize{$#3$}};
	\node at ($ #1 - (.2,0) + 1/3*(#2,0) + 1/3*(0,#2) $) {\scriptsize{$#4$}};
}

\newcommand{\identityMap}[3]{
	\draw[rounded corners=5pt, very thick, unshaded] ($ #1 - (#2,#2) $) rectangle ($ #1 + (#2,#2) $);
	\draw ($ #1 + 5/6*(0,#2) $) -- ($ #1 - 5/6*(0,#2) $);
	\draw[very thick] #1 circle ({5/6*#2});
	\filldraw[red] ($ #1 - 5/6*(#2,0) $) circle ({1/10*#2});
	\node at ($ #1 + (.2,0) $) {\scriptsize{$#3$}};
}

\newcommand{\emptyMap}[2]{
	\draw[rounded corners=5pt, very thick, unshaded] ($ #1 - (#2,#2) $) rectangle ($ #1 + (#2,#2) $);
	\draw[very thick] #1 circle ({2/3*#2});
	\filldraw[red] ($ #1 - 2/3*(#2,0) $) circle (.05cm);
}

\newcommand{\twist}[1]{
	\fill[unshaded] ($ #1 - (.1,.2) $) rectangle ($ #1 + (.1,.2) $);
	\draw ($ #1 - (0,.2) $) .. controls ++(135:.1cm) and ++(-135:.2cm) .. #1 .. controls ++(45:.1cm) and ++(-45:.2cm) .. ($ #1 + (0,.2) $);
}

\newcommand{\twistInverse}[1]{
	\fill[unshaded] ($ #1 - (.1,.2) $) rectangle ($ #1 + (.1,.2) $);
	\draw ($ #1 - (0,.2) $) .. controls ++(45:.1cm) and ++(-45:.2cm) .. #1 .. controls ++(135:.1cm) and ++(-135:.2cm) .. ($ #1 + (0,.2) $);
}

\newcommand{\twistHorizontal}[1]{
	\fill[unshaded] ($ #1 - (.2,.1) $) rectangle ($ #1 + (.2,.1) $);
	\draw ($ #1 - (.2,0) $) .. controls ++(45:.1cm) and ++(135:.2cm) .. #1 .. controls ++(-45:.1cm) and ++(-135:.2cm) .. ($ #1 + (.2,0) $);
}

\newcommand{\twistHorizontalInverse}[1]{
	\fill[unshaded] ($ #1 - (.2,.1) $) rectangle ($ #1 + (.2,.1) $);
	\draw ($ #1 - (.2,0) $) .. controls ++(-45:.1cm) and ++(-135:.2cm) .. #1 .. controls ++(45:.1cm) and ++(135:.2cm) .. ($ #1 + (.2,0) $);
}

\newcommand{\loopIso}[1]{
	\fill[unshaded] ($ #1 - (.3,.3) $) rectangle ($ #1 + (.1,.3) $);
	\draw ($ #1 + (-.3,.2) $) arc (90:270:.2cm);
	\draw ($ #1 + (0,-.3) $)  .. controls ++(90:.2cm) and ++(0:.2cm) .. ($ #1 + (-.3,.2) $);
	\draw[super thick, white] ($ #1 + (0,.3) $)  .. controls ++(270:.2cm) and ++(0:.2cm) .. ($ #1 + (-.3,-.2) $);
	\draw ($ #1 + (0,.3) $)  .. controls ++(270:.2cm) and ++(0:.2cm) .. ($ #1 + (-.3,-.2) $);
}

\newcommand{\loopIsoReverse}[1]{
	\fill[unshaded] ($ #1 - (.3,.3) $) rectangle ($ #1 + (.1,.3) $);
	\draw ($ #1 + (-.3,.2) $) arc (90:270:.2cm);
	\draw ($ #1 + (0,.3) $)  .. controls ++(270:.2cm) and ++(0:.2cm) .. ($ #1 + (-.3,-.2) $);
	\draw[super thick, white]	 ($ #1 + (0,-.3) $)  .. controls ++(90:.2cm) and ++(0:.2cm) .. ($ #1 + (-.3,.2) $);
	\draw ($ #1 + (0,-.3) $)  .. controls ++(90:.2cm) and ++(0:.2cm) .. ($ #1 + (-.3,.2) $);
}

\newcommand{\loopIsoInverseReverse}[1]{
	\fill[unshaded] ($ #1 - (.1,.3) $) rectangle ($ #1 + (.3,.3) $);
	\draw ($ #1 + (.3,.2) $) arc (90:-90:.2cm);
	\draw ($ #1 + (0,.3) $)  .. controls ++(270:.2cm) and ++(180:.2cm) .. ($ #1 + (.3,-.2) $);
	\draw[super thick, white] ($ #1 + (0,-.3) $)  .. controls ++(90:.2cm) and ++(180:.2cm) .. ($ #1 + (.3,.2) $);
	\draw ($ #1 + (0,-.3) $)  .. controls ++(90:.2cm) and ++(180:.2cm) .. ($ #1 + (.3,.2) $);
}

\newcommand{\braiding}[3]{
	\fill[unshaded] ($ #1 - (.1,#3) $) rectangle ($ #1 + (.1,#3) + (#2,0) $);
	\draw ($ #1 + (#2,-#3) $) to[out=90, in=-90] ($ #1 + (0,#3) $);
	\draw[white, line width=4] ($ #1 + (0,-#3) $) to[out=90, in=-90] ($ #1 + (#2,#3) $);
	\draw ($ #1 + (0,-#3) $) to[out=90, in=-90] ($ #1 + (#2,#3) $);
}

\newcommand{\evaluationMap}[3]{
	\draw[rounded corners=5pt, very thick, unshaded] ($ #1 - (#2,#2) $) rectangle ($ #1 + (#2,#2) $);
	\draw ($ #1 - 2/3*(#2,0) - 1/2*(0,#2) $) .. controls ++(90:{#2}) and ++(90:{#2}) .. ($ #1 + 2/3*(#2,0) - 1/2*(0,#2) $);
	\draw[very thick] #1 circle ({5/6*#2});
	\filldraw[red] ($ #1 - 5/6*(#2,0) $) circle ({1/10*#2});
	\node at ($ #1 - (0,.1) $) {\scriptsize{$#3$}};
}

\newcommand{\coevaluationMap}[3]{
	\draw[rounded corners=5pt, very thick, unshaded] ($ #1 - (#2,#2) $) rectangle ($ #1 + (#2,#2) $);
	\draw ($ #1 - 2/3*(#2,0) + 1/2*(0,#2) $) .. controls ++(270:{#2}) and ++(270:{#2}) .. ($ #1 + 2/3*(#2,0) + 1/2*(0,#2) $);
	\draw[very thick] #1 circle ({5/6*#2});
	\filldraw[red] ($ #1 - 5/6*(#2,0) $) circle ({1/10*#2});
	\node at ($ #1 + (0,.1) $) {\scriptsize{$#3$}};
}

\newcommand{\identityTangle}[4]{
\	\draw[rounded corners=5pt, very thick, unshaded] ($ #1 - (#2,#2) $) rectangle ($ #1 + (#2,#2) $);
	\draw ($ #1 + 5/6*(0,#2) $) -- ($ #1 - 5/6*(0,#2) $);
	\draw[thick, red] ($ #1 - 1/5*(#2,0) $) -- ($ #1 - 5/6*(#2,0) $);
	\draw[very thick] #1 circle ({5/6*#2});
	\draw[very thick, unshaded] #1 circle ({1/5*#2});
	\node at ($ #1 + (.2,0) - 1/2*(0,#2)$) {\scriptsize{$#3$}};
	\node at ($ #1 + (.2,0) + 1/2*(0,#2)$) {\scriptsize{$#4$}};
}



\makeatletter
\newcommand{\hashdef}[2]{\@namedef{#1}{#2}}
\newcommand{\hashlookup}[1]{\@nameuse{#1}}
\makeatother





\begin{document}
\title{Planar algebras in braided tensor categories}
\author{Andr\'{e} Henriques, David Penneys, and James Tener}
\date{\today}
\maketitle

\begin{abstract}
We generalize Jones' planar algebras by internalising the notion to a pivotal braided tensor category $\cC$.
To formulate the notion, the planar tangles are now equipped with additional `anchor lines' which connect the inner circles to the outer circle.
We call the resulting notion an \emph{anchored planar algebra}.
If we restrict to the case when $\cC$ is the category of vector spaces, then we recover the usual notion of a planar algebra.
\\\indent
Building on our previous work on categorified traces,
we prove that there is an equivalence of categories between anchored planar algebras in $\cC$ and pivotal module tensor categories over $\cC$ equipped with a chosen self-dual generator.
Even in the case of usual planar algebras, the precise formulation of this theorem, as an equivalence of categories, has not appeared in the literature.
Using our theorem, we describe many examples of anchored planar algebras.
\end{abstract}

\[
\,\,\tikz{
\draw[ultra thick] (0,0) circle (4);
\def \rA {.7}\def \rB {.7}\def \rC {.6}\def \rD {.7}
\path (60:1.75) coordinate (a)
(150:1.9) coordinate (b)
(-100:1.2) coordinate (c)
(-20:2) coordinate (d);
\draw (a) + (-70:\rA+.1) circle (.4);
\draw (a) + (-15:\rA+1.2) circle (.3);
\draw (b) + (-135:\rB+.1) circle (.58 and .6);
\draw (b) to[out=-30,in=100,looseness=1.2] (c);
\draw (d) + (-.1,-.1) to[out=150,in=40,looseness=1.2] (c);
\draw (d) + (-.1,-.1) to[out=-130,in=-40,looseness=1.2] (c);
\draw (a) to[out=155,in=55,looseness=1.2] (b);
\draw (c) + (-\rC,0) to[out=180,in=90] ($(c) + (-150:2.2*\rC)$) to[out=-90,in=90] (-100:4);
\draw (-70:4) arc (210:40:1);
\draw (42:4) to[bend left=15] ($(a) + (.2,.2)$);
\draw (70:4) to[bend right=20] ($(a) + (.1,.2)$);
\draw (137:4) to[bend left=10] ($(b) + (-.1,.2)$);
\draw[fill=white, ultra thick] (a) circle (\rA) (b) circle (\rB) (c) circle (\rC) (d) circle (\rD);
\fill[red] (-137:4) coordinate (Z) circle (.08);
\fill[red] (a) + (-140:\rA) coordinate (A) circle (.08);
\fill[red] (b) + (-140:\rB) coordinate (B) circle (.08);
\fill[red] (c) + (-110:\rC) coordinate (C) circle (.08);
\fill[red] (d) + (-80:\rD) coordinate (D) circle (.08);
\node[scale=1.6] at (-137:4.45) {\anchor};
\draw[very thick, red]
(Z) ..controls ++(50:1) and ++(-130:1).. ($(A)!.50!(Z) + (-.25,.25)$) ..controls ++(50:1) and ++(-130:1).. (A)
(Z) ..controls ++(60:1) and ++(-130:1.5).. (B)
(Z) ..controls ++(39:1.5) and ++(-130:.9).. (C)
(Z) ..controls ++(30:1.1) and ++(-90:1.5).. (D);
\node at (0,-4.9) {Figure 1: An anchored planar tangle};
}
\]
\addtocounter{figure}{1}
\label{That's where Figure 1 is}

\newpage

\setcounter{tocdepth}{2}
\tableofcontents
\newpage


\settocdepth{section}

\section{Introduction}\label{sec:Introduction}

In \cite{math.QA/9909027}, Jones introduced the notion of a {\it planar algebra} as an axiomatization of the standard invariant of a finite index subfactor. 
A planar algebra (in vector spaces) is a sequence of vector spaces $\cP[0]$, $\cP[1]$, $\cP[2], \ldots$ called the {\it box spaces} of the planar algebra, along with an action of planar tangles\footnote{While Jones worked with shaded planar tangles in his study of subfactors, in this article we work with unshaded planar tangles.}, i.e., for every planar tangle $T$
we have a linear map 
$Z(T): \cP[k_1] \otimes \cdots \otimes \cP[k_r] \to \cP[k_0]$. For example:
$$
Z\left(
\begin{tikzpicture}[baseline =-.1cm]
	\coordinate (a) at (0,0);
	\coordinate (b) at ($ (a) + (1.4,1) $);
	\coordinate (c) at ($ (a) + (.6,-.6) $);
	\coordinate (d) at ($ (a) + (-.6,.6) $);
	\coordinate (e) at ($ (a) + (-.8,-.6) $);
	
	\ncircle{}{(a)}{1.6}{85}{}
		
	\draw (60:1.6cm) arc (150:300:.4cm);
	\draw ($ (c) + (0,.4) $) arc (0:90:.8cm);
	\draw ($ (c) + (-.4,0) $) circle (.25cm);
	\draw ($ (d) + (0,.88) $) -- (d) -- ($ (d) + (-.88,0) $);
	\draw ($ (c) + (0,-.88) $) -- (c) -- ($ (c) + (.88,0) $);
	\ncircle{unshaded}{(d)}{.4}{235}{}
	\ncircle{unshaded}{(c)}{.4}{235}{}
	\node[blue] at (c) {\small 2};
	\node[blue] at (d) {\small 1};
\end{tikzpicture}
\right)\,\,:\,\,\,\cP[3] \otimes \cP[5] \to \cP[6]
$$
The operation of sticking a tangle inside another is required, by the axioms of a planar algebra, to correspond to the composition of multilinear maps.

In this paper, we internalize the notion of planar algebra to a braided pivotal tensor category $\cC$.
The resulting notion is called an \emph{anchored planar algebra}. 
The box spaces $\cP[k]$ are now objects of $\cC$, and the planar tangles are replaced by {\it anchored planar tangles} (see Figure~\pageref{That's where Figure 1 is} for an example).
An anchored planar algebra associates to each anchored planar tangle $T$ a morphism in $\cC$:
\begin{equation*}
Z\left(
\begin{tikzpicture}[baseline =-.1cm]
	\coordinate (a) at (0,0);
	\coordinate (b) at ($ (a) + (1.4,1) $);
	\coordinate (c) at ($ (a) + (.6,-.6) $);
	\coordinate (d) at ($ (a) + (-.6,.6) $);
	\coordinate (e) at ($ (a) + (-.8,-.6) $);
	
	\ncircle{}{(a)}{1.6}{85}{}
	\draw[thick, red] (c) arc (0:-180:.4cm) arc (180:0:.7cm) .. controls ++(270:.15cm) and ++(45:.15cm) .. ($ (a) + (-45:1.45cm) $) arc (315:90:1.45cm) arc (-90:0:.125cm);
	\draw[thick, red] (d) .. controls ++(225:1.2cm) and ++(270:2.6cm) .. ($ (a) + (85:1.6) $);
		
	\draw (60:1.6cm) arc (150:300:.4cm);
	\draw ($ (c) + (0,.4) $) arc (0:90:.8cm);
	\draw ($ (c) + (-.4,0) $) circle (.25cm);
	\draw ($ (d) + (0,.88) $) -- (d) -- ($ (d) + (-.88,0) $);
	\draw ($ (c) + (0,-.88) $) -- (c) -- ($ (c) + (.88,0) $);
	\ncircle{unshaded}{(d)}{.4}{235}{}
	\ncircle{unshaded}{(c)}{.4}{235}{}
\end{tikzpicture}
\right)
\,\,:\,\,\,\mathcal{P}[3]\otimes \mathcal{P}[5] \,\to\, \mathcal{P}[6].
\end{equation*}
In general, this is a morphism
$Z(T) \in \Hom_\cC(\cP[k_1] \otimes \cdots \otimes \cP[k_r], \cP[k_0])$,
where the order of the tensor factors $\cP[k_1], \ldots, \cP[k_r]$ is determined by the anchor lines (the red lines in the picture).

We recall that a tensor category is called \emph{pivotal} if every object is equipped with an isomorphism to its double dual, satisfying certain axioms.
There is a well-known algebraic classification of planar algebras \cite{MR2559686,1207.1923,MR3405915} which goes as follows:
\begin{equation}
\label{eq:PlanarAlgebraClassification}
\left\{\,\text{\rm Planar algebras}\left\}
\,\,\,\,\longleftrightarrow\,\,
\left\{\,\parbox{8.3cm}{\rm Pairs $(\mathcal{D},X)$, where $\cD$ is a pivotal category and $X\in\cD$ is a symmetrically self-dual generator}\,\right\}\right.\right.\!\!.
\end{equation}
Here, an object $X$ is called symmetrically self-dual if it is equipped with an isomorphism to its dual, subject to a certain symmetry condition.
For a pair $(\cD, X)$ as above, the $k$-th box space of the associated planar algebra $\cP$ is given by the invariants in $X^{\otimes k}$:
\begin{equation*}
\cP[k]:=\Hom_\cD(1_\cD, X^{\otimes k}).
\end{equation*}
Conversely, the action of planar tangles is given by the graphical calculus of $\cD$.

The main goal of our paper is to generalise the correspondence \eqref{eq:PlanarAlgebraClassification} to the case of planar algebras internal to $\cC$, i.e., anchored planar algebras.
Our classification result is formulated as an equivalence of categories.
It establishes a correspondence between anchored planar algebras in $\cC$, and a certain type of {\it module tensor categories} for $\cC$ (Definition~\ref{def: def of ModTC}).

Here, a module tensor category is a simultaneous generalisation of the notion of $\cC$-module category and of the notion of tensor category (a monoidal category which is linear over some field).
First and foremost, a module tensor category $\cM$ is a tensor category.
In addition to being a tensor category, it comes equipped with a tensor functor $\Phi:\cC\to \cM$ which gives it with the structure of a $\cC$-module category: $c \cdot m = \Phi(c) \otimes m$.
Now, because $\cC$ is braided, there is another left action of $\cC$ on $\cM$, given by $c \cdot m = m \otimes \Phi(c)$.
In a module tensor category, those two actions are isomorphic, i.e., we are provided with isomorphisms $e_{\Phi(c), m}:\Phi(c) \otimes m\to m \otimes \Phi(c)$.
The latter are subject to the following three axioms:
\begin{align*}
e_{\Phi(c), x \otimes y} &= (\id_x \otimes e_{\Phi(c),y}) \circ (e_{\Phi(c),x} \otimes \id_y)
\\
e_{\Phi(c \otimes d),x} &= (e_{\Phi(c),x} \otimes \id_{\Phi(d)}) \circ (\id_{\Phi(c)} \otimes e_{\Phi(d),x})
\\
e_{\Phi(c),\Phi(d)} &= \Phi(\beta_{c,d})
\end{align*}
where $\beta$ is the braiding in $\cC$.
The above conditions can be re-packaged into the single data of a braided functor
$\Phi^{\scriptscriptstyle \cZ}: \cC \to \cZ(\cM)$ from the category $\cC$ into the Drinfel'd center of $\cM$.

We say that a module tensor category is \emph{pivotal} if both $\cM$ and $\Phi^{\scriptscriptstyle \cZ}$ are pivotal,
and \emph{pointed} if it comes equipped with a symmetrically self-dual object $m \in \cM$ which generates it as a module tensor category.
Our main theorem is:

\begin{thmalpha}\label{thm:EquivalenceOfCategories}
There is an equivalence of categories
\[
\left\{\,\text{\rm Anchored planar algebras in $\cC$}\left\}
\,\,\,\,\cong\,\,
\left\{\,\parbox{4.7cm}{\rm \centerline{Pointed pivotal module} \centerline{tensor categories over $\cC$}}\,\right\}\right.\right.\!\!
\]
\end{thmalpha}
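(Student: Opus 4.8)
\emph{Setting up the two functors.} The plan is to build mutually quasi-inverse functors. In one direction, given a pointed pivotal module tensor category $(\cM,\Phi,m)$ over $\cC$, the categorified-trace machinery from our earlier work supplies a right adjoint $\Phi^R\colon\cM\to\cC$ to the tensor functor $\Phi$ (concretely, $\Phi^R=\underline{\Hom}_\cM(1_\cM,-)$), and we set $\cP[k]:=\Phi^R(m^{\otimes k})\in\cC$. To an anchored planar tangle $T$ we attach a morphism $Z(T)$ as follows: the underlying unanchored planar tangle is read as a morphism of $\cM$ through the graphical calculus of the pivotal category $\cM$, using the self-duality of $m$ to interpret every cup and cap; each anchor line transports the central object $\Phi(\cP[k_i])$ attached to an input disk out to the distinguished boundary point, applying a half-braiding $e_{\Phi(-),-}$ every time it crosses a string of $T$; the cyclic order in which the anchor lines arrive at that point fixes the ordering of the tensor factors $\cP[k_1]\otimes\dots\otimes\cP[k_r]$; and a final application of the $(\Phi,\Phi^R)$-adjunction, together with $\Phi(a)\otimes\Phi(b)\cong\Phi(a\otimes b)$, converts the resulting morphism of $\cM$ into the desired morphism $\cP[k_1]\otimes\dots\otimes\cP[k_r]\to\cP[k_0]$ of $\cC$. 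A pivotal tensor functor between such categories that commutes with the $\Phi$'s and preserves the chosen generator induces a morphism of anchored planar algebras, so the assignment is functorial.

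\emph{The reverse construction.} Given an anchored planar algebra $\cP$ in $\cC$, build $\cM_\cP$ by a universal construction. Its objects are the formal expressions $c\otimes m^{\otimes k}$ with $c\in\cC$ and $k\ge 0$ (consecutive $\cC$-letters collapsed via the tensor structure and moved to the left past $m$ using the prospective half-braiding), with $\Hom_{\cM_\cP}(c_0\otimes m^{\otimes k_0},c_1\otimes m^{\otimes k_1}):=\Hom_\cC(c_0,c_1\otimes\cP[k_0+k_1])$ and composition defined by the multiplication tangles of $\cP$; one then passes to the additive and idempotent completion. The functor $\Phi$ is $c\mapsto c\otimes m^{\otimes 0}$; the braiding $\beta$ of $\cC$ together with the tangle action of $\cP$ equips $\Phi$ with a lift $\Phi^{\scriptscriptstyle\cZ}$ to $\cZ(\cM_\cP)$, producing the half-braidings $e$; and the pivotal structures on $\cM_\cP$ and on $\Phi^{\scriptscriptstyle\cZ}$ are extracted from the pivotality built into the axioms of an anchored planar algebra. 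The object $m:=1_\cC\otimes m$ is symmetrically self-dual and generates $\cM_\cP$ as a module tensor category by construction, so $(\cM_\cP,\Phi,m)$ is pointed pivotal, and morphisms of anchored planar algebras induce morphisms of these.

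\emph{Verifying the equivalence.} Starting from $\cP$ and forming $\cM_\cP$, one reads off from the definition of its hom spaces that $\Phi^R(m^{\otimes k})=\cP[k]$ and that the reconstructed tangle action is the original one, once one checks that the idempotent completion does not enlarge these particular hom objects; this gives one composite $\cong\id$. For the other, start from $(\cM,\Phi,m)$, form $\cP=\Phi^R(m^{\otimes\bullet})$, and then $\cM_\cP$; the universal property of $\cM_\cP$ yields a comparison functor $\cM_\cP\to\cM$ of pointed pivotal module tensor categories. It is essentially surjective exactly because $m$ generates $\cM$ as a module tensor category --- this is where the ``pointed'' hypothesis enters --- and it is fully faithful because every $\Hom_\cM(\Phi(c_0)\otimes m^{\otimes k_0},\Phi(c_1)\otimes m^{\otimes k_1})$, after using rigidity and the pivotal structure to fold both sides into a single tensor power of $m$ and the module adjunction to extract the $\cC$-part, is computed by the box spaces $\cP[\bullet]$ and matches the hom space of $\cM_\cP$.

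\emph{The main obstacle.} The delicate point is the well-definedness of $T\mapsto Z(T)$: one must show $Z(T)$ depends only on the isotopy class of the anchored planar tangle and that $Z$ respects gluing of tangles, i.e.\ is a morphism of colored operads. This rests on a generators-and-relations presentation of the operad of anchored planar tangles, together with a lengthy check that each defining relation --- in particular those describing how an anchor line slides past a string, past another anchor line, or gets reordered at the boundary point --- is a formal consequence of the three half-braiding axioms, the hexagon and pivotal coherence of $\cM$, and naturality. The second, more structural difficulty is the full faithfulness of the comparison functor $\cM_\cP\to\cM$: showing that nothing in $\cM$ escapes detection by the box spaces requires assembling rigidity, pivotality, the self-duality of $m$, and the $\cC$-module structure in exactly the right order.
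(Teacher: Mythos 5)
Your proposal is correct and follows essentially the same route as the paper: the same two functors (box objects via the right adjoint $\Tr_\cC=\Phi^R$ of the action functor, and the formal category with objects $\Phi(c)\otimes m^{\otimes n}$ and hom spaces $\cC(c,d\otimes\cP[n_1+n_2])$), with the same verification strategy and the same identification of the hard points. The steps you defer --- isotopy invariance and gluing-compatibility of $T\mapsto Z(T)$ via a generators-and-relations presentation of the anchored planar operad, and full faithfulness of the comparison functor $\cM_\cP\to\cM$ --- are precisely what the paper's Sections 4--7 carry out in detail.
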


We warn the reader that equipping the collection of all pointed pivotal module tensor categories with the structure of a category is not totally obvious (they are more naturally a $2$-category).
The precise version of our theorem is stated as Theorem~\ref{thm:EquivalenceOfCategories2} in the body of this paper.

Given a pointed pivotal module tensor category $(\cM,m)$,
the $n$-th box object of the associated anchored planar algebra is given by  the formula
\[
\cP[n]=\Tr_\cC(m^{\otimes n}),
\]
where $\Tr_\cC: \cM \to \cC$ is the right adjoint of $\Phi$.
If one removes the condition that the generator $m\in\cM$ is symmetrically self-dual, then one obtains a classification of {\it oriented} planar algebras (i.e., planar algebras where the strands are oriented); for simplicity we only consider the unoriented case. 
We note that, even when $\cC=\Vec$, our theorem yields a version of the equivalence \eqref{eq:PlanarAlgebraClassification} which is more precise than has previously appeared in the literature.

In our previous article \cite{1509.02937}, we showed that the functor $\Tr_\cC$ admits a `calculus of strings on tubes' (see Section~\ref{sec:TubeRelations} for an overview).
As a corollary of our main theorem, we can now prove that this calculus of strings on tubes is invariant under all 3-dimensional isotopies (Appendix~\ref{sec:TubeCalculus}).

Examples of anchored planar algebras have already appeared in the literature, in the work of Jaffe-Liu on parafermions and reflection positivity \cite{1602.02662,1602.02671}.
In their work, a notion of `planar para algebra' is presented, which is equivalent to that of an anchored planar algebra in the braided tensor category $\Vec(\bbZ/N\bbZ)$ of $\bbZ/N\bbZ$-graded vector spaces (Example \ref{ex:Planar para algebras}).
By our main theorem (Theorem~\ref{thm:EquivalenceOfCategories}), the notion of a planar para algebra is equivalent to that of a module tensor category over $\Vec(\bbZ/N\bbZ)$.
The parafermion planar para algebras constructed in \cite{1602.02662} then correspond, under the equivalence, to Tambara-Yamagami categories associated to $\bbZ/N\bbZ$.
They lie in the larger family of Tambara-Yamagami module tensor categories over $\Vec(A)$, where $A$ is an abelian group (Example \ref{ex:TambaraYamagami}).

In the other direction, the algebraic classification given in Theorem \ref{thm:EquivalenceOfCategories} allows us to construct many examples of anchored planar algebras, including examples from near group categories (Section \ref{sec: Near group examples}), and from $ADE$ module tensor categories over Temperley-Lieb-Jones categories (Section \ref{sec: TLJ examples}).
We explicitly compute the box objects $\cP[k]$ in all our examples.

Our paper is structured as follows. 
In Section~\ref{sec: Anchored planar algebras}, we review material on planar algebras and introduce the notion of anchored planar algebra.
In Section~\ref{sec: The main theorem}, we review the notion of module tensor category, and state our main theorem (Theorem~\ref{thm:EquivalenceOfCategories2}).
Using our theorem, we then provide a number of examples of anchored planar algebras.
In Section~\ref{sec:Constructing anchored planar algebras}, we explain how to construct anchored planar algebras via generators and relations.
In Section~\ref{sec:APAfromMTC}, we use the categorified trace associated to a module tensor category \cite{1509.02937} to construct a functor $\Lambda:\Mod_* \to \APA$ from the category of pointed $\cC$-module tensor categories to the category of anchored planar algebras in $\cC$.
In Section~\ref{sec:MTCfromAPA}, we construct a functor $\Delta: \APA \to \Mod_*$ going the other way.
Finally, in Section~\ref{sec:Equivalence} we complete the proof of our main theorem and show that the two functors $\Lambda$ and $\Delta$ witness an equivalence of categories.

\paragraph{Acknowledgements.}
The authors would like to thank
Bruce Bartlett,
Vaughan Jones,
Zhengwei Liu,
Scott Morrison,
Mathew Pugh,
Noah Snyder, and
Kevin Walker
for helpful discussions.

Andr\'e Henriques gratefully acknowledges the Leverhulme trust and the EPSRC grant ``Quantum Mathematics and Computation'' for financing his visiting position in Oxford.
Andr\'e Henriques has received funding from the European Research Council (ERC) under the European Union's Horizon 2020 research and innovation programme (grant agreement No 674978).
David Penneys was supported by the NSF DMS grant 1500387.
James Tener would like to thank the Max Planck Institute for Mathematics for support during the course of this work.
David Penneys and James Tener were partially supported by NSF DMS grant 0856316.

\settocdepth{subsection}


\section{Anchored planar algebras}
\label{sec: Anchored planar algebras}

\subsection{Planar algebras}

Planar algebras were first introduced by Jones \cite{math.QA/9909027} in the context of his program to understand and classify subfactors
\cite{MR2679382,MR2979509,MR3166042,1509.00038}.
More generally, in the spirit of the tangle hypothesis \cite{MR2555928}, 
one can expect objects similar to planar algebras to be relevant whenever dealing with dualizable 1-morphisms in a 2-category.
We start with a brief definition, following \cite{MR2559686,MR3405915}.\medskip

Let $\mathbb D:=\{z\in\mathbb C:|z|\le 1\}$ be the closed unit disc.
We fix a base point $q \in \partial \mathbb D$ on its boundary.
For each $n\in\mathbb N$, we also fix a set $\mu_n=(\mu_n^{1},\ldots,\mu_n^{n})$ consisting of $n$ germs of arcs that intersect $\partial \mathbb D$ transversely, not at $q$:
\[
\tikz{
	\ncircle{}{(0,0)}{.5}{-90}{}
\draw (-10:.3) to[bend right=30] (-10:.7);
\draw (35:.3) to[bend left=30] (35:.7);
\draw (90:.3) to[bend right=30] (90:.7);
\draw (135:.3) to[bend right=30] (135:.7);
\draw (190:.3) to[bend right=30] (190:.7);
\node[red, scale=.9] at (-90:.8) {$q$};
\node[scale=.9] at (8:1.2) {$\bigg\}\mu_n$};
}
\]

\begin{defn}\label{def:planar tangle}
A \emph{planar tangle} $T=(T,X,Q)$ consists of:
\begin{itemize}
\item
A subset $T\subseteq\mathbb D$ of the form
\[
\qquad T=\mathbb D\setminus (\mathring D_1\cup \ldots \cup \mathring D_r),\qquad r\ge 0,
\]
where $D_i=\{z\in\mathbb D:|z-a_i| \le r_i\}\subset \mathring {\mathbb D}$ are disjoint closed discs contained in the interior of $\mathbb D$.
We let $D_0:=\mathbb D$ and write $\partial^i T:=\partial D_i$, so that $$\partial T\,=\,\bigcup_{i=0}^r\partial^iT.$$
The boundary components $\partial^1T,\ldots,\partial^rT$ are called \emph{input circles} (the input circles are ordered), and $\partial^0T=\partial\mathbb D$ is called the \emph{output circle} of $T$.
\item
A closed $1$-dimensional submanifold $X\subset T$ satisfying $\partial X = X\cap \partial T$.
The components of $X$ are called \emph{strands} and the points of $\partial X$ are called \emph{boundary points}.
\item
A set $Q=\{q_0,\ldots q_r\}$, $q_0=q$,
$q_i\in\partial^i T\setminus X$, called the \emph{anchor points}.
\end{itemize} 
If $X\cap \partial^iT$ has cardinality $k$ and $f:\mathbb C\to \mathbb C$ is the unique affine linear map that sends $\partial \mathbb D$ to $\partial^iT$ and $q$ to $q_i$ ($f$ is the identity if $i=0$),
we demand that the strands $X$ be tangent to $f(\mu_k)$ to infinite order.
\end{defn}

A planar tangle $T$ is \emph{of type} $(k_1,\ldots,k_r;k_0)$ if it has $r$ input circles, and for each $i\in\{0,\ldots,r\}$ we have $|X\cap\partial^iT|=k_i$.
The collection of all tangles (for fixed $q$ and $\mu_n$) admits the structure of a \emph{topological coloured operad} \cite{MR0420609}. 
This amounts to the following structure.
If $S=(S,Y,P)$ and $T=(T,X,Q)$ are tangles of types $(k_1,\ldots,k_r;k_0)$ and $(\ell_1,\ldots,\ell_s;\ell_0)$,
and if $\ell_0=k_i$ for some $i\ge 1$, then we can form a new tangle
\[
T\circ_i S\,\,\,\,\,
\text{of type}\,\,\,\,\, (k_1,\ldots,k_{i-1},\ell_1,\ldots, \ell_s,k_{i+1}\ldots,k_r\,;\,k_0),
\]
called the \emph{$i$-th operadic composition} of $T$ and $S$.
This tangle is defined as follows.
Letting $f:\mathbb C\to \mathbb C$ be the unique affine linear map that sends $\partial^0S$ to $\partial^iT$ and $q$ to $q_i$,
it is given by
\begin{equation}\label{eq: compose tangles no anchor lines}
T\circ_i S \,:=\, \big(T\cup f(S),X\cup f(Y), Q\cup f(P)\setminus \{q_i\}\big).
\end{equation}
Our conventions about the behaviour of strands near their boundary ensure that this operation is well defined, and in particular that $X\cup f(Y)$ is a smooth manifold.

\begin{ex}
Here is an instance of operadic composition of tangles:
\label{ex:TangleComposition}
$$
\begin{tikzpicture}[baseline =-.1cm, yscale = -1]
	\coordinate (a) at (0,0);
	\coordinate (b) at ($ (a) + (1.4,1) $);
	\coordinate (c) at ($ (a) + (.6,-.6) $);
	\coordinate (d) at ($ (a) + (-.6,.6) $);
	\coordinate (e) at ($ (a) + (-.8,-.6) $);
	\ncircle{}{(a)}{1.6}{89}{}
	\draw (60:1.6cm) arc (150:300:.4cm);
	\draw ($ (c) + (0,.4) $) arc (0:90:.8cm);
	\draw ($ (c) + (-.4,0) $) circle (.25cm);
	\draw ($ (d) + (0,.88) $) -- (d) -- ($ (d) + (-.88,0) $);
	\draw ($ (c) + (0,-.88) $) -- (c) -- ($ (c) + (.88,0) $);
	\draw (e) circle (.25cm);
	\ncircle{unshaded}{(d)}{.4}{235}{}
	\ncircle{unshaded}{(c)}{.4}{225+180}{}
	\node[blue] at (c) {\small 2};
	\node[blue] at (d) {\small 1};
\end{tikzpicture}
\,\,\,\,\,\circ_2\,\,
\begin{tikzpicture}[baseline =-.16cm, yscale = -1]
\pgftransformscale{4.01}
\pgftransformrotate{46}
	\draw[very thick] (.6,-.6) circle (.4);
	\filldraw[red] (.6,-.6) + (225+178:.4) circle (.012); 
	\draw[very thick] (.6,-.75) circle (.1);
	\filldraw[red] (.6,-.75) + (-40:.1) circle (.012); 
	\draw[very thick] (.6,-.45) circle (.1);
	\filldraw[red] (.6,-.45) + (-40:.1) circle (.012); 
	\draw (.6,-1.005) -- (.6,-.85) (.6,-.2+.005) -- (.6,-.35);
	\draw (.68,-.69) to[in=180, out=45] (1.005,-.6);
	\draw (.275,-.839) to[in=180, out=35] (.5,-.75);
	\draw (.275,-.6+0.239) to[in=180, out=-35] (.5,-.45);
	\node[blue] at (.6,-.75) {\small 1};
	\node[blue] at (.6,-.45) {\small 2};
\end{tikzpicture}
\,\,\,=\,\,\,\,\,
\begin{tikzpicture}[baseline =-.1cm, yscale = -1]
	\coordinate (a) at (0,0);
	\coordinate (b) at ($ (a) + (1.4,1) $);
	\coordinate (c) at ($ (a) + (.6,-.6) $);
	\coordinate (d) at ($ (a) + (-.6,.6) $);
	\coordinate (e) at ($ (a) + (-.8,-.6) $);
	\ncircle{}{(a)}{1.6}{89}{}
	\draw (60:1.6cm) arc (150:300:.4cm);
	\draw ($ (c) + (0,.4) $) arc (0:90:.8cm);
	\draw ($ (c) + (-.4,0) $) circle (.25cm);
	\draw ($ (d) + (0,.88) $) -- (d) -- ($ (d) + (-.88,0) $);
	\draw ($ (c) + (0,-.88) $) -- (c) -- ($ (c) + (.88,0) $);
	\draw (e) circle (.25cm);
	\ncircle{unshaded}{(d)}{.4}{235}{}
	\fill[white] (c) circle (.4);
	\ncircle{unshaded}{(.6,-.75)}{.1}{-45}{}
	\ncircle{unshaded}{(.6,-.45)}{.1}{-45}{}
	\draw (.6,-1.005) -- (.6,-.85) (.6,-.2+.005) -- (.6,-.35);
	\draw (.68,-.69) to[in=180, out=45] (1.005,-.6);
	\draw (.275,-.839) to[in=180, out=25] (.5,-.75);
	\draw (.275,-.6+0.239) to[in=180, out=-25] (.5,-.45);
	\node[blue, xshift=8, yshift=3] at (.6,-.75) {\small 2};
	\node[blue, xshift=8, yshift=-3] at (.6,-.45) {\small 3};
	\node[blue] at (d) {\small 1};
\end{tikzpicture}
$$
\end{ex}

The \emph{planar operad} is the coloured operad with set of colours given by the natural numbers,
and operations of type $(k_1,\ldots,k_r;k_0)$ given by isotopy classes of planar tangles of type $(k_1,\ldots,k_r;k_0)$.

We note that the planar operad is independent, up to canonical isomorphism, of the choices of basepoint $q\in \partial \mathbb D$ and germs of arcs $\mu_n$.

A \emph{planar algebra} is an algebra over the planar operad \cite{math.QA/9909027}:

\begin{defn}
A planar algebra in $\Vec$ is a sequence of vector spaces $\mathcal{P}=(\mathcal{P}[n])_{n\geq 0}$ together with an action of the planar operad, i.e., for every isotopy class of planar tangle $T$ 
of type $(k_1,\ldots,k_r;k_0)$, a linear map 
$Z(T):\mathcal{P}[k_1]\otimes \cdots \otimes \mathcal{P}[k_r]\to \mathcal{P}[k_0]$. 
For example,
$$
Z\left(
\begin{tikzpicture}[baseline =-.1cm, yscale = -1]
	\coordinate (a) at (0,0);
	\coordinate (b) at ($ (a) + (1.4,1) $);
	\coordinate (c) at ($ (a) + (.6,-.6) $);
	\coordinate (d) at ($ (a) + (-.6,.6) $);
	\coordinate (e) at ($ (a) + (-.8,-.6) $);
	\ncircle{}{(a)}{1.6}{88}{}
	\draw (60:1.6cm) arc (150:300:.4cm);
	\draw ($ (c) + (0,.4) $) arc (0:90:.8cm);
	\draw ($ (c) + (-.4,0) $) circle (.25cm);
	\draw ($ (d) + (0,.88) $) -- (d) -- ($ (d) + (-.88,0) $);
	\draw ($ (c) + (0,-.88) $) -- (c) -- ($ (c) + (.88,0) $);
	\draw (e) circle (.25cm);
	\ncircle{unshaded}{(d)}{.4}{235}{}
	\ncircle{unshaded}{(c)}{.4}{235}{}
	\node[blue] at (c) {\small 2};
	\node[blue] at (d) {\small 1};
\end{tikzpicture}
\right)
\,\,:\,\,\,\mathcal{P}[3]\otimes \mathcal{P}[5] \,\to\, \mathcal{P}[6].
$$
This data must satisfy the following axioms:
\begin{itemize}
\item 
(identity) the identity tangle (which only has radial strings and no rotation between marked points) acts as the identity transformation.
$$
Z\left(
\begin{tikzpicture}[baseline=-.1cm]
	\ncircle{unshaded}{(0,0)}{.7}{-90}{}
	\ncircle{unshaded}{(0,0)}{.2}{-90}{}
	\draw (90:.2cm) -- (90:.7cm);
	\node at (105:.45cm) {\scriptsize{$n$}};
\end{tikzpicture}
\right)
=
\id_{\cP[n]}
$$
Here, we draw an $n$ next to a string to indicate $n$ parallel strings.
\item 
(composition) operadic composition of tangles corresponds to composition of maps.
Specifically, if $S$ is of type $(k_1,\ldots,k_r;k_0)$, $T$ is of type $(\ell_1,\ldots,\ell_s;\ell_0)$, and $\ell_0=k_i$, then
\[
Z(T\circ_i S)=Z(T)\circ (\id_{\cP[k_1]\otimes\ldots\otimes \cP[k_{i-1}]}\otimes Z(S)\otimes\id_{\cP[k_{i+1}]\otimes\ldots\otimes \cP[k_r]}).
\]
For example, the composite tangle $T\circ_2 S$ from Example \ref{ex:TangleComposition} yields the 
equation
$$
Z(T\circ_2 S)= Z(T)\circ (\id_{\mathcal P[3]}\otimes Z(S))\,:\, \mathcal P[3]\otimes \mathcal P[3]\otimes \mathcal P[2]\,\to\,\mathcal P[6]
$$
\item(symmetry) renumbering the input discs amounts to precomposing by a permutation.
Namely, if $T$ is of type $(k_1,\ldots,k_r;k_0)$, $\sigma\in \mathfrak S_r$ is a permutation, and $T^\sigma$ is the tangle of type 
$(k_{\sigma(1)},\ldots,k_{\sigma(r)};k_0)$ obtained by renumbering the input circles of $T$, then
\[
Z(T^\sigma)=Z(T)\circ \sigma:\mathcal{P}[k_{\sigma(1)}]\otimes \cdots \otimes \mathcal{P}[k_{\sigma(r)}]\to\mathcal{P}[k_1]\otimes \cdots \otimes \mathcal{P}[k_r] \to \mathcal{P}[k_0].
\]
\end{itemize}
\end{defn}

\noindent
The vector space $\cP[n]$ is called the $n$\textsuperscript{th} \emph{box space} of the planar algebra $\cP$.
A planar algebra is called \emph{connected} if its zeroth box space $\cP[0]$ is one dimensional.
In that case, the map
$$
Z\left(\begin{tikzpicture}[baseline = -.1cm]
	\draw[very thick] (0,0) circle (.4cm);
	\filldraw[red] (0,-.4) circle (.05cm);
\end{tikzpicture}\right):\bbC\to \cP[0]
$$
is necessarily an isomorphism.
If $\cP$ is a connected planar algebra, then closed loops count for a multiplicative factor $\delta$, for some $\delta\in\bbC$.

Even though it has never been phrased in this precise form, the following theorem is essentially well known  \cite{MR2559686,1207.1923,MR3405915}:

\begin{thm}
There is an equivalence of categories\;\!\footnote{Strictly speaking, such pairs $(\mathcal{C},X)$ form a $2$-category. However, this $2$-category is equivalent to a usual category.
See Lemma~\ref{lem: it's secretly a 1-category} and the paragraph thereafter for a discussion of this subtle point.}
\[
\left\{\,\text{\rm Planar algebras}\left\}
\,\,\,\,\longleftrightarrow\,\,
\left\{\,\parbox{8.3cm}{\rm Pairs $(\mathcal{C},X)$, where $\cC$ is a pivotal category and $X\in\cC$ is a symmetrically self-dual generator}\,\right\}\right.\right.\!\!,
\]
where for an object $X$ to generate a tensor category means that every other object is isomorphic to a direct summand of a direct sum of powers of $X$.
\end{thm}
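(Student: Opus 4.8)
The plan is to produce the equivalence directly, by writing down mutually quasi-inverse functors, and then to dispose of the $2$-categorical subtlety mentioned in the footnote at the very end.

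\emph{From a pair to a planar algebra.} Given a pivotal category $\cC$ with a symmetrically self-dual generator $(X,\varphi\colon X\xrightarrow{\sim}X^*)$, I would set $\cP[n]:=\Hom_\cC(1_\cC,X^{\otimes n})$. For a planar tangle $T$ of type $(k_1,\dots,k_r;k_0)$, define $Z(T)\colon\cP[k_1]\otimes\cdots\otimes\cP[k_r]\to\cP[k_0]$ by the graphical calculus of $\cC$: insert the morphisms $\xi_i\in\cP[k_i]=\Hom_\cC(1,X^{\otimes k_i})$ into the input disks of $T$, read each strand as the identity, the evaluation, or the coevaluation of the self-dual object $X$ (here the \emph{symmetry} of the self-duality is precisely what makes the unoriented picture unambiguous), and compose along the disk to land in $\Hom_\cC(1,X^{\otimes k_0})$. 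Well-definedness on isotopy classes of tangles is the coherence theorem for pivotal categories, and the planar-operad axioms (identity, composition, symmetry) are then immediate from how pasting of string diagrams is defined. A pivotal functor $F\colon(\cC,X)\to(\cD,Y)$ induces maps $\Hom_\cC(1,X^{\otimes n})\to\Hom_\cD(1,Y^{\otimes n})$ via the chosen iso $F(X)\cong Y$, and these intertwine the tangle actions; this gives the forward functor.

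\emph{From a planar algebra to a pair.} Given $\cP$, first build a strict pivotal category $\cC_0(\cP)$ with objects the formal powers $\{X^{\otimes n}\}_{n\ge 0}$ and $\Hom_{\cC_0}(X^{\otimes m},X^{\otimes n}):=\cP[m+n]$, matching the $m$ bottom and $n$ top boundary points to the $m+n$ points of the output circle; composition is the ``rectangular'' stacking tangle, the tensor product is addition of exponents, and the strict duality and pivotal structure are induced by rotating diagrams by $\pi$, which is legitimate exactly because $X$ is \emph{symmetrically} self-dual. Then let $\cC(\cP)$ be the $\Vec$-linear Cauchy completion of $\cC_0(\cP)$ (additive completion followed by idempotent completion); it is again pivotal, and $X$ is a generator in the stated sense, with self-duality inherited from $\cC_0(\cP)$. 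A morphism of planar algebras induces a pivotal functor on $\cC_0(\cP)$, hence, by the universal property of Cauchy completion, on $\cC(\cP)$; this gives the backward functor. Quasi-invertibility of the two constructions is then checked as follows: one composite is essentially the identity, since $\Hom_{\cC(\cP)}(1,X^{\otimes n})=\Hom_{\cC_0(\cP)}(X^{\otimes 0},X^{\otimes n})=\cP[n]$ with matching tangle action; for the other composite, $\cC_0(\Hom_\cC(1,X^{\otimes\bullet}))$ is equivalent to the full subcategory of $\cC$ on the objects $X^{\otimes n}$ (the symmetric self-duality identifies $\Hom_\cC(X^{\otimes m},X^{\otimes n})$ with $\Hom_\cC(1,X^{\otimes(m+n)})$), and since $X$ generates, $\cC$ is the Cauchy completion of that subcategory, so $\cC(\Hom_\cC(1,X^{\otimes\bullet}))\simeq\cC$ compatibly with $X$.

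\emph{The $2$-categorical point, and the main obstacle.} Morphisms of planar algebras form an ordinary category, whereas morphisms of pairs $(\cC,X)$ a priori form a $2$-category; to reconcile these I would show that the category of morphisms between any two pairs is equivalent to a discrete category, namely the set of morphisms of the corresponding planar algebras. Concretely, a monoidal natural transformation $\eta\colon F\Rightarrow G$ between morphisms of pairs has $\eta_X$ forced to be the composite $F(X)\cong Y\cong G(X)$ (by compatibility with the chosen isos and the self-dualities), whence $\eta$ is determined on all powers of $X$ by monoidality and then on all objects by additivity and idempotent splitting; so such an $\eta$, when it exists, is unique and invertible. Hence the $2$-category collapses to its homotopy $1$-category, which is the content of the lemma referenced in the footnote. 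I expect the genuinely laborious parts to be (i) verifying that $\cC_0(\cP)$ satisfies all the strict-pivotal-category axioms, where the bookkeeping of anchor points, the basepoint $q$, the germs $\mu_n$, and the symmetry condition on the self-duality all have to be matched up carefully; and (ii) checking that the various comparison equivalences are natural in morphisms of both kinds, so that they assemble into honest quasi-inverse functors rather than a mere bijection on isomorphism classes. The conceptual content, by contrast, is just the slogan ``graphical calculus $=$ planar operad.''
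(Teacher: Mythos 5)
Your proof is correct and is essentially the approach the paper takes: the paper obtains this statement as the $\cC=\Vec$ case of Theorem~\ref{thm:EquivalenceOfCategories2}, whose proof is precisely your two-functor scheme --- box spaces $\Hom(1,X^{\otimes n})$ acted on by tangles via the graphical calculus (the functor $\Lambda$ of Section~\ref{sec:APAfromMTC}), reconstruction of the category from the box spaces followed by formally adding direct sums and idempotent completing (the functor $\Delta$ of Section~\ref{sec:MTCfromAPA}), and the collapse of the $2$-category of pairs by uniqueness and invertibility of $2$-morphisms (Lemma~\ref{lem: it's secretly a 1-category}). The one hypothesis to make explicit, as the paper does in its precise version, is that your quasi-inverse step requires $\cC$ to admit direct sums and be idempotent complete, so that $\cC$ is indeed the Cauchy completion of its full subcategory on the powers of $X$.
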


The main result of our paper (Theorem \ref{thm:EquivalenceOfCategories}) is a generalisation of the above result.

\subsection{Anchored planar algebras}\label{sec:DefineAnchored}

Anchored planar tangles are a variant of planar tangles where we add \emph{anchor lines} connecting the anchor points.
Anchored planar algebras are very similar to planar algebras, with the notable difference that the symmetry axiom gets replaced by a braiding axiom.

\begin{defn}
\label{defn:AnchoredPlanarTangle}
An \emph{anchored planar tangle} is a quadruple $T=(T,X,Q,A)$, where $(T,X,Q)$ is a planar tangle (Definition \ref{def:planar tangle}), and $A$ is a system of anchor lines.
Here, a system of anchor lines is a collection of paths in $T$ that connect the output anchor point $q_0$ to the input anchor points $q_i$ and which,
unlike the strings, are only considered up to isotopy.
The anchor lines (always drawn in red in our pictures) may intersect the strings, but should not intersect each other.
We also insist that the numbering of the input circles corresponds to the clockwise numbering of the anchor lines as they approach $q_0$ (indicated by the little blue arrow in the picture):
\begin{equation}\label{aflfmwbjsnnd}
\begin{matrix}
\begin{tikzpicture}[baseline =-.1cm]
\pgftransformrotate{185}
	\coordinate (a) at (0,0);
	\coordinate (b) at ($ (a) + (1.4,1) $);
	\coordinate (c) at ($ (a) + (.6,-.6) $);
	\coordinate (d) at ($ (a) + (-.6,.6) $);
	\coordinate (e) at ($ (a) + (-.8,-.6) $);
	
	\ncircle{}{(a)}{1.6}{85}{}
	\draw[thick, red] (c) arc (0:-180:.4cm) arc (180:0:.7cm) .. controls ++(270:.15cm) and ++(45:.15cm) .. ($ (a) + (-45:1.45cm) $) arc (315:90:1.45cm) arc (-90:0:.125cm);
	\draw[thick, red] (d) .. controls ++(225:1.2cm) and ++(270:2.6cm) .. ($ (a) + (85:1.6) $);
		
	\draw (60:1.6cm) arc (150:300:.4cm);
	\draw ($ (c) + (0,.4) $) arc (0:90:.8cm);
	\draw ($ (c) + (-.4,0) $) circle (.25cm);
	\draw ($ (d) + (0,.88) $) -- (d) -- ($ (d) + (-.88,0) $);
	\draw ($ (c) + (0,-.88) $) -- (c) -- ($ (c) + (.88,0) $);
	\ncircle{unshaded}{(d)}{.4}{235}{}
	\ncircle{unshaded}{(c)}{.4}{235}{}
	\node[blue] at (c) {\small 2};
	\node[blue] at (d) {\small 1};
\draw[blue,-stealth] (.25,1.5) arc(0:-183:.15) -- ++(105:.001);
\end{tikzpicture}
\end{matrix}
\end{equation}
This means that, when considering anchored planar tangles, we may drop the numbering of the input circles, as it is entirely determined by the topology of the anchor lines.
\end{defn}

Let $S$ and $T$ be anchored planar tangles of types $(k_1,\ldots,k_r;k_0)$ and $(\ell_1,\ldots,\ell_s;\ell_0)$, with systems of anchor lines $A_S$ and $A_T$.
If $\ell_0=k_i$ for some $i\ge 1$, then their $i$-th operadic composition $T\circ_iS$ is defined as follows.
The underlying planar tangle is as in (\ref{eq: compose tangles no anchor lines}).
It is then equipped with the system of anchor lines obtained
from $A_T\cup f(A_S)$ ($f$ as in (\ref{eq: compose tangles no anchor lines}))
by replacing the line that connects $q_0$ to $q_i$ with $s$ parallel lines.
We illustrate this process by an example:
\[
\begin{tikzpicture}[baseline =-.1cm]
\pgftransformyscale{-1}
	\coordinate (a) at (0,0);
	\coordinate (b) at ($ (a) + (1.4,1) $);
	\coordinate (c) at ($ (a) + (.6,-.6) $);
	\coordinate (d) at ($ (a) + (-.6,.6) $);
	\coordinate (e) at ($ (a) + (-.8,-.6) $);
	\ncircle{}{(a)}{1.6}{89}{}
	\draw[thick, red] (d) .. controls ++(225:1.2cm) and ++(275:2.6cm) .. ($ (a) + (89:1.6) $);
	\draw[thick, red] ($ (a) + (89:1.6) $) to[out=-60, in=40] (.9,-.3);
	\draw (60:1.6cm) arc (150:300:.4cm);
	\draw ($ (c) + (0,.4) $) arc (0:90:.8cm);
	\draw ($ (c) + (-.4,0) $) circle (.25cm);
	\draw ($ (d) + (0,.88) $) -- (d) -- ($ (d) + (-.88,0) $);
	\draw ($ (c) + (0,-.88) $) -- (c) -- ($ (c) + (.88,0) $);
	\draw (e) circle (.25cm);
	\ncircle{unshaded}{(d)}{.4}{235}{}
	\ncircle{unshaded}{(c)}{.4}{225+180}{}
	\node[blue] at (c) {\small 2};
	\node[blue] at (d) {\small 1};
\pgftransformyscale{-1}
\foreach \x/\s in {2.1/25, 3.1/22, 3.85/21, 4.55/20, 5.25/21, 5.9/23, 6.5/25, 7/26, 7.5/26, 8/27, 8.6/27}
{\draw[dotted, blue!50] ($(4,0)+(70+14.5*\x:1.615)$) to[bend right=\s] ($(.6,.6) + (130+14.5*\x:.415)$);}
\foreach \x/\s in {0/30,10/28}
{\draw[blue!50, very thin] ($(4,0)+(70+14.5*\x:1.615)$) to[bend right=\s] ($(.6,.6) + (130+14.5*\x:.415)$);}
\draw[blue!50, -stealth] (2.2,.855) to[bend right=7] (1.8,.865);
\pgftransformyscale{-1}
\pgftransformyshift{-1.6}
\pgftransformxshift{17}
\pgftransformscale{4.01}
\pgftransformrotate{46}
	\draw[very thick] (.6,-.6) circle (.4);
	\filldraw[red] (.6,-.6) + (225+178:.4) coordinate(CIRC1) circle (.012); 
	\draw[very thick] (.6,-.75) circle (.1);
	\filldraw[red] (.6,-.75) + (-40:.1) coordinate(CIRC2) circle (.012); 
	\draw[very thick] (.6,-.45) circle (.1);
	\filldraw[red] (.6,-.45) + (-40:.1) coordinate(CIRC3) circle (.012); 
	\draw[thick, red] (CIRC1) to[out=-150, in=-30] ++(180:.1) arc(56:275:.33) to[out=10, in=-30, looseness=1.6] (CIRC2);
	\draw[thick, red] (CIRC1) to[out=-110, in=-30, looseness=1.2] (CIRC3);
	\draw (.6,-1.005) -- (.6,-.85) (.6,-.2+.005) -- (.6,-.35);
	\draw (.68,-.69) to[in=180, out=45] (1.005,-.6);
	\draw (.275,-.839) to[in=180, out=35] (.5,-.75);
	\draw (.275,-.6+0.239) to[in=180, out=-35] (.5,-.45);
	\node[blue] at (.6,-.75) {\small 1};
	\node[blue] at (.6,-.45) {\small 2};
\end{tikzpicture}
\,\to\,\,
\begin{tikzpicture}[baseline =-.1cm, yscale = -1]
	\coordinate (a) at (0,0);
	\coordinate (b) at ($ (a) + (1.4,1) $);
	\coordinate (c) at ($ (a) + (.6,-.6) $);
	\coordinate (d) at ($ (a) + (-.6,.6) $);
	\coordinate (e) at ($ (a) + (-.8,-.6) $);
	\ncircle{}{(a)}{1.6}{89}{}
	\draw[thick, red] (d) .. controls ++(225:1.2cm) and ++(275:2.6cm) .. ($ (a) + (89:1.6) $);
	\draw[thick, red] ($ (a) + (89:1.6) $) to[out=-60, in=40] (.9,-.3) coordinate(CIRC1);
	\draw (60:1.6cm) arc (150:300:.4cm);
	\draw ($ (c) + (0,.4) $) arc (0:90:.8cm);
	\draw ($ (c) + (-.4,0) $) circle (.25cm);
	\draw ($ (d) + (0,.88) $) -- (d) -- ($ (d) + (-.88,0) $);
	\draw ($ (c) + (0,-.88) $) -- (c) -- ($ (c) + (.88,0) $);
	\draw (e) circle (.25cm);
	\ncircle{unshaded}{(d)}{.4}{235}{}
	\fill[white] (c) circle (.4);
	\ncircle{unshaded}{(.6,-.75)}{.1}{-45}{}
	\ncircle{unshaded}{(.6,-.45)}{.1}{-45}{}
	\draw (.6,-1.005) -- (.6,-.85) (.6,-.2+.005) -- (.6,-.35);
	\draw (.68,-.69) to[in=180, out=45] (1.005,-.6);
	\draw (.275,-.839) to[in=180, out=25] (.5,-.75);
	\draw (.275,-.6+0.239) to[in=180, out=-25] (.5,-.45);
	\node[blue, xshift=8, yshift=3] at (.6,-.75) {\small 2};
	\node[blue, xshift=-6, yshift=-9] at (.6,-.45) {\small 3};
	\node[blue] at (d) {\small 1};
	\path (.6,-.75) + (-40:.1) coordinate(CIRC2); 
	\path (.6,-.45) + (-40:.1) coordinate(CIRC3); 
	\draw[thick, red] (CIRC1) to[out=-150, in=-30] ++(180:.1) arc(59:275:.35) to[out=10, in=-30, looseness=1.6] (CIRC2);
	\draw[thick, red] (CIRC1) to[out=-130, in=-20, looseness=1.5] (CIRC3);
\end{tikzpicture}
\,\,\to\,\,
\begin{tikzpicture}[baseline =-.1cm, yscale = -1]
	\coordinate (a) at (0,0);
	\coordinate (b) at ($ (a) + (1.4,1) $);
	\coordinate (c) at ($ (a) + (.6,-.6) $);
	\coordinate (d) at ($ (a) + (-.6,.6) $);
	\coordinate (e) at ($ (a) + (-.8,-.6) $);
	\ncircle{}{(a)}{1.6}{89}{}
	\draw[thick, red] (d) .. controls ++(225:1.2cm) and ++(275:2.6cm) .. ($ (a) + (89:1.6) $);
	\draw[thick, red] ($ (a) + (89:1.6) $) to[out=-60, in=40] (.9,-.3) coordinate(CIRC1);
	\draw[thick, red] ($ (a) + (89:1.6) $) to[out=-47, in=39] (.96,-.36) coordinate(CIRC1');
	\draw (60:1.6cm) arc (150:300:.4cm);
	\draw ($ (c) + (0,.4) $) arc (0:90:.8cm);
	\draw ($ (c) + (-.4,0) $) circle (.25cm);
	\draw ($ (d) + (0,.88) $) -- (d) -- ($ (d) + (-.88,0) $);
	\draw ($ (c) + (0,-.88) $) -- (c) -- ($ (c) + (.88,0) $);
	\draw (e) circle (.25cm);
	\ncircle{unshaded}{(d)}{.4}{235}{}
	\fill[white] (c) circle (.4);
	\ncircle{unshaded}{(.6,-.75)}{.1}{-45}{}
	\ncircle{unshaded}{(.6,-.45)}{.1}{-45}{}
	\draw (.6,-1.005) -- (.6,-.85) (.6,-.2+.005) -- (.6,-.35);
	\draw (.68,-.69) to[in=180, out=45] (1.005,-.6);
	\draw (.275,-.839) to[in=180, out=25] (.5,-.75);
	\draw (.275,-.6+0.239) to[in=180, out=-25] (.5,-.45);
	\node[blue, xshift=8, yshift=3] at (.6,-.75) {\small 2};
	\node[blue, xshift=-6, yshift=-9] at (.6,-.45) {\small 3};
	\node[blue] at (d) {\small 1};
	\path (.6,-.75) + (-40:.1) coordinate(CIRC2); 
	\path (.6,-.45) + (-40:.1) coordinate(CIRC3); 
	\draw[thick, red] (CIRC1) to[out=-150, in=-30] ++(180:.1) arc(59:275:.35) to[out=10, in=-30, looseness=1.6] (CIRC2);
	\draw[thick, red] (CIRC1') to[out=-140, in=-40] (CIRC3);
\end{tikzpicture}
\]
So we have:
$$
\begin{tikzpicture}[baseline =-.1cm, yscale = -1]
	\coordinate (a) at (0,0);
	\coordinate (b) at ($ (a) + (1.4,1) $);
	\coordinate (c) at ($ (a) + (.6,-.6) $);
	\coordinate (d) at ($ (a) + (-.6,.6) $);
	\coordinate (e) at ($ (a) + (-.8,-.6) $);
	\ncircle{}{(a)}{1.6}{89}{}
	\draw[thick, red] (d) .. controls ++(225:1.2cm) and ++(275:2.6cm) .. ($ (a) + (89:1.6) $);
	\draw[thick, red] ($ (a) + (89:1.6) $) to[out=-60, in=40] (.9,-.3);
	\draw (60:1.6cm) arc (150:300:.4cm);
	\draw ($ (c) + (0,.4) $) arc (0:90:.8cm);
	\draw ($ (c) + (-.4,0) $) circle (.25cm);
	\draw ($ (d) + (0,.88) $) -- (d) -- ($ (d) + (-.88,0) $);
	\draw ($ (c) + (0,-.88) $) -- (c) -- ($ (c) + (.88,0) $);
	\draw (e) circle (.25cm);
	\ncircle{unshaded}{(d)}{.4}{235}{}
	\ncircle{unshaded}{(c)}{.4}{225+180}{}
\end{tikzpicture}
\,\,\,\,\,\circ_2\,\,
\begin{tikzpicture}[baseline =-.16cm, yscale = -1]
\pgftransformscale{4.01}
\pgftransformrotate{46}
	\draw[very thick] (.6,-.6) circle (.4);
	\filldraw[red] (.6,-.6) + (225+178:.4) coordinate(CIRC1) circle (.012); 
	\draw[very thick] (.6,-.75) circle (.1);
	\filldraw[red] (.6,-.75) + (-40:.1) coordinate(CIRC2) circle (.012); 
	\draw[very thick] (.6,-.45) circle (.1);
	\filldraw[red] (.6,-.45) + (-40:.1) coordinate(CIRC3) circle (.012); 
	\draw[thick, red] (CIRC1) to[out=-150, in=-30] ++(180:.1) arc(56:275:.33) to[out=10, in=-30, looseness=1.6] (CIRC2);
	\draw[thick, red] (CIRC1) to[out=-110, in=-30, looseness=1.2] (CIRC3);
	\draw (.6,-1.005) -- (.6,-.85) (.6,-.2+.005) -- (.6,-.35);
	\draw (.68,-.69) to[in=180, out=45] (1.005,-.6);
	\draw (.275,-.839) to[in=180, out=35] (.5,-.75);
	\draw (.275,-.6+0.239) to[in=180, out=-35] (.5,-.45);
\end{tikzpicture}
\,\,\,=\,\,\,\,\,
\begin{tikzpicture}[baseline =-.1cm, yscale = -1]
	\coordinate (a) at (0,0);
	\coordinate (b) at ($ (a) + (1.4,1) $);
	\coordinate (c) at ($ (a) + (.6,-.6) $);
	\coordinate (d) at ($ (a) + (-.6,.6) $);
	\coordinate (e) at ($ (a) + (-.8,-.6) $);
	\ncircle{}{(a)}{1.6}{89}{}
	\draw[thick, red] (d) .. controls ++(225:1.2cm) and ++(275:2.6cm) .. ($ (a) + (89:1.6) $);
	\draw (60:1.6cm) arc (150:300:.4cm);
	\draw ($ (c) + (0,.4) $) arc (0:90:.8cm);
	\draw ($ (c) + (-.4,0) $) circle (.25cm);
	\draw ($ (d) + (0,.88) $) -- (d) -- ($ (d) + (-.88,0) $);
	\draw ($ (c) + (0,-.88) $) -- (c) -- ($ (c) + (.88,0) $);
	\draw (e) circle (.25cm);
	\ncircle{unshaded}{(d)}{.4}{235}{}
	\fill[white] (c) circle (.4);
	\ncircle{unshaded}{(.6,-.75)}{.1}{-45}{}
	\ncircle{unshaded}{(.6,-.45)}{.1}{-45}{}
	\draw (.6,-1.005) -- (.6,-.85) (.6,-.2+.005) -- (.6,-.35);
	\draw (.68,-.69) to[in=180, out=45] (1.005,-.6);
	\draw (.275,-.839) to[in=180, out=25] (.5,-.75);
	\draw (.275,-.6+0.239) to[in=180, out=-25] (.5,-.45);
	\path (.6,-.45) ++ (-40:.1) ++(0,-.015) coordinate(CIRC3); 
	\draw[thick, red] ($ (a) + (89:1.6) $) to[out=-40, in=0] (CIRC3);

	\path (.6,-.75) + (-40:.1) coordinate(CIRC2); 
	\draw[thick, red] ($ (a) + (89:1.6) $) to[out=-55, in=160, looseness=.7] ($(CIRC2) + (-.15,-.25)$) to[out=-20, in=-30, looseness=2] (CIRC2);
\end{tikzpicture}
$$

The collection of all isotopy classes of anchored planar tangles, together with the operation of anchored tangle composition
forms the \emph{anchored planar operad}.

Note that, unlike the planar operad, the anchored planar operad does not admit an action of the symmetric group $\mathfrak S_n$ on its set of $n$-ary operations.
It is therefore not an operad, but rather a non-$\Sigma$ operad \cite[Def.\,3.12]{MR0420610}.
It carries however actions of the ribbon braid groups (see Section \ref{sec:RibbonBraidGroup}), and so it is what one might call a colored \emph{ribbon braided operad}.
To our knowledge, this particular variant of the notion of operad has not appeared in the literature.
It is a straightforward modification of the notion of a braided operad \cite[Def.\,3.2]{SymmetricBar}.

As is well known, operads can act on objects of a symmetric monoidal category (such as $\Vec$), while
non-$\Sigma$ operads can act on the objects of a monoidal category.
On the other hand, braided operads are designed so as to be able to act on the objects of a braided monoidal category.
Correspondingly, ribbon braided operads can act on the objects of a braided pivotal category.

Recall that a pivotal category is a rigid tensor category (= tensor category with left and right duals) equipped with a monoidal natural isomorphism $\varphi_a:a\to a^{**}$ between the identity functor and the double dual functor.
\begin{defn}
A \emph{braided pivotal} category is a tensor category which is both braided and pivotal (no compatibility between the two structures);
this is a slight weakening of the notion of ribbon category.
\end{defn}
Given an object $a$ of a braided pivotal category, we define its twist $\theta_a: a\to a$ by the formula
\begin{equation}\label{def:theta1}
\theta_{a}
:= 
(\id_a\otimes \ev_{a^*})
\circ
(\beta_{a^{**},a}\otimes\id_{a^*})
\circ
(\id_{a^{**}}\otimes \coev_a)
\circ
\varphi_a
=
\begin{tikzpicture}[rotate=180, baseline=.35cm]
	\draw (0,-1.6) -- (0,.8);
	\loopIso{(0,-1)}
	\roundNbox{unshaded}{(0,0)}{.35}{0}{0}{$\varphi_a$}	
	\node at (-.2+.4,.6) {\scriptsize{$a$}};
	\node at (-.2+.4,-1.4) {\scriptsize{$a$}};
	\node at (-.3+.55,-.55) {\scriptsize{$a^{**}$}};
\end{tikzpicture}
\end{equation}
where $\beta_{a,b} : a\otimes b \to b\otimes a$ denotes the braiding.
It satifsies $\theta_{a\otimes b}=\beta_{b,a}\circ\beta_{a,b}\circ(\theta_a\otimes\theta_b)$.
We refer the reader to \cite[\S2]{1509.02937} for an extended discussion of braided pivotal categories.

An anchored planar algebra is an algebra over the anchored planar operad:

\begin{defn}\label{def: anchored planar algebra}
Let $\cC$ be a braided pivotal category.
An \emph{anchored planar algebra} in $\cC$ is a sequence $\cP=(\cP[n])_{n\ge 0}$ of objects of $\cC$, along with operations
$$Z(T):\cP[k_1]\otimes\ldots\otimes \cP[k_r]\to \cP[k_0]$$
for every isotopy class of anchored planar tangle $T$ of type $(k_1,\ldots,k_r;k_0)$,
subject to the following axioms:
\begin{itemize}
\item
(identity) the identity anchored tangle acts as the identity morphism
\item
(composition) if $S$ and $T$ are anchored planar tangles of type $(k_1,\ldots,k_r;k_0)$ and $(\ell_1,\ldots,\ell_s;\ell_0)$, and if $\ell_0=k_i$, then
\begin{equation}\label{eq: composition of tangles}
Z(T\circ_i S)=Z(T)\circ (\id_{\cP[k_1]\otimes\ldots\otimes \cP[k_{i-1}]}\otimes Z(S)\otimes\id_{\cP[k_{i+1}]\otimes\ldots\otimes \cP[k_r]})
\end{equation}
\item
(anchor dependence) the following relations hold:
\begin{itemize}
\item
(braiding)\hspace{.7cm}
$
Z\left(
\begin{tikzpicture}[baseline = -.1cm]
	\draw (-.6,-.2) -- (-.6,1);
	\draw (0,1) -- (0,.6);
	\draw (.6,-.2) -- (.6,1);
	\draw[thick, red] (0,-.6) -- (0,-1);
	\draw[thick, red] (0,.2) arc (0:-90:.2cm) -- (-.6,0) arc (90:270:.4cm) -- (-.2,-.8) arc (90:0:.2cm);
	\roundNbox{}{(0,0)}{1}{.2}{.2}{}
	\roundNbox{unshaded}{(0,-.4)}{.2}{.6}{.6}{}
	\roundNbox{unshaded}{(0,.4)}{.2}{.2}{.2}{}
	\node at (-.8,.8) {\scriptsize{$i$}};
	\node at (-.2,.8) {\scriptsize{$j$}};
	\node at (.8,.8) {\scriptsize{$k$}};
	\fill[red] (0,.2) circle (.05)  (0,-.6) circle (.05)  (0,-1) circle (.05);
\end{tikzpicture}
\right)
=
Z
\left(
\begin{tikzpicture}[baseline = -.1cm]
	\draw (-.6,-.2) -- (-.6,1);
	\draw (0,1) -- (0,.6);
	\draw (.6,-.2) -- (.6,1);
	\draw[thick, red] (0,-.6) -- (0,-1);
	\draw[thick, red] (0,.2) arc (180:270:.2cm) -- (.6,0) arc (90:-90:.4cm) -- (.2,-.8) arc (90:180:.2cm);
	\roundNbox{}{(0,0)}{1}{.2}{.2}{}
	\roundNbox{unshaded}{(0,-.4)}{.2}{.6}{.6}{}
	\roundNbox{unshaded}{(0,.4)}{.2}{.2}{.2}{}
	\node at (-.8,.8) {\scriptsize{$i$}};
	\node at (-.2,.8) {\scriptsize{$j$}};
	\node at (.8,.8) {\scriptsize{$k$}};
	\fill[red] (0,.2) circle (.05)  (0,-.6) circle (.05)  (0,-1) circle (.05);
\end{tikzpicture}
\right)
\,
\circ \beta_{\cP[j],\cP[i+k]}
$
\item
(twist)\hspace{1.3cm}
$
Z\left(
\begin{tikzpicture}[baseline=-.1cm]
	\ncircle{unshaded}{(0,0)}{1}{270}{}
	\ncircle{unshaded}{(0,0)}{.3}{270}{}
	\draw (90:.3cm) -- (90:1cm);
	\draw[thick, red] (-90:.3cm) .. controls ++(270:.3cm) and ++(270:.5cm) .. (0:.5cm) .. controls ++(90:.8cm) and ++(90:.8cm) .. (180:.7cm) .. controls ++(270:.6cm) and ++(90:.4cm) .. (270:1cm);
	\node at (100:.8cm) {\scriptsize{$n$}};
\end{tikzpicture}
\right)
=\theta_{\cP[n]}
$
\end{itemize}
\end{itemize}
(Recall that a little number $n$ next to a string to indicates $n$ parallel strings.)
\end{defn}

$\cP[n]$ is called the $n$\textsuperscript{th} \emph{box object} of the anchored planar algebra $\cP$.

\begin{rem}
For any anchored planar algebra in $\cC$, the anchor dependence axiom implies that $\cP[0]$ has trivial twist, since the following tangles are isotopic:
$$
\theta_{\cP[0]}
=
Z
\left(
\begin{tikzpicture}[baseline=-.1cm]
	\ncircle{unshaded}{(0,0)}{.7}{270}{}
	\draw[thick, red] (-90:.2cm) .. controls ++(270:.25cm) and ++(270:.4cm) .. (0:.4cm) .. controls ++(90:.5cm) and ++(90:.5cm) .. (180:.4cm) .. controls ++(270:.3cm) and ++(90:.2cm) .. (270:.7cm);
	\ncircle{unshaded}{(0,0)}{.2}{270}{}
\end{tikzpicture}
\right)
=
Z\left(
\begin{tikzpicture}[baseline=-.1cm]
	\ncircle{unshaded}{(0,0)}{.7}{270}{}
	\ncircle{unshaded}{(0,0)}{.2}{270}{}
	\draw[thick, red] (-90:.2cm) -- (270:.7cm);
\end{tikzpicture}
\right)
=
\id_{\cP[0]}.
$$
\end{rem}

\begin{rem}\label{rem:operadic composition of morphisms}
If we write
\[
f\circ_i g:=f\circ(\id_{b_1\otimes\ldots\otimes b_{i-1}}\otimes g\otimes \id_{b_{i+1}\otimes\ldots\otimes b_n}):b_1\otimes\ldots\otimes b_{i-1}\otimes a_1\otimes\ldots\otimes a_m\otimes b_{i+1}\otimes\ldots\otimes b_n\to c
\]
for the operadic composition of morphisms $f:b_1\otimes\ldots\otimes b_n\to c$ and $g:a_1\otimes\ldots\otimes a_m\to b_i$ in $\cC$,
then axiom (\ref{eq: composition of tangles}) can be elegantly rephrased as
\[
Z(T\circ_i S)=Z(T)\circ_i Z(S).
\]
\end{rem}

\subsection{Generators for the anchored planar operad}
\label{sec:AnchoredTangleGeneratorsAndRelations}

In this section, we present a collection of elements of the anchored planar operad, along with a number of relations that they satisfy.
The tangles that we present below generate the anchored planar algebra operad as a non-$\Sigma$ operad, but we will not need this result.
What is more relevant is that these tangles generate the anchored planar algebra operad as a ribbon braided operad,
a fact which will become evident from the role that they will play further down, in Algorithm \ref{alg:AssignMap}.
However, as was have decided not to discuss ribbon braided operads, we will not attempt to formalize this statement.

Throughout this section, the word `tangle' means `isotopy class of anchored planar tangle':

\begin{defn}[generating tangles]
\label{defn:GeneratingTangles}
\be
\item
The unit tangle $u$ has no input disks and no strings:\,\,
$
u = \,
\begin{tikzpicture}[baseline = -.1cm, scale=.8]
	\draw[very thick] (0,0) circle (.4cm);
	\filldraw[red] (0,-.4) circle (.05cm);
\end{tikzpicture}
$\,.

\item
For every $n\in\bbN_{\geq 0}$, the identity tangle $\id_n$ has one input disk, $n$ radial strings, and a radial anchor line:\,\,
$
\id_n=\,
\begin{tikzpicture}[baseline=-.1cm, scale=.8]
	\ncircle{unshaded}{(0,0)}{.7}{270}{}
	\ncircle{unshaded}{(0,0)}{.25}{270}{}
	\draw (90:.25cm) -- (90:.7cm);
	\draw[thick, red] (270:.25cm) -- (270:.7cm);
	\node at (109:.477cm) {\scriptsize{$n$}};
\end{tikzpicture}
$\,.

\item
For every $n\in\bbN_{\geq 0}$ and every $0\leq i\leq n$,  we let $a_i$ be the tangle of type $(n+2;n)$ which caps the $(i+1)$-st and $(i+2)$-nd strings of the input circle,
and we let $\bar a_i$ be tangle of type $(n;n+2)$ which caps the $(i+1)$-st and $(i+2)$-nd strings of the output circle.

\item
Finally, for every $n\in\bbN_{\geq 0}$ and every $0\leq i\leq n$ and $j\ge 0$, we let $p_{i,j}$ be the tangle of type $(n,j;n+j)$ drawn here below:\vspace{-.2cm}
$$
a_i=\,\,
\begin{tikzpicture}[baseline = -.1cm, scale=1.3]
	\ncircle{unshaded}{(0,0)}{1}{270}{}
	\ncircle{unshaded}{(0,0)}{.25}{270}{}
	\draw[thick, red] (-90:1cm) -- (-90:.25cm);
	\draw (115:.25cm) .. controls ++(115:.5cm) and ++(65:.5cm) .. (65:.25cm);
	\draw (155:.25cm) -- (155:1cm);
	\draw (25:.25cm) -- (25:1cm);
\node at (-.63,.1) {$\scriptstyle i$};
\node at (.63,.08) {$\scriptstyle n-i$};
\end{tikzpicture}
\qquad\,\,\,
\bar a_i=\,\,
\begin{tikzpicture}[baseline = -.1cm, scale=1.3]
	\ncircle{unshaded}{(0,0)}{1}{270}{}
	\ncircle{unshaded}{(0,0)}{.25}{270}{}
	\draw[thick, red] (-90:1cm) -- (-90:.25cm);
	\draw (115:1cm) .. controls ++(-65:.4cm) and ++(245:.4cm) .. (65:1cm);
	\draw (155:.25cm) -- (155:1cm);
	\draw (25:.25cm) -- (25:1cm);
\node at (-.63,.1) {$\scriptstyle i$};
\node at (.63,.08) {$\scriptstyle n-i$};
\end{tikzpicture}
\qquad\,\,\,
p_{i,j}=\,
\begin{tikzpicture}[baseline = -.1cm, scale=1.15]
	\draw (-.6,-.2) -- (-.6,1);
	\node at (-.6,1.2) {\scriptsize{$i$}};
	\draw (0,1) -- (0,.6);
	\node at (0,1.2) {\scriptsize{$j$}};
	\draw (.6,-.2) -- (.6,1);
	\node at (.65,1.2) {\scriptsize{$n-i$}};
	\draw[thick, red] (0,-.6) -- (0,-1);
	\draw[thick, red] (0,.2) arc (180:270:.2cm) -- (.6,0) arc (90:-90:.4cm) -- (.2,-.8) arc (90:180:.2cm);
	\roundNbox{}{(0,0)}{1}{.2}{.2}{}
	\roundNbox{unshaded}{(0,-.4)}{.2}{.6}{.6}{}
	\roundNbox{unshaded}{(0,.4)}{.2}{.2}{.2}{}
	\fill[red] (0,.2) circle (.06)  (0,-.6) circle (.06)  (0,-1) circle (.06);
\end{tikzpicture}\qquad
\vspace{.2cm}
$$
\ee
\end{defn}

We now list a number of relations satisfied by the above tangles, similar to the ones which appear in \cite{math.QA/9909027,MR2903179}:

\begin{prop}\label{prop:AnchoredRelations}
The following relations hold amongst the tangles defined above.
Here, we write $\circ$ in place of $\circ_1$ when there is no possible confusion.
\begin{enumerate}[label={\rm(A\arabic*)}]
\item
\label{reln:id}
$p_{0,j}\circ_1 u = \id_j$,\, $p_{i,0}\circ_2 u = \id_n$
\item
\label{reln:Caps}
$a_i \circ a_j = a_{j-2}\circ a_i$ for $i+1< j$
\item
\label{reln:Cups}
$\bar a_i \circ \bar a_j = \bar a_{j+2}\circ \bar a_i$ for $i\leq j$
\\
\item
\label{reln:CapCup}
$\displaystyle a_i \circ \bar a_j = 
\begin{cases}
\bar a_{j-2}\circ a_i & \text{for } i<j-1
\\
\id_n & \text{for } i=j\pm1
\\
\delta \id_n & \text{for } i=j\qquad\,\,\,\, \text{where $\delta$ refers to a free-floating circular strand}
\\
\bar a_j \circ a_{i-2} & \text{for } i>j+1
\end{cases}
$
\item
\label{reln:CapQuadratic}
$\displaystyle
a_i \circ p_{j,k} = 
\begin{cases}
p_{j-2,k} \circ_1 a_{i} & \text{for }  i+1<j 
\\
p_{j,k-2}\circ_2 a_{i-j} &\text{for }j<i+1<j+k
\\
p_{j,k} \circ_1 a_{i-k} & \text{for } i+1> j+k
\end{cases}
$
\\
\item
\label{reln:CupQuadratic}
$\displaystyle
\bar a_i \circ p_{j,k} = 
\begin{cases}
p_{j+2,k}\circ_1 \bar a_i & \text{for } i\leq j
\\
p_{j,k+2} \circ_2 \bar a_{i-j} & \text{for } j\leq i\leq j+k
\\
p_{j,k} \circ_1 \bar a_{i-k} & \text{for } i\geq j+k
\end{cases}
$
\\
\item
\label{reln:EasyQuadratic}
$p_{i+j,k} \circ_1 p_{i, j+\ell} = p_{i, j+k+\ell} \circ_2 p_{j,k}$
$\displaystyle
$
\end{enumerate}
\end{prop}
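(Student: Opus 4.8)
The plan is to verify each of the relations \ref{reln:id}--\ref{reln:EasyQuadratic} directly, as an equality of isotopy classes of anchored planar tangles, by drawing both sides and exhibiting an isotopy between them. Since an anchored planar tangle is a planar tangle decorated with a system of anchor lines, and anchored operadic composition is operadic composition of the underlying tangles together with the anchor rule of Definition~\ref{defn:AnchoredPlanarTangle} — the anchor line running into the $i$-th input circle is replaced by $s$ parallel copies — each relation splits into two independent checks. First, the underlying \emph{planar} tangles on the two sides must be isotopic. Second, the resulting \emph{systems of anchor lines} must be isotopic rel endpoints, where one recalls that anchor lines are considered only up to isotopy, are permitted to cross strands, must remain disjoint from one another, and must reach $q_0$ in the clockwise order dictated by the numbering of the input circles.

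For the first check, erasing all anchor lines turns $u$, $\id_n$, $a_i$, $\bar a_i$ into the usual unit, identity, cap, and cup tangles, and turns $p_{i,j}$ into the tangle that permutes two adjacent blocks of strands; the relations \ref{reln:id}--\ref{reln:EasyQuadratic} then become the standard relations among these generating planar tangles, together with the convention that a contractible closed strand is erased at the cost of the scalar $\delta$ (the third case of \ref{reln:CapCup}). These are classical, and I would cite \cite{math.QA/9909027,MR2903179}. Before invoking them, though, I would first confirm for each relation that the two sides are tangles of the same type with compatibly numbered input circles; it is precisely the clockwise-ordering constraint of Definition~\ref{defn:AnchoredPlanarTangle} that forces the various index shifts appearing in \ref{reln:CapQuadratic}, \ref{reln:CupQuadratic}, and \ref{reln:EasyQuadratic}.

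For the second check, relations \ref{reln:id}--\ref{reln:CapCup} are immediate: every anchor line in $u$, $\id_n$, $a_i$, $\bar a_i$ is radial (degenerating to a single boundary point in the case of $u$), the caps and cups being composed involve strands disjoint from all anchor lines, and in the $\delta$ case of \ref{reln:CapCup} the sole extra feature is a free-floating circle carrying no anchor data; so both sides carry the evident radial system of anchor lines and there is nothing to isotope. In \ref{reln:CapQuadratic} and \ref{reln:CupQuadratic} the tangle $p_{j,k}$ contributes a single anchor line that wraps around a block of strands, and since it is the only wrapping line present it is essentially unconstrained (it may be pushed through strands, and there is no other anchor line for it to be blocked by), hence freely isotopic; in each of the three cases — the capped pair lying to the left of, straddling, or to the right of the inserted block — the two sides are then visibly isotopic.

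The substance, and the step I expect to be the main obstacle, is \ref{reln:EasyQuadratic}. Here the composite has three input circles and hence three anchor lines, two of which wrap around blocks of strands, and the two nestings of $p$-tangles route these wrapping lines differently. The plan is to draw both composites explicitly, in the style of the picture displayed after Definition~\ref{defn:AnchoredPlanarTangle}, and to write down an ambient isotopy carrying one to the other; the point that makes this work is once again that anchor lines pass freely through strands, so the only genuine constraints are that the two wrapping lines stay disjoint and that all three anchor lines arrive at $q_0$ in the prescribed clockwise order, and one checks that both nestings realize these constraints with the same combinatorial data. This relation is the anchored analogue of the exchange (associativity) relation for the multiplication tangle in an ordinary planar algebra, and it is a tangle-level shadow of the compatibility among the braiding isomorphisms $e$ built into a module tensor category — a reassuring consistency check.
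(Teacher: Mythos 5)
Your proposal is correct and is essentially the paper's own argument: the paper proves this proposition simply by declaring it "immediate by drawing pictures," and your write-up is just a careful elaboration of exactly that picture-drawing verification (underlying planar tangles isotopic, anchor-line systems isotopic rel the ordering constraint at $q_0$). Your flagging of \ref{reln:EasyQuadratic} as the one case where the anchor lines genuinely matter is consistent with the paper's Remark~\ref{rem:Quadratics}, which records the companion relation that fails precisely because of the anchor lines.
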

\begin{proof}
Immediate by drawing pictures.
\end{proof}

\begin{remark}
\label{rem:Quadratics}
The relation $p_{i+j+k,\ell} \circ_1 p_{i,j} = p_{i,j} \circ_1 p_{i+k,\ell}$ holds in the absence of anchor lines (\cite[Lemma 4.1.20, (i)]{math.QA/9909027}), but not in their presence.
These tangles are given by:
\[
\begin{tikzpicture}[baseline = -.1cm]
	\draw (-1.6,-.4) -- (-1.6,1.4);
	\node at (-1.6,1.6) {\scriptsize{$i$}};
	\draw (-.8,.4) -- (-.8,1.4);
	\node at (-.8,1.6) {\scriptsize{$j$}};
	\draw (0,-.4) -- (0,1.4);
	\node at (0,1.6) {\scriptsize{$k$}};
	\draw (.8,1) -- (.8,1.4);
	\node at (.8,1.6) {\scriptsize{$\ell$}};
	\draw (1.6,-.4) -- (1.6,1.4);
	\node at (1.6,1.6) {\scriptsize{$m$}};
	\draw[thick, red] (0,-.5) -- (0,-1.4);
	\draw[thick, red] (-.8,0) arc (180:270:.2cm) -- (1.6,-.2) arc (90:-90:.4cm) -- (.4,-1) arc (90:180:.4cm);
	\draw[thick, red] (.8,.8) arc (180:270:.2cm) -- (2,.6) arc (90:0:.2cm) -- (2.2,-1) arc (0:-90:.2cm) -- (.2,-1.2) arc (90:180:.2cm);
	\roundNbox{}{(0,0)}{1.4}{1}{1}{}
	\roundNbox{unshaded}{(0,-.6)}{.2}{1.6}{1.6}{}
	\roundNbox{unshaded}{(-.8,.2)}{.2}{.2}{.2}{}
	\roundNbox{unshaded}{(.8,1)}{.2}{.2}{.2}{}
	\fill[red] (0,-1.4) circle (.05)  (0,-.8) circle (.05)  (-.8,0) circle (.05)  (.8,.8) circle (.05);
\end{tikzpicture}
\qquad
\text{ and }
\qquad
\begin{tikzpicture}[baseline = -.1cm]
	\draw (-1.6,-.4) -- (-1.6,1.4);
	\node at (-1.6,1.6) {\scriptsize{$i$}};
	\draw (-.8,1) -- (-.8,1.4);
	\node at (-.8,1.6) {\scriptsize{$j$}};
	\draw (0,-.4) -- (0,1.4);
	\node at (0,1.6) {\scriptsize{$k$}};
	\draw (.8,.4) -- (.8,1.4);
	\node at (.8,1.6) {\scriptsize{$\ell$}};
	\draw (1.6,-.4) -- (1.6,1.4);
	\node at (1.6,1.6) {\scriptsize{$m$}};
	\draw[thick, red] (0,-.5) -- (0,-1.4);
	\draw[thick, red] (.8,0) arc (180:270:.2cm) -- (1.6,-.2) arc (90:-90:.4cm) -- (.4,-1) arc (90:180:.4cm);
	\draw[thick, red] (-.8,.8) arc (180:270:.2cm) -- (2,.6) arc (90:0:.2cm) -- (2.2,-1) arc (0:-90:.2cm) -- (.2,-1.2) arc (90:180:.2cm);
	\roundNbox{}{(0,0)}{1.4}{1}{1}{}
	\roundNbox{unshaded}{(0,-.6)}{.2}{1.6}{1.6}{}
	\roundNbox{unshaded}{(.8,.2)}{.2}{.2}{.2}{}
	\roundNbox{unshaded}{(-.8,1)}{.2}{.2}{.2}{}
	\fill[red] (0,-1.4) circle (.05)  (0,-.8) circle (.05)  (.8,0) circle (.05)  (-.8,.8) circle (.05);
\end{tikzpicture}
\]
The underlying planar tangles are isotopic, but the anchor lines are not the same.
\end{remark}


\section{The main theorem and examples}
\label{sec: The main theorem}

Given a braided pivotal category $\cC$, there is the notion of a \emph{module tensor category} over $\cC$.
This is a tensor category $\cM$ equipped with a multiplicative version of an action of $\cC$ (see Definition \ref{def: def of ModTC}).
A module tensor category is called \emph{pointed} if it equipped with a chosen self-dual object $m$ that generates $\cM$ as a $\cC$-module tensor category (see Definition~\ref{def: pointed} for a precise definition).

Our main theorem (Theorem \ref{thm:EquivalenceOfCategories2}) says that there is an equivalence between the notions of
anchored planar algebra in $\cC$ and of pointed module tensor category over $\cC$.
This will the be used later, in Section~\ref{sec: examples of APAs}, to construct many examples of anchored planar algebras.

\subsection{Module tensor categories}
\label{sec:InternalTrace}

The notion of module tensor category over a braided pivotal category was studied in \cite{1509.02937}:
\begin{defn}\label{def: def of ModTC}
Let $\cC$ be a braided tensor category.
A \emph{module tensor category} over $\cC$ is a tensor category $\cM$ together with a braided tensor functor $\Phi^{\scriptscriptstyle \cZ}:\cC\to \cZ(\cM)$ from $\cC$ to the Drinfel'd center of $\cM$. 
If $\cC$ and $\cM$ are moreover pivotal and if $\Phi^{\scriptscriptstyle \cZ}$ is a pivotal functor, then we call $\cM$ a pivotal module tensor category.
\end{defn}

From now on, we shall use the term ``module tensor category'' to mean ``pivotal module tensor category''.
The composite 
\[
\Phi:= F\circ \Phi^{\scriptscriptstyle \cZ} :\, \cC\to \cM
\]
of $\Phi^{\scriptscriptstyle \cZ}$ with the forgetful functor $F: \cZ(\cM)\to \cM$ is called the \emph{action functor}.
We write $e_{\Phi(c)}$ for the half-braiding associated to $\Phi^{\scriptscriptstyle \cZ}(c)$, so that $\Phi^{\scriptscriptstyle \cZ}(c)=(\Phi(c),e_{\Phi(c)})$.
If the action functor $\Phi$ admits a right adjoint, then we denote it by
\begin{equation}\label{eq: def of Tr}
\Tr_\cC:\cM\to \cC
\end{equation}
and call it a \emph{categorified trace}.
It comes equipped with \emph{traciator} isomorphisms
\[
\tau_{x,y}: \Tr_\cC(x \otimes y) \to \Tr_\cC(y \otimes x)
\]
satisfying $\tau_{x, y \otimes z}=\tau_{z \otimes x, y} \circ \tau_{x \otimes y, z}$.
For more details, we refer the reader to \cite{1509.02937}.

\begin{defn}
\label{defn:ModuleTensorCategoryFunctor}
Suppose $(\cM_1,\Phi^{\scriptscriptstyle \cZ}_1)$ and $(\cM_2,\Phi^{\scriptscriptstyle \cZ}_2)$ are module tensor categories over $\cC$.
A functor of module tensor categories $(G,\gamma): (\cM_1,\Phi^{\scriptscriptstyle \cZ}_1) \to (\cM_2,\Phi^{\scriptscriptstyle \cZ}_2)$ consists of the following data:
\begin{itemize}
\item
a pivotal tensor functor $G=(G,\nu,i): \cM_1\to \cM_2$.
Here, the structure maps $\nu_{x,y}: G(x)\otimes G(y)\to G(x\otimes y)$ and $i:1_{\cM_2}\to G(1_{\cM_1})$ satisfy the obvious
naturality, associativity and unitality axioms, along with the following compatibility with the pivotal structure:
\[
G(\varphi_x)=\delta_{x^*}^{-1}\circ \delta_x^*\circ \varphi_{G(x)}
\]
where $\delta_x:=((G(\ev_x)\circ\nu_{x^*,x})\otimes\id_{G(x)^*})\circ(\id_{G(x^*)}\otimes\coev_{G(x)}):G(x^*)\to G(x)^*$.
\item
an \emph{action coherence} monoidal natural isomorphism $\gamma: \Phi_2 \Rightarrow G\Phi_1$.
This consists of isomorphisms $\gamma_c : \Phi_2(c) \to G(\Phi_1(c))$, natural in $c$, making the following diagram commute:
\[
\xymatrix{
\Phi_2(c)\otimes \Phi_2(d)\ar[d]^{\cong}
\ar[rr]^(.43){\gamma_c\otimes \gamma_d}
&&
G(\Phi_1(c)) \otimes G(\Phi_1(d))
\ar[d]^{\cong} 
\\
\Phi_2(c\otimes d) 
\ar[r]^(.43){\gamma_{c\otimes d}}
&
G(\Phi_1(c\otimes d))
\ar[r]^(.45){\cong}
&
G(\Phi_1(c)\otimes\Phi_1(d)) 
}
\]
and such that $\gamma_{1_\cC}$ is equal to the obvious composite of unit coherences of $\Phi_i$ and $G$.
\end{itemize}
Moreover, for every $x\in\cM_1$ and $c\in\cC$,
the above pieces of data should satisfy the following compatibility axiom with half-braidings:
\begin{equation}
\label{eqn:GammaAndHalfBriadings}
\begin{matrix}\xymatrix{
\Phi_2(c) \otimes G(x) 
\ar[d]^{e_{\Phi_2(c), G(x)}}
\ar[rr]^(.45){\gamma_c\otimes \id}
&&
G(\Phi_1(c)) \otimes G(x)
\ar[rr]^(.52){\cong}
&&
G(\Phi_1(c) \otimes x)
\ar[d]^{G(e_{\Phi_1(c),x})}
\\
G(x) \otimes \Phi_2(c)
\ar[rr]^(.45){\id\otimes \gamma_c}
&&
G(x)\otimes G(\Phi_1(c))
\ar[rr]^(.52){\cong}
&&
G(x\otimes \Phi_1(c))
}\end{matrix}
\end{equation}
\end{defn}

\begin{defn}\label{Def: natural transformation for functors of module tensor categories}
Given two functors of module tensor categories $(G,\gamma^G)$ and $(H,\gamma^H)$ from $(\cM_1, \Phi^{\scriptscriptstyle \cZ}_1)$ to $(\cM_2, \Phi^{\scriptscriptstyle \cZ}_2)$,
a natural transformation $\kappa : (G,\gamma^G)\Rightarrow (H,\gamma^H)$ is a family of
isomorphisms $\kappa_x : G(x)\to H(x)$ for $x\in \cM_1$, natural in $x$.
They are monoidal in the sense that $\kappa_{1_{\cM_1}}\circ i_G=i_H$ and that the following diagram commutes:
\begin{equation}\label{eq: def nat transf 1}
\begin{matrix}
\xymatrix{
F(x\otimes y)  
\ar[rr]^(.45){\cong} 
\ar[d]^{\kappa_{x\otimes y}}
&&
F(x)\otimes F(y)
\ar[d]^{\kappa_x\otimes \kappa_y}
\\
G(x\otimes y) 
\ar[rr]^(.45){\cong}
&&
G(x)\otimes G(y),\!
}
\end{matrix}
\end{equation}
and they are subject to the following compatibility axiom with the action coherence isomorphisms:
\begin{equation}\label{eq: def nat transf 2}
\begin{matrix}
\xymatrix@C=.5cm{
\Phi_2(c)
\ar[rrrr]^(.47){\gamma^G_c}
\ar[drr]_{\gamma^H_c}
&&&&
G(\Phi_1(c))\,\,\,
\ar[dll]^{\kappa_{\Phi_1(c)}}
\\
&&
H(\Phi_1(c))
}
\end{matrix}
\end{equation}
\end{defn}

An object $m$ in a pivotal category is called \emph{symmetrically self-dual} if it comes equipped with an isomorphism $\psi:m\to m^*$ satisfying $\psi^*\circ\varphi_m=\psi$.
Here, $\varphi_m:m\to m^{**}$ is the isomorphism provided by the pivotal structure and 
\begin{equation}\label{eq: psi** def}
\psi^* = (\ev_{m^*}\otimes \id_m) \circ (\id_{m^{**}}\otimes\psi\otimes\id_{m^*}) \circ (\id_{m^{**}}\otimes \coev_m):m^{**}\to m^*
\end{equation}
is the dual of $\psi$.
Upon identifying $m$ with $m^*$ via $\psi$, we write 
\begin{equation}\label{eq: ev bar and coev bar}
\begin{split}
\bar\ev_m&:=\ev_m\circ(\psi\otimes\id_m):m\otimes m \to 1\quad\,\,\,\,\, \text{and}\\
\bar\coev_m&:=(\id_m\otimes\psi^{-1})\circ\coev_m:1 \to m \otimes m
\end{split}
\end{equation}
for the evaluation and coevaluation maps.
Symmetrically self-dual objects admit a graphical calculus of unoriented strands where all isotopies are allowed,
and where the caps and cups are given by the above modified evaluation and coevaluation maps.

\begin{defn}\label{def: pointed}
A module tensor category $\cM$ is called \emph{pointed} if it comes equipped with a chosen symmetrically self-dual object $m\in \cM$
that generates it as a module tensor category over $\cC$.
Here, we say that $m$ generates $\cM$ as a module tensor category if every object of $\cM$ is a direct summand of a sum of objects of the form 
$\Phi(c)\otimes m^{\otimes n}$, for some $c\in \cC$.

A functor of pointed module tensor categories is a functor $G: \cM_1 \to \cM_2$ of module tensor categories that sends the distinguished object $m_1\in\cM_1$ to the distinguished object $m_2\in\cM_2$
and satisfies $\psi_2=\delta_{m_1}\circ G(\psi_1)$.

A natural transformation between two functors $G,H:\cM_1\to\cM_2$ of pointed module tensor category is a natural transformation $\kappa : G \Rightarrow H$ that satisfies $\kappa_{m_1} = \id_{m_2}$.
\end{defn}

From now on, we restrict our attention to (pivotal) module tensor categories which admit direct sums, are idempotent complete, and whose action functor $\Phi:\cC\to\cM$ admits a right adjoint.
The collection of all such module tensor categories over a fixed braided pivotal category $\cC$ forms a $2$-category, which we denote $\Mod$ (we omit $\cC$ from the notation).
Similarly, the collection of all pointed module tensor categories forms a $2$-category $\Mod_*$.

\begin{lem}\label{lem: it's secretly a 1-category}
Let be $G,H:\cM_1\to\cM_2$ be two $1$-morphisms in $\Mod_*$.
Then there exists at most one $2$-morphism from $G$ to $H$.
Moreover, any $2$-morphism in $\Mod_*$ is invertible.
\end{lem}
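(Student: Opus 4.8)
The plan is to prove the two claims separately. For \emph{uniqueness}, I would show that the components of any $2$-morphism $\kappa:G\Rightarrow H$ in $\Mod_*$ are determined by $G$ and $H$; comparing two such then gives $\kappa=\kappa'$. Recall from Definition~\ref{Def: natural transformation for functors of module tensor categories} and Definition~\ref{def: pointed} that a $2$-morphism $\kappa:G\Rightarrow H$ consists of isomorphisms $\kappa_x:G(x)\to H(x)$, natural in $x$, satisfying the monoidality square \eqref{eq: def nat transf 1}, the compatibility triangle \eqref{eq: def nat transf 2} with the action coherences, and the normalisation $\kappa_{m_1}=\id_{m_2}$. The first step is to see that $\kappa$ is determined on every object built from the $\Phi_1(c)$ ($c\in\cC$) and $m_1$ by iterated tensor products: on $m_1$ it equals $\id_{m_2}$ by pointedness; on $\Phi_1(c)$ the triangle \eqref{eq: def nat transf 2} forces $\kappa_{\Phi_1(c)}=\gamma^H_c\circ(\gamma^G_c)^{-1}$; and \eqref{eq: def nat transf 1} writes $\kappa_{x\otimes y}$ as a fixed conjugate of $\kappa_x\otimes\kappa_y$ by the tensorators of $G$ and $H$, so induction handles all such tensor products. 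The second step uses that $\cM_1$ is pointed, so every $x\in\cM_1$ is a retract of a finite direct sum $S=\bigoplus_j\Phi_1(c_j)\otimes m_1^{\otimes n_j}$ via maps $i:x\to S$, $p:S\to x$ with $p\circ i=\id_x$: naturality of $\kappa$ applied to $i$ and $p$ gives $\kappa_x=H(p)\circ\kappa_S\circ G(i)$, and additivity of $G$ and $H$ identifies $\kappa_S$ with the direct sum of the already-determined components on the summands. Hence all components of $\kappa$ are determined, so at most one $2$-morphism $G\Rightarrow H$ exists.

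For \emph{invertibility}, let $\kappa:G\Rightarrow H$ be a $2$-morphism and set $\lambda_x:=\kappa_x^{-1}:H(x)\to G(x)$, which is legitimate since each $\kappa_x$ is an isomorphism by definition. Then $\lambda$ is automatically natural, and inverting all the arrows in the normalisation $\kappa_{m_1}=\id_{m_2}$, in the unit condition $\kappa_{1_{\cM_1}}\circ i_G=i_H$, in the monoidality square \eqref{eq: def nat transf 1}, and in the triangle \eqref{eq: def nat transf 2} produces precisely the corresponding conditions for $\lambda$, now with the roles of $G$ and $H$ interchanged (one uses here that the tensorators and the isomorphisms $\gamma^G_c,\gamma^H_c$ are invertible). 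Thus $\lambda$ is a $2$-morphism $H\Rightarrow G$, and it is a two-sided inverse of $\kappa$ for vertical composition because $\lambda_x\circ\kappa_x=\id_{G(x)}$ and $\kappa_x\circ\lambda_x=\id_{H(x)}$ for every $x$.

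I expect the only delicate point to be the second step of the uniqueness argument --- promoting ``$\kappa$ determined on the generators $\Phi_1(c)\otimes m_1^{\otimes n}$'' to ``$\kappa$ determined on all of $\cM_1$''. This needs the retract description coming from pointedness together with the fact that the tensor functors $G$ and $H$ preserve finite direct sums, fed into naturality; none of it is conceptually hard, but it is the step where the hypotheses of Definition~\ref{def: pointed} and the standing assumptions on $\Mod_*$ (idempotent completeness, existence of direct sums) are actually used. Everything else is a diagram chase with the axioms recalled above.
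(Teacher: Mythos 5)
Your proof is correct and, for the uniqueness claim, follows essentially the same route as the paper: pin down $\kappa$ on $m_1$ by pointedness, on $\Phi_1(c)$ via \eqref{eq: def nat transf 2}, on tensor products via \eqref{eq: def nat transf 1}, and then on all of $\cM_1$ using that every object is a retract of a direct sum of objects $\Phi_1(c)\otimes m_1^{\otimes n}$. The only (minor) divergence is in the invertibility claim, where you invoke the definitional requirement that the components $\kappa_x$ are isomorphisms and explicitly verify that the inverse family satisfies the $2$-morphism axioms, whereas the paper instead re-derives invertibility of the components from the same determination argument and leaves the axiom check for the inverse implicit; both are adequate at the paper's level of detail.
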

\begin{proof}
By definition, a natural transformation $\kappa : G \Rightarrow H$ of functors between pointed module tensor categories satisfies $\kappa_{m_1} = \id_{m_2}$.
By the commutativity of the diagram \eqref{eq: def nat transf 1}, this completely determines $\kappa$ on objects of the form $m_1^{\otimes n}$.
By the same diagram, we also see that $\kappa_{m_1^{\otimes n}}$ is invertible.
Similarly, the commutativity of \eqref{eq: def nat transf 2} completely determines $\kappa$ on objects of the form $\Phi_1(c)$ for $c\in\cC$, and $\kappa_{\Phi_1(c)}$ is invertible.
By one more application of \eqref{eq: def nat transf 1}, we see that $\kappa$ is completely determined and invertible on objects of the form $\Phi_1(c)\otimes m_1^{\otimes n}$.
At last, $\kappa$ is completely determined and invertible on any direct summand of a direct sum of objects of the form $\Phi_1(c)\otimes m_1^{\otimes n}$, which is all of $\cM_1$ by our assumption that $m_1$ generates $\cM_1$.
\end{proof}

Consider the equivalence relation on $1$-morphisms of $\Mod_*$ according to which two functors $G,H:\cM_1\to\cM_2$
are equivalent if there exists a (necessarily invertible) natural transformation $\kappa : G \Rightarrow H$.
We denote $\tau_{\le 1}(\Mod_*)$ the $1$-category with same objects as $\Mod_*$, and with $1$-morphisms given by equivalence classes of $1$-morphisms of $\Mod_*$ with respect to the above equivalence relation.
By the previous lemma, the projection functor
\[
\Mod_*\to \tau_{\le 1}(\Mod_*)
\]
is an equivalence of $2$-categories (where $\Mod_*$ is treated as a $2$-category with only identity $2$-morphisms).
The moral of the above discussion is that, even though $\Mod_*$ is not a $1$-category,
it is harmless to treat it as if it were one.

\subsection{The main theorem}

Fix a braided pivotal tensor category $\cC$ which admits finite direct sums and is idempotent complete.
Let $\APA$ be the category of anchored planar algebras in $\cC$ (we omit $\cC$ from the notation), and let $\Mod_*$ and  $\tau_{\le 1}(\Mod_*)$ be as in the previous section.
The following result generalizes the well known result according to which the data of a planar algebra (in vector spaces) is equivalent to that of a pair $(\cD,X)$, where $\cD$ is a pivotal tensor category and $X\in \cD$ is a generating symmetrically self-dual object \cite{MR2559686,MR3405915,1207.1923}:

\begin{mainthm}[Theorem \ref{thm:EquivalenceOfCategories}]
\label{thm:EquivalenceOfCategories2}
There exists an equivalence of categories
\[
\left\{\,\text{\rm Anchored planar algebras in $\cC$}\left\}
\,\,\,\,\cong\,\,
\left\{\,\parbox{4.7cm}{\rm \centerline{Pointed pivotal module} \centerline{tensor categories over $\cC$}}\,\right\}\right.\right.\!\!
\]
between the category $\APA$ of anchored planar algebras in $\cC$, and the category $\tau_{\le 1}(\Mod_*)$ of pointed pivotal module tensor categories over $\cC$ whose action functor admits a right adjoint.
\end{mainthm}

\begin{proof}
We construct functors $\Lambda: \tau_{\le 1}(\Mod_*) \to \APA$ and $\Delta:\APA\to \tau_{\le 1}(\Mod_*)$ in Sections \ref{sec:APAfromMTC} and \ref{sec:MTCfromAPA}, respectively.
In Sections \ref{sec:CMGtoAPAtoCMG} and \ref{sec:CMGtoAPAtoCMG_Naturality}, we construct a natural isomorphism $\Delta\Lambda \Rightarrow \id_{\tau_{\le 1}(\Mod_*)}$.
Finally, in Section \ref{sec:APAtoCMGtoAPA}, we show that the equality $\Lambda\Delta = \id_\APA$ holds on the nose.
\end{proof}

We quickly sketch the construction of the above two functors.
The functor $\Lambda$ sends a pointed module tensor category $(\cM,m)$
to the anchored planar algebra $\cP$ with $n$th box object
\begin{equation}\label{eq: def box object}
\cP[n] := \Tr_\cC(m^{\otimes n}),
\end{equation}
and action of the generating tangles (Definition \ref{defn:GeneratingTangles}) given by Theorem~\ref{thm: construct P from M and m}.

The functor $\Delta$ sends an anchored planar algebra $\cP$ to the category $\cM$ constructed as follows.
We first construct $\cM_0$, whose objects are formal expressions ``$\Phi(c)\otimes m^{\otimes n}$''
and whose morphisms are given by
\[
\cM_0\big(\Phi(c)\otimes m^{\otimes n_1},\Phi(d)\otimes m^{\otimes n_2}\big) \,:=\, \cC(c, d\otimes \cP[n_2 + n_1]).
\]
$\cM$ is then obtained from $\cM_0$ by formally adding direct sums, and idempotent completing.
The composition of morphisms, tensor structure, and structure of module tensor category are described in Section~\ref{sec:MTCfromAPA}.

\subsubsection{Generalisations}
\label{sec:Generalisations}
There exist a number of variations of Theorem~\ref{thm:EquivalenceOfCategories2}, which we list below without proof.
We believe that these more general statements can be proved in a way essentially identical to our main theorem:

\begin{itemize}
\item
If the generating object $m\in\cM$ is not required to be self-dual, then 
the notion of anchored planar algebra needs to be modified to include orientations on the strands.
We call the resulting notion an \emph{oriented anchored planar algebra}.
In an oriented anchored planar algebra, the box objects are no longer indexed by numbers.
They are indexed by sequences of $+$ and $-$, encoding the orientations of the strands.
For example, in an oriented anchored planar algebra, we have:
\[
Z\left(
\begin{tikzpicture}[scale=.8, baseline =-.1cm]
	\coordinate (a) at (0,0);
	\coordinate (c) at (.6,-.6);         
	\coordinate (d) at (-.6,.6);         
	
	\ncircle{}{(a)}{1.6}{-115}{}
	\draw[thick, red] (c)+(235:.4) .. controls ++(230:.5cm) and ++(45:.4cm) .. (-115:1.6);
	\draw[thick, red] (d)+(235:.4) .. controls ++(250:.6cm) and ++(70:.4cm) .. (-115:1.6);
			
	\draw (60:1.6cm) arc (150:300:.4cm);
	\draw ($ (c) + (0,.4) $) arc (0:90:.8cm);
	\draw ($ (c) + (-.4,0) $) circle (.25cm);
	\draw ($ (d) + (0,.88) $) -- (d) -- ($ (d) + (-.88,0) $);
	\draw ($ (c) + (0,-.88) $) -- (c) -- ($ (c) + (.88,0) $);
	\ncircle{unshaded}{(d)}{.4}{235}{}
	\ncircle{unshaded}{(c)}{.4}{235}{}
\draw[<-] (-.6,1.2) -- +(90:.01);
\draw[->] (-1.25,.6) -- +(180:.01);
\draw[<-] (-.05,-.62) -- +(87:.01);
\draw[<-] (.365,.365) -- +(135:.01);
\draw[->] (.85,.91) -- +(139:.01);
\draw[<-] (.6,-1.28) -- +(90:.01);
\draw[<-] (1.28,-.6) -- +(180:.01);
\end{tikzpicture}
\right)
\,:\,\,\,\mathcal{P}[\raisebox{1pt}{$\scriptstyle +,-,+$}]\otimes \mathcal{P}[\raisebox{1pt}{$\scriptstyle -,+,-,+,+$}] \,\to\, \mathcal{P}[\raisebox{1pt}{$\scriptstyle +,-,+,-,+,+$}].
\]

There is an equivalence of categories:
\[
\left\{\,\parbox{3.8cm}{\rm \centerline{Oriented anchored} \centerline{planar algebras in $\cC$}}
\left\}
\,\,\,\,\cong\,\,
\left\{\,\parbox{7.3cm}{\rm \centerline{Pivotal module tensor categories over $\cC$} \centerline{equipped with a chosen generator}
}\,\right\}\right.\right.\!\!
\]
Here, the condition that $m$ generates $\cM$ means that every object of the category is isomorphic to
a direct summand of a direct sum of objects of the form
$\Phi(c) \otimes m^{\otimes n_1} \otimes (m^*)^{\otimes n_2} \otimes m^{\otimes n_3}\otimes (m^*)^{\otimes n_4} \otimes \ldots$

\item
If instead of requiring that the generator $m\in\cM$ is symmetrically self-dual, we take it anti-symetrically self-dual, i.e., if one replaces the condition $\psi^*\circ\varphi_m=\psi$ in the paragraph above \eqref{eq: psi** def} by $\psi^*\circ\varphi_m=-\psi$, then the strands need to be equipped with a ``disorientation'' in the sense of \cite[Fig.\,3]{MR2496052}.
Here, a disorientation is the data of an orientation, along with a finite collection of orientation-reversing points, drawn as
$\tikz[baseline=-2.5]{\draw (0,0) -- (1,0); \draw[->] (.22,0) -- +(.01,0);\draw[->] (.78,0) -- +(-.01,0); \draw[red] (.43,0) +(0,.08) arc (90:-90:.08);}$
and
$\tikz[baseline=-2.5]{\draw (0,0) -- (1,0); \draw[<-] (.2,0) -- +(.01,0);\draw[<-] (.8,0) -- +(-.01,0); \draw[red] (.45,0) +(0,.08) arc (90:-90:.08);}$.
These are subject to the local relations
$Z(\tikz[baseline=-2.5]{\draw (0,0) -- (1.75,0); \draw[->] (.22,0) -- +(.01,0);\draw[->] (.78,0) -- +(-.01,0); \draw[->] (1.5,0) -- +(.01,0); \draw[red] (.43,0) +(0,.08) arc (90:-90:.08);\draw[red] (1.2,0) +(0,.08) arc (90:270:.08);})
=Z(\tikz[baseline=-2.5]{\draw (0,0) -- (.75,0); \draw[->] (.38,0) -- +(.01,0);})
$
and 
$Z(\tikz[baseline=-2.5]{\draw (0,0) -- (1,0); \draw[->] (.22,0) -- +(.01,0);\draw[->] (.78,0) -- +(-.01,0); \draw[red] (.43,0) +(0,.08) arc (90:-90:.08);})
=-Z(\tikz[baseline=-2.5]{\draw (0,0) -- (1,0); \draw[->] (.22,0) -- +(.01,0);\draw[->] (.78,0) -- +(-.01,0); \draw[red] (.57,0) +(0,.08) arc (90:270:.08);})$.
We call the resulting notion a \emph{disoriented anchored planar algebra}.
It is convenient (but not necessary) to insist that the strands only meet the boundary circles in the positive orientation:
in that way, the box object are indexed by $\mathbb N_{\ge 0}$.

We have an equivalence of categories:
\[
\left\{\,\parbox{3.9cm}{\rm \centerline{Disoriented anchored} \centerline{planar algebras in $\cC$}}
\left\}
\,\,\,\cong\,
\left\{\,\parbox{9.1cm}{\rm \centerline{Pivotal module tensor categories over $\cC$ equipped} \centerline{with a chosen anti-symetrically self-dual generator}
}\,\right\}\right.\right.\!
\]
The crucial step which gets affected by the presence of disorientations is the last equality in \eqref{eq: It's symmetrically self-dual!}, which now acquires a minus sign.

\item
Instead of a single generator, we may consider a collection of symmetrically self-dual objects $\{m_a\}_{a\in \Gamma}$ that jointly generate $\cM$.
The appropriate generalisation of anchored planar algebras then includes multiple colours of strings, one for each element of $\Gamma$.
We call the resulting notion a \emph{colored anchored planar algebra}.
The box objects of a colored anchored planar algebra are now indexed by words in the alphabet $\Gamma$.
For example:
\[
\qquad
Z\left(
\begin{tikzpicture}[scale=.8, baseline =-.1cm]
	\coordinate (a) at (0,0);
	\coordinate (c) at (.6,-.6);         
	\coordinate (d) at (-.6,.6);         
	
	\ncircle{}{(a)}{1.6}{-117}{}
	\draw[thick, red] (c)+(235:.4) .. controls ++(230:.5cm) and ++(45:.4cm) .. (-117:1.6);
	\draw[thick, red] (d)+(235:.4) .. controls ++(250:.6cm) and ++(70:.4cm) .. (-117:1.6);
			
	\draw[red] (60:1.6cm) arc (150:300:.4cm);
	\draw[DarkGreen] ($ (c) + (0,.4) $) arc (0:90:.8cm);
	\draw[blue] ($ (c) + (-.4,0) $) circle (.25cm);
	\draw[blue] ($ (d) + (0,.88) $) -- (d) -- ($ (d) + (-.88,0) $);
	\draw[DarkGreen] ($ (c) + (0,-.88) $) -- (c);
	\draw[red] (c) -- ($ (c) + (.88,0) $);
	\ncircle{unshaded}{(d)}{.4}{235}{}
	\ncircle{unshaded}{(c)}{.4}{235}{}
\node[right] at (-.7,1.25) {$\scriptstyle a$};
\node[below] at (-1.21,.68) {$\scriptstyle a$};
\node[left] at (.05,-.55) {$\scriptstyle a$};
\node[left] at (.83,1.05) {$\scriptstyle c$};
\node at (.62,.42) {$\scriptstyle b$};
\node[right] at (.52,-1.2) {$\scriptstyle b$};
\node[above] at (1.23,-.65) {$\scriptstyle c$};
\useasboundingbox;
\node[scale=.7] at (-120:1.82) {\anchor};
\end{tikzpicture}
\right)
:\,\mathcal{P}[aab]\otimes \mathcal{P}[aabcb] \,\to\, \mathcal{P}[aacccb], \quad\qquad \textcolor{blue}{a},\textcolor{DarkGreen}{b},\textcolor{red}{c}\in \Gamma.
\]

For every set $\Gamma$, we then get an equivalence of categories:
\[
\left\{\,\parbox{4cm}{\rm \centerline{$\Gamma$-colored anchored} \centerline{planar algebras in $\cC$}}
\left\}
\,\,\,\,\,\cong\,\,\,
\left\{\,\parbox{8.05cm}{\rm \centerline{Pivotal module tensor categories over $\cC$} \centerline{equipped with a collection of symmetrically} \centerline{self-dual generators indexed by $\Gamma$}
}\,\right\}\right.\right.\!
\]

\item
Dropping the self-dual condition, we may also consider collections of arbitrary objects that jointly generate $\cM$.
In that case, the appropriate generalisation of planar algebra is a \emph{colored oriented anchored planar algebra}.

There is an equivalence of categories:
\[
\left\{\,\parbox{5.1cm}{\rm \centerline{$\Gamma$-colored oriented anchored} \centerline{planar algebras in $\cC$}}
\left\}
\,\,\,\cong\,
\left\{\,\parbox{8.1cm}{\rm \centerline{Pivotal module tensor categories over $\cC$} \centerline{with a collection of generators indexed by $\Gamma$}
}\,\right\}\right.\right.\!
\]
We note that in a $\Gamma$-colored oriented anchored planar algebra, the box objects are now indexed by words in $\Gamma\cup \Gamma^*$.

\end{itemize}

\subsection{Examples}\label{sec: examples of APAs}

Using Theorem \ref{thm:EquivalenceOfCategories2}, we now provide a list of examples of anchored planar algebras.
For each anchored planar algebra, we compute the box objects by means of the formula \eqref{eq: def box object}.

\subsubsection{Near group categories}
\label{sec: Near group examples}
Let $A$ be an abelian group and let $\cC:=\Vec(A)$ be the category of $A$-graded vector spaces.
Given a symmetric bicharacter $\langle \,\cdot\,,\,\cdot\,\rangle: A\times A \to U(1)$ (which we will later take to be non-degenerate), there is a braided structure on $\cC$
with braiding $\beta_{a,b}: a\otimes b \to b\otimes a$ given by $\langle a,b\rangle\id_{a+b}$.
Note that two such categories are braided equivalent iff the associated quadratic forms $a\mapsto\langle a,a\rangle$ are equal \cite[Prop.\,2.14]{MR2076134}.
A braided category as above has a preferred pivotal structure, characterised by the fact that all the quantum dimensions of the simple objects are 1.
Its twist \eqref{def:theta1} are given by $\theta_{a} = \langle a, a\rangle\id_a$.

\begin{ex}[Tambara-Yamagami]
\label{ex:TambaraYamagami}
Let $A$ be a finite abelian group, and let $\langle \,\cdot\,,\cdot\,\rangle$ be a non-degenerate symmetric bicharacter. Let $N:=|A|$, and let $\tau$ be a square root of $N^{-1}$.
The Tambara-Yamagami fusion category $\cM := \cT\cY(A, \langle \,\cdot\,,\cdot\,\rangle, \tau)$ \cite{MR1659954}
has distinguished representatives of simple objects indexed by the set $A\cup\{m\}$.
We follow the conventions in \cite[\S3]{MR2774703}.
The monoidal structure on $A\subset \cM$ is given by group multiplication, with trivial associator.
The remaining fusion rules are:
\[
a\otimes m \,:=\, m, \qquad  m\otimes a\,:=\,m, \qquad\,\,\, m\otimes m \,:=\, \bigoplus_{a\in A} a.
\]
The associator $\alpha_{a,m,b}:(a\otimes m)\otimes b \rightarrow a\otimes(m\otimes b)$ is $\langle a,b\rangle\id_m$
(we refer to \cite[\S3]{MR2774703} for the remaining associators).
We work with the pivotal structure on $\cM$ given by $\varphi_a=\id_a$ for $a\in A$ (so that $\dim(a)=1$) and $\varphi_m=\id_m$ (so that $\dim(m)=\tau^{-1}$).
Note that, with respect to this pivotal structure, the object $m$ is symmetrically self-dual.

The center $\cZ(\cM)$ contains $2N$ invertible objects, labeled by pairs $(a, \varepsilon)$ where $a\in A$ and $\varepsilon^2 = \langle a, a\rangle$ \cite{MR1832764}. 
The half-braiding $e_{(a,\varepsilon)}$ is given by $e_{(a,\varepsilon),b} = \langle a,b\rangle\id_{a+b}$ and $e_{(a,\varepsilon), m} = \varepsilon \id_m$,
from which it follows that $\theta_{(a, \varepsilon)} = \langle a, a\rangle\id_{(a,\varepsilon)}$.
Using the formula for $\alpha_{a,m,b}$, one easily computes that 
$(a, \varepsilon)\otimes (b, \eta) = (a+b, \varepsilon \eta \langle a,b\rangle)$ in $\cZ(\cM)$.

We now specialize to $A=\bbZ/N\bbZ$, with bicharacter $\langle i, j\rangle = q^{ij}$, $q=\exp(2\pi i/N)$.
There is a distinguished copy of $A$ inside $\cZ(\cM)$, given by $\varepsilon_a = q^{a^2 (N+1)/2}$.
%
The thus obtained inclusion functor $\Phi^{\scriptscriptstyle \cZ}:\cC\to\cZ(\cM)$ endows $\cM$ with the structure of a module tensor category over $\cC=\Vec(\bbZ/N\bbZ)$
(with braiding given by the above bicharacter, and equipped with its canonical pivotal structure).
The action functor $\Phi: \cC\to \cM$ is fully faithful, and its adjoint $\Tr_\cC:\cM\to \cC$ is the projection functor from $\cM$ onto its full $\Vec(\bbZ/N\bbZ)$ subcategory.
In other words, $\Tr_\cC(m) = 0$, and $\Tr_\cC(a) = a$ for all $a\in \bbZ/N\bbZ$.
Applying formula \eqref{eq: def box object} to this situation,
we see that the anchored planar algebra $\cP$ associated to $\cM$ and $m$ is given by:
\[
\cP[2k] = \Tr_\cC(m^{\otimes 2k})\cong \bbC[\bbZ/N\bbZ]^{\otimes k}\qquad\,\, \text{and}\qquad\,\, \cP[2k+1] = 0.
\]
\end{ex}

\begin{ex}[Planar para algebras]
\label{ex:Planar para algebras}
Let $\cC=\Vec(\bbZ/N\bbZ)$ be as in the previous example.
The recently introduced planar para algebras \cite[Def.\,2.3]{1602.02662} are the same thing as anchored planar algebras in $\Vec(\bbZ/N\bbZ)$:
the para isotopy axiom (iv) is our braid anchor dependence, and the rotation axiom (v) is our twist anchor dependence. 

Applying the functor $\Delta:\APA\to \Mod_*$ (see Section \ref{sec:MTCfromAPA} below) to their main example of the parafermion planar para algebra, one recovers the Tambara-Yamagami category $\cT\cY(A, \langle \,\cdot\,,\cdot\,\rangle, \tau)$ associated to the positive square root of $N^{-1}$.
Indeed, if $m$ is the strand in the parafermion planar para algebra, then $m\otimes m \cong \bbC[\bbZ/N\bbZ]$.
Thus, the parafermion planar para algebra is given, as in Example \ref{ex:TambaraYamagami}, by a braided functor $\Vec(\bbZ/N\bbZ) \to \cZ(\cM)$ to the center of a Tambara-Yamagami fusion category.
\end{ex}

\begin{ex}[Near groups]
\label{ex: Near groups}
Let $A$ be an abelian group of order $N$, and let $\cM$ be a near group category with fusion rules 
\[
a\otimes m \,=\, m, \qquad  m\otimes a\,=\,m, \qquad\,\,\, m\otimes m \,=\, Nm \oplus \bigoplus_{a\in A} a.
\]
Such categories are known to exist for many groups (up to order 13) \cite[Table 2]{MR3167494}, and are conjectured to exist for all finite cyclic groups.

By \cite{MR1832764}, \cite[Thm.\,4]{MR3167494}, there is a non-degenerate symmetric bicharacter $\langle \,\cdot\, , \cdot\,\rangle: A\times A \to U(1)$
which is an invariant of $\cM$.
Now, by \cite[\S4.3]{MR3167494}, each invertible object $a\in A$ lifts uniquely to the center $\cZ(\cM)$, 
and the braidings and twists on $A\subset \cZ(\cM)$ are given by the bicharacter: $\beta_{a,b} = \langle a,b\rangle$ and $\theta_a = \langle a, a\rangle$.
Hence, $\cZ(\cM)$ admits a braided pivotal functor from the corresponding ribbon category $\Vec(A)$.

Let $\cP$ be the anchored planar algebra associated to $(\cM, m)$.
The action functor $\Phi:\cC\to \cM$ is full, and its adjoint $\Tr_\cC:\cM\to\cC$ is the projection onto the copy of $\Vec(A)$ inside $\cM$.
For small $k$, the box objects $\cP[k]=\Tr_\cC(m^{\otimes k})$ are given by $\cP[0]=1$, $\cP[1]=0$, and $\cP[2]=\bigoplus_{a\in A} a$.
For larger $k\geq 1$, we have $\cP[k] = f(k) \bigoplus_{a\in A} a$, where the function $f$ is given by the recursive formula
$$
f(1)=0\quad\,\,\,\, f(2)=1 \qquad  f(k+1) = N(f(k) + f(k-1)).
$$
We note that the quantity $f(k)$ depends polynomially on $N$, with coefficients given by diagonals in Pascal's triangle:
$$
f(k) = \sum_{i=2}^{\lfloor k/2\rfloor +1} {{k-i}\choose {i-2}} N^{k-i}.
$$
\end{ex}

\subsubsection{Temperley-Lieb-Jones module categories}
\label{sec: TLJ examples}

Let $\cC$ be non-degenerately braided pivotal fusion category, 
let $A\in\cC$ be a connected \'{e}tale algebra (an algebra is called \emph{\'etale} is it is both commutative and separable), and let $\cM = {\sf Mod}_\cC(A)$ be the category of right $A$-modules.
By \cite[Cor.\,3.30]{MR3039775}, we then have
\[
\cZ(\cM) \,\cong\, \cC \boxtimes \cD,
\]
where $\cD$ is the full subcategory of $\cM$ consisting of dyslectic $A$-modules, equipped with the braiding opposite to the one inherited from $\cC$.
Thus, we see that $\cM$ becomes a module tensor category for both $\cC$ and $\cD$:
\[
\Phi^{\scriptscriptstyle\cZ}_\cC:\cC\to\cZ(\cM)\quad\qquad\text{and}\quad\qquad \Phi^{\scriptscriptstyle\cZ}_\cD:\cD\to\cZ(\cM).
\]
The functor $\Phi_\cC:\cC\to \cM$ is dominant, while the functor $\Phi_\cD:\cD\to \cM$ is fully faithful 
(so that its adjoint $\Tr_\cD:\cM\to\cD$ is the projection onto a full subcategory of $\cM$ equivalent to~$\cD$).

We now specialize to the case where $\cC$ is the Temperley-Lieb-Jones (TLJ) $A_n$ category.
Let $q:=\exp(\pi i/(n+1))$ and $\delta:=q+q^{-1}$.
The TLJ $A_n$ category is the quotient of the free pivotal idempotent complete category on a single symmetrically self-dual object $m$ of dimension $\delta$
by the ideal of negligible morphisms (morphisms $f:x\to y$ such that $tr(fg)=0$ for all $g:y\to x$).
More concretely, it is obtained by taking the category whose objects are the natural numbers and whose morphisms are linear combinations of Kauffman diagrams diagrams
 (things like this: $\tikz[scale=.4, baseline=2]{\draw[thick] (0,0) rectangle (3.3,1);
\draw (.5,-.12) -- (.5,0) arc (180:0:.4) -- +(0,-.12);
\draw (.73,-.12) -- (.73,0) arc (180:0:.17) -- +(0,-.12);
\draw (1.7,-.12) -- (1.7,0) arc (180:0:.2) -- +(0,-.12);
\draw (2,1.12) -- (2,1) arc (-180:0:.25) -- +(0,.12);
\draw (2.5,-.12) -- (2.5,0) to[out=90,in=-90] (1.5,1) -- +(0,.12);
\draw (.6,1.12) -- (.6,1) arc (-180:0:.25) -- +(0,.12);
\draw (2.9,-.12) -- (2.9,0) to[out=90,in=-90] (2.8,1) -- +(0,.12);
}$\;), 
equating free floating loops with $\delta$,
taking the idempotent completion, and then modding out by the negligible morphisms.
This category has $n$ simple objects, denoted $\mathbf{1},\mathbf{2},\dots, \mathbf{n}$, called the Jones-Wenzl idempotents.
The object $\mathbf{1}$ is the unit, and the object $\mathbf{2}$ is the generator $m$ (the strand).
We also write $\mathbf g$ for the object $\mathbf{n}$.
We equip $\cC$ with the braiding given by
\begin{equation}
\label{eq:BraidingInTL}
\begin{tikzpicture}[baseline=-.1cm]
	\braiding{(0,0)}{.6}{.4}
\end{tikzpicture}
\,:=\,
iq^{1/2}\,
\begin{tikzpicture}[baseline=-.1cm]
	\nbox{unshaded}{(0,0)}{.4}{0}{0}{}
	\draw (.2,-.6)--(.2,.6);
	\draw (-.2,-.6)--(-.2,.6);
\end{tikzpicture}
\,-
iq^{-1/2}\,
\begin{tikzpicture}[baseline=-.1cm]
	\draw (.2,-.6)--(.2,.6);
	\draw (-.2,-.6)--(-.2,.6);
	\nbox{unshaded}{(0,0)}{.4}{0}{0}{}
	\draw (-.2,-.4) arc (180:0:.2cm);
	\draw (-.2,.4) arc (-180:0:.2cm);
\end{tikzpicture}
\end{equation}
on the generator.
Note that this is not the only possible braiding, and that the classification of \'etale algebras depends on the choice braiding \cite[Rem.\,3.14]{1509.02937}.

\begin{ex}
\label{sub-ex:TypeA}
Let $\cC$ be the TLJ $A_n$ category, and
let $(\cM,m)=(A_n, \mathbf{2})$ with its obvious structure of module tensor category over $\cC=A_n$.
The functor $\Tr_\cC:\cM\to\cC$ is the identity functor, and
the associated anchored planar algebra $\cP$ is given by $\cP[k]=\underline{\Hom}_\cC(1, m^{\otimes k}) = m^{\otimes k}$ (the object represented by $k$ parallel strands).
\end{ex}

\begin{ex}[$D_{2n}$]
\label{sub-ex:TypeD}
When $\cC$ is the TLJ $A_{4n-3}$ category, $A:=\mathbf{1}\oplus \mathbf{g}\in \cC$ is a commutative algebra object, and its category of modules $\cM$ is a $D_{2n}$ tensor category.
We write $\underline{1}$, $\underline{2},\dots, \underline{2n\hspace{.715cm}}\hspace{-.715cm}-2$, $\underline{2n\hspace{.715cm}}\hspace{-.715cm}-1$, $\underline{2n\hspace{.715cm}}\hspace{-.715cm}-1'$
for the simples of $\cM$.

If we view $(\cM,m)=(D_{2n},\underline{2})$ as a pointed module tensor category over $\cC$, then the box objects of the associated anchored planar algebra can be computed as follows:
\[
\cP[k] = \Tr_\cC(\underline{2}^{\otimes k}) = \Tr_\cC(\Phi(\mathbf{2}^{\otimes k}))
=
(\mathbf{1}\oplus \mathbf{g})\otimes \mathbf{2}^{\otimes k}.
\]
Here, $\Phi : A_{4n-3} \to D_{2n} = {\sf Mod}_{A_{4n-3}}(A)$ is the free $A$-module functor, and its adjoint $\Tr_\cC : D_{2n} \to A_{4n-3}$ is the functor that forgets the module structure (see \cite[Ex.\,3.12]{1509.02937}).
\end{ex}

\begin{ex}[$D_{2n}$]
Let $\cM=D_{2n}$ be as in the previous example.
By \cite[Cor.\,4.9]{MR1815993}, the center of $\cM$ is given by $\cZ(D_{2n})\cong \cC\boxtimes \cD = A_{4n-3} \boxtimes \frac{1}{2}\overline{D}_{2n}$, where the bar on the second factor indicates that it is equipped with the opposite of the natural braiding.

If we view $(\cM,m)=(D_{2n},\underline{2})$ as a pointed module tensor category over $\cD:=\frac{1}{2}\overline{D}_{2n}$, then
$\Tr_\cD:\cM\to \cD$ is the projection functor onto the copy of $\cD$ inside $D_{2n}$. 
The associated anchored planar algebra in $\cD$ is given by
$\cP[k]=\underline{2}^{\otimes k}$ when $k$ is even, and $\cP[k]=0$ when $k$ is odd.
\end{ex}

\begin{ex}[$D_4$]
\label{sub-ex:D4}
Let $(\cM,m)=(D_4,\underline{2})$ be as in the previous examples (note that it is also a Tamabra-Yamagami category),
and let
\[
\cC := \cZ(\cM) = A_5\boxtimes \tfrac12\overline{D}_4 = A_5\boxtimes \Vec(\bbZ/3\bbZ).
\]
As before, we write $\mathbf{1}$, $\mathbf{2},\ldots,\mathbf{5}$ for the objects of $A_5$, and
$\underline{1}$, $\underline{2}$, $\underline{3}$, $\underline{3}'$ for the object of $D_4$.
The functor $\Phi:\cC\to\cM$ is given on pairs $(p,q)\in\cZ(D_4)$ by $\Phi(p,q) = p\otimes q$:
\begin{align*}
(\mathbf1,\underline1) &\mapsto \underline1 
& 
(\mathbf2,\underline1) &\mapsto \underline2
& 
(\mathbf3,\underline1) &\mapsto \underline3\oplus \underline3'
& 
(\mathbf4,\underline1) &\mapsto \underline2
&
(\mathbf5,\underline1) &\mapsto \underline1
\\
(\mathbf1,\underline3) &\mapsto \underline3
&
(\mathbf2,\underline3) &\mapsto \underline2
& 
(\mathbf3,\underline3) &\mapsto \underline1\oplus \underline3'
& 
(\mathbf4,\underline3) &\mapsto \underline2
&
(\mathbf5,\underline3) &\mapsto \underline3
\\
(\mathbf1,\underline3')\! &\mapsto \underline3'
&
(\mathbf2,\underline3')\! &\mapsto \underline2
& 
(\mathbf3,\underline3')\! &\mapsto \underline1\oplus \underline3
& 
(\mathbf4,\underline3')\! &\mapsto \underline2
&
(\mathbf5,\underline3')\! &\mapsto \underline3'
\end{align*}
The box objects $\cP[k] = \Tr_\cC(\underline{2}^{\otimes k})$ of the associated anchored planar algebra are then given by
\begin{align*}
\cP[0] &= (\mathbf1,\underline1) + (\mathbf3,\underline3) + (\mathbf3,\underline3') + (\mathbf5,\underline1)
\\
\cP[1] &= (\mathbf2,\underline1) + (\mathbf2,\underline3) + (\mathbf2,\underline3') + (\mathbf4,\underline1) + (\mathbf4, \underline3) + (\mathbf4,\underline3')
\\
\cP[2] &= (\mathbf1,\underline1) + (\mathbf1,\underline3) + (\mathbf1,\underline3') + 2(\mathbf3,\underline1) + 2(\mathbf3,\underline3) + 2(\mathbf3,\underline3') + (\mathbf5,\underline1) + (\mathbf5,\underline3) + (\mathbf5,\underline3')
\\
\cP[k] &= 3 \cP[k-2] \text{ for }k\geq 3.
\end{align*}
\end{ex}

\begin{ex}[$E_6$]
\label{sub-ex:E6}
Let $\cC$ be the TLJ $A_{11}$ category, with simple objects $\mathbf{1}$, $\mathbf{2}$, $\dots$, $\mathbf{11}$, and let $\cM$ be its $E_6$ module tensor category.
We write
\begin{equation*}
\begin{tikzpicture}
	\filldraw (0,0) circle (.05cm) node [below] {$1$};
	\filldraw (1,0) circle (.05cm) node [below, yshift=-3] {$m$};
	\filldraw (2,0) circle (.05cm) node [below, yshift=-3] {$x$};
	\filldraw (3,0) circle (.05cm) node [below] {$\psi m$};
	\filldraw (4,0) circle (.05cm) node [below] {$\psi$};
	\filldraw (2,1) circle (.05cm) node [right] {$\sigma$};
	\draw (0,0) -- (4,0);
	\draw (2,0) -- (2,1);
\end{tikzpicture} 
\end{equation*}
for the simple objects of $\cM$.
By \cite[Cor.\,4.9]{MR1815993}, $\cZ(E_6)\cong \cC\boxtimes \cD = A_{11} \boxtimes \overline{A}_3$,
where $\cD=\overline{A}_3=\langle 1, \sigma, \psi\rangle\subset\cM$.
Using the $E_6$ fusion rules $m\otimes y=\bigoplus_{\substack{z\,:\,\text{neighbour}\\ \text{of $y$ in $\begin{tikzpicture}[scale=.14]
	\filldraw (0,0) circle (.1cm);
	\filldraw (1,0) circle (.1cm);
	\filldraw (2,0) circle (.1cm);
	\filldraw (3,0) circle (.1cm);
	\filldraw (4,0) circle (.1cm);
	\filldraw (2,1) circle (.1cm);
	\draw[very thin] (0,0) -- (4,0);
	\draw[very thin] (2,0) -- (2,1);
\end{tikzpicture}$}}} \,z$,\vspace{-.15cm} one easily computes\vspace{-.15cm}
\begin{equation}\label{eq: powers of m}
\begin{split}
m^{\otimes 2k}\,\,\;\,\, &= a_{k} 1 \,+\, b_{k} x \,+\, (a_{k}-1) \psi
\\
m^{\otimes 2k+1} &= a_{k+1} m \,+\, b_{k} \sigma \,+\, (a_{k+1}-1) \psi m
\end{split}
\end{equation}
where $a_{k}$ and $b_{k}$ are given by the recursive formula
$a_{k}= a_{k-1}+b_{k-1}$, $b_{k} = b_{k-1}+2a_{k}-1$,
$a_0=1$, $b_0 = 0$.
The functor $\Tr_\cC:\cM\to\cC$ is given by
\begin{equation}\label{eq: tables of Tr_C}
\begin{tabular}{lll}
$\Tr_\cC(1) \,=\, \mathbf{1} + \mathbf{7}$
&
$\Tr_\cC(m) = \mathbf{2} + \mathbf{6} + \mathbf{8}$
&
$\Tr_\cC(x) = \mathbf{3} + \mathbf{5} + \mathbf{7} + \mathbf{9}$
\\[.5mm]
$\Tr_\cC(\psi m) = \mathbf{4} + \mathbf{6} + \mathbf{10}$
&
$\Tr_\cC(\psi)\, = \mathbf{5} + \mathbf{11}$
&
$\Tr_\cC(\sigma) = \mathbf{4} + \mathbf{8}$
\\[.5mm]
\end{tabular}
\end{equation}
(see \cite[Sub-Ex.\,3.13 \& Rem.\,3.14]{1509.02937}).
Combining \eqref{eq: powers of m} and \eqref{eq: tables of Tr_C}, we obtain the following explicit formulas for the box objects of the associated anchored planar algebra:
\begin{align*}
\cP[2k] &= \Tr_\cC\big(a_{k}1 + b_{k}x + (a_{k}-1)\psi\big)
         \\&=  a_{k}(\mathbf{1}{+}\mathbf{7}) + b_{k}(\mathbf{3}{+}\mathbf{5}{+}\mathbf{7}{+}\mathbf{9}) + (a_{k}{-}1)(\mathbf{5}{+}\mathbf{11})
         \\&=  a_{k}\mathbf{1}  +  b_{k}\mathbf{3}  +  (a_{k}{+}b_{k}{-}1)\mathbf{5}  +  (a_{k}{+}b_{k})\mathbf{7}  +  b_{k}\mathbf{9}  +  (a_{k}{-}1)\mathbf{11}
\\
\cP[2k{+}1] &= \Tr_\cC\big(a_{k+1} m + b_{k} \sigma + (a_{k+1}-1) \psi m\big)
           \\&= a_{k+1}(\mathbf{2}{+}\mathbf{6}{+}\mathbf{8}) + b_{k}(\mathbf{4}{+}\mathbf{8}) + (a_{k+1}{-}1)(\mathbf{4}{+}\mathbf{6}{+}\mathbf{10})
           \\&= a_{k+1}\mathbf{2}  +  (a_{k+1}{+}b_{k}{-}1)\mathbf{4}  + (2a_{k+1}{-}1)\mathbf{6}  +  (a_{k+1}{+}b_{k})\mathbf{8}  +  (a_{k+1}{-}1)\mathbf{10}
\end{align*}
\end{ex}

\begin{ex}[$E_6$]
\label{sub-ex:E6 bis}
Let us now view $\cM=E_6$ as a module tensor category over $\cD=\overline{A}_3\subset \cZ(\cM)$.
The functor $\Phi_\cD:\cD\to \cM$ is fully faithful, so its adjoint $\Tr_\cD:\cM\to \cD$ is the projection functor onto the subcategory $\langle 1,\sigma,\psi\rangle\subset \cM$.
Using formula \eqref{eq: powers of m} for the powers of $m$, the box objects of the associated anchored planar algebra are easily computed:
\[
\cP[2k] = \Tr_\cD(m^{\otimes 2k})= a_{k}1+ (a_{k}-1)\psi,\qquad\,\,\,\,
\cP[2k+1] = \Tr_\cD(m^{\otimes 2k+1}) =  b_{k}\sigma.
\]
\end{ex}

\begin{ex}[$D_{10}{-}E_7$]
Let $\cC$ be the TLJ $A_{17}$ category, and let $A = \mathbf1+\mathbf9+\mathbf{17}$ be the algebra object whose category of modules is $E_7$.
Let $\cM$ be the category of $A$-$A$-bimodules.
This category is Morita equivalent to $\cC$, with ${\sf Mod}_\cC(A)$ as the equivalence bimodule.
Its center is thus given by $\cZ(\cM)=\cZ(\cC)=A_{17}\boxtimes \overline{A}_{17}$,
where the second $A_{17}$ is equipped with the opposite braiding \cite[Thm.\,7.10]{MR1966525}.

By identifying $A$-$A$-bimodules with $A\otimes A^{op}$-modules (braided tensor product), and computing $A\otimes A^{op}\in\cC$ (this object has a unique algebra structure), one can show that $\cM$ is equivalent to $D_{10}\oplus E_7$ as a $\cC$-module category (and also as a $\overline\cC$-module category).
Finally, by playing around with quantum dimensions and using the existence of a dominant functor $A_{17}\boxtimes \overline{A}_{17}\to \cM$, one obtains the following structure for $\cM$:
$$
\begin{tikzpicture}[scale=2, baseline = -3]
	\useasboundingbox (-.2,-.9) rectangle (2.2,.9);
	\coordinate (x) at (0,0);
	\node (A) [circle through=(x)] at (1,.5) {};
	\node (B) [circle through=(x)] at (1,-.5) {};
	\coordinate (y) at (intersection 2 of A and B);
	\node (C) [circle through=(y)] at (1.46,.51) {};
	\node (D) [circle through=(y)] at (1.46,-.51) {};
	\coordinate (z) at (intersection 1 of C and D);
	\coordinate (w) at ($(y)!.58!(z)$);
	\node (E) [circle through=(w)] at (.89,.4) {};
	\node (F) [circle through=(w)] at (.89,-.4) {};
	\coordinate (v) at (intersection 1 of E and F);
	\coordinate (u) at ($(v)!.5!(z)$);
	\coordinate (s) at (intersection 1 of C and F);
	\coordinate (t) at (intersection 2 of D and E);
	\coordinate (r) at ($(w)!.5!(y)$);
	\coordinate (p) at (intersection 1 of D and E);
	\coordinate (q) at (intersection 2 of C and F);
	\coordinate (a) at (intersection 1 of E and B);
	\coordinate (b) at (intersection 1 of C and B);
	\coordinate (c) at (intersection 2 of F and A);
	\coordinate (d) at (intersection 2 of D and A);
	\node[circle, inner sep=20] (W1) at (b) {};
	\node[circle, inner sep=20] (W2) at (d) {};
	\coordinate (m) at (intersection 1 of W2 and A);
	\coordinate (n) at (intersection 2 of W1 and B);
	\foreach \x in {a,b,c,d,n,m,p,q,r,s,t,u,v,w,x,y,z}
	{\filldraw (\x) circle (.02);}
\begin{scope}[blue, ultra thick, line cap=round, shorten >=3, shorten <=3]
	\draw(x)node[left, black, xshift=1.5, scale=1.2]{$\scriptstyle 1$} to[bend left=9] (a);
	\draw (a)node[left, black, xshift=3.5, yshift=3, scale=1.2]{$\scriptstyle m'$} to[bend left=10] (b);
	\draw (b) to[bend left=9] (n);
	\draw (n) to[bend left=23] (y);
	\draw (y) to[bend right=19] (p);
	\draw (p) to[bend right=19] (z);
	\draw (z) to[bend right=6] (t);
	\draw (t) to[bend left=6] (u);
	\draw (t) to[bend right=10] (d);
	\draw(c) to[bend left=11] (v);
	\draw(v) to[bend left=15] (s);
	\draw(s) to[bend left=27] (w);
	\draw(w) to[bend left=9] (q);
	\draw(r) to[bend left=9] (q);
	\draw(w) to[bend left=10] (m);
\end{scope}
\begin{scope}[orange, ultra thick, line cap=round, shorten >=3, shorten <=3]
	\draw(x) to[bend left=-9] (c);
	\draw (c)node[left, black, xshift=2, yshift=-3, scale=1.2]{$\scriptstyle m$} to[bend left=-10] (d);
	\draw (d) to[bend left=-9] (m);
	\draw (m) to[bend left=-23] (y);
	\draw (y) to[bend right=-19] (q);
	\draw (q) to[bend right=-19] (z);
	\draw (z) to[bend right=-6] (s);
	\draw (s) to[bend left=-6] (u);
	\draw (s) to[bend right=-10] (b);
	\draw(a) to[bend left=-11] (v);
	\draw(v) to[bend left=-15] (t);
	\draw(t) to[bend left=-27] (w);
	\draw(w) to[bend left=-9] (p);
	\draw(r) to[bend left=-9] (p);
	\draw(w) to[bend left=-10] (n);
\end{scope}
\end{tikzpicture}
$$
This category has $17$ simple objects, represented by the vertices of the above graph.
It is generated by two symmetrically self-dual objects $m,m'\in\cM$, and
the operations of fusion with $m$ and with $m'$ are encoded by the orange and blue edges, respectively.
This fusion category was studied in \cite[\S5.3.5]{MR2670999}, where it was realised as the category of $K$-$K$ bi-unitary connections for $K$ the graph $E_7$.

The inclusion $\cC\to\cC\,\boxtimes\,\cC=\cZ(\cM)$ of the first copy of $\cC$ (the one corresponding to the blue edges) equips $\cM$ with the structure of a $\cC$-module tensor category.
Let $\cP$ be the anchored planar algebra in $\cC$ associated to the pair $(\cM,m)$.
Its odd box objects $\cP[2k+1]$ are all zero, and its even box objects start as follows:
\[
\begin{split}
&\cP[0] = \mathbf1 + \mathbf{17}
\\&\cP[2] = \mathbf1 + \mathbf9 + \mathbf{17}
\\&\cP[4] = 2{\cdot}\mathbf1 + \mathbf5 + 3{\cdot}\mathbf9 + \mathbf{13} + 2{\cdot}\mathbf{17}
\\&\cP[6] = 5{\cdot}\mathbf1 + 5{\cdot}\mathbf5 + \mathbf7 + 9{\cdot}\mathbf9 + \mathbf{11} + 5{\cdot}\mathbf{13} + 5{\cdot}\mathbf{17}
\\&\cP[8] = 14{\cdot}\mathbf1 + \mathbf3 + 20{\cdot}\mathbf5 + 7{\cdot}\mathbf7 + 29{\cdot}\mathbf9 + 7{\cdot}\mathbf{11} + 20{\cdot}\mathbf{13} + \mathbf{15} + 14{\cdot}\mathbf{17}
\\&\cP[10] = 42{\cdot}\mathbf1 + 9{\cdot}\mathbf3 + 75{\cdot}\mathbf5 + 36{\cdot}\mathbf7 + 99{\cdot}\mathbf9 + 36{\cdot}\mathbf{11} + 75{\cdot}\mathbf{13} + 9{\cdot}\mathbf{15} + 42{\cdot}\mathbf{17}
\end{split}
\]
The latter are computed by taking the various powers of $m$:
\[
\begin{split}
&
m^{\otimes 2} =
\begin{tikzpicture}[scale=2, baseline = -3] \useasboundingbox (-.2,-.6) rectangle (1.8,.4); \begin{scope}[transform canvas={scale=.6}] \coordinate (x) at (0,0); \node (A) [circle through=(x)] at (1,.5) {}; \node (B) [circle through=(x)] at (1,-.5) {}; \coordinate (y) at (intersection 2 of A and B); \node (C) [circle through=(y)] at (1.46,.51) {}; \node (D) [circle through=(y)] at (1.46,-.51) {}; \coordinate (z) at (intersection 1 of C and D); \coordinate (w) at ($(y)!.58!(z)$); \node (E) [circle through=(w)] at (.89,.4) {}; \node (F) [circle through=(w)] at (.89,-.4) {}; \coordinate (v) at (intersection 1 of E and F); \coordinate (u) at ($(v)!.5!(z)$); \coordinate (s) at (intersection 1 of C and F); \coordinate (t) at (intersection 2 of D and E); \coordinate (r) at ($(w)!.5!(y)$); \coordinate (p) at (intersection 1 of D and E); \coordinate (q) at (intersection 2 of C and F); \coordinate (a) at (intersection 1 of E and B); \coordinate (b) at (intersection 1 of C and B); \coordinate (c) at (intersection 2 of F and A); \coordinate (d) at (intersection 2 of D and A); \node[circle, inner sep=20] (W1) at (b) {}; \node[circle, inner sep=20] (W2) at (d) {}; \coordinate (m) at (intersection 1 of W2 and A); \coordinate (n) at (intersection 2 of W1 and B);
\foreach \x in {a,b,c,d,n,m,p,q,r,s,t,u,v,w,x,y,z} {\filldraw (\x) circle (.02);} \begin{scope}[blue, ultra thick, line cap=round, shorten >=3, shorten <=3] \draw(x) to[bend left=9] (a); \draw (a) to[bend left=10] (b); \draw (b) to[bend left=9] (n); \draw (n) to[bend left=23] (y); \draw (y) to[bend right=19] (p); \draw (p) to[bend right=19] (z); \draw (z) to[bend right=6] (t); \draw (t) to[bend left=6] (u); \draw (t) to[bend right=10] (d); \draw(c) to[bend left=11] (v); \draw(v) to[bend left=15] (s); \draw(s) to[bend left=27] (w); \draw(w) to[bend left=9] (q); \draw(r) to[bend left=9] (q); \draw(w) to[bend left=10] (m);
\end{scope}
\begin{scope}[orange, ultra thick, line cap=round, shorten >=3, shorten <=3]
	\draw(x)node[left, black, xshift=3.5, scale=1.8]{$\scriptstyle 1$} to[bend left=-9] (c);
	\draw (c) to[bend left=-10] (d);
	\draw (d)node[below, black, yshift=2, scale=1.8]{$\scriptstyle 1$} to[bend left=-9] (m);
	\draw (m) to[bend left=-23] (y);
	\draw (y) to[bend right=-19] (q);
	\draw (q) to[bend right=-19] (z);
	\draw (z) to[bend right=-6] (s);
	\draw (s) to[bend left=-6] (u);
	\draw (s) to[bend right=-10] (b);
	\draw(a) to[bend left=-11] (v);
	\draw(v) to[bend left=-15] (t);
	\draw(t) to[bend left=-27] (w);
	\draw(w) to[bend left=-9] (p);
	\draw(r) to[bend left=-9] (p);
	\draw(w) to[bend left=-10] (n);
\end{scope}\end{scope}
\end{tikzpicture}
m^{\otimes 4} =
\begin{tikzpicture}[scale=2, baseline = -3] \useasboundingbox (-.2,-.6) rectangle (1.8,.4); \begin{scope}[transform canvas={scale=.6}]
\coordinate (x) at (0,0); \node (A) [circle through=(x)] at (1,.5) {}; \node (B) [circle through=(x)] at (1,-.5) {}; \coordinate (y) at (intersection 2 of A and B); \node (C) [circle through=(y)] at (1.46,.51) {}; \node (D) [circle through=(y)] at (1.46,-.51) {}; \coordinate (z) at (intersection 1 of C and D); \coordinate (w) at ($(y)!.58!(z)$); \node (E) [circle through=(w)] at (.89,.4) {}; \node (F) [circle through=(w)] at (.89,-.4) {}; \coordinate (v) at (intersection 1 of E and F); \coordinate (u) at ($(v)!.5!(z)$); \coordinate (s) at (intersection 1 of C and F); \coordinate (t) at (intersection 2 of D and E); \coordinate (r) at ($(w)!.5!(y)$); \coordinate (p) at (intersection 1 of D and E); \coordinate (q) at (intersection 2 of C and F); \coordinate (a) at (intersection 1 of E and B); \coordinate (b) at (intersection 1 of C and B); \coordinate (c) at (intersection 2 of F and A); \coordinate (d) at (intersection 2 of D and A); \node[circle, inner sep=20] (W1) at (b) {}; \node[circle, inner sep=20] (W2) at (d) {}; \coordinate (m) at (intersection 1 of W2 and A); \coordinate (n) at (intersection 2 of W1 and B);
\foreach \x in {a,b,c,d,n,m,p,q,r,s,t,u,v,w,x,y,z} {\filldraw (\x) circle (.02);} \begin{scope}[blue, ultra thick, line cap=round, shorten >=3, shorten <=3] \draw(x) to[bend left=9] (a); \draw (a) to[bend left=10] (b); \draw (b) to[bend left=9] (n); \draw (n) to[bend left=23] (y); \draw (y) to[bend right=19] (p); \draw (p) to[bend right=19] (z); \draw (z) to[bend right=6] (t); \draw (t) to[bend left=6] (u); \draw (t) to[bend right=10] (d); \draw(c) to[bend left=11] (v); \draw(v) to[bend left=15] (s); \draw(s) to[bend left=27] (w); \draw(w) to[bend left=9] (q); \draw(r) to[bend left=9] (q); \draw(w) to[bend left=10] (m);
\end{scope}
\begin{scope}[orange, ultra thick, line cap=round, shorten >=3, shorten <=3]
	\draw(x)node[left, black, xshift=2.5, scale=1.8]{$\scriptstyle 2$} to[bend left=-9] (c);
	\draw (c) to[bend left=-10] (d);
	\draw (d)node[below, black, yshift=2, scale=1.8]{$\scriptstyle 3$} to[bend left=-9] (m);
	\draw (m) to[bend left=-23] (y);
	\draw (y) node[right, black, xshift=-3, scale=1.8]{$\scriptstyle 1$} to[bend right=-19] (q);
	\draw (q) to[bend right=-19] (z);
	\draw (z) to[bend right=-6] (s);
	\draw (s) to[bend left=-6] (u);
	\draw (s) to[bend right=-10] (b);
	\draw(a) to[bend left=-11] (v);
	\draw(v) to[bend left=-15] (t);
	\draw(t) to[bend left=-27] (w);
	\draw(w) to[bend left=-9] (p);
	\draw(r) to[bend left=-9] (p);
	\draw(w) to[bend left=-10] (n);
\end{scope}\end{scope}
\end{tikzpicture}
\quad
m^{\otimes 6} =
\begin{tikzpicture}[scale=2, baseline = -3] \useasboundingbox (-.2,-.6) rectangle (1.4,.4); \begin{scope}[transform canvas={scale=.6}] \coordinate (x) at (0,0); \node (A) [circle through=(x)] at (1,.5) {}; \node (B) [circle through=(x)] at (1,-.5) {}; \coordinate (y) at (intersection 2 of A and B); \node (C) [circle through=(y)] at (1.46,.51) {}; \node (D) [circle through=(y)] at (1.46,-.51) {}; \coordinate (z) at (intersection 1 of C and D); \coordinate (w) at ($(y)!.58!(z)$); \node (E) [circle through=(w)] at (.89,.4) {}; \node (F) [circle through=(w)] at (.89,-.4) {}; \coordinate (v) at (intersection 1 of E and F); \coordinate (u) at ($(v)!.5!(z)$); \coordinate (s) at (intersection 1 of C and F); \coordinate (t) at (intersection 2 of D and E); \coordinate (r) at ($(w)!.5!(y)$); \coordinate (p) at (intersection 1 of D and E); \coordinate (q) at (intersection 2 of C and F); \coordinate (a) at (intersection 1 of E and B); \coordinate (b) at (intersection 1 of C and B); \coordinate (c) at (intersection 2 of F and A); \coordinate (d) at (intersection 2 of D and A); \node[circle, inner sep=20] (W1) at (b) {}; \node[circle, inner sep=20] (W2) at (d) {}; \coordinate (m) at (intersection 1 of W2 and A); \coordinate (n) at (intersection 2 of W1 and B);
\foreach \x in {a,b,c,d,n,m,p,q,r,s,t,u,v,w,x,y,z} {\filldraw (\x) circle (.02);} \begin{scope}[blue, ultra thick, line cap=round, shorten >=3, shorten <=3] \draw(x) to[bend left=9] (a); \draw (a) to[bend left=10] (b); \draw (b) to[bend left=9] (n); \draw (n) to[bend left=23] (y); \draw (y) to[bend right=19] (p); \draw (p) to[bend right=19] (z); \draw (z) to[bend right=6] (t); \draw (t) to[bend left=6] (u); \draw (t) to[bend right=10] (d); \draw(c) to[bend left=11] (v); \draw(v) to[bend left=15] (s); \draw(s) to[bend left=27] (w); \draw(w) to[bend left=9] (q); \draw(r) to[bend left=9] (q); \draw(w) to[bend left=10] (m);
\end{scope}
\begin{scope}[orange, ultra thick, line cap=round, shorten >=3, shorten <=3]
	\draw(x)node[left, black, xshift=2.5, scale=1.8]{$\scriptstyle 5$} to[bend left=-9] (c);
	\draw (c) to[bend left=-10] (d);
	\draw (d)node[below, black, yshift=2, scale=1.8]{$\scriptstyle 9$} to[bend left=-9] (m);
	\draw (m) to[bend left=-23] (y);
	\draw (y) node[right, black, xshift=-3, scale=1.8]{$\scriptstyle 5$} to[bend right=-19] (q);
	\draw (q) to[bend right=-19] (z);
	\draw (z) node[black, xshift=8, yshift=-2, scale=1.8]{$\scriptstyle 1$} to[bend right=-6] (s);
	\draw (s) to[bend left=-6] (u);
	\draw (s) to[bend right=-10] (b);
	\draw(a) to[bend left=-11] (v);
	\draw(v) to[bend left=-15] (t);
	\draw(t) to[bend left=-27] (w);
	\draw(w) to[bend left=-9] (p);
	\draw(r) to[bend left=-9] (p);
	\draw(w) to[bend left=-10] (n);
\end{scope}\end{scope}
\end{tikzpicture}
\\&\qquad\qquad\quad
m^{\otimes 8} =\,
\begin{tikzpicture}[scale=2, baseline = -3] \useasboundingbox (-.2,-.6) rectangle (1.8,.4); \begin{scope}[transform canvas={scale=.6}] \coordinate (x) at (0,0); \node (A) [circle through=(x)] at (1,.5) {}; \node (B) [circle through=(x)] at (1,-.5) {}; \coordinate (y) at (intersection 2 of A and B); \node (C) [circle through=(y)] at (1.46,.51) {}; \node (D) [circle through=(y)] at (1.46,-.51) {}; \coordinate (z) at (intersection 1 of C and D); \coordinate (w) at ($(y)!.58!(z)$); \node (E) [circle through=(w)] at (.89,.4) {}; \node (F) [circle through=(w)] at (.89,-.4) {}; \coordinate (v) at (intersection 1 of E and F); \coordinate (u) at ($(v)!.5!(z)$); \coordinate (s) at (intersection 1 of C and F); \coordinate (t) at (intersection 2 of D and E); \coordinate (r) at ($(w)!.5!(y)$); \coordinate (p) at (intersection 1 of D and E); \coordinate (q) at (intersection 2 of C and F); \coordinate (a) at (intersection 1 of E and B); \coordinate (b) at (intersection 1 of C and B); \coordinate (c) at (intersection 2 of F and A); \coordinate (d) at (intersection 2 of D and A); \node[circle, inner sep=20] (W1) at (b) {}; \node[circle, inner sep=20] (W2) at (d) {}; \coordinate (m) at (intersection 1 of W2 and A); \coordinate (n) at (intersection 2 of W1 and B);
\foreach \x in {a,b,c,d,n,m,p,q,r,s,t,u,v,w,x,y,z} {\filldraw (\x) circle (.02);} \begin{scope}[blue, ultra thick, line cap=round, shorten >=3, shorten <=3] \draw(x) to[bend left=9] (a); \draw (a) to[bend left=10] (b); \draw (b) to[bend left=9] (n); \draw (n) to[bend left=23] (y); \draw (y) to[bend right=19] (p); \draw (p) to[bend right=19] (z); \draw (z) to[bend right=6] (t); \draw (t) to[bend left=6] (u); \draw (t) to[bend right=10] (d); \draw(c) to[bend left=11] (v); \draw(v) to[bend left=15] (s); \draw(s) to[bend left=27] (w); \draw(w) to[bend left=9] (q); \draw(r) to[bend left=9] (q); \draw(w) to[bend left=10] (m);
\end{scope}
\begin{scope}[orange, ultra thick, line cap=round, shorten >=3, shorten <=3]
	\draw(x)node[left, black, xshift=2, scale=1.8]{$\scriptstyle 14$} to[bend left=-9] (c);
	\draw (c) to[bend left=-10] (d);
	\draw (d)node[below, black, yshift=2, scale=1.8]{$\scriptstyle 28$} to[bend left=-9] (m);
	\draw (m) to[bend left=-23] (y);
	\draw (y) node[right, black, xshift=-3, scale=1.8]{$\scriptstyle 20$} to[bend right=-19] (q);
	\draw (q) to[bend right=-19] (z);
	\draw (z) node[black, xshift=10, yshift=-1, scale=1.8]{$\scriptstyle 7$} to[bend right=-6] (s);
	\draw (s) to[bend left=-6] (u) node[black, xshift=-4, scale=1.8]{$\scriptstyle 1$};
	\draw (s) to[bend right=-10] (b) node[above, black, yshift=-2, scale=1.8]{$\scriptstyle 1$};
	\draw(a) to[bend left=-11] (v);
	\draw(v) to[bend left=-15] (t);
	\draw(t) to[bend left=-27] (w);
	\draw(w) to[bend left=-9] (p);
	\draw(r) to[bend left=-9] (p);
	\draw(w) to[bend left=-10] (n);
\end{scope}\end{scope}
\end{tikzpicture}
\quad
m^{\otimes 10} =\,
\begin{tikzpicture}[scale=2, baseline = -3] \useasboundingbox (-.2,-.6) rectangle (1.8,.4); \begin{scope}[transform canvas={scale=.6}] \coordinate (x) at (0,0); \node (A) [circle through=(x)] at (1,.5) {}; \node (B) [circle through=(x)] at (1,-.5) {}; \coordinate (y) at (intersection 2 of A and B); \node (C) [circle through=(y)] at (1.46,.51) {}; \node (D) [circle through=(y)] at (1.46,-.51) {}; \coordinate (z) at (intersection 1 of C and D); \coordinate (w) at ($(y)!.58!(z)$); \node (E) [circle through=(w)] at (.89,.4) {}; \node (F) [circle through=(w)] at (.89,-.4) {}; \coordinate (v) at (intersection 1 of E and F); \coordinate (u) at ($(v)!.5!(z)$); \coordinate (s) at (intersection 1 of C and F); \coordinate (t) at (intersection 2 of D and E); \coordinate (r) at ($(w)!.5!(y)$); \coordinate (p) at (intersection 1 of D and E); \coordinate (q) at (intersection 2 of C and F); \coordinate (a) at (intersection 1 of E and B); \coordinate (b) at (intersection 1 of C and B); \coordinate (c) at (intersection 2 of F and A); \coordinate (d) at (intersection 2 of D and A); \node[circle, inner sep=20] (W1) at (b) {}; \node[circle, inner sep=20] (W2) at (d) {}; \coordinate (m) at (intersection 1 of W2 and A); \coordinate (n) at (intersection 2 of W1 and B);
\foreach \x in {a,b,c,d,n,m,p,q,r,s,t,u,v,w,x,y,z} {\filldraw (\x) circle (.02);} \begin{scope}[blue, ultra thick, line cap=round, shorten >=3, shorten <=3] \draw(x) to[bend left=9] (a); \draw (a) to[bend left=10] (b); \draw (b) to[bend left=9] (n); \draw (n) to[bend left=23] (y); \draw (y) to[bend right=19] (p); \draw (p) to[bend right=19] (z); \draw (z) to[bend right=6] (t); \draw (t) to[bend left=6] (u); \draw (t) to[bend right=10] (d); \draw(c) to[bend left=11] (v); \draw(v) to[bend left=15] (s); \draw(s) to[bend left=27] (w); \draw(w) to[bend left=9] (q); \draw(r) to[bend left=9] (q); \draw(w) to[bend left=10] (m);
\end{scope}
\begin{scope}[orange, ultra thick, line cap=round, shorten >=3, shorten <=3]
	\draw(x)node[left, black, xshift=2, scale=1.8]{$\scriptstyle 42$} to[bend left=-9] (c);
	\draw (c) to[bend left=-10] (d);
	\draw (d)node[below, black, yshift=2, scale=1.8]{$\scriptstyle 90$} to[bend left=-9] (m);
	\draw (m) to[bend left=-23] (y);
	\draw (y) node[right, black, xshift=-3, scale=1.8]{$\scriptstyle 75$} to[bend right=-19] (q);
	\draw (q) to[bend right=-19] (z);
	\draw (z) node[black, xshift=8, scale=1.8]{$\scriptstyle 36$} to[bend right=-6] (s);
	\draw (s) to[bend left=-6] (u) node[black, xshift=-4, scale=1.8]{$\scriptstyle 9$};
	\draw (s) to[bend right=-10] (b) node[above, black, yshift=-2, scale=1.8]{$\scriptstyle 9$};
	\draw(a) to[bend left=-11] (v);
	\draw(v) to[bend left=-15] (t);
	\draw(t) to[bend left=-27] (w);
	\draw(w) to[bend left=-9] (p);
	\draw(r) to[bend left=-9] (p);
	\draw(w) to[bend left=-10] (n);
\end{scope}\end{scope}
\end{tikzpicture}
\end{split}
\]
and applying the functor $\Tr_\cC:\cM\to \cC$.

\end{ex}



\section{Constructing anchored planar algebras}
\label{sec:Constructing anchored planar algebras}

In this section we show that, in order to construct an anchored planar algebra, it is enough to provide the data associated to a certain small set of tangles (the ones which appear in Definition \ref{defn:GeneratingTangles}),
and to verify a certain list of relations.

\begin{thm}
\label{thm:ConstructAPA}
Let $\cC$ be a braided pivotal category and let $\cP=(\cP[n])_{n\ge 0}$ be a sequence of objects of $\cC$.
Let 
\[
\eta:1\to\cP[0]\quad\,\,\, \alpha_i:\cP[n+2]\to\cP[n]\quad\,\,\,\bar\alpha_i:\cP[n]\to\cP[n+2]\quad\,\,\, \varpi_{i,j}:\cP[n]\otimes\cP[j]\to\cP[n+j]
\]
be morphisms (numbering and indices as in Definition \ref{defn:GeneratingTangles}) satisfying the following relations (the first seven are similar to \ref{reln:id} -- \ref{reln:EasyQuadratic}; the last two 
are related to the two anchor dependence axioms, and are discussed in Remark~\ref{rem: explanation of 7th and 8th rels} below):

\begin{enumerate}[label={\rm(C\arabic*)}]
\item
\label{reln:UnitMap}
$\varpi_{0,j}\circ (\eta\otimes \id_j ) = \id_j$, $\varpi_{i,0}\circ (\id_n\otimes \eta) = \id_n$
\item
\label{reln:CapMaps}
$\alpha_i \circ \alpha_j = \alpha_{j-2}\circ \alpha_i$ for $i+1< j$
\item
\label{reln:CupMaps}
$\bar \alpha_i \circ \bar \alpha_j = \bar \alpha_{j+2}\circ \bar \alpha_i$ for $i\leq j$
\item
\label{reln:CapCupMaps}
$\displaystyle \alpha_i \circ \bar \alpha_j = 
\begin{cases}
\bar \alpha_{j-2}\circ \alpha_i & \text{for } i<j-1
\\
\id_n & \text{for } i=j\pm1
\\
\bar \alpha_j \circ \alpha_{i-2} & \text{for } i>j+1
\end{cases}
$
\item
\label{reln:CapQuadraticMaps}
$\displaystyle
\alpha_i \circ \varpi_{j,k} = 
\begin{cases}
\varpi_{j-2,k}\circ(\alpha_{i}\otimes\id_j) & \text{for }  i+1<j 
\\
\varpi_{j,k-2}\circ(\id_n\otimes\alpha_{i-j}) &\text{for }j<i+1<j+k
\\
\varpi_{j,k}\circ(\alpha_{i-k}\otimes \id_j) & \text{for } i+1> j+k
\end{cases}
$
\item
\label{reln:CupQuadraticMaps}
$\displaystyle
\bar \alpha_i \circ \varpi_{j,k} = 
\begin{cases}
\varpi_{j+2,k}\circ(\bar \alpha_i\otimes\id_j) & \text{for } i\leq j
\\
\varpi_{j,k+2}\circ(\id_n\otimes\bar \alpha_{i-j}) & \text{for } j\leq i\leq j+k
\\
\varpi_{j,k}\circ(\bar \alpha_{i-k}\otimes\id_j) & \text{for } i\geq j+k
\end{cases}
$
\item
\label{reln:EasyQuadraticMaps}
$\varpi_{i+j,k} \circ(\varpi_{i, j+\ell}\otimes \id_k) = \varpi_{i, j+k+\ell} \circ(\id_{i+m}\otimes \varpi_{j,k})$
\item
\label{reln:HardQuadraticMaps}
$\varpi_{i+j+k,\ell}\circ(\varpi_{i,j} \otimes \id_{\ell}) = \varpi_{i,j}\circ(\varpi_{i+k,\ell} \otimes \id_{j})\circ(\id_{i+k+m} \otimes \beta_{j,\ell})$
\item
\label{reln:RotationThetaMaps}
$\alpha_{n}\circ \cdots \circ \alpha_{2n-1} \circ \varpi_{n,n}\circ[(\bar\alpha_{n-1}\circ \cdots \circ \bar\alpha_0 \circ\eta)\otimes \id_n ] = \theta_{\cP[n]}$,
\end{enumerate}
where the twist $\theta$ is as in~\eqref{def:theta1}, $\id_n$ stands for $\id_{\cP[n]}$, and $\beta_{j,\ell}$ stands for $\beta_{\cP[j],\cP[\ell]}$.

Then the assignment $T\mapsto Z(T)$ given in Algorithm~\ref{alg:AssignMap} below endows $\cP$ with the structure of an anchored planar algebra in $\cC$.
\end{thm}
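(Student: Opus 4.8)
The plan is to show that the assignment $T\mapsto Z(T)$ produced by Algorithm~\ref{alg:AssignMap} is well defined --- independent of the choices made in running the algorithm, and invariant under isotopy of $T$ --- and then to verify, one family at a time, the three axioms of Definition~\ref{def: anchored planar algebra} (identity, composition, anchor dependence).

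For well-definedness I would argue as follows. Algorithm~\ref{alg:AssignMap} presents an anchored planar tangle $T$ as an iterated operadic composite of the generating tangles $u$, $\id_n$, $a_i$, $\bar a_i$, $p_{i,j}$ of Definition~\ref{defn:GeneratingTangles}, and then applies the corresponding composite of the given morphisms $\eta$, $\id$, $\alpha_i$, $\bar\alpha_i$, $\varpi_{i,j}$. Existence of at least one such presentation for every $T$ is a routine Morse-theoretic fact: after a generic perturbation with respect to a radial height function, the strands and anchor lines have finitely many critical points and the input disks occur at distinct heights, so $T$ can be sliced into elementary pieces. The content is that any two presentations of isotopic tangles are joined by a finite sequence of elementary moves, each respected by the assignment. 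These moves come in three kinds: (i) structural moves --- reordering independent slices, sliding a generator past an identity, unit cancellation --- which $Z$ respects because $\cC$ is a braided pivotal tensor category, using associativity and unitality of $\circ$ and $\otimes$ together with \ref{reln:UnitMap} and \ref{reln:EasyQuadraticMaps}; (ii) moves internal to the underlying unanchored planar tangle, which by the classical presentation of the planar operad \cite{math.QA/9909027,MR2903179} (cf.\ Proposition~\ref{prop:AnchoredRelations}) are generated by the cap/cup relations and their interactions with the $p_{i,j}$, matched by \ref{reln:CapMaps}--\ref{reln:CupQuadraticMaps}; and (iii) moves that alter the system of anchor lines while fixing the underlying planar tangle. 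The third kind is the genuinely new feature of the anchored setting: an isotopy may drag the anchor lines by an element of a ribbon braid group, braiding input circles past one another and past the base point $q_0$ and possibly introducing a full twist. The elementary such moves are the anchored rearrangement of Remark~\ref{rem:Quadratics} and the $2\pi$-rotation of an $\id_n$ anchor line, and hypotheses \ref{reln:HardQuadraticMaps} and \ref{reln:RotationThetaMaps} --- whose right-hand sides feature the braiding $\beta$ and the twist $\theta$ of $\cC$ --- say exactly that $Z$ is invariant under these.

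Once well-definedness is established, the axioms follow quickly. The identity axiom is immediate, since $\id_n$ is a generator with $Z(\id_n)=\id_{\cP[n]}$. The composition axiom $Z(T\circ_i S)=Z(T)\circ_i Z(S)$ (notation of Remark~\ref{rem:operadic composition of morphisms}) holds because the algorithm is compositional: a presentation of $T$ and a presentation of $S$ assemble into one of $T\circ_i S$, and the associated composite of morphisms factors as claimed by functoriality of $\otimes$ and associativity of $\circ$. For anchor dependence, the two tangles appearing in the braiding relation (respectively the twist relation) of Definition~\ref{def: anchored planar algebra} differ only in their anchor lines, so each identity is an instance of a type-(iii) move: the braiding relation comes from \ref{reln:HardQuadraticMaps}, and the twist relation is \ref{reln:RotationThetaMaps} read directly; in particular the triviality of $\theta_{\cP[0]}$ is \ref{reln:RotationThetaMaps} with $n=0$.

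The main obstacle is the bookkeeping of the type-(iii) moves in the well-definedness step: one must prove that the discrepancy between two isotopic anchored tangles sharing an underlying planar tangle is recorded by an element of the appropriate ribbon braid group, that this group is generated by the single ``hard quadratic'' braid move and the single rotation move, and hence that relations \ref{reln:HardQuadraticMaps} and \ref{reln:RotationThetaMaps} suffice on top of the classical planar relations. I would carry this out by a ``movie moves'' analysis --- reducing a general ambient isotopy to a sequence of births, deaths, saddles, strand slides, and the elementary braid/twist moves, and matching each move to a listed relation or to a structural identity in $\cC$. Everything else (the structural moves, the classical planar relations, and the axiom verifications above) is routine once this step is in place.
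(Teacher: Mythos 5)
Your outline has the right overall shape---reduce isotopy invariance to a finite list of elementary moves matched against \ref{reln:UnitMap}--\ref{reln:RotationThetaMaps}, then verify the three axioms---but it rests on a misreading of Algorithm~\ref{alg:AssignMap}, and that misreading conceals the real difficulty. The algorithm does \emph{not} present $T$ as a composite of the generating tangles: it decomposes only the standardly-anchored tangle $T_{\text{st}}$ into generators, records the actual anchor lines by a ribbon braid $\sigma_T$, and sets $Z(T):=Z(T_{\text{st}})\circ P(\sigma_T)$, with $P$ built from $\beta$ and $\theta$. With that definition the two anchor dependence axioms hold essentially by construction (Examples~\ref{ex:SwapAnchorDependence} and~\ref{ex:2PiRotation}: the two tangles in each axiom share the same $T_{\text{st}}$ and their ribbon braids differ by an $\varepsilon$, resp.\ a $\vartheta$), whereas \ref{reln:HardQuadraticMaps} and \ref{reln:RotationThetaMaps} are consumed in the isotopy-invariance proof: \ref{reln:HardQuadraticMaps} compensates the move \ref{M:1} exchanging the heights of two input discs (whose standard-form data differ both in the generator word and by an $\varepsilon$ in $\sigma_T$), and \ref{reln:RotationThetaMaps} enters the move \ref{M:6} through the full-twist computation of Example~\ref{ex: full twist tangle}. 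In your decomposition-only reading, the braiding axiom is not ``an instance of a type-(iii) move'': its two tangles are not isotopic, so no invariance statement applies; you would first have to exhibit the wrapped-anchor tangle as a composite of the generators (the generation-as-non-$\Sigma$-operad claim the paper states but deliberately never proves or uses) and then compute. Likewise the twist axiom is not \ref{reln:RotationThetaMaps} ``read directly'': that relation evaluates the tangle whose \emph{strands} wrap while the anchor line stays radial (Example~\ref{ex: full twist tangle}), and identifying it with the anchor-wrapped identity tangle already invokes the isotopy invariance you are in the middle of proving---exactly the circularity the $P(\sigma_T)$ factor is designed to break. (Also, the elementary anchored move you want is the one of Remark~\ref{rem: explanation of 7th and 8th rels}, not Remark~\ref{rem:Quadratics}, which records two non-isotopic tangles.)

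The second gap is that the central technical step is only promised. Your deferred ``movie moves'' analysis is precisely what Section~\ref{sec:StandardForm} supplies: the genericity conditions \ref{G:1}--\ref{G:4}, the codimension-one strata \ref{S:Fail1}--\ref{S:Fail4}, and a transversality-plus-finiteness argument (which needs the curvature hypothesis on the boundary germs $\mu_n$) showing that any isotopy between generic anchored tangles decomposes into finitely many moves \ref{M:1}--\ref{M:6}; Proposition~\ref{prop:IsotopyInvariance} then checks each move against the relations, and Proposition~\ref{prop:Gluing} establishes compatibility with operadic composition while tracking the ribbon-braid factors via $P(\sigma\circ_i\tau)=P(\sigma)\circ_i P(\tau)$. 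Without carrying out either that stratification argument or your completeness claim for the anchor-line moves, the proof is not complete: you have correctly located where the work lies, but the step you defer is the content of the theorem rather than routine bookkeeping.
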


The proof of Theorem \ref{thm:ConstructAPA} will be presented at the end of Section~\ref{sec:WellDefined}.

\begin{rems}\label{rem: explanation of 7th and 8th rels}
\mbox{}
\begin{enumerate}[label=(\arabic*)]
\item
The relation \ref{reln:HardQuadraticMaps} above is a variant of the first anchor dependence relation in Definition~\ref{def: anchored planar algebra}.
Consider the following non-generic tangle:
$$
\begin{tikzpicture}
	\pgfmathsetmacro{\high}{1.2};
	\pgfmathsetmacro{\hoffset}{.8};
	\pgfmathsetmacro{\low}{-.5};
	\pgfmathsetmacro{\medium}{.5};
	
	\draw ($ {-2}*(\hoffset,0)+(0,\low)+(0,.2)$) -- ($ {-2}*(\hoffset,0)+(0,\high) $);
	\node at ($ {-2}*(\hoffset,0)+(0,\high) + (0,.2) $) {\scriptsize{$i$}};
	\draw ($ {-1}*(\hoffset,0)+(0,\medium)+(0,.2)$) -- ($ {-1}*(\hoffset,0)+(0,\high) $);
	\node at ($ {-1}*(\hoffset,0)+(0,\high) + (0,.2) $) {\scriptsize{$j$}};
	\draw ($ {0}*(\hoffset,0)+(0,\low)+(0,.2)$) -- ($ {0}*(\hoffset,0)+(0,\high) $);
	\node at ($ {0}*(\hoffset,0)+(0,\high) + (0,.2) $) {\scriptsize{$k$}};
	\draw ($ {1}*(\hoffset,0)+(0,\medium)+(0,.2)$) -- ($ {1}*(\hoffset,0)+(0,\high) $);
	\node at ($ {1}*(\hoffset,0)+(0,\high) + (0,.2) $) {\scriptsize{$\ell$}};
	\draw ($ {2}*(\hoffset,0)+(0,\low)+(0,.2)$) -- ($ {2}*(\hoffset,0)+(0,\high) $);
	\node at ($ {2}*(\hoffset,0)+(0,\high) + (0,.2) $) {\scriptsize{$m$}};
	\draw[thick, red] ($ (0,\low) + (0,-.2) $) -- (0,-\high);
	\draw[thick, red] ($ (\hoffset, \medium) + (0,-.2) $) arc (180:270:.2cm) -- ($ 2*(\hoffset,0) + (0,\medium) + (0,-.4) $) arc (90:-90:.6cm) -- ($ (.1,.1) + (0,-\high) $) arc (90:180:.1cm);
	\draw[thick, red] ($ (-\hoffset, \medium) + (0,-.2) $) arc (180:270:.4cm) -- ($ 2*(\hoffset,0) + (0,\medium) + (0,-.6) $) arc (90:-90:.4cm) -- ($ (.3,.3) + (0,-\high) $) arc (90:180:.3cm);
	\roundNbox{unshaded}{(0,\low)}{.2}{{2*\hoffset}}{{2*\hoffset}}{}
	\roundNbox{unshaded}{(-\hoffset,\medium)}{.2}{.2}{.2}{}
	\roundNbox{unshaded}{(\hoffset,\medium)}{.2}{.2}{.2}{}
	\roundNbox{}{(0,0)}{\high}{\high}{\high}{}
\end{tikzpicture}
$$
which we can make generic in the following two ways:
$$
\begin{tikzpicture}[baseline=-.1cm]
	\pgfmathsetmacro{\high}{1.2};
	\pgfmathsetmacro{\hoffset}{.8};
	\pgfmathsetmacro{\low}{-.5};
	\pgfmathsetmacro{\medium}{.3};
	\pgfmathsetmacro{\mediumhigh}{.7};
	
	\draw ($ {-2}*(\hoffset,0)+(0,\low)+(0,.2)$) -- ($ {-2}*(\hoffset,0)+(0,\high) $);
	\node at ($ {-2}*(\hoffset,0)+(0,\high) + (0,.2) $) {\scriptsize{$i$}};
	\draw ($ {-1}*(\hoffset,0)+(0,\medium)+(0,.2)$) -- ($ {-1}*(\hoffset,0)+(0,\high) $);
	\node at ($ {-1}*(\hoffset,0)+(0,\high) + (0,.2) $) {\scriptsize{$j$}};
	\draw ($ {0}*(\hoffset,0)+(0,\low)+(0,.2)$) -- ($ {0}*(\hoffset,0)+(0,\high) $);
	\node at ($ {0}*(\hoffset,0)+(0,\high) + (0,.2) $) {\scriptsize{$k$}};
	\draw ($ {1}*(\hoffset,0)+(0,\mediumhigh)+(0,.2)$) -- ($ {1}*(\hoffset,0)+(0,\high) $);
	\node at ($ {1}*(\hoffset,0)+(0,\high) + (0,.2) $) {\scriptsize{$\ell$}};
	\draw ($ {2}*(\hoffset,0)+(0,\low)+(0,.2)$) -- ($ {2}*(\hoffset,0)+(0,\high) $);
	\node at ($ {2}*(\hoffset,0)+(0,\high) + (0,.2) $) {\scriptsize{$m$}};
	\draw[thick, red] ($ (0,\low) + (0,-.2) $) -- (0,-\high);
	\draw[thick, red] ($ (\hoffset, \mediumhigh) + (0,-.2) $) arc (180:270:.2cm) -- ($ 2*(\hoffset,0) + (0,\mediumhigh) + (0,-.4) $) arc (90:0:.6cm) -- ($ 2*(\hoffset,0) + (.6,\low) $) arc (0:-90:.6cm) -- ($ (.1,.1) + (0,-\high) $) arc (90:180:.1cm);
	\draw[thick, red] ($ (-\hoffset, \medium) + (0,-.2) $) arc (180:270:.2cm) -- ($ 2*(\hoffset,0) + (0,\medium) + (0,-.4) $) arc (90:-90:.4cm) -- ($ (.3,.3) + (0,-\high) $) arc (90:180:.3cm);
	\roundNbox{unshaded}{(0,\low)}{.2}{{2*\hoffset}}{{2*\hoffset}}{}
	\roundNbox{unshaded}{(-\hoffset,\medium)}{.2}{.2}{.2}{}
	\roundNbox{unshaded}{(\hoffset,\mediumhigh)}{.2}{.2}{.2}{}
	\roundNbox{}{(0,0)}{\high}{\high}{\high}{}
\end{tikzpicture}\
\,\,\,\text{ and }\,\,\,
\begin{tikzpicture}[baseline=-.1cm]
	\pgfmathsetmacro{\high}{1.2};
	\pgfmathsetmacro{\hoffset}{.8};
	\pgfmathsetmacro{\low}{-.5};
	\pgfmathsetmacro{\medium}{.7};
	\pgfmathsetmacro{\mediumhigh}{.3};
	
	\draw ($ {-2}*(\hoffset,0)+(0,\low)+(0,.2)$) -- ($ {-2}*(\hoffset,0)+(0,\high) $);
	\node at ($ {-2}*(\hoffset,0)+(0,\high) + (0,.2) $) {\scriptsize{$i$}};
	\draw ($ {-1}*(\hoffset,0)+(0,\medium)+(0,.2)$) -- ($ {-1}*(\hoffset,0)+(0,\high) $);
	\node at ($ {-1}*(\hoffset,0)+(0,\high) + (0,.2) $) {\scriptsize{$j$}};
	\draw ($ {0}*(\hoffset,0)+(0,\low)+(0,.2)$) -- ($ {0}*(\hoffset,0)+(0,\high) $);
	\node at ($ {0}*(\hoffset,0)+(0,\high) + (0,.2) $) {\scriptsize{$k$}};
	\draw ($ {1}*(\hoffset,0)+(0,\mediumhigh)+(0,.2)$) -- ($ {1}*(\hoffset,0)+(0,\high) $);
	\node at ($ {1}*(\hoffset,0)+(0,\high) + (0,.2) $) {\scriptsize{$\ell$}};
	\draw ($ {2}*(\hoffset,0)+(0,\low)+(0,.2)$) -- ($ {2}*(\hoffset,0)+(0,\high) $);
	\node at ($ {2}*(\hoffset,0)+(0,\high) + (0,.2) $) {\scriptsize{$m$}};
	\draw[thick, red] ($ (0,\low) + (0,-.2) $) -- (0,-\high);
	\draw[thick, red] ($ (\hoffset, \mediumhigh) + (0,-.2) $) arc (180:270:.1cm) -- ($ 2*(\hoffset,0) + (0,\mediumhigh) + (0,-.3) $) arc (90:-90:.5cm) -- ($ (.2,.2) + (0,-\high) $) arc (90:180:.2cm);
	\draw[thick, red] ($ (-\hoffset, \medium) + (0,-.2) $) -- ($  (-\hoffset, \mediumhigh) + (0,-.2) $) arc (180:270:.2cm) -- ($ 2*(\hoffset,0) + (0,\mediumhigh) + (0,-.4) $) arc (90:-90:.4cm) -- ($ (.3,.3) + (0,-\high) $) arc (90:180:.3cm);
	\roundNbox{unshaded}{(0,\low)}{.2}{{2*\hoffset}}{{2*\hoffset}}{}
	\roundNbox{unshaded}{(-\hoffset,\medium)}{.2}{.2}{.2}{}
	\roundNbox{unshaded}{(\hoffset,\mediumhigh)}{.2}{.2}{.2}{}
	\roundNbox{}{(0,0)}{\high}{\high}{\high}{}
\end{tikzpicture}
$$
The left and right tangles corresponds to the left and right hand sides of \ref{reln:HardQuadraticMaps}.
\item
The relation \ref{reln:RotationThetaMaps} above is a variant of the twist anchor dependence axiom:
$$
Z\left(
\begin{tikzpicture}[baseline=-.1cm, yscale = -1]
	\draw [thick, red] (0,0) -- (90:1cm);
	\draw[] (-90:.3cm) .. controls ++(270:.3cm) and ++(270:.5cm) .. (0:.5cm) .. controls ++(90:.8cm) and ++(90:.8cm) .. (180:.7cm) .. controls ++(270:.6cm) and ++(90:.4cm) .. (270:1cm);
	\draw[very thick] (0,0) circle (1cm);
	\draw[unshaded, very thick] (0,0) circle (.3cm);
	\node at (-80:.8cm) {\scriptsize{$n$}};
\end{tikzpicture}
\right)
=\theta_{\cP[n]}
$$
\end{enumerate}
\end{rems}

\subsection{Generic tangles and tangles in standard form}
\label{sec:StandardForm}

The moduli space of anchored planar tangles is an infinite dimensional manifold $\mathbb M$ modelled on a Fr\'echet space.
In this section, we define a stratification of $\mathbb M$ and study the associated notion of \emph{generic} anchored planar tangle.
We then prove that any two generic anchored planar tangles can be connected by a finite sequence of certain moves.
Later on, we define a notion of anchored planar tangle with \emph{underlying tangle in standard form},
and describe a procedure which takes a generic anchored planar tangle and brings it into standard form.

Recall that, before stating Definitions \ref{def:planar tangle} and \ref{defn:AnchoredPlanarTangle} (the definitions of planar tangles and of anchored planar tangles), we had assumed the data of a point $q\in\partial \mathbb D$,
and of certain collections of germs of arcs $\mu_n=(\mu_n^{1},\ldots,\mu_n^{n})$.
It will be convenient to make a number of technical restrictions to the above data:
we assume that $q$ is at the bottom of $\mathbb D$ (angle $-\pi/2$),
that the arcs $\mu_n^i$ cross the boundary of $\mathbb D$ orthogonally, that they have everywhere non-zero curvature,
and that they intersect $\partial\mathbb D$ only in the top hemisphere (angles strictly between $0$ and $\pi$).

\begin{defn}\label{def: generic APT}
An anchored planar tangle is called \emph{generic} if it satisfies:
\begin{enumerate}[label=(G\arabic*)]
\item
\label{G:1}
the $y$-coordinates of the local maxima/minima of strings as well as those of the centers of the input disks are all distinct.
\item
\label{G:2}
if a string has a horizontal tangent at some point, then the second derivative at that point does not vanish.
\item
\label{G:3}
no boundary point $p\in\partial X$ is on the equator of a circle (angle 0 or $\pi$).
\item
\label{G:4}
no anchor point $q_i$ is at the top of a circle (angle $\pi/2$).
\end{enumerate}
\end{defn}

We let $\mathbb M_0\subset \mathbb M$ denote the open dense subset consisting of anchored planar tangles that satisfy \ref{G:1} -- \ref{G:4}.
The simplest way in which a tangle can fail to be generic is if:

\begin{enumerate}[label=(S\arabic*)]
\item
\label{S:Fail1}
the $y$-coordinates of two centers of circles and/or critical points of strands coincide; the tangle otherwise satisfies 
\ref{G:2}, \ref{G:3}, \ref{G:4}.
\item
\label{S:Fail2}
a strand has a horizontal inflection point (and the third derivative at that point does not vanish); the tangle otherwise satisfies 
\ref{G:1}, \ref{G:3}, \ref{G:4}.
\item
\label{S:Fail3}
a strand is attached on the equator of a circle; the tangle otherwise satisfies
\ref{G:1}, \ref{G:2}, \ref{G:4}.
\item
\label{S:Fail4}
an anchor point is on the top of its circle; the tangle otherwise satisfies
\ref{G:1}, \ref{G:2}, \ref{G:3}.
\end{enumerate}

Each of the above conditions \ref{S:Fail1} -- \ref{S:Fail4} determines a submanifold of $\mathbb M$ of codimension one (it is locally determined by one equation).
Let $\mathbb M_1$ denote their union.
Its boundary $\mathbb M_{\ge2}:=\partial \mathbb M_1\subset \mathbb M$ has codimension $\geq 2$, leading to a stratification
\[
\mathbb M \,=\, \mathbb M_0\cup \mathbb M_1\cup \mathbb M_{\ge 2}
\]
of the moduli space of anchored planar tangles.

Let $T_1,T_2\in\mathbb M_0$ be two generic tangles which are isotopic.
An isotopy between $T_1$ and $T_2$ is a path $[0,1]\to\mathbb M$.
By transversality, such a path can be isotoped to one which avoids $\mathbb M_{\ge 2}$ and which intersects $\mathbb M_1$ transversely.
Each transversal intersection correspond to one the following moves:
\begin{enumerate}[label=(M\arabic*)]
\item
\label{M:1}
exchange the relative order of the $y$-coordinates of the centers of two input discs.
\item
\label{M:2}
exchange the order of the $y$-coordinates of the center of a disc and of a critical point of a string
(two cases, depending on whether the critical point is a cup  or a cap).
\item
\label{M:3}
exchange the relative order of the heights of two critical points of strings
(four sub-cases: cup/cup, cup/cap, cap/cup, cap/cap).
\item
\label{M:4}
replace a cup followed by a cap by a string that goes straight down : $\tikz[scale=.6, baseline=-4]{\draw (-.1,-.5) to[out=90,in=-90] (-.15,-.1) to[out=90,in=115, looseness=2] (0,0) to[out=-65,in=-90, looseness=2] (.15,.1) to[out=90,in=-90] (.1,.5);} \to \tikz[scale=.6, baseline=-4]{\draw (0,-.5) -- (0,.5);}$\,\,
(this move comes in two flavors which are each other's mirror images).
\item
\label{M:5}
move a strand's boundary point past the equator of an input circle
(two variants, depending on whether the boundary point is on the left or on the right of the circle).
\item
\label{M:6}
swing an anchor point from slightly to the right to slightly to the left of the north pole.
\end{enumerate}

\noindent The moves \ref{M:1} -- \ref{M:3} correspond to crossing a component of the stratum \ref{S:Fail1},
and the moves \ref{M:4} -- \ref{M:6} correspond to crossing the strata \ref{S:Fail2} -- \ref{S:Fail4}, respectively.

The condition about the arcs $\mu_n$ having non-zero curvature is needed in the following technical lemma:
\begin{lem}
Let $\gamma:[0,1]\to\mathbb M$ be a smooth path which avoids $\mathbb M_{\ge 2}$ and intersects $\mathbb M_1$ transversely.
Then $\gamma$ intersects $\mathbb M_1$ in finitely many points.
\end{lem}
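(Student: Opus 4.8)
The plan is to show that the set $I:=\{t\in[0,1]:\gamma(t)\in\mathbb{M}_1\}$ is finite; since $\gamma$ is defined on the compact interval $[0,1]$, it suffices to produce a contradiction from the existence of an accumulation point of $I$. First I would record the two structural facts about the stratification $\mathbb{M}=\mathbb{M}_0\cup\mathbb{M}_1\cup\mathbb{M}_{\ge 2}$ that do all the work. The first is a closure statement: $\mathbb{M}_{\ge 2}$ is by construction the boundary of $\mathbb{M}_1$, so $\overline{\mathbb{M}_1}\subseteq\mathbb{M}_1\cup\mathbb{M}_{\ge 2}$. The second is local: the four loci of types \ref{S:Fail1}--\ref{S:Fail4} are pairwise disjoint, since each one requires the three genericity conditions it does not itself violate to hold (for instance \ref{S:Fail1} requires \ref{G:2}, \ref{G:3}, \ref{G:4}, while \ref{S:Fail2}, \ref{S:Fail3}, \ref{S:Fail4} each violate one of these). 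Consequently every $p\in\mathbb{M}_1$ has a neighbourhood $U$ in $\mathbb{M}$ such that $\mathbb{M}_1\cap U$ is a single codimension-one submanifold of $U$, cut out by a submersion $\rho_p\colon U\to\mathbb{R}$; this is exactly the assertion, made when the strata were introduced, that each of \ref{S:Fail1}--\ref{S:Fail4} ``is locally determined by one equation''.

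Granting these, the argument is short. Suppose $I$ were infinite. By compactness of $[0,1]$ it has an accumulation point $t_*$, and by continuity of $\gamma$ we get $\gamma(t_*)\in\overline{\mathbb{M}_1}\subseteq\mathbb{M}_1\cup\mathbb{M}_{\ge 2}$. Since $\gamma$ avoids $\mathbb{M}_{\ge 2}$ by hypothesis, $\gamma(t_*)\in\mathbb{M}_1$, so $t_*\in I$. Take a neighbourhood $U$ of $\gamma(t_*)$ and a defining submersion $\rho:=\rho_{\gamma(t_*)}$ as above, and choose $\varepsilon>0$ with $\gamma\big((t_*-\varepsilon,t_*+\varepsilon)\big)\subseteq U$. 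Transversality of $\gamma$ to $\mathbb{M}_1$ at $t_*$ means exactly that $(\rho\circ\gamma)'(t_*)\neq 0$, so the smooth function $\rho\circ\gamma$ has an isolated zero at $t_*$; shrinking $\varepsilon$, we have $\rho(\gamma(t))\neq 0$ whenever $0<|t-t_*|<\varepsilon$, hence $\gamma(t)\notin\mathbb{M}_1\cap U$, and since $\gamma(t)\in U$ this forces $\gamma(t)\notin\mathbb{M}_1$, i.e.\ $t\notin I$. This contradicts $t_*$ being an accumulation point of $I$, so $I$ is finite, and $\gamma$ meets $\mathbb{M}_1$ in finitely many points.

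The only input beyond formalities is the local one: that near a point of $\mathbb{M}_1$ the relevant degeneracy is codimension \emph{exactly} one, so that it really is cut out by a submersion. This is where the standing hypothesis that the germs $\mu_n$ have nowhere-vanishing curvature enters. For a stratum of type \ref{S:Fail3}, a boundary point sitting on the equator of an input circle forces the adjacent strand, which is tangent to a copy of $\mu_k$ to infinite order and therefore orthogonal to that circle, to have a horizontal tangent at that very point; the non-vanishing curvature of $\mu_k$ guarantees that this horizontal tangent is non-degenerate (non-vanishing second derivative) rather than an inflection, so the configuration is governed by the single equation ``the boundary point is at angle $0$ (or $\pi$)''. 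Were $\mu_k$ allowed to have a vanishing curvature, this configuration would carry an extra equation and belong in $\mathbb{M}_{\ge 2}$, destroying both the codimension count and the submanifold structure used above (and similarly destabilising the move \ref{M:5}). I expect this verification — that \ref{S:Fail1}--\ref{S:Fail4} carve out honest codimension-one submanifolds of the Fréchet manifold $\mathbb{M}$ whose common boundary is the higher-codimension part $\mathbb{M}_{\ge 2}$ — to be the main technical obstacle: it is a jet-transversality computation, routine in finite dimensions but requiring some care in the Fréchet setting, and the curvature assumption is precisely what makes the count come out right.
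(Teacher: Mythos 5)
There is a genuine gap, and it sits exactly where you locate the ``main technical obstacle'': the claim that every $p\in\mathbb M_1$ has a neighbourhood $U$ such that $\mathbb M_1\cap U$ is a single closed codimension-one sheet cut out by a submersion. Pairwise disjointness of the loci (S1)--(S4) does not give this: disjoint strata can still accumulate on one another, and here they genuinely threaten to. Near a point of (S3) (a strand attached at the equator of an input circle), an interior cup or cap is being born at, or absorbed into, the boundary; at nearby tangles this critical point can participate in height coincidences of type (S1) (or degenerations of type (S2)), while in the limit it escapes into the boundary circle, so the limiting tangle need not violate (G1) and need not lie in $\mathbb M_{\ge 2}$. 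Hence, a priori, a path transverse to $\mathbb M_1$ and avoiding $\mathbb M_{\ge 2}$ could cross sheets of (S1)/(S2) infinitely often, accumulating at an (M5) event; this is precisely the failure mode the paper has in mind when it warns that the lemma is false without the curvature hypothesis. Your compactness/transversality argument (isolation of a transverse zero of $\rho\circ\gamma$) is fine, but it only applies once local closedness of $\mathbb M_1$ is known, and near (S3) points that is essentially the whole content of the lemma --- so the proposal assumes what it needs to prove.

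Relatedly, your account of where the curvature enters is mislocated, and your proof takes a different (pointwise-local) route than the paper's. Nonvanishing curvature is not needed to make (S3) itself codimension one (``attachment point at angle $0$ or $\pi$'' is one equation in any case); it is needed to control how interior critical points of the strands are created and destroyed at the boundary as the tangle varies. The paper works along the path rather than pointwise in $\mathbb M$: it forms the surface $\underline X\subset[0,1]\times\mathbb D$ swept by the strands, shows that the locus $C$ of cups and caps is a compact $1$-manifold with boundary, smooth up to $\partial\underline X$, with the projection to $[0,1]$ Morse and regular at $\partial C$ (this is where orthogonality and nonzero curvature of the $\mu_n$ are used), deduces finiteness of (M4) events from Morse theory on the compact $C$, and then rules out accumulation of (M2)--(M3) events either at interior points of $C$ (transversality with $\mathbb M_1$ would fail) or at $\partial C$ (the limit would fail (G1) and so fall into $\mathbb M_{\ge 2}$, which $\gamma$ avoids); the events (M1), (M5), (M6) are handled by the easy transversality argument because the relevant data persist along the path. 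To repair your version you would need an explicit normal-form analysis near (S3) points --- e.g.\ showing that with nonzero curvature the newborn critical value differs from the circle's centre height to first order in the attachment angle, so that no (S1)/(S2) sheets approach an (S3) point --- which is exactly the work your sketch defers.
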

\begin{proof}
First of all, there can be at most finitely many \ref{M:1} events,
as any accumulation point of such events would be a point where $\gamma$ fails to be transverse to the submanifold $\mathbb M_1$.
Similarly, there can be at most finitely many occurrences of the moves \ref{M:5} and \ref{M:6}.

The situation for the moves \ref{M:2} -- \ref{M:4} is different, and the lemma would be false in the absence of the curvature condition on the arcs $\mu_n$.

Write $(T_t,X_t,Q_t,A_t)$ for the tangle $\gamma(t)$.
Let $\underline X\subset [0,1]\times\mathbb D$ be the $2$-manifold which is the union of all the $\{t\}\times X_t$, and let
\[
\partial \underline X:=\bigcup_{t\in[0,1]}\{t\}\times \partial X_t\qquad\qquad
\underline {\mathring X}:=\underline X\setminus \partial \underline X=\bigcup_{t\in[0,1]}\{t\}\times \mathring X_t.
\]
By Morse theory, the cups and caps form a $1$-dimensional submanifold $\mathring C\subset\underline {\mathring X}$.
The projection map $\mathring C\to [0,1]$ is then itself a Morse function, whose critical points are the \ref{M:4} events.
Let $C$ be the closure of $\mathring C$ in $\underline X$.
By our condition that the $\mu_n$ cross $\partial \mathbb D$ orthogonally, one can identify the points of $C\cap \partial \underline X$ with the \ref{M:5} events.
Moreover, because of our condition on the curvature of the $\mu_n$, $C$ is a submanifold of $\underline X$ (in the sense of manifolds with boundary),
and $C\cap \partial \underline X$ consists of regular points for the projection to $[0,1]$.
The projection $C\to [0,1]$ is therefore Morse on all of $C$.

Since $C$ is a manifold and since it is compact, the projection map $C\to [0,1]$ has at most finitely critical points.
Our path $\gamma$ thererfore sees at most finitely many \ref{M:4} events.

Finally, there can be at most finitely many \ref{M:2} -- \ref{M:3} events.
An accumulation point of such events would either be a point of $\mathring C$ where 
the transversality condition with $\mathbb M_1$ fails, or a point of $\partial C$ where the condition \ref{G:1}
fails in the definition of \ref{S:Fail3}\,$\subset \mathbb M_1$.
\end{proof}

We have proved:

\begin{prop}
Any two generic anchored planar tangles can be connected by a finite sequence of the moves \ref{M:1} -- \ref{M:6}.\hfill $\square$
\end{prop}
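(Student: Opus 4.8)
The plan is simply to assemble the pieces set up in this subsection. First I would take the isotopy hypothesis seriously: $T_1$ and $T_2$ being isotopic means there is some path $\gamma_0\colon[0,1]\to\mathbb M$ from $T_1$ to $T_2$. Since $\mathbb M_{\ge 2}$ has codimension at least two and $\mathbb M_1$ is a codimension-one submanifold — each of the degenerations \ref{S:Fail1}--\ref{S:Fail4} being locally cut out by a single real equation — a standard transversality/general-position argument, applied rel endpoints, deforms $\gamma_0$ to a path $\gamma$ that misses $\mathbb M_{\ge 2}$ altogether and crosses $\mathbb M_1$ transversely at interior points of its top-dimensional strata. The endpoints are already in the open set $\mathbb M_0$, so no perturbation near $t=0,1$ is needed.

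Next I would invoke the finiteness lemma just proved: $\gamma$ meets $\mathbb M_1$ in finitely many parameter values $0<t_1<\dots<t_N<1$. On each complementary interval $\gamma$ stays in $\mathbb M_0$, where the combinatorial data attached to a generic tangle (the vertical order of the strand critical points and of the input-disk centres, the side of the equator on which each boundary point sits, and the position of each anchor point relative to the north pole) is locally constant; so those stretches of $\gamma$ contribute nothing. A transverse crossing of $\mathbb M_1$ at $t_i$ flips exactly one of these pieces of data in exactly one elementary fashion, and the enumeration \ref{M:1}--\ref{M:6} records precisely which move this is: crossing a component of \ref{S:Fail1} gives one of \ref{M:1}--\ref{M:3}, while crossing \ref{S:Fail2}, \ref{S:Fail3}, \ref{S:Fail4} gives \ref{M:4}, \ref{M:5}, \ref{M:6} respectively. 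Reading these $N$ moves in order produces the required finite sequence from $T_1$ to $T_2$.

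The only point requiring care is the transversality step itself, given that $\mathbb M$ is infinite-dimensional and only Fr\'echet. The way I would handle this is to note that, near any tangle, conditions \ref{G:1}--\ref{G:4} and their codimension-one failures \ref{S:Fail1}--\ref{S:Fail4} constrain only a finite-dimensional quotient of the local configuration space — the heights of the finitely many critical points and disk centres, together with the angular positions of the finitely many boundary and anchor points — so the classical jet-transversality theorem applies in that finite-dimensional slice, the remaining infinite-dimensional directions being unconstrained. Everything else is bookkeeping: matching each transverse wall-crossing with its move, which is exactly the content of the list \ref{M:1}--\ref{M:6} derived above. I expect this Fr\'echet-versus-finite-dimensional reduction to be the only genuine subtlety; the combinatorics of the six moves is routine.
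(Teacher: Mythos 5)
Your argument is correct and follows the paper's own route exactly: perturb the isotopy (rel endpoints) to avoid $\mathbb M_{\ge 2}$ and meet $\mathbb M_1$ transversely, invoke the finiteness lemma for the number of crossings, and identify each transverse crossing of a component of \ref{S:Fail1}--\ref{S:Fail4} with the corresponding move among \ref{M:1}--\ref{M:6}. Your remark on reducing the Fr\'echet-space transversality to the finitely many constrained directions is a reasonable elaboration of a step the paper leaves implicit, but it does not change the argument.
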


Generic tangles can be brought to a certain standard form by an algorithm that we now describe.

\begin{defn}\label{def: anchored planar tangle : underlying tangle in standard form}
A generic anchored planar tangle is said to have \emph{underlying tangle in standard form} if:
\begin{itemize}
\item
all the anchor points are on the bottom of the disks (by our conditions on the $\mu_n$,
this implies that all the strings are attached to the top halves of the input disks).
\item
the projections onto the $y$-axis of all the input disks, and of all the local maxima/minima of the strings are disjoint.
\end{itemize}
\end{defn}

\begin{figure}[!ht]
$$
\begin{tikzpicture}[scale=.55]
	\clip (-6.15,-6.15) rectangle (8.15,6.15);
	\roundNbox{unshaded}{(0,0)}{6}{0}{0}{};
	\draw (-3.5,6)--(-3.5,4);
	\draw (-3,4) -- (-3,5) arc (180:0:.5cm) -- (-2,4);
	\draw (-1.5,4) -- (-1.5,4.5) arc (180:0:.5cm) --(-.5,4) .. controls ++(270:1cm) and ++(90:1cm) .. (0.5,2);
	\draw (-2,-3.5)--(-2,-3) arc (0:180:1cm) -- (-4,-3) .. controls ++(270:5cm) and ++(270:5cm) .. (5,0) arc (0:180:.5cm) -- (4,0); 
	\draw (1.5,2) -- (1.5,6);
	\draw (3.3,0) -- (3.3,2) arc (0:180:.5cm);
	\draw (-2.5, -3.5)--(-2.5,-3) arc(0:180:.5cm) .. controls ++(270:3cm) and ++(270:3cm) .. (2,-2.5) arc (0:180:1cm)  -- (0,-3.5);	
	\draw (-5,6) .. controls ++(270:7cm) and ++(90:7cm) .. (-1,-3.5);
	\roundNbox{unshaded}{(-2.5,4)}{.4}{1}{1}{};
	\roundNbox{unshaded}{(-1,-3.5)}{.4}{1.5}{1.5}{};
	\roundNbox{unshaded}{(3.3,-.4)}{.4}{1}{1}{};
	\roundNbox{unshaded}{(1.5,1.6)}{.4}{1}{1}{};
	\draw[thick, red] (1.5,1.2) .. controls ++(270:1cm) and ++(-90:2cm) ..
	                          (-.8,1.9) .. controls ++(90:1cm) and ++(-90:1cm) .. 
	                          (-.7,3.5) .. controls ++(90:1cm) and ++(0:1cm) ..
	                          (-2.4,5)  .. controls ++(180:3.2cm) and ++(90:1cm) .. (-5,1.5) --
	                          (-5,-1.5) .. controls ++(270:4.5cm) and ++(120:1.5cm) .. (0,-6);
	\draw[thick, red] (-1,-3.9) .. controls ++(270:1cm) and ++(90:1cm) .. (0,-6);
	\draw[thick, red] (3.3,-.8) .. controls ++(270:4cm) and ++(60:1cm) .. (0,-6);
	\draw[thick, red] (-2.5,3.6)  .. controls ++(270:5cm) and ++(90:4cm) .. (1.5,-3) .. controls ++(270:2cm) and ++(70:1cm) .. (0,-6);
	\filldraw[red] (-2.5,3.6) circle (.1cm);
	\filldraw[red] (-1,-3.9) circle (.1cm);
	\filldraw[red] (3.3,-.8) circle (.1cm);
	\filldraw[red] (1.5,1.2) circle (.1cm);
	\filldraw[red] (0,-6) circle (.1cm);
\draw (8.07,-6) -- (8.07,6); \draw[blue, dashed, very thin] (8,5.5) -- +(-10.5,0); \fill[blue] (8,5.5) circle (.07); \draw[blue, dashed, very thin] (8,5) -- +(-9,0); \fill[blue] (8,5) circle (.07); \filldraw[blue, dashed, very thin] (8,4.4) -- +(-9.3,0); \filldraw[blue, dashed, very thin] (8,3.6) -- +(-9.3,0); \draw[blue, line width=2.2, line cap=round] (8,3.6) -- (8,4.4); \draw[blue, dashed, very thin] (8,2.5) -- +(-5.3,0); \fill[blue] (8,2.5) circle (.07); \filldraw[blue, dashed, very thin] (8,2) -- +(-5.3,0); \filldraw[blue, dashed, very thin] (8,1.2) -- +(-5.3,0); \draw[blue, line width=2.2, line cap=round] (8,2) -- (8,1.2); \draw[blue, dashed, very thin] (8,0.5) -- +(-3.5,0); \fill[blue] (8,0.5) circle (.07); \filldraw[blue, dashed, very thin] (8,0) -- +(-3.5,0); \filldraw[blue, dashed, very thin] (8,-.8) -- +(-3.5,0); \draw[blue, line width=2.2, line cap=round] (8,0) -- (8,-.8); \draw[blue, dashed, very thin] (8,-1.5) -- +(-7,0); \fill[blue] (8,-1.5) circle (.07); \draw[blue, dashed, very thin] (8,-2) -- +(-11,0); \fill[blue] (8,-2) circle (.07); \draw[blue, dashed, very thin] (8,-2.5) -- +(-11,0); \fill[blue] (8,-2.5) circle (.07); \filldraw[blue, dashed, very thin] (8,-3.1) -- +(-7.5,0); \filldraw[blue, dashed, very thin] (8,-3.9) -- +(-7.5,0); \draw[blue, line width=2.2, line cap=round] (8,-3.1) -- (8,-3.9); \draw[blue, dashed, very thin] (8,-5.01) -- +(-9,0); \fill[blue] (8,-5.01) circle (.07); \draw[blue, dashed, very thin] (8,-5.57) -- +(-9.3,0); \fill[blue] (8,-5.57) circle (.07); \end{tikzpicture}
$$
\caption{An example of an anchored planar tangle with underlying tangle in standard form (except that the circles are drawn more like rectangles).
The $y$-projections of the input circles and of the maxima/minima of strings are indicated in blue. They are all disjoint, as required by the definition.
}
\label{fig:StandardForm}
\end{figure}
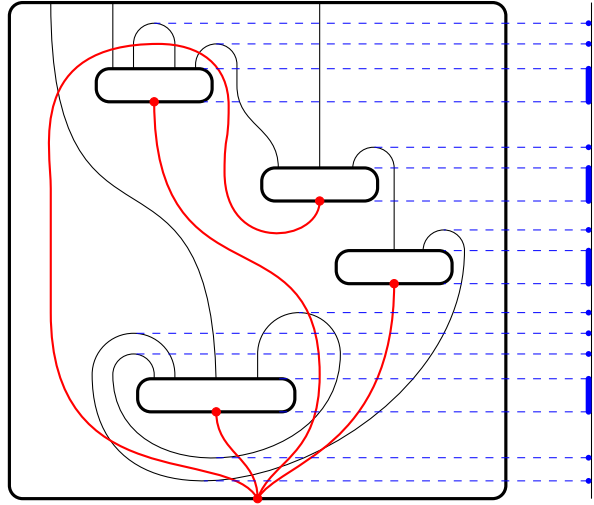

The following algorithm takes as input a generic anchored planar tangle (Definition \ref{def: generic APT}) and produces as output an anchored planar tangle whose underlying tangle is in standard form (Definition \ref{def: anchored planar tangle : underlying tangle in standard form}).
The output of the algorithm is well defined up to isotopy within the space of anchored planar tangles whose underlying tangle is in standard form.

\begin{alg}[Standard form]
\label{alg:StandardForm}
Let $T$ be a generic anchored planar tangle.
\begin{enumerate}[label=(\arabic*)]
\item
First, we shrink all input circles so that their $y$-projections do not overlap with those of the other input disks, nor with
those of the local maxima/minima of the strings.
The strings attached to these circles are extended by straight lines:

$$
\begin{tikzpicture}[baseline = -.1cm]
	\draw (40:.5cm) -- (40:1cm) .. controls ++(40:.2cm) and ++(270:.2cm) .. (1,1.5);
	\draw (120:.5cm) -- (120:1cm) .. controls ++(120:.2cm) and ++(270:.2cm) .. (-1,1.5);
\pgftransformrotate{180}
	\draw[thick, red] (70:.5cm) -- (70:1cm) .. controls ++(70:.2cm) and ++(270:.2cm) .. (.5,1.5);
\pgftransformrotate{180}
	\draw[very thick, unshaded] (0,0) circle (1cm);
\pgftransformrotate{180}
	\filldraw[red] (70:1cm) circle (.05cm);
	\node at (80:1.2cm) {\anchor};
\pgftransformrotate{180}
	\draw (-1.8,-1.5) -- (-1.8,.3) arc (180:0:.3cm) -- (-1.2,-1.5);
	\draw (1.8,1.5) -- (1.8,-.3) arc (0:-180:.3cm) -- (1.2,1.5);
\end{tikzpicture}
\quad\longmapsto\quad
\begin{tikzpicture}[baseline = -.1cm]
	\draw (40:.4cm) -- (40:1cm) .. controls ++(40:.2cm) and ++(270:.2cm) .. (1,1.5);
	\draw (120:.4cm) -- (120:1cm) .. controls ++(120:.2cm) and ++(270:.2cm) .. (-1,1.5);
\pgftransformrotate{180}
	\draw[thick, red] (70:.4cm) -- (70:1cm) .. controls ++(70:.2cm) and ++(270:.2cm) .. (.5,1.5);
\pgftransformrotate{180}
	\draw[very thick, unshaded] (0,0) circle (.4cm);
\pgftransformrotate{180}
	\filldraw[red] (70:.4cm) circle (.05cm);
	\node at (85:.65cm) {\anchor};
\pgftransformrotate{180}
	\draw (-1.8,-1.5) -- (-1.8,.3) arc (180:0:.3cm) -- (-1.2,-1.5);
	\draw (1.8,1.5) -- (1.8,-.3) arc (0:-180:.3cm) -- (1.2,1.5);
	\draw[dotted] (0,0) circle (1cm);
\draw[blue, dashed, very thin]
(2.1,.4) -- (0,.4)
(2.1,-.4) -- (0,-.4)
(2.1,-.6) -- (1.5,-.6)
(2.1,.6) -- (-1.5,.6);
\end{tikzpicture}
$$

\item
Next, we rotate the input circles so as to let the anchor points sink to the bottom.
The direction of rotation depends on whether the anchor point is in the left half of right half of the input circle.
The boundary points rotate by the same amount (by our choice of $\mu_n$, this implies that they end up above the equator).
At the same time, we further shrink the input circles, so as make sure that the new critical $y$-values satisfy the conditions of a tangle in standard form
(Definition \ref{def: anchored planar tangle : underlying tangle in standard form}),
and that no unnecessary new maxima/minima are introduced.

$$
\begin{tikzpicture}[baseline = -.1cm]
	\draw[dotted] (110:1.3cm) -- (-70:1.3cm)(180:1.3cm) -- (0,0);
	\draw[dotted] (0,0) circle (1.3cm);
	\draw[very thick, unshaded] (0,0) circle (.5cm);
	\filldraw[red] (20:.5cm) circle (.05cm);
	\node at (40:.73cm) {\anchor};
	\draw[thick, red] (20:.5cm) -- (20:1.5cm);
	\draw (200:.5cm) -- (200:1.5cm);
	\draw (150:.5cm) -- (150:1.5cm);
	\draw (250:.5cm) -- (250:1.5cm);
	\draw[thick, blue] (-70:1.7cm) arc (-70:-175:1.7cm);
	\draw[thick, blue] (175:1.7cm) arc (175:110:1.7cm);
	\node at (-127:2cm) {\textcolor{blue}{$I_-$}};
	\node at (145:2cm) {\textcolor{blue}{$I_+$}};
\draw[red, thick, line join = round, ->] (10:.35) arc (10:-75:.35) -- ++(-159:.01);
\end{tikzpicture}
\,\,\,\longmapsto\,\,\,
\begin{tikzpicture}[baseline = -.1cm]
	\draw[dotted] (-1.3,0) -- (-.2,0) (.2,0) -- (1.3,0);
	\draw[very thick] (0,0) circle (.2cm);
	\filldraw[red] (270:.2cm) circle (.05cm);
	\node at (270:.5cm) {\anchor};
	\draw[thick, red] (20:1.5cm) -- (20:1.3cm) .. controls ++(200:.25cm) and ++(240-180:.2cm) .. (0:.7cm)  .. controls ++(240:.5cm) and ++(270:.3cm) .. (270:.2cm) ;
	\draw (250:1.5cm) -- (250:1.3cm) .. controls ++(200-130:.4cm) and ++(240-180-130-50:.4cm) .. (-20-130-50:.5cm)  .. controls ++(240-130-50:.2cm) and ++(270-130-40:.15cm) .. (140:.2cm) ;
	\draw (200:1.5cm) -- (200:1.3cm) .. controls ++(200-180:.25cm) and ++(240-10:.2cm) .. (-20-180+10:.72cm)  .. controls ++(240-180-5:.5cm) and ++(270-180:.2cm) .. (90:.2cm) ;
	\draw (150:1.5cm) -- (150:1.3cm) .. controls ++(200-230:.3cm) and ++(240-180-230-20:.2cm) .. (-20-230:.5cm)  .. controls ++(240-230-20:.2cm) and ++(270-230+20:.2cm) .. (40:.2cm);
	\draw[dotted] (0,0) circle (1.3cm);
\end{tikzpicture}
$$

No new local maxima/minima are created on the strings attached above the equator (attached in the blue interval $I_+$).
The strings attached below the equator (in the blue interval $I_-$) acquire exactly one new local maximum.
\end{enumerate}
\end{alg}

\subsection{The ribbon braid group}
\label{sec:RibbonBraidGroup}

The ribbon braid group $\cR\cB_n=\cB_n\ltimes\mathbb Z^n$ is the semi-direct product of the braid group $\cB_n$ with the free abelian group of rank $n$
with respect to the action that permutes the coordinates.

\begin{defn}
The ribbon braid group is the group $\cR\cB_n$ with generators $\varepsilon_1,\dots, \varepsilon_{n-1}$, $\vartheta_1,\dots, \vartheta_n$ and relations:
\begin{itemize}
\item 
$\varepsilon_i \varepsilon_{i+1} \varepsilon_i = \varepsilon_{i+1} \varepsilon_i \varepsilon_{i+1}$ for all $i=1,\dots, n-1$,
\item
$\varepsilon_i \varepsilon_j = \varepsilon_j \varepsilon_i $ if $|i-j|>1$,
\item
$\varepsilon_i \vartheta_i=\vartheta_{i+1}\varepsilon_i,\quad \varepsilon_i \vartheta_{i+1}=\vartheta_i\varepsilon_i$,
\item
$\varepsilon_i \vartheta_j=\vartheta_j\varepsilon_i$ for $j\not\in\{i,i+1\}$.
\end{itemize}\smallskip
\end{defn}

\noindent
The elements of $\cR\cB_n$ are best visualized as ribbons that braid among each other, and twirl on themselves:
\[
\varepsilon_i\,=\,\,
\begin{tikzpicture}[baseline=.8cm, scale=1.2]
\draw (-.5,0) rectangle ++(.1,2);
\draw (0,0) rectangle ++(.1,2);
\draw (.5,0) rectangle ++(.1,2);
\filldraw[fill=white] (1.5,0) -- (1.6,0) .. controls ++(90:.7) and ++(-90:.75) .. (1.1,2) -- (1,2) .. controls ++(-90:.75) and ++(90:.7) .. (1.5,0);
\filldraw[fill=white] (1,0) -- (1.1,0) .. controls ++(90:.75) and ++(-90:.7) .. (1.6,2) -- (1.5,2) .. controls ++(-90:.7) and ++(90:.75) .. (1,0);
\draw (2,0) rectangle ++(.1,2);
\draw (2.5,0) rectangle ++(.1,2);
\draw (3,0) rectangle ++(.1,2);
\node[above] at (-.5 + .05,-.5) {$\scriptstyle 1$}; 
\node[above] at (.05,-.5) {$\scriptstyle 2$}; 
\node[above] at (.5 + .05,-.5-.05) {$\scriptstyle \cdots$}; 
\node[above] at (1 + .05,-.5) {$\scriptstyle i$}; 
\node[above] at (1.5 + .05,-.5-.03) {$\scriptstyle i+1$}; 
\node[above] at (2.3 + .05,-.5-.05) {$\scriptstyle \cdots$}; 
\node[above] at (3 + .05,-.5) {$\scriptstyle n$}; 
\end{tikzpicture}
\qquad\quad
\vartheta_i\,=\,\,
\begin{tikzpicture}[baseline=.8cm, scale=1.2]
\draw (-1,0) rectangle ++(.1,2);
\draw (-.5,0) rectangle ++(.1,2);
\draw (0,0) rectangle ++(.1,2);
\draw (.5,0) rectangle ++(.1,2);
\draw (1,.8) -- (1,0) -- (1.1,0) -- (1.1,.8) (1,1.2) -- (1,2) -- (1.1,2) -- (1.1,1.2);
\draw (1.1,.8) to[in=-90,out=90] (1,1)(1.1,1) to[in=-90,out=90] (1,1.2);
\draw[line width = 3, white] (1,.8) to[in=-90,out=90] (1.1,1)(1,1) to[in=-90,out=90] (1.1,1.2);
\draw (1,.8) to[in=-90,out=90] (1.1,1)(1,1) to[in=-90,out=90] (1.1,1.2);
\draw (1.5,0) rectangle ++(.1,2);
\draw (2,0) rectangle ++(.1,2);
\draw (2.5,0) rectangle ++(.1,2);
\node[above] at (-1 + .05,-.5) {$\scriptstyle 1$}; 
\node[above] at (-0.5 + .05,-.5) {$\scriptstyle 2$}; 
\node[above] at (.2 + .05,-.5-.05) {$\scriptstyle \cdots$}; 
\node[above] at (1 + .05,-.5) {$\scriptstyle i$}; 
\node[above] at (1.8 + .05,-.5-.05) {$\scriptstyle \cdots$}; 
\node[above] at (2.5 + .05,-.5) {$\scriptstyle n$}; 
\fill[gray!40] (1.05,.91) -- ++(.01,.013) arc (-45:45:.11) -- ++(-.01,.013) -- ++(-.01,-.013) arc (180-45:180+45:.11) -- ++(.01,-.013);
\end{tikzpicture}\vspace{-.2cm}
\]
We compose from bottom to top so that, for example,
$
\varepsilon_1\vartheta_1=\,
\tikz[baseline=-.2cm]{
\filldraw[fill=white] (1,-.6) -- (1.1,-.6) .. controls ++(90:.4) and ++(-90:.33) .. (.6,0) -- (.6,.1) (.6,.5) -- (.6,.6) -- (.5,.6) -- (.5,.5) (.5,.1) -- (.5,0) .. controls ++(-90:.4) and ++(90:.33) .. (1,-.6);
\draw (.6,.8-.7) to[in=-90,out=90] (.5,1-.7)(.6,1-.7) to[in=-90,out=90] (.5,1.2-.7);
\draw[line width = 3, white] (.5,.8-.7) to[in=-90,out=90] (.6,1-.7)(.5,1-.7) to[in=-90,out=90] (.6,1.2-.7);
\draw (.5,.8-.7) to[in=-90,out=90] (.6,1-.7)(.5,1-.7) to[in=-90,out=90] (.6,1.2-.7);
\filldraw[fill=white, xshift=31.5, rotate=180] (0,0) -- (0,-.6) -- (.1,-.6) -- (.1,0) .. controls ++(90:.33) and ++(-90:.4) .. (.6,.6) -- (.5,.6) .. controls ++(-90:.33) and ++(90:.4) .. (0,0);
\fill[gray!40] (.55,.21) -- ++(.01,.013) arc (-45:45:.11) -- ++(-.01,.013) -- ++(-.01,-.013) arc (180-45:180+45:.11) -- ++(.01,-.013);
}
$\,\,.\smallskip

Let $\mathfrak S_n$ be the symmetric group, and let $\sigma\mapsto\pi_\sigma:\cR\cB_n\to \mathfrak S_n$ be the obvious homomorphism.
Given objects $a_1,\ldots,a_n$ in a braided pivotal category, an element
$\sigma\in\cR\cB_n$ induces a map
\begin{equation} \label{action of ribbon braid on iterated tensor product}
P(\sigma):a_1\otimes\ldots\otimes a_n\to a_{\pi_\sigma^{-1}(1)}\otimes\ldots\otimes a_{\pi_\sigma^{-1}(n)}
\end{equation}
by the rules
$P(\varepsilon_i):=\id_{a_1\otimes\ldots\otimes a_{i-1}}\otimes \beta_{a_i,a_{i+1}}\otimes\id_{a_{i+2}\otimes\ldots\otimes a_n}$,
$P(\vartheta_i):=\id_{a_1\otimes\ldots\otimes a_{i-1}}\otimes \theta_{a_i}\otimes\id_{a_{i+1}\otimes\ldots\otimes a_n}$,
and $P(\sigma\tau)=P(\sigma)\circ P(\tau)$,
where $\beta$ is the braiding in $\cC$ and the twist $\theta$ is defined in (\ref{def:theta1}).

Let $T$ be a planar tangle whose input circles have disjoint $y$-projections.
Then there is an action of $\cR\cB_n$ on the set of systems of anchor lines on $T$.
We assume without loss of generality that $T$ has no strands (as the anchor lines don't interact with the strands).
Let $\sigma\in\cR\cB_n$ be a ribbon braid, and let $A$ be a systems of anchor lines.
To compute $\sigma\cdot A$, we place the ribbon braid vertically under the tangle and connect the $i$th ribbon to the $i$th input circle in order of increasing $y$-projection.
Then we let the input circles flow down following the ribbons.
The anchor lines follow by continuity while the circles braid and twist around each other.
Here is an example of this process
\begin{equation}\label{eq: Here is an example of this process}
\begin{matrix}
\begin{tikzpicture}[baseline=-.2cm]
\node (A1) at (-4,4.05) {$A$};
\node (A2) at (-4,-.05) {$\sigma\cdot A$};
\draw[-stealth, shorten <=6, shorten >=3] (A1) -- (A2);
\pgftransformrotate{-90}
\pgftransformyscale{1.5}
	\coordinate (a) at (0,0);
	\coordinate (b) at ($ (a) + (0,1.2)$);
	\coordinate (c) at ($ (a) + (0,.4)$);
	\coordinate (d) at ($ (a) + (0,-.4)$);
	\coordinate (e) at ($ (a) + (0,-1.2)$);
	\coordinate (f) at ($ (a) + (0,-2)$);
	\draw[very thick] (0,-2) arc (-90:90:1 and 2);
	\draw[very thick, dashed] (0,-2) arc (-90:-270:1 and 2);
	\draw[very thick] (0,-2) -- (-.1,-2)(0,2) -- (-.1,2);
	\ncircle{unshaded}{(b)}{.1}{270}{}
	\ncircle{unshaded}{(c)}{.1}{270}{}
	\ncircle{unshaded}{(d)}{.1}{270}{}
	\ncircle{unshaded}{(e)}{.1}{270}{}
	\draw[thick, red] ($ (b) + (0,-.1) $)  .. controls ++(270:.2cm) and ++(270:.3cm) ..  ($ (b) + (-.3,0) $) .. controls ++(90:.4cm) and ++(90:.4cm) .. ($ (b) + (.3,0) $) .. controls ++(270:.6cm) and ++(270:.5cm) ..  ($ (b) + (-.42,0) $) .. controls ++(90:.25cm) and ++(180:.25cm) ..  ($ (b) + (0,.45) $) .. controls ++(0:.7cm) and ++(60:1cm) .. ($ (e) + (.5,0) $) .. controls ++(240:.5cm) and ++(70:.2cm) .. (f);
	\draw[thick, red] ($ (c) + (0,-.1) $)  .. controls ++(270:.2cm) and ++(90:.3cm) ..  ($ (d) + (.5,0) $) .. controls ++(270:.2cm) and ++(0:.2cm) .. ($ (e) + (0,.4) $) .. controls ++(180:.2cm) and ++(90:.3cm) .. ($ (e) + (-.5,0) $) .. controls ++(270:.2cm) and ++(150:.2cm) .. (f);
	\draw[thick, red] ($ (e) + (0,-.1) $)  .. controls ++(270:.2cm) and ++(270:.3cm) ..  ($ (e) + (.3,0) $) .. controls ++(90:.4cm) and ++(90:.4cm) .. ($ (e) + (-.3,0) $) .. controls ++(270:.3cm) and ++(110:.2cm) .. (f);
	\draw[thick, red] ($ (d) + (0,-.1) $)  .. controls ++(270:.4cm) and ++(270:.8cm) ..  ($ (c) + (-.7,0) $) .. controls ++(90:.8cm) and ++(180:.4cm) .. ($ (b) + (0,.6) $) .. controls ++(0:.8cm) and ++(70:1.2cm) .. ($ (e) + (.7,.1) $) .. controls ++(250:.5cm) and ++(30:.2cm) .. (f) ;
	\filldraw[red] (f) circle (.05cm);
\pgftransformyscale{.666}
\pgftransformrotate{90}
\filldraw[fill=white] (1.9-.16,2) -- (1.9-.16,1.5) .. controls ++(-90:.8) and ++(90:.92) .. (-.68,0) -- (-.52,0) .. controls ++(90:.8) and ++(-90:.92) .. (1.9,1.5) -- (1.9,2) (1.9,2.6) --  (1.9,4) -- (1.9-.16,4) -- (1.9-.16,2.6);
\draw (1.9-.16,2) .. controls ++(90:.1) and ++(-90:.1) .. +(.16,.3) +(0,.3) .. controls ++(90:.1) and ++(-90:.1) .. +(.16,.6);
\draw[line width = 4, white] (1.9,2) .. controls ++(90:.1) and ++(-90:.1) .. +(-.16,.3) +(0,.3) .. controls ++(90:.1) and ++(-90:.1) .. +(-.16,.6);
\draw (1.9,2) .. controls ++(90:.1) and ++(-90:.1) .. +(-.16,.3) +(0,.3) .. controls ++(90:.1) and ++(-90:.1) .. +(-.16,.6);
\fill[gray!40] (1.9,2) ++ (-.08,.16) -- ++(.042,.056) arc (-40:40:.13) -- ++(-.042,.056) -- ++(-.042,-.056) arc (180-40:180+40:.13) -- ++(.042,-.056);
\filldraw[fill=white] (.7-.16,1.7) -- (.7-.16,1.5) .. controls ++(-90:.88) and ++(90:.8) .. (1.9-.16,0) -- (1.9,0) .. controls ++(90:.88) and ++(-90:.8) .. (.7,1.5) -- (.7,1.7) (.7,2.9) --  (.7,4) -- (.7-.16,4) -- (.7-.16,2.9);
\draw (.7-.16,1.7) .. controls ++(90:.1) and ++(-90:.1) .. +(.16,.3) +(0,.3) .. controls ++(90:.1) and ++(-90:.1) .. +(.16,.6);
\draw[line width = 4, white] (.7,1.7) .. controls ++(90:.1) and ++(-90:.1) .. +(-.16,.3) +(0,.3) .. controls ++(90:.1) and ++(-90:.1) .. +(-.16,.6);
\draw (.7,1.7) .. controls ++(90:.1) and ++(-90:.1) .. +(-.16,.3) +(0,.3) .. controls ++(90:.1) and ++(-90:.1) .. +(-.16,.6);
\fill[gray!40] (.7,1.7) ++ (-.08,.16) -- ++(.042,.056) arc (-40:40:.13) -- ++(-.042,.056) -- ++(-.042,-.056) arc (180-40:180+40:.13) -- ++(.042,-.056);
\draw (.7-.16,2.3) .. controls ++(90:.1) and ++(-90:.1) .. +(.16,.3) +(0,.3) .. controls ++(90:.1) and ++(-90:.1) .. +(.16,.6);
\draw[line width = 4, white] (.7,2.3) .. controls ++(90:.1) and ++(-90:.1) .. +(-.16,.3) +(0,.3) .. controls ++(90:.1) and ++(-90:.1) .. +(-.16,.6);
\draw (.7,2.3) .. controls ++(90:.1) and ++(-90:.1) .. +(-.16,.3) +(0,.3) .. controls ++(90:.1) and ++(-90:.1) .. +(-.16,.6);
\fill[gray!40] (.7,2.3) ++ (-.08,.16) -- ++(.042,.056) arc (-40:40:.13) -- ++(-.042,.056) -- ++(-.042,-.056) arc (180-40:180+40:.13) -- ++(.042,-.056);
\filldraw[fill=white] (-1.73-.16,1.6) .. controls ++(-90:.92) and ++(90:.8) .. (.7-.16,0) -- (.7,0) .. controls ++(90:.92) and ++(-90:.8) .. (-1.73,1.6) --  (-1.73,4) -- (-1.73-.16,4) -- (-1.73-.16,1.6);
\filldraw[fill=white] (-.52,2) -- (-.52,1.5) .. controls ++(-90:.9) and ++(90:.8) .. (-1.73,0) -- (-1.73-.16,0) .. controls ++(90:.9) and ++(-90:.8) .. (-.68,1.5) -- (-.68,2) (-.68,2.6) --  (-.68,4) -- (-.68+.16,4) -- (-.52,2.6);
\draw (-.68+.16,2) .. controls ++(90:.1) and ++(-90:.1) .. +(-.16,.3) +(0,.3) .. controls ++(90:.1) and ++(-90:.1) .. +(-.16,.6);
\draw[line width = 4, white](-.68,2) .. controls ++(90:.1) and ++(-90:.1) .. +(.16,.3) +(0,.3) .. controls ++(90:.1) and ++(-90:.1) .. +(.16,.6);
\draw (-.68,2) .. controls ++(90:.1) and ++(-90:.1) .. +(.16,.3) +(0,.3) .. controls ++(90:.1) and ++(-90:.1) .. +(.16,.6);
\fill[gray!40] (-.68+.16,2) ++ (-.08,.16) -- ++(.042,.056) arc (-40:40:.13) -- ++(-.042,.056) -- ++(-.042,-.056) arc (180-40:180+40:.13) -- ++(.042,-.056);
\pgftransformrotate{-90}
\pgftransformyscale{1.5}
\pgftransformxshift{-114}
	\coordinate (a) at (0,0);
	\coordinate (b) at ($ (a) + (0,1.205)$);
	\coordinate (c) at ($ (a) + (0,.4)$);
	\coordinate (d) at ($ (a) + (0,-.4)$);
	\coordinate (e) at ($ (a) + (0,-1.2)$);
	\coordinate (f) at ($ (a) + (0,-2)$);
	\draw[line width=5, white] (a) ellipse (1 and 2);
	\draw[very thick] (a) ellipse (1 and 2);
	\draw[very thick] (0,-2) -- (3.91,-2)(0,2) -- (3.91,2);
	\ncircle{double, draw=white, double=black, double distance=1.1, unshaded}{(b)}{.1}{270}{}
	\ncircle{double, draw=white, double=black, double distance=1.1, unshaded}{(c)}{.1}{270}{}
	\ncircle{double, draw=white, double=black, double distance=1.1, unshaded}{(d)}{.1}{270}{}
	\ncircle{double, draw=white, double=black, double distance=1.1, unshaded}{(e)}{.1}{270}{}
	\draw[thick, red] ($ (b) + (0,-.1) $)  .. controls ++(270:.3cm) and ++(90:.8cm) ..  ($ (d) + (.7,0) $) .. controls ++(270:1.5cm) and ++(90:.4cm) .. (f);
	\draw[thick, red] ($ (c) + (0,-.1) $)  .. controls ++(270:.3cm) and ++(90:.6cm) ..  ($ (e) + (.5,.4) $) .. controls ++(270:1cm) and ++(90:.4cm) .. (f);
	\draw[thick, red] ($ (d) + (0,-.1) $)  .. controls ++(270:.2cm) and ++(90:.4cm) ..  ($ (e) + (.3,0) $) .. controls ++(270:.4cm) and ++(90:.4cm) .. (f);
	\draw[thick, red] ($ (e) + (0,-.1) $) -- (f) ;
	\filldraw[red] (f) circle (.05cm);
\end{tikzpicture}
\end{matrix}
\end{equation}
with $\sigma=\varepsilon_2^{-1}\varepsilon_3^{-1}\varepsilon_1\,\vartheta_2\,\vartheta_3^{-2}\vartheta_4^{-1}$.

Let $A_{\text{st}}:=\begin{tikzpicture}[baseline=-.4cm, scale=.5]
	\coordinate (a) at (0,0);
	\coordinate (b) at ($ (a) + (0,1.2)$);
	\coordinate (c) at ($ (a) + (0,.4)$);
	\coordinate (d) at ($ (a) + (0,-.4)$);
	\coordinate (e) at ($ (a) + (0,-1.2)$);
	\coordinate (f) at ($ (a) + (0,-2)$);
	\draw[very thick] (a) ellipse (1 and 2);
	\ncircle{unshaded}{(b)}{.2}{270}{}
	\ncircle{unshaded}{(c)}{.2}{270}{}
	\ncircle{unshaded}{(d)}{.2}{270}{}
	\ncircle{unshaded}{(e)}{.2}{270}{}
	\draw[thick, red] ($ (b) + (0,-.2) $)  .. controls ++(270:.3cm) and ++(90:.8cm) ..  ($ (d) + (.7,0) $) .. controls ++(270:1.5cm) and ++(90:.4cm) .. (f);
	\draw[thick, red] ($ (c) + (0,-.2) $)  .. controls ++(270:.3cm) and ++(90:.6cm) ..  ($ (e) + (.5,.4) $) .. controls ++(270:1cm) and ++(90:.4cm) .. (f);
	\draw[thick, red] ($ (d) + (0,-.2) $)  .. controls ++(270:.2cm) and ++(90:.4cm) ..  ($ (e) + (.3,0) $) .. controls ++(270:.4cm) and ++(90:.4cm) .. (f);
	\draw[thick, red] ($ (e) + (0,-.2) $) -- (f) ;
	\filldraw[red] (f) circle (.05cm);
\end{tikzpicture}
$\,
denote the `standard' system of anchor lines.
We may identify the elements of $\cR\cB_n$ with systems of anchor lines via the map $\sigma\mapsto \sigma\cdot A_{\text{st}}$.
Using that correspondence, we can then transport the operad structure from anchored tangles to ribbon braids.
We illustrate the resulting operad structure on $(\cR\cB_n)_{n\ge0}$ with an example:
\begin{equation}\label{eq: pic: operadic product on braids}
\tikz[baseline=.5cm, scale=1.1]{
\filldraw[fill=white] (.5,0) rectangle (.6,1.2);
\filldraw[fill=white] (1,0) -- (1.1,0) .. controls ++(90:.44) and ++(-90:.5) .. (1.6,1.2) -- (1.5,1.2) .. controls ++(-90:.44) and ++(90:.5) .. (1,0);
\filldraw[fill=white] (1.5,0) -- (1.6,0) .. controls ++(90:.5) and ++(-90:.44) .. (1.1,1.2) -- (1,1.2) .. controls ++(-90:.5) and ++(90:.44) .. (1.5,0);
}
\,\,\,\,
\circ_3
\,\,\,
\tikz[baseline=-.1cm, scale=1.1]{
\filldraw[fill=white] (1,-.6) -- (1.1,-.6) .. controls ++(90:.4) and ++(-90:.33) .. (.6,0) -- (.6,.1) (.6,.5) -- (.6,.6) -- (.5,.6) -- (.5,.5) (.5,.1) -- (.5,0) .. controls ++(-90:.4) and ++(90:.33) .. (1,-.6);
\draw (.6,.8-.7) to[in=-90,out=90] (.5,1-.7)(.6,1-.7) to[in=-90,out=90] (.5,1.2-.7);
\draw[line width = 3, white] (.5,.8-.7) to[in=-90,out=90] (.6,1-.7)(.5,1-.7) to[in=-90,out=90] (.6,1.2-.7);
\draw (.5,.8-.7) to[in=-90,out=90] (.6,1-.7)(.5,1-.7) to[in=-90,out=90] (.6,1.2-.7);
\filldraw[fill=white, xshift=31.5, rotate=180] (0,0) -- (0,-.6) -- (.1,-.6) -- (.1,0) .. controls ++(90:.33) and ++(-90:.4) .. (.6,.6) -- (.5,.6) .. controls ++(-90:.33) and ++(90:.4) .. (0,0);
\fill[gray!40] (.55,.21) -- ++(.01,.013) arc (-45:45:.11) -- ++(-.01,.013) -- ++(-.01,-.013) arc (180-45:180+45:.11) -- ++(.01,-.013);
}
\,\,\,\,\,=\,\,\,\,\,
\tikz[baseline=-.6cm, scale=1.1]{
\filldraw[fill=white] (-.5,-1.5) rectangle (-.4,.6);
\filldraw[fill=white]  (1.1,-.6) .. controls ++(90:.4) and ++(-90:.33) .. (.6,0) -- (.6,.1) (.6,.5) -- (.6,.6) -- (.5,.6) -- (.5,.5) (.5,.1) -- (.5,0) .. controls ++(-90:.4) and ++(90:.33) .. (1,-.6);
\draw (.6,.8-.7) to[in=-90,out=90] (.5,1-.7)(.6,1-.7) to[in=-90,out=90] (.5,1.2-.7);
\draw[line width = 3, white] (.5,.8-.7) to[in=-90,out=90] (.6,1-.7)(.5,1-.7) to[in=-90,out=90] (.6,1.2-.7);
\draw (.5,.8-.7) to[in=-90,out=90] (.6,1-.7)(.5,1-.7) to[in=-90,out=90] (.6,1.2-.7);
\fill[fill=white, xshift=31.3, rotate=180] (0,0) -- (0,-.6) -- (.1,-.6) -- (.1,0) .. controls ++(90:.33) and ++(-90:.4) .. (.6,.6) -- (.5,.6) .. controls ++(-90:.33) and ++(90:.4) .. (0,0);
\draw[xshift=31.3, rotate=180] (0,0) -- (0,-.6) -- (.1,-.6) -- (.1,0) .. controls ++(90:.33) and ++(-90:.4) .. (.6,.6) (.5,.6) .. controls ++(-90:.33) and ++(90:.4) .. (0,0);
\fill[gray!40] (.55,.21) -- ++(.01,.013) arc (-45:45:.11) -- ++(-.01,.013) -- ++(-.01,-.013) arc (180-45:180+45:.11) -- ++(.01,-.013);
\filldraw[fill=white] (.5,-1.5) -- (.6,-1.5) .. controls ++(90:.44) and ++(-90:.5) .. (1.1,-.6) (1,-.6) .. controls ++(-90:.44) and ++(90:.5) .. (.5,-1.5);
\filldraw[fill=white] (0,-1.5) -- (.1,-1.5) .. controls ++(90:.44) and ++(-90:.5) .. (.6,-.6) (.5,-.6) .. controls ++(-90:.44) and ++(90:.5) .. (0,-1.5);
\filldraw[fill=white] (1.1,-1.5) -- (1,-1.5) .. controls ++(90:.44) and ++(-90:.5) .. (0,-.6) -- (0,.6) -- (.1,.6) -- (.1,-.6) .. controls ++(-90:.44) and ++(90:.5) .. (1.1,-1.5);
}
\end{equation}
The operadic composition of ribbon braids is compatible with their action \eqref{action of ribbon braid on iterated tensor product}
on tensor products of objects of $\cC$ in the sense that
\begin{equation}\label{P(st)=P(s)oP(t)}
P(\sigma\circ_i \tau) \,=\, P(\sigma)\circ_i P(\tau).
\end{equation}
Here, the $\circ_i$ in the left hand side is as in \eqref{eq: pic: operadic product on braids}, and the $\circ_i$ in the right hand side is the one mentioned in Remark~\ref{rem:operadic composition of morphisms}.

For the reader's convenience, we finish this section by including the pictures of the systems of anchor lines which correspond to the generators $\varepsilon_i$ and $\vartheta_i$ of the ribbon braid group:
\begin{equation*} 
\varepsilon_i
\,\,:\,\,\,\,\,
\begin{tikzpicture}[baseline=-.1cm, yscale=.8, xscale=.9]
	\coordinate (a) at (0,0);
	\coordinate (b) at ($ (a) + (0,2)$);
	\coordinate (c) at ($ (a) + (0,1.1)$);
	\coordinate (d) at ($ (a) + (0,.4)$);
	\coordinate (e) at ($ (a) + (0,-.3)$);
	\coordinate (f) at ($ (a) + (0,-1.1)$);
	\coordinate (g) at ($ (a) + (0,-2)$);
	\coordinate (h) at ($ (a) + (0,-2.4)$);
	\draw[very thick] (a) ellipse (1 and 2.4);
	\ncircle{unshaded}{(b)}{.1}{270}{}
	\ncircle{unshaded}{(c)}{.1}{270}{}
	\ncircle{unshaded}{(d)}{.1}{270}{}
	\ncircle{unshaded}{(e)}{.1}{270}{}
	\ncircle{unshaded}{(f)}{.1}{270}{}
	\ncircle{unshaded}{(g)}{.1}{270}{}
	\node[scale=.9] at (0,-1.45) {\scriptsize{$\vdots$}};
	\node[scale=.9] at (0,1.63) {\scriptsize{$\vdots$}};
	\node at ($ (d) + (-.45,0) $) {\scriptsize{$i{+}1$}};
	\node at ($ (e) + (-.25,0) $) {\scriptsize{$i$}};
	\draw[thick, red] ($ (b) + (0,-.1) $)  .. controls ++(270:.5cm) and ++(90:2.4cm) ..  ($ (f) + (.7,0) $) .. controls ++(270:1cm) and ++(30:.2cm) .. (h);
	\draw[thick, red] ($ (c) + (0,-.1) $)  .. controls ++(270:.4cm) and ++(90:1.4cm) ..  ($ (f) + (.6,0) $) .. controls ++(270:1cm) and ++(40:.3cm) .. (h);
	\draw[thick, red] ($ (e) + (0,-.1) $)  .. controls ++(270:.3cm) and ++(90:.5cm) ..  ($ (f) + (.48,-.1) $) .. controls ++(270:.9cm) and ++(50:.3cm) .. (h);
	\draw[thick, red] ($ (d) + (0,-.1) $)  .. controls ++(270:.3cm) and ++(90:.4cm) ..  ($ (e) + (-.4,0) $) .. controls ++(270:.4cm) and ++(90:.6cm) .. ($ (f) + (.35,-.2) $) .. controls ++(270:.4cm) and ++(50:.6cm) .. (h);
	\draw[thick, red] ($ (f) + (0,-.1) $)  .. controls ++(270:.2cm) and ++(90:.3cm) ..  ($ (g) + (.25,.2) $) .. controls ++(270:.3cm) and ++(80:.1cm) .. (h);
	\draw[thick, red] ($ (g) + (0,-.1) $) -- (h) ;
	\filldraw[red] (h) circle (.05cm);
\end{tikzpicture}
\qquad
\quad\qquad
\vartheta_i 
\,\,:\,\,\,\,\,
\begin{tikzpicture}[baseline=-.1cm, yscale=.8, xscale=.9]
	\coordinate (a) at (0,0);
	\coordinate (b) at ($ (a) + (0,2)$);
	\coordinate (c) at ($ (a) + (0,1)$);
	\coordinate (d) at ($ (a) + (0,.0)$);
	\coordinate (e) at ($ (a) + (0,-1)$);
	\coordinate (f) at ($ (a) + (0,-2)$);
	\coordinate (g) at ($ (a) + (0,-2.4)$);
	\draw[very thick] (a) ellipse (1 and 2.4);
	\ncircle{unshaded}{(b)}{.1}{270}{}
	\ncircle{unshaded}{(c)}{.1}{270}{}
	\ncircle{unshaded}{(d)}{.1}{270}{}
	\ncircle{unshaded}{(e)}{.1}{270}{}
	\ncircle{unshaded}{(f)}{.1}{270}{}
	\node[scale=.9] at (0,-1.4) {\scriptsize{$\vdots$}};
	\node[scale=.9] at (0,1.6) {\scriptsize{$\vdots$}};
	\node at ($ (d) + (-.25,0) $) {\scriptsize{$i$}};
	\draw[thick, red] ($ (b) + (0,-.1) $)  .. controls ++(270:.5cm) and ++(90:2.4cm) ..  ($ (e) + (.71,0) $) .. controls ++(270:1cm) and ++(25:.16cm) .. (g);
	\draw[thick, red] ($ (c) + (0,-.1) $)  .. controls ++(270:.3cm) and ++(90:1.4cm) ..  ($ (e) + (.6,0) $) .. controls ++(270:1cm) and ++(40:.2cm) .. (g);
	\draw[thick, red] (-90:.1cm) .. controls ++(270:.15cm) and ++(270:.25cm) .. (0:.25cm) .. controls ++(90:.4cm) and ++(90:.4cm) .. (180:.4cm) .. controls ++(270:.6cm) and ++(90:1.2cm) .. ($ (e) + (.4,-.5) $) .. controls ++(270:.2cm) and ++(50:.7cm) .. (g);
	\draw[thick, red] ($ (e) + (0,-.1) $)  .. controls ++(270:.2cm) and ++(90:.4cm) ..  ($ (f) + (.25,.2) $) .. controls ++(270:.2cm) and ++(80:.1cm) .. (g);
	\draw[thick, red] ($ (f) + (0,-.1) $) -- (g) ;
	\filldraw[red] (g) circle (.05cm);
\end{tikzpicture}
\end{equation*}

\subsection{Assigning maps to generic tangles}
\label{sec:Assigning maps}

Given objects $\cP[n]\in \cC$ and morphisms $\eta, \alpha_i, \bar\alpha_i, \varpi_{i,j}$ as in Theorem \ref{thm:ConstructAPA}, we now present an algorithm 
which assigns to a generic anchored planar tangle $T$ of type $(k_1,\ldots,k_r;k_0)$ a morphism $Z(T):\cP[k_1]\otimes\ldots\otimes\cP[k_r]\to\cP[k_0]$ in $\cC$.

\begin{alg}
\label{alg:AssignMap}
Let $T=(T,X,Q,A)$ be a generic anchored planar tangle.
\begin{enumerate}[label=(\arabic*)]

\item
First, apply Algorithm \ref{alg:StandardForm} so as to bring the underlying planar tangle of $T$ into standard form.
The resulting tangle is determined canonically up to isotopy within the space of anchored planar tangles whose underlying tangle is in standard form.

\item
At this point, it becomes technically convenient to replace all circles by rectangles.
We do this while taking care not to change the minimal and maximal $y$-coordinates of the input boxes, and not to change the $y$-coordinates of the local maxima/minima of the strings.
We call the resulting rectangular anchored planar tangle $T'=(T',X',Q',A')$ (this is no longer an anchored planar tangle in the sense of Definition \ref{defn:AnchoredPlanarTangle}).
It is well defined up to isotopy within the space rectangular anchored planar tangles whose underlying rectangular tangle is in standard form (obvious analog of Definition \ref{def: anchored planar tangle : underlying tangle in standard form}).

\item
Endow $T'$ with a new system $A_{\text{st}}$ of \emph{standard anchor lines}, and call the result $T_{\text{st}}=(T',X',Q',A_{\text{st}})$.
Here, standard anchor lines are anchor lines which travel from the anchor points on the input circles horizontally to the right towards the external boundary, 
then down along that boundary, and finally come together at the very bottom
(see Figure~\ref{fig:StandardAnchorLines} for an example of this procedure).
Finally, let $\sigma_T\in \cR\cB_r$
be the ribbon braid group element uniquely determined by the equation
\[
\sigma_T \cdot A_{\text{st}} \,=\, A'.
\]
Here, the dot denotes the action of ribbon braids on anchor lines described in Section~\ref{sec:RibbonBraidGroup}.

\begin{figure}[!ht]
$$
\begin{tikzpicture}[scale=.5, baseline=-.1cm]
	\clip (-6.15,-6.15) rectangle (6.15,6.15);
	\roundNbox{unshaded}{(0,0)}{6}{0}{0}{};
	\draw (-3.5,6)--(-3.5,4);
	\draw (-3,4) -- (-3,5) arc (180:0:.5cm) -- (-2,4);
	\draw (-1.5,4) -- (-1.5,4.5) arc (180:0:.5cm) --(-.5,4) .. controls ++(270:1cm) and ++(90:1cm) .. (0.5,2);
	\draw (-2,-3.5)--(-2,-3) arc (0:180:1cm) -- (-4,-3) .. controls ++(270:5cm) and ++(270:5cm) .. (5,0) arc (0:180:.5cm) -- (4,0); 
	\draw (1.5,2) -- (1.5,6);
	\draw (3.3,0) -- (3.3,2) arc (0:180:.5cm);
	\draw (-2.5, -3.5)--(-2.5,-3) arc(0:180:.5cm) .. controls ++(270:3cm) and ++(270:3cm) .. (2,-2.5) arc (0:180:1cm)  -- (0,-3.5);	
	\draw (-5,6) .. controls ++(270:7cm) and ++(90:7cm) .. (-1,-3.5);
	\roundNbox{unshaded}{(-2.5,4)}{.4}{1}{1}{};
	\roundNbox{unshaded}{(-1,-3.5)}{.4}{1.5}{1.5}{};
	\roundNbox{unshaded}{(3.3,-.4)}{.4}{1}{1}{};
	\roundNbox{unshaded}{(1.5,1.6)}{.4}{1}{1}{};
	\draw[thick, red] (1.5,1.2) .. controls ++(270:1cm) and ++(-90:2cm) ..
	                          (-.8,1.9) .. controls ++(90:1cm) and ++(-90:1cm) .. 
	                          (-.7,3.5) .. controls ++(90:1cm) and ++(0:1cm) ..
	                          (-2.4,5)  .. controls ++(180:3.2cm) and ++(90:1cm) .. (-5,1.5) --
	                          (-5,-1.5) .. controls ++(270:4.5cm) and ++(120:1.5cm) .. (0,-6);
	\draw[thick, red] (-1,-3.9) .. controls ++(270:1cm) and ++(90:1cm) .. (0,-6);
	\draw[thick, red] (3.3,-.8) .. controls ++(270:4cm) and ++(60:1cm) .. (0,-6);
	\draw[thick, red] (-2.5,3.6)  .. controls ++(270:5cm) and ++(90:4cm) .. (1.5,-3) .. controls ++(270:2cm) and ++(70:1cm) .. (0,-6);
	\filldraw[red] (-2.5,3.6) circle (.1cm);
	\filldraw[red] (-1,-3.9) circle (.1cm);
	\filldraw[red] (3.3,-.8) circle (.1cm);
	\filldraw[red] (1.5,1.2) circle (.1cm);
	\filldraw[red] (0,-6) circle (.1cm);
\end{tikzpicture}
\longmapsto
\begin{tikzpicture}[scale=.5, baseline=-.1cm]
	\clip (-6.15,-6.15) rectangle (6.15,6.15);
	\roundNbox{unshaded}{(0,0)}{6}{0}{0}{};
	\draw (-3.5,6)--(-3.5,4);
	\draw (-3,4) -- (-3,5) arc (180:0:.5cm) -- (-2,4);
	\draw (-1.5,4) -- (-1.5,4.5) arc (180:0:.5cm) --(-.5,4) .. controls ++(270:1cm) and ++(90:1cm) .. (0.5,2);
	\draw (-2,-3.5)--(-2,-3) arc (0:180:1cm) -- (-4,-3) .. controls ++(270:5cm) and ++(270:5cm) .. (5,0) arc (0:180:.5cm) -- (4,0); 
	\draw (1.5,2) -- (1.5,6);
	\draw (3.3,0) -- (3.3,2) arc (0:180:.5cm);
	\draw (-2.5, -3.5)--(-2.5,-3) arc(0:180:.5cm) .. controls ++(270:3cm) and ++(270:3cm) .. (2,-2.5) arc (0:180:1cm)  -- (0,-3.5);	
	\draw (-5,6) .. controls ++(270:7cm) and ++(90:7cm) .. (-1,-3.5);
	\roundNbox{unshaded}{(-2.5,4)}{.4}{1}{1}{};
	\roundNbox{unshaded}{(-1,-3.5)}{.4}{1.5}{1.5}{};
	\roundNbox{unshaded}{(3.3,-.4)}{.4}{1}{1}{};
	\roundNbox{unshaded}{(1.5,1.6)}{.4}{1}{1}{};
	\draw[thick, red] (-2.5,3.6) arc (180:270:.5cm) -- (5.1+.2,3.1) arc (90:0:.5cm) -- (5.6+.2,-5.3) arc(0:-90:.5) -- +(-4,0) .. controls ++(180:.5cm) and ++(90:.2cm) .. (0,-6);
	\draw[thick, red] (1.5,1.2) arc (180:270:.4cm) --(5+.2,.8) arc (90:0:.5cm) -- (5.5+.2,-5.3)        arc(0:-90:.4) -- +(-4,0).. controls ++(180:.5cm) and ++(90:.3cm) .. (0,-6);
	\draw[thick, red] (3.3,-.8) arc (180:270:.5cm) --(4.9+.2,-1.3) arc (90:0:.5cm) -- (5.4+.2,-5.3)   arc(0:-90:.3) -- +(-4,0) .. controls ++(180:.5cm) and ++(90:.4cm) .. (0,-6);
	\draw[thick, red] (-1,-3.9) arc (180:270:.5cm) --(4.8+.2,-4.4) arc (90:0:.5cm) -- (5.3+.2,-5.3)  arc(0:-90:.2) -- +(-4,0) .. controls ++(180:.5cm) and ++(90:.5cm) .. (0,-6);
	\filldraw[red] (-2.5,3.6) circle (.1cm);
	\filldraw[red] (-1,-3.9) circle (.1cm);
	\filldraw[red] (3.3,-.8) circle (.1cm);
	\filldraw[red] (1.5,1.2) circle (.1cm);
	\filldraw[red] (0,-6) circle (.1cm);
\end{tikzpicture}
$$
\caption{Endowing a tangle in standard form with standard anchor lines}\label{fig:StandardAnchorLines}
\end{figure}
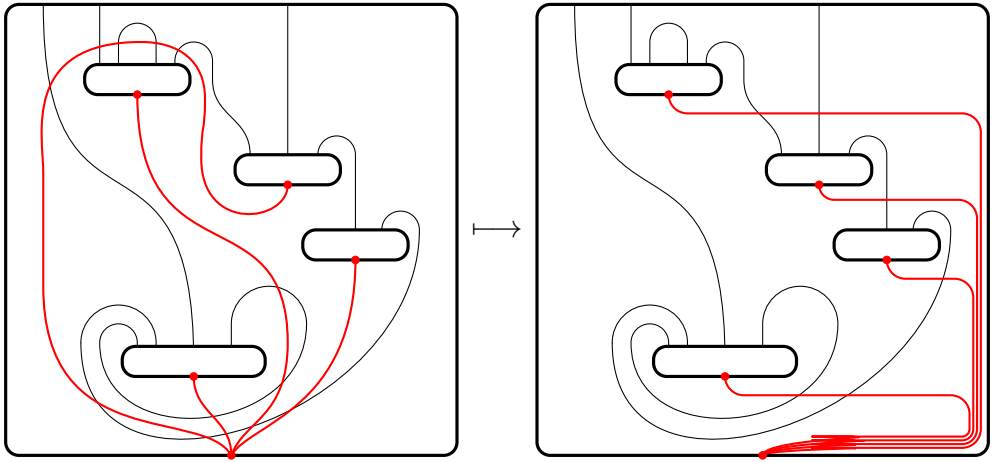

\item
We now write $T_{\text{st}}$ as a composite of generating tangles, in a way similar to the algorithm described in \cite{math.QA/9909027,MR2812459}.
Let $C$ be the union of all the input rectangles and all the critical points of strings.
Consider a nested sequence of closed rectangles
\[
R_0\subset R_1\subset\ldots\subset R_N
\]
with the following properties:
$R_N$ is the outer rectangle of $T_{\text{st}}$;
the $y$-projection of $R_0$ lies below that of $C$;
the $x$-projection of $R_0$ contains that of $C$;
the strands don't intersect the vertical sides of the rectangles;
$R_i$ contains $R_{i-1}$ in its interior;
finally and most importantly, the $y$-projection of $R_i$ minus that of $R_{i-1}$ contains exactly one component of the $y$-projection of $C$.
We illustrate this with an example:
$$
T_{\text{st}}\,\,=\,\,\,
\begin{tikzpicture}[baseline =-.3cm]
	\roundNbox{}{(0,0)}{2}{0}{0}{}
	\draw (-.4,.6) -- (-.4,2);
	\draw (0,.6) -- (0,2);
	\draw (.4,.6) -- (.4,1) arc (180:0:.4cm) -- (1.2,-.1) .. controls ++(270:.8cm) and ++(270:.8cm) .. (-1.2,-.1) -- (-1.2,2);
	\roundNbox{unshaded}{(0,.4)}{.2}{.4}{.4}{}
	\roundNbox{draw=blue, dashed, thin}{(0,-.4)}{1.4}{.4}{.4}{}
	\roundNbox{draw=blue, dashed, thin}{(0,-.9)}{.7}{.9}{.9}{}
	\roundNbox{draw=blue, dashed, thin}{(0,-1.2)}{.2}{1.2}{1.2}{}
	\roundNbox{draw=blue, dashed, thin}{(0,0)}{2.02}{.02}{.02}{}
\node[blue] (a) at (-1.1,-2.5) {$\scriptstyle R_0$};
\node[blue] (b) at (-.3,-2.5) {$\scriptstyle R_1$};
\node[blue] (c) at (.5,-2.5) {$\scriptstyle R_2$};
\node[blue] (d) at (1.3,-2.5) {$\scriptstyle R_3$};
\draw[blue, -stealth] (a) -- +(0,1.1);
\draw[blue, -stealth] (b) -- +(0,.9);
\draw[blue, -stealth] (c) -- +(0,.7);
\draw[blue, -stealth] (d) -- +(0,.5);
\end{tikzpicture}
$$
Let $T_0:=R_0$, and let $T_i:=R_i\setminus \mathring R_{i-1}$ for $i\ge 1$.
By construction, each $T_i$ is isotopic to a generating tangle (Definition \ref{defn:GeneratingTangles}), and we have
$
T_{\text{st}} = T_N \circ_{\scriptscriptstyle 1} \ldots \circ_{\scriptscriptstyle 1} T_2 \circ_{\scriptscriptstyle 1} T_1  \circ_{\scriptscriptstyle 1} T_0$
(see Figure~\ref{fig:ReadRotation} for an example).
Letting $\mathcal T_0=\eta$, and letting $\mathcal T_i=\alpha_j$ if $T_i=a_j$, $\mathcal T_i=\bar\alpha_j$ if $T_i=\bar a_j$, and
$\mathcal T_i=\varpi_{j,k}$ if $T_i=p_{j,k}$ ($\eta$, $\alpha_j$, $\bar\alpha_j$, $\varpi_{j,k}$ as in Theorem~\ref{thm:ConstructAPA}), we set
\begin{equation}\label{eq: Z(T_st):= ... }
Z(T_{\text{st}})\,:=\,\mathcal T_N \circ_{\scriptscriptstyle 1} \ldots \circ_{\scriptscriptstyle 1} \mathcal T_2 \circ_{\scriptscriptstyle 1} \mathcal T_1  \circ_{\scriptscriptstyle 1} \mathcal T_0.
\end{equation}
Note that the operation $\circ_1$ is associative, and that the prescription \eqref{eq: Z(T_st):= ... } is therefore well defined.

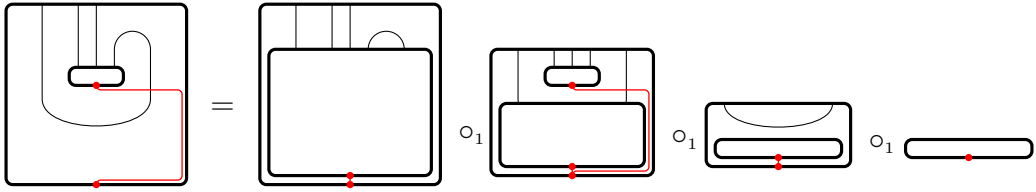
\begin{figure}[!ht]
$$
\begin{tikzpicture}[scale=.6, baseline =-.3cm]
	\roundNbox{rounded corners=2.5}{(0,0)}{2}{0}{0}{}
	\draw (-.4,.6) -- (-.4,2);
	\draw (0,.6) -- (0,2);
	\draw (.4,.6) -- (.4,1) arc (180:0:.4cm) -- (1.2,-.1) .. controls ++(270:.8cm) and ++(270:.8cm) .. (-1.2,-.1) -- (-1.2,2);
	\roundNbox{unshaded, rounded corners=2.5}{(0,.4)}{.2}{.4}{.4}{}
\node at (2.8,-.3) {$=$};
\fill[red] (0,.2) circle (.08);
\fill[red] (0,-2) circle (.08);
\draw[red, line width=.5] (0,-2) arc (180:90:.1) -- ++(1.7,0) arc (-90:0:.1) -- ++(0,1.8)  arc (0:90:.1) -- ++(-1.7,0) arc (-90:-180:.1);
\pgftransformxshift{160}
	\roundNbox{rounded corners=2.5}{(0,0)}{2}{0}{0}{}
	\draw (-.4,.6) -- (-.4,2);
	\draw (0,.6) -- (0,2);
	\draw (.4,.6) -- (.4,1) arc (180:0:.4cm) -- (1.2,-.1) .. controls ++(270:.8cm) and ++(270:.8cm) .. (-1.2,-.1) -- (-1.2,2);
	\roundNbox{unshaded, rounded corners=2.5}{(0,-.4)}{1.4}{.4}{.4}{}
\node at (2.7,-.9) {$\circ_{\scriptscriptstyle 1}$};
\fill[red] (0,-1.8) circle (.08);
\fill[red] (0,-2) circle (.08);
\draw[red, line width=.5] (0,-2) -- (0,-1.8);
\pgftransformxshift{140}
	\roundNbox{unshaded, rounded corners=2.5}{(0,-.4)}{1.4}{.4}{.4}{}
	\draw (-.4,.6) -- (-.4,1);
	\draw (0,.6) -- (0,1);
	\draw (.4,.6) -- (.4,1) (1.2,1) -- (1.2,-.1) .. controls ++(270:.8cm) and ++(270:.8cm) .. (-1.2,-.1) -- (-1.2,1);
	\roundNbox{unshaded, rounded corners=2.5}{(0,.4)}{.2}{.4}{.4}{}
	\roundNbox{unshaded, rounded corners=2.5}{(0,-.9)}{.7}{.9}{.9}{}
\node at (2.5,-1) {$\circ_{\scriptscriptstyle 1}$};
\fill[red] (0,-1.8) circle (.08);
\fill[red] (0,-1.6) circle (.08);
\fill[red] (0,.2) circle (.08);
\draw[red, line width=.5] (0,-1.6) -- (0,-1.8);
\draw[red, line width=.5] (0,-1.8) arc (180:90:.1) -- ++(1.5,0) arc (-90:0:.1) -- ++(0,1.6)  arc (0:90:.1) -- ++(-1.5,0) arc (-90:-180:.1);
\pgftransformxshift{130}
	\roundNbox{unshaded, rounded corners=2.5}{(0,-.9)}{.7}{.9}{.9}{}
	\draw (1.2,-.2) .. controls ++(270:.7cm) and ++(270:.7cm) .. (-1.2,-.2);
	\roundNbox{unshaded, rounded corners=2.5}{(0,-1.2)}{.2}{1.2}{1.2}{}
\node at (2.3,-1.1) {$\circ_{\scriptscriptstyle 1}$};
\fill[red] (0,-1.4) circle (.08);
\fill[red] (0,-1.6) circle (.08);
\draw[red, line width=.5] (0,-1.4) -- (0,-1.6);
\pgftransformxshift{120}
	\roundNbox{unshaded, rounded corners=2.5}{(0,-1.2)}{.2}{1.2}{1.2}{}
\fill[red] (0,-1.4) circle (.08);
\end{tikzpicture}
$$
\caption{Decomposing the tangle $T_{\text{st}}$ into generating tangles}
\label{fig:ReadRotation}
\end{figure}

\item
Finally, we let 
\begin{equation}\label{eq: Z(T) = Z(T_st) o P(s_T)}
Z(T) := Z(T_{\text{st}}) \circ P(\sigma_T),
\end{equation}
where
$P(\sigma_T):\cP[k_1]\otimes\ldots\otimes\cP[k_r] \to \cP[k_{\pi_T^{-1}(1)}]\otimes\ldots\otimes\cP[k_{\pi_T^{-1}(r)}]$
is the action (described in Section~\ref{sec:RibbonBraidGroup}) of the ribbon braid $\sigma_T$ on a tensor product of objects of $\cC$, 
and $\pi_T$ is the image of $\sigma_T$ in the symmetric group.

\end{enumerate}
\end{alg}

We illustrate the above algorithm by computing the value of certain simple tangles:

\begin{ex}
\label{ex:Zero tangle}
$Z\Big(\begin{tikzpicture}[baseline = -.1cm, scale=.8]
	\draw[very thick] (0,0) circle (.4cm);
	\filldraw[red] (0,-.4) circle (.05cm);
\end{tikzpicture}\Big)=\eta$.
\end{ex}

\begin{ex}
\label{ex:Identity}
We have
$
Z\bigg(
\begin{tikzpicture}[baseline=-.1cm, scale=.8]
	\ncircle{unshaded}{(0,0)}{.7}{270}{}
	\ncircle{unshaded}{(0,0)}{.25}{270}{}
	\draw (90:.25cm) -- (90:.7cm);
	\draw[thick, red] (270:.25cm) -- (270:.7cm);
	\node at (109:.477cm) {\scriptsize{$n$}};
\end{tikzpicture}
\bigg)
=\id_{\cP[n]}
$.
\end{ex}
\begin{proof}
Following Algorithm \ref{alg:AssignMap}, $Z(\id_n):=\varpi_{0,n}\circ_1 \eta$, which equals $\id_{\cP[n]}$ by \ref{reln:UnitMap}.
\end{proof}

\begin{ex}
\label{ex:computation alpha's}
We have $
Z
\left(\,
\begin{tikzpicture}[baseline = -.3cm, scale=.9]
	\draw (-.6,-.4) -- (-.6,1-.4);
	\draw (.2,-.4) -- (.2,.1) arc (0:180:.2) -- (-.2,-.4);
	\draw (.6,-.4) -- (.6,1-.4);
	\draw[thick, red] (0,-.6) -- (0,-1);
	\roundNbox{}{(0,-.2)}{.8}{.2}{.2}{}
	\roundNbox{unshaded}{(0,-.4)}{.2}{.6}{.6}{}
	\node at (-.8,.8-.5) {\scriptsize{$i$}};
	\node at (.8,.8-.5) {\scriptsize{$j$}};
\end{tikzpicture}
\,\right)
=\alpha_i$,
and 
$
Z
\left(\,
\begin{tikzpicture}[baseline = -.3cm, scale=.9]
	\draw (-.6,-.2) -- (-.6,1-.4);
	\draw (.2,1-.4) -- (.2,.7-.4) arc (0:-180:.2) -- (-.2,1-.4);
	\draw (.6,-.2) -- (.6,1-.4);
	\draw[thick, red] (0,-.6) -- (0,-1);
	\roundNbox{}{(0,-.2)}{.8}{.2}{.2}{}
	\roundNbox{unshaded}{(0,-.4)}{.2}{.6}{.6}{}
	\node at (-.8,.8-.5) {\scriptsize{$i$}};
	\node at (.8,.8-.5) {\scriptsize{$j$}};
\end{tikzpicture}
\,\right)
=\bar \alpha_i$.
\end{ex}
\begin{proof}
The first tangle evaluates to $\alpha_i\circ_1\varpi_{0,i+j+2}\circ_1\eta$,
which simplifies to $\alpha_i \circ_1 \id_{\cP[i+j+2]}=\alpha_i$ by \ref{reln:UnitMap}.
By the same argument, the second tangle evaluates to $\bar \alpha_i\circ_1\varpi_{0,i+j}\circ_1\eta =\bar\alpha_i$.
\end{proof}

\begin{ex}
\label{ex:SwapAnchorDependence}
We have $
Z
\left(
\begin{tikzpicture}[baseline = -.1cm, scale=.9]
	\draw (-.6,-.2) -- (-.6,1);
	\draw (0,1) -- (0,.6);
	\draw (.6,-.2) -- (.6,1);
	\draw[thick, red] (0,-.6) -- (0,-1);
	\draw[thick, red] (0,.2) arc (180:270:.2cm) -- (.6,0) arc (90:-90:.4cm) -- (.2,-.8) arc (90:180:.2cm);
	\roundNbox{}{(0,0)}{1}{.2}{.2}{}
	\roundNbox{unshaded}{(0,-.4)}{.2}{.6}{.6}{}
	\roundNbox{unshaded}{(0,.4)}{.2}{.2}{.2}{}
	\node at (-.8,.8) {\scriptsize{$i$}};
	\node at (-.2,.8) {\scriptsize{$j$}};
	\node at (.8,.8) {\scriptsize{$k$}};
\end{tikzpicture}
\right)
=\varpi_{i,j}$,
and 
$
Z\left(
\begin{tikzpicture}[baseline = -.1cm, scale=.9]
	\draw (-.6,-.2) -- (-.6,1);
	\draw (0,1) -- (0,.6);
	\draw (.6,-.2) -- (.6,1);
	\draw[thick, red] (0,-.6) -- (0,-1);
	\draw[thick, red] (0,.2) arc (0:-90:.2cm) -- (-.6,0) arc (90:270:.4cm) -- (-.2,-.8) arc (90:0:.2cm);
	\roundNbox{}{(0,0)}{1}{.2}{.2}{}
	\roundNbox{unshaded}{(0,-.4)}{.2}{.6}{.6}{}
	\roundNbox{unshaded}{(0,.4)}{.2}{.2}{.2}{}
	\node at (-.8,.8) {\scriptsize{$i$}};
	\node at (-.2,.8) {\scriptsize{$j$}};
	\node at (.8,.8) {\scriptsize{$k$}};
\end{tikzpicture}
\right)
=
\varpi_{i,j}\,
\circ \beta_{\cP[j],\cP[i+k]}
$.
\end{ex}
\begin{proof}
$Z(p_{i,j}):=\varpi_{i,j}\circ_1\varpi_{0,i+k}\circ_1\eta$ simplifies to $\varpi_{i,j}\circ_1 \id_{\cP[i+k]}=\varpi_{i,j}$.
The second tangle differs from the first one by its anchor lines. 
By (\ref{eq: Z(T) = Z(T_st) o P(s_T)}), it evaluates to $Z(p_{i,j})\circ \beta_{\cP[j],\cP[i+k]}$.
\end{proof}

\begin{ex}
\label{ex:2PiRotation}
We have $
Z\left(
\begin{tikzpicture}[baseline=-.1cm, scale=.8]
	\draw (0,0) -- (90:1cm);
	\draw[thick, red] (-90:.3cm) .. controls ++(270:.3cm) and ++(270:.5cm) .. (0:.5cm) .. controls ++(90:.8cm) and ++(90:.8cm) .. (180:.7cm) .. controls ++(270:.6cm) and ++(90:.4cm) .. (270:1cm);
	\draw[very thick] (0,0) circle (1cm);
	\draw[unshaded, very thick] (0,0) circle (.3cm);
	\node at (100:.8cm) {\scriptsize{$n$}};
\end{tikzpicture}
\right)
=\theta_{\cP[n]}
\text{ and }\,
Z\left(
\begin{tikzpicture}[baseline=-.1cm, xscale=-.8, yscale=.8]
	\draw (0,0) -- (90:1cm);
	\draw[thick, red] (-90:.3cm) .. controls ++(270:.3cm) and ++(270:.5cm) .. (0:.5cm) .. controls ++(90:.8cm) and ++(90:.8cm) .. (180:.7cm) .. controls ++(270:.6cm) and ++(90:.4cm) .. (270:1cm);
	\draw[very thick] (0,0) circle (1cm);
	\draw[unshaded, very thick] (0,0) circle (.3cm);
	\node at (100:.8cm) {\scriptsize{$n$}};
\end{tikzpicture}
\right)
=\theta_{\cP[n]}^{-1}.
$
\end{ex}
\begin{proof}
These tangles are identical to the one in Example \ref{ex:Identity}, except for the position of their anchor lines.
By (\ref{eq: Z(T) = Z(T_st) o P(s_T)}), they evaluate 
to $Z(\id_n)\circ\, \theta_{\cP[n]}$ and $Z(\id_n)\circ\, \theta_{\cP[n]}^{-1}$, respectively.
\end{proof}

\begin{ex}\label{ex: full twist tangle}
We have $
Z\left(
\begin{tikzpicture}[baseline=-.1cm, xscale=.8, yscale=-.8]
	\draw [thick, red] (0,0) -- (90:1cm);
	\draw[] (-90:.3cm) .. controls ++(270:.3cm) and ++(270:.5cm) .. (0:.5cm) .. controls ++(90:.8cm) and ++(90:.8cm) .. (180:.7cm) .. controls ++(270:.6cm) and ++(90:.4cm) .. (270:1cm);
	\draw[very thick] (0,0) circle (1cm);
	\draw[unshaded, very thick] (0,0) circle (.3cm);
	\node at (-80:.8cm) {\scriptsize{$n$}};
\end{tikzpicture}
\right)
=\theta_{\cP[n]}
\text{ and }\,
Z\left(
\begin{tikzpicture}[baseline=-.1cm, scale=-.8]
	\draw[thick, red] (0,0) -- (90:1cm);
	\draw[] (-90:.3cm) .. controls ++(270:.3cm) and ++(270:.5cm) .. (0:.5cm) .. controls ++(90:.8cm) and ++(90:.8cm) .. (180:.7cm) .. controls ++(270:.6cm) and ++(90:.4cm) .. (270:1cm);
	\draw[very thick] (0,0) circle (1cm);
	\draw[unshaded, very thick] (0,0) circle (.3cm);
	\node at (-80:.8cm) {\scriptsize{$n$}};
\end{tikzpicture}
\right)
=\theta_{\cP[n]}^{-1}
$.
\end{ex}
\begin{proof}
Following Algorithm \ref{alg:AssignMap},
the first tangle evaluates to
$\alpha_{n}\circ_1 \cdots \circ_1 \alpha_{2n-1} \circ_1 \varpi_{n,n}\circ_1 \bar\alpha_{n-1}\circ_1 \cdots \circ_1 \bar\alpha_0 \circ_1 \eta$,
which is equal to $\theta_{\cP[n]}$ by \ref{reln:RotationThetaMaps}.
The second tangle is the inverse up to isotopy of the first one.
By Propositions \ref{prop:Gluing} and \ref{prop:IsotopyInvariance} below, it evaluates to the inverse morhpism $\theta_{\cP[n]}^{-1}$.
\end{proof}

\begin{ex}\label{ex: double twirl}
We have
$
Z\left(
\begin{tikzpicture}[baseline=-.1cm, xscale=-.8, yscale=.8]
	\draw[] (90:.3cm) .. controls ++(90:.3cm) and ++(90:.5cm) .. (180:.5cm) .. controls ++(270:.8cm) and ++(270:.8cm) .. (0:.7cm) .. controls ++(90:.6cm) and ++(270:.4cm) .. (90:1cm);
	\draw[thick, red] (-90:.3cm) .. controls ++(270:.3cm) and ++(270:.5cm) .. (0:.5cm) .. controls ++(90:.8cm) and ++(90:.8cm) .. (180:.7cm) .. controls ++(270:.6cm) and ++(90:.4cm) .. (270:1cm);
	\draw[very thick] (0,0) circle (1cm);
	\draw[unshaded, very thick] (0,0) circle (.3cm);
	\node at (100:.8cm) {\scriptsize{$n$}};
\end{tikzpicture}
\right)
=
\id_{\cP[n]}.
$
\end{ex}
\begin{proof}
This follows from the prescription \eqref{eq: Z(T) = Z(T_st) o P(s_T)}, and from the first part of Example~\ref{ex: full twist tangle}
(the part which does not rely on Propositions \ref{prop:Gluing} and \ref{prop:IsotopyInvariance}).
\end{proof}

\subsection{Proof of Theorem \ref{thm:ConstructAPA}}
\label{sec:WellDefined}

As a first step towards Theorem \ref{thm:ConstructAPA},
we verify that the assignment $T\mapsto Z(T)$ is compatible with composition of tangles:

\begin{prop}
\label{prop:Gluing}
Let $T$ and $S$ be generic anchored planar tangles with underlying planar tangles in standard form.
Assume that the composition $T\circ_a S$ is allowed,
and let $Z(T)$, $Z(S)$, $Z(T\circ_a S)$ be as provided by Algorithm \ref{alg:AssignMap}.
Then we have $Z(T\circ_a S) = Z(T)\circ_a Z(S)$.
\end{prop}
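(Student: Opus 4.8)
The plan is to run the two tangles through Algorithm~\ref{alg:AssignMap} in parallel and track how the construction interacts with operadic composition. Recall that the algorithm writes $Z(T)=Z(T_{\mathrm{st}})\circ P(\sigma_T)$, where $T_{\mathrm{st}}$ is the result of Steps~(1)--(3) (underlying tangle in standard form, circles replaced by rectangles, standard anchor lines installed) and $\sigma_T$ is the ribbon braid of Step~(3), determined by $\sigma_T\cdot A_{\mathrm{st}}=A'$; likewise for $S$ and for $T\circ_a S$. I would therefore prove the two identities
\[
\sigma_{T\circ_a S}=\sigma_T\circ_a\sigma_S\qquad\text{and}\qquad Z\big((T\circ_a S)_{\mathrm{st}}\big)=Z(T_{\mathrm{st}})\circ_{\pi_T(a)}Z(S_{\mathrm{st}})
\]
separately, and then recombine them using the equivariance law relating operadic composition $\circ_a$ to ordinary composition of morphisms in $\cC$, together with the compatibility~\eqref{P(st)=P(s)oP(t)} of $P$ with operadic composition of ribbon braids.

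First I would dispose of the ribbon-braid identity, the easy half. Since anchor lines never interact with strands, the system of anchor lines of $T\circ_a S$ is, directly from the construction of the composite in Definition~\ref{defn:AnchoredPlanarTangle}, the operadic composite of the anchor systems of $T$ and $S$ inside the strandless anchored planar operad. The operad structure on $(\cR\cB_n)_n$ was defined in Section~\ref{sec:RibbonBraidGroup} precisely so that $\sigma\mapsto\sigma\cdot A_{\mathrm{st}}$ intertwines it with that composite (this uses that composing standard anchor systems again yields, up to isotopy, a standard anchor system). Writing the anchor systems of $T$ and $S$ as $\sigma_T\cdot A_{\mathrm{st}}$ and $\sigma_S\cdot A_{\mathrm{st}}$ and feeding this through, one finds that the anchor system of $T\circ_a S$ is $(\sigma_T\circ_a\sigma_S)\cdot A_{\mathrm{st}}$; since the $\cR\cB$-action on anchor systems is free, $\sigma_{T\circ_a S}=\sigma_T\circ_a\sigma_S$, and then~\eqref{P(st)=P(s)oP(t)} gives $P(\sigma_{T\circ_a S})=P(\sigma_T)\circ_a P(\sigma_S)$.

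Next comes the standard-form identity, which I expect to be the real work. Because $T$ and $S$ already have underlying tangles in standard form, the affine gluing map $f$ of $T\circ_a S$ is a pure scaling-and-translation (it must send the bottom of $\partial^0 S$ to the bottom anchor point $q_a$ of the $a$-th disc of $T$), so $f(S)$ already has its anchor points at the bottom of its discs and meets $\partial^aT$ exactly where the strands of $T$ did; the only thing Algorithm~\ref{alg:StandardForm} has to do is shrink discs, and one can shrink so that all of $f(S)$'s combinatorial data (discs and critical points of strands) lands in the $y$-interval formerly occupied by the $a$-th disc of $T$, creating no new critical points. I would then refine the nested sequence of rectangles of Step~(4) chosen for $T_{\mathrm{st}}$ by inserting, inside that $y$-interval, a copy of the nested sequence chosen for $S_{\mathrm{st}}$; reading off the resulting decomposition into generating tangles, the part coming from $S$ appears ``inlined'' among the surrounding strands of $T$ rather than as a free-standing copy of $Z(S_{\mathrm{st}})$ spliced in via the $\varpi$-tangle that had introduced disc $a$. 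The heart of the matter is to show that these two presentations agree, i.e.\ that $Z$ of the inlined decomposition equals $Z(T_{\mathrm{st}})\circ_{\pi_T(a)}Z(S_{\mathrm{st}})$; this is a finite manipulation using the associativity of the chain-composition $\circ_1$ together with the relations~\ref{reln:UnitMap}--\ref{reln:HardQuadraticMaps} (and, if the shrinking produced a cancelling cup--cap pair, the $\id_n$ case of~\ref{reln:CapCupMaps} with~\ref{reln:UnitMap}), plus the well-definedness of $Z(\cdot_{\mathrm{st}})$ up to auxiliary choices already supplied by the algorithm. I would carry this out by induction on the number of generating tangles occurring in $S_{\mathrm{st}}$.

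Finally, to recombine: by the equivariance law of the endomorphism $\cR\cB$-operad of $\cC$,
\[
Z(T)\circ_a Z(S)=\big(Z(T_{\mathrm{st}})\circ P(\sigma_T)\big)\circ_a\big(Z(S_{\mathrm{st}})\circ P(\sigma_S)\big)=\big(Z(T_{\mathrm{st}})\circ_{\pi_T(a)}Z(S_{\mathrm{st}})\big)\circ\big(P(\sigma_T)\circ_a P(\sigma_S)\big),
\]
and the two parenthesised factors on the right were just identified with $Z((T\circ_a S)_{\mathrm{st}})$ and $P(\sigma_{T\circ_a S})$, whose composite is $Z(T\circ_a S)$ by definition. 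The main obstacle, as flagged, is the standard-form step: verifying that the inlined nested-rectangle decomposition of $T\circ_a S$ really evaluates to $Z(T_{\mathrm{st}})\circ_{\pi_T(a)}Z(S_{\mathrm{st}})$, and that the slot relabelling $a\mapsto\pi_T(a)$ that appears there is exactly the one forced by the equivariance law in the last display. Everything else is bookkeeping once the pictures are drawn.
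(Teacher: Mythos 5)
Your proposal is correct and follows essentially the same route as the paper: factor $Z(-)=Z((-)_{\mathrm{st}})\circ P(\sigma_{-})$, prove $\sigma_{T\circ_a S}=\sigma_T\circ_a\sigma_S$ and $Z\big((T\circ_a S)_{\mathrm{st}}\big)=Z(T_{\mathrm{st}})\circ_{\pi_T(a)}Z(S_{\mathrm{st}})$ separately, and recombine via the compatibility \eqref{P(st)=P(s)oP(t)} and the interchange law. The paper organizes your ``inlining'' induction in stages --- first $S$ a single generating tangle glued into $T=p_{i,j}$ (checked directly from the relations (C1), (C5)--(C7)), then arbitrary $T$ with standard anchor lines, then arbitrary $S$ by peeling off its generators one at a time --- but this is the same computation you describe.
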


\begin{proof}
We first check the special case when $S$ is a generating tangle (Definition \ref{defn:GeneratingTangles}), $T=p_{i,j}$, and $a=2$.
If $S=u$, we check using \ref{reln:UnitMap}:
\[
Z(T\circ_2 S)=Z(p_{i,0}\circ_2 u)=Z(\id_n)=\id_{\cP[n]}=\varpi_{i,0}\circ_2 \eta=Z(T)\circ_2 Z(S)
\]
If $S=a_k$, we check using \ref{reln:CapQuadraticMaps}:
\[
Z(T\circ_2 S)=Z(p_{i,j}\circ_2 a_k)=Z(a_{i+k} \circ_1 p_{i,j+2})=\alpha_{i+k} \circ_1 \varpi_{i,j+2}=\varpi_{i,j}\circ_2 \alpha_k=Z(T)\circ_2 Z(S).
\]
If $S=\bar a_k$, we check using \ref{reln:CupQuadraticMaps}:
\[
Z(T\circ_2 S)=Z(p_{i,j}\circ_2 \bar a_k)=Z(\bar a_{i+k} \circ_1 p_{i,j-2})=\bar \alpha_{i+k} \circ_1 \varpi_{i,j-2}=\varpi_{i,j}\circ_2 \bar \alpha_k=Z(T)\circ_2 Z(S).
\]
Finally, if $S=p_{k,l}$, we check using \ref{reln:EasyQuadraticMaps}:
\[
Z(T\circ_2 S)=Z(p_{i,j}\circ_2 p_{k,l})=Z(p_{i+k,l}\circ_1 p_{i,j-l})=\varpi_{i+k,l}\circ_1 \varpi_{i,j-l}=\varpi_{i,j}\circ_2 \varpi_{k,l}=Z(T)\circ_2 Z(S).
\]

Let us now assume that $S$ is a generating tangle (Definition \ref{defn:GeneratingTangles}) and that $T$ has standard anchor lines.
Decompose $T$ into generating tangles $T = T_N \circ_1 \ldots \circ_1 T_0$ as in Figure~\ref{fig:ReadRotation}.
By (\ref{eq: Z(T_st):= ... }) and by the computations performed in Examples \ref{ex:Zero tangle}--\ref{ex:SwapAnchorDependence}, we get that
$Z(T) \,=\, Z(T_N) \circ_1 \ldots \circ_1 Z(T_0)$.
We then have
\[
T\circ_a S
= [[T_N \circ_1 \ldots \circ_1  T_k] \circ_1  [T_{k-1}\circ_1\ldots \circ_1 T_0]] \circ_a S
= [[T_N \circ_1 \ldots \circ_1  T_k] \circ_2 S] \circ_1  [T_{k-1} \circ_1 \ldots \circ_1 T_0]
\]
for some index $k$ with $T_k=p_{i,j}$.
It follows that
\[
\begin{split}
Z(T\circ_a S)
&= Z([[T_N \circ_1 \ldots \circ_1  T_k] \circ_2 S] \circ_1  [T_{k-1} \circ_1 \ldots \circ_1 T_0])\\
&= Z(T_N \circ_1 \ldots \circ_1 T_{k+1} \circ_1  [T_k \circ_2 S] \circ_1  T_{k-1} \circ_1 \ldots \circ_1 T_0)\\
&= Z(T_N \circ_1 \ldots \circ_1 T_{k+1} \circ_1  T_k' \circ_1 T_k'' \circ_1  T_{k-1} \circ_1 \ldots \circ_1 T_0)\\
&= Z(T_N) \circ_1 \ldots \circ_1 Z(T_{k+1}) \circ_1  Z(T_k') \circ_1 Z(T_k'') \circ_1  Z(T_{k-1}) \circ_1 \ldots \circ_1 Z(T_0)\\
&= Z(T_N) \circ_1 \ldots \circ_1 Z(T_{k+1}) \circ_1  Z(T_k' \circ_1 T_k'') \circ_1  Z(T_{k-1}) \circ_1 \ldots \circ_1 Z(T_0)\\
&= Z(T_N) \circ_1 \ldots \circ_1 Z(T_{k+1}) \circ_1  Z(T_k \circ_2 S) \circ_1  Z(T_{k-1}) \circ_1 \ldots \circ_1 Z(T_0)\\
&= Z(T_N) \circ_1 \ldots \circ_1 Z(T_{k+1}) \circ_1  [Z(T_k) \circ_2 Z(S)] \circ_1  Z(T_{k-1}) \circ_1 \ldots \circ_1 Z(T_0)\\
&= [[Z(T_N) \circ_1 \ldots \circ_1  Z(T_k)] \circ_2 Z(S)] \circ_1  [Z(T_{k-1}) \circ_1 \ldots \circ_1 Z(T_0)]\\
&= [[Z(T_N) \circ_1 \ldots \circ_1  Z(T_k)] \circ_1  [Z(T_{k-1})\circ_1\ldots \circ_1 Z(T_0)]] \circ_a Z(S)\\
&=Z(T)\circ_a Z(S),
\end{split}
\]
where $T_k' \circ_1 T_k''$ is the standard way (as in Figure~\ref{fig:ReadRotation}) of decomposing $T_k \circ_2 S$ into generating tangles
(except when $S=u$, in which case the terms involving $T_k'$ and $T_k''$ should be omitted),
and we have used the previous computations to pass from the 6th to the 7th line.

We now take $T$ and $S$ to be arbitrary planar tangles (in standard form), equipped with standard anchor lines.
Write $S = S_N \circ_1 \ldots \circ_1 S_0$ as products of generating tangles,
so that $Z(S) = Z(S_N) \circ_1 \ldots \circ_1 Z(S_0)$.
By repeated use of the previous step, we get
\begin{align*}
Z(T \circ_a S)
&=Z(T \circ_a [S_N\circ_1\ldots \circ_1 S_2\circ_1 S_1 \circ_1 S_0])
\\&=Z([[[\cdots[T \circ_a S_N]\circ_a\ldots ]\circ_a S_2]\circ_a S_1]\circ_a S_0)
\\&=Z([[\cdots[T \circ_a S_N]\circ_a\ldots ]\circ_a S_2]\circ_a S_1)\circ_a Z(S_0)
\\&=[Z([\cdots[T \circ_a S_N]\circ_a\ldots ]\circ_a S_2)\circ_a Z(S_1)]\circ_a Z(S_0)
\\&=\ldots
\\&=[[[\cdots[Z(T )\circ_a Z(S_N)]\circ_a \cdots ]\circ_aZ(S_2)]\circ_a Z(S_1)] \circ_a Z (S_0)
\\&=Z(T) \circ_a [Z(S_N)\circ_1 \cdots \circ_1 Z (S_2)\circ_1 Z (S_1) \circ_1 Z (S_0)]
\\&=Z(T) \circ_a Z(S)
\end{align*}
as desired.

Finally, we treat the general case.
Let $T$ and $S$ to be anchored planar tangles with underlying planar tangles in standard form, and
let $\sigma_S$, $\sigma_T$, $\sigma_{T\circ_a S}$ be the ribbon braid elements which enter in the definition \eqref{eq: Z(T) = Z(T_st) o P(s_T)} of $Z$, so that
$Z(T) = Z(T_{\text{st}})\circ P(\sigma_T)$, $Z(S) = Z(S_{\text{st}})\circ P(\sigma_S)$, and $Z(T\circ_a S) = Z((T\circ_a S)_{\text{st}})\circ P(\sigma_{T\circ_a S})$.
Letting $\pi_T$ be the image of $\sigma_T$ in the symmetric group,
we have $(T\circ_a S)_{\text{st}}  = T_{\text{st}} \circ _{\pi_T(a)}S_{\text{st}}$.
It follows from our previous computations that
\[
Z((T\circ_a S)_{\text{st}}) = Z(T_{\text{st}} \circ _{\pi_T(a)}S_{\text{st}}) = Z(T_{\text{st}}) \circ_{\pi_T(a)} Z(S_{\text{st}}).
\]
We also have
\[
P(\sigma_{T\circ_a S}) = P(\sigma_T \circ_a \sigma_S) = P(\sigma_T) \circ_a P(\sigma_S),
\]
where the first equality holds by the definition of the operadic composition of ribbon braids (Section~\ref{sec:RibbonBraidGroup}), and the second equality is equation \eqref{P(st)=P(s)oP(t)}.
Putting all this together, we get
\begin{align*}
Z(T\circ_a S)
&=
Z((T\circ_a S)_{\text{st}})\circ P(\sigma_{T\circ_a S})
\\&=
[Z(T_{\text{st}}) \circ_{\pi_T(a)} Z(S_{\text{st}})] \circ [P(\sigma_T) \circ_a P(\sigma_S)]
\\&=
[Z(T_{\text{st}})\circ P(\sigma_T)] \circ_a [Z(S_{\text{st}})\circ P(\sigma_S)]
\\&=
Z(T) \circ_a Z(S).\qedhere
\end{align*}
\end{proof}

We now prove that the assignment $T\mapsto Z(T)$ is invariant under all isotopies of anchored planar tangles:

\begin{prop}
\label{prop:IsotopyInvariance}
Suppose $T$ and $S$ are generic anchored planar tangles which are isotopic. 
Then $Z(T)=Z(S)$.
\end{prop}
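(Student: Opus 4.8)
The plan is to reduce isotopy invariance to the finite list of elementary moves \ref{M:1}--\ref{M:6} and to check invariance under each of them separately, using the gluing property of Proposition~\ref{prop:Gluing} to localise.

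By the Proposition asserting that any two generic anchored planar tangles can be joined by a finite sequence of the moves \ref{M:1}--\ref{M:6}, it suffices to treat the case in which $S$ is obtained from $T$ by a single such move. Every move is supported in a small disk $B\subset\mathbb D$: outside $B$ the tangles $T$ and $S$ coincide, while inside $B$ they differ by a standard local model. For the moves \ref{M:1}--\ref{M:4}, one may choose $B$ so that, after a small perturbation near $\partial B$, one obtains decompositions $T=R\circ_a L$ and $S=R\circ_a L'$ with $R$, $L$, $L'$ all having underlying tangles in standard form and with $L$, $L'$ small tangles differing only by the move; Proposition~\ref{prop:Gluing} then gives $Z(T)=Z(R)\circ_a Z(L)$ and $Z(S)=Z(R)\circ_a Z(L')$, reducing the claim to the local identity $Z(L)=Z(L')$. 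Evaluating $Z(L)$ and $Z(L')$ via Algorithm~\ref{alg:AssignMap}, these identities become, respectively: the interchange relations \ref{reln:CapMaps}--\ref{reln:EasyQuadraticMaps} among the generators $\alpha_i$, $\bar\alpha_i$, $\varpi_{j,k}$ for the height exchanges \ref{M:2} and \ref{M:3} (this part is formally identical to the unanchored case \cite{math.QA/9909027,MR2812459}); the zigzag relation \ref{reln:CapCupMaps} in the case $i=j\pm1$ for the cup-cap cancellation \ref{M:4}; and, most importantly, the ``hard'' relation \ref{reln:HardQuadraticMaps} for the move \ref{M:1} that exchanges the heights of two input circles, which forces their anchor lines to braid past one another and is the point at which the braiding $\beta$ of $\cC$ genuinely enters (compare Remarks~\ref{rem: explanation of 7th and 8th rels}(1) and \ref{rem:Quadratics}).

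The moves \ref{M:5} and \ref{M:6} disturb the conditions defining standard form (they move a boundary point past the equator, resp.\ an anchor point past the north pole, of an input circle), so here one argues directly with Algorithm~\ref{alg:AssignMap} rather than localising at once. For \ref{M:5}, one checks that the standard-form reduction of Algorithm~\ref{alg:StandardForm} turns the move into the insertion or deletion of a cup-cap zigzag on the strings attached to that circle, which is again absorbed by \ref{reln:CapCupMaps}. For \ref{M:6}, the standard-form reduction is forced to rotate the offending input circle by an extra full turn, so that $T_{\text{st}}$ and $S_{\text{st}}$ differ by a full twist inserted at that circle, while the comparison ribbon braids $\sigma_T$, $\sigma_S$ in the definition~\eqref{eq: Z(T) = Z(T_st) o P(s_T)} of $Z$ differ by a generator $\vartheta_i^{\pm1}$; Examples~\ref{ex:2PiRotation} and \ref{ex: full twist tangle} (which identify a full twist tangle with $\theta_{\cP[n]}$ via relation \ref{reln:RotationThetaMaps}) then show that the factor $\theta_{\cP[n]}$ produced by $Z(T_{\text{st}})$ is exactly cancelled by the factor $\theta_{\cP[n]}^{-1}$ produced by $P(\vartheta_i^{\pm1})$.

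The main obstacle is precisely this bookkeeping for the moves \ref{M:1} and \ref{M:6}: one must track, in every case of ``which side of the equator / north pole'' and ``which input circle lies higher'', how both the standard-form reduction of Algorithm~\ref{alg:StandardForm} and the comparison braid $\sigma_T$ of Algorithm~\ref{alg:AssignMap} respond to the move, and verify that their combined effect is exactly the single relation \ref{reln:HardQuadraticMaps}, resp.\ \ref{reln:RotationThetaMaps}. It is also convenient to record first that the output of Algorithm~\ref{alg:AssignMap} does not depend on its internal choices: within the space of generic tangles in standard form, the input circles and the local extrema of strands are totally ordered by height, so the decomposition into generating tangles is canonical up to associativity of $\circ_1$; hence $Z(T_{\text{st}})$ is a well-defined function of the standard-form isotopy class, and that class is in turn canonically produced by Algorithm~\ref{alg:StandardForm}.
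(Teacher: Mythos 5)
Your overall skeleton — reduce to the moves \ref{M:1}--\ref{M:6}, match \ref{M:2}--\ref{M:4} with the relations \ref{reln:CapMaps}--\ref{reln:CupQuadraticMaps}, use \ref{reln:HardQuadraticMaps} for \ref{M:1}, and cancel the twist produced by \ref{reln:RotationThetaMaps} against $P(\vartheta_i^{\pm1})$ for \ref{M:6} — is exactly the paper's, and your treatment of \ref{M:5} and \ref{M:6} is essentially the argument given there. The gap is in the localisation step you use for \ref{M:1}--\ref{M:4}, and it is fatal precisely for \ref{M:1}, the one move where the braiding enters. In an operadic decomposition $T=R\circ_a L$, the anchor lines of all input circles of $L$ are, by definition of $\circ_a$, parallel copies of the single anchor line of the $a$-th input of $R$; in particular they arrive at $q_0$ consecutively, with no other anchor line between them. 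But the two input circles whose heights are exchanged by an \ref{M:1} move need not be adjacent in the (isotopy-invariant) clockwise order of anchor lines at $q_0$: a third circle's anchor line may separate them, and since anchor lines may not cross each other this cannot be isotoped away. In such a configuration there is \emph{no} disc $B$ containing exactly those two circles for which $T$ decomposes as $R\circ_a L$, so your reduction to a local identity $Z(L)=Z(L')$ does not cover the general case. (Shrinking $B$ to contain only the moving circle does not help: then the height exchange is between a feature of $L$ and a feature of $R$, which the operadic composition does not see, and you are back to comparing global standard forms.) Even for \ref{M:2}--\ref{M:4} the localisation needs more care than ``a small perturbation near $\partial B$'': anchor lines of other circles are global arcs ending at $q_0$ and must first be argued out of $B$, and $R$, $L$ must be put in standard form by an isotopy that stays in $\mathbb M_0$, since at this stage $Z$ is only known to be constant on connected components of $\mathbb M_0$ and Proposition~\ref{prop:Gluing} applies only to tangles literally in standard form.

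The paper's proof avoids all of this by never localising the circle exchange: at the moment of an \ref{M:1} crossing the two circles are adjacent in the height order, hence the corresponding generating tangles $p_{i+j+k,\ell}\circ_1 p_{i,j}$ and $p_{i,j}\circ_1 p_{i+k,\ell}$ sit adjacently inside the \emph{global} $\circ_1$-decompositions of $T_{\mathrm{st}}$ and $S_{\mathrm{st}}$, while the change in the anchor-line combinatorics is recorded globally by $\sigma_S=\varepsilon_a\sigma_T$; the explicit braiding $\beta_{j,\ell}$ in \ref{reln:HardQuadraticMaps} then exactly matches the extra $P(\varepsilon_a)$. To repair your write-up for \ref{M:1} (and, more safely, for \ref{M:2}--\ref{M:4} as well), replace the decomposition $T=R\circ_a L$ by this direct comparison of the full standard-form decompositions and comparison braids; the relations you invoke are the right ones, but they must be applied in the middle of the global composite rather than to a would-be local sub-tangle.
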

\begin{proof}
In Section \ref{sec:StandardForm}, we saw that any two generic anchored planar tangles which are isotopic can be connected by a finite sequence of the moves \ref{M:1} -- \ref{M:6}.
It is therefore enough to show that $Z(T)=Z(S)$ whenever $T$ and $S$ are related by one of these moves.
We proceed case by case:

\begin{enumerate}[label=(M\arabic*)]
\item
\label{rel:MorseMoveExchangeInputs}
$T$ is obtained from $S$ by exchanging the relative order of the $y$-coordinates of the centers of two input circles.
Then, up to permuting $T$ and $S$, the decompositions of $T_{\text{st}}$ and $S_{\text{st}}$ into generating tangles (as in Figure~\ref{fig:ReadRotation}) look like
\[
\begin{split}
T_{\text{st}} &= T_N \circ_1\ldots\circ_1T_{s+2}\circ_1
p_{i+j+k,\ell}\circ_1 p_{i,j}
\circ_1T_{s-1}\circ_1\ldots\circ_1 T_0
\\
S_{\text{st}} &= T_N \circ_1\ldots\circ_1T_{s+2}\circ_1
p_{i,j}\circ_1 p_{i+k,\ell}
\circ_1T_{s-1}\circ_1\ldots\circ_1 T_0
\end{split}
\]
(see part (1) of Remark \ref{rem: explanation of 7th and 8th rels}).
We also have $\sigma_S=\varepsilon_a\sigma_T$ for some appropriate index $a$.
It follows that
\begin{align*}Z(T) 
&= Z(T_{\text{st}}) \circ P(\sigma_T)\\
&=[Z(T_N) \circ_1\ldots\circ_1Z(p_{i+j+k,\ell})\circ_1 Z(p_{i,j})\circ_1\ldots\circ_1 Z(T_0)] \circ P(\sigma_T)\\
&= [Z(T_N) \circ_1\ldots\circ_1\varpi_{i+j+k,\ell}\circ_1 \varpi_{i,j}\circ_1\ldots\circ_1 Z(T_0)] \circ P(\sigma_T)\\
&= [Z(T_N) \circ_1\ldots\circ_1[[\varpi_{i,j}\circ_1 \varpi_{i+k,\ell}]\circ(\id \otimes \beta_{\cP[j],\cP[\ell]})]\circ_1\ldots\circ_1 Z(T_0)] \circ P(\sigma_T)\\
&= [Z(T_N) \circ_1\ldots\circ_1\varpi_{i,j}\circ_1 \varpi_{i+k,\ell}\circ_1\ldots\circ_1 Z(T_0)] \circ P(\varepsilon_a)\circ P(\sigma_T)\\
&=[Z(T_N) \circ_1\ldots\circ_1Z(p_{i,j})\circ_1 Z(p_{i+k,\ell})\circ_1\ldots\circ_1 Z(T_0)] \circ P(\varepsilon_a\sigma_T)\\
&=Z(S_{\text{st}})\circ  P(\sigma_S)=Z(S),
\end{align*}
where we have used \ref{reln:HardQuadraticMaps} in the fourth equality.

\item
\label{rel:MorseMoveExchangeInputAndCap}
$T$ is obtained from $S$ by exchanging the relative order of the $y$-coordinates of the center of and input circle and of a critical point of a string.
Then, up to permuting $T$ and $S$, the decompositions of $T_{\text{st}}$ and $S_{\text{st}}$ into generating tangles are of the from
\[
\begin{split}
T_{\text{st}} &= T_N \circ_1\ldots\circ_1T_{s+2}\circ_1
T_{s+1}\circ_1 T_s
\circ_1T_{s-1}\circ_1\ldots\circ_1 T_0
\\
S_{\text{st}} &= T_N \circ_1\ldots\circ_1T_{s+2}\circ_1
S_{s+1}\circ_1 S_s
\circ_1T_{s-1}\circ_1\ldots\circ_1 T_0
\end{split}
\]
with 
\begin{equation}\label{eq: lots of cases of T and S}
\begin{matrix} T_{s+1}=a_i \\ T_s=p_{j,k} \\ S_{s+1}=p_{j-2,k} \\ S_s=a_i\\ \text{for }  i+1<j \end{matrix}
\,\,\,\,\,\,\text{or}\,\,\,\,
\begin{matrix} T_{s+1}=a_i \\ T_s=p_{j,k} \\ S_{s+1}=p_{j,k} \\ S_s=a_{i-k} \\ \text{for } i+1> j+k \end{matrix}
\,\,\,\,\text{or}\,\,\,\,\,\,\,
\begin{matrix} T_{s+1}=\bar a_i \\ T_s=p_{j,k} \\ S_{s+1}=p_{j+2,k} \\ S_s=\bar a_i \\ \text{for } i\leq j \end{matrix}
\,\,\,\,\,\,\,\text{or}\,\,\,\,
\begin{matrix} T_{s+1}=\bar a_i \\ T_s=p_{j,k} \\ S_{s+1}=p_{j,k} \\ S_s=\bar a_{i-k} \\ \text{for } i\geq j+k. \end{matrix}
\end{equation}
By \ref{reln:CapQuadraticMaps} and \ref{reln:CupQuadraticMaps} we note that, in every case, the equation
\[
Z(T_{s+1})\circ_1 Z(T_s)=Z(S_{s+1})\circ_1 Z(S_s)
\]
holds.
We also have $\sigma_S = \sigma_T$. It follows that
\begin{align*}
Z(T) 
= 
Z(T_{\text{st}}) \circ P(\sigma_T)
&=[Z(T_N) \circ_1\ldots\circ_1 Z(T_{s+1})\circ_1 Z(T_s) \circ_1\ldots\circ_1 Z(T_0)]\circ P(\sigma_T)\\
&=[Z(T_N) \circ_1\ldots\circ_1 Z(S_{s+1})\circ_1 Z(S_s) \circ_1\ldots\circ_1 Z(T_0)]\circ P(\sigma_S)\\
&= 
Z(S_{\text{st}}) \circ P(\sigma_S)
= 
Z(S).
\end{align*}

\item
\label{rel:MorseMoveExchangeCaps}
$T$ is obtained from $S$ by exchanging the relative order of the $y$-coordinates of two critical points of strings. 
This case is entirely similar to the previous one, with 
\[
\begin{matrix} T_{s+1}=a_i \\ T_s=a_j \\ S_{s+1}=a_{j-2} \\ S_s=a_i\\ \text{for }  i+1<j \end{matrix}
\,\,\,\,\,\,\text{or}\,\,\,\,
\begin{matrix} T_{s+1}=\bar a_i \\ T_s=\bar a_j \\ S_{s+1}=\bar a_{j+2} \\ S_s=\bar a_i \\ \text{for } i\le j \end{matrix}
\,\,\,\,\text{or}\,\,\,\,\,\,\,
\begin{matrix} T_{s+1}=a_i \\ T_s=\bar a_j \\ S_{s+1}=\bar a_{j-2} \\ S_s=a_i \\ \text{for } i< j-1 \end{matrix}
\,\,\,\,\,\,\,\text{or}\,\,\,\,
\begin{matrix} T_{s+1}=a_i \\ T_s=\bar a_j \\ S_{s+1}=\bar a_j \\ S_s=a_{i-2} \\ \text{for } i> j+1. \end{matrix}
\]
instead of \eqref{eq: lots of cases of T and S}. The relevant relations are
\ref{reln:CapMaps},
\ref{reln:CupMaps}, and
\ref{reln:CapCupMaps}.

\item
\label{rel:MorseCancelation}
$T$ is obtained from $S$ by a Morse cancellation:
$\tikz[scale=.6, baseline=-4]{\draw (-.1,-.5) to[out=90,in=-90] (-.15,-.1) to[out=90,in=115, looseness=2] (0,0) to[out=-65,in=-90, looseness=2] (.15,.1) to[out=90,in=-90] (.1,.5);} \to \tikz[scale=.6, baseline=-4]{\draw (0,-.5) -- (0,.5);}$\,\,.
The argument to show $Z(T)=Z(S)$ uses \ref{reln:CapCupMaps}, and is similar to the previous ones.

\item
\label{rel:MorseMoveStrands}
$T$ is obtained from $S$ by moving the attaching point of a strand past the equator of some input circle. 
The invariance of $Z$ under that move is a direct consequence of the corresponding statement for the moves \ref{M:2} and \ref{M:4}.

\item
\label{rel:MorseMoveAnchor}
$T$ is obtained from $S$ by swinging an anchor point from slightly to the right of the north pole to slightly to the left of the north pole of an input circle. 
By applying Algorithm \ref{alg:StandardForm} (which is step (1) of Algorithm \ref{alg:AssignMap}) to $T$ and to $S$,
we obtain tangles $T'$ and $S'$ that differ from each other by a $2\pi$-rotation around some input circle:
$$
T'\,=\,\begin{tikzpicture}[baseline=-.1cm, xscale=-1]
	\draw [thick, red] (90:.7cm) .. controls ++(270:.3cm) and ++(90:.4cm) .. (.4,0) .. controls ++(270:.4cm) and ++(270:.2cm) .. (0,-.3);
\pgftransformrotate{180}
	\draw (90:.7cm) .. controls ++(270:.3cm) and ++(90:.4cm) .. (.4,0) .. controls ++(270:.4cm) and ++(270:.2cm) .. (0,-.3);
\pgftransformrotate{180}
	\ncircle{unshaded}{(0,0)}{.3}{270}{}
	\node[right] at (-90:.7cm) {$\scriptstyle n$};
\end{tikzpicture}
\,\,\,\,\longleftarrow\,\,\,\,
\begin{tikzpicture}[baseline=-.1cm]
	\draw (-90:.8cm) -- (0,0);
	\draw [thick, red] (90:.7cm) -- (0,0);
	\ncircle{unshaded}{(0,0)}{.3}{90}{}
	\node[right] at (-90:.7cm) {$\scriptstyle n$};
\end{tikzpicture}
\,\,\,\,\longrightarrow\,\,\,\,
\begin{tikzpicture}[baseline=-.1cm]
	\draw [thick, red] (90:.7cm) .. controls ++(270:.3cm) and ++(90:.4cm) .. (.4,0) .. controls ++(270:.4cm) and ++(270:.2cm) .. (0,-.3);
\pgftransformrotate{180}
	\draw (90:.7cm) .. controls ++(270:.3cm) and ++(90:.4cm) .. (.4,0) .. controls ++(270:.4cm) and ++(270:.2cm) .. (0,-.3);
\pgftransformrotate{180}
	\ncircle{unshaded}{(0,0)}{.3}{270}{}
	\node[right] at (-90:.7cm) {$\scriptstyle n$};
\end{tikzpicture}\,=\,S'
$$
By the already established moves \ref{M:2} -- \ref{M:4} and by Proposition~\ref{prop:Gluing}, checking the equality
$
Z\bigg(\begin{tikzpicture}[baseline=-.1cm, xscale=-1]
	\draw [thick, red] (90:.7cm) .. controls ++(270:.3cm) and ++(90:.4cm) .. (.4,0) .. controls ++(270:.4cm) and ++(270:.2cm) .. (0,-.3);
\pgftransformrotate{180}
	\draw (90:.7cm) .. controls ++(270:.3cm) and ++(90:.4cm) .. (.4,0) .. controls ++(270:.4cm) and ++(270:.2cm) .. (0,-.3);
\pgftransformrotate{180}
	\ncircle{unshaded}{(0,0)}{.3}{270}{}
	\node[right] at (-90:.7cm) {$\scriptstyle n$};
\end{tikzpicture}\bigg)
\,=\,
Z\bigg(\begin{tikzpicture}[baseline=-.1cm]
	\draw [thick, red] (90:.7cm) .. controls ++(270:.3cm) and ++(90:.4cm) .. (.4,0) .. controls ++(270:.4cm) and ++(270:.2cm) .. (0,-.3);
\pgftransformrotate{180}
	\draw (90:.7cm) .. controls ++(270:.3cm) and ++(90:.4cm) .. (.4,0) .. controls ++(270:.4cm) and ++(270:.2cm) .. (0,-.3);
\pgftransformrotate{180}
	\ncircle{unshaded}{(0,0)}{.3}{270}{}
	\node[right] at (-90:.7cm) {$\scriptstyle n$};
\end{tikzpicture}\bigg)
$
is equivalent to checking that\vspace{-.5cm}
$$
Z\left(
\begin{tikzpicture}[baseline=-.1cm, xscale=-.8, yscale=.8]
	\ncircle{unshaded}{(0,0)}{1}{-90}{}
	\draw[] (90:.3cm) .. controls ++(90:.3cm) and ++(90:.5cm) .. (180:.5cm) .. controls ++(270:.8cm) and ++(270:.8cm) .. (0:.7cm) .. controls ++(90:.6cm) and ++(270:.4cm) .. (90:1cm);
	\draw[thick, red] (-90:.3cm) .. controls ++(270:.3cm) and ++(270:.5cm) .. (0:.5cm) .. controls ++(90:.8cm) and ++(90:.8cm) .. (180:.7cm) .. controls ++(270:.6cm) and ++(90:.4cm) .. (270:1cm);
	\ncircle{unshaded}{(0,0)}{.3}{-90}{}
	\node at (100:.8cm) {\scriptsize{$n$}};
\end{tikzpicture}
\right)
=
Z\left(
\begin{tikzpicture}[baseline=-.1cm]
	\ncircle{unshaded}{(0,0)}{.7}{-90}{}
	\ncircle{unshaded}{(0,0)}{.25}{-90}{}
	\draw (90:.25cm) -- (90:.7cm);
	\node at (105:.45cm) {\scriptsize{$n$}};
	\draw[thick, red] (0,-.7) -- (0,-.25);
\end{tikzpicture}
\right)
$$
holds.
The latter follows from the computations performed in Examples~\ref{ex:Identity} and~\ref{ex: double twirl}.

\end{enumerate}
This concludes the proof of isotopy invariance of the assignment $T\mapsto Z(T)$.
\end{proof}

\begin{proof}[Proof of Theorem \ref{thm:ConstructAPA}]
Given an anchored planar tangle $T$, pick a generic anchored planar tangle $\tilde T$ which is isotopic to $T$
and set
\[
Z(T):=Z(\tilde T),
\]
where the meaning of the right hand side is provided by Algorithm~\ref{alg:AssignMap}.
We note that, by Proposition~\ref{prop:IsotopyInvariance}, the value $Z(\tilde T)$ is independent of the choice of perturbation $\tilde T$ of $T$,
and that the map $T\mapsto Z(T)$ is well-defined on isotopy classes of tangles.

It remains to show that $((\cP[n])_{n\ge 0}, Z)$ satisfies the axioms of an anchored planar algebra.
The action of the identity tangle was computed in Example \ref{ex:Identity}.
The gluing axiom was proved in Proposition \ref{prop:Gluing}.
Finally, the two anchor dependence axioms were established in Examples~\ref{ex:SwapAnchorDependence} and \ref{ex:2PiRotation}, respectively.
\end{proof}



\section{Anchored planar algebras from module tensor categories}
\label{sec:APAfromMTC}

In this section we will use Theorem~\ref{thm:ConstructAPA} in combination with the results of our previous paper \cite{1509.02937}
to construct an anchored planar algebra $\cP$ from a pair $(\cM, m)$ consisting of a module tensor category $\cM$ and a symmetrically self-dual object $m\in\cM$.
The anchored planar algebra only depends on the sub-module tensor category generated by $m$,
so we may assume without loss of generality that $m$ generates $\cM$ as a $\cC$-module tensor category.

\subsection{Diagram cheat sheet from \texorpdfstring{\cite{1509.02937}}{HPT15}}
\label{sec:TubeRelations}

Let $\cC$ be a braided pivotal category, and let $\cM$ be a module tensor category whose action functor $\Phi:\cC\to \cM$ admits a right adjoint $\Tr_\cC:\cM\to \cC$.
Following \cite{1509.02937}, we have a diagrammatic calculus of strings on cylinders for the categorified trace $\Tr_\cC$,
where the tubes are allowed to branch and braid.
We recall the main results of our earlier paper, which we will use to construct anchored planar algebras
(the objects of $\cM$ which appear below are not assumed to be self-dual, and so the strings should really be oriented).

\input{Chapters/TubeRelations.tex}

\subsection{Constructing the anchored planar algebra}
\label{sec:Constructing}

Let $\cC$ be a braided pivotal category.
For the remainder of this section, we fix a module tensor category  $\cM$ and a symmetrically self-dual object $m\in \cM$. 
We assume that $\Phi:= F\circ \Phi^{\scriptscriptstyle \cZ}:\cC\to\cM$ has a right adjoint, denoted
\[
\Tr_\cC:\cM\to\cC.
\]
Recall the morphisms $\bar\ev_m:m\otimes m \to 1$ and $\bar\coev_m:1 \to m \otimes m$ from \eqref{eq: ev bar and coev bar}.

\begin{thm}
\label{thm: construct P from M and m}
Let $\cC$ be a braided pivotal category.
Let $\cM$ and $m$ be as above, and let
\begin{equation}\label{eq:   cP[n]:=Tr_cC(m^otimes n)}
\cP[n]:=\Tr_\cC(m^{\otimes n}).
\end{equation}
Then the maps $\eta$, $\alpha_i$, $\bar\alpha_i$, $\varpi_{i,j}$ defined below satisfy the assumptions of Theorem~\ref{thm:ConstructAPA},
and thus endow $\cP=(\cP[n])_{n\ge 0}$ with the structure of an anchored planar algebra in~$\cC$:
\begin{enumerate}[label={}]
\item
$\eta:=
i:1\to\Tr_\cC(1_\cM)\,\,:
\quad 
\begin{tikzpicture}[baseline=0cm]
	\coordinate (a1) at (0,0);
	\coordinate (b1) at (0,.4);
	\draw[thick] (a1) -- (b1);
	\draw[thick] ($ (a1) + (.6,0) $) -- ($ (b1) + (.6,0) $);
	\draw[thick] ($ (b1) + (.3,0) $) ellipse (.3 and .1);
	\draw[thick] (a1) arc (-180:0:.3cm);
\end{tikzpicture}
$\vspace{-.3cm}

\item
$\alpha_i :=   \Tr_\cC(\id_{m^{\otimes i}} \otimes\bar\ev_m \otimes \id_{m^{\otimes n-i}}) :  \Tr_\cC(m^{\otimes n+2}) \to \Tr_\cC(m^{\otimes n})\,\,:\,\,\,
\begin{tikzpicture}[baseline=.6cm]

	\coordinate (a1) at (0,.1);
	\coordinate (b1) at (0,1.6);
	\coordinate (c) at (.35,.6);
	
	\draw[thick] (a1) -- (b1);
	\draw[thick] ($ (a1) + (1,0) $) -- ($ (b1) + (1,0) $);
	\halfDottedEllipse{(a1)}{.5}{.2}
	\draw[thick] ($ (b1) + (.5,0) $) ellipse (.5cm and .2cm);

	\draw[thick, orange] ($ (a1) + (.15,-.13) $) -- ($ (b1) + (.15,-.13) $);	
	\draw[thick, blue] ($ (a1) + (.35,-.18) $) -- (c) arc (180:0:.15cm) -- ($ (a1) + (.65,-.18) $);	
	\draw[thick, DarkGreen] ($ (a1) + (.85,-.13) $) -- ($ (b1) + (.85,-.13) $);	
\node[orange] at (-.5,.3) {$m^{\otimes i}$};
\node[blue, scale=.97] at (.5,1) {$m$};
\node[DarkGreen] at (1.75,.3) {$m^{\otimes n-i}$};
\end{tikzpicture}
$

\item
$\bar\alpha_i := \Tr_\cC(\id_{m^{\otimes i}} \otimes\bar\coev_m \otimes \id_{m^{\otimes n-i}}) : \Tr_\cC(m^{\otimes n}) \to \Tr_\cC(m^{\otimes n+2})\,\,:\,\,\,
\begin{tikzpicture}[baseline=.6cm]

	\coordinate (a1) at (0,.1);
	\coordinate (b1) at (0,1.6);
	\coordinate (c) at (.35,.9);
	
	\draw[thick] (a1) -- (b1);
	\draw[thick] ($ (a1) + (1,0) $) -- ($ (b1) + (1,0) $);
	\halfDottedEllipse{(a1)}{.5}{.2}
	\draw[thick] ($ (b1) + (.5,0) $) ellipse (.5cm and .2cm);

	\draw[thick, orange] ($ (a1) + (.15,-.13) $) -- ($ (b1) + (.15,-.13) $);	
	\draw[thick, blue] ($ (b1) + (.35,-.18) $) -- (c) arc (-180:0:.15cm) -- ($ (b1) + (.65,-.18) $);	
	\draw[thick, DarkGreen] ($ (a1) + (.85,-.13) $) -- ($ (b1) + (.85,-.13) $);	
\node[orange] at (-.5,.3) {$m^{\otimes i}$};
\node[blue, scale=.97] at (.5,.5) {$m$};
\node[DarkGreen] at (1.75,.3) {$m^{\otimes n-i}$};
\end{tikzpicture}
$

\item
\[
\begin{split}
\varpi_{i,j} := \tau^-_{m^{\otimes n-i},m^{\otimes i+j}} \circ \mu_{m^{\otimes n},m^{\otimes j}} &\circ (\tau^+_{m^{\otimes i},m^{\otimes n-i}} \otimes 1_{\Tr_\cC(m^{\otimes j})}):\\
\Tr_\cC(&m^{\otimes n}) \otimes \Tr_\cC(m^{\otimes j}) \to \Tr_\cC(m^{\otimes n+j})\,\,\,:\end{split}\,
\begin{tikzpicture}[baseline=.7cm]
	\pgfmathsetmacro{\voffset}{.08};
	\pgfmathsetmacro{\hoffset}{.15};
	\pgfmathsetmacro{\hoffsetTop}{.12};
	\coordinate (a1) at (-1,-1);
	\coordinate (a2) at ($ (a1) + (1.4,0)$);
	\coordinate (a3) at ($ (a1) + (2.8,0)$);
	\coordinate (b1) at ($ (a1) + (0,1)$);
	\coordinate (b2) at ($ (b1) + (1.4,0) $);
	\coordinate (b3) at ($ (b2) + (1.4,0) $);
	\coordinate (c1) at ($ (b1) + (.7,1.5)$);
	\coordinate (c2) at ($ (b2) + (.7,1.5)$);
	\coordinate (d1) at ($ (c1) + (0,1)$);
	\coordinate (d2) at ($ (c2) + (0,1)$);
	\pairOfPants{(b2)}{}
	\draw[thick] (c2) -- ($ (c2) + (0,1) $);
	\draw[thick] ($ (c2) + (.6,0) $) -- ($ (c2) + (.6,1) $);
	\draw[thick] ($ (d2) + (.3,0) $) ellipse (.3cm and .1cm);
	\draw[thick] (a2) -- ($ (a2) + (0,1) $);
	\draw[thick] ($ (a2) + (.6,0) $) -- ($ (a2) + (.6,1) $);
	\halfDottedEllipse{(a2)}{.3}{.1}	
	\draw[thick] (a3) -- ($ (a3) + (0,1) $);
	\draw[thick] ($ (a3) + (.6,0) $) -- ($ (a3) + (.6,1) $);
	\halfDottedEllipse{(a3)}{.3}{.1}	
%
	\draw[thick, orange] ($ (a2) + (\hoffset,0) + (0,-.1)$) .. controls ++(90:.4cm) and ++(270:.4cm) .. ($ (a2) + 3*(\hoffset,0) + (0,-\voffset) + (0,1) $);		
	\draw[thick, blue] ($ (a2) + 3*(\hoffset,0) + (0,-\voffset) $) .. controls ++(90:.2cm) and ++(225:.1cm) .. ($ (a2) + 4*(\hoffset,0) + (0,-\voffset) + (0,.45)$);
	\draw[thick, blue] ($ (a2) + (\hoffset,1) + (0,-\voffset) $) .. controls ++(270:.2cm) and ++(45:.1cm) .. ($ (a2) + (0,1) + (0,-\voffset) + (0,-.45)$);
%
	\draw[thick, orange] ($ (c2) + 2*(\hoffset,0) + (0,-.1)$) .. controls ++(90:.4cm) and ++(270:.4cm) .. ($ (c2) + (\hoffset,0) + (0,-\voffset) + (0,1) $);
	\draw[thick, DarkGreen] ($ (c2) + 3*(\hoffset,0) + (0,-\voffset) $) .. controls ++(90:.4cm) and ++(270:.4cm) .. ($ (c2) + 2*(\hoffset,0) + (0,.9) $);		
	\draw[thick, blue] ($ (c2) + (\hoffset,0) + (0,-\voffset) $) .. controls ++(90:.2cm) and ++(-45:.1cm) .. ($ (c2) + (0,-\voffset) + (0,.45)$);
	\draw[thick, blue] ($ (c2) + 3*(\hoffset,0) + (0,1) + (0,-\voffset) $) .. controls ++(270:.2cm) and ++(135:.1cm) .. ($ (c2) + 4*(\hoffset,0) + (0,-\voffset) + (0,-.45) + (0,1)$);
%
	\draw[thick, blue] ($ (b2) + (\hoffset,0) + (0,-\voffset)$) .. controls ++(90:.8cm) and ++(270:.8cm) .. ($ (c2) + (\hoffset,0) + (0,-\voffset) $);	
	\draw[thick, orange] ($ (b2) + 3*(\hoffset,0) + (0,-\voffset)$) .. controls ++(90:.8cm) and ++(270:.8cm) .. ($ (c2) + 2*(\hoffset,0) + (0,-.1) $);	
	\draw[thick, DarkGreen] ($ (a3) + 2*(\hoffset,0) + (0,-.1)$) -- ($ (b3) + 2*(\hoffset,0) + (0,-.1)$) .. controls ++(90:.8cm) and ++(270:.8cm) .. ($ (c2) + 3*(\hoffset,0) + (0,-\voffset) $);	
\node[orange] at (.3,-1.3) {$m^{\otimes i}$};
\node[blue] at (1.3,-1.3) {$m^{\otimes n-i}$};
\node[DarkGreen] at (2.95,-.8) {$m^{\otimes j}$};
\end{tikzpicture}
\]
\end{enumerate}
\end{thm}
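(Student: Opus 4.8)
The plan is to verify the nine relations \ref{reln:UnitMap}--\ref{reln:RotationThetaMaps} of Theorem~\ref{thm:ConstructAPA} for the maps $\eta, \alpha_i, \bar\alpha_i, \varpi_{i,j}$ defined above, using the tube calculus \ref{rel:CategorifiedTrace}--\ref{rel:MultiplicationAssociative} imported from \cite{1509.02937}. Since $m$ is symmetrically self-dual, the oriented strings appearing in those relations may be treated as unoriented and all planar isotopies of string diagrams on tubes are allowed (the only subtlety being that a $2\pi$-rotation of a tube contributes the traciator $\tau_{1,-}=\theta$, cf.\ \ref{rel:ThetaAndTraciator}). The strategy throughout is the same: translate the left- and right-hand sides of each $(\mathrm{C}n)$ relation into string-on-tube pictures, then show the two pictures are related by a sequence of the moves \ref{rel:Traciator}--\ref{rel:MultiplicationAssociative}.

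First I would dispose of the ``easy'' relations. Relation \ref{reln:UnitMap} ($\varpi_{0,j}\circ(\eta\otimes\id_j)=\id_j$ and $\varpi_{i,0}\circ(\id_n\otimes\eta)=\id_n$) follows from the compatibility \ref{rel:CreateUnit} of the unit map with multiplication, together with the fact that the traciators $\tau^\pm$ appearing in $\varpi_{i,j}$ become identities when one of the tensor factors is the empty word. Relations \ref{reln:CapMaps}, \ref{reln:CupMaps} and \ref{reln:CapCupMaps} are about the maps $\alpha_i,\bar\alpha_i$ only; since these are defined by applying $\Tr_\cC$ to morphisms of $\cM$ built from $\bar\ev_m,\bar\coev_m$, and $\Tr_\cC$ is a functor, these reduce to the corresponding planar identities among $\bar\ev_m,\bar\coev_m$ in the symmetrically self-dual calculus on $m$ (the ``$\delta\,\id$'' case is absent because the statement of Theorem~\ref{thm:ConstructAPA} omits it — we need not worry about free-floating circles). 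Relations \ref{reln:CapQuadraticMaps} and \ref{reln:CupQuadraticMaps} express how a cap or cup on one boundary circle slides through a $\varpi$; these follow from naturality of $\mu$ (\ref{rel:MoveTensorThroughMultiplication}) and naturality of $\tau$ (\ref{The traciator is natural}), applied according to which of the three positions ($i+1<j$, $j<i+1<j+k$, $i+1>j+k$) the cap/cup sits in — in the middle case one uses naturality of $\mu$ in its second argument, in the outer cases naturality in the first argument combined with a traciator slide.

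Next the three genuinely ``algebra-structural'' relations. Relation \ref{reln:EasyQuadraticMaps} is associativity of $\varpi$ in the form where the inner multiplication is on the right; I would deduce it from the strong associativity \ref{rel:MultiplicationAssociative} of $\mu$ together with the cocycle identity \ref{rel:TraciatorComposition} for $\tau$, tracking the traciator bookkeeping in the definition of $\varpi_{i,j}$ carefully. Relation \ref{reln:RotationThetaMaps}, which says the ``rotate all the way around'' composite equals $\theta_{\cP[n]}$, is where \ref{rel:ThetaAndTraciator} (traciator with $1_\cM$ is the twist), \ref{rel:DualsAndTraciator} (zig-zag around the back of the tube), and \ref{rel:StringOverCap} come in: the composite $\alpha_n\circ\cdots\circ\alpha_{2n-1}\circ\varpi_{n,n}\circ[(\bar\alpha_{n-1}\circ\cdots\circ\bar\alpha_0\circ\eta)\otimes\id_n]$ builds, on tubes, a picture where the $n$ strands of $\cP[n]$ make a full loop around the outside of the tube, coupled in by $\bar\coev_m$'s and coupled out by $\bar\ev_m$'s against a unit tube; repeatedly applying the zig-zag \ref{rel:DualsAndTraciator} collapses this to $\tau_{1,m^{\otimes n}}=\theta_{\Tr_\cC(m^{\otimes n})}$.

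The hard part will be relation \ref{reln:HardQuadraticMaps}, $\varpi_{i+j+k,\ell}\circ(\varpi_{i,j}\otimes\id_\ell)=\varpi_{i,j}\circ(\varpi_{i+k,\ell}\otimes\id_j)\circ(\id_{i+k+m}\otimes\beta_{j,\ell})$, which is the braided replacement of the planar identity that fails in the presence of anchor lines (Remark~\ref{rem:Quadratics}). Here I expect to need \ref{rel:TwistMultiplicationAndTraciators} (the interaction $\tau\circ\mu = \mu\circ\beta\circ(\id\otimes\theta)$) in an essential way, since it is the only relation that produces a braiding $\beta$ out of the tube calculus, together with the second, ``harder'' variant of associativity \ref{rel:MultiplicationAssociative} proved in the appendix. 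The plan is to draw both sides as three-input pair-of-pants towers on tubes, use the strong associativity to bring both into a common shape in which the two inner discs have been merged, and then reconcile the leftover traciator-and-braiding decorations using \ref{rel:TwistMultiplicationAndTraciators} and the cocycle relation \ref{rel:TraciatorComposition}; verifying that the twist contributions cancel (the $\theta$'s on the two sides must match up after the braiding is extracted) is the delicate bookkeeping step. Once all nine relations are checked, Theorem~\ref{thm:ConstructAPA} applies verbatim and equips $\cP=(\Tr_\cC(m^{\otimes n}))_{n\ge0}$ with the structure of an anchored planar algebra, completing the proof.
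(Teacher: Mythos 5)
Your plan is correct and follows essentially the same route as the paper: (C1)--(C6) from functoriality of $\Tr_\cC$, unit compatibility \ref{rel:CreateUnit}, and naturality of $\mu$ and $\tau$; (C7) and (C8) by reducing, via the traciator cocycle \ref{rel:TraciatorComposition}, to the two versions of strong associativity in \ref{rel:MultiplicationAssociative} (the second, appendix, variant — whose proof indeed rests on \ref{rel:TwistMultiplicationAndTraciators} — handling the braided relation (C8)); and (C9) by the zig-zag/twist identities. The only cosmetic difference is in (C9), where the paper's collapse also invokes \ref{rel:MoveTensorThroughMultiplication} and \ref{rel:CreateUnit} to absorb the unit tube before applying \ref{rel:DualsAndTraciator} and \ref{rel:ThetaAndTraciator}, a step your sketch leaves implicit.
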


\begin{proof}
We show that the maps $\eta$, $\alpha_i$, $\bar\alpha_i$, $\varpi_{i,j}$ satisfy the relations \ref{reln:UnitMap} -- \ref{reln:RotationThetaMaps} in Theorem~\ref{thm:ConstructAPA}.

The relation \ref{reln:UnitMap} holds by \ref{rel:CreateUnit}.
The three relations \ref{reln:CapMaps} -- \ref{reln:CapCupMaps} hold because the maps $\id_{m^{\otimes i}} \otimes\,\bar\ev_m \otimes \id_{m^{\otimes n-i}}$
and $\id_{m^{\otimes i}} \otimes\,\bar\coev_m \otimes \id_{m^{\otimes n-i}}$ satisfy corresponding relations in $\cM$, and $\Tr_\cC$ is a functor.
The relations \ref{reln:CapQuadraticMaps} -- \ref{reln:CupQuadraticMaps} holds by \ref{rel:MoveTensorThroughMultiplication} and \ref{The traciator is natural}.

Using \ref{rel:TraciatorComposition}, the relation \ref{reln:EasyQuadraticMaps} can be shown 
to be equivalent to the first equation in \ref{rel:MultiplicationAssociative} with
$x=m^{\otimes m+i}$, $y=m^{\otimes j}$, $z=m^{\otimes \ell}$, $w=m^{\otimes k}$, upon precomposing both sides by
$\tau_{m^{\otimes i},m^{\otimes m}}\otimes\id_{\Tr_\cC(m^{\otimes j+\ell})\otimes\Tr_\cC(m^{\otimes k})}$ and postcomposing both sides by $\tau^-_{m^{\otimes m},m^{\otimes i+j+k+\ell}}$.

Again using \ref{rel:TraciatorComposition}, the relation \ref{reln:HardQuadraticMaps} can be shown 
to be equivalent to the second equation in \ref{rel:MultiplicationAssociative} with
$x=m^{\otimes m+i}$, $y=m^{\otimes k}$, $z=m^{\otimes j}$, $w=m^{\otimes \ell}$, upon precomposing both sides by
$\tau_{m^{\otimes i+k},m^{\otimes m}}\otimes\id_{\Tr_\cC(m^{\otimes j})\otimes\Tr_\cC(m^{\otimes \ell})}$ and postcomposing both sides by $\tau^-_{m^{\otimes m},m^{\otimes i+j+k+\ell}}$.

Finally, to show \ref{reln:RotationThetaMaps}, we note that the left hand side of that relation can be represented diagrammatically as
$$
\begin{tikzpicture}[baseline=2.65cm]

	\coordinate (d1) at (0,0);
	\coordinate (a1) at (0,1);
	\coordinate (a2) at (1.4,-.2);
	\coordinate (b1) at (0,2);
	\coordinate (b2) at (1.4,2);
	\coordinate (c1) at (.7,3.5);
	\coordinate (e1) at (.7,4.5);
	\coordinate (f1) at (.7,5.5);
	\coordinate (z) at (.15,.4);
	
	\draw[thick] (c1) -- (e1);
	\draw[thick] ($ (c1) + (.6,0) $) -- ($ (e1) + (.6,0) $);
	\halfDottedEllipse{(e1)}{.3}{.1}
	\draw[thick] (f1) -- (e1);
	\draw[thick] ($ (f1) + (.6,0) $) -- ($ (e1) + (.6,0) $);
	\draw[thick] ($ (f1) + (.3,0) $) ellipse (.3cm and .1cm);

	\pairOfPants{(b1)}{}

	\draw[thick] (a2) -- (b2);
	\draw[thick] ($ (a2) + (.6,0) $) -- ($ (b2) + (.6,0) $);
	\halfDottedEllipse{(a2)}{.3}{.1}
	\halfDottedEllipse{($ (a2) + (0,.75) $)}{.3}{.1}
		
	\draw[thick] (d1) -- (b1);
	\draw[thick] ($ (d1) + (.6,0) $) -- ($ (b1) + (.6,0) $);
	\halfDottedEllipse{(d1)}{.3}{.1}
	\halfDottedEllipse{($ (d1) + (0,.55) $)}{.3}{.1}

	\draw[thick] (d1) arc (-180:0:.3cm);		

	\draw[thick, orange] ($ (a2) + (.3,-.1) $) -- ($ (b2) + (.3,-.1) $) .. controls ++(90:.8cm) and ++(270:.8cm) .. ($ (c1) + (.45,-.08) $);

	\draw[thick, orange] (z) arc (-180:0:.15cm)  .. controls ++(90:.2cm) and ++(225:.2cm) .. ($ (z) + (.45,.6) $);		
	\draw[thick, orange] ($ (c1) + (.15,-.08) $) .. controls ++(270:.8cm) and ++(90:.8cm) .. ($ (b1) + (.15,-.08) $) .. controls ++(270:.2cm) and ++(45:.2cm) .. ($ (z) + (-.15,.8) $);
	\draw[thick, orange, dotted] ($ (z) + (-.15,.8) $) -- ($ (z) + (.45,.6) $);	
	\draw[thick, orange] (z) .. controls ++(90:.8cm) and ++(270:.8cm) .. ($ (b1) + (.45,-.08) $) .. controls ++(90:.8cm) and ++(270:.8cm) .. ($ (c1) + (.3,-.1) $);

	\draw[thick, orange] ($ (e1) + (.45,-.08) $) .. controls ++(270:.1cm) and ++(135:.1cm) .. ($ (e1) + (.6,-.45) $);
	\draw[thick, orange] ($ (c1) + (.15,-.08) $) .. controls ++(90:.1cm) and ++(-45:.1cm) .. ($ (c1) + (0,.45) $);
	\draw[thick, orange, dotted] ($ (c1) + (0,.35) $) -- ($ (e1) + (.6,-.45) $);
	\draw[thick, orange] ($ (e1) + (.15,-.08) $) .. controls ++(270:.6cm) and ++(90:.6cm) .. ($ (c1) + (.3,-.1) $);
	\draw[thick, orange] ($ (e1) + (.3,-.1) $) .. controls ++(270:.6cm) and ++(90:.6cm) .. ($ (c1) + (.45,-.08) $);
	
	\draw[thick, orange] ($ (e1) + (.3,-.1) $) .. controls ++(90:.4cm) and ++(90:.4cm) .. ($ (e1) + (.45,-.08) $);
	\draw[thick, orange] ($ (f1) + (.3,-.1) $) .. controls ++(270:.6cm) and ++(90:.6cm) .. ($ (e1) + (.15,-.08) $);
	
\end{tikzpicture}
$$
with the orange strand standing for $m^{\otimes n}$.
By applying in order \ref{rel:StringOverCap}, \ref{rel:MoveTensorThroughMultiplication}, \ref{rel:CreateUnit}, \ref{rel:DualsAndTraciator}, and \ref{rel:ThetaAndTraciator},
we see that this map is equal to $\theta_{\Tr_\cC(m^{\otimes n})}$, as desired.
\end{proof}

We denote by $\Lambda(\cM,m)$
the anchored planar algebra associated to the pointed module tensor category $(\cM,m)$ by means of the construction described in Theorem~\ref{thm: construct P from M and m}.

\subsection{Functoriality}

In the previous section, given a pivotal module tensor category $\cM$ and a symmetrically self-dual object $m\in\cM$,
we constructed an anchored planar algebra $\cP=\Lambda(\cM,m)$.
Our next goal is to upgrade this to a functor
\[
\Lambda:\Mod_* \to \APA
\]
between the category of pointed pivotal module tensor categories over $\cC$ and the category of anchored planar algebras in $\cC$.
We recall that all module tensor categories are assumed pivotal, even if do not mention it explicitly.

Let $(\cM_1, m_1)$ and $(\cM_2, m_2)$ be pointed module tensor categories,
and let $\cP_1$ and $\cP_2$ be the associated anchored planar algebras.
Given a morphism $G:(\cM_1, m_1)\to(\cM_2, m_2)$ in $\Mod_*$ (Definition~\ref{def: pointed}), the wish to describe the corresponding morphism of anchored planar algebras
\begin{equation*}
\Lambda(G)\,:\,\,\cP_1=\Lambda(\cM_1, m_1)\,\longrightarrow\, \cP_2=\Lambda(\cM_2, m_2).
\end{equation*}

Recall from Definition~\ref{defn:ModuleTensorCategoryFunctor} that a morphism of module tensor categories is a pair $G=(G,\gamma)$, with $\gamma:\Phi_2\Rightarrow G\Phi_1$.
Let $\Tr_\cC^i : \cM_i\to \cC$, $i=1,2$, be the categorified traces associated to $\cM_1$ and $\cM_2$.
\begin{defn}\label{def: mate of zeta}
For each $x\in \cM_1$, we define $\zeta_x : \Tr_\cC^1(x) \to \Tr_\cC^2(G(x))$ to be the mate of
$$
\Phi_2(\Tr_\cC^1(x)) \xrightarrow{\gamma_{\Tr_\cC^1(x)}} G(\Phi_1(\Tr_\cC^1(x))) \xrightarrow{G(\varepsilon_x^1)} G(x)
$$
under the adjunction $\Phi_2 \dashv \Tr_\cC^2$.
\end{defn}

\begin{defn}\label{def: morphism of APA associated to G:M-->M'}
The map $\Lambda(G):\cP_1\to\cP_2$ associated to $G:(\cM_1, m_1)\to(\cM_2, m_2)$ is the sequence of morphisms
\begin{equation}\label{eq: this is a morphism of APA}
\big(\Lambda(G)[n]:\cP_1[n]\to\cP_2[n]\big)_{n\ge 0}
\end{equation}
given by\vspace{-.2cm}
\[
\Lambda(G)[n]\,:\,\cP_1[n] 
= \Tr_\cC^1(m_1^{\otimes n}) 
\xrightarrow{\,\,\,\,\textstyle\zeta_{m_1^{\otimes n}}\,\,\,} 
\Tr_\cC^2(G(m_1^{\otimes n})) 
\xrightarrow{\cong} 
\Tr_\cC^2(m_2^{\otimes n}) 
= 
\cP_2[n].
\smallskip
\]
\end{defn}

Our next goal is to show that this is a morphism of anchored planar algebras.
With this purpose in mind, we first list some properties of the natural transformation $\zeta$:

\begin{lem}
\label{lem:AttachingMapCompatible}
The following diagram commutes:
\[
\xymatrix{
\Phi_2(\Tr_\cC^1(x))
\ar[rr]^{\gamma_{\Tr^1_\cC(x)}}
\ar[d]^{\Phi_2(\zeta_{x})}
&&
G(\Phi_1(\Tr_\cC^1(x)))
\ar[d]^{G(\varepsilon^1_{x})}
\\
\Phi_2(\Tr_\cC^2(G(x)))
\ar[rr]^{\varepsilon^2_{G(x)}}
&&
G(x)
}
\]
\end{lem}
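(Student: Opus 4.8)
The statement to prove is Lemma~\ref{lem:AttachingMapCompatible}, which asserts that the map $\zeta_x$ is compatible with the attaching maps (counits) $\varepsilon^1$ and $\varepsilon^2$ in the appropriate sense. By Definition~\ref{def: mate of zeta}, $\zeta_x$ is \emph{defined} as the mate, under the adjunction $\Phi_2\dashv\Tr_\cC^2$, of the composite $G(\varepsilon_x^1)\circ\gamma_{\Tr_\cC^1(x)}:\Phi_2(\Tr_\cC^1(x))\to G(x)$. The plan is to simply unwind what ``mate under an adjunction'' means on the level of counits. Recall that if $f:c\to \Tr_\cC^2(y)$ is a morphism in $\cC$, then the mate of the composite $\Phi_2(c)\xrightarrow{g}y$ (thought of as coming from $f$ via $g = \varepsilon^2_y\circ\Phi_2(f)$, i.e.\ $f$ is the mate of $g$) is characterized precisely by the triangle identity $\varepsilon^2_y\circ\Phi_2(f) = g$.

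\textbf{Key steps.} First I would apply this characterization with $c = \Tr_\cC^1(x)$, $y = G(x)$, $f = \zeta_x$, and $g = G(\varepsilon_x^1)\circ\gamma_{\Tr_\cC^1(x)}$. By the very definition of $\zeta_x$ as the mate of $g$, the triangle identity for the adjunction $\Phi_2\dashv\Tr_\cC^2$ gives
\[
\varepsilon^2_{G(x)}\circ\Phi_2(\zeta_x) \;=\; G(\varepsilon_x^1)\circ\gamma_{\Tr_\cC^1(x)}.
\]
This is exactly the commutativity of the square in the statement: the left-then-bottom composite is $\varepsilon^2_{G(x)}\circ\Phi_2(\zeta_x)$, and the top-then-right composite is $G(\varepsilon^1_x)\circ\gamma_{\Tr^1_\cC(x)}$. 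So there is essentially nothing to do beyond citing the standard formula for the counit of a composite mate. I would state this cleanly, perhaps recalling in one sentence the general fact that for a mate $f = \widehat{g}$ of $g:\Phi_2(c)\to y$, one has $g = \varepsilon^2_y\circ\Phi_2(f)$.

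\textbf{Main obstacle.} There is no real obstacle here; this lemma is a formal consequence of the definition of $\zeta_x$ together with the universal property of the adjunction. The only thing to be careful about is bookkeeping: making sure the direction of the mate correspondence is the one I claimed (from $\Hom_\cM(\Phi_2(c),y)$ to $\Hom_\cC(c,\Tr_\cC^2(y))$), so that $\zeta_x$ really is obtained from a morphism \emph{out of} $\Phi_2(\Tr_\cC^1(x))$, and that applying $\Phi_2$ to $\zeta_x$ and postcomposing with $\varepsilon^2$ recovers that original morphism. Once the conventions are fixed (as they are in Definition~\ref{def: mate of zeta} and in the recollection of $\varepsilon$ as the counit in \ref{rel:AttachingMap}), the proof is a one-line invocation of the triangle identity, and I would present it as such. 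A subsequent lemma (compatibility of $\zeta$ with the multiplication and traciator structure) will require more genuine work, but this particular lemma does not.
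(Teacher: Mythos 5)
Your proposal is correct and is essentially the paper's own argument: the paper likewise observes that $\zeta_x$ is by definition the mate of $G(\varepsilon^1_x)\circ\gamma_{\Tr^1_\cC(x)}$ and, citing the standard adjunction fact (Lemma 4.3 of the earlier paper), that it is also the mate of $\varepsilon^2_{G(x)}\circ\Phi_2(\zeta_x)$, so the two composites agree. Your direct invocation of the triangle-identity characterization of mates is the same one-line argument, just without the external citation.
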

\begin{proof}
The morphism $\zeta_x$ is the mate of $G(\varepsilon^1_{x}) \circ \gamma_{\Tr^1_\cC(x)}$.
By \cite[Lem.\,4.3]{1509.02937}, it is also the mate of $\varepsilon^2_{G(x)} \circ \Phi_2(\zeta_{x})$.
The two composites therefore agree.
\end{proof}

\begin{lem}
\label{lem:MultiplicationCompatible}
For all $x,y\in \cM_1$, the following diagram commutes:
\[
\xymatrix{
\Tr_\cC^1(x)\otimes \Tr_\cC^1(y) 
\ar[rrr]^{\mu^1_{x,y}}
\ar[d]^{\zeta_x\otimes \zeta_y}
&&&
\Tr_\cC^1(x\otimes y)
\ar[d]^{\zeta_{x\otimes y}}
\\
\Tr_\cC^2(G(x))\otimes \Tr_\cC^2(G(y)) 
\ar[rr]^(.53){\mu^2_{G(x),G(y)}}
&&
\Tr_\cC^2(G(x)\otimes G(y))
\ar[r]^(.53){\cong}
&
\Tr_\cC^2(G(x\otimes y))
}
\]
\end{lem}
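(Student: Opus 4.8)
\textbf{Proof plan for Lemma~\ref{lem:MultiplicationCompatible}.}
The strategy is to use the defining characterizations of the two multiplication maps $\mu^1$ and $\mu^2$ as mates, together with the compatibility of $\zeta$ with the attaching maps established in Lemma~\ref{lem:AttachingMapCompatible}. Recall from \ref{rel:MultiplicationMap} that $\mu^i_{a,b}$ is the mate, under $\Phi_i \dashv \Tr_\cC^i$, of the morphism $\varepsilon^i_a \otimes \varepsilon^i_b : \Phi_i(\Tr^i_\cC(a)\otimes\Tr^i_\cC(b)) \to a\otimes b$ (using the tensor structure of $\Phi_i$ to identify $\Phi_i(\Tr^i_\cC(a)\otimes\Tr^i_\cC(b))$ with $\Phi_i(\Tr^i_\cC(a))\otimes\Phi_i(\Tr^i_\cC(b))$). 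Since both composites in the square are morphisms $\Tr_\cC^1(x)\otimes\Tr_\cC^1(y)\to\Tr_\cC^2(G(x\otimes y))$, and taking mates under $\Phi_2 \dashv \Tr_\cC^2$ is a bijection, it suffices to check that their mates agree as morphisms $\Phi_2(\Tr_\cC^1(x)\otimes\Tr_\cC^1(y)) \to G(x\otimes y)$.

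First I would compute the mate of the clockwise composite $\zeta_{x\otimes y}\circ\mu^1_{x,y}$. Using the standard fact that the mate of $\zeta_{x\otimes y}\circ\mu^1_{x,y}$ is obtained by applying $\Phi_2$, composing with the unit of $\Phi_2\dashv\Tr^2_\cC$, and then post-composing with the appropriate counit-based expression, one reduces — via Lemma~\ref{lem:AttachingMapCompatible} and the mate-computation lemma \cite[Lem.\,4.3]{1509.02937} — to the composite
\[
\Phi_2(\Tr_\cC^1(x)\otimes\Tr_\cC^1(y)) \xrightarrow{\ \Phi_2(\mu^1_{x,y})\ \text{-ish}\ } \cdots \to G(x\otimes y),
\]
where the key input is that $\mu^1_{x,y}$ is itself a mate of $\varepsilon^1_x\otimes\varepsilon^1_y$. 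Concretely, I expect the mate of $\zeta_{x\otimes y}\circ\mu^1_{x,y}$ to equal $G(\varepsilon^1_x\otimes\varepsilon^1_y)\circ(\text{coherence isos})\circ\gamma_{\Tr^1_\cC(x)\otimes\Tr^1_\cC(y)}$, after using that $\gamma$ is monoidal (the commuting hexagon in Definition~\ref{defn:ModuleTensorCategoryFunctor}) to split $\gamma$ on a tensor product into $\gamma\otimes\gamma$.

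Second I would compute the mate of the counterclockwise composite. Here the outer map is $\mu^2_{G(x),G(y)}$, whose mate is $\varepsilon^2_{G(x)}\otimes\varepsilon^2_{G(y)}$ by definition, pre-composed with $\Phi_2(\zeta_x\otimes\zeta_y)$; this gives $(\varepsilon^2_{G(x)}\circ\Phi_2(\zeta_x))\otimes(\varepsilon^2_{G(y)}\circ\Phi_2(\zeta_y))$, and then Lemma~\ref{lem:AttachingMapCompatible} rewrites each factor $\varepsilon^2_{G(x)}\circ\Phi_2(\zeta_x)$ as $G(\varepsilon^1_x)\circ\gamma_{\Tr^1_\cC(x)}$. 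Comparing with the outcome of the first computation, the two sides now visibly agree up to the monoidality hexagon for $\gamma$ and the naturality/coherence of the tensor structures of $\Phi_2$ and $G$. The only real bookkeeping obstacle will be tracking the various associator and tensorator coherence isomorphisms of $\Phi_1$, $\Phi_2$, and $G$ so that the two mate expressions are literally equal; this is routine diagram-chasing with no conceptual difficulty, and I would present it as a single commuting diagram built from the hexagon of Definition~\ref{defn:ModuleTensorCategoryFunctor}, the naturality of $\gamma$, and Lemma~\ref{lem:AttachingMapCompatible} applied twice. Once the mates agree, faithfulness of the mate correspondence gives the commutativity of the square.
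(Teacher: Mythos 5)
Your proposal is correct and follows essentially the same route as the paper: both pass to mates under $\Phi_2\dashv\Tr_\cC^2$, use that $\mu^i$ is by definition the mate of $\varepsilon^i\otimes\varepsilon^i$ (so $\varepsilon^i_{a\otimes b}\circ\Phi_i(\mu^i_{a,b})=\varepsilon^i_a\otimes\varepsilon^i_b$), and then reduce the comparison to Lemma~\ref{lem:AttachingMapCompatible} together with naturality and monoidality of $\gamma$ (with the coherence isomorphisms suppressed, as in the paper). The only difference is presentational: the paper writes the comparison as a single chain of equalities rather than a pasted commuting diagram.
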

\begin{proof}
The mate of $\zeta_{x\otimes y}\circ\mu^1_{x,y}$ is $G(\varepsilon^1_{x\otimes y}) \,\circ \,\gamma_{\Tr^1_\cC(x\otimes y)}  \circ \Phi_2(\mu^1_{x,y})$,
and the mate of $\mu^2_{G(x),G(y)}\circ\zeta_x\otimes \zeta_y$ is $\varepsilon^2_{G(x)\otimes G(y)} \circ \Phi_2(\mu_{G(x),G(y)}^2 \circ (\zeta_x \otimes \zeta_y))$.
It is therefore enough to show that 
\[
G(\varepsilon^1_{x\otimes y}) \,\circ \,\gamma_{\Tr^1_\cC(x\otimes y)}  \circ \Phi_2(\mu^1_{x,y})
\,=\,
\varepsilon^2_{G(x)\otimes G(y)} \circ \Phi_2(\mu_{G(x),G(y)}^2 \circ (\zeta_x \otimes \zeta_y))
\]
(note that we have suppressed the isomorphism $\nu_{x,y}:G(x)\otimes G(y) \to G(x\otimes y)$).
This is verified as follows:
\begin{align*}
G(\varepsilon^1_{x\otimes y}) \,\circ \,&\gamma_{\Tr^1_\cC(x\otimes y)}  \circ \Phi_2(\mu^1_{x,y})
\\&=
G(\varepsilon^1_{x\otimes y}) \circ G(\Phi_1(\mu_{x,y}^1)) \circ \gamma_{\Tr^1(x)\otimes \Tr^1(y)}
&&\text{(naturality of $\gamma$)}
\\&=
G(\varepsilon^1_{x\otimes y} \circ \Phi_1(\mu_{x,y}^1)) \circ \gamma_{\Tr^1(x)\otimes \Tr^1(y)}
&&
\\&=
G(\varepsilon^1_{x}\otimes \varepsilon^1_{y}) \circ \gamma_{\Tr^1(x)\otimes \Tr^1(y)}
&&\text{(\cite[Lem.\,4.6]{1509.02937})}
\\&=
(G(\varepsilon^1_{x})\otimes G(\varepsilon^1_{y})) \circ (\gamma_{\Tr^1(x)}\otimes \gamma_{\Tr^1(y)})
&&
\\&=
(G(\varepsilon^1_{x})\circ \gamma_{\Tr^1(x)})
\otimes
(G(\varepsilon^1_{y}) \circ \gamma_{\Tr^1(y)})
&&
\\&=
(\varepsilon^2_{G(x)} \circ \Phi_2(\zeta_{x}))
\otimes
(\varepsilon^2_{G(y)} \circ \Phi_2(\zeta_{y}))
&&(\text{Lemma \ref{lem:AttachingMapCompatible}})
\\&=
(\varepsilon^2_{G(x)} \otimes \varepsilon^2_{G(y)})
\circ
\Phi_2(\zeta_{x} \otimes \zeta_{y})
&&
\\&=
\varepsilon^2_{G(x)\otimes G(y)} \circ \Phi_2(\mu_{G(x),G(y)}^2) \circ \Phi_2(\zeta_x \otimes \zeta_y)
&&\text{(\cite[Lem.\,4.6]{1509.02937})}
\\&=
\varepsilon^2_{G(x)\otimes G(y)} \circ \Phi_2(\mu_{G(x),G(y)}^2 \circ (\zeta_x \otimes \zeta_y))
&&
\qedhere
\end{align*}
\end{proof}

Let us write $\tau^i_{x,y}:\Tr^i_\cC(x\otimes y)\to \Tr^i_\cC(y\otimes x)$, $i=1,2$, for the traciator associatied to $\Tr^i:\cM_i\to \cC$.
Recall from Definition \ref{defn:ModuleTensorCategoryFunctor} that for every $c\in\cC$ and $x\in\cM_1$, we have
\begin{equation}\label{eq: from defn:ModuleTensorCategoryFunctor}
G(e_{\Phi_1(c),x})\circ(\gamma_c\otimes \id_{G(x)})=(\id_{G(x)}\otimes \gamma_c)\circ e_{\Phi_2(c), G(x)}.
\end{equation}

\begin{lem}
\label{lem:TraciatorsCompatible}
The following diagram is commutative:
\[
\xymatrix{
\Tr_\cC^1(x\otimes y) 
\ar[d]^{\zeta_{x\otimes y}}
\ar[rr]^{\tau_{x,y}^1}
&&
\Tr_\cC^1(y\otimes x)
\ar[d]^{\zeta_{y\otimes x}}
\\
\Tr_\cC^2(G(x\otimes y))
\ar[d]^{\cong}
&&
\Tr_\cC^2(G(x\otimes y))
\ar[d]^{\cong}
\\
\Tr_\cC^2(G(x)\otimes G(y))
\ar[rr]^{\tau_{G(x),G(y)}^2}
&&
\Tr_\cC^2(G(y)\otimes G(x))
}
\]
\end{lem}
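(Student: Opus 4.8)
\textbf{Proof plan for Lemma~\ref{lem:TraciatorsCompatible}.}
The strategy is the same one used in the proofs of Lemmas~\ref{lem:AttachingMapCompatible} and~\ref{lem:MultiplicationCompatible}: pass to mates under the adjunction $\Phi_2\dashv\Tr_\cC^2$ and reduce the statement to an identity of morphisms in $\cM_2$ that can be verified using the description of the traciator as a mate (item \ref{rel:Traciator} of the cheat sheet) together with the compatibility of $G$ with half-braidings, equation~\eqref{eq: from defn:ModuleTensorCategoryFunctor}. First I would compute the mate of the top-then-right composite $(\cong)\circ\zeta_{y\otimes x}\circ\tau^1_{x,y}$. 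Unwinding, the mate of $\zeta_{y\otimes x}$ is $G(\varepsilon^1_{y\otimes x})\circ\gamma_{\Tr^1_\cC(y\otimes x)}$ (Definition~\ref{def: mate of zeta}), so by naturality of $\gamma$ and functoriality of $\Phi_2$, the mate of the whole composite is obtained by applying $G$ to $\varepsilon^1_{y\otimes x}$, precomposing with $G(\Phi_1(\tau^1_{x,y}))$, and then $\gamma$, and finally dragging everything past the free-$\Phi_2$ coevaluation unit $\Tr^1_\cC(x\otimes y)$; the key input here is that $\varepsilon^1_{y\otimes x}\circ\Phi_1(\tau^1_{x,y})$ is, by the very definition of $\tau^1$ as a mate, the morphism $\Phi_1(\Tr^1_\cC(x\otimes y))\to y\otimes x$ pictured in \ref{rel:Traciator} (the ``$\varepsilon^1_{x\otimes y}$ with the strands crossing around the back of the tube'').

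Next I would compute the mate of the left-then-bottom composite $\tau^2_{G(x),G(y)}\circ(\cong)\circ\zeta_{x\otimes y}$. Its mate is $\varepsilon^2_{G(y)\otimes G(x)}\circ\Phi_2\big(\tau^2_{G(x),G(y)}\circ\nu_{x,y}^{-1}\circ\zeta_{x\otimes y}\big)$, and again by the mate-characterisation of $\tau^2$ this equals the composite $\Phi_2(\Tr^1_\cC(x\otimes y))\to G(x)\otimes G(y)$ obtained by routing the two strands of $\varepsilon^2_{G(x)\otimes G(y)}$ around the back of the tube, precomposed with $\Phi_2(\zeta_{x\otimes y})$ and the identification $G(x)\otimes G(y)\cong G(x\otimes y)$. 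Now Lemma~\ref{lem:AttachingMapCompatible} (combined with \cite[Lem.\,4.6]{1509.02937}, exactly as in the proof of Lemma~\ref{lem:MultiplicationCompatible}) rewrites $\varepsilon^2_{G(x)\otimes G(y)}\circ\Phi_2(\zeta_{x\otimes y})$, after the $\nu$-identification, as $G(\varepsilon^1_{x\otimes y})\circ\gamma_{\Tr^1_\cC(x\otimes y)}$. So both mates are built from $G(\varepsilon^1_{x\otimes y})$ and $\gamma$; they differ only in \emph{how the strand that wraps around the back of the tube is pushed through $\gamma$}. Matching the two therefore comes down to showing that applying $G$ to a half-braiding $e_{\Phi_1(c),-}$ and then $\gamma_c$ agrees with first applying $\gamma_c$ and then the half-braiding $e_{\Phi_2(c),G(-)}$ — which is precisely \eqref{eq: from defn:ModuleTensorCategoryFunctor}, applied with $c=\Tr^1_\cC(x\otimes y)$ (the object labelling the back of the tube) and with $x$ replaced by the relevant tensor power that the strand crosses. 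Iterating \eqref{eq: from defn:ModuleTensorCategoryFunctor} along the strand as it travels around the tube converts one mate into the other.

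The main obstacle, and the step that needs genuine care rather than formal nonsense, is the bookkeeping in that last paragraph: the traciator's defining picture routes the $\Tr^1_\cC(x\otimes y)$-strand around the back of the tube past \emph{both} the $G(x)$- and $G(y)$-strands, and one must check that every elementary crossing is an instance of \eqref{eq: from defn:ModuleTensorCategoryFunctor} with the correct choice of object, and that the multiplicativity axiom \eqref{eqn:GammaAndHalfBriadings}/the first half-braiding axiom $e_{\Phi(c),x\otimes y}=(\id_x\otimes e_{\Phi(c),y})\circ(e_{\Phi(c),x}\otimes\id_y)$ is what allows us to treat the composite strand $G(x)\otimes G(y)$ at once. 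A clean way to organise this is to prove the identity first for $\tau^{\pm}$ of the special form $\tau_{1,-}$ (where, by \ref{rel:ThetaAndTraciator}, it reduces to the statement that $\zeta$ intertwines the twists $\theta_{\Tr^1_\cC(-)}$ and $\theta_{\Tr^2_\cC(G(-))}$, which follows from $G$ being a pivotal functor and Lemma~\ref{lem:AttachingMapCompatible}), and then bootstrap to general $x,y$ using the cyclic-invariance relation \ref{rel:TraciatorComposition} $\tau_{x,y\otimes z}=\tau_{z\otimes x,y}\circ\tau_{x\otimes y,z}$ together with Lemma~\ref{lem:MultiplicationCompatible} and the compatibility of $\tau$ with $\mu$ in \ref{rel:TwistMultiplicationAndTraciators}. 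Either route works; the diagrammatic route is shorter to write once one trusts the 3-dimensional calculus, so I would present that one, with the axiom \eqref{eq: from defn:ModuleTensorCategoryFunctor} invoked as the single substantive ingredient and all remaining manipulations flagged as routine applications of naturality of $\gamma$, functoriality of $\Phi_2$ and $G$, and \cite[Lems.\,4.3 \& 4.6]{1509.02937}.
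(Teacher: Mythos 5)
Your proposal is correct and follows essentially the same route as the paper: pass to mates, use the mate description of the traciator from \ref{rel:Traciator}, and convert one mate into the other via Lemma~\ref{lem:AttachingMapCompatible}, naturality, and the half-braiding compatibility \eqref{eq: from defn:ModuleTensorCategoryFunctor}. One small simplification: in the defining picture only the single strand labelled $y$ crosses the tube, so a single application of \eqref{eq: from defn:ModuleTensorCategoryFunctor} with object $y$ (together with pivotality of $G$ to push $G$ through $\coev_y$ and $\widetilde{\ev}_y$) suffices, rather than the iterated crossing bookkeeping over both the $G(x)$- and $G(y)$-strands that you describe.
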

\begin{proof}
We verify $\zeta_{y\otimes x}\circ\tau_{x,y}^1=\tau_{G(x),G(y)}^2\circ\zeta_{x\otimes y}$ upon taking mates.
Let $c:=\Tr^1_\cC(x\otimes y)$ and $d:=\Tr^1_\cC(y\otimes x)$.
The mate of $\zeta_{y\otimes x}\circ\tau_{x,y}^1$ is:
\begin{align*}
G(\varepsilon^1_{y\otimes x}) \circ \gamma_{d} \circ \Phi_2(\tau^1_{x,y}) 
&= G(\varepsilon^1_{y\otimes x})  \circ G\Phi_1(\tau_{x,y}^1) \circ \gamma_{c} 
\\&= G(\varepsilon^1_{y\otimes x} \circ \Phi_1(\tau_{x,y}^1)) \circ \gamma_{c} 
= G(\text{mate of $\tau_{x,y}^1$}) \circ \gamma_{c}.
\end{align*}
By definition (see \ref{rel:Traciator} or \cite[Def.\,4.13]{1509.02937}), the mate of $\tau^1_{x,y}$ is
\begin{equation}\label{equation}
(\id_{y\otimes x} \otimes\, \widetilde{\ev}_y) \circ (\id_y \otimes \varepsilon^1_{x\otimes y} \otimes \id_{y^*}) \circ (e_{\Phi_1(c),y} \otimes \id_{y^*}) \circ (\id_{\Phi_1(c)} \otimes \coev_y)
\end{equation}
where $\widetilde{\ev}_y=\ev_{y^*}\circ (\varphi_y\otimes \id_{y^*})$ and $\varphi_y: y\to y^{**}$ is the pivotal structure.
Let $\dot x:=G(x)$ and $\dot y:=G(y)$.
Applying $G$ to \eqref{equation} and precomposing with $\gamma_{\Tr^1_\cC(x\otimes y)}$, we obtain:
\begin{align*}
&\,\,\,\,\,\,G(\text{mate of\, $\tau_{x,y}^1$}) \circ \gamma_c 
\\&=
(\id_{\dot y\otimes \dot x} \otimes \widetilde{\ev}_{\dot y}) \circ (\id_{\dot y} \otimes G(\varepsilon^1_{x\otimes y}) \otimes \id_{\dot y^*}) \circ (G(e_{\Phi_1(c),y}) \otimes \id_{\dot y^*}) \circ (\id_{G\Phi_1(c)} \otimes \coev_{\dot y}) \circ\gamma_{c} 
\\&=
(\id_{\dot y\otimes \dot x} \otimes \widetilde{\ev}_{\dot y}) \circ (\id_{\dot y} \otimes G(\varepsilon^1_{x\otimes y}) \otimes \id_{\dot y^*}) \circ 
([ G(e_{\Phi_1(c),y})\circ (\gamma_{c} \otimes \id_{\dot y} )]\otimes \id_{\dot y^*})
\circ (\id_{\Phi_2(c)}\otimes \coev_{\dot y})
\\&=
(\id_{\dot y\otimes \dot x} \otimes \widetilde{\ev}_{\dot y}) \circ (\id_{\dot y} \otimes G(\varepsilon^1_{x\otimes y}) \otimes \id_{\dot y^*}) \circ 
([(\id_{\dot y} \otimes \gamma_{c})\circ e_{\Phi_2(c),\dot y}]\otimes \id_{\dot y^*})
\circ (\id_{\Phi_2(c)}\otimes \coev_{\dot y})
\\&=
(\id_{\dot y\otimes \dot x} \otimes \widetilde{\ev}_{\dot y}) 
\circ 
(\id_{\dot y} \otimes [G(\varepsilon^1_{x\otimes y})\circ   \gamma_{c}] \otimes \id_{\dot y^*}) 
\circ 
(e_{\Phi_2(c),\dot y} \otimes \id_{\dot y^*})
\circ 
(\id_{\Phi_2(c)}\otimes \coev_{\dot y})
\\&=
(\id_{\dot y\otimes \dot x} \otimes \widetilde{\ev}_{\dot y}) 
\circ 
(\id_{\dot y} \otimes [\varepsilon^2_{\dot x\otimes \dot y} \circ \Phi_2(\zeta_{x\otimes y})] \otimes \id_{\dot y^*}) 
\circ 
(e_{\Phi_2(c),\dot y} \otimes \id_{\dot y^*})
\circ 
(\id_{\Phi_2(c)}\otimes \coev_{\dot y})
\\&=
(\id_{\dot y\otimes \dot x} \otimes \widetilde{\ev}_{\dot y}) 
\circ 
(\id_{\dot y} \otimes \varepsilon^2_{\dot x\otimes \dot y} \otimes \id_{\dot y^*}) 
\circ 
(e_{\Phi_2(d),\dot y} \otimes \id_{\dot y^*}) \circ (\id_{\Phi_2(d)} \otimes \coev_{\dot y}) 
\circ 
\Phi_2(\zeta_{x\otimes y})
\\&=\,\,
\text{mate of $(\tau_{\dot x,\dot y}^2\circ\zeta_{x\otimes y})$.}
\end{align*}
The first equality holds because $G$ is a pivotal functor, the third one holds by equation \eqref{eq: from defn:ModuleTensorCategoryFunctor},
the fifth one holds by Lemma \ref{lem:AttachingMapCompatible}, and the sixth one holds by the naturality of $e_{\Phi_2(-),\dot y}$ (see \cite[Def.\,3.5]{1509.02937}).
\end{proof}

\begin{lem}\label{lem: all about i}
The following diagram commutes:
\[
\xymatrix{
\,1_\cC\,\ar[d]^{i^2}\ar[r]^(.45){i^1}&\Tr_\cC^1(1_{\cM_1})\ar[d]^{\zeta_1}\\
\Tr_\cC^2(1_{\cM_2})\ar[r]^(.45){\cong}&\Tr_\cC^1(G(1_{\cM_1}))
}
\]
\end{lem}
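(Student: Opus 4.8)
\textbf{Proof plan for Lemma~\ref{lem: all about i}.}
The strategy is to verify the commutativity by passing to mates under the adjunction $\Phi_2\dashv\Tr^2_\cC$, exactly as in the proofs of Lemmas~\ref{lem:AttachingMapCompatible}--\ref{lem:TraciatorsCompatible}. Recall that $i^1$ is the unit of $\Phi_1\dashv\Tr^1_\cC$ evaluated at $1_\cC$ and $i^2$ is the unit of $\Phi_2\dashv\Tr^2_\cC$ evaluated at $1_\cC$, so $i^2$ is the mate of $\id_{1_{\cM_2}}$ (or, equivalently, of the unit coherence $\Phi_2(1_\cC)\xrightarrow{\cong}1_{\cM_2}$). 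First I would unwind the right-hand composite: $\zeta_1\circ i^1$ is a morphism $1_\cC\to\Tr^2_\cC(G(1_{\cM_1}))$, and since $\zeta_1$ is by Definition~\ref{def: mate of zeta} the mate of $G(\varepsilon^1_{1_{\cM_1}})\circ\gamma_{\Tr^1_\cC(1_{\cM_1})}$, precomposition with $i^1$ lets me use the triangle identity for $\Phi_1\dashv\Tr^1_\cC$ to identify its mate.

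Concretely, the mate of $\zeta_1\circ i^1$ under $\Phi_2\dashv\Tr^2_\cC$ is
\[
\varepsilon^2_{G(1_{\cM_1})}\circ\Phi_2(\zeta_1)\circ\Phi_2(i^1)
= G(\varepsilon^1_{1_{\cM_1}})\circ\gamma_{\Tr^1_\cC(1_{\cM_1})}\circ\Phi_2(i^1),
\]
using Lemma~\ref{lem:AttachingMapCompatible} (which gives $\varepsilon^2_{G(x)}\circ\Phi_2(\zeta_x)=G(\varepsilon^1_x)\circ\gamma_{\Tr^1_\cC(x)}$) in the case $x=1_{\cM_1}$. Next I would apply naturality of $\gamma$ to the morphism $i^1:1_\cC\to\Tr^1_\cC(1_{\cM_1})$, which rewrites $\gamma_{\Tr^1_\cC(1_{\cM_1})}\circ\Phi_2(i^1)$ as $G(\Phi_1(i^1))\circ\gamma_{1_\cC}$. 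Then $G(\varepsilon^1_{1_{\cM_1}})\circ G(\Phi_1(i^1))=G(\varepsilon^1_{1_{\cM_1}}\circ\Phi_1(i^1))=G$ applied to a unit-coherence isomorphism, by the triangle identity for $\Phi_1\dashv\Tr^1_\cC$ (the composite $\varepsilon^1_{1_{\cM_1}}\circ\Phi_1(i^1)$ is the identity up to the canonical identification $\Phi_1(1_\cC)\cong1_{\cM_1}$). So the mate becomes $G(\text{unit coherence})\circ\gamma_{1_\cC}$, and since $\gamma_{1_\cC}$ is by hypothesis (Definition~\ref{defn:ModuleTensorCategoryFunctor}) the composite of unit coherences of $\Phi_2$ and $G$, this whole expression collapses to the unit coherence $\Phi_2(1_\cC)\xrightarrow{\cong}G(1_{\cM_1})$ — which is exactly the mate of $i^2$ followed by the isomorphism $\Tr^2_\cC(1_{\cM_2})\xrightarrow{\cong}\Tr^2_\cC(G(1_{\cM_1}))$. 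Since mates are unique, this proves the diagram commutes.

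The only mild obstacle is bookkeeping: one must keep careful track of the various unit-coherence isomorphisms ($\Phi_1(1_\cC)\cong1_{\cM_1}$, $\Phi_2(1_\cC)\cong1_{\cM_2}$, $G(1_{\cM_1})\cong1_{\cM_2}$, and the isomorphism $\Tr^2_\cC(G(1_{\cM_1}))\cong\Tr^2_\cC(1_{\cM_2})$ appearing as the bottom arrow), and check that they assemble correctly — this is guaranteed precisely by the normalization condition on $\gamma_{1_\cC}$ built into the definition of a functor of module tensor categories. All the real content has already been isolated in Lemma~\ref{lem:AttachingMapCompatible} and in the coherence axioms, so no genuinely new computation is required; I would state the mate-chasing argument in a short displayed computation mirroring the style of Lemma~\ref{lem:MultiplicationCompatible}.
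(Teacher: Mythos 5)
Your proposal is correct and follows essentially the same route as the paper's proof: both take mates under $\Phi_2\dashv\Tr^2_\cC$, apply Lemma~\ref{lem:AttachingMapCompatible}, then naturality of $\gamma$ at $i^1$, and finally the triangle identity $\varepsilon^1_{1_{\cM_1}}\circ\Phi_1(i^1)=\id$, with the mate of $i^2$ being the identity up to unit coherences. Your explicit tracking of the unit-coherence isomorphisms and the normalization of $\gamma_{1_\cC}$ is simply a more careful version of what the paper suppresses in its last line.
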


\begin{proof}
We check the equality $\zeta_1\circ i^1=i^2$ upon taking mates.
Recall that $i^2$ is the unit of the adjunction $\Phi_2\dashv\Tr_\cC^2$, evaluated on the object $1_{\cM_2}$.
Its mate is the identity on $1_{\cM_2}$.
We now compute the mate of $\zeta_1\circ i^1$.
To lessen the notational confusion between indices and unit objects, we write $\underline 1$ for $1_{\cM_1}$.
We have:
\[
\begin{split}
\text{mate of $(\zeta_{\underline 1}\circ i^1)$}
=\,\,&\varepsilon^2_{G(\underline 1)}\circ \Phi_2(\zeta_{\underline 1}\circ i^1)
\\=\,\,&\varepsilon^2_{G(\underline 1)}\circ \Phi_2(\zeta_{\underline 1})\circ\Phi_2(i^1)
\\=\,\,&G(\varepsilon^1_{\underline 1})\circ\gamma_{\Tr^2_\cC(\underline 1)}\circ\Phi_2(i^1)
\\=\,\,&G(\varepsilon^1_{\underline 1})\circ G\Phi_1(i^1)\circ\gamma_{1_\cC}
\\=\,\,&G(\varepsilon^1_{\underline 1}\circ \Phi_1(i^1))=G(\id_{\underline 1})=\id_{1_{\cM_2}}
\end{split}
\]
where the third equality holds by Lemma \ref{lem:AttachingMapCompatible}, the fourth equality is the naturality of $\gamma$,
and the sixth one is \cite[(12)]{1509.02937}.
Towards the end of the calculation, we have suppressed the unit coherences of $\Phi_i$ and of $G$.
\end{proof}

We are now in position to show that the sequence $(\Lambda(G)[n]:\cP_1[n]\to\cP_2[n])_{n\ge 0}$ defines a morphism of anchored planar algebras.

\begin{prop}
The map $\Lambda(G) : \cP_1 \to \cP_2$ given in Definition~\ref{def: morphism of APA associated to G:M-->M'} is a morphism of anchored planar algebras.
\end{prop}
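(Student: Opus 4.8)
The plan is to verify that $\Lambda(G)$ intertwines the action of every anchored planar tangle, and by Theorem~\ref{thm:ConstructAPA} and the way $Z$ is built in Algorithm~\ref{alg:AssignMap}, it suffices to check compatibility with the generating tangles $u$, $\id_n$, $a_i$, $\bar a_i$, $p_{i,j}$ together with the ribbon braid action $P(\sigma)$. Concretely, I would show that the squares
\[
\xymatrix@C=3.5em{
\cP_1[k_1]\otimes\cdots\otimes\cP_1[k_r] \ar[r]^(.55){Z_1(T)} \ar[d]_{\Lambda(G)[k_1]\otimes\cdots} & \cP_1[k_0] \ar[d]^{\Lambda(G)[k_0]} \\
\cP_2[k_1]\otimes\cdots\otimes\cP_2[k_r] \ar[r]^(.55){Z_2(T)} & \cP_2[k_0]
}
\]
commute for $T$ each of the generating tangles, and that $\Lambda(G)$ commutes with the morphisms $P(\varepsilon_i)$ and $P(\vartheta_i)$; since $\Lambda(G)[n]$ is (up to the fixed isomorphism $\Tr^2_\cC(G(m_1^{\otimes n}))\cong\Tr^2_\cC(m_2^{\otimes n})$ coming from $\nu$ and $G(m_1)=m_2$) just the component $\zeta_{m_1^{\otimes n}}$, each square unwinds into one of the compatibility lemmas just proved.

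The individual checks would go as follows. For $u$: the tangle acts by $\eta=i^i$, so this is exactly Lemma~\ref{lem: all about i}. For $\id_n$: trivial, since $Z(\id_n)=\id$. For $a_i$ and $\bar a_i$: these act by $\Tr_\cC^i$ applied to $\id_{m^{\otimes i}}\otimes\bar\ev_m\otimes\id$ (resp. $\bar\coev_m$), so the square becomes the naturality square of $\zeta$ — one must check that $\zeta$ is a natural transformation $\Tr^1_\cC\Rightarrow \Tr^2_\cC\circ G$, which follows by taking mates and using naturality of $\gamma$ and of $\varepsilon^1$, exactly as in the first lines of the proofs of Lemmas~\ref{lem:MultiplicationCompatible} and \ref{lem:TraciatorsCompatible}; one also uses that $G$ is a pivotal \emph{tensor} functor so that $G(\bar\ev_{m_1})$ and $G(\bar\coev_{m_1})$ correspond under $\nu$ and $G(\psi_1)=\delta_{m_1}^{-1}\circ\psi_2$ (this is where the pointed-functor condition $\psi_2=\delta_{m_1}\circ G(\psi_1)$ from Definition~\ref{def: pointed} enters) to the maps $\bar\ev_{m_2}$, $\bar\coev_{m_2}$ built from $m_2$. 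For $p_{i,j}$: recall $\varpi_{i,j}=\tau^-\circ\mu\circ(\tau^+\otimes\id)$; so the square decomposes into one instance of Lemma~\ref{lem:TraciatorsCompatible} (for $\tau^+$), one of Lemma~\ref{lem:MultiplicationCompatible} (for $\mu$), one more of Lemma~\ref{lem:TraciatorsCompatible} (for $\tau^-$, using that $\zeta$ commutes with traciators and hence with their inverses), plus naturality of $\zeta$. Finally, $P(\varepsilon_i)$ is built from the braiding $\beta$ of $\cC$ and $P(\vartheta_i)$ from the twist $\theta$ of $\cC$; since $\Lambda(G)[n]=\zeta_{m_1^{\otimes n}}$ is an honest morphism of $\cC$, it automatically commutes with $\beta$ and $\theta$ on tensor products, so these squares commute by naturality of $\beta$ and $\theta$ in $\cC$ with respect to the morphisms $\Lambda(G)[k_i]$. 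This last point also uses that $P(\vartheta_i)$ involves $\theta_{\cP_j[\,\cdot\,]}$ and the traciator/twist compatibility \ref{rel:ThetaAndTraciator}, but at the level of $\cP$-objects it is just a $\cC$-morphism, so naturality suffices.

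Putting these together: given an arbitrary generic anchored planar tangle $T$, write $Z_i(T)=Z_i(T_{\text{st}})\circ P(\sigma_T)$ as in \eqref{eq: Z(T) = Z(T_st) o P(s_T)}, decompose $Z_i(T_{\text{st}})$ into the $\circ_1$-composite of the $\mathcal T_k$'s coming from the generating tangles as in \eqref{eq: Z(T_st):= ... }, and observe that $\Lambda(G)$ intertwines each factor; since $\Lambda(G)$ is a (tensor-compatible) family of morphisms, the intertwining propagates through $\circ_1$ and through the operadic insertions, giving $Z_2(T)\circ(\Lambda(G)[k_1]\otimes\cdots\otimes\Lambda(G)[k_r])=\Lambda(G)[k_0]\circ Z_1(T)$. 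One should also record that $\Lambda(G)[0]$ is compatible with $\eta$ (same as the $u$ case) so nothing degenerate goes wrong at $n=0$.

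The main obstacle, and the one deserving the most care, is the $p_{i,j}$ case: one must track precisely how the fixed isomorphisms $\Tr^2_\cC(G(m_1^{\otimes n}))\cong\Tr^2_\cC(m_2^{\otimes n})$ (induced by iterating $\nu$ and using $G(m_1)=m_2$) thread through the traciators $\tau^\pm$ and the multiplication $\mu$, because Lemmas~\ref{lem:MultiplicationCompatible} and \ref{lem:TraciatorsCompatible} are stated with the $\Tr^2_\cC(G(-))$ normalization and one is secretly also applying the monoidal coherence of $G$ (the pentagon-type diagram in Definition~\ref{defn:ModuleTensorCategoryFunctor}). So the real content is a bookkeeping check that these coherence isomorphisms are compatible with $\mu$ and $\tau$; I expect this to be routine but notation-heavy, and I would either suppress the $\nu$'s (as is done in the proof of Lemma~\ref{lem:MultiplicationCompatible}) or phrase everything in terms of $\zeta$ being a \emph{monoidal} natural transformation between the lax monoidal functors $\Tr^1_\cC$ and $\Tr^2_\cC\circ G$ that additionally commutes with the traciators, and then note that $\varpi_{i,j}$ is expressed purely in those terms.
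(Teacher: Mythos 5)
Your proposal is correct and follows essentially the same route as the paper: reduce to the generating data via Theorem~\ref{thm:ConstructAPA}, handle the unit square with Lemma~\ref{lem: all about i}, the cap/cup squares with naturality of $\zeta$ together with $G(\bar\ev_{m_1})=\bar\ev_{m_2}$ and $G(\bar\coev_{m_1})=\bar\coev_{m_2}$ (using the pointed condition $\psi_2=\delta_{m_1}\circ G(\psi_1)$), and the $\varpi_{i,j}$ square by splitting $\tau^-\circ\mu\circ(\tau^+\otimes\id)$ into instances of Lemmas~\ref{lem:TraciatorsCompatible} and~\ref{lem:MultiplicationCompatible}. The extra bookkeeping you flag (the braid/twist action and the $\nu$-coherences) is handled in the paper exactly as you suggest, by suppressing the coherence isomorphisms and invoking the generator criterion rather than re-deriving the action on arbitrary tangles.
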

\begin{proof}
In view of Theorem \ref{thm:ConstructAPA},
a morphism of anchored planar algebras $\cP_1\to\cP_2$ is equivalent to a sequence of morphisms $(\cP_1[n]\to\cP_2[n])_{n\ge 0}$ making the following diagrams commute:
\begin{equation*}
\xymatrix{
1\ar@{=}[d]\ar[r]^(.4){\eta}&\cP_1[0]\ar[d]
\\1\ar[r]^(.4){\eta}&\cP_2[0]
}
\quad
\xymatrix{
\cP_1[n+2]\ar[d]
\ar[r]^(.6){\alpha_i}&\cP_1[n]\ar[d]
\\\cP_2[n+2]\ar[r]^(.6){\alpha_i}&\cP_2[n]
}
\quad
\xymatrix{
\cP_1[n]\ar[d]
\ar[r]^(.4){\bar\alpha_i}&\cP_1[n+2]\ar[d]
\\\cP_2[n]\ar[r]^(.4){\bar\alpha_i}&\cP_2[n+2]
}
\quad
\xymatrix{
\cP_1[n]\otimes\cP_1[j]\ar[d]
\ar[r]^(.55){\varpi_{i,j}}&\cP_1[n+j]\ar[d]
\\\cP_2[n]\otimes\cP_2[j]\ar[r]^(.55){\varpi_{i,j}}&\cP_2[n+j]
}
\end{equation*}
We show that the morphisms \eqref{eq: this is a morphism of APA} satisfy the above conditions.

The commutativity of the first diagram is the content of Lemma \ref{lem: all about i}.
The commutativity of the middle diagrams
\[
\begin{matrix}\xymatrix{
\cP_1[n+2]\ar[d]_{\Lambda(G)[n+2]\,=\,\zeta_{m_1^{\otimes n+2}}}
\ar[r]^(.6){\alpha_i}&\cP_1[n]\ar[d]^{\zeta_{m_1^{\otimes n}}}
\\\cP_2[n+2]\ar[r]^(.6){\alpha_i}&\cP_2[n]
}\end{matrix}
\quad\,\,\,\text{and}\,\,\,\quad
\begin{matrix}\xymatrix{
\cP_1[n]\ar[d]_{\zeta_{m_1^{\otimes n}}}
\ar[r]^(.4){\bar\alpha_i}&\cP_1[n+2]\ar[d]^{\zeta_{m_1^{\otimes n+2}}}
\\\cP_2[n]\ar[r]^(.4){\bar\alpha_i}&\cP_2[n+2]
}\end{matrix}\qquad
\]
follows from the naturality of $\zeta$, and the relations $G(\bar{\ev}_{m_1}) = \bar{\ev}_{m_2}$ and $G(\bar{\coev}_{m_1}) = \bar{\coev}_{m_2}$, which we explain below.
We only show the first equation, as the other one is similar.
For clarity, we reintroduce the coherence $\nu$, which we had often suppressed.
The computation goes as follows:
\[
\begin{split}
G(\bar{\ev}_{m_1}) \circ \nu_{m_1,m_1}
&= G(\ev_{m_1} \circ (\psi_1\otimes \id_{m_1}) ) \circ \nu_{m_1,m_1}
\\&= G(\ev_{m_1}) \circ G(\psi_1\otimes \id_{m_1}) \circ \nu_{m_1,m_1}
\\&= G(\ev_{m_1}) \circ \nu_{m_1^*,m_1}\circ (G(\psi_1) \otimes\id_{m_2})
\\&= \ev_{G(m_1)} \circ ([\delta_{m_1} \circ G(\psi_1)] \otimes\id_{m_2})
= \ev_{m_2} \circ (\psi_2\otimes \id_{m_2}) 
= \bar{\ev}_{m_2},
\end{split}
\]
where the fourth equality uses the identity $G(\ev_{m_1}) \circ \nu_{m_1^*,m_1} = \ev_{G(m_1)} \circ (\delta_{m_1} \otimes \id)$
(which holds by the definition of $\delta$; see Definition \ref{defn:ModuleTensorCategoryFunctor}),
and the fifth equality follows from the relation $\delta_{m_1} \circ G(\psi_1) = \psi_2$ (see Definition~\ref{def: pointed}).

Finally, the commutativity of the diagram
\[
\xymatrix{
\cP_1[n]\otimes\cP_1[j]\ar[d]_{\Lambda(G)[n]\otimes\Lambda(G)[j]\,\,}
\ar[r]^(.55){\varpi_{i,j}}&\cP_1[n+j]\ar[d]^{\,\Lambda(G)[n+j]}
\\\cP_2[n]\otimes\cP_2[j]\ar[r]^(.55){\varpi_{i,j}}&\cP_2[n+j]
}
\]
follows from Lemmas \ref{lem:MultiplicationCompatible} and \ref{lem:TraciatorsCompatible}:
\[
\xymatrix{
\Tr_\cC^1(m_1^{\otimes n})\otimes\Tr_\cC^1(m_1^{\otimes j})\ar[d]|{\Lambda(G)[n]\otimes\Lambda(G)[j]}
\ar@{}[dr]|{\textstyle \ref{lem:TraciatorsCompatible}}
\ar[r]^(.5){\tau^+\otimes\id}
&
\ar@{}[dr]|{\textcolor{white}{---}\textstyle \ref{lem:MultiplicationCompatible}}
\Tr_\cC^1(m_1^{\otimes n})\otimes\Tr_\cC^1(m_1^{\otimes j})\ar[d]|{\Lambda(G)[n]\otimes\Lambda(G)[j]}
\ar[r]^(.6){\mu}
&
\ar@{}[dr]|{\textstyle \ref{lem:TraciatorsCompatible}}
\Tr_\cC^1(m_1^{\otimes n+j})\ar[d]|{\,\Lambda(G)[n+j]}\ar[r]^{\tau^-}
&
\Tr_\cC^1(m_1^{\otimes n+j})\ar[d]|{\,\Lambda(G)[n+j]}
\\
\Tr_\cC^2(m_2^{\otimes n})\otimes\Tr_\cC^2(m_2^{\otimes j})\ar[r]^(.5){\tau^+\otimes\id}
&
\Tr_\cC^2(m_2^{\otimes n})\otimes\Tr_\cC^2(m_2^{\otimes j})\ar[r]^(.6){\mu}
&
\Tr_\cC^2(m_2^{\otimes n+j})\ar[r]^{\tau^-}&\Tr_\cC^2(m_2^{\otimes n+j})
}
\]
\end{proof}

In the previous proposition, we have done all the hard work for:
\begin{thm}\label{thm: Here's Lambda!}
Let $\cC$ be a braided pivotal category. Then the map
\[
\Lambda:\Mod_* \to \APA
\]
given by Theorem~\ref{thm: construct P from M and m} at the level of objects 
and by Definition \ref{def: morphism of APA associated to G:M-->M'} at the level of morphisms 
is a functor
from the (2-)category of pointed pivotal module tensor categories over $\cC$ to the category of anchored planar algebras in $\cC$.
\end{thm}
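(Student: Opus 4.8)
The plan is to verify the two functor axioms: $\Lambda$ preserves identities and composition. Most of the conceptual content has already been established in the preceding proposition (that $\Lambda(G)$ is a morphism of anchored planar algebras), so what remains is essentially a bookkeeping exercise about mates and adjunctions, carried out at the level of the box objects $\cP[n] = \Tr^1_\cC(m_1^{\otimes n})$.

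First I would treat the identity. If $G = \id_{\cM}$ with $\gamma = \id$, then the morphism $\zeta_x : \Tr_\cC(x) \to \Tr_\cC(x)$ of Definition~\ref{def: mate of zeta} is the mate of $\varepsilon_x : \Phi(\Tr_\cC(x)) \to x$ (since $\gamma_{\Tr_\cC(x)} = \id$ and $G(\varepsilon^1_x) = \varepsilon_x$), and the mate of $\varepsilon_x$ under $\Phi \dashv \Tr_\cC$ is precisely $\id_{\Tr_\cC(x)}$ by the triangle identities. Hence $\Lambda(\id_\cM)[n] = \id_{\cP[n]}$ for all $n$, so $\Lambda(\id_\cM) = \id_{\Lambda(\cM,m)}$.

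Next, composition: given $G:(\cM_1,m_1)\to(\cM_2,m_2)$ and $H:(\cM_2,m_2)\to(\cM_3,m_3)$ with action coherences $\gamma^G$ and $\gamma^H$, the composite $HG$ carries the action coherence $\gamma^{HG}_c = H(\gamma^G_c)\circ\gamma^H_c$ (up to the coherence isomorphisms, as in Definition~\ref{defn:ModuleTensorCategoryFunctor}), and I must show $\zeta^{HG}_x = \zeta^H_{G(x)}\circ\zeta^G_x$ after identifying $\Tr^3_\cC(HG(x))$ appropriately. This is again a mate computation: the mate of $\zeta^{HG}_x$ is $HG(\varepsilon^1_x)\circ\gamma^{HG}_{\Tr^1_\cC(x)}$, while the mate of $\zeta^H_{G(x)}\circ\zeta^G_x$ can be rewritten, using Lemma~\ref{lem:AttachingMapCompatible} for $G$ and the general fact that the mate of a composite $\zeta^H_{G(x)}\circ\zeta^G_x$ equals $\varepsilon^3_{HG(x)}\circ\Phi_3(\zeta^H_{G(x)})\circ\Phi_3(\zeta^G_x)$, together with the naturality of $\gamma^H$; unwinding both sides reduces the equality to the definition $\gamma^{HG}_c = H(\gamma^G_c)\circ\gamma^H_c$ and the functoriality of $H$. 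I would then conclude $\Lambda(HG)[n] = \Lambda(H)[n]\circ\Lambda(G)[n]$ for all $n$.

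The only genuinely delicate point is the compatibility of all this with the category structure on $\tau_{\le1}(\Mod_*)$: strictly speaking $\Lambda$ is defined on $\Mod_*$, which is a $2$-category, so I should note that naturally isomorphic functors $G\cong G'$ in $\Mod_*$ induce equal maps $\Lambda(G)=\Lambda(G')$ of anchored planar algebras. This follows from Definition~\ref{def: pointed}: a natural transformation of pointed module tensor categories satisfies $\kappa_{m_1}=\id_{m_2}$, hence (by the monoidal condition \eqref{eq: def nat transf 1}) $\kappa_{m_1^{\otimes n}}=\id$, and the compatibility axiom \eqref{eq: def nat transf 2} forces $\zeta^G$ and $\zeta^{G'}$ to agree on the objects $m_1^{\otimes n}$. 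Thus $\Lambda$ descends to a functor on $\tau_{\le1}(\Mod_*)$, which by Lemma~\ref{lem: it's secretly a 1-category} is all we need. I expect this last descent argument, rather than the mate calculations, to be the part requiring the most care to state precisely.
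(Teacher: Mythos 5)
Your proposal is correct and follows essentially the same route as the paper: both the identity check (the mate of $\varepsilon_{m^{\otimes n}}$ is the identity) and the composition check ($\zeta^{HG}=\zeta^H\circ\zeta^G$ via mates, Lemma~\ref{lem:AttachingMapCompatible}, naturality of $\gamma$, and $\gamma^{HG}_c=H(\gamma^G_c)\circ\gamma^H_c$) are exactly the paper's computations. The descent to $\tau_{\le 1}(\Mod_*)$ that you flag as delicate is not part of the paper's proof of this theorem (it is dispatched in a one-line remark afterwards, since $\APA$ is a $1$-category), but your sketched argument for it is sound.
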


\begin{proof}
Given two functors
\[
(\cM_1,m_1)\xrightarrow{(G_1,\gamma^1)} (\cM_2,m_2)\xrightarrow{(G_2,\gamma^2)}(\cM_3,m_3),
\]
with composite
$(G_3,\gamma^3):=(G_2,\gamma^2)\circ(G_1,\gamma^1)$, we still need to show that 
$\Lambda(G_3)=\Lambda(G_2)\circ\Lambda(G_1)$.
Thus, for every $n\in\mathbb N$, we need to show $\Lambda(G_3)[n]=\Lambda(G_2)[n]\circ\Lambda(G_1)[n]$, i.e.,
\begin{equation}\label{eq: GG=G}
\zeta^3_{m_1^{\otimes n}}=\zeta^2_{m_2^{\otimes n}}\circ\zeta^1_{m_1^{\otimes n}}.
\end{equation}
Note that, by definition, $\gamma^3_c=G_2(\gamma^1_c)\circ\gamma^2_c$.
We verify \eqref{eq: GG=G} upon taking mates:
\[
\begin{split}
\qquad\qquad\quad
\text{mate of $\zeta^3_{m_1^{\otimes n}}$}
  &=G_3(\varepsilon^1_{m_1^{\otimes n}})\circ\gamma^3_{\Tr_\cC^1(m_1^{\otimes n})}
\\&=G_2G_1(\varepsilon^1_{m_1^{\otimes n}})\circ G_2(\gamma^1_{\Tr_\cC^1(m_1^{\otimes n})})\circ\gamma^2_{\Tr_\cC^1(m_1^{\otimes n})}
\\&=G_2(G_1(\varepsilon^1_{m_1^{\otimes n}}) \circ \gamma^1_{\Tr_\cC^1(m_1^{\otimes n})})\circ\gamma^2_{\Tr_\cC^1(m_1^{\otimes n})}
\\&=G_2(\varepsilon^2_{G_1(m_1^{\otimes n})}\circ\Phi_2(\zeta^1_{m_1^{\otimes n}}))\circ\gamma^2_{\Tr_\cC^1(m_1^{\otimes n})}\qquad\quad(\text{Lemma \ref{lem:AttachingMapCompatible}})
\\&=G_2(\varepsilon^2_{m_2^{\otimes n}})\circ G_2\Phi_2(\zeta^1_{m_1^{\otimes n}})\circ\gamma^2_{\Tr_\cC^1(m_1^{\otimes n})}
\\&=G_2(\varepsilon^2_{m_2^{\otimes n}})\circ \gamma^2_{\Tr_\cC^2(m_2^{\otimes n})}\circ \Phi_3(\zeta^1_{m_1^{\otimes n}})\qquad\qquad\,\;(\text{naturality of $\gamma^2$})
\\&=\text{mate of $(\zeta^2_{m_2^{\otimes n}}\circ\zeta^1_{m_1^{\otimes n}})$}
\end{split}
\]

We also need to check that $\Lambda(\id_{(\cM,m)})=\id_{\Lambda(\cM,m)}$.
By definition, the map $\Lambda(\cM,m)[n]\to \Lambda(\cM,m)[n]$ associated to $(G,\gamma):=\id_{(\cM,m)}$
is the mate of $G(\varepsilon_{m^{\otimes n}})\circ \gamma_{\Tr_\cC(m^{\otimes n})}=\varepsilon_{m^{\otimes n}}$ (Definition~\ref{def: mate of zeta}).
But $\varepsilon$ is the counit of the adjunction, and so its mate is an identity morphism.
\end{proof}

We finish the section by noting that, since $\APA$ is a 1-category, the functor $\Lambda$ descends to a functor $\tau_{\le 1}(\Mod_*) \to \APA$.
We will not distinguish between these two meanings of $\Lambda$.



\section{Module tensor categories from anchored planar algebras}
\label{sec:MTCfromAPA}

In the previous section, we saw how to construct an anchored planar algebra $\cP$ from a pointed module tensor category $(\cM,\Phi^{\scriptscriptstyle \cZ},m)$.
Our next goal is to provide a construction that goes the other way.

Given braided pivotal category $\cC$ and an anchored planar algebra $\cP$ in $\cC$, we will construct a pivotal tensor category $\cM$,
a braided pivotal functor $\Phi^{\scriptscriptstyle \cZ}:\cC\to\cZ(\cM)$, and the symmetrically self-dual object $m$ that generates $\cM$ as a module tensor category (Definition~\ref{def: pointed}).
In Section \ref{sec:Equivalence}, we will show that these two constructions are each other's inverses.

\subsection{Reconstructing the category  \texorpdfstring{$\cM$}{M}}
\label{sec:DefinitionOfM}

In this section, given an anchored planar algebra, we construct a category $\cM$ with a distinguished object $m\in\cM$.

Our construction makes heavy use of the graphical calculus for morphisms in $\cC$.
Given an anchored planar algebra $(\cP,Z)$, an anchored planar tangle $T$ yields a morphism $Z(T)$ in $\cC$.
We represent this by a coupon labeled by the tangle $T$.
For example, the multiplication tangle
 $T=\begin{tikzpicture}[xscale=.5, yscale=.5, baseline = -.1cm]
	\draw[thick, red] (-.3,.5) -- (-1.15,0);
	\draw[thick, red] (-.3,-.5) -- (-1.15,0);
	\draw[very thick] (0,0) ellipse (1.15 and 1.25);
	\draw (0,-1.25) -- (0,1.25);
	\filldraw[very thick, unshaded] (0,.5) circle (.3cm);
	\filldraw[very thick, unshaded] (0,-.5) circle (.3cm);
	\node[scale=.6] at (.2+.05,-1) {\scriptsize{$n_3$}};
	\node[scale=.6] at (.2+.05,0) {\scriptsize{$n_2$}};
	\node[scale=.6] at (.2+.05,1) {\scriptsize{$n_1$}};
	\fill[red] (-.3,-.5) circle (.07) (-.3,.5) circle (.07) (-1.15,0) circle (.07);
\end{tikzpicture}$\,
yields a map\vspace{-.2cm}
 $Z(T):\cP[n_1+n_2]\otimes \cP[n_2+n_3] \to \cP[n_1+n_3]$, denoted
$$
Z(T)\,\,:\,\,\,\,\,\,\,\,\,\,\,\begin{tikzpicture}[baseline = -.1cm]
	\draw (-3.5,-.5) -- (-1.5,-.5);
	\draw (-3.5,.5) -- (-1.5,.5);
	\draw (3.5,0) -- (1.5,0);
	\draw[rounded corners=5pt, very thick, unshaded] (-1.5,-1.5) rectangle (1.5,1.5);
	\draw[thick, red] (-.3,.5) -- (-1,0);
	\draw[thick, red] (-.3,-.5) -- (-1,0);
	\draw[very thick] (0,0) ellipse (1 and 1.25);
	\draw (0,-1.25) -- (0,1.25);
	\filldraw[very thick, unshaded] (0,.5) circle (.3cm);
	\filldraw[very thick, unshaded] (0,-.5) circle (.3cm);
	\node at (.2,-1) {\scriptsize{$n_3$}};
	\node at (.2,0) {\scriptsize{$n_2$}};
	\node at (.2,1) {\scriptsize{$n_1$}};
	\node at (-2.5,.7) {\scriptsize{$\cP[n_1+n_2]$}};
	\node at (-2.5,-.3) {\scriptsize{$\cP[n_2+n_3]$}};
	\node at (2.5,.2) {\scriptsize{$\cP[n_1+n_3]$}};
\end{tikzpicture}
$$
The above diagram reads from left to right;
later on, we will use some more complicated diagrams with strands coming in and out from multiple directions.
We adopt the convention that the strands that come in from the left and from the bottom are input strands, and that the strands that come out from
the top and from the right are output strands.

As a way towards constructing $\cM$, we first construct a certain full subcategory $\cM_0$ of $\cM$.
The latter is obtained from $\cM_0$ by formally adding finite direct sums, 
and passing to the idempotent completion. The objects of $\cM_0$ are formal symbols
$
\text{``}\Phi(c)\otimes m^{\otimes n}\text{''}
$,
where $c$ is an object of $\cC$, and $n\ge0$.
The hom spaces are given by
\begin{equation}\label{eq: that's the composition in M_0}
\cM_0\big(\text{``}\Phi(c)\otimes m^{\otimes n_1}\text{''},\text{``}\Phi(d)\otimes m^{\otimes n_2}\text{''}\big) \,:=\, \cC(c, d\otimes \cP[n_2 + n_1]).
\end{equation}
We represent an element $f$ of the above hom space as follows:
$$
\begin{tikzpicture}[baseline=-.1cm]
	\draw (0,.8) -- (0,-.8);
	\draw (0,0) -- (2,0);
	\roundNbox{unshaded}{(0,0)}{.4}{0}{0}{$f$};
	\node at (-.2,.6) {\scriptsize{$d$}};
	\node at (1.2,.2) {\scriptsize{$\cP[n_2+n_1]$}};
	\node at (-.2,-.6) {\scriptsize{$c$}};
\end{tikzpicture}
$$
We then let
\begin{equation}\label{Phi M eq1}
m:=\text{``}\Phi(1_\cC)\otimes m^{\otimes 1}\text{''}\,\in\,\cM_0\subset \cM.
\end{equation}

The composition in $\cM_0$ of two morphisms $f:\text{``}\Phi(a)\otimes m^{\otimes n_1}\text{''}\to\text{``}\Phi(b)\otimes m^{\otimes n_2}\text{''}$ and $g:\text{``}\Phi(b)\otimes m^{\otimes n_2}\text{''}\to\text{``}\Phi(c)\otimes m^{\otimes n_3}\text{''}$
is given by
\begin{equation}\label{eq: that's how one defines composition}
g\circ f :=\,\,\,
\begin{tikzpicture}[baseline=-.1cm]
	\draw (0,1.4) -- (0,-1.4);
	\draw (.4,.6) -- (2,.6);
	\draw (.4,-.6) -- (2,-.6);
	\draw (4.4,0) -- (6,0);
	\roundNbox{unshaded}{(0,.6)}{.4}{0}{0}{$g$};
	\roundNbox{unshaded}{(0,-.6)}{.4}{0}{0}{$f$};
	\node at (-.2,1.2) {\scriptsize{$c$}};
	\node at (1.2,.8) {\scriptsize{$\cP[n_3+n_2]$}};
	\node at (-.2,0) {\scriptsize{$b$}};
	\node at (1.2,-.4) {\scriptsize{$\cP[n_2+n_1]$}};
	\node at (-.2,-1.2) {\scriptsize{$a$}};
	\multiplication{(3.2,0)}{1.2}{n_1}{n_2}{n_3}
	\node at (5.2,.2) {\scriptsize{$\cP[n_3+n_1]$}};
\end{tikzpicture}
\end{equation}
and the identity morphism on an object $\text{``}\Phi(c)\otimes m^{\otimes n}\text{''}$ is given by
\begin{equation}\label{eq: ...and that's how one defines identities}
\begin{tikzpicture}[baseline=-.1cm]
	\draw (0,-.8) -- (0,.8);
	\draw (1.5,0) -- (2.5,0);
	\node at (-.2,0) {\scriptsize{$c$}};
	\identityMap{(1,0)}{.5}{n\,\,}
	\node at (2,.2) {\scriptsize{$\cP[2n]$}};
\end{tikzpicture}
\end{equation}
The following computations shows that the composition of morphisms is associative:
\begin{align*}
(h\circ g) \circ f\,\, &=\,\,\,
\begin{tikzpicture}[baseline=-.7cm]
	\draw (0,1.4) -- (0,-2.6);
	\draw (.4,.6) -- (2,.6);
	\draw (.4,-.6) -- (2,-.6);
	\draw (4.4,0) -- (6,0);
	\draw (.4,-1.8) -- (6,-1.8);
	\draw (9,-.9) -- (10.6,-.9);
	\roundNbox{unshaded}{(0,.6)}{.4}{0}{0}{$h$};
	\roundNbox{unshaded}{(0,-.6)}{.4}{0}{0}{$g$};
	\roundNbox{unshaded}{(0,-1.8)}{.4}{0}{0}{$f$};
	\node at (-.2,1.2) {\scriptsize{$d$}};
	\node at (1.2,.8) {\scriptsize{$\cP[n_4+n_3]$}};
	\node at (-.2,0) {\scriptsize{$c$}};
	\node at (1.2,-.4) {\scriptsize{$\cP[n_3+n_2]$}};
	\node at (-.2,-1.2) {\scriptsize{$b$}};
	\node at (3.2,-1.6) {\scriptsize{$\cP[n_2+n_1]$}};
	\node at (-.2,-2.4) {\scriptsize{$a$}};
	\multiplication{(3.2,0)}{1.2}{n_2}{n_3}{n_4}
	\node at (5.2,.2) {\scriptsize{$\cP[n_4+n_2]$}};
	\multiplication{(7.5,-.9)}{1.5}{n_1}{n_2}{n_4}
	\node at (9.8,-.7) {\scriptsize{$\cP[n_4+n_1]$}};
\end{tikzpicture}
\displaybreak[1]
\\&=\,\,\,
\begin{tikzpicture}[baseline=-.7cm]
	\draw (0,1.4) -- (0,-2.6);
	\draw (.4,.6) -- (2,.6);
	\draw (.4,-.6) -- (2,-.6);
	\draw (.4,-1.8) -- (2,-1.8);
	\draw (5.6,-.6) -- (7.2,-.6);
	\roundNbox{unshaded}{(0,.6)}{.4}{0}{0}{$h$};
	\roundNbox{unshaded}{(0,-.6)}{.4}{0}{0}{$g$};
	\roundNbox{unshaded}{(0,-1.8)}{.4}{0}{0}{$f$};
	\node at (-.2,1.2) {\scriptsize{$d$}};
	\node at (1.2,.8) {\scriptsize{$\cP[n_4+n_3]$}};
	\node at (-.2,0) {\scriptsize{$c$}};
	\node at (1.2,-.4) {\scriptsize{$\cP[n_3+n_2]$}};
	\node at (-.2,-1.2) {\scriptsize{$b$}};
	\node at (1.2,-1.6) {\scriptsize{$\cP[n_2+n_1]$}};
	\node at (-.2,-2.4) {\scriptsize{$a$}};
	\draw[rounded corners=5pt, very thick, unshaded] (2,-2.4) rectangle (5.6,1.2);
	\draw[thick, red] (3.5,.3) -- (2.6,-.6);
	\draw[thick, red] (3.5,-.6) -- (2.6,-.6);
	\draw[thick, red] (3.5,-1.5) -- (2.6,-.6);
	\draw[very thick] (3.8,-.6) ellipse (1.2 and 1.5);
	\draw (3.8,-2.1) -- (3.8,.9);
	\filldraw[very thick, unshaded] (3.8,.3) circle (.3cm);
	\filldraw[very thick, unshaded] (3.8,-.6) circle (.3cm);
	\filldraw[very thick, unshaded] (3.8,-1.5) circle (.3cm);
	\node at (4,.7) {\scriptsize{$n_4$}};
	\node at (4,-.15) {\scriptsize{$n_3$}};
	\node at (4,-1.05) {\scriptsize{$n_2$}};
	\node at (4,-1.9) {\scriptsize{$n_1$}};
	\node at (6.4,-.4) {\scriptsize{$\cP[n_4+n_1]$}};
\end{tikzpicture}
\displaybreak[1]
\\&=\,\,\,
\begin{tikzpicture}[baseline=.7cm, yscale=-1]
	\draw (0,1.4) -- (0,-2.6);
	\draw (.4,.6) -- (2,.6);
	\draw (.4,-.6) -- (2,-.6);
	\draw (4.4,0) -- (6,0);
	\draw (.4,-1.8) -- (6,-1.8);
	\draw (9,-.9) -- (10.6,-.9);
	\roundNbox{unshaded}{(0,.6)}{.4}{0}{0}{$f$};
	\roundNbox{unshaded}{(0,-.6)}{.4}{0}{0}{$g$};
	\roundNbox{unshaded}{(0,-1.8)}{.4}{0}{0}{$h$};
	\node at (-.2,1.2) {\scriptsize{$a$}};
	\node at (1.2,.4) {\scriptsize{$\cP[n_2+n_1]$}};
	\node at (-.2,0) {\scriptsize{$b$}};
	\node at (1.2,-.8) {\scriptsize{$\cP[n_3+n_2]$}};
	\node at (-.2,-1.2) {\scriptsize{$c$}};
	\node at (3.2,-2) {\scriptsize{$\cP[n_4+n_3]$}};
	\node at (-.2,-2.4) {\scriptsize{$d$}};
	\multiplication{(3.2,0)}{1.2}{n_3}{n_2}{n_1}
	\node at (5.2,-.2) {\scriptsize{$\cP[n_3+n_1]$}};
	\multiplication{(7.5,-.9)}{1.5}{n_4}{n_3}{n_1}
	\node at (9.8,-1.1) {\scriptsize{$\cP[n_4+n_1]$}};
\end{tikzpicture}
\displaybreak[1]
\\&=\,\,
h\circ (g\circ f)
\end{align*}
and unital:
\begin{align*}
\id \circ f\,\,
&=\,\,\,
\begin{tikzpicture}[baseline=-.6cm, yscale=-1]
	\draw (0,2.4) -- (0,-1);
	\draw (.4,1.4) -- (4,1.4);
	\draw (5.2,.7) -- (6.8,.7);
	\roundNbox{unshaded}{(0,1.4)}{.4}{0}{0}{$f$};
	\draw (1.5,0) -- (2.8,0);
	\node at (-.2,0) {\scriptsize{$b$}};
	\identityMap{(1,0)}{.5}{n_2}
	\node at (2,-.2) {\scriptsize{$\cP[2n_2]$}};
	\draw[rounded corners=5pt, dashed] (-.4,-.7) rectangle (2.5,.7);
	\node at (-.2,2) {\scriptsize{$a$}};
	\node at (1.7,1.6) {\scriptsize{$\cP[n_2+n_1]$}};
	\multiplication{(4,.7)}{1.2}{n_3}{n_2}{n_1}
	\node at (6,.5) {\scriptsize{$\cP[n_2+n_1]$}};
\end{tikzpicture}
\displaybreak[1]
\\&=\,\,\,
\begin{tikzpicture}[baseline=-.1cm]
	\draw (0,.8) -- (0,-.8);
	\draw (0,0) -- (2,0);
	\roundNbox{unshaded}{(0,0)}{.4}{0}{0}{$f$};
	\node at (-.2,.6) {\scriptsize{$b$}};
	\node at (1.2,.2) {\scriptsize{$\cP[n_2+n_1]$}};
	\node at (-.2,-.6) {\scriptsize{$a$}};
	\identityTangle{(2.8,0)}{.8}{n_1}{n_2}
	\draw (3.6,0) -- (5.2,0);
	\node at (4.4,.2) {\scriptsize{$\cP[n_2+n_1]$}};
\end{tikzpicture}
\,\,=\,\,f
\displaybreak[1]
\\&=\,\,\,
\begin{tikzpicture}[baseline=.6cm]
	\draw (0,2.4) -- (0,-1);
	\draw (.4,1.4) -- (4,1.4);
	\draw (5.2,.7) -- (6.8,.7);
	\roundNbox{unshaded}{(0,1.4)}{.4}{0}{0}{$f$};
	\draw (1.5,0) -- (2.8,0);
	\node at (-.2,0) {\scriptsize{$a$}};
	\identityMap{(1,0)}{.5}{n_1}
	\node at (2,.2) {\scriptsize{$\cP[2n_1]$}};
	\draw[rounded corners=5pt, dashed] (-.4,-.7) rectangle (2.5,.7);
	\node at (-.2,2) {\scriptsize{$b$}};
	\node at (1.7,1.6) {\scriptsize{$\cP[n_2+n_1]$}};
	\multiplication{(4,.7)}{1.2}{n_1}{n_1}{n_2}
	\node at (6,.9) {\scriptsize{$\cP[n_2+n_1]$}};
\end{tikzpicture}
\displaybreak[1]
\\&=\,\,\,f\circ \id.
\end{align*}

\subsection{The adjoint pair \texorpdfstring{$\Phi \dashv \Tr_\cC$}{Phi and Tr_C}}

Our next goal is to construct a pair of adjoint functors
\begin{equation}\label{eq: the adjunction : statement}
\Phi : \cC\,\,\raisebox{-.08cm}{$\stackrel{\textstyle\leftarrow}\to$}\,\, \cM:\Tr_\cC.
\end{equation}
The functor $\Phi$ lands in the full subcategory $\cM_0$ of $\cM$.
It is given by
\begin{equation}\vspace{.6cm}
\label{Phi M eq2}
\begin{split}
\qquad\Phi(c):=\text{``}\Phi(c)\otimes m^{\otimes 0}\text{''}\quad\qquad\text{and}\,\,&
\\
\Phi(f:a\to b) \,:= \,\,\,
\begin{tikzpicture}[baseline=-.1cm]
\useasboundingbox (-.4,-.2) rectangle (2.2,.9);
	\draw (0,-.8) -- (0,.8);
	\draw (1.4,0) -- (2.2,0);
	\node at (-.2,.6) {\scriptsize{$b$}};
	\node at (-.2,-.6) {\scriptsize{$a$}};
	\emptyMap{(1,0)}{.4}
	\node at (1.8,.2) {\scriptsize{$\cP[0]$}};
	\roundNbox{unshaded}{(0,0)}{.4}{0}{0}{$f$}
\end{tikzpicture}
\,\,\in\,\, \cM\big(\Phi(a), \Phi(b)\big) =&\,\, \cC(a, b\otimes \cP[0]).
\end{split}
\end{equation}
The functor $\Tr_\cC$ is first defined on $\cM_0$, and then formally extended to $\cM$
using the fact that $\cC$ admits direct sums and is idempotent complete.
It is given on objects by
\begin{equation}\label{eq: that's how you define Tr!}
\Tr_\cC(\text{``}\Phi(c)\otimes m^{\otimes n}\text{''}) \,:=\, c\otimes \cP[n],
\end{equation}
and on morphisms by
\begin{equation}\label{eq: That's Tr_cC(f) }
\Tr_\cC(f)\,:=\,\,\,
\begin{tikzpicture}[baseline=.6cm]
	\draw (0,2.4) -- (0,-.6);
	\draw (.4,1.4) -- (4,1.4);
	\draw (4,.7) -- (4.8,.7);
	\draw (1.2,-.6) arc (180:90:.8cm);
	\roundNbox{unshaded}{(0,1.4)}{.4}{0}{0}{$f$};
	\node at (-.2,0) {\scriptsize{$a$}};
	\node at (-.2,2.2) {\scriptsize{$b$}};
	\node at (1.2,1.6) {\scriptsize{$\cP[p+n]$}};
	\node at (4.4,.9) {\scriptsize{$\cP[p]$}};
	\node at (1,0) {\scriptsize{$\cP[n]$}};
	\multiplication{(3,.8)}{1}{0}{n}{p}
\end{tikzpicture}
\,\,\,\in\,\,\, \cC(a\otimes \cP[n], b\otimes \cP[p]).
\end{equation}
for $f\in \cM_0(\text{``}\Phi(a)\otimes m^{\otimes n}\text{''} , \text{``}\Phi(b) \otimes m^{p}\text{''})= \cC(a, b \otimes \cP[p+n])$.
To see that this defines a functor, we check:
\begin{align*}
\Tr_\cC(g)\circ \Tr_\cC(f)\,\,
&=\,\,\,
\begin{tikzpicture}[baseline=-.3cm]
	\draw (0,1.4) -- (0,-2);
	\draw (1.4,-2) arc (180:90:.6);
	\draw (.4,.6) -- (5.4,.6);
	\draw (.4,-.6) -- (5.4,-.6);
	\draw (7.4,0) -- (8.2,0);
	\roundNbox{unshaded}{(0,-.6)}{.4}{0}{0}{$f$};
	\roundNbox{unshaded}{(0,.6)}{.4}{0}{0}{$g$};
	\node at (1.2,-.4) {\scriptsize{$\cP[p+n]$}};
	\node at (4.7,-.4) {\scriptsize{$\cP[p]$}};
	\node at (1.2,.8) {\scriptsize{$\cP[q+p]$}};
	\node at (7.8,.2) {\scriptsize{$\cP[q]$}};
	\node at (1.2,-1.4) {\scriptsize{$\cP[n]$}};
	\node at (-.2,-1.4) {\scriptsize{$a$}};
	\node at (-.2,0) {\scriptsize{$b$}};
	\node at (-.2,1.2) {\scriptsize{$c$}};
	\multiplication{(3,-1)}{1}{0}{n}{p}
	\multiplication{(6.4,0)}{1}{0}{p}{q}
\end{tikzpicture}
\displaybreak[1]
\\&=\,\,\,
\begin{tikzpicture}[baseline=-.1cm]
	\draw (0,1.4) -- (0,-2.2);
	\draw (.4,.6) -- (2,.6);
	\draw (.4,-.6) -- (4.8,-.6);
	\draw (1.4,-2.2) arc (180:90:.6cm);
	\node at (1.2,-1.6) {\scriptsize{$\cP[n]$}};	
	\roundNbox{unshaded}{(0,.6)}{.4}{0}{0}{$g$};
	\roundNbox{unshaded}{(0,-.6)}{.4}{0}{0}{$f$};
	\node at (-.2,1.2) {\scriptsize{$c$}};
	\node at (1.2,.8) {\scriptsize{$\cP[q+p]$}};
	\node at (-.2,0) {\scriptsize{$b$}};
	\node at (1.2,-.4) {\scriptsize{$\cP[p+n]$}};
	\node at (-.2,-1.2) {\scriptsize{$a$}};
	\node at (4.4,-.4) {\scriptsize{$\cP[q]$}};
	\draw[rounded corners=5pt, very thick, unshaded] (2,-2.2) rectangle (4,1);
	\draw (3,-1.25) -- (3,.8);
	\draw[thick, red] (2.2,-.6) -- (2.75,-.6) ;
	\draw[thick, red] (2.2,-.6) -- (2.75,.2) ;
	\draw[thick, red] (2.2,-.6) -- (2.75,-1.4) ;
	\draw[very thick] (3,-.6) ellipse (.8cm and 1.4cm);
	\draw[unshaded, very thick] (3,-.6) circle (.25cm);
	\draw[unshaded, very thick] (3,.2) circle (.25cm);
	\draw[unshaded, very thick] (3,-1.4) circle (.25cm);
	\node at (3.15,-1) {\scriptsize{$n$}};
	\node at (3.15,-.2) {\scriptsize{$p$}};
	\node at (3.15,.6) {\scriptsize{$q$}};
%
\end{tikzpicture}
\displaybreak[1]
\\&=\,\,\,
\begin{tikzpicture}[baseline=-.1cm]
	\draw (0,1.4) -- (0,-1.8);
	\draw (.4,.6) -- (2,.6);
	\draw (.4,-.6) -- (2,-.6);
	\draw (4,0) -- (5.2,0);
	\roundNbox{unshaded}{(0,.6)}{.4}{0}{0}{$g$};
	\roundNbox{unshaded}{(0,-.6)}{.4}{0}{0}{$f$};
	\node at (-.2,1.2) {\scriptsize{$c$}};
	\node at (1.2,.8) {\scriptsize{$\cP[q+p]$}};
	\node at (-.2,0) {\scriptsize{$b$}};
	\node at (1.2,-.4) {\scriptsize{$\cP[p+n]$}};
	\node at (-.2,-1.2) {\scriptsize{$a$}};
	\multiplication{(3,0)}{1}{n}{p}{q}
	\node at (4.6,.2) {\scriptsize{$\cP[q+n]$}};
	\draw (7.2,-.6) -- (8, -.6);
	\node at (7.6,-.4) {\scriptsize{$\cP[q]$}};
	\draw (4.6,-1.8) arc (180:90:.6cm);
	\node at (4.4,-1.2) {\scriptsize{$\cP[n]$}};
	\multiplication{(6.2,-.6)}{1}{0}{n}{q}
\end{tikzpicture}
\displaybreak[1]
\\&=\,\,\,
\Tr_\cC(g\circ f)
\end{align*}
and
$$
\Tr_\cC(\id_{\text{``}\Phi(a)\otimes m^{\otimes n}\text{''}})
\,=\,\,
\begin{tikzpicture}[baseline=-.6cm]
	\draw (0,1) -- (0,-1.8);
	\draw (2,-1.8) arc (180:90:.6);
	\draw (4.6,-.6) -- (5.4,-.6);
	\draw (1.2,0) -- (2.6,0);
	\node at (-.2,0) {\scriptsize{$a$}};
	\identityMap{(.8,0)}{.4}{n\,\;}
	\node at (1.7,.2) {\scriptsize{$\cP[2n]$}};
	\draw[rounded corners=5pt, dashed] (-.4,-.6) rectangle (2.2,.6);
	\node at (1.8,-1.2) {\scriptsize{$\cP[n]$}};
	\node at (5,-.4) {\scriptsize{$\cP[n]$}};
	\multiplication{(3.6,-.6)}{1}{0}{n}{n}
\end{tikzpicture}
=
\begin{tikzpicture}[baseline=-.1cm]
	\draw (-.6,-.6) -- (-.6,.6);
	\draw (0,-.6) arc (180:90:.6cm);
	\draw (1.8,0) -- (2.6,0);
	\node at (2.2, .2) {\scriptsize{$\cP[n]$}};
	\node at (0,-.9) {\scriptsize{$\cP[n]$}};
	\node at (-.6,-.9) {\scriptsize{$a$}};
	\draw[rounded corners=5pt, very thick, unshaded] (.6,-.6) rectangle (1.8,.6);
	\draw (1.2,0) -- (1.2,.5);
	\draw[thick, red] (1.2,0) -- (.7,0) ;
	\draw[very thick] (1.2,0) circle (.5cm);
	\draw[unshaded, very thick] (1.2,0) circle (.15cm);
	\node at (1.35,.25) {\scriptsize{$n$}};
\end{tikzpicture}
\,=\,
\id_{a\otimes \cP[n]}.
$$

\begin{lem}\label{lem: exhibits Tr as the right adjoint}
The identity map 
$$
\cM(\Phi(a), \text{\rm``}\Phi(c)\otimes m^{\otimes n}\text{\rm''}) 
 =
 \cC(a, c\otimes \cP[n])
\,\longrightarrow\,
\cC(a, c \otimes \cP[n])
= 
\cC(a, \Tr_\cC(\text{\rm``}\Phi(c)\otimes m^{\otimes n}\text{\rm''})) 
$$
is natural in $a\in \cC$ and in $\text{\rm``}\Phi(c)\otimes m^{\otimes n}\text{\rm''}\in \cM$.
It exhibits $\Tr_\cC:\cM_0\to\cC$ as the right adjoint of $\Phi:\cC\to \cM_0$.
\end{lem}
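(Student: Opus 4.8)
The plan is to verify naturality of the claimed isomorphism in both variables and then to check that the two triangle identities hold, with all of this being a direct translation of the defining formulas \eqref{eq: that's the composition in M_0}, \eqref{eq: that's how one defines composition}, \eqref{Phi M eq2}, \eqref{eq: that's how you define Tr!}, \eqref{eq: That's Tr_cC(f) } into the graphical calculus. First I would spell out what the candidate unit and counit of the adjunction are. Since $\cM(\Phi(a),\text{``}\Phi(c)\otimes m^{\otimes n}\text{''}) = \cC(a, c\otimes\cP[n]) = \cC(a,\Tr_\cC(\text{``}\Phi(c)\otimes m^{\otimes n}\text{''}))$ by definition, the claimed natural bijection is literally the identity map on the common hom-set. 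Hence the unit $i_X\colon X\to \Phi(\Tr_\cC(X))$ and counit $\varepsilon_c\colon \Tr_\cC(\Phi(c))\to c$ can be read off: taking $X=\text{``}\Phi(c)\otimes m^{\otimes n}\text{''}$ and $a=c\otimes\cP[n]$, the unit is the image of $\id_{c\otimes\cP[n]}$; taking $a=\Tr_\cC(\Phi(c))=c\otimes\cP[0]$ and the object $\Phi(c)=\text{``}\Phi(c)\otimes m^{\otimes 0}\text{''}$, the counit corresponds to $\id_{c\otimes\cP[0]}$ under $\cC(c\otimes\cP[0], c\otimes\cP[0])=\cM(\Phi(c\otimes\cP[0]),\Phi(c))$.

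Next I would check naturality in $a\in\cC$: given $g\colon a'\to a$ in $\cC$, I must show that the identity bijection intertwines precomposition by $\Phi(g)$ in $\cM$ with precomposition by $g$ in $\cC$. Unwinding, $\Phi(g)$ acts on a morphism $f\in\cC(a, c\otimes\cP[n])$ viewed in $\cM(\Phi(a),\text{``}\Phi(c)\otimes m^{\otimes n}\text{''})$; by the composition formula \eqref{eq: that's how one defines composition} together with the definition \eqref{Phi M eq2} of $\Phi$ on morphisms and the normalization \ref{reln:UnitMap}/\ref{reln:id} of the multiplication tangle against the empty/unit tangle, the multiplication coupon $Z(p_{0,n})\circ(\eta\otimes-)$ degenerates to an identity, so composition in $\cM$ with $\Phi(g)$ is just composition with $g$ in $\cC$. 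The same bookkeeping, run in the other slot, gives naturality in $X\in\cM$: for $h\in\cM(X,Y)$, the formula \eqref{eq: That's Tr_cC(f) } for $\Tr_\cC(h)$ combined with \eqref{eq: that's how one defines composition} and the associativity and unit relations of the multiplication tangles shows that postcomposing the bijection's source by $h$ matches postcomposing its target by $\Tr_\cC(h)$; this is precisely the computation already carried out just above the lemma showing $\Tr_\cC(g)\circ\Tr_\cC(f)=\Tr_\cC(g\circ f)$, reused verbatim. Once naturality in both variables is established, the triangle identities are automatic because the bijection is the identity on hom-sets — more precisely, a natural isomorphism $\cM(\Phi(-),-)\cong\cC(-,\Tr_\cC(-))$ that is the identity map on each hom-set manifestly satisfies the hom-set characterization of an adjunction, so no separate check is needed.

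The genuine obstacle is purely diagrammatic rather than conceptual: one must be careful that the many index shifts (the $p+n$, $q+p$, $n_2+n_1$ patterns in \eqref{eq: that's the composition in M_0} and \eqref{eq: That's Tr_cC(f) }) and the placement of anchor lines are consistent, and that the degeneracies of the multiplication tangle used above really follow from relation \ref{reln:id} (equivalently \ref{reln:UnitMap}) and from the associativity relations \ref{reln:EasyQuadratic}/\ref{reln:HardQuadraticMaps} exactly as stated, with no hidden braiding or twist entering. In particular, I would double-check that the identity morphism \eqref{eq: ...and that's how one defines identities} on $\text{``}\Phi(c)\otimes m^{\otimes n}\text{''}$ is sent, under $\Tr_\cC$, to $\id_{c\otimes\cP[n]}$ (this is the computation $\Tr_\cC(\id_{\text{``}\Phi(a)\otimes m^{\otimes n}\text{''}})=\id_{a\otimes\cP[n]}$ already displayed), since this is what guarantees the counit is well defined on the nose. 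Modulo these routine verifications, the lemma follows.
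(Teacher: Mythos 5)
Your proposal is correct and follows essentially the same route as the paper: the paper's proof consists precisely of the two graphical naturality checks you describe (precomposition by $\Phi(f)$ for $f$ in $\cC$, using the unit relation to absorb the empty disc, and postcomposition by a morphism of $\cM_0$ versus its image under $\Tr_\cC$), after which the adjunction is immediate because a natural isomorphism of hom-functors that is the identity on each hom-set is by definition an adjunction. The only blemishes are inessential: in your aside the unit and counit are interchanged (the unit of $\Phi\dashv\Tr_\cC$ is $a\to\Tr_\cC(\Phi(a))$ in $\cC$ and the counit is $\Phi(\Tr_\cC(X))\to X$ in $\cM$), and the second naturality square is not literally the functoriality computation $\Tr_\cC(g)\circ\Tr_\cC(f)=\Tr_\cC(g\circ f)$ reused verbatim but a slightly simpler diagram chase of the same kind — neither point affects the argument, since you correctly discard the unit/counit description in favor of the hom-set characterization.
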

\begin{proof}
First, given $f\in \cC(a, b)$, we must show that the following diagram commutes:
\[
\xymatrix{
\cM_0(\Phi(b), \text{``}\Phi(c)\otimes m^{\otimes n}\text{''}) = \cC(b, c\otimes \cP[n])
\ar@<-8ex>[d]^{-\,\circ\,\Phi(f)}
\ar[rr]^(.65){\id}
&&
\cC(b, c\otimes \cP[n])
\ar[d]^{-\;\!\circ\;\! f}
\\
\cM_0(\Phi(a), \text{``}\Phi(c)\otimes m^{\otimes n}\text{''}) = \cC(a, c\otimes \cP[n])
\ar[rr]^(.65){\id}
&&
\cC(a, c\otimes \cP[n])
}
\]
Starting with $g$ in the upper left corner, we compute:
$$
g\circ \Phi(f) 
\,=\,\,
\begin{tikzpicture}[baseline=-.1cm]
	\draw (0,1.4) -- (0,-1.4);
	\draw (.4,.6) -- (2,.6);
	\draw (1,-.6) -- (2,-.6);
	\draw (4,0) -- (4.6,0);
	\roundNbox{unshaded}{(0,.6)}{.4}{0}{0}{$g$};
	\roundNbox{unshaded}{(0,-.6)}{.4}{0}{0}{$f$};
	\node at (-.2,1.2) {\scriptsize{$c$}};
	\node at (-.2,0) {\scriptsize{$b$}};
	\node at (-.2,-1.2) {\scriptsize{$a$}};
	\emptyMap{(1,-.6)}{.4}
	\multiplication{(3,0)}{1}{0}{0}{n}
\end{tikzpicture}
\,\,=\,\,
\begin{tikzpicture}[baseline=-.1cm]
	\draw (-.4,-.8) -- (-.4,.8);
	\draw (0,0) -- (2.2,0);
	\node at (-.6,.6) {\scriptsize{$c$}};
	\node at (-.6,-.6) {\scriptsize{$a$}};
	\roundNbox{unshaded}{(-.4,0)}{.4}{.2}{.2}{$g\circ f$}
	\draw[rounded corners=5pt, very thick, unshaded] (.6,-.6) rectangle (1.8,.6);
	\draw (1.2,0) -- (1.2,.5);
	\draw[thick, red] (1.2,0) -- (.7,0) ;
	\draw[very thick] (1.2,0) circle (.5cm);
	\draw[unshaded, very thick] (1.2,0) circle (.15cm);
	\node at (1.35,.25) {\scriptsize{$n$}};
\end{tikzpicture}
\,\,=\,
g\circ f. 
$$

Second, given $g\in \cM_0(\text{``}\Phi(c)\otimes m^{\otimes n}\text{''} , \text{``}\Phi(d)\otimes m^{\otimes p}\text{''})$, we must show that the following diagram commutes:
\[
\xymatrix{
\cM_0(\Phi(a), \text{``}\Phi(c)\otimes m^{\otimes n}\text{''}) :=\cC(a, c\otimes \cP[n])
\ar@<-8ex>[d]^{g\;\!\circ\;\! -}
\ar[rr]^(.65){\id}
&&
\cC(a, c\otimes \cP[n])
\ar[d]^{\Tr_\cC(g)\,\circ\, -}
\\
\cM_0(\Phi(a),  \text{``}\Phi(d)\otimes m^{\otimes p}\text{''}) := \cC(a, d\otimes \cP[p])
\ar[rr]^(.65){\id}
&&
\cC(a, d\otimes \cP[p])
}
\]
Starting with $f$ in the upper left corner, we compute:
$$
g\circ f 
\,=\,\,
\begin{tikzpicture}[baseline=-.1cm]
	\draw (0,1.4) -- (0,-1.4);
	\draw (.4,.6) -- (2,.6);
	\draw (.4,-.6) -- (2,-.6);
	\draw (4,0) -- (4.8,0);
	\roundNbox{unshaded}{(0,.6)}{.4}{0}{0}{$g$};
	\roundNbox{unshaded}{(0,-.6)}{.4}{0}{0}{$f$};
	\node at (-.2,1.2) {\scriptsize{$d$}};
	\node at (1.1,.8) {\scriptsize{$\cP[p+n]$}};
	\node at (-.2,0) {\scriptsize{$c$}};
	\node at (1,-.4) {\scriptsize{$\cP[n]$}};
	\node at (-.2,-1.2) {\scriptsize{$a$}};
	\multiplication{(3,0)}{1}{0}{n}{p}
	\node at (4.4,.2) {\scriptsize{$\cP[p]$}};
\end{tikzpicture}
\,\,\,=\,\,
\begin{tikzpicture}[baseline=-.7cm]
	\draw (0,1.4) -- (0,-2.6);
	\draw (.4,.6) -- (2,.6);
	\draw (1.4,-1.2) arc (180:90:.6cm);
	\draw (.4,-1.8) -- (.8,-1.8) arc (-90:0:.6cm);
	\draw (4,0) -- (4.8,0);
	\roundNbox{unshaded}{(0,.6)}{.4}{0}{0}{$g$};
	\roundNbox{unshaded}{(0,-1.8)}{.4}{0}{0}{$f$};
	\node at (-.2,1.2) {\scriptsize{$d$}};
	\node at (1.1,.8) {\scriptsize{$\cP[p+n]$}};
	\node at (-.2,-.6) {\scriptsize{$c$}};
	\node at (1.2,-.6) {\scriptsize{$\cP[n]$}};
	\node at (-.2,-2.4) {\scriptsize{$a$}};
	\multiplication{(3,0)}{1}{0}{n}{p}
	\node at (4.4,.2) {\scriptsize{$\cP[p]$}};
	\draw[dashed] (-.4,-1.2) -- (4.8,-1.2);
\end{tikzpicture}
\,=\,
\Tr_\cC(g) \circ f.
$$
\end{proof}

By the above lemma, we get an adjunction
\begin{equation}\label{eq: the adjunction : pre}
\Phi : \cC\,\,\raisebox{-.08cm}{$\stackrel{\textstyle\leftarrow}\to$}\,\, \cM_0:\Tr_\cC.
\end{equation}
The adjunction \eqref{eq: the adjunction : statement} follows formally.
It is obtained from \eqref{eq: the adjunction : pre} by applying the 2-functor
\begin{equation}\label{eq: add direct sums and idempotent complete}
\{\text{linear categories}\}\,\to\, \{\text{additive idempodent complete linear categories}\},
\end{equation}
which adjoins finite direct sums and idempotent completes.

\subsection{The tensor structure on $\cM$}

We now endow $\cM$ with a monoidal structure.
To do so, we first define such a structure on $\cM_0$ and then extend it formally to $\cM$ by applying of the 2-functor \eqref{eq: add direct sums and idempotent complete}.
At the level of objects, the monoidal structure is given by
\begin{equation}\label{Phi M eq3}
\text{``}\Phi(c)\otimes m^{\otimes n_1}\text{''} \otimes \text{``}\Phi(d)\otimes m^{\otimes n_2}\text{''} := \text{``}\Phi(c\otimes d)\otimes m^{\otimes n_2+n_1}\text{''}\qquad\,\,\, 1_{\cM_0} := \Phi(1_\cC).
\end{equation}
The tensor product of morphisms
$f_i\in \cM_0(\text{``}\Phi(a_i)\otimes m^{\otimes n_i}\text{''},\text{``}\Phi(b_i)\otimes m^{\otimes p_i}\text{''})$, $i=1,2$,
is given by:
\begin{equation}
\label{eq:TensorProductInM}
f_1\otimes f_2 \,:=\,\,\,
\begin{tikzpicture}[baseline=-.6cm]
	\draw (0,0) -- (2,0);
	\draw (-1,-1) -- (2,-1);
	\draw (4.2,-.5) -- (6.8,-.5);
	\draw (-1,.8) -- (-1,-1.8);
	\draw[super thick, white] (0,.8) -- (0,-1.8);
	\draw (0,.8) -- (0,-1.8);
	\roundNbox{unshaded}{(-1,-1)}{.4}{0}{0}{$f_1$}
	\roundNbox{unshaded}{(0,0)}{.4}{0}{0}{$f_2$}
	\tensor{(3.2,-.5)}{1.2}{n_1}{p_1}{n_2}{p_2}
	\node at (1.2,.2) {\scriptsize{$\cP[p_2{+}n_2]$}};
	\node at (1.2,-.8) {\scriptsize{$\cP[p_1{+}n_1]$}};
	\node at (5.6,-.3) {\scriptsize{$\cP[\sum p_i {+} \sum n_i]$}};
	\node at (-1.2,-1.6) {\scriptsize{$a_1$}};
	\node at (-.2,-1.6) {\scriptsize{$a_2$}};
	\node at (-1.2,.6) {\scriptsize{$b_1$}};
	\node at (-.2,.6) {\scriptsize{$b_2$}};
\end{tikzpicture}
\end{equation}
Note that tensoring a morphism $f\in \cM_0(\text{``}\Phi(a)\otimes m^{\otimes n}\text{''} , \text{``}\Phi(b)\otimes m^{\otimes p}\text{''})$
with an identity morphism $\id_{\text{``}\Phi(c)\otimes m^{\otimes q}\text{''}}$ is computed as follows:
\begin{align*}
f\otimes \id &=
\begin{tikzpicture}[baseline=.4cm]
	\draw (2,-.8) -- (2,1.8);
	\draw (2.4,0) -- (5,0);
	\draw (4,1) -- (5,1);
	\draw[super thick, white] (2.7,-.8) -- (2.7,1.8);
	\draw (2.7,-.8) -- (2.7,1.8);
	\node at (1.8,-.6) {\scriptsize{$a$}};
	\node at (1.8,.6) {\scriptsize{$b$}};
	\node at (2.5,1) {\scriptsize{$c$}};
	\roundNbox{unshaded}{(2,0)}{.4}{0}{0}{$f$}
	\identityMap{(3.5,1)}{.5}{q\,\,}
	\node at (3.5,.2) {\scriptsize{$\cP[p+n]$}};
	\node at (4.5,1.2) {\scriptsize{$\cP[2q]$}};
	\tensor{(6.2,.5)}{1.2}{n}{p}{q}{q}
	\draw (7.4,.5) -- (9.4,.5);
	\node at (8.4,.7) {\scriptsize{$\cP[p+2q+n]$}};
\end{tikzpicture}
=
\begin{tikzpicture}[baseline=-.1cm]
	\draw (0,-1.2) -- (0,1.2);
	\draw (.4,0) -- (6,0);
	\draw[super thick, white] (.6,-1.2) -- (.6,1.2);
	\draw (.6,-1.2) -- (.6,1.2);
	\node at (-.2,.6) {\scriptsize{$b$}};
	\node at (-.2,-.6) {\scriptsize{$a$}};
	\node at (.4,.6) {\scriptsize{$c$}};
	\tensorRightId{(3,0)}{1}{n}{p}{q}
	\node at (1.3,.2) {\scriptsize{$\cP[p+n]$}};
	\node at (5,.2) {\scriptsize{$\cP[p+2q+n]$}};
	\roundNbox{unshaded}{(0,0)}{.4}{0}{0}{$f$}
\end{tikzpicture}
\\ 
\id \otimes f &=
\begin{tikzpicture}[baseline=.4cm]
	\draw (3.8,1) -- (5,1);
	\draw (2.4,0) -- (5,0);
	\draw[super thick, white] (3.4,-.8) -- (3.4,1.8);
	\draw (3.4,-.8) -- (3.4,1.8);
	\draw (1.5,-.8) -- (1.5,1.8);
	\node at (3.2,.4) {\scriptsize{$a$}};
	\node at (3.2,1.6) {\scriptsize{$b$}};
	\node at (1.7,1) {\scriptsize{$c$}};
	\roundNbox{unshaded}{(3.4,1)}{.4}{0}{0}{$f$}
	\identityMap{(2.3,0)}{.5}{q\,\,}
	\node at (4.4,.2) {\scriptsize{$\cP[2q]$}};
	\node at (4.4,1.2) {\scriptsize{$\cP[p+n]$}};
	\tensor{(6.2,.5)}{1.2}{q}{q}{n}{p}
	\draw (7.4,.5) -- (9.4,.5);
	\node at (8.4,.7) {\scriptsize{$\cP[p+2q+n]$}};
\end{tikzpicture}
=\!\!
\begin{tikzpicture}[baseline=-.1cm]
	\draw (-.6,-1.2) -- (-.6,1.2);
	\draw (0,-1.2) -- (0,1.2);
	\draw (.4,0) -- (5.6,0);
	\node at (-.2,.6) {\scriptsize{$b$}};
	\node at (-.2,-.6) {\scriptsize{$a$}};
	\node at (-.8,0) {\scriptsize{$c$}};
	\tensorLeftId{(2.6,0)}{1}{q}{n}{p}
	\node at (1,.2) {\scriptsize{$\cP[p+n]$}};
	\node at (4.6,.2) {\scriptsize{$\cP[p+2q+n]$}};
	\roundNbox{unshaded}{(0,0)}{.4}{0}{0}{$f$}
\end{tikzpicture}
\end{align*}

We now show that the above operation defines a functor $\otimes:\cM_0\times\cM_0\to \cM_0$.
Given morphisms $f_i\in \cM_0(\text{``}\Phi(a_i)\otimes m^{\otimes n_i}\text{''},\text{``}\Phi(b_i)\otimes m^{\otimes p_i}\text{''})$ and
$g_i\in \cM_0(\text{``}\Phi(b_i)\otimes m^{\otimes p_i}\text{''},\text{``}\Phi(b_i)\otimes m^{\otimes q_i}\text{''})$, for $i=1,2$,
we compute:
\begin{align*}
(g_1\circ f_1)&\otimes (g_2\circ f_2)
\displaybreak[1]
\\&=\,\,\,
\begin{tikzpicture}[baseline=-1.85cm]
	\draw (0,0) -- (2,0);
	\draw (0,-1) -- (2,-1);
	\draw (-1,-2.5) -- (2,-2.5);
	\draw (-1,-3.5) -- (2,-3.5);
	\draw (4.2,-.5) -- (6,-.5);
	\draw (4.2,-3) -- (6,-3);
	\draw (-1,.8) -- (-1,-4.3);
	\draw[super thick, white] (0,.8) -- (0,-4.3);
	\draw (0,.8) -- (0,-4.3);
	\roundNbox{unshaded}{(0,-1)}{.4}{0}{0}{$f_2$}
	\roundNbox{unshaded}{(0,0)}{.4}{0}{0}{$g_2$}
	\multiplication{(3.2,-.5)}{1.2}{n_2}{p_2}{q_2}
	\node at (1.2,.2) {\scriptsize{$\cP[q_2{+}p_2]$}};
	\node at (1.2,-.8) {\scriptsize{$\cP[p_2{+}n_2]$}};
	\node at (5.2,-.3) {\scriptsize{$\cP[q_2{+}n_2]$}};
	\roundNbox{unshaded}{(-1,-2.5)}{.4}{0}{0}{$g_1$}
	\roundNbox{unshaded}{(-1,-3.5)}{.4}{0}{0}{$f_1$}
	\multiplication{(3.2,-3)}{1.2}{n_1}{p_1}{q_1}
	\node at (1.2,-2.3) {\scriptsize{$\cP[q_1{+}p_1]$}};
	\node at (1.2,-3.3) {\scriptsize{$\cP[p_1{+}n_1]$}};
	\node at (5.2,-2.8) {\scriptsize{$\cP[q_1{+}n_1]$}};
	\tensor{(7.5,-1.75)}{1.5}{n_1}{q_1}{n_2}{q_2}
	\draw (9,-1.75) -- (11.4,-1.75);
	\node at (10.2,-1.55) {\scriptsize{$\cP[\sum q_i {+} \sum n_i]$}};
\end{tikzpicture}
\displaybreak[1]
\\&=\,\,\,
\begin{tikzpicture}[baseline=-1.6cm]
	\draw (0,0) -- (2,0);
	\draw (0,-1) -- (2,-1);
	\draw (-1,-2) -- (2,-2);
	\draw (-1,-3) -- (2,-3);
	\draw (-1,.8) -- (-1,-3.8);
	\draw[super thick, white] (0,.8) -- (0,-3.8);
	\draw (0,.8) -- (0,-3.8);
	\roundNbox{unshaded}{(0,0)}{.4}{0}{0}{$g_2$}
	\roundNbox{unshaded}{(0,-1)}{.4}{0}{0}{$f_2$}
	\node at (1.2,.2) {\scriptsize{$\cP[q_2{+}p_2]$}};
	\node at (1.2,-.8) {\scriptsize{$\cP[p_2{+}n_2]$}};
	\roundNbox{unshaded}{(-1,-2)}{.4}{0}{0}{$g_1$}
	\roundNbox{unshaded}{(-1,-3)}{.4}{0}{0}{$f_1$}
	\node at (1.2,-1.8) {\scriptsize{$\cP[q_1{+}p_1]$}};
	\node at (1.2,-2.8) {\scriptsize{$\cP[p_1{+}n_1]$}};
	\draw[rounded corners=5pt, very thick, unshaded] (2,-3.5) rectangle (6,.5);
	\draw[thick, red] (2.95,-1) -- (2.2,-1.5);
	\draw[thick, red] (2.95,-2) -- (2.2,-1.5);
	\draw[thick, red] (4.45,-2) .. controls ++(180:.4cm) and ++(0:.6cm) .. (3.25,-.5) .. controls ++(180:.4cm) and ++(45:.2cm) .. (2.2,-1.5);
	\draw[thick, red] (4.45,-1) .. controls ++(180:.4cm) and ++(0:.6cm) .. (3.25,-.2) .. controls ++(180:.4cm) and ++(45:.2cm) .. (2.2,-1.5);
	\draw[very thick] (4,-1.5) circle (1.8cm);
	\draw (3.25,-3.15) -- (3.25,.15);
	\draw (4.75,-3.15) -- (4.75,.15);
	\filldraw[very thick, unshaded] (4.75,-2) circle (.3cm);
	\filldraw[very thick, unshaded] (4.75,-1) circle (.3cm);
	\filldraw[very thick, unshaded] (3.25,-2) circle (.3cm);
	\filldraw[very thick, unshaded] (3.25,-1) circle (.3cm);
	\node at (3.45,-2.6) {\scriptsize{$n_1$}};
	\node at (4.95,-2.6) {\scriptsize{$n_2$}};
	\node at (3.45,-1.5) {\scriptsize{$p_1$}};
	\node at (4.95,-1.5) {\scriptsize{$p_2$}};
	\node at (3.45,-.4) {\scriptsize{$q_1$}};
	\node at (4.95,-.4) {\scriptsize{$q_2$}};
	\draw (6,-1.5) -- (8.4,-1.5);
	\node at (7.2,-1.3) {\scriptsize{$\cP[\sum q_i {+} \sum n_i]$}};
\end{tikzpicture}
\displaybreak[1]
\\&=\,\,\,
\begin{tikzpicture}[baseline=-1.6cm]
	\draw (0,0) -- (2,0);
	\draw (0,-2) -- (2,-2);
	\draw (-1,-1) -- (2,-1);
	\draw (-1,-3) -- (2,-3);
	\draw (-1,.8) -- (-1,-3.8);
	\draw[super thick, white] (0,.8) -- (0,-3.8);
	\draw (0,.8) -- (0,-3.8);
	\roundNbox{unshaded}{(-1,-1)}{.4}{0}{0}{$g_1$}
	\roundNbox{unshaded}{(0,0)}{.4}{0}{0}{$g_2$}
	\node at (1.2,.2) {\scriptsize{$\cP[q_2{+}p_2]$}};
	\node at (1.2,-.8) {\scriptsize{$\cP[q_1{+}p_1]$}};
	\roundNbox{unshaded}{(-1,-3)}{.4}{0}{0}{$f_1$}
	\roundNbox{unshaded}{(0,-2)}{.4}{0}{0}{$f_2$}
	\node at (1.2,-1.8) {\scriptsize{$\cP[p_2{+}n_2]$}};
	\node at (1.2,-2.8) {\scriptsize{$\cP[p_1{+}n_1]$}};
	\draw[rounded corners=5pt, very thick, unshaded] (2,-3.5) rectangle (6,.5);
	\draw[thick, red] (3.2,-.75) -- (2.2,-1.5);
	\draw[thick, red] (3.2,-2.25) -- (2.2,-1.5);
	\draw[thick, red] (4.2,-2.25) .. controls ++(180:.3cm) and ++(0:1.5cm) .. (2.2,-1.5);
	\draw[thick, red] (4.2,-.75) .. controls ++(180:.4cm) and ++(0:.4cm) .. (3.5,-.2) .. controls ++(180:.4cm) and ++(45:.2cm) .. (2.2,-1.5);
	\draw[very thick] (4,-1.5) circle (1.8cm);
	\draw (3.5,-3.25) -- (3.5,.25);
	\draw (4.5,-3.25) -- (4.5,.25);
	\filldraw[very thick, unshaded] (4.5,-2.25) circle (.3cm);
	\filldraw[very thick, unshaded] (4.5,-.75) circle (.3cm);
	\filldraw[very thick, unshaded] (3.5,-2.25) circle (.3cm);
	\filldraw[very thick, unshaded] (3.5,-.75) circle (.3cm);
	\node at (3.7,-2.8) {\scriptsize{$n_1$}};
	\node at (4.7,-2.8) {\scriptsize{$n_2$}};
	\node at (3.7,-1.5) {\scriptsize{$p_1$}};
	\node at (4.7,-1.5) {\scriptsize{$p_2$}};
	\node at (3.7,-.2) {\scriptsize{$q_1$}};
	\node at (4.7,-.2) {\scriptsize{$q_2$}};
	\draw (6,-1.5) -- (8.4,-1.5);
	\node at (7.2,-1.3) {\scriptsize{$\cP[\sum q_i {+} \sum n_i]$}};
\end{tikzpicture}
\displaybreak[1]
\\&=\,\,\,
\begin{tikzpicture}[baseline=-1.85cm]
	\draw (0,0) -- (2,0);
	\draw (0,-2.5) -- (2,-2.5);
	\draw (-1,-1) -- (2,-1);
	\draw (-1,-3.5) -- (2,-3.5);
	\draw (4.2,-.5) -- (6.8,-.5);
	\draw (4.2,-3) -- (6.8,-3);
	\draw (-1,.8) -- (-1,-4.3);
	\draw[super thick, white] (0,.8) -- (0,-4.3);
	\draw (0,.8) -- (0,-4.3);
	\roundNbox{unshaded}{(-1,-1)}{.4}{0}{0}{$g_1$}
	\roundNbox{unshaded}{(0,0)}{.4}{0}{0}{$g_2$}
	\tensor{(3.2,-.5)}{1.2}{p_1}{q_1}{p_2}{q_2}
	\node at (1.2,.2) {\scriptsize{$\cP[q_2{+}p_2]$}};
	\node at (1.2,-.8) {\scriptsize{$\cP[q_1{+}p_1]$}};
	\node at (5.6,-.3) {\scriptsize{$\cP[\sum q_i{+}\sum p_i]$}};
	\roundNbox{unshaded}{(-1,-3.5)}{.4}{0}{0}{$f_1$}
	\roundNbox{unshaded}{(0,-2.5)}{.4}{0}{0}{$f_2$}
	\tensor{(3.2,-3)}{1.2}{n_1}{p_1}{n_2}{q_2}
	\node at (1.2,-2.3) {\scriptsize{$\cP[p_2{+}n_2]$}};
	\node at (1.2,-3.3) {\scriptsize{$\cP[p_1{+}n_1]$}};
	\node at (5.6,-2.8) {\scriptsize{$\cP[\sum p_i {+} \sum n_i]$}};
	\multiplication{(8.3,-1.75)}{1.5}{q_i}{p_i}{n_i}
	\draw (9.8,-1.75) -- (12.2,-1.75);
	\node at (11,-1.55) {\scriptsize{$\cP[\sum q_i {+} \sum n_i]$}};
\end{tikzpicture}
\displaybreak[1]
\\&=\,\,(g_1\otimes g_2)\circ (f_1\otimes f_2)
\end{align*}
where we used the anchor dependence axiom (Definition \ref{def: anchored planar algebra}) in the third equality.
Finally, we verify that $\id_{\text{``}\Phi(a)\otimes m^{\otimes n}\text{''}}\otimes \id_{\text{``}\Phi(b)\otimes m^{\otimes p}\text{''}}=\id_{\text{``}\Phi(a)\otimes m^{\otimes n}\text{''} \otimes \text{``}\Phi(b)\otimes m^{\otimes p}\text{''}}$ holds:
$$
\begin{tikzpicture}[baseline=.4cm]
	\draw (.2,-.8) -- (.2,1.8);
	\draw (1.5,0) -- (5,0);
	\draw (4,1) -- (5,1);
	\draw[super thick, white] (2.7,-.8) -- (2.7,1.8);
	\draw (2.7,-.8) -- (2.7,1.8);
	\node at (0,0) {\scriptsize{$a$}};
	\node at (2.5,1) {\scriptsize{$b$}};
	\identityMap{(1,0)}{.5}{n\,\,}
	\identityMap{(3.5,1)}{.5}{p\,\,}
	\node at (2,.2) {\scriptsize{$\cP[2n]$}};
	\node at (4.5,1.2) {\scriptsize{$\cP[2p]$}};
	\tensor{(6.2,.5)}{1.2}{n}{n}{p}{p}
	\draw (7.4,.5) -- (9.4,.5);
	\node at (8.4,.7) {\scriptsize{$\cP[n+2p+n]$}};
\end{tikzpicture}
=
\begin{tikzpicture}[baseline=-.1cm]
	\draw (.2,-1.2) -- (.2,1.2);
	\node at (-.2,0) {\scriptsize{$a\otimes b$}};
	\draw (2.1,0) -- (3.7,0);
	\identityMap{(1.3,0)}{.8}{\,\,\,\,\,n{+}p}
	\node at (2.9,.2) {\scriptsize{$\cP[2(n+p)]$}};
\end{tikzpicture}
$$

The associator and unitor isomorphisms of $\cM_0$ are inherited from those of $\cC$.
The asociator
$\alpha :
\big(\text{``}\Phi(a)\otimes m^{\otimes n}\text{''}\otimes \text{``}\Phi(b)\otimes m^{\otimes p}\text{''}\big) \otimes \text{``}\Phi(c)\otimes m^{\otimes q}\text{''}
\to 
\text{``}\Phi(a)\otimes m^{\otimes n}\text{''}\otimes \big(\text{``}\Phi(b)\otimes m^{\otimes p}\text{''} \otimes \text{``}\Phi(c)\otimes m^{\otimes q}\text{''}\big)
$
is given by
$$
\begin{tikzpicture}[baseline=-.1cm]
	\draw (0,-.8) -- (0,.8);
	\draw (1.8,0) -- (2.8,0);
	\node at (-.8,.6) {\scriptsize{$a\otimes (b\otimes c)$}};
	\node at (-.8,-.6) {\scriptsize{$(a\otimes b)\otimes c$}};
	\identityMap{(1.3,0)}{.5}{r\,\,}
	\node at (2.3,.2) {\scriptsize{$\cP[2r]$}};
	\roundNbox{unshaded}{(0,0)}{.4}{.1}{.1}{$\alpha_{a,b,c}$}
\end{tikzpicture}\qquad\,\,\,\, r=n+p+q,
$$
and the unitors
$\lambda : \Phi(1_\cC)\otimes \text{``}\Phi(a)\otimes m^{\otimes n}\text{''} \to  \text{``}\Phi(a)\otimes m^{\otimes n}\text{''}$
and
$\rho : \text{``}\Phi(a)\otimes m^{\otimes n}\text{''}\otimes \Phi(1_\cC) \to  \text{``}\Phi(a)\otimes m^{\otimes n}\text{''}$
are given by
\[
\begin{tikzpicture}[baseline=-.1cm]
	\draw (0,-.8) -- (0,.8);
	\draw (1.7,0) -- (2.7,0);
	\node at (-.2,.6) {\scriptsize{$a$}};
	\node at (-.5,-.6) {\scriptsize{$1_\cC\otimes a$}};
	\identityMap{(1.2,0)}{.5}{n\,\,}
	\node at (2.2,.2) {\scriptsize{$\cP[2n]$}};
	\roundNbox{unshaded}{(0,0)}{.4}{0}{0}{$\lambda_a$}
\end{tikzpicture}
\qquad\text{and}\qquad
\begin{tikzpicture}[baseline=-.1cm]
	\draw (0,-.8) -- (0,.8);
	\draw (1.7,0) -- (2.7,0);
	\node at (-.2,.6) {\scriptsize{$a$}};
	\node at (-.5,-.6) {\scriptsize{$a\otimes 1_\cC$}};
	\identityMap{(1.2,0)}{.5}{n\,\,}
	\node at (2.2,.2) {\scriptsize{$\cP[2n]$}};
	\roundNbox{unshaded}{(0,0)}{.4}{0}{0}{$\rho_a$}
\end{tikzpicture}\,.
\]
The associator is natural in its three inputs:
\begin{align*}
\alpha \circ \big((f_1\otimes &f_2)\otimes f_3\big)
\displaybreak[1]
\\&=\,\,
\begin{tikzpicture}[baseline = .4cm]
	\draw (-.8,-.8) -- (1.4,-.8);
	\draw (0,0) -- (1.4,0);
	\draw (2.6,.8) -- (4,.8);
	\draw (4,.2) -- (5.2,.2);
	\draw (1.4,-.4) -- (4,-.4);
	\draw (3.6,1.8) -- (5.6,1.8);
	\draw (7.2,1) -- (7.6,1);
	\draw (-.8,2.6) -- (-.8,-1.6);
	\draw (1.8,2.6) -- (1.8,1.8);
	\draw[super thick, white] (0,1.8) -- (0,-1.6);
	\draw (0,1.8) -- (0,-1.6);
	\draw[super thick, white] (2.6,2.6) -- (2.6,-1.6);
	\draw (2.6,2.6) -- (2.6,-1.6);
	\roundNbox{unshaded}{(-.8,-.8)}{.3}{0}{0}{$f_1$}
	\roundNbox{unshaded}{(0,0)}{.3}{0}{0}{$f_2$}
	\roundNbox{unshaded}{(2.6,.8)}{.3}{0}{0}{$f_3$}
	\roundNbox{unshaded}{(0,1.8)}{.3}{.8}{2.6}{$\alpha$}
	\tensor{(1.4,-.4)}{.8}{}{}{}{}
	\tensor{(4,.2)}{.8}{}{}{}{}
	\identityMap{(3.6,1.8)}{.4}{}
	\multiplication{(6.2,1)}{1}{}{}{}
\end{tikzpicture}
\displaybreak[1]
\\&=\,\,
\begin{tikzpicture}[baseline = .4cm]
	\draw (-.8,-.8) -- (1.4,-.8);
	\draw (0,0) -- (1.4,0);
	\draw (.8,.8) -- (1.4,.8);
	\draw (4,0) -- (4.4,0);
	\draw (-.8,2.6) -- (-.8,-1.6);
	\draw[super thick, white] (0,2.6) -- (0,-1.6);
	\draw (0,2.6) -- (0,-1.6);
	\draw[super thick, white] (.8,2.6) -- (.8,-1.6);
	\draw (.8,2.6) -- (.8,-1.6);
	\roundNbox{unshaded}{(-.8,-.8)}{.3}{0}{0}{$f_1$}
	\roundNbox{unshaded}{(0,0)}{.3}{0}{0}{$f_2$}
	\roundNbox{unshaded}{(.8,.8)}{.3}{0}{0}{$f_3$}
	\roundNbox{unshaded}{(0,1.8)}{.3}{.8}{.8}{$\alpha$}
	\coordinate (aa) at (2.7,0);
	\coordinate (bb) at (1.6,0);
	\draw[rounded corners=5pt, very thick, unshaded] (1.4,-1) rectangle (4,1);
	\draw (2.05,-.65) -- (2.05,.65);
	\draw (2.7,-.8) -- (2.7,.8);
	\draw (3.35,-.65) -- (3.35,.65);
	\draw[thick, red] (bb) -- (1.9,0);
	\draw[thick, red] (bb) .. controls ++(0:.3cm) and ++(180:.3cm) .. (2.05,.3) .. controls ++(0:.3cm) and ++(180:.3cm) .. (2.5,0);
	\draw[thick, red] (bb) .. controls ++(0:.2cm) and ++(180:.6cm) .. (2.3,.5) .. controls ++(0:.8cm) and ++(180:.3cm) .. (3.15,0);
	\draw[very thick] (aa) ellipse ( {1.1} and {.8});
	\filldraw[very thick, unshaded] (2.05,0) circle (.2cm);
	\filldraw[very thick, unshaded] (aa) circle (.2cm);
	\filldraw[very thick, unshaded] (3.35,0) circle (.2cm);
\end{tikzpicture}
=\,\,
\begin{tikzpicture}[baseline = -.4cm]
	\draw (-.8,-.8) -- (1.4,-.8);
	\draw (0,0) -- (1.4,0);
	\draw (.8,.8) -- (1.4,.8);
	\draw (4,0) -- (4.4,0);
	\draw (-.8,-2.6) -- (-.8,1.6);
	\draw[super thick, white] (0,-2.6) -- (0,1.6);
	\draw (0,-2.6) -- (0,1.6);
	\draw[super thick, white] (.8,-2.6) -- (.8,1.6);
	\draw (.8,-2.6) -- (.8,1.6);
	\roundNbox{unshaded}{(-.8,-.8)}{.3}{0}{0}{$f_1$}
	\roundNbox{unshaded}{(0,0)}{.3}{0}{0}{$f_2$}
	\roundNbox{unshaded}{(.8,.8)}{.3}{0}{0}{$f_3$}
	\roundNbox{unshaded}{(0,-1.8)}{.3}{.8}{.8}{$\alpha$}
	\coordinate (aa) at (2.7,0);
	\coordinate (bb) at (1.6,0);
	\draw[rounded corners=5pt, very thick, unshaded] (1.4,-1) rectangle (4,1);
	\draw (2.05,-.65) -- (2.05,.65);
	\draw (2.7,-.8) -- (2.7,.8);
	\draw (3.35,-.65) -- (3.35,.65);
	\draw[thick, red] (bb) -- (1.9,0);
	\draw[thick, red] (bb) .. controls ++(0:.3cm) and ++(180:.3cm) .. (2.05,.3) .. controls ++(0:.3cm) and ++(180:.3cm) .. (2.5,0);
	\draw[thick, red] (bb) .. controls ++(0:.2cm) and ++(180:.6cm) .. (2.3,.5) .. controls ++(0:.8cm) and ++(180:.3cm) .. (3.15,0);
	\draw[very thick] (aa) ellipse ( {1.1} and {.8});
	\filldraw[very thick, unshaded] (2.05,0) circle (.2cm);
	\filldraw[very thick, unshaded] (aa) circle (.2cm);
	\filldraw[very thick, unshaded] (3.35,0) circle (.2cm);
\end{tikzpicture}
\displaybreak[1]
\\&=\,\,
\begin{tikzpicture}[baseline = .4cm]
	\draw (-.8,-.8) -- (5.2,-.8);
	\draw (1,0) -- (3.2,0);
	\draw (1.8,.8) -- (3.2,.8);
	\draw (3.2,.2) -- (5.2,.2);
	\draw (5.2,-.4) -- (7.4,-.4);
	\draw (2.8,-1.8) -- (7.4,-1.8);
	\draw (8.4,-1.1) -- (8.8,-1.1);
	\draw (-.8,-2.6) -- (-.8,1.6);
	\draw (0,-2.6) -- (0,-1.8);
	\draw[super thick, white] (1,-1.8) -- (1,1.6);
	\draw (1,-1.8) -- (1,1.6);
	\draw[super thick, white] (1.8,-2.6) -- (1.8,1.6);
	\draw (1.8,-2.6) -- (1.8,1.6);
	\roundNbox{unshaded}{(-.8,-.8)}{.3}{0}{0}{$f_1$}
	\roundNbox{unshaded}{(1,0)}{.3}{0}{0}{$f_2$}
	\roundNbox{unshaded}{(1.8,.8)}{.3}{0}{0}{$f_3$}
	\roundNbox{unshaded}{(0,-1.8)}{.3}{.8}{1.8}{$\alpha$}
	\tensor{(3.2,.2)}{.8}{}{}{}{}
	\tensor{(5.2,-.4)}{.8}{}{}{}{}
	\identityMap{(2.8,-1.8)}{.4}{}
	\multiplication{(7.4,-1.1)}{1}{}{}{}
\end{tikzpicture}
\displaybreak[1]
\\&=\,\,
(f_1\otimes (f_2\otimes f_3))\circ \alpha.
\end{align*}
Similarly, the two unitors $\lambda$ and $\rho$ are natural (we only show this for $\lambda$):
\begin{align*}
\lambda\circ (\id \otimes f) 
&\,=\,
\begin{tikzpicture}[baseline=.5cm]
	\draw (-.3,1.5) -- (-.3,2.4);
	\draw[dotted] (-.6,-1.2) -- (-.6,1.2);
	\draw (0,-1.2) -- (0,1.2);
	\draw (.4,0) -- (4.8,0);
	\draw (1.1,1.5) -- (4.8,1.5);
	\node at (-.5,2.2) {\scriptsize{$b$}};
	\node at (-.2,.6) {\scriptsize{$b$}};
	\node at (-.2,-.6) {\scriptsize{$a$}};
	\node at (-.8,0) {\scriptsize{$1_\cC$}};
	\tensorLeftId{(2.6,0)}{.8}{0}{n}{p}
	\identityMap{(1.1,1.5)}{.5}{p\,\,}
	\node at (1,.2) {\scriptsize{$\cP[p+n]$}};
	\node at (4,.2) {\scriptsize{$\cP[p+n]$}};
	\node at (3,1.7) {\scriptsize{$\cP[2q]$}};
	\roundNbox{unshaded}{(0,0)}{.4}{0}{0}{$f$}
	\roundNbox{unshaded}{(-.3,1.5)}{.4}{.2}{.2}{$\lambda$}
	\multiplication{(5.8,.75)}{1.2}{n}{p}{p}
	\draw (7,.75) -- (8.2,.75);
	\node at (7.6,.95) {\scriptsize{$\cP[p+n]$}};
\end{tikzpicture}
\displaybreak[1]
\\&=\,
\begin{tikzpicture}[baseline=.5cm]
	\draw (0,.8) -- (0,-.8);
	\draw (0,0) -- (1.6,0);
	\roundNbox{unshaded}{(0,0)}{.4}{0}{0}{$f$};
	\node at (-.2,.6) {\scriptsize{$b$}};
	\node at (1,.2) {\scriptsize{$\cP[p+n]$}};
	\node at (-.2,-.6) {\scriptsize{$a$}};
	\roundNbox{unshaded}{(-.3,1.2)}{.4}{.2}{.2}{$\lambda$}
	\draw[dotted] (-.6,-.8) -- (-.6,.8);
	\draw (-.3,1.6) -- (-.3,2);
	\node at (-.5,1.8) {\scriptsize{$b$}};
	\node at (-.8,0) {\scriptsize{$1_\cC$}};
\end{tikzpicture}
\,=
\begin{tikzpicture}[baseline=.5cm]
	\draw (-.3,0) -- (-.3,-.8);
	\draw (.1,-.4) .. controls ++(270:1cm) and ++(180:.5cm) .. (1.6,.6);
	\draw (-.5,.4) -- (-.5,1.2);
	\draw (-.1,.4) -- (-.1,1.2);
	\draw (-.3,1.2) -- (-.3,2);
	\draw[dotted] (-.7,-.8) -- (-.7,0);
	\node at (-.5,1.8) {\scriptsize{$b$}};
	\roundNbox{unshaded}{(-.3,1.2)}{.4}{0}{0}{$f$};
	\node at (.5,.6) {\scriptsize{$\cP[p+n]$}};
	\node at (-.7,.6) {\scriptsize{$a$}};
	\roundNbox{unshaded}{(-.3,0)}{.4}{.2}{.2}{$\lambda$}
	\node at (-.9,-.6) {\scriptsize{$1_\cC$}};
	\node at (-.5,-.6) {\scriptsize{$a$}};
	\node at (.8,-.8) {\scriptsize{$\cP[p+n]$}};
\end{tikzpicture}
\,\,=
\begin{tikzpicture}[baseline=.5cm]
	\draw (0,0) -- (0,-.8);
	\draw (-.3,.4) -- (-.3,2);
	\draw (-.3,1.2) -- (1.6,1.2);
	\draw[dotted] (-.6,-.8) -- (-.6,0);
	\roundNbox{unshaded}{(-.3,1.2)}{.4}{0}{0}{$f$};
	\node at (-.2,-.6) {\scriptsize{$a$}};
	\node at (.8,1.4) {\scriptsize{$\cP[p+n]$}};
	\roundNbox{unshaded}{(-.3,0)}{.4}{.2}{.2}{$\lambda$}
	\node at (-.5,1.8) {\scriptsize{$b$}};
	\node at (-.5,.6) {\scriptsize{$a$}};
	\node at (-.8,-.6) {\scriptsize{$1_\cC$}};
\end{tikzpicture}
\displaybreak[1]
\\&=\,
\begin{tikzpicture}[baseline=.5cm]
	\draw (0,0) -- (0,-.8);
	\draw (-.3,.4) -- (-.3,2);
	\draw (-.3,1.2) -- (3,1.2);
	\draw (1.1,0) -- (3,0);
	\draw[dotted] (-.6,-.8) -- (-.6,0);
	\roundNbox{unshaded}{(-.3,1.2)}{.4}{0}{0}{$f$};
	\node at (-.2,-.6) {\scriptsize{$a$}};
	\node at (1.1,1.4) {\scriptsize{$\cP[p+n]$}};
	\roundNbox{unshaded}{(-.3,0)}{.4}{.2}{.2}{$\lambda$}
	\node at (-.5,1.8) {\scriptsize{$b$}};
	\node at (-.5,.6) {\scriptsize{$a$}};
	\node at (-.8,-.6) {\scriptsize{$1_\cC$}};
	\identityMap{(1.1,0)}{.5}{n\,\,}
	\multiplication{(3.6,.6)}{1.2}{n}{n}{p}
	\node at (2,.2) {\scriptsize{$\cP[2n]$}};
	\draw (4.8,.6) -- (6,.6);
	\node at (5.4,.8) {\scriptsize{$\cP[p+n]$}};
\end{tikzpicture}
\displaybreak[1]
\\&=\,\,
f\circ \lambda.
\end{align*}
Note that the fourth equality follows from the relation $\lambda_{a\otimes c}\circ \alpha_{1_\cC,a,c}=\lambda_a\otimes \id_c$,
which holds true in any tensor category.

Finally, we verify that the pentagon and triangle axioms are satisfied:
\begin{align*}
\hspace{-.2cm}
(\id\otimes \alpha&)\circ \alpha \circ (\alpha\otimes \id)
=
\begin{tikzpicture}[baseline=-.1cm]
	\draw (2,1) -- (2.7,1);
	\draw (2,0) -- (2.7,0);
	\draw (.4,-1) -- (2.7,-1);
	\draw (-1.1,-1.6) -- (-1.1,1.6);
	\draw (-.9,-1.6) -- (-.9,-1);
	\draw (-.7,-1) -- (-.7,0);
	\draw (-.5,-1.6) -- (-.5,0);
	\draw (.7,1) -- (.7,0);
	\draw (.5,1.6) -- (.5,0);
	\draw (.9,1.6) -- (.9,1);
	\draw[super thick, white] (1.1,-1.6) -- (1.1,1.6);
	\draw (1.1,-1.6) -- (1.1,1.6);
	\roundNbox{unshaded}{(.8,1)}{.3}{.2}{.2}{$\alpha$}
	\roundNbox{unshaded}{(0,0)}{.3}{1}{1}{$\alpha$}
	\roundNbox{unshaded}{(-.8,-1)}{.3}{.2}{.2}{$\alpha$}
	\identityMap{(2,1)}{.4}{}
	\identityMap{(2,0)}{.4}{}
	\identityMap{(.4,-1)}{.4}{}
	\tensorRightId{(2,-1)}{.4}{}{}{}
	\draw[rounded corners=5pt, very thick, unshaded] (2.7,-1.3) rectangle (4.3,1.3);
	\coordinate (a) at (2.85,0);
	\draw[thick, red] (a) -- (3.3,.7);
	\draw[thick, red] (a) -- (3.3,0);
	\draw[thick, red] (a) -- (3.3,-.7);
	\draw (4.3,0) -- (4.7,0);
	\draw[very thick] (3.5,0) ellipse (.65 and 1.15);
	\draw (3.5,-1.15) -- (3.5,1.15);
	\filldraw[very thick, unshaded] (3.5,.7) circle (.2cm);
	\filldraw[very thick, unshaded] (3.5,0) circle (.2cm);
	\filldraw[very thick, unshaded] (3.5,-.7) circle (.2cm);
\end{tikzpicture}
=
\begin{tikzpicture}[baseline=-.1cm]
	\draw (1.2,.6) -- (2,.6);
	\draw (1.2,-.6) -- (2,-.6);
	\draw (3.8,0) -- (3.4,0);
	\draw (-.4,-1.2) -- (-.4,1.2);
	\draw (-.3,-1.2) -- (-.3,.6);
	\draw (-.1,-1.2) -- (-.1,-.6);
	\draw (.1,1.2) -- (.1,.6);
	\draw (.3,1.2) -- (.3,-.6);
	\draw (.4,-1.2) -- (.4,1.2);
	\roundNbox{unshaded}{(0,.6)}{.3}{.3}{.3}{$\alpha$}
	\roundNbox{unshaded}{(0,-.6)}{.3}{.3}{.3}{$\alpha$}
	\identityMap{(1.2,.6)}{.4}{}
	\identityMap{(1.2,-.6)}{.4}{}
	\multiplication{(2.6,0)}{.8}{}{}{}
\end{tikzpicture}
=
\alpha\circ\alpha
\\
&(\id\otimes\lambda)\circ \alpha 
=
\begin{tikzpicture}[baseline=-.1cm]
	\draw (-.5,-1.2) -- (-.5,1.2);
	\draw (.4,.6) -- (.4,1.2);
	\draw[dotted] (.3,-.6) -- (.3,.6);
	\draw (.5,-1.2) -- (.5,.6);
	\draw[dotted] (-.3,-1.2) -- (-.3,-.6);
	\draw (1.3,.6) -- (4.5,.6);
	\draw (1.3,-.6) -- (4.5,-.6);
	\draw (4.3,0) -- (5.4,0);
	\roundNbox{unshaded}{(.4,.6)}{.3}{0}{0}{$\lambda$}
	\roundNbox{unshaded}{(0,-.6)}{.3}{.4}{.4}{$\alpha$}
	\identityMap{(1.3,.6)}{.4}{}
	\identityMap{(1.3,-.6)}{.4}{}
	\tensorLeftId{(2.6,.6)}{.6}{}{}{}
	\multiplication{(4.3,0)}{.8}{}{}{}
\end{tikzpicture}
=
\begin{tikzpicture}[baseline=-.1cm]
	\draw (0,0) -- (0,.8);
	\draw (-.1,-.8) -- (-.1,0);
	\draw[dotted] (.15,-.8) -- (.15,0);
	\draw (1,0) -- (3.3,0);
	\draw[super thick, white] (1.6,-.8) -- (1.6,.8);
	\draw (1.6,-.8) -- (1.6,.8);
	\roundNbox{unshaded}{(0,0)}{.4}{0}{0}{$\rho$}
	\identityMap{(1,0)}{.4}{}
	\tensorRightId{(2.4,0)}{.6}{}{}{}
\end{tikzpicture}
=
\rho\otimes \id.
\end{align*}
This finishes the proof that $\cM_0$, and thus $\cM$, is a tensor category.\bigskip

Note that, by \eqref{Phi M eq1}, \eqref{Phi M eq2} and \eqref{Phi M eq3}, there is a canonical isomorphism
\[
\text{``}\Phi(c)\otimes m^{\otimes n}\text{''}\,\cong\,\,\Phi(c)\otimes m^{\otimes n}.
\]
We may therefore blur the distinction between the formal symbol $\text{``}\Phi(c)\otimes m^{\otimes n}\text{''}$
and the tensor product $\Phi(c)\otimes m^{\otimes n}$.

We also note that $\Phi:\cC\to \cM_0$ (and therefore $\Phi:\cC\to \cM$) is visibly a tensor functor.

\subsection{The pivotal structure}\label{sec:The pivotal structure}

The goal of this section is to show that the category $\cM$ is pivotal, and that the object $m\in\cM$ is symmetrically self-dual.
Recall that the balanced structure on $\cC$ was defined in \eqref{def:theta1}.
We denote the twist and its inverse graphically by:
\[
\theta_a =\! 
\begin{tikzpicture}[baseline=-.1cm]
	\draw (0,-.8) -- (0,.8);
	\twist{(0,0)}
	\node at (-.2,-.6) {\scriptsize{$a$}};
	\node at (-.2,.6) {\scriptsize{$a$}};
\end{tikzpicture}
\qquad\quad
\text{and}
\qquad\quad
\theta_a^{-1}=\!
\begin{tikzpicture}[baseline=-.1cm]
	\draw (0,-.8) -- (0,.8);
	\twistInverse{(0,0)}
	\node at (-.2,-.6) {\scriptsize{$a$}};
	\node at (-.2,.6) {\scriptsize{$a$}};
\end{tikzpicture}
\,.
\]

The rigidity of $\cC$ implies the corresponding property for $\cM_0$ (which then implies that $\cM$ is rigid):

\begin{lem}
\label{lem:DualityInM0}
For any $c\in \cC$, $n\ge 0$,
the morphisms\vspace{-.2cm}
$$
\ev_{\Phi(c)\otimes m^{\otimes n}}\,\,:=\,\,\,
\begin{tikzpicture}[baseline=-.1cm]
	\draw (-.4,-.8) node[above, yshift=-12, xshift=1]{$\scriptstyle c^*$} -- (-.4,-.4) arc (180:0:.2cm) -- (0,-.8)node[above, yshift=-12]{$\scriptstyle c$};
	\draw (1.3,0) -- (2.3,0);
	\node[xshift=1] at (-.2,0) {\scriptsize{$\ev_c$}};
	\evaluationMap{(.8,0)}{.5}{n}
	\node at (1.8,.2) {\scriptsize{$\cP[2n]$}};
\end{tikzpicture}
\quad\,\,\,\text{and}\,\,\,\quad
\coev_{\Phi(c)\otimes m^{\otimes n}}\,\,:=\,\,\,
\begin{tikzpicture}[baseline=-.1cm]
	\draw (-.4,.8)node[above]{$\scriptstyle c$} -- (-.4,.4) arc (-180:0:.2cm) -- (0,.8)node[above, xshift=2]{$\scriptstyle c^*$};
	\draw (1.3,0) -- (2.3,0);
	\node at (-.2,0) {\scriptsize{$\coev_c$}};
	\coevaluationMap{(.8,0)}{.5}{n}
	\node at (1.8,.2) {\scriptsize{$\cP[2n]$}};
\end{tikzpicture}
$$
exhibit $\Phi(c^*)\otimes m^{\otimes n}$ as the dual object of $\Phi(c)\otimes m^{\otimes n}$ in $\cM_0$.
\end{lem}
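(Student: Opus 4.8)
The goal is to verify the two triangle (``zig-zag'') identities for the proposed evaluation and coevaluation, which establishes that $\Phi(c^*)\otimes m^{\otimes n}$ is a two-sided dual of $\Phi(c)\otimes m^{\otimes n}$ in $\cM_0$; rigidity of $\cM$ then follows at once, since adjoining finite direct sums and idempotent-completing (the $2$-functor \eqref{eq: add direct sums and idempotent complete}) preserves the existence of duals. First I would unfold each triangle identity using the explicit descriptions of composition \eqref{eq: that's how one defines composition}, tensor product \eqref{eq:TensorProductInM}, and identity morphisms \eqref{eq: ...and that's how one defines identities} in $\cM_0$. Since each of these operations is performed by applying $Z$ to an explicit anchored planar tangle (multiplication tangles for $\otimes$ and $\circ$, the identity tangle for $\id$), each side of each triangle identity becomes a single morphism of $\cC$, assembled out of the duality data $\ev_c,\coev_c$ of $\cC$, the braiding $\beta$ of $\cC$ (which enters through \eqref{eq:TensorProductInM}), and the maps $Z(T)$ for the nested cups and caps appearing inside $\ev_{\Phi(c)\otimes m^{\otimes n}}$ and $\coev_{\Phi(c)\otimes m^{\otimes n}}$.

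Next I would separate the resulting $\cC$-morphism into a ``$\cC$-part'' carried by the $c$- and $c^*$-strands and a ``$\cP$-part'' carried by the $\cP[\,\cdot\,]$-strands. On the $\cC$-part the composite is, up to the braidings discussed below, the ordinary zig-zag $(\id_c\otimes \ev_c)\circ(\coev_c\otimes\id_c)=\id_c$ from the rigidity of $\cC$, together with its mirror image for the second triangle identity. On the $\cP$-part, the planar-algebra composition axiom \eqref{eq: composition of tangles} (equivalently Remark~\ref{rem:operadic composition of morphisms}) lets me amalgamate the various tangle-coupons into a single coupon $Z(S)$, where $S$ is the anchored planar tangle obtained by stacking the nested cups coming from $\coev$, then a pair-of-pants multiplication tangle, then the nested caps coming from $\ev$. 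Since $Z$ is defined on \emph{isotopy classes} of anchored planar tangles (Definition~\ref{def: anchored planar algebra}) and $S$ is isotopic rel boundary to the identity tangle --- the nested cups cancel against the nested caps and the multiplication coupon closes off into a strand running straight through --- one concludes $Z(S)=\id_{\cP[n]}$. Reassembling the two parts yields $\id_{\Phi(c)\otimes m^{\otimes n}}$, respectively $\id_{\Phi(c^*)\otimes m^{\otimes n}}$.

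The step I expect to be the main obstacle is accounting for the braidings $\beta_{c,\cP[\,\cdot\,]}$ and $\beta_{c^*,\cP[\,\cdot\,]}$ that the tensor-product formula \eqref{eq:TensorProductInM} forces into the picture: one must check that, after the $\ev_c$--$\coev_c$ cancellation, the residual $c$-strand drags these crossings along so that they cancel in pairs --- by naturality of the braiding and the zig-zag for $c$ in $\cC$ --- leaving no leftover twist $\theta_c$ or half-twist. This is the only genuinely delicate bookkeeping (which arc of the composite tangle $S$ links with which, and which crossings are over versus under); once it is carried out, the lemma follows with no appeal to pivotality of $\cC$ or to self-duality of $m$ beyond the relations already built into $\cP$ (in particular \ref{reln:UnitMap} and \ref{reln:CapCupMaps}).
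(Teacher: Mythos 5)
Your proposal is correct and takes essentially the same route as the paper's proof: both unfold the two zig-zag composites using the tangle-coupon definitions of composition, tensor product and identity in $\cM_0$, cancel the $c$-strands by rigidity of $\cC$ (the crossing forced by \eqref{eq:TensorProductInM} disappearing by naturality of the braiding against the unit-source coupon), and merge the remaining coupons via the composition axiom and isotopy invariance into the single tangle defining the identity morphism. The only nitpick is notational: the merged coupon is a morphism $1_\cC\to\cP[2n]$ equal to $Z$ of the $n$-through-strand tangle (the coupon in \eqref{eq: ...and that's how one defines identities}), not literally $\id_{\cP[n]}$, but this does not affect the argument.
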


\begin{proof}
We need to show the two zig-zag equations.
The first one is as follows:
$$
(\id\otimes \ev)\circ (\coev\otimes \id)
\,=
\begin{tikzpicture}[baseline=-.1cm]
	\draw (1.4,.6) -- (5,.6);
	\draw (0,-.6) -- (5,-.6);
	\draw[super thick, white] (.6,-1.2) -- (.6,.6);
	\draw (.6,-1.2) -- (.6,.6) .. controls ++(90:.6cm) and ++(90:.6cm) .. (-.6,.6) -- (-.6,-.6) arc (0:-180:.2cm) -- (-1,1.2);
	\draw (5.2,0) -- (5.6,0);
	\node at (-1.2,0) {\scriptsize{$c$}};
	\node at (-.4,0) {\scriptsize{$c^*$}};
	\node at (.8,0) {\scriptsize{$c$}};
	\evaluationMap{(1.2,.6)}{.4}{n}
	\coevaluationMap{(0,-.6)}{.4}{n}
	\tensorLeftIdEv{(2.4,.6)}{.4}{}{}
	\tensorRightIdCoev{(2.4,-.6)}{.4}{}{}
	\multiplication{(4.2,0)}{1}{n}{3n}{n}
\end{tikzpicture}
\,=
\begin{tikzpicture}[baseline=-.1cm]
	\draw (0,-.8) -- (0,.8);
	\draw (1,0) -- (1.8,0);
	\node at (-.2,0) {\scriptsize{$c$}};
	\identityMap{(.6,0)}{.4}{n\,\,\,}
	\node at (1.4,.2) {\scriptsize{$\cP[2n]$}};
\end{tikzpicture}
\,=\id.
$$
The other one is similar.
\end{proof}

From now on, we identify $(\Phi(c)\otimes m^{\otimes n})^*$ with $\Phi(c^*)\otimes m^{\otimes n}$ by the above lemma.
We now equip $\cM_0$ (and hence $\cM$) with a pivotal stucture:
\begin{gather}\notag
\varphi_{\Phi(a)\otimes m^{\otimes n}}\,:\, \Phi(a)\otimes m^{\otimes n} \to (\Phi(a)\otimes m^{\otimes n})^{**}=\Phi(a^{**})\otimes m^{\otimes n}
\\
\label{eq: def: piv structure}
\varphi_{\Phi(a)\otimes m^{\otimes n}}\,:= 
\begin{tikzpicture}[baseline=-.1cm]
	\draw (0,-1) -- (0,1);
	\twist{(0,-.4)}
	\loopIsoReverse{(0,.4)}
	\draw (1.4,0) -- (2.4,0);
	\node at (-.2,-.8) {\scriptsize{$a$}};
	\node at (-.3,.8) {\scriptsize{$a^{**}$}};
	\identityMap{(1,0)}{.4}{n\,\,\,}
	\node at (1.9,.2) {\scriptsize{$\cP[2n]$}};
\end{tikzpicture}
\,\,=
\begin{tikzpicture}[baseline=-.1cm]
	\draw (0,-1) -- (0,1);
	\roundNbox{unshaded}{(0,0)}{.4}{0}{0}{$\varphi_a$}
	\draw (1.4,0) -- (2.4,0);
	\node at (-.2,-.8) {\scriptsize{$a$}};
	\node at (-.3,.8) {\scriptsize{$a^{**}$}};
	\identityMap{(1,0)}{.4}{n\,\,\,}
	\node at (1.9,.2) {\scriptsize{$\cP[2n]$}};
\end{tikzpicture}\,.
\end{gather}

\begin{lem}
The isomorphism $\varphi$ is natural.
\end{lem}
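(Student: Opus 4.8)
The plan is to check the naturality square
$\varphi_{\Phi(b)\otimes m^{\otimes n_2}}\circ f = f^{**}\circ \varphi_{\Phi(a)\otimes m^{\otimes n_1}}$
for an arbitrary morphism $f\in\cM_0(\Phi(a)\otimes m^{\otimes n_1},\Phi(b)\otimes m^{\otimes n_2})=\cC(a,b\otimes\cP[n_2+n_1])$, working entirely inside the graphical calculus of $\cC$; naturality on all of $\cM$ then follows formally, since $\cM$ is obtained from $\cM_0$ by adjoining finite direct sums and idempotent completing, and both operations preserve duals and pivotal structures. By \eqref{eq: def: piv structure} the pivotal morphism $\varphi_{\Phi(c)\otimes m^{\otimes k}}$ factors as $\varphi_c$ acting on the $c$-strand and the identity on the remaining data, so the left-hand side of the square is simply $f$ with $\varphi_b$ post-composed onto the $b$-strand.

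The main computation is to identify $f^{**}$ explicitly. First I would compute $f^*\in\cM_0(\Phi(b^*)\otimes m^{\otimes n_2},\Phi(a^*)\otimes m^{\otimes n_1})=\cC(b^*,a^*\otimes\cP[n_1+n_2])$ by substituting the evaluation and coevaluation morphisms of Lemma~\ref{lem:DualityInM0} into the $\cM_0$ composition and tensor formulas \eqref{eq: that's how one defines composition} and \eqref{eq:TensorProductInM}, and simplifying with the anchored planar algebra axioms of Definition~\ref{def: anchored planar algebra} together with relations \ref{reln:id}--\ref{reln:EasyQuadratic}. The outcome is that $f^*$ is, on the $\cC$-strands, the ordinary dual of $f$ in $\cC$ (built from $\ev_a,\ev_b,\coev_a,\coev_b$), and, on the $\cP$-strand, the anchored tangle that rotates a $\cP[n_2+n_1]$ box halfway around the outer boundary so that the two blocks of strands are interchanged. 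Iterating once more yields $f^{**}\in\cC(b^{**},a^{**}\otimes\cP[n_2+n_1])$: on the $\cC$-strands it is the $\cC$-double-dual of $f$, while on the $\cP$-strand the two half-rotations compose to the full $2\pi$ twist tangle, which by the twist anchor-dependence axiom (Definition~\ref{def: anchored planar algebra}; cf. Examples~\ref{ex:2PiRotation} and~\ref{ex: full twist tangle}) acts as $\theta_{\cP[n_2+n_1]}$.

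It then remains to assemble the two sides. On the $\cC$-strands, naturality and monoidality of the pivotal structure of $\cC$ (both part of the hypothesis that $\cC$ is pivotal) rewrite the $\cC$-double-dual of $f$ as $(\varphi_b\otimes\varphi_{\cP[n_2+n_1]})\circ f\circ\varphi_a^{-1}$; the factor $\varphi_a^{-1}$ cancels the $\varphi_a$ contributed by $\varphi_{\Phi(a)\otimes m^{\otimes n_1}}$, leaving $\varphi_b$ on the $b$-strand, as wanted. On the $\cP$-strand one is left with $\theta_{\cP[n_2+n_1]}$ composed with the contribution of $\varphi_{\Phi(a)\otimes m^{\otimes n_1}}$, and here one invokes the defining formula \eqref{def:theta1}: $\theta_{\cP[k]}$ differs from $\varphi_{\cP[k]}$ only by a braiding-and-duality move involving no double dual, and this is precisely the move encoded by the half-rotation built into the $\cM_0$ duality, so the $\cP$-strand composite reduces to $\varphi_{\cP[n_2+n_1]}$, matching the factor produced by naturality in $\cC$. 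The main obstacle is exactly this last bookkeeping step: one must keep careful track of how the rotation tangles, the twist $\theta_{\cP[k]}$, and the pivotal morphisms $\varphi$ on the $a$-, $b$-, and $\cP$-strands interleave, and confirm that the $2\pi$-rotation that the $\cM_0$-duality imposes on the $\cP$-strand is exactly the one compatible with the definition \eqref{eq: def: piv structure} of $\varphi$ on $\cM_0$ — i.e., that no residual twist is left over.
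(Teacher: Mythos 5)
There is a genuine gap, and it sits exactly where you flag the "main obstacle": your intermediate descriptions of $f^*$ and $f^{**}$ are not correct, and with them the square cannot close. The duality data of $\cM_0$ do not split into a "$\cC$-strand part" and a "$\cP$-strand part": the tensor product \eqref{eq:TensorProductInM} braids the $\cC$-strand of one factor over the $\cP$-strand of the other, so when you dualize twice the $b$-strand of $f$ acquires a full (double) braiding around the $\cP[n_2+n_1]$-strand, together with an (inverse) twist on the $\cP$-strand — it is not "the $\cC$-double dual of $f$ tensored with $\theta_{\cP[n_2+n_1]}$". Indeed, if $f^{**}$ had the form you claim, then since $\varphi_{\Phi(a)\otimes m^{\otimes n_1}}$ and $\varphi_{\Phi(b)\otimes m^{\otimes n_2}}$ contribute nothing on the $\cP$-strand, naturality would reduce to $(\id_{b^{**}}\otimes\theta_{\cP[n_2+n_1]})\circ g=g$ for the relevant $g$, which is false for a general braided pivotal $\cC$; so your factorization of $f^{**}$ actually contradicts the statement being proved. (There is also a type problem in the final step: $\theta_{\cP[k]}\colon\cP[k]\to\cP[k]$ and $\varphi_{\cP[k]}\colon\cP[k]\to\cP[k]^{**}$ have different codomains, so "the $\cP$-strand composite reduces to $\varphi_{\cP[k]}$" cannot be the right bookkeeping; the ambiguity in identifying $\cP[k]^{**}$ with $\cP[k]$ is precisely where the braiding enters and cannot be resolved on the $\cP$-strand alone.)

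What is missing is the interaction term: after substituting the evaluations and coevaluations of Lemma~\ref{lem:DualityInM0} and simplifying the tangles, $f^{**}\circ\varphi$ is $f$ decorated by $\varphi_a$ (written as twist plus loop move, as in \eqref{eq: def: piv structure}), by a full braiding of the $b$-strand around the $\cP[n_2+n_1]$-strand, and by $\theta^{-1}$ on the $\cP$-strand; the equality with $\varphi\circ f$ then follows from the compatibility of the full braiding with the twist, $\theta_{x\otimes y}=\beta_{y,x}\circ\beta_{x,y}\circ(\theta_x\otimes\theta_y)$, applied to the $b$-strand and the $\cP$-strand jointly. This balanced-category identity — which your sketch never invokes — is the essential ingredient, and it cannot be replaced by the "half-rotation composes to a $2\pi$ rotation, hence $\theta_{\cP}$" bookkeeping, because that discards exactly the braiding between the $b$-strand and the $\cP$-strand that carries the content. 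Your overall strategy (compute the double dual in $\cM_0$ graphically from the explicit ev/coev) is the same as the paper's; the repair is to keep the crossings imposed by \eqref{eq:TensorProductInM} throughout the computation of $f^*$ and $f^{**}$ and finish with the twist–braiding compatibility, rather than trying to localize everything on the $\cP$-strand.
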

\begin{proof}
Let $f: \Phi(a)\otimes m^n \to \Phi(b)\otimes m^p$ be a morphism in $\cM_0$.
We must show that
$$
\varphi \circ f 
=\,
\begin{tikzpicture}[baseline=-.5cm]
	\draw (0,-2.2) -- (0,1);
	\twist{(0,-.4)}
	\loopIsoReverse{(0,.4)}
	\draw (1.7,0) -- (2,0);
	\draw (.4,-1.4) -- (2,-1.4);
	\draw (4.4,-.7) -- (5,-.7);
	\node at (-.2,-2) {\scriptsize{$a$}};
	\node at (-.2,-.8) {\scriptsize{$b$}};
	\node at (-.3,.8) {\scriptsize{$b^{**}$}};
	\identityMap{(1.2,0)}{.5}{p\,\,}
	\roundNbox{unshaded}{(0,-1.4)}{.4}{0}{0}{$f$};
	\multiplication{(3.2,-.7)}{1.2}{n}{p}{p}
\end{tikzpicture}
\,\,=
\begin{tikzpicture}[baseline=-.5cm]
	\draw (0,-2.2) -- (0,1);
	\twist{(0,-.4)}
	\loopIsoReverse{(0,.4)}
	\draw (.4,-1.4) -- (1,-1.4);
	\node at (-.2,-2) {\scriptsize{$a$}};
	\node at (-.2,-.8) {\scriptsize{$b$}};
	\node at (-.3,.8) {\scriptsize{$b^{**}$}};
	\roundNbox{unshaded}{(0,-1.4)}{.4}{0}{0}{$f$};
\end{tikzpicture}
$$
is equal to $f^{**}\circ \varphi$, where 
$f^{**}:\Phi(a^{**})\otimes m^n \to \Phi(b^{**})\otimes m^p$ is the double dual of $f$: 
$$
f^{**} = (\ev \otimes \id)\circ (\id\otimes \ev\otimes \id\otimes \id)\circ (\id\otimes \id \otimes f\otimes \id\otimes \id)\circ (\id\otimes \id\otimes \coev\otimes \id)\circ (\id\otimes\coev). 
$$
We compute:
\begin{align*}
f^{**} \circ \varphi
\,&=
\begin{tikzpicture}[baseline=2.4cm]
	\draw (-4,-1) -- (-4,1);
	\twist{(-4,-.4)}
	\loopIsoReverse{(-4,.4)}
	\node at (-3.8,-.8) {\scriptsize{$a$}};
	\node at (-3.7,.8) {\scriptsize{$a^{**}$}};
	\evaluationMap{(-.7,5.4)}{.4}{n}
	\evaluationMap{(-2.1,4.4)}{.4}{p}
	\roundNbox{unshaded}{(-3.2,3.4)}{.4}{0}{0}{$f$}
	\coevaluationMap{(-.8,2.4)}{.4}{n}
	\coevaluationMap{(.7,1.4)}{.4}{p}
	\identityMap{(-2.8,0)}{.5}{n}
	\draw (-.3,5.4) -- (3.2,5.4);
	\draw (-1.7,4.4) -- (3.2,4.4);
	\draw (-2.8,3.4) -- (3.2,3.4);
	\draw (-.4,2.4) -- (3.2,2.4);
	\draw (1.1,1.4) -- (3.2,1.4);
	\draw (-2.3,0) -- (3.2,0);
	\draw[super thick, white] (0,2) -- (0,6);
	\draw (0,6) -- (0,2) .. controls ++(270:1cm) and ++(270:1cm) .. (-3.8,2) -- (-3.8,4.2) arc (180:0:.3cm) -- (-3.2,3.8);
	\draw (-3.2,3) -- (-3.2,2.6) .. controls ++(270:.6cm) and ++(270:.6cm) .. (-1.4,2.6);
	\draw[super thick, white] (-1.4,2.6) -- (-1.4,5);
	\draw (-1.4,2.6) -- (-1.4,5) .. controls ++(90:1cm) and ++(90:1cm) .. (-4,5) -- (-4,1);
	\node at (.3,5.8) {\scriptsize{$b^{**}$}};
	\node at (-3,4) {\scriptsize{$b$}};
	\node at (-3,2.8) {\scriptsize{$a$}};
	\tensorRightIdEv{(2.4,5.4)}{.4}{}{}{}
	\tensorLeftRightIdEv{(2.4,4.4)}{.4}
	\tensorLeftRightId{(2.4,3.4)}{.4}
	\tensorLeftRightIdCoev{(2.4,2.4)}{.4}
	\tensorLeftIdCoev{(2.4,1.4)}{.4}{}{}{}
	\draw (6.8,2.4) -- (7.4,2.4);
	\draw[rounded corners=5pt, very thick, unshaded] (3.2,-.6) rectangle (6.8,6);
	\coordinate (a) at (3.4,2.7);
	\draw[thick, red] (a) -- (4.75,5);
	\draw[thick, red] (a) -- (4.75,4.1);
	\draw[thick, red] (a) -- (4.75,3.2);
	\draw[thick, red] (a) -- (4.75,2.3);
	\draw[thick, red] (a) -- (4.75,1.4);
	\draw[thick, red] (a) -- (4.75,.5);
	\draw (5,5.7) -- (5,-.3);
	\draw[very thick] (5,2.7) ellipse (1.6 and 3);
	\filldraw[very thick, unshaded] (5,5) circle (.25cm);
	\filldraw[very thick, unshaded] (5,4.1) circle (.25cm);
	\filldraw[very thick, unshaded] (5,3.2) circle (.25cm);
	\filldraw[very thick, unshaded] (5,2.3) circle (.25cm);
	\filldraw[very thick, unshaded] (5,1.4) circle (.25cm);
	\filldraw[very thick, unshaded] (5,.5) circle (.25cm);
	\node at (5.2,5.45) {\scriptsize{$p$}};
	\node at (5.4,4.55) {\scriptsize{$2n{+}p$}};
	\node at (5.8,3.65) {\scriptsize{$n{+}2p{+}n{+}p$}};
	\node at (5.8,2.75) {\scriptsize{$n{+}p{+}2n{+}p$}};
	\node at (5.4,1.85) {\scriptsize{$n{+}2p$}};
	\node at (5.2,.95) {\scriptsize{$n$}};
	\node at (5.2,.05) {\scriptsize{$n$}};
\end{tikzpicture}
\displaybreak[1]\\&=
\begin{tikzpicture}[baseline=.8cm]
	\draw (-2,-1) -- (-2,.9);
	\twist{(-2,-.4)}
	\loopIsoReverse{(-2,.4)}
	\node at (-2.2,-.8) {\scriptsize{$a$}};
	\node at (-2.3,1.2) {\scriptsize{$a^{**}$}};
	\roundNbox{unshaded}{(-1,1)}{.4}{0}{0}{$f$}
	\draw (-.6,1) -- (4,1);
	\draw[super thick, white] (0,2.8) -- (0,.2);
	\draw (0,2.8) -- (0,.2) .. controls ++(270:1cm) and ++(270:1cm) .. (-1.6,.2) -- (-1.6,1.8) arc (180:0:.3cm) -- (-1,1.4);
	\draw (-1,.6) -- (-1,.2) arc (-180:0:.3cm);
	\draw[super thick, white] (-.4,.2) -- (-.4,1.8);
	\draw (-.4,.2) -- (-.4,1.8) .. controls ++(90:1cm) and ++(90:1cm) .. (-2,1.8) -- (-2,.9);
	\node at (.3,2.6) {\scriptsize{$b^{**}$}};
	\node at (-.8,1.6) {\scriptsize{$b$}};
	\node at (-.8,.4) {\scriptsize{$a$}};
	\draw[rounded corners=5pt, very thick, unshaded] (.6,-.4) rectangle (3.4,2.4);
	\draw[thick, red] (.8,1) -- (1.7,1);
	\draw[very thick] (2,1) circle (1.2cm);
	\draw[very thick] (2,1) circle (.3cm);
	\draw (2,1.3) .. controls ++(90:.4cm) and ++(90:.6cm) .. (1.4,1) .. controls ++(270:1cm) and ++(270:1cm) .. (2.9,1) .. controls ++(90:.8cm) and ++(270:.2cm) .. (2,2.2);
	\draw (2,.7) .. controls ++(270:.4cm) and ++(270:.6cm) .. (2.6,1) .. controls ++(90:1cm) and ++(90:1cm) .. (1.1,1) .. controls ++(270:.8cm) and ++(90:.2cm) .. (2,-.2);
	\node at (2.1,0) {\scriptsize{$n$}};
	\node at (1.9,.6) {\scriptsize{$n$}};
	\node at (2.1,1.4) {\scriptsize{$p$}};
	\node at (1.9,2) {\scriptsize{$p$}};
\end{tikzpicture}
\,\,=
\begin{tikzpicture}[baseline=.8cm]
	\draw (-2,-1) -- (-2,.9);
	\twist{(-2,-.4)}
	\loopIsoReverse{(-2,.4)}
	\node at (-2.2,-.8) {\scriptsize{$a$}};
	\node at (-2.3,1.2) {\scriptsize{$a^{**}$}};
	\roundNbox{unshaded}{(-1,1)}{.4}{0}{0}{$f$}
	\draw (-.6,1) -- (3.6,1);
	\draw[super thick, white] (0,2.8) -- (0,.2);
	\draw (0,2.8) -- (0,.2) .. controls ++(270:1cm) and ++(270:1cm) .. (-1.6,.2) -- (-1.6,1.8) arc (180:0:.3cm) -- (-1,1.4);
	\draw (-1,.6) -- (-1,.2) arc (-180:0:.3cm);
	\draw[super thick, white] (-.4,.2) -- (-.4,1.8);
	\draw (-.4,.2) -- (-.4,1.8) .. controls ++(90:1cm) and ++(90:1cm) .. (-2,1.8) -- (-2,.9);
	\node at (.3,2.6) {\scriptsize{$b^{**}$}};
	\node at (-.8,1.6) {\scriptsize{$b$}};
	\node at (-.8,.4) {\scriptsize{$a$}};
	\draw[rounded corners=5pt, very thick, unshaded] (.6,-.2) rectangle (3,2.2);
	\draw (1.8,0) -- (1.8,2);
	\draw[thick, red] (1.5,1) .. controls ++(180:.3cm) and ++(180:.5cm) .. (1.8,1.5) .. controls ++(0:.8cm) and ++(0:.8cm) .. (1.8,.4) .. controls ++(180:.6cm) and ++(0:.4cm) .. (.8,1);
	\draw[very thick] (1.8,1) circle (1cm);
	\draw[unshaded, very thick] (1.8,1) circle (.3cm);
	\node at (2,.2) {\scriptsize{$n$}};
	\node at (2,1.8) {\scriptsize{$p$}};
\end{tikzpicture}
\displaybreak[1]\\&=
\begin{tikzpicture}[baseline=.8cm]
	\draw (-2,-1) -- (-2,.9);
	\twist{(-2,-.4)}
	\loopIsoReverse{(-2,.4)}
	\node at (-2.2,-.8) {\scriptsize{$a$}};
	\node at (-2.3,1.2) {\scriptsize{$a^{**}$}};
	\roundNbox{unshaded}{(-1,1)}{.4}{0}{0}{$f$}
	\draw (-.6,1) -- (1.6,1);
	\draw[super thick, white] (0,2.8) -- (0,.2);
	\draw (0,2.8) -- (0,.2) .. controls ++(270:1cm) and ++(270:1cm) .. (-1.6,.2) -- (-1.6,1.8) arc (180:0:.3cm) -- (-1,1.4);
	\draw (-1,.6) -- (-1,.2) arc (-180:0:.3cm);
	\draw[super thick, white] (-.4,.2) -- (-.4,1.8);
	\draw (-.4,.2) -- (-.4,1.8) .. controls ++(90:1cm) and ++(90:1cm) .. (-2,1.8) -- (-2,.9);
	\node at (.3,2.6) {\scriptsize{$b^{**}$}};
	\node at (-.8,1.6) {\scriptsize{$b$}};
	\node at (-.8,.4) {\scriptsize{$a$}};
	\twistHorizontalInverse{(.8,1)}
\end{tikzpicture}
\,\,=\,\,\,
\begin{tikzpicture}[baseline=.3cm, yscale=-1]
	\draw (0,.4) -- (0,1.6);
	\twistInverse{(0,1)}
	\node at (1.3,-2.6) {\scriptsize{$b^{**}$}};
	\node at (-.2,-.6) {\scriptsize{$b$}};
	\node at (-.2,.6) {\scriptsize{$a$}};
	\node at (-.2,1.4) {\scriptsize{$a$}};
	\draw (0,-.4) -- (0,-.6) arc (-180:-90:.2cm);
	\draw (1,-1.6) -- (2.6,-1.6);
	\draw (.2,-.8) -- (1.4,-.8);
	\draw[super thick, white] (1.4,-.8) arc (90:0:.2cm) -- (1.6,-2.8);
	\draw (1.4,-.8) arc (90:0:.2cm) -- (1.6,-2.8);
	\draw[super thick, white] (.4,0) -- (.6,0) arc (90:0:.2cm) -- (.8,-1.4) arc (-180:-90:.2cm);
	\draw (.4,0) -- (.6,0) arc (90:0:.2cm) -- (.8,-1.4) arc (-180:-90:.2cm);
	\roundNbox{unshaded}{(0,0)}{.4}{0}{0}{$f$}
	\loopIso{(1.6,-2.2)}
	\twistHorizontal{(2.2,-1.6)}
\end{tikzpicture}
\!=\,\,
\begin{tikzpicture}[baseline=-.9cm]
	\draw (0,-2.2) -- (0,1);
	\twist{(0,-.4)}
	\loopIsoReverse{(0,.4)}
	\draw (.4,-1.4) -- (1,-1.4);
	\node at (-.2,-2) {\scriptsize{$a$}};
	\node at (-.2,-.8) {\scriptsize{$b$}};
	\node at (-.3,.8) {\scriptsize{$b^{**}$}};
	\roundNbox{unshaded}{(0,-1.4)}{.4}{0}{0}{$f$};
\end{tikzpicture}
\displaybreak[1]
\end{align*} 
where the last equality follows from the compatibility between the full braiding and the twist, valid in any balanced tensor category.
\end{proof}

\begin{lem}\label{lem: The isomorphism varphi is monoidal}
The isomorphism $\varphi$ is monoidal: $\varphi_a\otimes\varphi_b=\varphi_{a\otimes b}$.
\end{lem}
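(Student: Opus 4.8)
The plan is to unfold both sides of the claimed identity using the explicit description \eqref{eq: def: piv structure} of the pivotal structure of $\cM_0$ together with the formula \eqref{eq:TensorProductInM} for the tensor product of morphisms, and to reduce everything to the monoidality of the pivotal structure of $\cC$. First I would set $a=\Phi(c)\otimes m^{\otimes n}$ and $b=\Phi(d)\otimes m^{\otimes p}$. By Lemma~\ref{lem:DualityInM0} and the identification made right after it, $a^{**}=\Phi(c^{**})\otimes m^{\otimes n}$ and $b^{**}=\Phi(d^{**})\otimes m^{\otimes p}$; and since the canonical isomorphism $(c\otimes d)^{**}\cong c^{**}\otimes d^{**}$ of $\cC$ is compatible with $\varphi$ (as $\cC$ is pivotal), one gets $(a\otimes b)^{**}=a^{**}\otimes b^{**}=\Phi((c\otimes d)^{**})\otimes m^{\otimes p+n}$. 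Thus both $\varphi_a\otimes\varphi_b$ and $\varphi_{a\otimes b}$ are morphisms $\Phi(c\otimes d)\otimes m^{\otimes p+n}\to\Phi((c\otimes d)^{**})\otimes m^{\otimes p+n}$, i.e.\ elements of $\cC(c\otimes d,\,(c\otimes d)^{**}\otimes\cP[2(n+p)])$, so it makes sense to compare them.

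By \eqref{eq: def: piv structure}, the right hand side $\varphi_{a\otimes b}$ is the morphism in $\cC$ obtained from $\varphi_{c\otimes d}\colon c\otimes d\to(c\otimes d)^{**}$ by tensoring with the fixed element $\iota_{n+p}\in\cC(1,\cP[2(n+p)])$ which is the $\cP$-component of $\id_{m^{\otimes(n+p)}}$, i.e.\ the value of $Z$ on the cup tangle in \eqref{eq: ...and that's how one defines identities}. For the left hand side I would apply \eqref{eq:TensorProductInM} with $f_1=\varphi_a=\varphi_c\otimes\iota_n$ and $f_2=\varphi_b=\varphi_d\otimes\iota_p$: then $\varphi_a\otimes\varphi_b$ is the value of $Z$ on the relevant tensor tangle, precomposed with a braiding of $\cC$ that slides the strand $\cP[2n]$ past the strands carrying $d$ and $d^{**}$. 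Using naturality of the braiding of $\cC$ to pull the coupons $\varphi_c,\varphi_d$ clear of that crossing, then the braiding anchor dependence axiom (Definition~\ref{def: anchored planar algebra}) to absorb the crossing into the anchor lines of the tensor tangle, and finally relation \ref{reln:UnitMap} together with isotopy of tangles to simplify the resulting composite of generating tangles — exactly the sequence of moves already used in the verification that $\id_{\Phi(a)\otimes m^{\otimes n}}\otimes\id_{\Phi(b)\otimes m^{\otimes p}}=\id_{\Phi(a)\otimes m^{\otimes n}\otimes\Phi(b)\otimes m^{\otimes p}}$ earlier in this section — one arrives at $\varphi_a\otimes\varphi_b=(\varphi_c\otimes\varphi_d)\otimes\iota_{n+p}$, where the tangle producing $\iota_{n+p}$ is isotopic to the one in \eqref{eq: ...and that's how one defines identities} because an $n$-fold and a $p$-fold cup system merged by a tensor tangle are isotopic to the $(n+p)$-fold cup system.

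Comparing the two expressions, it then remains to invoke $\varphi_c\otimes\varphi_d=\varphi_{c\otimes d}$, which is the monoidality of $\varphi$ built into the definition of the pivotal category $\cC$; this gives $\varphi_a\otimes\varphi_b=\varphi_{a\otimes b}$ on $\cM_0$. No further argument is needed to pass from $\cM_0$ to $\cM$, since $\otimes$, the double dual, and $\varphi$ on $\cM$ were all obtained from $\cM_0$ by the $2$-functor \eqref{eq: add direct sums and idempotent complete}, under which such an identity of structure morphisms is preserved. The hard part will be the bookkeeping in the middle paragraph: one must carefully track the braid crossing that the tensor product of $\cM_0$ inserts between the $\cP$-strand and the $\cC$-strand, and check that the braiding axiom for $\cP$ cancels it exactly — the same phenomenon that already forced the appearance of the anchor dependence axiom in the proof that $\otimes$ is a bifunctor on $\cM_0$.
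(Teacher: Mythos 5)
Your proof takes essentially the same route as the paper's: the paper's argument is a single diagrammatic computation that unfolds $\varphi_a\otimes\varphi_b$ via \eqref{eq:TensorProductInM} and \eqref{eq: def: piv structure}, removes the crossing, identifies the merged identity-defining tangles with the one for $n+p$, and invokes monoidality of $\varphi$ in $\cC$; your outline, including the formal passage from $\cM_0$ to $\cM$, matches this.

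One intermediate justification is misattributed, though the argument survives without it. The crossing that \eqref{eq:TensorProductInM} inserts is between the $\cP[2n]$ strand coming out of the $\varphi_a$-coupon and the $\cC$-strand carrying $d$; it is not a crossing of two box objects, so the braiding anchor-dependence axiom of Definition~\ref{def: anchored planar algebra} (which only converts braided anchor lines into $\beta_{\cP[j],\cP[i+k]}$ between box objects) does not license ``absorbing the crossing into the anchor lines of the tensor tangle.'' You do not need it: the $\cP[2n]$ strand emanates from your $\iota_n$, a morphism with source $1_\cC$, so naturality of the braiding of $\cC$ slides that coupon through the crossing and turns it into a braiding with the unit, which is trivial. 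For the same reason the paper's check that $\id\otimes\id=\id$ uses no anchor-dependence axiom at all; that axiom is what powers the functoriality computation $(g_1\circ f_1)\otimes(g_2\circ f_2)=(g_1\otimes g_2)\circ(f_1\otimes f_2)$, not this lemma. With that step replaced — crossing killed by naturality and unit-braiding, then $Z$ of the tensor tangle glued onto the two no-input identity tangles equals the identity-defining tangle for $n+p$ by isotopy invariance, then $\varphi_c\otimes\varphi_d=\varphi_{c\otimes d}$ in $\cC$ — your proof coincides with the displayed equality in the paper.
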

\begin{proof}
We compute:
$$
\begin{tikzpicture}[baseline=.4cm]
	\draw (1.4,0) -- (4,0);
	\draw (3.4,1) -- (4,1);
	\draw (0,-1) -- (0,2);
	\draw[super thick, white] (2,-1) -- (2,2);
	\draw (2,-1) -- (2,2);
	\roundNbox{unshaded}{(0,0)}{.4}{0}{0}{$\varphi_a$}	
	\roundNbox{unshaded}{(2,1)}{.4}{0}{0}{$\varphi_b$}	
	\node at (-.2,-.8) {\scriptsize{$a$}};
	\node at (-.3,1.8) {\scriptsize{$a^{**}$}};
	\node at (1.8,-.8) {\scriptsize{$b$}};
	\node at (1.7,1.8) {\scriptsize{$b^{**}$}};
	\identityMap{(1,0)}{.4}{n\,\;}
	\identityMap{(3,1)}{.4}{p\,\;}
	\tensor{(5.2,.5)}{1.2}{n}{n}{p}{p}
	\draw (6.4,.5) -- (7,.5);
\end{tikzpicture}
\,\,\,=\!
\begin{tikzpicture}[baseline=-.1cm]
	\draw (0,-1) -- (0,1);
	\roundNbox{unshaded}{(0,0)}{.4}{.1}{.1}{$\varphi_{a\otimes b}$}	
	\node at (-.4,-.8) {\scriptsize{$a\otimes b$}};
	\node at (-.6,.8) {\scriptsize{$(a\otimes b)^{**}$}};
	\draw (1.9,0) -- (2.5,0);
	\identityMap{(1.3,0)}{.6}{\;\scriptscriptstyle n{+}p}
\end{tikzpicture}\,.\vspace{-.3cm}
$$
\end{proof}

The object $m$ is visibly self-dual.
Its evaluation of coevaluations maps are given by:
\begin{equation}\label{eq: ev and coev for m}
\ev_m = 
\begin{tikzpicture}[baseline=-.1cm]
	\draw[thick, dotted] (-.4,-.8) -- (-.4,-.4) arc (180:0:.2cm) -- (0,-.8);
	\draw (1.3,0) -- (2.3,0);
	\node at (-.2,0) {\scriptsize{$1_\cC$}};
	\evaluationMap{(.8,0)}{.5}{ }
	\node at (1.8,.2) {\scriptsize{$\cP[2]$}};
\end{tikzpicture}
\qquad\text{and}\qquad
\coev_m=
\begin{tikzpicture}[baseline=-.1cm]
	\draw[thick, dotted] (-.4,.8) -- (-.4,.4) arc (-180:0:.2cm) -- (0,.8);
	\draw (1.3,0) -- (2.3,0);
	\node at (-.2,0) {\scriptsize{$1_\cC$}};
	\coevaluationMap{(.8,0)}{.5}{ }
	\node at (1.8,.2) {\scriptsize{$\cP[2]$}};
\end{tikzpicture}\,\,.
\end{equation}
We now show that $m$ is symmetrically self-dual (Section \ref{sec:InternalTrace}).
Upon identifying $m^*$ with $m$ by means of \eqref{eq: ev and coev for m}, we let $\psi:m\to m^*$ be the identity map.
In the same way, we identify $m^{**}$ with $m^*$ and with $m$.
The isomorphism $\varphi:m\to m^{**}$ described in \eqref{eq: def: piv structure} is then also the identity map, as $\varphi_1=1$ in $\cC$.
Finally, letting $\psi^*$ be as in \eqref{eq: psi** def}, we verify that the desired equation $\psi^*\circ\varphi=\psi$ holds true:
\begin{equation}\label{eq: It's symmetrically self-dual!}
\psi^*=\,\,\begin{tikzpicture}[baseline=.3cm]
	\draw (1,.4) -- (6.4,.4);
	\draw (2,-.6) -- (4,-.6);
	\draw (1,1.4) -- (4,1.4);
	\draw[thick, dotted] (-.4,-1) -- (-.4,1.6) arc (180:0:.2cm) -- (0,-.4) .. controls ++(270:.6cm) and ++(270:.6cm) .. (1.2,-.4) -- (1.2,2);
	\identityMap{(.6,.4)}{.4}{ }
	\evaluationMap{(.6,1.4)}{.4}{ }
	\coevaluationMap{(1.8,-.6)}{.4}{ }
	\tensorLeftIdCoev{(3,-.6)}{.4}{}{}
	\tensorRightIdEv{(3,1.4)}{.4}{}{}
	\tensorLeftRightId{(3,.4)}{.4}
	\coordinate (a) at (4,-1);
	\coordinate (b) at ($ (a) + (.2,1.4) $);
	\pgfmathsetmacro{\width}{.2}
	\pgfmathsetmacro{\height}{.6}
	\coordinate (c) at ($ (a) + (\width,\height) $);
	\draw[rounded corners=5pt, very thick, unshaded] (a) rectangle ($ (a) + (1.8,2.8) $);
	\draw[thick, red] ($ (a) + (.7,.7) $) -- (b);
	\draw[thick, red] ($ (a) + (.7,1.4) $) -- (b);
	\draw[thick, red] ($ (a) + (.7,2.1) $) -- (b);
	\draw[very thick] ($ (a) + (.9,1.4) $) ellipse (.7 and 1.3);
	\draw ($ (a) + (.9,.1) $) -- ($ (a) + (.9, 2.7) $);
	\filldraw[very thick, unshaded] ($ (a) + (.9,.7) $) circle (.2cm);
	\filldraw[very thick, unshaded] ($ (a) + (.9,1.4) $) circle (.2cm);
	\filldraw[very thick, unshaded] ($ (a) + (.9,2.1) $) circle (.2cm);
	\node at ($(a) + (1.02,.35)$) {\scriptsize{$1$}};
	\node at ($(a) + (1.05,1.05)$) {\scriptsize{$3$}};
	\node at ($(a) + (1.05,1.75)$) {\scriptsize{$3$}};
	\node at ($(a) + (1.02,2.48)$) {\scriptsize{$1$}};
	\node at (6.2,.6) {\scriptsize{$\cP[2]$}};
\end{tikzpicture}
=
\begin{tikzpicture}[baseline=-.1cm]
	\draw (0,0) -- (4,0);
	\identityMap{(0,0)}{.4}{ }
\pgftransformscale{.9}
\pgftransformxshift{5}
	\coordinate (a) at (2,0);
	\draw[rounded corners=5pt, very thick, unshaded] (.8,-1.2) rectangle (3.2,1.2);
	\draw[thick, red] (1,0) -- (1.7,0);
	\draw ($ (a) + (90:.3cm) $) .. controls ++(90:.4cm) and ++(90:.5cm) .. ($ (a) + (180:.6cm) $) .. controls ++(270:.5cm) and ++(90:.4cm) .. ($ (a) + (270:1cm) $);
	\draw ($ (a) + (270:.3cm) $) .. controls ++(270:.4cm) and ++(270:.5cm) .. ($ (a) + (0:.6cm) $) .. controls ++(90:.5cm) and ++(270:.4cm) .. ($ (a) + (90:1cm) $);
	\draw[very thick] (2,0) circle (1cm);
	\draw[very thick, unshaded] (2,0) circle (.3cm);
	\node at (3.65,.22) {\scriptsize{$\cP[2]$}};
\end{tikzpicture}
=
\begin{tikzpicture}[baseline=-.1cm]
	\draw (0,0) -- (1.2,0);
	\identityMap{(0,0)}{.4}{ }
	\node at (.8,.2) {\scriptsize{$\cP[2]$}};
\end{tikzpicture}\,.
\end{equation}

\subsection{The central functor}\label{sec:The central functor}
In this section, we equip the functor $\Phi:\cC\to \cM$ with the structure of a central functor, i.e., we provide a factorization
\[
\xymatrix@C=1cm
{
&\cZ(\cM)\ar[dr]^F\\
\cC\ar[ur]^{\Phi^\cZ}\ar[rr]^{\Phi}&&\cM.
}
\]
To do so, we associate to each object $c\in \cC$ a half-braiding $e_{\Phi(c)}$, so that
$(\Phi(c), e_{\Phi(c)})$ is an object of $\cZ(\cM)$.
Once again, it is enough to do this at the level of the category $\cM_0$.
For $c\in \cC$ and $\Phi(a)\otimes m^{\otimes n}\in \cM_0$, we define:
\begin{equation}\label{eq: That's the half-braiding!}
e_{\Phi(c),\Phi(a)\otimes m^{\otimes n}}
\,:=\,\Phi(\beta_{c,a})\otimes\id_{m^{\otimes n}}\,=
\begin{tikzpicture}[baseline=.4cm]
	\draw (.2,0)  .. controls ++(90:.2cm) and ++(270:.2cm) .. ($ (-.2,0) + (0,1) $);
	\draw[super thick, white] (-.2,0)  .. controls ++(90:.2cm) and ++(270:.2cm) ..($ (.2,0) + (0,1) $);
	\draw (-.2,0)  .. controls ++(90:.2cm) and ++(270:.2cm) ..($ (.2,0) + (0,1) $);
	\node at (-.35,.1) {\scriptsize{$c$}};
	\node at (-.35,.9) {\scriptsize{$a$}};
	\identityMap{(1,.5)}{.5}{n\,\,}
	\draw (1.5,.5) -- (2.3,.5);
	\node at (1.9,.7) {\scriptsize{$\cP[2n]$}};
\end{tikzpicture}\,.
\end{equation}

\begin{lem}
The isomorphisms $e_{\Phi(c)}$ are natural.
\end{lem}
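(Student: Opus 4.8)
The plan is to verify the hexagon-type naturality square directly from the definition \eqref{eq: That's the half-braiding!}, using the graphical calculus for morphisms in $\cM_0$ that was set up in Sections~\ref{sec:DefinitionOfM}--\ref{sec:The central functor}. Concretely, given a morphism $f:\Phi(a)\otimes m^{\otimes n}\to \Phi(b)\otimes m^{\otimes p}$ in $\cM_0$ and an object $c\in\cC$, I want to show
\[
e_{\Phi(c),\Phi(b)\otimes m^{\otimes p}}\circ(\id_{\Phi(c)}\otimes f)
=
(f\otimes \id_{\Phi(c)})\circ e_{\Phi(c),\Phi(a)\otimes m^{\otimes n}}.
\]
First I would unwind both sides into pictures in $\cC$: the tensor products $\id_{\Phi(c)}\otimes f$ and $f\otimes\id_{\Phi(c)}$ are computed by formula \eqref{eq:TensorProductInM} (which in the presence of a $\cP[0]$-coupon for $\Phi(c)$ reduces to the simplified forms $\id\otimes f$ and $f\otimes\id$ displayed right after it), and the half-braiding coupons are just $\Phi(\beta_{c,b})\otimes\id_{m^{\otimes p}}$ and $\Phi(\beta_{c,a})\otimes\id_{m^{\otimes n}}$ composed with identity tangles. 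Composition in $\cM_0$ is given by \eqref{eq: that's how one defines composition}, i.e.\ by attaching the multiplication tangle $p_{i,j}$-coupons.

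The key step is then a local move: on both sides the $f$-coupon and the $\beta_{c,-}$-coupon live in disjoint tensor factors of $\cC$ (one in the $\cP[\cdots]$-string part coming from $f$, the other in the $c$-versus-$\{a,b\}$ braiding), so they commute past each other by the interchange law in the monoidal category $\cC$ together with naturality of the braiding $\beta$ in its second argument, $\beta_{c,b}\circ(\id_c\otimes f_{\cC})=(f_{\cC}\otimes\id_c)\circ\beta_{c,a}$, where $f_{\cC}\in\cC(a,b\otimes\cP[p+n])$ is the underlying morphism of $f$. The only nontrivial part is bookkeeping: one must check that the multiplication tangles appearing in $e\circ(\id\otimes f)$ and in $(f\otimes\id)\circ e$ are the \emph{same} tangle up to isotopy, so that $Z$ applied to them agrees — but this is immediate from the composition axiom \eqref{eq: composition of tangles} for anchored planar algebras, since both composites are built by plugging the ``$f$'' box and an identity box into one and the same ambient tangle. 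I expect this to follow by the kind of ``immediate by drawing pictures'' argument used for Proposition~\ref{prop:AnchoredRelations}; it suffices to check it when $f$ is either of the form $\Phi(g)$ (so the tangle part is trivial) or $\id_{\Phi(a)}\otimes(\text{some morphism between }m^{\otimes n}\text{'s})$, by decomposing $f$, though in fact the general picture works at once.

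The main obstacle — really the only subtlety — is aligning conventions about which strands are ``inputs from the left/bottom'' versus ``outputs to the top/right'' (the convention fixed just before \eqref{eq: that's the composition in M_0}), so that the braiding coupon $\Phi(\beta_{c,b})$ genuinely slides through the $f$-coupon in the direction the picture suggests rather than its inverse. Once the diagrammatic conventions are pinned down, naturality of $e_{\Phi(c)}$ reduces to naturality of $\beta$ in $\cC$ and the interchange law, with the anchored-planar-algebra composition axiom guaranteeing that the tangle-valued pieces match. I would present the proof as a short two-picture computation — draw $e_{\Phi(c),\Phi(b)\otimes m^{\otimes p}}\circ(\id_{\Phi(c)}\otimes f)$, slide the $\beta$-coupon past the $f$-coupon, and recognize the result as $(f\otimes\id_{\Phi(c)})\circ e_{\Phi(c),\Phi(a)\otimes m^{\otimes n}}$ — and remark that, as usual, it extends from $\cM_0$ to $\cM$ by applying the $2$-functor \eqref{eq: add direct sums and idempotent complete}.
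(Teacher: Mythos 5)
Your proposal is correct and matches the paper's proof, which is exactly the two-picture computation you describe: unwind $(f\otimes \id)\circ e_{\Phi(c),\Phi(a)\otimes m^{\otimes n}}$ via the composition and tensor formulas of $\cM_0$, slide the $f$-coupon through the crossing with the $c$-strand using naturality of the braiding in $\cC$, and observe that the resulting picture is $e_{\Phi(c),\Phi(b)\otimes m^{\otimes p}}\circ(\id\otimes f)$, the coupon/tangle pieces matching by isotopy. One small caution: the $f$-box and the braiding are not in disjoint tensor factors (they share the $a$-strand, and the $\cP[p+n]$-output of $f$ also crosses the $c$-strand because of the tensor-with-identity formula), so the identity actually invoked is $\beta_{c,\,b\otimes \cP[p+n]}\circ(\id_c\otimes f)=(f\otimes\id_c)\circ\beta_{c,a}$ together with the hexagon, rather than the literally stated $\beta_{c,b}\circ(\id_c\otimes f)=(f\otimes\id_c)\circ\beta_{c,a}$.
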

\begin{proof}
Given a morphism $f: \Phi(a)\otimes m^{\otimes n} \to \Phi(b)\otimes m^{\otimes p}$ in $\cM_0$, we compute:
\begin{align*}
(f\otimes \id) \circ e_{\Phi(c),\Phi(a)\otimes m^{\otimes n}}
\,\,&=\,
\begin{tikzpicture}[baseline=1.3cm]
	\draw (1.2,.5) -- (3.4,.5);
	\draw (0,2.1) -- (3.4,2.1);
	\draw (-.4,1) -- (-.4,3.1);
	\draw (.4,0)  .. controls ++(90:.3cm) and ++(270:.3cm) .. ($ (-.4,0) + (0,1) $);
	\draw[super thick, white] (-.4,0)  .. controls ++(90:.3cm) and ++(270:.3cm) ..($ (.4,0) + (0,1) $);
	\draw (-.4,0)  .. controls ++(90:.3cm) and ++(270:.3cm) ..($ (.4,0) + (0,1) $);
	\draw[super thick, white] (.4,1) -- (.4,3.1);
	\draw (.4,1) -- (.4,3.1);
	\node at (-.6,1.2) {\scriptsize{$a$}};
	\node at (-.6,2.8) {\scriptsize{$b$}};
	\node at (.2,1.2) {\scriptsize{$c$}};
	\tensorRightIdZero{(1.8,2.1)}{.8}{n}{p}
	\identityMap{(1.2,.5)}{.5}{n\,\,}
	\roundNbox{unshaded}{(-.4,2.1)}{.4}{0}{0}{$f$}
	\multiplication{(4.6,1.3)}{1.2}{n}{n}{p}
	\draw (5.8,1.3) -- (6.4,1.3);
\end{tikzpicture}
\displaybreak[1]
\\&=\,
\begin{tikzpicture}[baseline=-.5cm]
	\draw (1.2,.5) -- (3.4,.5);
	\draw (.4,-1.1) -- (3.4,-1.1);
	\draw (-.4,0) -- (-.4,-2.1);
	\draw (.4,0)  .. controls ++(90:.3cm) and ++(270:.3cm) .. ($ (-.4,0) + (0,1) $);
	\draw[super thick, white] (-.4,0)  .. controls ++(90:.3cm) and ++(270:.3cm) ..($ (.4,0) + (0,1) $);
	\draw (-.4,0)  .. controls ++(90:.3cm) and ++(270:.3cm) ..($ (.4,0) + (0,1) $);
	\draw[super thick, white] (.4,0) -- (.4,-2.1);
	\draw (.4,0) -- (.4,-2.1);
	\node at (-.6,-.2) {\scriptsize{$c$}};
	\node at (.2,-1.8) {\scriptsize{$a$}};
	\node at (.2,-.2) {\scriptsize{$b$}};
	\tensorLeftIdZero{(2.1,-1.1)}{.8}{n}{p}
	\identityMap{(1.2,.5)}{.5}{p\,\,}
	\roundNbox{unshaded}{(.4,-1.1)}{.4}{0}{0}{$f$}
	\multiplication{(4.6,-.3)}{1.2}{n}{p}{p}
	\draw (5.8,-.3) -- (6.4,-.3);
\end{tikzpicture}
\displaybreak[1]
\\&=\,\,\,
e_{\Phi(c),\Phi(b)\otimes m^{\otimes p}}\circ (\id \otimes f).
\qedhere
\displaybreak[1]
\end{align*}
\end{proof}

\begin{lem}
The isomorphisms $e_{\Phi(a)}$ satisfy the hexagon axiom
$$
(\id_{\Phi(b)\otimes m^{\otimes n}} \otimes e_{\Phi(a),\Phi(c)\otimes m^{\otimes p}})
\circ
(e_{\Phi(a),\Phi(b)\otimes m^{\otimes n}}\otimes \id_{\Phi(c)\otimes m^{\otimes p}})
= 
e_{\Phi(a),\Phi(b\otimes c)\otimes m^{\otimes n+p}}.
$$
\end{lem}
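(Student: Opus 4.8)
The plan is to unwind the definition \eqref{eq: That's the half-braiding!} of the half-braidings on both sides of the hexagon and reduce the identity to the hexagon axiom for the braiding $\beta$ in $\cC$. It suffices to verify the identity in $\cM_0$, and throughout one works under the canonical isomorphism $\text{``}\Phi(c)\otimes m^{\otimes n}\text{''}\cong\Phi(c)\otimes m^{\otimes n}$. Recall from \eqref{Phi M eq3} that the tensor product of objects of $\cM_0$ groups the $\Phi$-factors together on the left and the $m$-factors on the right, so that $(\Phi(b)\otimes m^{\otimes n})\otimes(\Phi(c)\otimes m^{\otimes p})=\Phi(b\otimes c)\otimes m^{\otimes n+p}$; in particular the right-hand side of the hexagon is, by definition, $e_{\Phi(a),\Phi(b\otimes c)\otimes m^{\otimes n+p}}=\Phi(\beta_{a,b\otimes c})\otimes\id_{m^{\otimes n+p}}$.

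First I would compute the two factors on the left-hand side separately, using the formula \eqref{eq:TensorProductInM} for the tensor product of morphisms in $\cM_0$ together with the tensor-with-identity simplifications recorded immediately after it. Since $e_{\Phi(a),\Phi(b)\otimes m^{\otimes n}}=\Phi(\beta_{a,b})\otimes\id_{m^{\otimes n}}$ and $\Phi:\cC\to\cM_0$ is a tensor functor, tensoring on the right with $\id_{\Phi(c)\otimes m^{\otimes p}}$ yields $\Phi(\beta_{a,b}\otimes\id_c)\otimes\id_{m^{\otimes n+p}}$ (the $m$-strands pass straight through, and the $\Phi$-parts combine via the tensorator of $\Phi$); symmetrically, $\id_{\Phi(b)\otimes m^{\otimes n}}\otimes e_{\Phi(a),\Phi(c)\otimes m^{\otimes p}}$ equals $\Phi(\id_b\otimes\beta_{a,c})\otimes\id_{m^{\otimes n+p}}$. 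Composing these in $\cM_0$, functoriality of $\Phi$ together with the bifunctoriality of $\otimes$ in $\cM$ (both established earlier in this section) gives $\Phi\big((\id_b\otimes\beta_{a,c})\circ(\beta_{a,b}\otimes\id_c)\big)\otimes\id_{m^{\otimes n+p}}$. The hexagon axiom for $\beta$ in $\cC$ rewrites $(\id_b\otimes\beta_{a,c})\circ(\beta_{a,b}\otimes\id_c)$ as $\beta_{a,b\otimes c}$, so the left-hand side equals $\Phi(\beta_{a,b\otimes c})\otimes\id_{m^{\otimes n+p}}=e_{\Phi(a),\Phi(b\otimes c)\otimes m^{\otimes n+p}}$, as desired.

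The genuine content of the argument is, as usual in this section, the bookkeeping of coherence isomorphisms: one must check that the associators of $\cM$ and the canonical isomorphisms $\text{``}\Phi(c)\otimes m^{\otimes n}\text{''}\cong\Phi(c)\otimes m^{\otimes n}$ intertwine correctly with the tensorator of $\Phi$ and with the bifunctoriality of $\otimes$ in $\cM$. This is the main obstacle, but it is a routine diagram chase, most transparently carried out by drawing the string-diagram pictures for \eqref{eq:TensorProductInM} and \eqref{eq: that's how one defines composition}, in which the $m$-strands appear as straight vertical lines and the only genuine crossings are the $\cC$-braidings. The same computation, run with the two arguments exchanged, establishes the other half-braiding axiom $e_{\Phi(c\otimes d),x}=(e_{\Phi(c),x}\otimes\id_{\Phi(d)})\circ(\id_{\Phi(c)}\otimes e_{\Phi(d),x})$, while $e_{\Phi(c),\Phi(d)}=\Phi(\beta_{c,d})$ holds on the nose from \eqref{eq: That's the half-braiding!}.
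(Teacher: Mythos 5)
Your argument is correct and is essentially the paper's proof in algebraic form: the paper establishes the lemma by exactly this computation, carried out as a single graphical identity in $\cC$ in which the identity-tangle coupons for $n$ and $p$ together with the tensor and multiplication tangles collapse to the single coupon for $n+p$, while the $\cC$-strands exhibit $a$ braiding past $b$ and then past $c$, i.e.\ $\beta_{a,b\otimes c}$. The one step worth spelling out in your ``routine chase'' is that the crossing of the $c$-strand over the $\cP[2n]$-output produced by the $f\otimes\id$ formula is harmless precisely because that output emanates from a morphism out of $1_\cC$, so naturality of $\beta$ removes it --- the same silent move underlying the paper's picture equality.
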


\begin{proof}
We compute:
$$
\begin{tikzpicture}[baseline=.4cm]
	\draw (.2,-1)  .. controls ++(90:.2cm) and ++(270:.2cm) .. ($ (-.2,0) + (0,.2) $);
	\draw[super thick, white] (-.2,-1)  .. controls ++(90:.2cm) and ++(270:.2cm) ..($ (.2,0) + (0,.2) $);
	\draw (-.2,-1)  .. controls ++(90:.2cm) and ++(270:.2cm) ..($ (.2,0) + (0,.2) $);
	\draw (1.6,.8)  .. controls ++(90:.6cm) and ++(270:.6cm) .. ($ (.2,0) + (0,2) $);
	\draw[super thick, white] (.2,.8)  .. controls ++(90:.6cm) and ++(270:.6cm) ..($ (1.6,0) + (0,2) $);
	\draw (.2,.8)  .. controls ++(90:.6cm) and ++(270:.6cm) ..($ (1.6,0) + (0,2) $);
	\draw (2.2,1.5) -- (6.2,1.5);
	\draw (.8,-.5) -- (6.2,-.5);
	\draw (-.2,.2) -- (-.2,2);
	\draw (.2,.2) -- (.2,.8);
	\draw[super thick, white] (1.6,.8) -- (1.6,-1);
	\draw (1.6,.8) -- (1.6,-1);
	\node at (-.2,-1.2) {\scriptsize{$a$}};
	\node at (.2,-1.18) {\scriptsize{$b$}};
	\node at (1.6,-1.2) {\scriptsize{$c$}};
	\identityMap{(2.2,1.4)}{.4}{p\,\,}
	\identityMap{(.8,-.4)}{.4}{n\,\,}
	\tensorLeftId{(3.8,1.4)}{.8}{n}{p}{p}
	\tensorRightId{(3.8,-.4)}{.8}{n}{n}{p}
	\multiplication{(6.2,.5)}{1.2}{\;\scriptscriptstyle n{+}p}{\;\scriptscriptstyle n{+}p}{\;\scriptscriptstyle n{+}p}
	\draw (7.4,.5) -- (7.8,.5);
\end{tikzpicture}
=
\begin{tikzpicture}[baseline=.4cm]
	\draw (.2,0)  .. controls ++(90:.2cm) and ++(270:.2cm) .. ($ (-.2,0) + (0,1) $);
	\draw (0,0)  .. controls ++(90:.2cm) and ++(270:.2cm) .. ($ (-.4,0) + (0,1) $);
	\draw[super thick, white] (-.4,0)  .. controls ++(90:.2cm) and ++(270:.2cm) ..($ (.2,0) + (0,1) $);
	\draw (-.4,0)  .. controls ++(90:.2cm) and ++(270:.2cm) ..($ (.2,0) + (0,1) $);
	\node at (-.4,-.2) {\scriptsize{$a$}};
	\node at (0,-.18) {\scriptsize{$b$}};
	\node at (.2,-.2) {\scriptsize{$c$}};
	\draw (1.9,.5) -- (2.5,.5);
	\identityMap{(1.3,.5)}{.6}{\;\scriptscriptstyle n{+}p}
\end{tikzpicture}\,.
$$
\end{proof}

We can now define the functor $\Phi^{\scriptscriptstyle \cZ}:\cC \to \cZ(\cM)$.
It is given by $\Phi^{\scriptscriptstyle \cZ}(c) := (\Phi(c), e_{\Phi(c)})$ on objects, and
\[
\Phi^{\scriptscriptstyle \cZ}(f)\,:=\,\Phi(f) \,=\, 
\begin{tikzpicture}[baseline=-.1cm]
	\draw (0,-.8) -- (0,.8);
	\draw (1.4,0) -- (2.2,0);
	\node at (-.2,.62) {\scriptsize{$b$}};
	\node at (-.2,-.6) {\scriptsize{$a$}};
	\emptyMap{(1,0)}{.4}
	\node at (1.8,.2) {\scriptsize{$\cP[0]$}};
	\roundNbox{unshaded}{(0,0)}{.4}{0}{0}{$f$}
\end{tikzpicture}
\]
on morphisms. 
To see that $\Phi^{\scriptscriptstyle \cZ}(f):\Phi^{\scriptscriptstyle \cZ}(a)\to \Phi^{\scriptscriptstyle \cZ}(b)$ is a morphism in $\cZ(\cM)$, i.e. that the equation
\[
(\id\otimes f)\circ e_{\Phi(a),\Phi(c)\otimes m^{\otimes n}} = e_{\Phi(b),\Phi(c)\otimes m^{\otimes n}} \circ (f \otimes \id)
\]
holds for any $\Phi(c)\otimes m^{\otimes n}\in \cM_0$, we verify:
$$
\begin{tikzpicture}[baseline=1.3cm]
	\draw (1.2,.5) -- (4.2,.5);
	\draw (1.8,2.1) -- (4.2,2.1);
	\draw (-.4,1) -- (-.4,3.1);
	\draw (.4,0)  .. controls ++(90:.3cm) and ++(270:.3cm) .. ($ (-.4,0) + (0,1) $);
	\draw[super thick, white] (-.4,0)  .. controls ++(90:.3cm) and ++(270:.3cm) ..($ (.4,0) + (0,1) $);
	\draw (-.4,0)  .. controls ++(90:.3cm) and ++(270:.3cm) ..($ (.4,0) + (0,1) $);
	\draw[super thick, white] (.4,1) -- (.4,3.1);
	\draw (.4,1) -- (.4,3.1);
	\node at (.2,1.2) {\scriptsize{$a$}};
	\node at (.2,2.8) {\scriptsize{$b$}};
	\node at (-.6,1.2) {\scriptsize{$c$}};
	\emptyMap{(1.4,2.1)}{.4}
	\tensorLeftId{(3,2.1)}{.9}{n}{0}{0}
	\identityMap{(1.2,.5)}{.4}{n\,\,}
	\roundNbox{unshaded}{(.4,2.1)}{.4}{0}{0}{$f$}
	\multiplication{(5.2,1.3)}{1}{n}{n}{n}
	\draw (6.2,1.3) -- (6.6,1.3);
\end{tikzpicture}
\,\,=\,
\begin{tikzpicture}[baseline=-.5cm]
	\draw (1.8,.5) -- (3.4,.5);
	\draw (.4,-1.1) -- (3.4,-1.1);
	\draw (-.4,0) -- (-.4,-2.1);
	\draw (1.2,0)  .. controls ++(90:.5cm) and ++(270:.5cm) .. ($ (-.4,0) + (0,1) $);
	\draw[super thick, white] (-.4,0)  .. controls ++(90:.5cm) and ++(270:.5cm) ..($ (1.2,0) + (0,1) $);
	\draw (-.4,0)  .. controls ++(90:.5cm) and ++(270:.5cm) ..($ (1.2,0) + (0,1) $);
	\draw[super thick, white] (1.2,0) -- (1.2,-2.1);
	\draw (1.2,0) -- (1.2,-2.1);
	\node at (1,-.2) {\scriptsize{$c$}};
	\node at (-.6,-1.8) {\scriptsize{$a$}};
	\node at (-.6,-.2) {\scriptsize{$b$}};
	\emptyMap{(.6,-1.1)}{.4}
	\tensorRightId{(2.2,-1.1)}{.8}{0}{0}{n}
	\identityMap{(1.8,.5)}{.4}{n\,\,}
	\roundNbox{unshaded}{(-.4,-1.1)}{.4}{0}{0}{$f$}
	\multiplication{(4.4,-.3)}{1}{n}{n}{n}
	\draw (5.4,-.3) -- (5.8,-.3);
\end{tikzpicture}\,\,.
$$
The equations $\Phi^{\scriptscriptstyle \cZ}(g\circ f) = \Phi^{\scriptscriptstyle \cZ}(g)\circ \Phi^{\scriptscriptstyle \cZ}(f)$ 
and $\Phi^{\scriptscriptstyle \cZ}(\id)=\id$ are trivial to check, and so we get a functor $\Phi^{\scriptscriptstyle \cZ}:\cC\to \cZ(\cM)$.

Our next task is to show that $\Phi^{\scriptscriptstyle \cZ}$ is a tensor functor.
Note that, by \eqref{Phi M eq3}, the functor $\Phi:\cC\to \cM_0$ is a strict tensor functor:
\[
\Phi(a\otimes b) = \Phi(a)\otimes \Phi(b),\quad\,\,\,\Phi(1) = 1.
\]
The half-braiding of $\Phi^{\scriptscriptstyle \cZ}(a\otimes b)$ is identical to that of $\Phi^{\scriptscriptstyle \cZ}(a)\otimes \Phi^{\scriptscriptstyle \cZ}(b)$:
$$
\begin{tikzpicture}[baseline=.4cm]
	\draw (.2,0)  .. controls ++(90:.2cm) and ++(270:.2cm) .. ($ (-.4,0) + (0,1) $);
	\draw[super thick, white] (-.2,0)  .. controls ++(90:.2cm) and ++(270:.2cm) ..($ (.2,0) + (0,1) $);
	\draw (-.2,0)  .. controls ++(90:.2cm) and ++(270:.2cm) ..($ (.2,0) + (0,1) $);
	\draw[super thick, white] (-.4,0)  .. controls ++(90:.2cm) and ++(270:.2cm) ..($ (0,0) + (0,1) $);
	\draw (-.4,0)  .. controls ++(90:.2cm) and ++(270:.2cm) ..($ (0,0) + (0,1) $);
	\node at (-.4,-.2) {\scriptsize{$a$}};
	\node at (-.2,-.2) {\scriptsize{$b$}};
	\node at (.2,-.2) {\scriptsize{$c$}};
	\identityMap{(1,.5)}{.5}{n}
	\draw (1.5,.5) -- (2,.5);
\end{tikzpicture}
\,\,=\!
\begin{tikzpicture}[baseline=.9cm]
	\draw (.7,0)  .. controls ++(90:.4cm) and ++(270:.4cm) .. ($ (-.5,0) + (0,1) $);
	\draw[super thick, white] (-.5,0)  .. controls ++(90:.4cm) and ++(270:.4cm) ..($ (.7,0) + (0,1) $);
	\draw (-.5,0)  .. controls ++(90:.4cm) and ++(270:.4cm) ..($ (.7,0) + (0,1) $);
	\draw (-1,2)  .. controls ++(270:.2cm) and ++(90:.2cm) .. ($ (-.5,0) + (0,1) $);
	\draw[super thick, white] (-1,1)  .. controls ++(90:.2cm) and ++(270:.2cm) ..($ (-.5,0) + (0,2) $);
	\draw (-1,1)  .. controls ++(90:.2cm) and ++(270:.2cm) ..($ (-.5,0) + (0,2) $);
	\node at (-1.15,.1) {\scriptsize{$a$}};
	\node at (-.6,.1) {\scriptsize{$b$}};
	\node at (-.6,1) {\scriptsize{$c$}};
	\draw (0,1.5) -- (3.3,1.5);
	\draw (1.4,.5) -- (3.3,.5);
	\draw (-1,0) -- (-1,1);
	\draw[super thick, white] (.7,1) -- (.7,2);
	\draw (.7,1) -- (.7,2);
	\identityMap{(1.3,.5)}{.4}{n}
	\identityMap{(.1,1.5)}{.4}{n}
	\tensorLeftIdZero{(2.5,.5)}{.4}{}{}
	\tensorRightIdZero{(2.5,1.5)}{.4}{}{}
	\multiplication{(4.3,1)}{1}{n}{n}{n}
	\draw (5.3,1) -- (5.7,1);
\end{tikzpicture}
$$
and the half-braiding on $\Phi^{\scriptscriptstyle \cZ}(1)$ is trivial.
It follows that $\Phi^{\scriptscriptstyle \cZ}:\cC\to \cZ(\cM_0)$, and hence $\Phi^{\scriptscriptstyle \cZ}:\cC\to \cZ(\cM)$, are strict tensor functors.

\begin{lem}
The functor $\Phi^{\scriptscriptstyle \cZ}$ is braided.
\end{lem}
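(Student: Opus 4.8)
The goal is to verify that the functor $\Phi^{\scriptscriptstyle \cZ}:\cC\to\cZ(\cM)$ is braided, i.e., that for all $c,d\in\cC$ the half-braiding of $\Phi^{\scriptscriptstyle \cZ}(c)$ evaluated on the object $\Phi^{\scriptscriptstyle \cZ}(d)$ agrees with the image under $\Phi^{\scriptscriptstyle \cZ}$ of the braiding $\beta_{c,d}$ in $\cC$. Concretely, one must show
\[
e_{\Phi(c),\Phi(d)\otimes m^{\otimes 0}} \,=\, \Phi^{\scriptscriptstyle \cZ}(\beta_{c,d}).
\]

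\begin{proof}
Since $\cZ(\cM)$ is braided with braiding given by the half-braidings, and since the forgetful functor $\cZ(\cM)\to\cM$ is faithful, it suffices to check that $\Phi^{\scriptscriptstyle \cZ}$ carries $\beta_{c,d}$ to the half-braiding $e_{\Phi(c),\Phi(d)}$; by the definition of the braiding on the center, a functor to $\cZ(\cM)$ is braided precisely when this identity holds for all $c,d$. Plugging $n=0$ into the definition \eqref{eq: That's the half-braiding!} of the half-braiding, we get
\[
e_{\Phi(c),\Phi(d)\otimes m^{\otimes 0}} \,=\, \Phi(\beta_{c,d})\otimes\id_{m^{\otimes 0}} \,=\, \Phi(\beta_{c,d}),
\]
where the last equality uses that $m^{\otimes 0}=1_{\cM}$ and that tensoring with the identity of the monoidal unit is (strictly, by our construction of $\cM_0$) the identity operation. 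On the other hand, by the definition of $\Phi^{\scriptscriptstyle \cZ}$ on morphisms we have $\Phi^{\scriptscriptstyle \cZ}(\beta_{c,d}) = \Phi(\beta_{c,d})$. Hence the two morphisms coincide, and $\Phi^{\scriptscriptstyle \cZ}$ is braided.
\end{proof}

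The only subtlety worth flagging is bookkeeping around the strict tensor structure of $\Phi:\cC\to\cM_0$ and the identification $\Phi(c)\otimes m^{\otimes 0}\cong\Phi(c)$ from \eqref{Phi M eq1}--\eqref{Phi M eq3}; once one has agreed to blur that distinction (as the text does just above this lemma), there is genuinely nothing left to compute — the half-braiding was \emph{defined} in \eqref{eq: That's the half-braiding!} so as to restrict to $\Phi(\beta_{c,d})$ on the image of $\Phi$, so the braided compatibility is immediate. If one prefers to avoid appealing to faithfulness of the forgetful functor, one can instead unwind the definition of the braiding in $\cZ(\cM)$ directly: it sends a pair $((\Phi(c),e_{\Phi(c)}),(\Phi(d),e_{\Phi(d)}))$ to $e_{\Phi(c),\Phi(d)}$, and one checks this equals $\Phi^{\scriptscriptstyle \cZ}(\beta_{c,d})$ by the display above. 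No step here is a real obstacle; the ``hard part'' is purely notational, namely keeping the strictifications straight so that $\Phi(\beta_{c,d})\otimes\id_{m^{\otimes 0}}$ is literally $\Phi(\beta_{c,d})=\Phi^{\scriptscriptstyle\cZ}(\beta_{c,d})$.
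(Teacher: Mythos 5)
Your proof is correct and follows essentially the same route as the paper's: both simply unwind the definition of the half-braiding \eqref{eq: That's the half-braiding!} at $n=0$ (where the identity coupon is the unit tangle valued in $\cP[0]$) and observe that the resulting morphism is literally $\Phi(\beta_{c,d})=\Phi^{\scriptscriptstyle\cZ}(\beta_{c,d})$, using that $\Phi^{\scriptscriptstyle\cZ}$ is strict monoidal so the braiding on $\cZ(\cM)$ at these objects is exactly $e_{\Phi(c),\Phi(d)}$. The extra framing via faithfulness of the forgetful functor is harmless but not needed, since the comparison already takes place at the level of the underlying morphisms in $\cM_0$.
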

\begin{proof}
For all $a,b\in\cC$, we have
$
\Phi^{\scriptscriptstyle \cZ}(\beta_{a,b})
=\,
\begin{tikzpicture}[baseline=.4cm]
	\draw (.2,0)  .. controls ++(90:.2cm) and ++(270:.2cm) .. ($ (-.2,0) + (0,1) $);
	\draw[super thick, white] (-.2,0)  .. controls ++(90:.2cm) and ++(270:.2cm) ..($ (.2,0) + (0,1) $);
	\draw (-.2,0)  .. controls ++(90:.2cm) and ++(270:.2cm) ..($ (.2,0) + (0,1) $);
	\node at (-.35,.1) {\scriptsize{$a$}};
	\node at (-.35,.9) {\scriptsize{$b$}};
	\emptyMap{(.8,.5)}{.4}
	\draw (1.2,.5) -- (1.6,.5);
\end{tikzpicture}
\,=\,
e_{\Phi(a),\Phi(b)}
$.
\end{proof}

\begin{lem}
The functor $\Phi^{\scriptscriptstyle \cZ}$ is compatible with the twists: $\theta_{\Phi^{\scriptscriptstyle \cZ}(a)}=\Phi^{\scriptscriptstyle \cZ}(\theta_a)$.
\end{lem}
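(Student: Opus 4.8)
The plan is to verify the identity $\theta_{\Phi^{\scriptscriptstyle \cZ}(a)}=\Phi^{\scriptscriptstyle \cZ}(\theta_a)$ by unwinding the definition of the twist in $\cZ(\cM)$ and computing everything inside the graphical calculus of $\cC$ described in Section~\ref{sec:DefinitionOfM}. Recall that for an object $(X,e_X)$ of the Drinfel'd center of a pivotal category, the twist is given by formula~\eqref{def:theta1} applied to $X$, but with the braiding $\beta_{X^{**},X}$ replaced by the half-braiding $e_{X}$ evaluated appropriately (equivalently, one can use $\theta_{(X,e_X)} = (\id\otimes\ev_{X^*})\circ(e_{X,X^*}\otimes\id)\circ\ldots\circ\varphi_X$ in the usual way, using that in $\cZ(\cM)$ the braiding of $(X,e_X)$ with $(X,e_X)$ is the half-braiding $e_{X,X}$). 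Since $\Phi^{\scriptscriptstyle\cZ}(a)=(\Phi(a),e_{\Phi(a)})$, I would substitute the explicit half-braiding from~\eqref{eq: That's the half-braiding!}, the explicit pivotal structure from~\eqref{eq: def: piv structure}, and the explicit duality data from Lemma~\ref{lem:DualityInM0} specialized to $n=0$ (so all the box-objects appearing are $\cP[0]$).

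The first step is to write out $\theta_{\Phi^{\scriptscriptstyle\cZ}(a)}$ as a composite of morphisms in $\cM_0(\Phi(a),\Phi(a^{**}))=\cC(a,a^{**}\otimes\cP[0])$, using that $\Phi$ is a strict tensor functor and that $\ev,\coev$ on objects of the form $\Phi(c)$ (i.e. $n=0$) are just $\Phi(\ev_c)$ and $\Phi(\coev_c)$ post/pre-composed with the empty-map coupon $\emptyMap$. The key observation is that all the coupons involved become labeled by the identity tangle on $\cP[0]$ or by $\eta$, so the whole composite collapses, via the unit axiom~\ref{reln:UnitMap} and the computation in Example~\ref{ex:Zero tangle}, to $\Phi$ applied to the corresponding composite in $\cC$. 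Concretely, one reads off that the composite defining $\theta_{\Phi^{\scriptscriptstyle\cZ}(a)}$ equals $\Phi$ of $(\id_a\otimes\ev_{a^*})\circ(\beta_{a^{**},a}\otimes\id_{a^*})\circ(\id_{a^{**}}\otimes\coev_a)\circ\varphi_a$, which is exactly $\theta_a$ by~\eqref{def:theta1}; hence $\theta_{\Phi^{\scriptscriptstyle\cZ}(a)}=\Phi(\theta_a)=\Phi^{\scriptscriptstyle\cZ}(\theta_a)$, the last equality by the definition of $\Phi^{\scriptscriptstyle\cZ}$ on morphisms.

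The main thing to be careful about --- and what I expect to be the only genuine obstacle --- is bookkeeping the half-braiding correctly: in $\cZ(\cM)$ the braiding $\beta_{(X,e_X),(Y,e_Y)}$ is by definition $e_{X,Y}$, so the $\beta_{a^{**},a}$ appearing in the center's twist formula is $e_{\Phi(a^{**}),\Phi(a)}=\Phi(\beta_{a^{**},a})\otimes\id_{m^{\otimes 0}}$ by~\eqref{eq: That's the half-braiding!}; tracking that this substitution indeed produces $\Phi(\beta_{a^{**},a})$ (and not, say, its inverse or a variant twisted by $\varphi$) is the one place where a sign/convention error could creep in. Once that identification is in hand, the remaining steps --- pulling $\Phi$ out of the composite using that it is a strict pivotal tensor functor (the pivotal structure $\varphi_{\Phi(a)}$ was defined in~\eqref{eq: def: piv structure} precisely so that it equals $\Phi(\varphi_a)$ up to the identity coupon) and that $\ev_{\Phi(c)}=\Phi(\ev_c)$, $\coev_{\Phi(c)}=\Phi(\coev_c)$ up to empty-map coupons --- are purely formal and amount to repeated application of~\ref{reln:UnitMap}. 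I would present the argument as a short chain of equalities, optionally illustrated by one string diagram in $\cC$ showing that every coupon is the identity on $\cP[0]$, and conclude that $\theta_{\Phi^{\scriptscriptstyle\cZ}(a)}=\Phi^{\scriptscriptstyle\cZ}(\theta_a)$ as claimed. This completes the verification that $\Phi^{\scriptscriptstyle\cZ}$ is a pivotal braided functor, and hence that $(\cM,\Phi^{\scriptscriptstyle\cZ},m)$ is a pointed pivotal module tensor category over $\cC$.
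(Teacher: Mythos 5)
Your proposal is correct and follows essentially the same route as the paper: the paper's proof likewise unwinds the twist formula \eqref{def:theta1} in $\cZ(\cM)$, substituting the explicit half-braiding \eqref{eq: That's the half-braiding!}, pivotal structure \eqref{eq: def: piv structure} and duality data of Lemma~\ref{lem:DualityInM0} with $n=0$, so that all coupons collapse to the empty coupon on $\cP[0]$ and the diagram is recognized as $\Phi^{\scriptscriptstyle\cZ}(\theta_a)$. The only difference is presentational: the paper carries this out as a two-step string-diagram computation rather than a chain of equalities.
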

\begin{proof}
We have
\begin{align*}
\theta_{\Phi^{\scriptscriptstyle \cZ}(a)} 
&=\!
\begin{tikzpicture}[baseline=-.5cm]
	\draw (1.5,-1) -- (3.4,-1);
	\draw (1.5,1) -- (3.4,1);
	\draw (2,-2) -- (3.4,-2);
	\draw (.5,0) -- (3.4,0);
	\draw[super thick, white] (-.6,1.6) -- (-.6,.6);
	\draw (-.6,1.6) -- (-.6,.6);
	\draw[super thick, white] (-.6,-2.8) -- (-.6,-.6);
	\draw (-.6,-2.8) -- (-.6,-.6);
	\draw (0,-.6)  .. controls ++(90:.4cm) and ++(270:.4cm) .. ($ (-.6,-.6) + (0,1.2) $);
	\draw[super thick, white] (-.6,-.6)  .. controls ++(90:.4cm) and ++(270:.4cm) ..($ (0,-.6) + (0,1.2) $);
	\draw (-.6,-.6)  .. controls ++(90:.4cm) and ++(270:.4cm) ..($ (0,-.6) + (0,1.2) $);
	\draw[super thick, white] (0,-.6) arc (-180:0:.5cm and .4cm) -- (1,.6) arc (0:180:.5cm and .4cm);
	\draw (0,-.6) arc (-180:0:.5cm and .4cm) -- (1,.6) arc (0:180:.5cm and .4cm);
	\node at (.8,-.6) {\scriptsize{$a^*$}};
	\node at (-.9,-1.4) {\scriptsize{$a^{**}$}};
	\node at (-.8,1.4) {\scriptsize{$a$}};
	\node at (-.8,-2.6) {\scriptsize{$a$}};
	\roundNbox{unshaded}{(-.6,-2)}{.4}{0}{0}{$\varphi_{a}$}
	\evaluationMap{(1.6,1)}{.4}{\scriptstyle 0}
	\coevaluationMap{(1.6,-1)}{.4}{\scriptstyle 0}
	\emptyMap{(2.1,-2)}{.4}
	\emptyMap{(.45,0)}{.35}
	\tensorRightIdZeroZero{(2.6,0)}{.4}
	\tensorLeftIdZeroZero{(2.6,1)}{.4}
	\tensorLeftIdZeroZero{(2.6,-1)}{.4}
	\draw[rounded corners=5pt, very thick, unshaded] (3.4,1.4) rectangle (5.4,-2.4);
	\coordinate (a) at (3.6,-.5);
	\draw[thick, red] (a) -- (4.2,-1.7);
	\draw[thick, red] (a) -- (4.2,-.9);
	\draw[thick, red] (a) -- (4.2,-.1);
	\draw[thick, red] (a) -- (4.2,.7);
	\draw (5.4,-.5) -- (5.8,-.5);
	\draw[very thick] (4.4,-.5) ellipse (.8 and 1.8);
	\filldraw[very thick, unshaded] (4.4,-1.7) circle (.2cm);
	\filldraw[very thick, unshaded] (4.4,-.9) circle (.2cm);
	\filldraw[very thick, unshaded] (4.4,-.1) circle (.2cm);
	\filldraw[very thick, unshaded] (4.4,.7) circle (.2cm);
\end{tikzpicture}
\,=
\begin{tikzpicture}[baseline=.5cm]
	\draw (0,1.6) -- (0,-.8);
	\loopIsoInverseReverse{(0,1)}
	\roundNbox{unshaded}{(0,0)}{.4}{0}{0}{$\varphi_a$}	
	\node at (-.2,-.6) {\scriptsize{$a$}};
	\node at (-.2,1.4) {\scriptsize{$a$}};
	\node at (-.3,.6) {\scriptsize{$a^{**}$}};
	\emptyMap{(1,.4)}{.4}
	\draw (1.4,.4) -- (1.8,.4);
\end{tikzpicture}
\,=\,
\Phi^{\scriptscriptstyle \cZ}(\theta_a).
\qedhere
\end{align*}
\end{proof}

This finishes the construction of the module tensor category $\cM$.
We sum up the results of this section in the following theorem:

\begin{thm}
\label{thm:ConstructCMG}
Given an anchored planar algebra $\cP$ in a braided pivotal category $\cC$, we can construct a pair $(\cM,m)$ where $\cM$ is a pivotal module tensor category, and $m\in \cM$ is a symmetrically self-dual object.
 Moreover, the functor $\Phi=F\circ \Phi^{\scriptscriptstyle \cZ}:\cC\to \cM$ admits a right adjoint $\Tr_\cC:\cM\to\cC$.
\end{thm}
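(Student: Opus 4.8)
The plan is to collect the constructions and verifications carried out over the course of Sections~\ref{sec:DefinitionOfM}--\ref{sec:The central functor} and to observe that, taken together, they supply exactly the data claimed. First I would recall the reconstruction of $\cM$: the category $\cM_0$ of Section~\ref{sec:DefinitionOfM} has objects the formal symbols ``$\Phi(c)\otimes m^{\otimes n}$'', hom-spaces given by \eqref{eq: that's the composition in M_0}, and composition \eqref{eq: that's how one defines composition}; the category $\cM$ is obtained from $\cM_0$ by applying the $2$-functor \eqref{eq: add direct sums and idempotent complete} that adjoins finite direct sums and idempotent completes, and the distinguished object is $m := {}$``$\Phi(1_\cC)\otimes m^{\otimes 1}$'' as in \eqref{Phi M eq1}. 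The string-diagram identities recorded in Section~\ref{sec:DefinitionOfM} already show that this composition is associative and unital, so $\cM_0$, hence $\cM$, is a linear category; moreover $m$ generates $\cM$ as a $\cC$-module tensor category (Definition~\ref{def: pointed}), since by construction every object of $\cM$ is a direct summand of a direct sum of objects of the form $\Phi(c)\otimes m^{\otimes n}$.

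Next I would equip $\cM$ with its tensor and pivotal structures. The formulas \eqref{Phi M eq3} on objects and \eqref{eq:TensorProductInM} on morphisms, together with associator and unitors inherited from $\cC$, define the monoidal structure on $\cM_0$; bifunctoriality of $\otimes$ and the pentagon and triangle axioms are the diagram chases in the tensor-structure subsection (the braiding anchor-dependence axiom of Definition~\ref{def: anchored planar algebra} is what makes $\otimes$ a bifunctor), and applying \eqref{eq: add direct sums and idempotent complete} transports this to $\cM$, making $\Phi:\cC\to\cM$ a strict tensor functor. Lemma~\ref{lem:DualityInM0} exhibits duals in $\cM_0$, the formula \eqref{eq: def: piv structure} defines the isomorphism $\varphi$ to the double dual, which is natural (using the balancing of $\cC$) and monoidal (Lemma~\ref{lem: The isomorphism varphi is monoidal}), so $\cM$ is pivotal; and \eqref{eq: It's symmetrically self-dual!} shows that $m$, with $\psi=\id$ under the identifications \eqref{eq: ev and coev for m}, is symmetrically self-dual. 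Finally, from Section~\ref{sec:The central functor}, the half-braidings \eqref{eq: That's the half-braiding!} are natural and satisfy the hexagon, so $\Phi^{\scriptscriptstyle\cZ}(c):=(\Phi(c),e_{\Phi(c)})$ and $\Phi^{\scriptscriptstyle\cZ}(f):=\Phi(f)$ define a functor $\Phi^{\scriptscriptstyle\cZ}:\cC\to\cZ(\cM)$ which is a strict tensor functor, is braided, and is compatible with the twists; since $\Phi=F\circ\Phi^{\scriptscriptstyle\cZ}$ respects the pivotal structure and the pivotal structure on $\cZ(\cM)$ is the one induced from $\cM$, the functor $\Phi^{\scriptscriptstyle\cZ}$ is pivotal. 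By Definition~\ref{def: def of ModTC} this makes $(\cM,\Phi^{\scriptscriptstyle\cZ})$ a pivotal module tensor category, and together with the symmetrically self-dual generator $m$ it is a pointed pivotal module tensor category.

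For the last clause I would invoke Lemma~\ref{lem: exhibits Tr as the right adjoint}: the functor $\Tr_\cC:\cM_0\to\cC$ defined by \eqref{eq: that's how you define Tr!} and \eqref{eq: That's Tr_cC(f) } is right adjoint to $\Phi:\cC\to\cM_0$, as in the adjunction \eqref{eq: the adjunction : pre}, and the adjunction \eqref{eq: the adjunction : statement} for $\cM$ then follows formally by applying the $2$-functor \eqref{eq: add direct sums and idempotent complete}. The one place that requires genuine input from the anchored planar algebra axioms — and hence the most delicate point — is the well-definedness of the extra structure on $\cM$, namely that $\otimes$ is a bifunctor and that $\Phi^{\scriptscriptstyle\cZ}$ genuinely lands in the Drinfel'd center (centrality of the morphisms $\Phi^{\scriptscriptstyle\cZ}(f)$ and naturality of the half-braidings), both of which rest on the braiding anchor-dependence relation of Definition~\ref{def: anchored planar algebra}. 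All remaining verifications are routine graphical-calculus manipulations in $\cC$, already carried out in the preceding subsections.
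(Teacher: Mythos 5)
Your proposal is correct and follows essentially the same route as the paper: Theorem~\ref{thm:ConstructCMG} is stated there as a summary of the constructions and verifications of Sections~\ref{sec:DefinitionOfM}--\ref{sec:The central functor}, and you assemble exactly those ingredients (the category $\cM_0$ and its completion, the tensor and pivotal structures, symmetric self-duality of $m$ via \eqref{eq: It's symmetrically self-dual!}, the central functor $\Phi^{\scriptscriptstyle\cZ}$, and the adjunction from Lemma~\ref{lem: exhibits Tr as the right adjoint} extended formally). You also correctly identify the anchor-dependence axiom as the essential input for bifunctoriality of $\otimes$ and centrality, which matches the paper's use of it.
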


We denote by $\Delta(\cP)$ the pointed module tensor category associated to the anchored planar algebra $\cP$ by means of the above construction.

\subsection{Functoriality}
\label{sec:APA_Functoriality}
In Theorem~\ref{thm:ConstructCMG}, given an anchored planar algebra,
we constructed a pointed module tensor category $\Delta(\cP)$.
Our next goal is to upgrade this to a functor
\[
\Delta : \APA \to \Mod_*
\]
between the category of anchored planar algebras in $\cC$ and that of pointed pivotal module tensor categories over $\cC$. 

Let $\cP_1$ and $\cP_2$ be anchored planar algebras, and let $\cM_1:=\Delta(\cP_1)$ and $\cM_2:=\Delta(\cP_2)$ be the associated pointed module tensor categories.
Given a morphism $H: \cP_1\to \cP_2$, we describe the associated functor 
$\Delta(H)=(\Delta(H),\gamma^{\Delta(H)}):\cM_1\to \cM_2$. 
It is enough to give $\Delta(H)$ on the full subcategory of $\cM_1$ with objects of the form $\Phi_1(c)\otimes m_1^{\otimes n}$.
We let
\begin{equation}\label{eq:def of Delta -- objects}
\Delta(H)(\Phi_1(c)\otimes m_1^{\otimes n}) := \Phi_2(c)\otimes m_2^{\otimes n}
\end{equation}
and $(\gamma^{\Delta(H)})_c:=\id: \Phi_2(c) \to G(\Phi_1(c))$.
The image of a morphism $f\in \cM_1(\Phi(a)\otimes m_1^{\otimes k}, \Phi(b)\otimes m_1^{\otimes n}) := \cC(a, b\otimes \cP[n+k])$ is then given by
$\Delta(H)(f):=(\id_b\otimes {H[n+k]})\circ f$:
\begin{equation}\label{eq:def of Delta}
\Delta(H)(f) =
\Delta(H)\left(
\begin{tikzpicture}[baseline=-.1cm]
	\draw (0,.8) -- (0,-.8);
	\draw (0,0) -- (2,0);
	\roundNbox{unshaded}{(0,0)}{.4}{0}{0}{$f$};
	\node at (-.2,.6) {\scriptsize{$b$}};
	\node at (1.2,.2) {\scriptsize{$\cP_1[n+k]$}};
	\node at (-.2,-.6) {\scriptsize{$a$}};
\end{tikzpicture}
\right)
:=\,
\begin{tikzpicture}[baseline=-.1cm]
	\draw (0,.8) -- (0,-.8);
	\draw (0,0) -- (3.8,0);
	\roundNbox{unshaded}{(2.1,0)}{.3}{0}{0}{$H$};
	\roundNbox{unshaded}{(0,0)}{.4}{0}{0}{$f$};
	\node at (-.2,.6) {\scriptsize{$b$}};
	\node at (1.1,.2) {\scriptsize{$\cP_1[n+k]$}};
	\node at (3.1,.2) {\scriptsize{$\cP_2[n+k]$}};
	\node at (-.2,-.6) {\scriptsize{$a$}};
\end{tikzpicture}
\,.
\end{equation}
To see that $\Delta(H)$ is a functor, we use that $H$ is a map of anchored planar algebras, and thus commutes with the action of tangles:
\begin{align*}
&\Delta(H)(g\circ f)
=\,
\begin{tikzpicture}[baseline=-.1cm, scale=.9]
	\draw (0,1.4) -- (0,-1.4);
	\draw (.4,.6) -- (1,.6);
	\draw (.4,-.6) -- (1,-.6);
	\draw (3.4,0) -- (4.8,0);
	\roundNbox{unshaded}{(4.2,0)}{.35}{0}{0}{$H$};
	\roundNbox{unshaded}{(0,.6)}{.4}{0}{0}{$g$};
	\roundNbox{unshaded}{(0,-.6)}{.4}{0}{0}{$f$};
	\multiplication{(2.2,0)}{1.2}{}{}{}
\end{tikzpicture}
\,=
\begin{tikzpicture}[baseline=-.1cm, scale=.9]
	\draw (0,1.4) -- (0,-1.4);
	\draw (.4,.6) -- (2,.6);
	\draw (.4,-.6) -- (2,-.6);
	\draw (4,0) -- (4.4,0);
	\roundNbox{unshaded}{(1,.6)}{.35}{0}{0}{$H$};
	\roundNbox{unshaded}{(1,-.6)}{.35}{0}{0}{$H$};
	\roundNbox{unshaded}{(0,.6)}{.4}{0}{0}{$g$};
	\roundNbox{unshaded}{(0,-.6)}{.4}{0}{0}{$f$};
	\multiplication{(2.8,0)}{1.2}{}{}{}
\end{tikzpicture}
=
\Delta(H)(g)\circ \Delta(H)(f)
\\
&\Delta(H)(\id_{\Phi_1(c)\otimes m_1^{\otimes n}}) 
=
\begin{tikzpicture}[baseline=-.1cm]
	\draw (.3,-.8) -- (.3,.8);
	\draw (1.5,0) -- (4.1,0);
	\identityMap{(1,0)}{.5}{n\,\,}
	\node at (.1,0) {\scriptsize{$c$}};
	\node at (2,.2) {\scriptsize{$\cP_1[2n]$}};
	\node at (3.6,.2) {\scriptsize{$\cP_2[2n]$}};
	\roundNbox{unshaded}{(2.8,0)}{.3}{0}{0}{$H$};
\end{tikzpicture}
=
\begin{tikzpicture}[baseline=-.1cm]
	\draw (.3,-.8) -- (.3,.8);
	\draw (1.5,0) -- (2.5,0);
	\identityMap{(1,0)}{.5}{n\,\,}
	\node at (.1,0) {\scriptsize{$c$}};
	\node at (2,.2) {\scriptsize{$\cP_2[2n]$}};
\end{tikzpicture}
=
\id_{\Phi_2(c)\otimes m_2^{\otimes n}}.
\end{align*}
The functor $\Delta(H)$ is a tensor functor because $\Delta(H)(1_{\cM_1}) = \Delta(H)(\Phi_1(1_\cC))=\Phi_2(1_\cC)=1_{\cM_2}$ and
\[
\begin{split}
\Delta(H)\big((\Phi_1(c)\otimes m_1^{\otimes n}) &\otimes (\Phi_1(d)\otimes m_1^{\otimes n'})\big) = \Delta(H)\big(\Phi_1(c\otimes d)\otimes m_1^{\otimes n+n'}\big)
\\&=\Phi_2(c\otimes d)\otimes m_2^{\otimes n+n'}
=(\Phi_2(c)\otimes m_2^{\otimes n}) \otimes (\Phi_2(d)\otimes m_2^{\otimes n'})
\\&=\Delta(H)(\Phi_1(c)\otimes m_1^{\otimes n}) \otimes \Delta(H)(\Phi_1(d)\otimes m_1^{\otimes n'}),
\end{split}
\]
and because:
\begin{align*}
\Delta(H)(f\otimes g)
&=
\begin{tikzpicture}[baseline=-.6cm]
	\draw (0,0) -- (2,0);
	\draw (-1,-1) -- (2,-1);
	\draw (3.4,-.5) -- (5,-.5);
	\draw (-1,.8) -- (-1,-1.8);
	\draw[super thick, white] (0,.8) -- (0,-1.8);
	\draw (0,.8) -- (0,-1.8);
	\roundNbox{unshaded}{(4.2,-.5)}{.3}{0}{0}{$H$}
	\roundNbox{unshaded}{(-1,-1)}{.4}{0}{0}{$f$}
	\roundNbox{unshaded}{(0,0)}{.4}{0}{0}{$g$}
	\tensor{(2.2,-.5)}{1.2}{}{}{}{}
\end{tikzpicture}
\displaybreak[1]
\\&=
\begin{tikzpicture}[baseline=-.6cm]
	\draw (0,0) -- (2,0);
	\draw (-1,-1) -- (2,-1);
	\draw (4.4,-.5) -- (5,-.5);
	\draw (-1,.8) -- (-1,-1.8);
	\draw[super thick, white] (0,.8) -- (0,-1.8);
	\draw (0,.8) -- (0,-1.8);
	\roundNbox{unshaded}{(1.2,0)}{.3}{0}{0}{$H$}
	\roundNbox{unshaded}{(1.2,-1)}{.3}{0}{0}{$H$}
	\roundNbox{unshaded}{(-1,-1)}{.4}{0}{0}{$f$}
	\roundNbox{unshaded}{(0,0)}{.4}{0}{0}{$g$}
	\tensor{(3.2,-.5)}{1.2}{}{}{}{}
\end{tikzpicture}
\displaybreak[1]
\\&=
\begin{tikzpicture}[baseline=-.6cm]
	\draw (.5,0) -- (2,0);
	\draw (-1,-1) -- (2,-1);
	\draw (4.4,-.5) -- (5,-.5);
	\draw (-1,.8) -- (-1,-1.8);
	\draw[super thick, white] (.5,.8) -- (.5,-1.8);
	\draw (.5,.8) -- (.5,-1.8);
	\roundNbox{unshaded}{(1.4,0)}{.3}{0}{0}{$H$}
	\roundNbox{unshaded}{(-.1,-1)}{.3}{0}{0}{$H$}
	\roundNbox{unshaded}{(-1,-1)}{.4}{0}{0}{$f$}
	\roundNbox{unshaded}{(.5,0)}{.4}{0}{0}{$g$}
	\tensor{(3.2,-.5)}{1.2}{}{}{}{}
\end{tikzpicture}
=
\Delta(H)(f)\otimes \Delta(H)(g).
\displaybreak[1]
\end{align*}
It is pivotal because
$$
\Delta(H)(\varphi_{\Phi_1(c)\otimes m_1^{\otimes n}})
\,=\!
\begin{tikzpicture}[baseline=-.1cm]
	\draw (0,-1) -- (0,1);
	\draw (1.4,0) -- (4.3,0);
	\roundNbox{unshaded}{(0,0)}{.4}{0}{0}{$\varphi_c$}
	\roundNbox{unshaded}{(2.8,0)}{.3}{0}{0}{$H$}
	\node at (-.2,-.8) {\scriptsize{$c$}};
	\node at (-.3,.8) {\scriptsize{$c^{**}$}};
	\identityMap{(1,0)}{.4}{n\,\,\,}
	\node at (1.9,.2) {\scriptsize{$\cP_1[2n]$}};
	\node at (3.7,.2) {\scriptsize{$\cP_2[2n]$}};
\end{tikzpicture}
\,=\!
\begin{tikzpicture}[baseline=-.1cm]
	\draw (0,-1) -- (0,1);
	\roundNbox{unshaded}{(0,0)}{.4}{0}{0}{$\varphi_c$}
	\draw (1.4,0) -- (2.4,0);
	\node at (-.2,-.8) {\scriptsize{$c$}};
	\node at (-.3,.8) {\scriptsize{$c^{**}$}};
	\identityMap{(1,0)}{.4}{n\,\,\,}
	\node at (1.9,.2) {\scriptsize{$\cP_2[2n]$}};
\end{tikzpicture}
=
\varphi_{\Phi_2(c)\otimes m_2^{\otimes n}}.
$$
$\Delta(H)$ is compatible with the half-braidings, and thus a functor of module tensor categories (Definition~\ref{defn:ModuleTensorCategoryFunctor}):
$$
\Delta(H)(e_{\Phi_1(c),\Phi_1(a)\otimes m_1^{\otimes n}})
=\!\!
\begin{tikzpicture}[baseline=.4cm]
	\draw (.2,0)  .. controls ++(90:.2cm) and ++(270:.2cm) .. ($ (-.2,0) + (0,1) $);
	\draw[super thick, white] (-.2,0)  .. controls ++(90:.2cm) and ++(270:.2cm) ..($ (.2,0) + (0,1) $);
	\draw (-.2,0)  .. controls ++(90:.2cm) and ++(270:.2cm) ..($ (.2,0) + (0,1) $);
	\node at (-.35,.1) {\scriptsize{$c$}};
	\node at (-.35,.9) {\scriptsize{$a$}};
	\identityMap{(1,.5)}{.5}{n\,\,}
	\draw (1.5,.5) -- (4.5,.5);
	\roundNbox{unshaded}{(3,.5)}{.3}{0}{0}{$H$}
	\node at (2.1,.7) {\scriptsize{$\cP_1[2n]$}};
	\node at (3.9,.7) {\scriptsize{$\cP_2[2n]$}};
\end{tikzpicture}
=\!\!
\begin{tikzpicture}[baseline=.4cm]
	\draw (.2,0)  .. controls ++(90:.2cm) and ++(270:.2cm) .. ($ (-.2,0) + (0,1) $);
	\draw[super thick, white] (-.2,0)  .. controls ++(90:.2cm) and ++(270:.2cm) ..($ (.2,0) + (0,1) $);
	\draw (-.2,0)  .. controls ++(90:.2cm) and ++(270:.2cm) ..($ (.2,0) + (0,1) $);
	\node at (-.35,.1) {\scriptsize{$c$}};
	\node at (-.35,.9) {\scriptsize{$a$}};
	\identityMap{(1,.5)}{.5}{n\,\,}
	\draw (1.5,.5) -- (2.4,.5);
	\node at (2,.7) {\scriptsize{$\cP_2[2n]$}};
\end{tikzpicture}
=
e_{\Phi_2(c),\Phi_2(a)\otimes m_2^{\otimes n}}
\,.
$$
Finally, it is a functor of pointed module tensor categories (Definition~\ref{def: pointed})
as it sends $m_1$ to $m_2$ and the condition $\psi_2=\delta_{m_1}\circ\Delta(H)(\psi_1)$ holds true (both sides are identity morphisms).

It remains to show that the assignement $H\mapsto\Delta(H)$ is a functor, i.e., that the relations $\Delta(H_1\circ H_2)=\Delta(H_1)\circ \Delta(H_2)$ and $\Delta(\id)=\id$ are satisfied.
Both are obvious from the definition \eqref{eq:def of Delta} of $\Delta$. We have shown:

\begin{thm}\label{thm: Here's Delta!}
Let $\cC$ be a braided pivotal category which admits finite direct sums and is idempotent complete. Then the map
\[
\Delta:\APA \to \Mod_*
\]
given by Theorem~\ref{thm:ConstructCMG} at the level of objects and by \eqref{eq:def of Delta -- objects} and \eqref{eq:def of Delta} at the level of morphisms is a functor
from the category of anchored planar algebras in $\cC$ to the (2-)category of pointed pivotal module tensor categories over $\cC$.
\end{thm}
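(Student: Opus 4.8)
The plan is to assemble the verification into the three things a functor requires: that $\Delta$ sends objects to objects, morphisms to morphisms, and respects composition and identities. The object level is already Theorem~\ref{thm:ConstructCMG}, which produces from an anchored planar algebra $\cP$ a pointed pivotal module tensor category $\Delta(\cP)=(\cM,m)$ whose action functor admits a right adjoint. So what remains is, first, to show that for each morphism $H:\cP_1\to\cP_2$ of anchored planar algebras the prescribed assignment \eqref{eq:def of Delta -- objects}--\eqref{eq:def of Delta} genuinely defines a morphism in $\Mod_*$, and second, to show that $\Delta$ preserves composition and identities.

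For the first part I would work through Definitions~\ref{defn:ModuleTensorCategoryFunctor} and~\ref{def: pointed} in order. Since $\cM_i$ is obtained from the full subcategory $\cM_0^{(i)}$ by freely adjoining direct sums and idempotent completing, it is enough to define $\Delta(H)$ on $\cM_0^{(1)}$ and extend formally; there it post-composes a morphism $f\in\cC(a,b\otimes\cP_1[n+k])$ with $\id_b\otimes H[n+k]$. Functoriality of $\Delta(H)$, strict monoidality, pivotality (with $\delta_x$ the identity), the hexagon and unit axioms for the action coherence $\gamma^{\Delta(H)}_c:=\id_{\Phi_2(c)}$, compatibility with the half-braidings \eqref{eq: That's the half-braiding!} (i.e.\ the square \eqref{eqn:GammaAndHalfBriadings}), and the pointedness conditions $\Delta(H)(m_1)=m_2$ and $\psi_2=\delta_{m_1}\circ\Delta(H)(\psi_1)$ are each a short diagram chase whose only ingredient is that the $H$-coupon slides freely through any tangle-coupon. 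Here one uses that the structure maps of $\Delta(\cP)$ --- composition \eqref{eq: that's how one defines composition}, tensor product \eqref{eq:TensorProductInM}, the duality data, the pivotal structure \eqref{eq: def: piv structure}, the half-braiding \eqref{eq: That's the half-braiding!}, the self-duality $\psi$ --- are all built from the generating tangles of Definition~\ref{defn:GeneratingTangles}, which $H$ commutes with by hypothesis.

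For the second part, $\Delta(\id_\cP)=\id_{\Delta(\cP)}$ holds because post-composing with the identity morphism does nothing, and $\Delta(H_1\circ H_2)=\Delta(H_1)\circ\Delta(H_2)$ holds because $\id_b\otimes(H_1\circ H_2)[n+k]=(\id_b\otimes H_1[n+k])\circ(\id_b\otimes H_2[n+k])$, the composition of morphisms of anchored planar algebras being componentwise. The main obstacle is not any single hard step but the sheer number of coherence diagrams one must check; the efficient way to organise the argument is to record once and for all the single transparency fact --- that a morphism of anchored planar algebras commutes with the action of all anchored planar tangles, in particular with the generators $u,\id_n,a_i,\bar a_i,p_{i,j}$ --- and then observe that every required commuting square is an instance of pulling an $H$-coupon past a tangle-coupon.
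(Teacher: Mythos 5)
Your proposal is correct and follows essentially the same route as the paper: define $\Delta(H)$ on the subcategory of objects $\Phi_1(c)\otimes m_1^{\otimes n}$ by post-composing with $\id\otimes H[\,\cdot\,]$, and reduce every required verification (functoriality, tensor and pivotal structure, compatibility with half-braidings via $\gamma^{\Delta(H)}=\id$, pointedness, and finally $\Delta(H_1\circ H_2)=\Delta(H_1)\circ\Delta(H_2)$, $\Delta(\id)=\id$) to the fact that $H$ commutes with the action of the relevant tangles, i.e.\ the $H$-coupon slides past the tangle coupons. This is exactly how the paper organizes its proof, so no further comparison is needed.
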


Note that we may compose $\Delta$ with the projection $\Mod_*\xrightarrow{\,\scriptscriptstyle\simeq\,}\tau_{\le 1}(\Mod_*)$ to get a functor $\APA\to\tau_{\le 1}(\Mod_*)$.


\section{Equivalence of categories}\label{sec:Bijective}
\label{sec:Equivalence}

In this section, we complete the proof of Theorem \ref{thm:EquivalenceOfCategories2} and show that the functors
\[
\Lambda\,:\,\, \Mod_* \,\rightleftarrows\, \APA\,\,:\,\Delta
\]
witness an equivalence of (2-)categories.
In view of Lemma~\ref{lem: it's secretly a 1-category}, this is equivalent to
the functors $\Lambda: \tau_{\le1}(\Mod_*)\rightleftarrows \APA:\Delta$ witnessing an equivalence of ordinary categories.

The proof is in two steps.
In Sections \ref{sec:CMGtoAPAtoCMG} and \ref{sec:CMGtoAPAtoCMG_Naturality}
we construct a natural isomorphism $\Delta\circ\Lambda \Rightarrow \id$, and in Section \ref{sec:APAtoCMGtoAPA}
we show that $\Lambda\circ\Delta = \id$. 

\subsection{Module tensor categories to planar algebras and back}\label{sec:CMGtoAPAtoCMG}

Let $(\cM,m)$ be a pointed module tensor category, and let $(\cM',m')$ be its image under $\Delta\circ\Lambda$.
We wish to construct an equivalence
\[
\Psi = (\Psi, \gamma^\Psi) : (\cM',m') \to (\cM,m)
\]
(an isomorphism in $\tau_{\le1}(\Mod_*)$).

Let $\Phi : \cC\to \cM$ and $\Phi' :\cC\to \cM'$ be the action functors of $\cM$ and of $\cM'$.
Since $\cM$ admits direct sums and is idempotent complete, it is enough to define $\Psi$
on the full subcategory $\cM'_0\subset \cM'$ with objects $\Phi'(c)\otimes m'^{\otimes n}$.
At the level of objects, the functor $\Psi:\cM'\to\cM$ is given by:
\begin{equation}\label{eq: def Psi -- objects}
\Psi (\Phi'(c)\otimes m'^{\otimes n}):=\Phi(c)\otimes m^{\otimes n}.
\end{equation}
At the level of morphisms, the image of
\[
f\in \cM'(\Phi'(a)\otimes m'^{\otimes i}, \Phi'(b)\otimes m'^{\otimes j})=\cC(a, b\otimes \cP[j+i])=\cC(a, b\otimes \Tr_\cC(m^{\otimes j+i}))
\]
under $\Psi$ is given by:
\begin{equation}\label{eq:Psi of f}
\begin{split}
\Psi(f)&:=\,\,\,
\begin{tikzpicture}[baseline=0cm, scale=.8]
	\plane{(-.4,-.9)}{2.8}{1.8}
	\node at (-2.4,1.7) {\scriptsize{$a$}};
	\node at (1,1.7) {\scriptsize{$b$}};
	\draw[thick, red] (-2.8,1.5) -- (-1.2,1.5);
	\draw[thick, blue] (-1.2,1.5) -- (2,1.5);
	\coordinate (a) at (0,-.5);
	\draw ($ (a) + (.8,.4) $)  -- ($ (a) + (1.6,.4) $) ;
	\draw ($ (a) + (.8,.2) $)  arc (90:-90:.2cm) -- ($ (a) + (-.6,-.2) $) ;
	\node at ($ (a) + (1.1,.6) $)  {\scriptsize{$j$}};
	\node at ($ (a) + (1.2,0) $)  {\scriptsize{$i$}};
	\CMbox{box}{(a)}{.8}{.8}{.4}{$\varepsilon$}
	\curvedTubeNoString{($ (a) + (-.3,.4) $)}{1}{0}{$f$}
\end{tikzpicture}
\\&=\,
(\id_{\Phi(b)}\otimes \id_{m^{\otimes j}} \otimes\;\! \bar\ev_{m^{\otimes i}}) 
\circ 
(\id_{\Phi(b)}\otimes \varepsilon_{m^{\otimes i+j}} \otimes\id_{m^{\otimes i}})
\circ 
(\Phi(f)\otimes \id_{m^{\otimes i}}),
\end{split}
\end{equation}
where we have written $\cP$ for $\Lambda(\cM)$.
The action coherence isomorhpisms $(\gamma^\Psi)_c:\Phi(c)\to\Psi\Phi'(c)$ are all identity morphisms.

Note that, at this point, it is not obvious that $\Psi$ is a functor.
Nevertheless, it is easy to see that, if $\Psi$ is a functor, then it is actually an equivalence of categories:

\begin{lem}\label{lem: yes, it's an equivalence of categories}
If $\Psi:\cM'\to\cM$ is a functor, then it is an equivalence of categories.
\end{lem}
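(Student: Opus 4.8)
The plan is to verify that, assuming $\Psi$ is a functor, it is essentially surjective and fully faithful, and then invoke the standard fact that a fully faithful essentially surjective functor between categories that admit direct sums and are idempotent complete is an equivalence. Essential surjectivity is immediate: by construction $\Psi(\Phi'(c)\otimes m'^{\otimes n}) = \Phi(c)\otimes m^{\otimes n}$, and since $m$ generates $\cM$ as a pointed module tensor category (Definition~\ref{def: pointed}), every object of $\cM$ is a direct summand of a direct sum of objects of the form $\Phi(c)\otimes m^{\otimes n}$; because $\Psi$ is defined on all of $\cM'$ by formally adjoining finite direct sums and idempotent completing the full subcategory $\cM'_0$, its image hits every such summand. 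So $\Psi$ is essentially surjective.

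For full faithfulness, since both $\cM$ and $\cM'$ are obtained from their respective subcategories $\cM_0$ and $\cM'_0$ by the same $2$-functor \eqref{eq: add direct sums and idempotent complete} (adjoining direct sums and idempotent completing), and $\Psi$ is defined as the extension of $\Psi|_{\cM'_0}$ along this $2$-functor, it suffices to check that $\Psi$ restricts to a fully faithful functor $\cM'_0 \to \cM_0'' $ — more precisely, that for all $a,b\in\cC$ and $i,j\ge 0$, the map
\[
\Psi\,:\,\cM'(\Phi'(a)\otimes m'^{\otimes i}, \Phi'(b)\otimes m'^{\otimes j}) \longrightarrow \cM(\Phi(a)\otimes m^{\otimes i}, \Phi(b)\otimes m^{\otimes j})
\]
is a bijection. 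By definition, the source of this map is $\cC(a, b\otimes \cP[j+i]) = \cC(a, b\otimes \Tr_\cC(m^{\otimes j+i}))$, and by the adjunction $\Phi\dashv\Tr_\cC$ (which exists by hypothesis) together with the graphical formula \eqref{eq:Psi of f}, the target $\cM(\Phi(a)\otimes m^{\otimes i}, \Phi(b)\otimes m^{\otimes j})$ can be identified with $\cM(\Phi(a), \Phi(b)\otimes m^{\otimes j}\otimes (m^{\otimes i})^*) = \cM(\Phi(a), \Phi(b)\otimes m^{\otimes j+i})$ (using that $m$ is self-dual, with duality data \eqref{eq: ev bar and coev bar}), which in turn is $\cC(a, b\otimes \Tr_\cC(m^{\otimes j+i}))$ by the adjunction. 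The plan is to check that, under these two identifications, $\Psi$ becomes the identity map: this follows by unwinding \eqref{eq:Psi of f}, using the mate correspondence for $\Phi\dashv\Tr_\cC$ and the zig-zag relations for the self-duality of $m^{\otimes i}$ (i.e.\ $\bar\ev_{m^{\otimes i}}$ and $\bar\coev_{m^{\otimes i}}$), so that the cap created by $\bar\ev_{m^{\otimes i}}$ is precisely the one undone by the bending involved in passing from a morphism out of $\Phi(a)$ to one out of $\Phi(a)\otimes m^{\otimes i}$.

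I expect the main obstacle to be keeping careful track of the identifications in the fully-faithful step: one must make sure that the diagrammatic manipulation in \eqref{eq:Psi of f}, combined with the bijections coming from the adjunction and from the self-duality of $m^{\otimes i}$, really does compose to the identity, rather than to some nontrivial automorphism (e.g.\ a twist or a rotation, which would creep in if the duality data for $m^{\otimes i}$ were not used consistently with the symmetric self-duality of $m$). The symmetric self-duality hypothesis on $m$ is exactly what guarantees there is no sign or twist discrepancy here. Once the identification $\Psi = \mathrm{id}$ on hom-sets is established, functoriality of $\Psi$ (assumed) ensures these bijections assemble compatibly with composition, and essential surjectivity plus full faithfulness give the equivalence.
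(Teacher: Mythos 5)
Your proposal is correct and follows essentially the same route as the paper: essential surjectivity from the fact that $m$ generates $\cM$, and full faithfulness on $\cM'_0$ by identifying $\Psi$ on hom-sets with a composite of natural bijections coming from the adjunction $\Phi\dashv\Tr_\cC$ and the (symmetric) self-duality data, then extending formally through direct sums and idempotent completion. The only difference is cosmetic: the paper's chain of identifications bends the $b$-strand first (passing through $\cM(\Phi(b^*\otimes a), m^{\otimes n+k})$), while you bend the $m^{\otimes i}$-strands, which matches \eqref{eq:Psi of f} slightly more directly but is the same argument.
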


\begin{proof}
We first show that $\Psi|_{\cM'_0}$ is fully faithful.
This holds true because the image $\Psi(f)$ of a morphism $f$ is also its image under the following sequence of isomorphisms:
\begin{align*}
\cM'(\Phi'(a)\otimes m'^{\otimes k}, \Phi'(b)\otimes m'^{\otimes n})
&=
\cC(a, b\otimes \cP[n+k]) 
\\&=
\cC(a, b\otimes \Tr_\cC(m^{\otimes n+k}))
\\&\cong 
\cC(b^*\otimes a , \Tr_\cC(m^{\otimes n+k})) 
\\&\cong 
\cM(\Phi(b^*\otimes a), m^{\otimes n+k}) 
\\&\cong 
\cM(\Phi(b)^* \otimes \Phi(a), m^{\otimes n}\otimes m^{\otimes k})
\\&\cong 
\cM(\Phi(a)\otimes m^{\otimes k} , \Phi(b)\otimes m^{\otimes n}).
\end{align*}
This implies that $\Psi$ is also fully faithful.
Finally, the functor $\Psi$ is essentially surjective because $m$ generates $\cM$ (Definition~\ref{def: pointed}).
\end{proof}

We now show that $\Psi$ is a functor:

\begin{lem}
\label{lem:ThetaPreservesComposition}
$\Psi$ preserves composition of morphisms:\, $\Psi(g)\circ\Psi(f)=\Psi(g\circ f)$.
\end{lem}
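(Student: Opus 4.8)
The plan is to verify $\Psi(g\circ f)=\Psi(g)\circ\Psi(f)$ by a direct diagrammatic computation in the calculus of strings on tubes recalled in Section~\ref{sec:TubeRelations}. Let $f\in\cM'(\Phi'(a)\otimes m'^{\otimes i},\Phi'(b)\otimes m'^{\otimes j})$ and $g\in\cM'(\Phi'(b)\otimes m'^{\otimes j},\Phi'(c)\otimes m'^{\otimes k})$, so $f$ corresponds to a morphism $a\to b\otimes\cP[j+i]$ and $g$ to $b\to c\otimes\cP[k+j]$ in $\cC$. By \eqref{eq: that's how one defines composition}, the composite $g\circ f$ in $\cM'=\Delta\Lambda(\cM)$ is computed by feeding $f$ and $g$ into the multiplication tangle, whose action on $\cP=\Lambda(\cM)$ is, by Theorem~\ref{thm: construct P from M and m}, a conjugate of the multiplication map $\mu$ by traciators $\tau^\pm$. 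So the right-hand side of the desired identity is obtained by plugging this explicit formula for $Z(\text{mult})$ into \eqref{eq:Psi of f}.

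First I would expand $\Psi(g\circ f)$ using \eqref{eq:Psi of f}: it is $\varepsilon$ applied to $\Phi$ of the $\cC$-morphism representing $g\circ f$, followed by the cap $\bar\ev_{m^{\otimes i}}$ contracting the last $i$ strands against the preceding $i$ strands. The morphism representing $g\circ f$ contains $\Phi$ of the traciator-and-multiplication composite, so after applying $\varepsilon$ and using naturality of $\varepsilon$ \ref{rel:NaturalityOfEpsilon} I can push $\Phi(\mu)$ and $\Phi(\tau^\pm)$ through $\varepsilon$ to rewrite things in terms of the unit $\varepsilon_{m^{\otimes \bullet}}$, the multiplication map $\mu$ (via \ref{rel:MultiplicationMap}, i.e. $\varepsilon_{x\otimes y}=(\varepsilon_x\otimes\varepsilon_y)\circ\Phi(\mu_{x,y})$ up to the adjunction), and the traciator \ref{rel:Traciator}. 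Then I would expand $\Psi(g)\circ\Psi(f)$ directly from \eqref{eq:Psi of f}: this is a vertical stacking of two ``curved tube'' diagrams, and I would use \ref{rel:NaturalityOfEpsilon} together with the zig-zag/duality identities for $m$ (the caps $\bar\ev$, $\bar\coev$ of the self-dual object, and relations \ref{rel:DualsAndTraciator}, \ref{rel:StringOverCap}) to slide the cap of the lower diagram up past the tube of the upper one.

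The key point is that both sides, after these manipulations, become the same composite of: $\Phi(f)$, $\Phi(g)$, two copies of $\varepsilon$, one multiplication $\mu$ (merging the tubes carrying $m^{\otimes(k+j)}$ and $m^{\otimes(j+i)}$), and a sequence of caps $\bar\ev_m$ removing the $2j$ middle strands, all assembled with the appropriate traciators to route the strands correctly around the tube. Matching these two expressions is exactly the content of the ``strong associativity'' relations \ref{rel:MultiplicationAssociative} and the traciator-multiplication compatibility \ref{rel:TwistMultiplicationAndTraciators}; I expect the proof to reduce, after bookkeeping, to one application of \ref{rel:MultiplicationAssociative} (in the variant proved in the appendix) plus \ref{rel:TraciatorComposition} to reconcile the nested traciators. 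The main obstacle is purely combinatorial: keeping track of the $\tau^+$'s and $\tau^-$'s and the precise index ranges $i,j,k$ so that the strand-routings on the two sides literally coincide; the underlying topological/diagrammatic moves are all instances of relations \ref{rel:CategorifiedTrace}--\ref{rel:MultiplicationAssociative}, so no genuinely new input is needed. I would organize the write-up as a chain of displayed tube diagrams, citing the relevant \ref{rel:...} at each step, rather than an equational computation in $\cC$.
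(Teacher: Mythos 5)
Your overall strategy — expand both sides in the tube calculus and compare — is the same as the paper's, but the step you defer to "bookkeeping" is actually the crux, and the step you identify as the crux is not the one the paper needs. The paper's proof computes $\Psi(g)\circ\Psi(f)$ in three diagram moves: stack the two defining pictures of \eqref{eq:Psi of f}, merge the two attaching maps into a single $\varepsilon$ preceded by a pair of pants using the mate relation \ref{rel:MultiplicationMap} (i.e.\ \cite[Lem.\,4.6]{1509.02937}), and then slide the cap using naturality of $\varepsilon$, \ref{rel:NaturalityOfEpsilon}. It then identifies the result with $\Psi(g\circ f)$ by the separate observation that the value of $Z$ on the composition tangle of \eqref{eq: that's how one defines composition} is
$\Tr_\cC(\id_{m^{\otimes k}}\otimes\bar\ev_{m^{\otimes j}}\otimes\id_{m^{\otimes i}})\circ\mu_{m^{\otimes k+j},m^{\otimes j+i}}$,
a formula containing no traciators at all. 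In particular neither \ref{rel:MultiplicationAssociative}, nor \ref{rel:TwistMultiplicationAndTraciators}, nor \ref{rel:TraciatorComposition} is invoked anywhere in the argument; those relations are used elsewhere (e.g.\ in the proof of Theorem~\ref{thm: construct P from M and m} and in the tensor-functoriality of $\Psi$), not here.

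The concrete gaps in your plan are: (1) you conflate the composition tangle with the generator $p_{i,j}$ — the latter acts by $\varpi_{i,j}=\tau^-\circ\mu\circ(\tau^+\otimes\id)$, but the tangle in \eqref{eq: that's how one defines composition} additionally caps off the $j$ intermediate strands, and when one evaluates it via Algorithm~\ref{alg:AssignMap} the traciators cancel, which is precisely why the final matching is easy; and (2) your asserted reduction of the matching to "one application of \ref{rel:MultiplicationAssociative} plus \ref{rel:TraciatorComposition}" is an unverified expectation and, as far as I can see, mispredicts the needed ingredients — once the composition tangle's value is computed, the two sides agree after only \ref{rel:MultiplicationMap} and \ref{rel:NaturalityOfEpsilon}. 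So the proposal is not wrong in spirit, but as written the decisive identity is asserted rather than proved, and the relations you plan to lean on are not the ones that close the argument; you should instead carry out the evaluation of the composition tangle (and watch the traciators drop out) before comparing with $\Psi(g)\circ\Psi(f)$.
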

\begin{proof}
It is enough to check this condition when $f$ and $g$ are morphisms in $\cM_0'$.
Let $f:\Phi'(a)\otimes m'^{\otimes i} \to \Phi'(b)\otimes m'^{\otimes j}$
and 
$g: \Phi'(b)\otimes m'^{\otimes j} \to \Phi'(c)\otimes m'^{\otimes k}$ be composable morphisms.
Then
\begin{align*}
\Psi(g)\circ \Psi(f)\,\,
&=
\begin{tikzpicture}[baseline=0cm, scale=.8]
	\plane{(-.4,-.9)}{5.3}{1.8}
	\node at (-2.4,1.7) {\scriptsize{$a$}};
	\node at (0,1.7) {\scriptsize{$b$}};
	\node at (3,1.7) {\scriptsize{$c$}};
	\draw[thick, red] (-2.8,1.5) -- (-1.2,1.5);
	\draw[thick, blue] (-1.2,1.5) -- (1.6,1.5);
	\draw[thick, DarkGreen] (.8,1.5) -- (4,1.5);
	\coordinate (a) at (0,-.5);
	\draw ($ (a) + (.8,.2) $)  arc (90:-90:.2cm) -- ($ (a) + (-.6,-.2) $) ;
	\node at ($ (a) + (1.1,.6) $)  {\scriptsize{$j$}};
	\node at ($ (a) + (1.2,0) $)  {\scriptsize{$i$}};
	\CMbox{box}{(a)}{.8}{.8}{.4}{$\varepsilon$}
	\curvedTubeNoString{($ (a) + (-.3,.4) $)}{1}{0}{$f$}
	\coordinate (b) at (2.5,-.5);
	\draw ($ (b) + (.8,.4) $)  -- ($ (b) + (1.6,.4) $) ;
	\node at ($ (b) + (1.1,.6) $)  {\scriptsize{$k$}};
	\node at ($ (b) + (1.2,0) $)  {\scriptsize{$j$}};
	\CMbox{box}{(b)}{.8}{.8}{.4}{$\varepsilon$}
	\curvedTubeNoString{($ (b) + (-.3,.4) $)}{1}{0}{$g$}
	\draw ($ (a) + (.8,.4) $) .. controls ++(0:.4cm) and ++(180:.6cm) .. ($ (b) + (.2,-.2) $) -- ($ (b) + (.8,-.2) $) arc (-90:90:.2cm);
\end{tikzpicture}
\displaybreak[1]
\\&=
\begin{tikzpicture}[baseline=1.2cm, scale=.8]
	\plane{(0,-.2)}{4}{3.5}
	\draw[thick, red] (-4,3.8) -- (-2.4,3.8);
	\draw[thick, blue] (-2.4,3.8) -- (-1,3.8);
	\draw[thick, DarkGreen] (-1,3.8) -- (3,3.8);
	\coordinate (a) at (.4,.2);
	\CMbox{box}{(a)}{.8}{.8}{.4}{$\varepsilon$}
	\curvedTubeNoString{($ (a) + (-.3,.4) $)}{2.6}{0}{$f$}
	\coordinate (b) at (0,2);
	\CMbox{box}{(b)}{.8}{.8}{.4}{$\varepsilon$}
	\curvedTubeNoString{($ (b) + (-.3,.4) $)}{.8}{0}{$g$}
	\draw ($ (a) + (.8,.2) $) arc (90:-90:.2cm) -- ($ (a) + (-.6,-.2) $) ;
	\draw ($ (b) + (.8,.4) $)  -- ($ (b) + (1.4,.4) $) ;
	\draw ($ (a) + (.8,.4) $)  .. controls ++(0:.6cm) and ++(0:.6cm) .. ($ (b) + (.8,.2) $) ;
	\node at ($ (a) + (1.1,0) $)  {\scriptsize{$i$}};
	\node at ($ (a) + (1.3,1.2) $)  {\scriptsize{$j$}};
	\node at ($ (b) + (1.3,.2) $)  {\scriptsize{$k$}};
\end{tikzpicture}
\displaybreak[1]
\\&=
\begin{tikzpicture}[scale=.8, baseline = -.6cm]
	\plane{(1,-2.3)}{4}{2.5}
	\CMbox{box}{(1.8,-1.7)}{1}{1}{.4}{$\varepsilon$}
	\draw[thick, unshaded] (1.3,-.5) arc (180:270:.2cm) arc (90:-90: .2 cm and .3 cm) arc (270:180:.8cm);
	\fill[white] (0,0) rectangle (2,1);
	\invertedPairOfPants{(0,1)}{};
	\draw[thick, red] (-2.4,1.4) -- (.3,1.4);
	\draw[thick, blue] (.3,1.4) -- (1.7,1.4);
	\draw[thick, DarkGreen] (1.7,1.4) -- (4,1.4);
	\roundNbox{unshaded}{(.3,1.4)}{.4}{0}{0}{$f$}
	\roundNbox{unshaded}{(1.7,1.4)}{.4}{0}{0}{$g$}
	\draw (2.8,-1.5) arc (90:-90:.25cm) -- (.7,-2);
	\draw (2.8,-1.3) -- (3,-1.3) arc (-90:90:.1cm) -- (2.8,-1.1);
	\draw (2.8, -.9) -- (3.6,-.9);
	\node at (3.1,-2)  {\scriptsize{$i$}};
	\node at (3.3,-1.2)  {\scriptsize{$j$}};
	\node at (3.1,-.7)  {\scriptsize{$k$}};
	\draw (.2,1) .. controls ++(270:.6cm) and ++(90:.8cm) .. (.8,-.5) arc (180:270:.7cm) -- (1.65,-1.2);
	\draw (.4,1) .. controls ++(270:.6cm) and ++(90:.9cm) .. (.9,-.5) arc (180:270:.6cm) -- (1.7,-1.1);
	\draw (1.6,1) .. controls ++(270:.6cm) and ++(90:.9cm) .. (1.1,-.5) arc (180:270:.4cm) -- (1.7,-.9);
	\draw (1.8,1) .. controls ++(270:.6cm) and ++(90:.8cm) .. (1.2,-.5) arc (180:270:.3cm) -- (1.65,-.8);
\end{tikzpicture}
&&
\text{\cite[Lem.\,4.6]{1509.02937}}
\displaybreak[1]
\\&=
\begin{tikzpicture}[scale=.8, baseline = -.6cm]
	\plane{(1,-2.3)}{4}{2.5}
	\CMbox{box}{(1.8,-1.7)}{1}{1}{.4}{$\varepsilon$}
	\draw[thick, unshaded] (1.3,-.5) arc (180:270:.2cm) arc (90:-90: .2 cm and .3 cm) arc (270:180:.8cm);
	\fill[white] (0,0) rectangle (2,1);
	\invertedPairOfPants{(0,1)}{};
	\draw[thick, red] (-2.4,1.4) -- (.3,1.4);
	\draw[thick, blue] (.3,1.4) -- (1.7,1.4);
	\draw[thick, DarkGreen] (1.7,1.4) -- (4,1.4);
	\roundNbox{unshaded}{(.3,1.4)}{.4}{0}{0}{$f$}
	\roundNbox{unshaded}{(1.7,1.4)}{.4}{0}{0}{$g$}
	\draw (2.8,-1.5) arc (90:-90:.25cm) -- (.7,-2);
	\draw (2.8, -.9) -- (3.6,-.9);
	\node at (3.1,-2)  {\scriptsize{$i$}};
	\node at (1,.05)  {\scriptsize{$j$}};
	\node at (3.1,-.7)  {\scriptsize{$k$}};
	\draw (.2,1) .. controls ++(270:.6cm) and ++(90:.6cm) .. (.9,-.5) arc (180:270:.6cm) -- (1.7,-1.1);
	\draw (.4,1) .. controls ++(270:1cm) and ++(270:1cm) .. (1.6,1);
	\draw (1.8,1) .. controls ++(270:.6cm) and ++(90:.6cm) .. (1.1,-.5) arc (180:270:.4cm) -- (1.7,-.9);
\end{tikzpicture}
&&
\text{(naturality of $\varepsilon$).}
\displaybreak[1]
\end{align*}
We need to show that this is equal to the image of
\newcommand{\multiplicationforhere}[5]{
	\draw ($ #1 + 5/6*(0,#2) $) -- ($ #1 - 5/6*(0,#2) $);
	\draw[thick, red] ($ #1 + 1/3*(0,#2) - 1/5*(#2,0) $) -- ($ #1 - 4/5*(#2,0) $);
	\draw[thick, red] ($ #1 - 1/3*(0,#2) - 1/5*(#2,0) $) -- ($ #1 - 4/5*(#2,0) $);
	\draw[very thick] #1 ellipse ({4/5*#2} and {5/6*#2});
	\filldraw[very thick, unshaded] ($ #1 + 1/3*(0,#2) $) circle (1/5*#2);
	\filldraw[very thick, unshaded] ($ #1 - 1/3*(0,#2) $) circle (1/5*#2);
	\node at ($ #1 + (.15,.05) + 5/8*(0,#2)$) {\scriptsize{$#5$}};
	\node at ($ #1 + (.15,0) $) {\scriptsize{$#4$}};
	\node at ($ #1 + (.15,-.02) - 5/8*(0,#2)$) {\scriptsize{$#3$}};
}
\[
g\circ f \in
\cM'(\Phi'(a)\otimes m'^{\otimes i}, \Phi'(c)\otimes m'^{\otimes k})
=\cC(a,c\otimes \cP[i+k])
\]
under $\Psi$, where $\cP=\Lambda(\cM,m)$
and $g\circ f$ is as in \eqref{eq: that's how one defines composition}.
By chasing through the various definitions (Theorem~\ref{thm: construct P from M and m} and Algorithm~\ref{alg:AssignMap}) one verifies that the value of $\cP=(\cP,Z)$ on the multiplication tangle is given by
\[
Z\Bigg(\tikz[baseline=-2]{\multiplicationforhere{(0,0)}{1}{i}{j}{k}}\Bigg)=
\Tr_\cC(\id_{m^{\otimes k}}\otimes\bar\ev_{m^{\otimes j}}
\!\!\begin{tabular}{l}\\[1mm]
$\otimes\id_{m^{\otimes i}})
\circ
\mu_{m^{\otimes {k+j}},m^{\otimes {j+i}}}:$
\\[1mm]
\,\, $
\Tr_\cC(m^{\otimes k+j})\otimes \Tr_\cC(m^{\otimes j+i})\to \Tr_\cC(m^{\otimes k+i}).$
\end{tabular}
\]
It follows that $\Psi (g\circ f) = \Psi(g)\circ \Psi(f)$, as desired.
\end{proof}

\begin{lem}
\label{lem:PsiPreservesIdentity}
$\Psi$ preserves identity morphisms:\, $\Psi(\id_X)=\id_{\Psi(X)}$.
\end{lem}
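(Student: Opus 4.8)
The plan is to reduce the statement to objects of the full subcategory $\cM'_0$ and then run a short computation with the formula~\eqref{eq:Psi of f}. Since $\Psi\colon\cM'\to\cM$ is the formal extension of $\Psi|_{\cM'_0}$ across finite direct sums and idempotent completion, and the identity morphism of an arbitrary object of $\cM'$ is assembled canonically from identity morphisms of objects of $\cM'_0$ (using Lemma~\ref{lem:ThetaPreservesComposition} for the idempotent part), it suffices to treat $X=\Phi'(c)\otimes m'^{\otimes n}$; here $\Psi(X)=\Phi(c)\otimes m^{\otimes n}$ by~\eqref{eq: def Psi -- objects}.

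First I would unwind the definition~\eqref{eq: ...and that's how one defines identities} of the identity morphism in $\cM'_0$: under the identification $\cM'_0(X,X)=\cC(c,c\otimes\cP[2n])$ of~\eqref{eq: that's the composition in M_0}, the morphism $\id_X$ is $\id_c\otimes\iota_n$, where $\iota_n\in\cC(1_\cC,\cP[2n])$ is the ``nested cup'' element $\bar\alpha_{n-1}\circ\cdots\circ\bar\alpha_0\circ\eta$. Because $\cP=\Lambda(\cM,m)$, the explicit formulas of Theorem~\ref{thm: construct P from M and m} (with $\eta=i$ and each $\bar\alpha_i$ equal to $\Tr_\cC$ of an insertion of $\bar\coev_m$), together with functoriality of $\Tr_\cC$, identify this element as
\[
\iota_n \;=\; \Tr_\cC(\bar\coev_{m^{\otimes n}})\circ i \;\colon\; 1_\cC\longrightarrow\Tr_\cC(m^{\otimes 2n})=\cP[2n].
\]
Substituting $f=\id_X$, $a=b=c$, $i=j=n$ into~\eqref{eq:Psi of f} and using that the action functor $\Phi\colon\cC\to\cM$ is a strict tensor functor, the composite becomes
\[
\Psi(\id_X)\;=\;\bigl(\id_{\Phi(c)}\otimes\id_{m^{\otimes n}}\otimes\bar\ev_{m^{\otimes n}}\bigr)\circ\bigl(\id_{\Phi(c)}\otimes[\varepsilon_{m^{\otimes 2n}}\circ\Phi(\iota_n)]\otimes\id_{m^{\otimes n}}\bigr).
\]

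The inner factor is where the adjunction does its work. By naturality of the counit (relation~\ref{rel:NaturalityOfEpsilon}) we have $\varepsilon_{m^{\otimes 2n}}\circ\Phi(\Tr_\cC(\bar\coev_{m^{\otimes n}}))=\bar\coev_{m^{\otimes n}}\circ\varepsilon_{1_\cM}$, and the triangle identity for $\Phi\dashv\Tr_\cC$ evaluated at $1_\cC$ gives $\varepsilon_{1_\cM}\circ\Phi(i)=\id_{1_\cM}$ (up to the unit coherences of $\Phi$; recall from~\ref{rel:UnitMap} that $i$ is precisely the unit of this adjunction at $1_\cC$). Hence $\varepsilon_{m^{\otimes 2n}}\circ\Phi(\iota_n)=\bar\coev_{m^{\otimes n}}$, so $\Psi(\id_X)$ equals $\id_{\Phi(c)}$ tensored with $(\id_{m^{\otimes n}}\otimes\bar\ev_{m^{\otimes n}})\circ(\bar\coev_{m^{\otimes n}}\otimes\id_{m^{\otimes n}})$. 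This last composite is $\id_{m^{\otimes n}}$ by the snake identity for the self-dual object $m^{\otimes n}$ --- the one place where the symmetric self-duality of $m$, through the maps $\bar\ev$, $\bar\coev$ of~\eqref{eq: ev bar and coev bar}, enters --- and therefore $\Psi(\id_X)=\id_{\Phi(c)\otimes m^{\otimes n}}=\id_{\Psi(X)}$.

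The only genuinely delicate point is the first step: correctly reading off $\id_X=\id_c\otimes\iota_n$ from~\eqref{eq: ...and that's how one defines identities} and then identifying $\iota_n$ with $\Tr_\cC(\bar\coev_{m^{\otimes n}})\circ i$, which requires passing carefully between the abstract definition of an anchored planar algebra and the concrete model $\Lambda(\cM,m)$ of Theorem~\ref{thm: construct P from M and m}, keeping track of the strict monoidal structure of $\Phi$ and of the unitors of $\cM$. Once that identification is in place the argument is just a triangle identity followed by a snake identity; it can equally well be carried out diagrammatically inside the picture of~\eqref{eq:Psi of f}, merging the capped tube produced by $\iota_n$ with the $\varepsilon$-box via the string-on-tubes calculus of Section~\ref{sec:TubeRelations}.
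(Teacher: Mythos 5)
Your proof is correct and follows essentially the same route as the paper's: reduce to $\cM'_0$, identify $\id_X$ with the nested-cup element $\Tr_\cC(\bar\coev_{m^{\otimes n}})\circ i$, and cancel it against the $\varepsilon$-box and the $\bar\ev$-cap — the paper carries out exactly these steps (counit naturality, the triangle identity for $\Phi\dashv\Tr_\cC$, and the snake identity) graphically in the tube calculus rather than symbolically. The only nitpick is that for a general pointed module tensor category $(\cM,m)$ the action functor $\Phi$ is not strict, but the coherence isomorphisms you suppress are harmless here, just as they are throughout the paper.
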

\begin{proof}
It is enough to check this condition when $X$ is an object of $\cM_0'$.
The identity morphism $\id_{\Phi'(a)\otimes m'^{\otimes n}}$ is equal to the morphism in $\cC$ given by
$$
\begin{tikzpicture}[baseline=-.1cm]
	\draw[thick, orange] (0,-.8) -- (0,.8);
	\draw (1.5,0) -- (2.5,0);
	\node at (-.2,0) {\scriptsize{$a$}};
	\identityMap{(1,0)}{.5}{n}
	\node at (2,.2) {\scriptsize{$\cP[2n]$}};
\end{tikzpicture}
\,\in \cC(a,a\otimes \cP[2n]).
$$
By definition, $\Psi(\id_{\Phi'(a)\otimes m'^{\otimes n}})$ is then given by:
$$
\begin{tikzpicture}[baseline=0cm, scale=.8]
	\node at (-.2,1.4) {\scriptsize{$a$}};
	\draw[thick, orange] (-2,1.2) -- (2,1.2);
	\plane{(0,-.9)}{2.4}{1.8}
	\CMbox{box}{(0,-.5)}{.8}{.8}{.4}{$\varepsilon$}
	\straightCappedTube{(-.2,-.1)}{.1}{1}
	\draw (-.1,0) -- (-1.2,0) arc (270:90:.1cm) -- (-.1,.2);
	\draw (.8,-.1) -- (1.6,-.1);
	\draw (.8,-.3) arc (90:-90:.2cm) -- (-.2,-.7);
	\node at (1.1,.1) {\scriptsize{$n$}};
	\node at (1.2,-.5) {\scriptsize{$n$}};
\end{tikzpicture}
=
\begin{tikzpicture}[baseline=0cm, scale=.8]
	\node at (0,1.4) {\scriptsize{$a$}};
	\draw[thick, orange] (-2,1.2) -- (2.4,1.2);
	\plane{(0,-.9)}{2.8}{1.8}
	\CMbox{box}{(0,-.5)}{.8}{.8}{.4}{$\varepsilon$}
	\straightCappedTube{(-.2,-.1)}{.1}{1}
	\draw (1.2,-.1) -- (2,-.1);
	\draw (1.2,-.1) arc (90:270:.1cm);
	\draw (1.2,-.3) arc (90:-90:.2cm) -- (-.2,-.7);
	\node at (1.4,.1) {\scriptsize{$n$}};
	\node at (1.6,-.5) {\scriptsize{$n$}};
\end{tikzpicture}
=
\begin{tikzpicture}[baseline=0cm, scale=.8]
	\node at (-.8,1.4) {\scriptsize{$a$}};
	\draw[thick, orange] (-2.6,1.2) -- (1.4,1.2);
	\plane{(-.4,-.9)}{2}{1.8}
	\draw (-1.3,0) -- (.7,0);
	\node at (-.4,.2) {\scriptsize{$n$}};
\end{tikzpicture}
=\,\id_{\Phi(a)\otimes m^{\otimes n}}.
$$
\end{proof}

We now wish to endow $\Psi$ with the structure of a tensor functor.
It is enough to do this on $\Psi|_{\cM_0'}:\cM'_0\to\cM$.
Recall that, by definition,
$\left(\Phi'(a)\otimes m'^{\otimes k} \right) \otimes \left( \Phi'(b)\otimes m'^{\otimes n}\right) = \Phi'(a\otimes b) \otimes m'^{\otimes k+n}$,
and $1_{\cM'}=\Phi'(1_\cC)$.
So we must provide natural isomorphisms 
\[
\Psi\left(\Phi'(a)\otimes m'^{\otimes k}\right) \otimes \Psi\left(\Phi'(b)\otimes m'^{\otimes n}\right) \to \Psi\left(\Phi'(a\otimes b)\otimes m'^{\otimes k+n}\right),\quad\,
1_\cM\to\Psi(\Phi'(1_\cC)),
\]
subject to the usual associativity and unitality conditions.
We let the former be the composite
\begin{align*}
\big(\Phi(a)\otimes m^{\otimes k} \big) \otimes \big( \Phi(b)\otimes m^{\otimes n}\big)  
\to\,\,
&\Phi(a)\otimes \big(m^{\otimes k}  \otimes \Phi(b)\big)\otimes m^{\otimes n}  
\\\xrightarrow{ e_{\Phi(b),m^{\otimes k}}^{-1} }
\,\,&\Phi(a)\otimes \big( \Phi(b)  \otimes m^{\otimes k}\big)\otimes m^{\otimes n}  
\to
\Phi(a\otimes b) \otimes m^{\otimes k+n},
\end{align*}
and the latter be the canonical isomorphism $1_\cM\to\Phi(1_\cC)$.

\begin{prop}\label{prop: yes, it's a tensor functor}
The above isomorphisms equip $\Psi:\cM'\to \cM$ with the structure of a tensor functor.
\end{prop}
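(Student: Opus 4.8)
The plan is to verify, in order: that $\nu_{X,Y}$ and $i$ are isomorphisms, that $\nu$ is natural in both arguments, and that $\nu$ and $i$ satisfy the associativity and unitality coherence axioms. It suffices throughout to work with the full subcategory $\cM'_0\subset\cM'$, since the tensor structure then extends formally along the $2$-functor that adjoins direct sums and idempotent completes.

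That $\nu_{X,Y}$ is an isomorphism is immediate from its definition: it is a composite of associators of $\cM$, of the monoidal structure isomorphisms of $\Phi:\cC\to\cM$, and of the half-braiding $e_{\Phi(b),m^{\otimes k}}^{-1}$, the last of which is invertible precisely because $\Phi^{\scriptscriptstyle \cZ}(b)=(\Phi(b),e_{\Phi(b)})$ is an object of $\cZ(\cM)$; similarly $i:1_\cM\to\Psi(\Phi'(1_\cC))=\Phi(1_\cC)$ is the canonical unit isomorphism of $\Phi$. For naturality I would fix morphisms $f\in\cM'_0(\Phi'(a)\otimes m'^{\otimes k_1},\Phi'(b)\otimes m'^{\otimes k_2})$ and $g\in\cM'_0(\Phi'(c)\otimes m'^{\otimes n_1},\Phi'(d)\otimes m'^{\otimes n_2})$ and show $\nu\circ(\Psi(f)\otimes\Psi(g))=\Psi(f\otimes g)\circ\nu$. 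Unwinding the definition \eqref{eq:Psi of f} of $\Psi$ on morphisms on both sides, together with the definition \eqref{eq:TensorProductInM} of the tensor product in $\cM'_0$ and the description of $Z$ on the tensor tangle provided by Theorem~\ref{thm: construct P from M and m} and Algorithm~\ref{alg:AssignMap}, the two composites become string diagrams built from $\varepsilon$, $\bar\ev_m$, $\mu$, the traciators and the half-braidings; the equality then follows from the naturality of the half-braidings, the naturality of $\varepsilon$ (relation~\ref{rel:NaturalityOfEpsilon}), and the tube-calculus relations of Section~\ref{sec:TubeRelations}, the argument being of the same flavour as the one already carried out in Lemma~\ref{lem:ThetaPreservesComposition}.

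For the associativity coherence I would expand the associator of $\cM'_0$ on objects of the form $\Phi'(a)\otimes m'^{\otimes k}$, which by construction is $\Phi'(\alpha_{a,b,c})$ decorated by the identity tangle, apply $\Psi$ to it using \eqref{eq:Psi of f}, and compare the resulting square with the associator of $\cM$ and with $\nu$; this reduces the pentagon-type diagram to the hexagon axiom for the half-braidings $e_{\Phi(-)}$ established in Section~\ref{sec:The central functor}, the pentagon in $\cM$, and the image under $\Phi$ of the $\cC$-pentagon, using also that $\Phi'$ is strict monoidal. The unitality axioms are handled the same way, via the unitors of $\cM'_0$ and $\cM$ and the triviality of the half-braiding on $\Phi^{\scriptscriptstyle \cZ}(1_\cC)$. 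I expect the associativity coherence to be the main obstacle: one must keep careful track of the interplay between the formal associator of $\cM'_0$ (which is inherited from $\cC$ through the identity tangle), the half-braidings entering the definition of $\nu$, and the hexagon axiom; once this bookkeeping is organized, the verification is a routine, if lengthy, diagram chase. (Alternatively, one can transport the tensor structure along the chain of natural hom-space isomorphisms used in the proof of Lemma~\ref{lem: yes, it's an equivalence of categories}, which makes naturality automatic but moves the bulk of the work onto identifying the transported tensor product with $\nu$.)
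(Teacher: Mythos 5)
Your outline matches the paper's proof: restrict to $\cM'_0$, check the associativity and unit coherences by reducing them to properties of the half-braidings $e_{\Phi(-)}$ (the paper does this with a single equality of braid-type string diagrams in $\cM$, leaving unitality as an exercise), and verify naturality of $\nu$ by unwinding $\Psi(f)\otimes\Psi(g)$ and $\Psi(f\otimes g)$ via the tube calculus together with the value of the tensor tangle from Theorem~\ref{thm: construct P from M and m} (which, by anchor dependence, is $\tau^-\circ\mu\circ(\tau^+\otimes\id)\circ\beta$). The only mild mismatch is your expectation that associativity is the hard part: in the paper that step is a quick half-braiding identity, while the substantive computation is the naturality of $\nu$, carried out exactly along the lines you sketch.
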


\begin{proof}
It is enough to check that $\Psi|_{\cM_0'}$ is a tensor functor.
Let $X=\Phi'(a)\otimes m'^{\otimes n}$, $Y=\Phi'(b)\otimes m'^{\otimes p}$, $Z=\Phi'(c)\otimes m'^{\otimes q}$ be objects.
The associativity condition
\[
\xymatrix{
\big(\Psi (X)\otimes \Psi(Y)\big) \otimes \Psi(Z)   \ar[d]_{\alpha}   \ar[r]
&\Psi(X\otimes Y) \otimes \Psi(Z)   \ar[r]
&\Psi\big((X\otimes Y) \otimes Z\big)   \ar[d]^{\alpha}
\\\Psi (X)\otimes \big(\Psi(Y) \otimes \Psi(Z)\big)   \ar[r]
&\Psi(X)\otimes \Psi\big(Y \otimes Z\big)   \ar[r]
&\Psi\big(X\otimes (Y \otimes Z)\big)}
\]
can be rewritten as:
{\scriptsize{
\[
\hspace{-.25cm}
\xymatrix@C=.5cm@R=.5cm{
\bigg(\!\!\left( \Phi(a)\otimes m^{\otimes n}\right)\otimes \left( \Phi(b)\otimes m^{\otimes p}\right)\!\!\bigg) \otimes \left( \Phi(c)\otimes m^{\otimes q}\right)
\ar[d]_{\alpha}
\ar[r]
&
\Phi\left((a\otimes b)\otimes m'^{\otimes n+p}\right)\otimes \left( \Phi(c)\otimes m^{\otimes q}\right)
\ar[r]
&
\Phi\big((a\otimes b)\otimes  c \big) \otimes m^{\otimes n+p+q} 
\ar[d]^{\alpha}
\\
\left( \Phi(a)\otimes m^{\otimes n}\right)\otimes \bigg(\!\!\left( \Phi(b)\otimes m^{\otimes p}\right) \otimes \left( \Phi(c)\otimes m^{\otimes q}\right)\!\!\bigg)
\ar[r]
&
\left( \Phi(a)\otimes m^{\otimes n}\right)\otimes \left( \Phi(b\otimes c)\otimes m^{\otimes p+q}\right)
\ar[r]
&
\Phi\big(a\otimes (b\otimes  c) \big) \otimes m^{\otimes n+p+q} 
}
\]
}}
\!\!This diagram is commutative because of the following equality of morphisms in $\cM$:
$$
\begin{tikzpicture}[baseline=.9cm]
	\plane{(0,0)}{5}{2}
	\draw[super thick, white] (-2.2,1.8) -- (3.6,1.8);
	\draw[thick, red] (-2.2,1.8) -- (3.6,1.8);
	\draw (-1.6,1.6) -- (1.8,1.6) .. controls ++(0:.3cm) and ++(180:.3cm) .. (3.2,.6) -- (4.4,.6);
	\draw (-1.2,1.2) -- (-.8,1.2) .. controls ++(0:.3cm) and ++(180:.3cm) .. (.2,.6) -- (.6,.6) .. controls ++(0:.3cm) and ++(180:.3cm) .. (1.6,.4) -- (4.6,.4);
	\draw[super thick, white] (-1.8,1.4) -- (-.6,1.4) .. controls ++(0:.3cm) and ++(180:.3cm) .. (.4,.8) -- (2,.8) .. controls ++(0:.3cm) and ++(180:.3cm) .. (3,1.6) -- (3.8,1.6); 
	\draw[thick, blue] (-1.8,1.4) -- (-.6,1.4) .. controls ++(0:.3cm) and ++(180:.3cm) .. (.4,.8) -- (2,.8) .. controls ++(0:.3cm) and ++(180:.3cm) .. (3,1.6) -- (3.8,1.6); 
	\draw (-.2,.2) -- (4.8,.2);
	\draw[super thick, white] (-.8,.4) -- (.8,.4) .. controls ++(0:.3cm) and ++(180:.3cm) .. (1.8,.6) -- (2.2,.6) .. controls ++(0:.3cm) and ++(180:.3cm) .. (3.2,1.4) -- (4,1.4); 
	\draw[thick, DarkGreen] (-.8,.4) -- (.8,.4) .. controls ++(0:.3cm) and ++(180:.3cm) .. (1.8,.6) -- (2.2,.6) .. controls ++(0:.3cm) and ++(180:.3cm) .. (3.2,1.4) -- (4,1.4); 
\end{tikzpicture}
\,\,=\,\,
\begin{tikzpicture}[baseline=.9cm]
	\plane{(0,0)}{5}{2}
	\draw[super thick, white] (-2.2,1.8) -- (3.6,1.8);
	\draw[thick, red] (-2.2,1.8) -- (3.6,1.8);
	\draw (-1.6,1.6) -- (-.4,1.6) .. controls ++(0:.3cm) and ++(180:.3cm) ..  (.8,1.4) -- (2,1.4) .. controls ++(0:.3cm) and ++(180:.3cm) .. (3.4,.6) -- (4.4,.6);
	\draw (-1.2,1.2) -- (1.8,1.2)  .. controls ++(0:.3cm) and ++(180:.3cm) .. (3.2,.4) -- (4.6,.4);
	\draw[super thick, white] (-1.8,1.4) -- (-.4,1.4) .. controls ++(0:.3cm) and ++(180:.3cm) .. (.6,1.6) -- (3.8,1.6); 
	\draw[thick, blue] (-1.8,1.4) -- (-.4,1.4) .. controls ++(0:.3cm) and ++(180:.3cm) .. (.6,1.6) -- (3.8,1.6); 
	\draw (-.2,.2) -- (4.8,.2);
	\draw[super thick, white] (-.8,.4) -- (2.2,.4) .. controls ++(0:.3cm) and ++(180:.3cm) .. (3.2,1.4) -- (4,1.4); 
	\draw[thick, DarkGreen] (-.8,.4) -- (2.2,.4) .. controls ++(0:.3cm) and ++(180:.3cm) .. (3.2,1.4) -- (4,1.4); 
\end{tikzpicture}
$$
The unitality conditions
\[
\begin{matrix} \xymatrix{
\Psi(1_{\cM'})\otimes\Psi(X)   \ar@{<-}[r]  \ar[d]
& 1_{\cM}\otimes\Psi(X)  \ar[d]
\\ \Psi\big(1_{\cM'}\otimes X\big)   \ar[r]
& \Psi(X)
}\end{matrix}
\quad\text{and}\quad
\begin{matrix}\xymatrix{
\Psi(X) \otimes 1_{\cM}  \ar[d]
& \Psi(X) \otimes \Psi(1_{\cM'})  \ar@{<-}[l]  \ar[d]
\\ \Psi(X) 
&\Psi\big(X \otimes 1_{\cM'}\big) \ar[l]
}\end{matrix}
\]
are left as an exercise to the reader.

It remains to show that the map $\Psi(X)\otimes\Psi(Y)\to \Psi(X\otimes Y)$ is natural in $X$ and $Y$.
Given two morphisms in $\cM'$ (which we may take with out loss of generality in $\cM_0'$)
\[
f:\Phi'(a)\otimes m'^{\otimes i} \to \Phi'(b)\otimes m'^{\otimes j},\qquad
g: \Phi'(c)\otimes m'^{\otimes k} \to \Phi'(d)\otimes m'^{\otimes \ell},
\]
we need to argue that the following diagram commutes:
\[
\xymatrix{
\Psi( \Phi'(a)\otimes m'^{\otimes i}) \otimes \Psi( \Phi'(c)\otimes m'^{\otimes k})
\ar[r]
\ar[d]|{\Psi(f)\otimes \Psi(g)}
&
\Psi\big(( \Phi'(a)\otimes m'^{\otimes i}) \otimes ( \Phi'(c)\otimes m'^{\otimes k}) \big)
\ar[d]^{\Psi(f\otimes g)}
\\
\Psi( \Phi'(b)\otimes m'^{\otimes j}) \otimes \Psi( \Phi'(d)\otimes m'^{\otimes \ell})
\ar[r]
&
\Psi\big(( \Phi'(b)\otimes m'^{\otimes j}) \otimes ( \Phi'(d)\otimes m'^{\otimes \ell}) \big)
}
\]
Equivalently:
\begin{equation}\label{fkwmdgo}
\begin{matrix}
\xymatrix{
( \Phi(a)\otimes m^{\otimes i}) \otimes ( \Phi(c)\otimes m^{\otimes k})
\ar[r]
\ar[d]|{\Psi(f)\otimes \Psi(g)}
&
\Phi(a\otimes b)\otimes m'^{\otimes i+k}
\ar[d]^{\Psi(f\otimes g)}
\\
( \Phi(b)\otimes m^{\otimes j}) \otimes ( \Phi(d)\otimes m^{\otimes \ell})
\ar[r]
&
\Phi(b\otimes d)\otimes m^{\otimes j+\ell}
}
\end{matrix}
\end{equation}

Recall the definition
\[
\Psi(f)=
\begin{tikzpicture}[baseline=0cm, scale=.8]
	\plane{(-.4,-.9)}{2.8}{1.8}
	\node at (-2.4,1.7) {\scriptsize{$a$}};
	\node at (1,1.7) {\scriptsize{$b$}};
	\draw[thick, red] (-2.8,1.5) -- (-1.2,1.5);
	\draw[thick, orange] (-1.2,1.5) -- (2,1.5);
	\coordinate (a) at (0,-.5);
	\draw ($ (a) + (.8,.4) $)  -- ($ (a) + (1.6,.4) $) ;
	\draw ($ (a) + (.8,.2) $)  arc (90:-90:.2cm) -- ($ (a) + (-.6,-.2) $) ;
	\node at ($ (a) + (1.1,.6) $)  {\scriptsize{$j$}};
	\node at ($ (a) + (1.2,0) $)  {\scriptsize{$i$}};
	\CMbox{box}{(a)}{.8}{.8}{.4}{$\varepsilon$}
	\curvedTubeNoString{($ (a) + (-.3,.4) $)}{1}{0}{$f$}
\end{tikzpicture}
\qquad\,\,\,\,
\Psi(g)=
\begin{tikzpicture}[baseline=0cm, scale=.8]
	\plane{(-.4,-.9)}{2.8}{1.8}
	\node at (-2.4,1.7) {\scriptsize{$c$}};
	\node at (1,1.7) {\scriptsize{$d$}};
	\draw[thick, blue] (-2.8,1.5) -- (-1.2,1.5);
	\draw[thick, DarkGreen] (-1.2,1.5) -- (2,1.5);
	\coordinate (a) at (0,-.5);
	\draw ($ (a) + (.8,.4) $)  -- ($ (a) + (1.6,.4) $) ;
	\draw ($ (a) + (.8,.2) $)  arc (90:-90:.2cm) -- ($ (a) + (-.6,-.2) $) ;
	\node at ($ (a) + (1.1,.6) $)  {\scriptsize{$\ell$}};
	\node at ($ (a) + (1.2,0) $)  {\scriptsize{$k$}};
	\CMbox{box}{(a)}{.8}{.8}{.4}{$\varepsilon$}
	\curvedTubeNoString{($ (a) + (-.3,.4) $)}{1}{0}{$g$}
\end{tikzpicture}
\]
of $\Psi(f)$ and $\Psi(g)$.
The composite\;\! $\tikz{\draw[-latex, scale=.4, baseline=6] (1.5,1) -- (0,1) -- (0,0) -- (1.5,0);}$\;\!
in \eqref{fkwmdgo} is given by
\begin{align}
&\phantom{=}
\begin{tikzpicture}[baseline=1.2cm, scale=.8]
	\plane{(0,-.6)}{4}{3}
	\coordinate (a) at (1.2,-.2);
	\coordinate (b) at (0,1);
	\draw[thick, red] ($ (b) + (-3.6,3) $) -- ($ (b) + (-2,3) $);
	\draw[thick, orange] ($ (b) + (2.8,3) $) -- ($ (b) + (-2,3) $);
	\draw ($ (b) + (.8,.2) $) arc (90:-90:.2cm) -- ($ (b) + (-1.4,-.2) $) ;
	\CMbox{box}{(b)}{.8}{.8}{.4}{$\varepsilon$}
	\curvedTubeNoString{($ (b) + (-.3,.4) $)}{2}{0}{$f$}
	\draw[super thick, white] ($ (a) + (-4.8,3) $) -- ($ (a) + (-2,3) $);
	\draw[super thick, white] ($ (a) + (1.6,3) $) -- ($ (a) + (-2,3) $);
	\draw[thick, blue] ($ (a) + (-4.8,3) $) -- ($ (a) + (-2,3) $);
	\draw[thick, DarkGreen] ($ (a) + (1.6,3) $) -- ($ (a) + (-2,3) $);
	\draw ($ (a) + (.8,.2) $) arc (90:-90:.2cm) -- ($ (a) + (-1.4,-.2) $) ;
	\CMbox{box}{(a)}{.8}{.8}{.4}{$\varepsilon$}
	\curvedTubeNoString{($ (a) + (-.3,.4) $)}{2}{0}{$g$}
	\draw ($ (a) + (.8,.4) $)  -- ($ (a) + (2,.4) $) ;
	\draw ($ (b) + (.8,.4) $)  -- ($ (b) + (2,.4) $) ;
	\node at ($ (b) + (-1.1,0) $)  {\scriptsize{$i$}};
	\node at ($ (a) + (-1.1,0) $)  {\scriptsize{$k$}};
	\node at ($ (b) + (1.2,.6) $)  {\scriptsize{$j$}};
	\node at ($ (a) + (1.2,.6) $)  {\scriptsize{$\ell$}};
\end{tikzpicture}
\notag
\\
&=\phantom{=}\begin{tikzpicture}[baseline=1.2cm, scale=.8]
	\plane{(-.6,-1)}{4.9}{3.4}
	\pgfmathsetmacro{\voffset}{.08};
	\pgfmathsetmacro{\hoffset}{.15};
	\coordinate (a) at (1.2,-.2);
	\coordinate (b) at (0,.8);
	\coordinate (tt) at ($ (b) + (-2.4,1.2) $);
	\filldraw[white] (-2.5,2.2) rectangle (-1.7,2.6);
	\draw ($ (b) + (.6,.6) $) arc (-90:90:.4cm) -- ($ (b) + (-1,1.4) $) .. controls ++(180:1cm) and ++(180:2.5cm) .. ($ (a) + (0,-.2) $) -- ($ (a) + (1.4,-.2) $) arc (90:-90:.1cm) -- ($ (a) + (-2.2,-.4) $);
	\CMbox{box}{(b)}{.8}{.8}{.4}{$\varepsilon$}
	\draw[thick, unshaded] ($ (b) + (-1.8,1.2) $) -- ($ (b) + (-1.8,1) $) arc (180:270:.2cm) -- ($ (b) + (-.3,.8) $) arc (90:-90: .15 cm and .2 cm) -- ($ (b) + (-1.8,.4) $) arc (270:180:.6cm) -- ($ (b) + (-2.4,1.2) $);
	\emptyCylinder{(tt)}{.3}{1}
	\draw ($ (tt) + 1*(\hoffset,0) + (0,-.1) $) .. controls ++(90:.4cm) and ++(270:.4cm) .. ($ (tt) + 3*(\hoffset,0) + (0,1) $);	
	\draw ($ (tt) + 1*(\hoffset,0) + (0,-.1) $) .. controls ++(270:.4cm) and ++(180:.3cm) .. ($ (tt) + (.6,-.7) $) -- ($ (tt) + (2.2,-.7) $);
	\draw ($ (tt) + 3*(\hoffset,0) + (0,-\voffset) $) .. controls ++(90:.2cm) and ++(225:.1cm) .. ($ (tt) + 4*(\hoffset,0) + (0,-\voffset) + (0,.45)$);
	\draw  ($ (tt) + 3*(\hoffset,0) + (0,-\voffset) $) .. controls ++(270:.2cm) and ++(180:.2cm) .. ($ (tt) + (.8,-.5) $) -- ($ (tt) + (2.2,-.5) $);
	\draw ($ (tt) + (\hoffset,1) $) .. controls ++(270:.2cm) and ++(45:.1cm) .. ($ (tt) + (0,1) + (0,-\voffset) + (0,-.45)$);
	\halfDottedEllipse{(tt)}{.3}{.1}
	\draw[thick, red] ($ (b) + (-4.2,2.6) $) -- ($ (b) + (-2,2.6) $);
	\draw[thick, orange] ($ (b) + (2.8,2.6) $) -- ($ (b) + (-2,2.6) $);
	\roundNbox{unshaded}{($ (b) + (-2.1,2.6) $)}{.4}{0}{0}{$f$}
	\draw[super thick, white] ($ (a) + (-4.8,3) $) -- ($ (a) + (-2,3) $);
	\draw[super thick, white] ($ (a) + (1.6,3) $) -- ($ (a) + (-2,3) $);
	\draw[thick, blue] ($ (a) + (-5.4,3) $) -- ($ (a) + (-2,3) $);
	\draw[thick, DarkGreen] ($ (a) + (1.6,3) $) -- ($ (a) + (-2,3) $);
	\draw ($ (a) + (.8,.2) $) -- ($ (a) + (1.4,.2) $) arc (90:-90:.4cm) -- ($ (a) + (-2,-.6) $) ;
	\CMbox{box}{(a)}{.8}{.8}{.4}{$\varepsilon$}
	\curvedTubeNoString{($ (a) + (-.3,.4) $)}{2}{0}{$g$}
	\draw ($ (a) + (.8,.4) $)  -- ($ (a) + (1.9,.4) $) ;
	\draw ($ (b) + (.8,.4) $)  -- ($ (b) + (2.1,.4) $) ;
	\node at ($ (a) + (-2,.3) $)  {\scriptsize{$i$}};
	\node at ($ (a) + (2,-.4) $)  {\scriptsize{$k$}};
	\node at ($ (b) + (1.2,.6) $)  {\scriptsize{$j$}};
	\node at ($ (a) + (1.2,.6) $)  {\scriptsize{$\ell$}};
\end{tikzpicture}
&&
\text{\cite[(15.a)]{1509.02937}}
\displaybreak[1]
\notag
\\&=
\begin{tikzpicture}[scale=.8, baseline = .6cm]
	\plane{(1,-2.5)}{5.2}{2.7}
	\pgfmathsetmacro{\voffset}{.08};
	\pgfmathsetmacro{\hoffset}{.15};
	\coordinate (tt) at (0,3);
	\draw (3.8,-.8) .. controls ++(0:.6cm) and ++(0:1cm) .. (2.8,0) .. controls ++(180:2cm) and ++(180:2cm) .. (3.4,-1.9) -- (4.2,-1.9) arc (90:-90:.1cm) -- (.6,-2.1);
	\CMbox{box}{(2.8,-1.7)}{1}{1}{.4}{$\varepsilon$}
	\draw[thick, unshaded] (1.3,-.5) arc (180:270:.2cm) -- (2.5,-.7) arc (90:-90: .2 cm and .3 cm) -- (1.5,-1.3) arc (270:180:.8cm);
	\fill[white] (0,0) rectangle (2,1);
	\draw ($ (tt) + 1*(\hoffset,0) + (0,-.1) $) .. controls ++(270:.6cm) and ++(90:.8cm) .. (1.6,1);
	\draw ($ (tt) + 3*(\hoffset,0) + (0,-\voffset) $) .. controls ++(270:.6cm) and ++(90:.7cm) .. (1.8,1);
	\braid{(0,1)}{.3}{2}
	\halfDottedEllipse{(0,1)}{.3}{.1}
	\halfDottedEllipse{(1.4,1)}{.3}{.1}
	\halfDottedEllipse{(0,3)}{.3}{.1}
	\invertedPairOfPants{(0,1)}{};
	\emptyCylinder{(0,3)}{.3}{1}
	\draw ($ (tt) + 1*(\hoffset,0) + (1.4,0) $) .. controls ++(270:.8cm) and ++(90:.8cm) .. (.2,1);
	\draw ($ (tt) + 3*(\hoffset,0) + (1.4,0) $) .. controls ++(270:.7cm) and ++(90:.8cm) .. (.4,1);
	\draw[thick, red] (-2,4.4) -- (.3,4.4);
	\draw[thick, orange] (5,4.4) -- (.3,4.4);
	\draw[super thick, white] (-2,3.4) -- (1.7,3.4);
	\draw[thick, blue] (-2,3.4) -- (1.7,3.4);
	\draw[thick, DarkGreen] (5,3.4) -- (1.7,3.4);
	\roundNbox{unshaded}{(.3,4.4)}{.4}{0}{0}{$f$}
	\roundNbox{unshaded}{(1.7,3.4)}{.4}{0}{0}{$g$}
	\draw (3.8,-1.5) -- (4.1,-1.5) arc (90:-90:.4cm) -- (.8,-2.3);
	\draw (3.8, -1.1) -- (4.8,-1.1);
	\draw (3.8, -1.2) -- (4.9,-1.2);
	\node at (2,-1.9)  {\scriptsize{$i$}};
	\node at (4.7,-2)  {\scriptsize{$k$}};
	\node at (4.7,-1.4)  {\scriptsize{$\ell$}};
	\node at (4.2,-.9)  {\scriptsize{$j$}};
	\draw (.2,1) .. controls ++(270:.6cm) and ++(90:.8cm) .. (.8,-.5) arc (180:270:.7cm) -- (2.65,-1.2);
	\draw (.4,1) .. controls ++(270:.6cm) and ++(90:.9cm) .. (.9,-.5) arc (180:270:.6cm) -- (2.7,-1.1);
	\draw (1.6,1) .. controls ++(270:.6cm) and ++(90:.9cm) .. (1.1,-.5) arc (180:270:.4cm) -- (2.7,-.9);
	\draw (1.8,1) .. controls ++(270:.6cm) and ++(90:.8cm) .. (1.2,-.5) arc (180:270:.3cm) -- (2.65,-.8) ;
	\draw ($ (tt) + 1*(\hoffset,0) + (0,-.1) $) .. controls ++(90:.4cm) and ++(270:.4cm) .. ($ (tt) + 3*(\hoffset,0) + (0,1) $);	
	\draw ($ (tt) + 3*(\hoffset,0) + (0,-\voffset) $) .. controls ++(90:.2cm) and ++(225:.1cm) .. ($ (tt) + 4*(\hoffset,0) + (0,-\voffset) + (0,.45)$);
	\draw ($ (tt) + (\hoffset,1) $) .. controls ++(270:.2cm) and ++(45:.1cm) .. ($ (tt) + (0,1) + (0,-\voffset) + (0,-.45)$);
\end{tikzpicture}
&&
\text{\cite[Lem.\,4.6]{1509.02937}}
\displaybreak[1]
\notag
\\&=\label{jyt,qgty7}
\begin{tikzpicture}[scale=.8, baseline = .6cm]
	\plane{(1,-2.3)}{5}{2.5}
	\pgfmathsetmacro{\voffset}{.08};
	\pgfmathsetmacro{\hoffset}{.15};
	\coordinate (tt) at (0,3);
	\CMbox{box}{(2.8,-1.7)}{1}{1}{.4}{$\varepsilon$}
	\draw[thick, unshaded] (1.3,-.5) arc (180:270:.2cm) -- (2.5,-.7) arc (90:-90: .2 cm and .3 cm) -- (1.5,-1.3) arc (270:180:.8cm);
	\fill[white] (0,0) rectangle (2,1);
	\draw ($ (tt) + 1*(\hoffset,0) + (0,-.1) $) .. controls ++(270:.6cm) and ++(90:.8cm) .. (1.6,1);
	\draw ($ (tt) + 3*(\hoffset,0) + (0,-\voffset) $) .. controls ++(270:.6cm) and ++(90:.7cm) .. (1.8,1);
	\braid{(0,1)}{.3}{2}
	\halfDottedEllipse{(0,1)}{.3}{.1}
	\halfDottedEllipse{(1.4,1)}{.3}{.1}
	\halfDottedEllipse{(0,3)}{.3}{.1}
	\invertedPairOfPants{(0,1)}{};
	\emptyCylinder{(0,3)}{.3}{1}
	\draw ($ (tt) + 1*(\hoffset,0) + (1.4,0) $) .. controls ++(270:.8cm) and ++(90:.8cm) .. (.2,1);
	\draw ($ (tt) + 3*(\hoffset,0) + (1.4,0) $) .. controls ++(270:.7cm) and ++(90:.8cm) .. (.4,1);
	\draw[thick, red] (-2,4.4) -- (.3,4.4);
	\draw[thick, orange] (5,4.4) -- (.3,4.4);
	\draw[super thick, white] (-2,3.4) -- (1.7,3.4);
	\draw[thick, blue] (-2,3.4) -- (1.7,3.4);
	\draw[thick, DarkGreen] (5,3.4) -- (1.7,3.4);
	\roundNbox{unshaded}{(.3,4.4)}{.4}{0}{0}{$f$}
	\roundNbox{unshaded}{(1.7,3.4)}{.4}{0}{0}{$g$}
	\draw (3.8,-1.5) arc (90:-90:.3cm) -- (.8,-2.1);
	\draw (3.8,-1.6) arc (90:-90:.2cm) -- (.7,-2);
	\draw (3.8, -1.1) -- (4.8,-1.1);
	\draw (3.8, -1.2) -- (4.9,-1.2);
	\node at (2,-1.8)  {\scriptsize{$i$}};
	\node at (4.2,-2)  {\scriptsize{$k$}};
	\node at (4.5,-1.4)  {\scriptsize{$\ell$}};
	\node at (4.2,-.9)  {\scriptsize{$j$}};
	\draw (.2,1) .. controls ++(270:.6cm) and ++(90:.8cm) .. (.8,-.5) arc (180:270:.7cm) -- (1.5,-1.2) .. controls ++(0:.1cm) and ++(180:.1cm) .. (2.4,-1.1) -- (2.7,-1.1);
	\draw (.4,1) .. controls ++(270:.6cm) and ++(90:.9cm) .. (.9,-.5) arc (180:270:.6cm) -- (1.5,-1.1) .. controls ++(0:.2cm) and ++(180:.2cm) .. (2.4,-.9) -- (2.7,-.9);
	\draw (1.6,1) .. controls ++(270:.6cm) and ++(90:.9cm) .. (1.1,-.5) arc (180:270:.4cm) -- (1.5,-.9) .. controls ++(0:.1cm) and ++(180:.1cm) .. (2.4,-.8) -- (2.65,-.8);
	\draw (1.8,1) .. controls ++(270:.6cm) and ++(90:.8cm) .. (1.2,-.5) arc (180:270:.3cm) -- (1.5,-.8) ;
	\draw (1.5,-.8) .. controls ++(0:.2cm) and ++(-135:.1cm) .. (1.8,-.7);
	\draw (2.1,-1.3) .. controls ++(45:.1cm) and ++(180:.3cm) .. (2.6,-1.2) -- (2.65,-1.2);
	\draw ($ (tt) + 1*(\hoffset,0) + (0,-.1) $) .. controls ++(90:.4cm) and ++(270:.4cm) .. ($ (tt) + 3*(\hoffset,0) + (0,1) $);	
	\draw ($ (tt) + 3*(\hoffset,0) + (0,-\voffset) $) .. controls ++(90:.2cm) and ++(225:.1cm) .. ($ (tt) + 4*(\hoffset,0) + (0,-\voffset) + (0,.45)$);
	\draw ($ (tt) + (\hoffset,1) $) .. controls ++(270:.2cm) and ++(45:.1cm) .. ($ (tt) + (0,1) + (0,-\voffset) + (0,-.45)$);
\end{tikzpicture}
&&
\text{\cite[(15.b)]{1509.02937}}
\displaybreak[1]
\end{align}
We need to show that this is equal to the image of
\newcommand{\tensorforhere}[6]{
	\draw ($ #1 + 16/23*(0,#2) - 1/3*(#2,0) $) -- ($ #1 - 16/23*(0,#2) - 1/3*(#2,0) $);
	\draw ($ #1 + 16/23*(0,#2) + 1/3*(#2,0) $) -- ($ #1 - 16/23*(0,#2) + 1/3*(#2,0) $);
	\draw[thick, red] ($ #1 - 1/3*(#2,0) - 1/5*(#2,0) $) -- ($ #1 - 5/6*(#2,0) $);
	\draw[thick, red] ($ #1 + 1/3*(#2,0) - 1/5*(#2,0) $) .. controls ++(180:.2cm) and ++(0:.2cm) .. ($ #1 - 1/3*(#2,0) + 2/5*(0,#2) $) .. controls ++(180:.2cm) and ++(0:.2cm) .. ($ #1 - 5/6*(#2,0) $);
	\draw[very thick] #1 ellipse ( {5/6*#2} and {78/100*#2});
	\filldraw[very thick, unshaded] ($ #1 + 1/3*(#2,0) $) circle (1/5*#2);
	\filldraw[very thick, unshaded] ($ #1 - 1/3*(#2,0) $) circle (1/5*#2);
	\node at ($ #1 + (.2,0) - 1/3*(#2,0) - .4*(0,#2) $) {\scriptsize{$#3$}};
	\node at ($ #1 + (.2,0) - 1/3*(#2,0) + .4*(0,#2) $) {\scriptsize{$#4$}};
	\node at ($ #1 + (.2,0) + 1/3*(#2,0) - .4*(0,#2) $) {\scriptsize{$#5$}};
	\node at ($ #1 + (.2,0) + 1/3*(#2,0) + .4*(0,#2) $) {\scriptsize{$#6$}};
}
\[
\begin{split}
f\otimes g &\,\in\,
\cM'\big((\Phi'(a)\otimes m'^{\otimes i}) \otimes (\Phi'(c)\otimes m'^{\otimes k}),
(\Phi'(b)\otimes m'^{\otimes j})\otimes (\Phi'(d)\otimes m'^{\otimes \ell})\big)
\\
&\,=\,
\cM'(\Phi'(a\otimes c)\otimes m'^{\otimes i+k},
\Phi'(b\otimes d)\otimes m'^{\otimes j+\ell})
\\&\,=\,\cC(a\otimes c,b\otimes d\otimes \cP[j+\ell+k+i])
\end{split}
\]
under $\Psi$.
Recall that $f\otimes g$ was defined in \eqref{eq:TensorProductInM}.
By the anchor dependence relation (Definition \ref{def: anchored planar algebra})
and the definition of $\varpi_{i,j}$ (Theorem~\ref{thm: construct P from M and m})
the value of $\cP=(\cP,Z)$ on the tensor product tangle is given by
\[
\begin{split}
Z\Bigg(\tikz[baseline=-17]{\tensorforhere{(3.2,-.5)}{1.2}{i}{j}{k}{\ell}}
\Bigg)
=\,&
Z\Bigg(\tikz[baseline=11, yscale=-1]{\tensorforhere{(3.2,-.5)}{1.2}{j}{i}{\ell}{k}}
\Bigg)\circ\beta_{\cP[\ell+k],\cP[j+i]}
\begin{tabular}{l}\\[1.5mm]
$=Z(p_{j,\ell+k})\circ\beta_{\cP[\ell+k],\cP[j+i]}$
\\[1.5mm]
$=\varpi_{j,\ell+k}\circ\beta_{\cP[\ell+k],\cP[j+i]}$
\end{tabular}
\\
=\tau^-_{m^{\otimes i},m^{\otimes j+\ell+k}} &\circ \mu_{m^{\otimes j+i},m^{\otimes \ell+k}}\circ (\tau^+_{m^{\otimes j},m^{\otimes i}} \otimes \id)\circ\beta_{\cP[\ell+k],\cP[j+i]}:
\\
&\hspace{2.3cm}\Tr_\cC(m^{\otimes \ell+k})\otimes \Tr_\cC(m^{\otimes j+i})\to \Tr_\cC(m^{\otimes j+\ell+k+i}).
\end{split}
\]
This composite is exactly the one which appears in \eqref{jyt,qgty7}, and so the latter is equal $\Psi(f\otimes g)$.
The commutativity of \eqref{fkwmdgo} follows.
\end{proof}

We also need to check that $\Psi$ is a pivotal functor (first item in Definition \ref{defn:ModuleTensorCategoryFunctor}):

\begin{lem}\label{lem: Psi is a pivotal functor}
The functor $\Psi:\cM'\to \cM$ is compatible with the pivotal structures:
\[
\Psi(\varphi_X)=\delta_{X^*}^{-1}\circ \delta_X^*\circ \varphi_{\Psi(X)}.
\]
\end{lem}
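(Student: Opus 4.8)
The plan is to verify the pivotality equation $\Psi(\varphi_X)=\delta_{X^*}^{-1}\circ \delta_X^*\circ \varphi_{\Psi(X)}$ on objects of the full subcategory $\cM'_0$, which suffices since both sides are natural and every object of $\cM'$ is a summand of a sum of objects $\Phi'(c)\otimes m'^{\otimes n}$. So I fix $X=\Phi'(a)\otimes m'^{\otimes n}$, whose dual (by the construction in Section~\ref{sec:DefinitionOfM}, specifically Lemma~\ref{lem:DualityInM0}, transported through $\Psi$) is $X^*=\Phi'(a^*)\otimes m'^{\otimes n}$, so that $\Psi(X)=\Phi(a)\otimes m^{\otimes n}$ and $\Psi(X^*)=\Phi(a^*)\otimes m^{\otimes n}$ in $\cM$.

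First I would unwind the three maps on the right-hand side. The pivotal structure $\varphi_{\Psi(X)}=\varphi_{\Phi(a)\otimes m^{\otimes n}}$ in $\cM$ is not defined by formula \eqref{eq: def: piv structure} — that formula lives in the category $\Delta(\cP)$, not in $\cM$ — so instead I use that $\cM$ is a genuine pivotal module tensor category with its given pivotal structure, and that $m\in\cM$ is symmetrically self-dual. The maps $\delta_X$ and $\delta_{X^*}$ are the comparison isomorphisms $\delta_x=((\Psi(\ev_x)\circ\nu_{x^*,x})\otimes\id_{\Psi(x)^*})\circ(\id_{\Psi(x^*)}\otimes\coev_{\Psi(x)})$ from Definition~\ref{defn:ModuleTensorCategoryFunctor}, built from the monoidal structure $\nu$ on $\Psi$ that was constructed in Proposition~\ref{prop: yes, it's a tensor functor}. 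The key computational input is that the evaluation and coevaluation maps of $\Phi'(c)\otimes m'^{\otimes n}$ in $\cM'=\Delta(\Lambda(\cM,m))$ are the ones from Lemma~\ref{lem:DualityInM0}, built out of $\ev_c,\coev_c$ in $\cC$ together with the $\cP$-action of the evaluation/coevaluation tangles; under $\Psi$ these translate, via the diagrammatic calculus \ref{rel:AttachingMap}--\ref{rel:DualsAndTraciator}, into $\ev_{\Phi(a)}\otimes\bar\ev_{m^{\otimes n}}$-type expressions in $\cM$. So the strategy is: compute $\Psi(\ev_X)$ and $\Psi(\coev_X)$ explicitly using the definition \eqref{eq:Psi of f} of $\Psi$ on morphisms, plug these into $\delta_X$ and $\delta_{X^*}$, and then check the asserted identity.

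The cleanest way to organize the verification is to exploit that both $\cM$ and $\cM'$ are pivotal and that $\Psi$ is already known (Proposition~\ref{prop: yes, it's a tensor functor}) to be a monoidal equivalence, so $\delta_X:\Psi(X^*)\to\Psi(X)^*$ is automatically an isomorphism and the claim $\Psi(\varphi_X)=\delta_{X^*}^{-1}\circ\delta_X^*\circ\varphi_{\Psi(X)}$ is equivalent, after rearranging, to the statement that the composite $\delta_{X^*}\circ\Psi(\varphi_X)\circ\varphi_{\Psi(X)}^{-1}$ equals $\delta_X^*$. Since $\varphi$ in $\cM'$ is monoidal and $\Psi$ is monoidal, both sides are monoidal natural transformations in $X$, so it is enough to check the identity separately on the two generating types of objects, namely $X=\Phi'(a)$ (where it reduces to the pivotality of $\Phi:\cC\to\cM$ as a pivotal functor together with the identity $\varphi_{\Phi'(a)}=\Phi'(\varphi_a)$ that follows from \eqref{eq: def: piv structure} with $n=0$) and $X=m'$ (where it reduces to the fact that $m'$ is symmetrically self-dual in $\cM'$, established in \eqref{eq: It's symmetrically self-dual!}, matching the symmetric self-duality of $m$ in $\cM$, via the normalization $\psi_2=\delta_{m_1}\circ\Psi(\psi_1)$ that is demanded of functors of pointed module tensor categories in Definition~\ref{def: pointed}). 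Each of these two cases is a short diagram chase in the tube calculus, using \ref{rel:ThetaAndTraciator} to identify the twist on $\Tr_\cC(x)$ with the traciator $\tau_{1,x}$, which is exactly where the twist appearing in $\varphi$ via \eqref{eq: def: piv structure} gets matched with the ambient twist in $\cM$.

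The main obstacle I expect is bookkeeping rather than conceptual: matching the pivotal structure $\varphi_{\Phi(a)\otimes m^{\otimes n}}$ as computed inside $\cM'=\Delta(\cP)$ by formula \eqref{eq: def: piv structure} — which explicitly involves the twist $\theta_a$ in $\cC$ together with the identity tangle's action — against the pivotal structure that $\Psi$ transports from $\cM$, and checking that the loop/twist isotopies hidden inside $\delta_X$ (which come from the zig-zag relations of Lemma~\ref{lem:DualityInM0} pushed through $\Psi$) exactly account for the discrepancy $\delta_{X^*}^{-1}\circ\delta_X^*$. Concretely, one must carefully track how the $\bar\ev$ and $\bar\coev$ caps on the $m$-strands interact with the attaching map $\varepsilon$ and the multiplication $\mu$, using associativity \ref{rel:MultiplicationAssociative} and naturality of $\varepsilon$ \ref{rel:NaturalityOfEpsilon}; the relevant manipulations are entirely analogous to those performed in the proof of Theorem~\ref{thm: construct P from M and m} for relation \ref{reln:RotationThetaMaps} and in Lemma~\ref{lem:ThetaPreservesComposition}, so I would model the argument on those, citing \ref{rel:StringOverCap}, \ref{rel:DualsAndTraciator}, and \ref{rel:ThetaAndTraciator} at the steps where the twist is produced.
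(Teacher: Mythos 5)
Your first half — restrict to $X=\Phi'(a)\otimes m'^{\otimes n}$, apply the definition \eqref{eq:Psi of f} of $\Psi$ to $\varphi_X$ (taken from \eqref{eq: def: piv structure}) and to $\ev_X,\coev_X$ (taken from Lemma~\ref{lem:DualityInM0}), and feed the results into $\delta_X,\delta_{X^*}$ — is exactly how the paper proceeds. But where the paper then just splits everything as a tensor product, $\Psi(\varphi_X)=\Phi(\varphi_a)\otimes\id_{m^{\otimes n}}$, $\delta_X=\dot\delta_a\otimes\psi^{\otimes n}$, and $\varphi_{\Phi(a)\otimes m^{\otimes n}}=\varphi_{\Phi(a)}\otimes\varphi_m^{\otimes n}=[(\dot\delta_a^*)^{-1}\circ\dot\delta_{a^*}\circ\Phi(\varphi_a)]\otimes[(\psi^*)^{-1}\circ\psi]^{\otimes n}$, and multiplies out, you instead route the argument through a reduction to the tensor generators $\Phi'(a)$ and $m'$. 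That reduction rests on the assertion that $\delta_{X^*}^{-1}\circ\delta_X^*\circ\varphi_{\Psi(X)}$ is monoidal in $X$, which requires the compatibility of $\delta$ with tensor products (an order-reversing identity through $(X\otimes Y)^*\cong Y^*\otimes X^*$) that you neither prove nor cite; it is true and standard, but it is an extra lemma the paper's direct computation makes unnecessary.

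The more serious problem is your base case $X=m'$: you justify it "via the normalization $\psi_2=\delta_{m_1}\circ G(\psi_1)$ that is demanded of functors of pointed module tensor categories in Definition~\ref{def: pointed}". At this stage $\Psi$ has not been shown to be such a functor — verifying precisely that identity (in the form $\psi=\delta_{m'}\circ\Psi(\psi')$, i.e.\ $\delta_{m'}=\psi$ since $\psi'=\id_{m'}$) is part of what this section must establish, and in the paper it is done only \emph{after} this lemma, via $\Psi(\bar\ev_{m'})=\bar\ev_m$. As written your $m'$-case is therefore circular; it is repairable within your own plan, since the explicit computation of $\Psi(\ev)$/$\Psi(\coev)$ you propose yields $\delta_{m'}=\psi$ directly, after which the case follows from the axiom $\psi^*\circ\varphi_m=\psi$, i.e.\ $\varphi_m=(\psi^*)^{-1}\circ\psi$. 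Finally, a smaller misdiagnosis: the difficulty you anticipate — matching twists through the tube calculus via \ref{rel:StringOverCap}, \ref{rel:DualsAndTraciator}, \ref{rel:ThetaAndTraciator} — does not arise, because the twist in \eqref{eq: def: piv structure} is already absorbed into $\varphi_a$ (the second equality there); beyond routine evaluations of $\Psi$ on these simple morphisms, the proof is a short algebraic computation using only monoidality of $\varphi$, pivotality of $\Phi$, and the symmetric self-duality of $m$.
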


\begin{proof}
Once again, it is enough to check this compatibility on objects of the form $X=\Phi'(a)\otimes m'^{\otimes n}$.
Using Lemma \ref{lem:DualityInM0} to identify $(\Phi'(a)\otimes m'^{\otimes n})^*$ with $\Phi'(a^*)\otimes m'^{\otimes n}$, 
the desired equation becomes:
\begin{equation}\label{eq: ffwjjohj}
\Psi(\varphi_{\Phi'(a)\otimes m'^{\otimes n}})=\delta_{\Phi'(a^*)\otimes m'^{\otimes n}}^{-1}\circ \delta_{\Phi'(a)\otimes m'^{\otimes n}}^*\circ \varphi_{\Phi(a)\otimes m^{\otimes n}}.
\end{equation}
Applying the definition \eqref{eq:Psi of f} of $\Psi$ to the morphism $\varphi_{\Phi'(a)\otimes m'^{\otimes n}}$ (described in \eqref{eq: def: piv structure}) yields
the formula:
\[
\Psi(\varphi_{\Phi'(a)\otimes m'^{\otimes n}})=\Phi(\varphi_a)\otimes \id_{m^{\otimes n}}.
\]
Let $\psi:m\to m^*$ be the isomorphism which equips $m$ with the structure of a symmetrically self-dual object (paragraph before Definition \ref{def: pointed}),
and let $\dot\delta_a:\Phi(a^*)\to\Phi(a)^*$ be the natural isomorphism coming from the fact that $\Phi$ is a tensor functor.
Using the monoidal property of $\varphi$, the fact that $\Phi$ is a pivotal functor, and the axiom satisfied by $\psi$, we get:
\[
\varphi_{\Phi(a)\otimes m^{\otimes n}}=
\varphi_{\Phi(a)}\otimes {\varphi_{m}}^{\otimes n}=
\big[(\dot\delta_a^*)^{-1}\circ \dot\delta_{a^*}\circ\Phi(\varphi_a)\big]\otimes\big[(\psi^*)^{-1}\circ\psi\big]^{\otimes n}
\]
Finally, using the definition of $\delta$ (Definition \ref{defn:ModuleTensorCategoryFunctor}), we compute:
\[
\begin{split}
\delta_{\Phi'(a)\otimes m'^{\otimes n}}
&=(\Psi(\ev_{\Phi'(a)\otimes m'^{\otimes n}})\otimes \id)\circ(\id\otimes \coev_{\Psi(\Phi'(a)\otimes m'^{\otimes n})})\\
&=((\Phi(\ev_a)\otimes\bar\ev_{m^{\otimes n}})\otimes \id)\circ(\id\otimes \coev_{\Phi(a)\otimes m^{\otimes n}})\qquad\qquad\quad[\text{\eqref{eq:Psi of f} and Lem.\,\ref{lem:DualityInM0}}]
\\
&=\big((\Phi(\ev_a)\otimes \id)\circ(\id\otimes \coev_{\Phi(a)})\big)
\otimes
\big((\bar\ev_m\otimes \id)\circ(\id\otimes \coev_{m})\big)^{\otimes n}\\
&=\dot\delta_a\otimes \psi^{\otimes n}
\end{split}
\]
It follows that
$\delta_{\Phi'(a^*)\otimes m'^{\otimes n}}^{-1}=\dot\delta_{a^*}^{-1}\otimes (\psi^{-1})^{\otimes n}$
and
$\delta_{\Phi'(a)\otimes m'^{\otimes n}}^*=\dot\delta_a^*\otimes (\psi^*)^{\otimes n}$.
The right hand side of \eqref{eq: ffwjjohj} is therefore given by:
\[
\big[\dot\delta_{a^*}^{-1}\otimes (\psi^{-1})^{\otimes n}\big]\circ
\big[\dot\delta_a^*\otimes (\psi^*)^{\otimes n}\big]\circ
\big[
[(\dot\delta_a^*)^{-1}\circ \dot\delta_{a^*}\circ\Phi(\varphi_a)]\otimes[(\psi^*)^{-1}\circ\psi]^{\otimes n}
\big]=\Phi(\varphi_a)\otimes \id_{m^{\otimes n}}\!.
\]
This finishes the proof.
\end{proof}

Recall that $(\cM',m')$ is the image of $(\cM,m)$ under the functor $\Delta\circ\Lambda$.
By Lemmas \ref{lem: yes, it's an equivalence of categories}, \ref{lem:ThetaPreservesComposition}, \ref{lem:PsiPreservesIdentity}, \ref{lem: Psi is a pivotal functor} and Proposition \ref{prop: yes, it's a tensor functor},
we know that $\Psi: \cM' \to \cM$ is an equivalence of pivotal tensor categories.
To verify that $\Psi$ is an equivalence of \emph{pointed} pivotal tensor categories (Definition \ref{def: pointed}), we still need to check that $\Psi(m')=m$ and that
\begin{equation}\label{eq: psi=delta circ Psi(psi)}
\psi=\delta_{m'}\circ \Psi(\psi').
\end{equation}
Here, $\psi:m\to m^*$ and $\psi':m'\to (m')^*$ are the isomorphisms which make $m$ and $m'$ into symmetrically self-dual objects,
and $\delta_{m'}:\Psi((m')^*)\to\Psi(m')^*$ is as described in Definition~\ref{defn:ModuleTensorCategoryFunctor}.
The first condition $\Psi(m')=m$ is clear.
The second condition \eqref{eq: psi=delta circ Psi(psi)} reduces to $\psi=\delta_{m'}$
because $\psi'=\id_{m'}$ (see the end of Section \ref{sec:The pivotal structure}).
Here, we have identified $m'$ with its dual object by means of the pairing \eqref{eq: ev and coev for m}.

By definition, $\delta_{m'}$ is given by
\[
\delta_{m'}:\Psi((m')^*)=\Psi(m')=m\xrightarrow{\id_{m}\otimes\coev_{m}}m\otimes m\otimes m^* \xrightarrow{\Psi(\bar\ev_{m'})\otimes\id_{m^*}}m^*=\Psi(m')^*.
\]
Now, applying the definition \eqref{eq:Psi of f} of $\Psi$ to the morphism $\bar\ev_{m'}$ (which is equal to \eqref{eq: ev and coev for m} because $\psi'=\id_{m'}$), we learn that $\Psi(\bar\ev_{m'})=\bar\ev_m$.
Finally, going back to the definition \eqref{eq: ev bar and coev bar} of $\bar\ev_m$, we get $\psi=\delta_{m'}$, as desired.
We conclude that $\Psi$ is an equivalence of pointed pivotal tensor categories.

In order to show that $\Psi:(\cM',m')\to(\cM,m)$ is an isomorphism in $\Mod_*$,
it remain to show that it is a functor of module tensor categories:

\begin{lem}
$\Psi:\cM'\to\cM$ satisfies the compatibility \eqref{eqn:GammaAndHalfBriadings}
in the definition of functor between module tensor categories:
\begin{equation}\label{gwdkgjn;ho}
\begin{matrix}\xymatrix{
\Phi(c) \otimes \Psi(X) 
\ar[d]^{e_{\Phi(c), \Psi(X)}}
\ar[rr]^(.52){\cong}
&&
\Psi(\Phi'(c) \otimes X)
\ar[d]^{\Psi(e_{\Phi'(c),X})}
\\
\Psi(X) \otimes \Phi(c)
\ar[rr]^(.52){\cong}
&&
\Psi(X\otimes \Phi'(c))
}\end{matrix}
\end{equation}
\end{lem}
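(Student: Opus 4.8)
The plan is to reduce the verification of the commuting square \eqref{gwdkgjn;ho} to two special cases and then propagate. It suffices to treat objects $X$ of the full subcategory $\cM'_0\subset\cM'$, since $\cM'$ is obtained from $\cM'_0$ by the $2$-functor \eqref{eq: add direct sums and idempotent complete} that adjoins direct sums and idempotent completes, and all the data and all the diagrams in sight extend along it in the evident way. Both composites in \eqref{gwdkgjn;ho} are natural in $X$: the right-then-down path because $e_{\Phi'(c),-}$ is a half-braiding and $\Psi$ together with its tensor coherence is functorial, and the down-then-right path because $e_{\Phi(c),-}$ is a half-braiding and the tensor coherence of $\Psi$ is natural. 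Moreover, since $e_{\Phi'(c),-}$ satisfies the hexagon axiom in $\cM'$, since $e_{\Phi(c),-}$ satisfies the hexagon axiom in $\cM$ (as $\Phi^{\scriptscriptstyle\cZ}$ lands in $\cZ(\cM)$), and since $\Psi$ is a tensor functor (Proposition~\ref{prop: yes, it's a tensor functor}), both composites have their value on $X\otimes Y$ determined by their values on $X$ and on $Y$ through one and the same hexagon-type recipe. As every object of $\cM'_0$ is isomorphic to some $\Phi'(c')\otimes m'^{\otimes n}$, hence is built from objects $\Phi'(c')$ ($c'\in\cC$) and from $m'$ by iterated tensor products, it is enough to check \eqref{gwdkgjn;ho} for $X=\Phi'(c')$ and for $X=m'$.

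For $X=\Phi'(c')$ the top and bottom horizontal isomorphisms of \eqref{gwdkgjn;ho} are the coherence maps of $\Psi$ at pairs of objects whose $m'$-tensor-factors are trivial; inspecting the formula for the tensor structure of $\Psi$ in the proof of Proposition~\ref{prop: yes, it's a tensor functor}, these reduce to the (strict) coherence isomorphisms of $\Phi'$ and $\Phi$, the only half-braiding occurring being $e_{-,m^{\otimes 0}}$, which is the evident unitor. Using \eqref{eq: That's the half-braiding!} we have $e_{\Phi'(c),\Phi'(c')}=\Phi'(\beta_{c,c'})$, and $\Psi\Phi'=\Phi$ on morphisms, while $e_{\Phi(c),\Phi(c')}=\Phi(\beta_{c,c'})$ because $\Phi^{\scriptscriptstyle\cZ}$ is braided; so both composites of \eqref{gwdkgjn;ho} collapse to $\Phi(\beta_{c,c'})$.

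For $X=m'$ we use that $m'$ is, by construction, $\Phi'(1_\cC)\otimes m'^{\otimes 1}$, so \eqref{eq: That's the half-braiding!} gives $e_{\Phi'(c),m'}=\id$ (since $\beta_{c,1_\cC}=\id$). The top coherence isomorphism again only involves $e_{-,m^{\otimes 0}}$ and so is the evident one; hence the right-then-down composite is the identity up to the evident unitors. For the down-then-right composite, the bottom coherence isomorphism $\Psi(m')\otimes\Phi(c)\to\Psi(m'\otimes\Phi'(c))$, read off the definition of the tensor structure of $\Psi$, is precisely $e_{\Phi(c),m}^{-1}$ up to the unitors of $\cM$; composing it with $e_{\Phi(c),\Psi(m')}=e_{\Phi(c),m}$ again yields the identity. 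The two composites therefore agree, and the reduction is complete.

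The step I expect to be the real obstacle is this $X=m'$ case: one must carefully track the associators and unitors entering the monoidal coherence of $\Psi$ and confirm that, after cancelling them, what remains is exactly $e_{\Phi(c),m}^{-1}$ — i.e. that the half-braiding of $\cM$ is precisely the one the reconstructed monoidal structure of $\Psi$ forces. Everything else is either naturality/hexagon bookkeeping or a direct appeal to the already-established facts that $\Phi^{\scriptscriptstyle\cZ}$ is braided, that $\Psi$ is a tensor functor, and that $\cM'$ is obtained from $\cM'_0$ by freely adjoining direct sums and idempotents.
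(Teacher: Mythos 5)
Your argument is correct, but it takes a genuinely different route from the paper. The paper proves the lemma by a single direct computation: for an arbitrary object $X=\Phi'(a)\otimes m'^{\otimes n}$ it writes out the composite (top coherence, then $\Psi(e_{\Phi'(c),X})$, then the inverse of the bottom coherence) explicitly, using the formula \eqref{eq:Psi of f} for $\Psi$ on morphisms and the definition \eqref{eq: That's the half-braiding!} of the half-braiding in $\cM'$, and observes that the resulting tube-with-strings picture simplifies to $e_{\Phi(c),\Phi(a)\otimes m^{\otimes n}}$; no reduction to generators is needed. Your proof instead transports $e_{\Phi'(c),-}$ along the tensor functor $\Psi$ and compares the two resulting half-braiding-type families on the generators $\Phi'(c')$ and $m'$, using naturality plus the fact that both families reproduce themselves on tensor products by the same hexagon recipe; the $m'$ case is then essentially tautological because the monoidal coherence of $\Psi$ was literally built out of $e_{\Phi(c),m}^{-1}$, and the $\Phi'(c')$ case follows from $e_{\Phi(c),\Phi(c')}=\Phi(\beta_{c,c'})$ together with $\Psi\Phi'=\Phi$. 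What your route buys is a conceptual explanation of why the lemma is forced by the choice of tensor structure on $\Psi$, with almost no graphical calculus; what it costs is that two routine but nontrivial verifications are only asserted: (i) that both composites really satisfy the \emph{same} multiplicativity formula (this is the standard ``transport of a half-braiding along a strong monoidal functor'' coherence chase, using the hexagon in $\cM'$ established in Section~\ref{sec:The central functor}, the hexagon in $\cM$ from $\Phi^{\scriptscriptstyle\cZ}$ landing in $\cZ(\cM)$, and the monoidal coherence of $\Psi$ from Proposition~\ref{prop: yes, it's a tensor functor}), and (ii) that $\Psi(\Phi'(g))=\Phi(g)$ on morphisms, which needs the triangle identity $\varepsilon_{1_\cM}\circ\Phi(i)=\id$ applied to the definition \eqref{eq:Psi of f}. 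Both can be filled in, so there is no gap in substance, but if you write this up you should include those two checks (and the unitor bookkeeping identifying $\Psi(m')=\Phi(1_\cC)\otimes m$ with $m$), whereas the paper's one-shot computation sidesteps them.
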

\begin{proof}
Without loss of generality, we take $X$ of the form $\Phi'(a)\otimes m'^{\otimes n}$.
Recall that $e_{\Phi'(c),\Phi'(a)\otimes m'^{\otimes n}}$ was defined in \eqref{eq: That's the half-braiding!}.
The composite\;\! $\tikz{\draw[-latex, scale=.38, baseline=6] (0,.9) --(1.5,.9) -- (1.5,0) -- (0,0);}$\;\! in \eqref{gwdkgjn;ho} is then given by:
$$
\begin{tikzpicture}[baseline=0cm, scale=.8]
	\plane{(0,-.9)}{3.8}{1.8}
	\draw (2.4,-.9) -- (.6,.9);	
	\CMbox{box}{(0,-.5)}{.8}{.8}{.4}{$\varepsilon$}
	\straightCappedTube{(-.2,-.1)}{.1}{1}
	\draw (-.1,0) -- (-1.2,0) arc (270:90:.1cm) -- (-.1,.2);
	\draw (.8,-.1) -- (1.6,-.1) .. controls ++(0:.4cm) and ++(180:.4cm) .. (2,.7) -- (2.2,.7);
	\draw (.8,-.3) arc (90:-90:.2cm) -- (-.2,-.7);
	\node at (1.1,.1) {\scriptsize{$n$}};
	\node at (1.2,-.5) {\scriptsize{$n$}};
	\node at (-1.2,1.8) {\scriptsize{$c$}};
	\node at (1.2,1.8) {\scriptsize{$a$}};
	\draw[thick, blue] (-2,1.2) -- (-.4,1.2) .. controls ++(0:.4cm) and ++(180:.4cm) .. (.4,1.6) -- (2,1.6);
	\draw[super thick, white] (-2,1.6) -- (-.4,1.6) .. controls ++(0:.4cm) and ++(180:.4cm) .. (.4,1.2) -- (.8,1.2) .. controls ++(0:.5cm) and ++(180:.5cm) .. (3.1,-.7) -- (3.9,-.7);
	\draw[thick, red] (-2,1.6) -- (-.4,1.6) .. controls ++(0:.4cm) and ++(180:.4cm) .. (.4,1.2) -- (.8,1.2) .. controls ++(0:.5cm) and ++(180:.5cm) .. (3.1,-.7) -- (3.9,-.7);
\end{tikzpicture}
=
\begin{tikzpicture}[baseline=.9cm]
	\plane{(0,0)}{2}{2}
	\draw[super thick, white] (-.8,.4) --  (-.2,.4) .. controls ++(0:.6cm) and ++(180:.6cm) .. (0,1.8) -- (.6,1.8);
	\draw[thick, blue] (-.8,.4) --  (-.2,.4) .. controls ++(0:.6cm) and ++(180:.6cm) .. (0,1.8) -- (.6,1.8);
	\draw (-.6,.2) --  (0,.2) .. controls ++(0:.6cm) and ++(180:.6cm) .. (.2,1.6) -- (.8,1.6);
	\draw[super thick, white] (-2.2,1.8) --  (-1.4,1.8) .. controls ++(0:.6cm) and ++(180:.6cm) .. (1.4,.2) -- (2.2,.2);
	\draw[thick, red] (-2.2,1.8) --  (-1.4,1.8) .. controls ++(0:.6cm) and ++(180:.6cm) .. (1.4,.2) -- (2.2,.2);
\end{tikzpicture}
$$
The latter is visibly equal to $e_{\Phi(c), \Psi(\Phi'(a)\otimes m'^{\otimes n})}=e_{\Phi(c), \Phi(a)\otimes m^{\otimes n}}$.\
\end{proof}

Recall that the functors $\Lambda:\Mod_*\to\APA$ and $\Delta:\APA \to \Mod_*$ were described in Theorems~\ref{thm: Here's Lambda!} and \ref{thm: Here's Delta!}, respectively.
We summarize the results of this section:

\begin{prop}\label{prop: That's Psi!}
For any pointed module tensor category $(\cM,m)\in\Mod_*$, there exists an isomorphism in $\Mod_*$
\[
\Psi_{(\cM,m)}:\Delta\circ\Lambda(\cM,m)\to(\cM,m).
\]
\end{prop}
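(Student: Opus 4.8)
The plan is to assemble Proposition~\ref{prop: That's Psi!} from the components already constructed earlier in Section~\ref{sec:CMGtoAPAtoCMG}. Concretely, $\Psi_{(\cM,m)}$ will be the pair $(\Psi,\gamma^\Psi)$ whose underlying functor $\Psi:\cM'\to\cM$ is defined on the generating subcategory $\cM_0'\subset\cM'$ by the formula \eqref{eq: def Psi -- objects} on objects and \eqref{eq:Psi of f} on morphisms (then extended to all of $\cM'$ by the universal property of adding direct sums and idempotent completing, since $\cM$ itself has direct sums and is idempotent complete), with all action-coherence isomorphisms $(\gamma^\Psi)_c:\Phi(c)\to\Psi\Phi'(c)$ taken to be identities. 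So the task reduces to verifying that this assignment really is a morphism (indeed an isomorphism) in $\Mod_*$, which is exactly what the lemmas of this section establish.

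First I would invoke Lemmas~\ref{lem:ThetaPreservesComposition} and~\ref{lem:PsiPreservesIdentity} to see that $\Psi$ is a well-defined functor; then Lemma~\ref{lem: yes, it's an equivalence of categories} upgrades this to: $\Psi$ is an equivalence of categories (full faithfulness via the chain of adjunction isomorphisms $\cC(a,b\otimes\Tr_\cC(m^{\otimes n+k}))\cong\cM(\Phi(a)\otimes m^{\otimes k},\Phi(b)\otimes m^{\otimes n})$, essential surjectivity because $m$ generates $\cM$). Next, Proposition~\ref{prop: yes, it's a tensor functor} supplies the monoidal structure on $\Psi$ (built from the inverse half-braidings $e_{\Phi(b),m^{\otimes k}}^{-1}$), and Lemma~\ref{lem: Psi is a pivotal functor} checks the pivotal compatibility $\Psi(\varphi_X)=\delta_{X^*}^{-1}\circ\delta_X^*\circ\varphi_{\Psi(X)}$. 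Together these say $\Psi$ is an equivalence of pivotal tensor categories, hence the first item of Definition~\ref{defn:ModuleTensorCategoryFunctor} is satisfied.

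Then I would check that $\Psi$ is pointed in the sense of Definition~\ref{def: pointed}: the equality $\Psi(m')=m$ is immediate from \eqref{eq: def Psi -- objects}, and the condition $\psi=\delta_{m'}\circ\Psi(\psi')$ reduces, since $\psi'=\id_{m'}$ (end of Section~\ref{sec:The pivotal structure}), to $\psi=\delta_{m'}$; unwinding the definition of $\delta_{m'}$ and using that $\Psi(\bar\ev_{m'})=\bar\ev_m$ (again via \eqref{eq:Psi of f}) together with the definition \eqref{eq: ev bar and coev bar} of $\bar\ev_m$ gives exactly this. Finally, the last displayed lemma of the section verifies the compatibility \eqref{eqn:GammaAndHalfBriadings} of $\Psi$ with the half-braidings, with $\gamma^\Psi$ the identity, so $\Psi$ is a $1$-morphism in $\Mod_*$; being an equivalence of underlying pivotal tensor categories, it is invertible in $\Mod_*$ (recall from Lemma~\ref{lem: it's secretly a 1-category} that $2$-morphisms in $\Mod_*$ are automatically invertible, so it suffices to exhibit a weak inverse functor, which any adjoint equivalence provides). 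Assembling these gives the desired isomorphism $\Psi_{(\cM,m)}:\Delta\circ\Lambda(\cM,m)\to(\cM,m)$ in $\Mod_*$.

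The main obstacle is not any single step of this assembly — each cited lemma does real diagrammatic work — but rather bookkeeping: one must be careful that the monoidal, pivotal, and module-tensor structures chosen on $\Psi$ are mutually consistent, i.e.\ that the coherence isomorphisms produced in Proposition~\ref{prop: yes, it's a tensor functor} are the same ones used implicitly in Lemmas~\ref{lem: Psi is a pivotal functor} and the half-braiding lemma, and that ``$\gamma^\Psi=\id$'' is legitimate (which hinges on $\Phi$ and $\Phi'$ both being \emph{strict} tensor functors with $\Phi'(c)\otimes m'^{\otimes 0}=\Phi'(c)$ matching $\Psi\Phi'(c)=\Phi(c)$ on the nose). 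Since all the hard identities have been discharged in the preceding lemmas, the proof of the proposition itself is a short paragraph citing them in order.
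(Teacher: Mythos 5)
Your proposal matches the paper's own argument: the proposition is proved there exactly by assembling the preceding results of the section — the functoriality lemmas, Lemma~\ref{lem: yes, it's an equivalence of categories}, Proposition~\ref{prop: yes, it's a tensor functor}, Lemma~\ref{lem: Psi is a pivotal functor}, the pointedness check $\psi=\delta_{m'}$, and the half-braiding compatibility lemma — with $\gamma^\Psi=\id$ as you describe. Your bookkeeping remarks (consistency of the coherence data and invertibility up to $2$-morphisms) are consistent with how the paper treats these points, so there is nothing substantive to add.
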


Our next task is to investigate the question of whether these isomorphisms assemble to a natural transformation $\Psi:\Delta\circ\Lambda\Rightarrow \id_{\Mod_*}$.

\subsection{Naturality of \texorpdfstring{$\Psi$}{Psi}}\label{sec:CMGtoAPAtoCMG_Naturality}

Let $\cM_1=(\cM_1,m_1)$ and $\cM_2=(\cM_2,m_2)$ be pointed module tensor categories.
Let $G:\cM_1\to\cM_2$ be a functor of pointed module tensor categories (a morphism in $\Mod_*$), and let $G':\cM_1'\to\cM_2'$ be its image under $\Delta\circ\Lambda$.
Let also $\Psi_1:=\Psi_{\cM_1}:\cM_1'\to\cM_1$ and $\Psi_2:=\Psi_{\cM_2}:\cM_2'\to\cM_2$ be as in Proposition~\ref{prop: That's Psi!}.

We consider the following diagram:
\begin{equation}\label{eq: jagjrrejflwc}
\begin{matrix}\xymatrix{
\cM_1' 
\ar[rr]^{G'}
\ar[d]^{\Psi_1} 
&&
\cM_2'
\ar[d]^{\Psi_2} 
\\
\cM_1
\ar[rr]^{G}
&&
\cM_2.\!\!
}\end{matrix}
\end{equation}
Recall from Section~\ref{sec:InternalTrace} that $\tau_{\le 1}(\Mod_*)$ is obtained from $\Mod_*$ by identifying all $1$-morphisms which are connected by a (necessarily invertible) $2$-morphism.
The diagram \eqref{eq: jagjrrejflwc} does not commute in $\Mod_*$, but its image down in $\tau_{\le 1}(\Mod_*)$ does commute:
\begin{prop}
There exists an (invertible) natural transformation
\[
\kappa: \Psi_2\circ G' \Rightarrow G\circ \Psi_1
\]
of pointed module tensor category functors (a $2$-morphism in $\Mod_*$)
between the composites $\Psi_2\circ G'$ and $G\circ \Psi_1$.
\end{prop}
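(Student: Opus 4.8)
By Lemma~\ref{lem: it's secretly a 1-category}, any $2$-morphism in $\Mod_*$ between the parallel $1$-morphisms $\Psi_2\circ G'$ and $G\circ\Psi_1$ is automatically unique and invertible, so it suffices to exhibit \emph{one} natural transformation $\kappa:\Psi_2\circ G'\Rightarrow G\circ\Psi_1$ of pointed module tensor category functors. Since $\cM_1'$ is obtained from the full subcategory $\cM_1'^{\scriptscriptstyle 0}$ with objects $\Phi_1'(c)\otimes m_1'^{\otimes n}$ by formally adjoining direct sums and idempotent completing, it is enough to define $\kappa$ there and extend it formally. On such an object we have $\Psi_2(G'(\Phi_1'(c)\otimes m_1'^{\otimes n}))=\Phi_2(c)\otimes m_2^{\otimes n}$ by \eqref{eq:def of Delta -- objects} and \eqref{eq: def Psi -- objects}, while $G(\Psi_1(\Phi_1'(c)\otimes m_1'^{\otimes n}))=G(\Phi_1(c)\otimes m_1^{\otimes n})$, and I would let $\kappa_{\Phi_1'(c)\otimes m_1'^{\otimes n}}$ be the canonical isomorphism
\[
\Phi_2(c)\otimes m_2^{\otimes n}\,=\,\Phi_2(c)\otimes G(m_1)^{\otimes n}\xrightarrow{\,\gamma^G_c\otimes\id\,}G(\Phi_1(c))\otimes G(m_1)^{\otimes n}\xrightarrow{\,\nu\,}G(\Phi_1(c)\otimes m_1^{\otimes n}),
\]
built from the equality $G(m_1)=m_2$ (Definition~\ref{def: pointed}), the action coherence $\gamma^G$, and the tensor structure maps $\nu,i$ of $G$; it is invertible because $\gamma^G_c$ and the $\nu$'s are.

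The substantive point is naturality: for $f\in\cM_1'(\Phi_1'(a)\otimes m_1'^{\otimes i},\Phi_1'(b)\otimes m_1'^{\otimes j})=\cC(a,b\otimes\cP_1[j+i])$ one must check, with $X=\Phi_1'(a)\otimes m_1'^{\otimes i}$ and $Y=\Phi_1'(b)\otimes m_1'^{\otimes j}$,
\[
\kappa_Y\circ\Psi_2(G'(f))\,=\,G(\Psi_1(f))\circ\kappa_X.
\]
I would unwind both sides. By \eqref{eq:def of Delta}, $G'(f)=(\Delta\circ\Lambda)(G)(f)=(\id_b\otimes\Lambda(G)[j+i])\circ f$, where $\Lambda(G)[n]$ is the composite of $\zeta_{m_1^{\otimes n}}$ with the isomorphism $\Tr_\cC^2(G(m_1^{\otimes n}))\cong\Tr_\cC^2(m_2^{\otimes n})$ (Definition~\ref{def: morphism of APA associated to G:M-->M'}); then $\Psi_2(G'(f))$ and $\Psi_1(f)$ are expanded via formula~\eqref{eq:Psi of f}, which writes them in terms of $\Phi_i$ applied to the relevant $\cC$-morphism, the attaching maps $\varepsilon^i$, and the caps $\bar\ev_{m_i^{\otimes i}}$. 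The three ingredients needed to match the two sides are: (i)~Lemma~\ref{lem:AttachingMapCompatible}, i.e. $\varepsilon^2_{G(x)}\circ\Phi_2(\zeta_x)=G(\varepsilon^1_x)\circ\gamma^G_{\Tr_\cC^1(x)}$ (applied with $x=m_1^{\otimes i+j}$), which slides $\zeta$ past the attaching map; (ii)~the fact that $G$ is a pivotal tensor functor, so $\nu$ commutes with evaluations and $G(\bar\ev_{m_1})=\bar\ev_{m_2}$ — the latter being precisely the computation already performed in Section~\ref{sec:APAfromMTC}, using $\delta_{m_1}\circ G(\psi_1)=\psi_2$ from Definition~\ref{def: pointed}; and (iii)~naturality of $\gamma^G$ and of $\varepsilon^2$. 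Chasing these through yields the desired equality.

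The remaining properties of a $2$-morphism in $\Mod_*$ are then routine. Monoidality (diagram~\eqref{eq: def nat transf 1}) follows from the associativity and unitality of $\nu,i$ together with the definitions of the tensor structures on $\Psi_i$ (Proposition~\ref{prop: yes, it's a tensor functor}) and on $(\Delta\circ\Lambda)(G)$. Compatibility with the action coherences (diagram~\eqref{eq: def nat transf 2}) reduces to the observation that the action coherences of $\Psi_i$ and of $G'$ are all identities, so that $\kappa_{\Phi_1'(c)}=\gamma^G_c$ is exactly what that diagram demands; the compatibility with half-braidings required for $\kappa$ to be a functor of module tensor categories (diagram~\eqref{eqn:GammaAndHalfBriadings}) is then forced by uniqueness of $2$-morphisms, or can be checked directly from \eqref{eq: That's the half-braiding!}. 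Finally $\kappa_{m_1'}=\id_{m_2}$ since on $m_1'=\Phi_1'(1_\cC)\otimes m_1'^{\otimes 1}$ the isomorphism above collapses to the identity, using the unit coherences of $G$ and $G(m_1)=m_2$. The main obstacle is the naturality verification of the second paragraph: it is the one place where the formula for $\Psi$, the definition of $\Lambda$ on morphisms via $\zeta$, Lemma~\ref{lem:AttachingMapCompatible}, and the pivotal-tensor-functor axioms for $G$ must be assembled into a single diagram chase.
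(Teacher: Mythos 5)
Your construction of $\kappa$ is exactly the paper's (formula \eqref{eq: That's kappa!}): $\gamma^G_c\otimes\id_{m_2^{\otimes n}}$ followed by the tensor coherence of $G$, defined on the generating objects and extended formally. Your naturality argument also follows the paper's chain: unwind $\Psi_2 G'(f)$ and $G\Psi_1(f)$ via \eqref{eq:Psi of f} and the definition of $\Delta\Lambda(G)$, slide $\zeta$ past the attaching map with Lemma~\ref{lem:AttachingMapCompatible}, use $G(\bar\ev_{m_1})=\bar\ev_{m_2}$ (from pivotality of $G$ and $\delta_{m_1}\circ G(\psi_1)=\psi_2$), and finish with naturality (and, implicitly, monoidality) of $\gamma^G$. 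The reduction of \eqref{eq: def nat transf 2} to $\kappa_{\Phi_1'(c)}=\gamma^G_c$ and the pointedness check also match the paper.

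There is, however, one step that does not go through as you state it: the monoidality of $\kappa$, i.e.\ diagram \eqref{eq: def nat transf 1}. You claim it follows from the associativity and unitality of $\nu,i$ together with the definitions of the tensor structures, but the tensor coherence of $\Psi_i$ (Proposition~\ref{prop: yes, it's a tensor functor}) contains the half-braiding $e^{-1}_{\Phi(b),\,m^{\otimes n}}$, so commuting $\kappa$ past it forces you to compare $e_{\Phi_2(b),\,m_2^{\otimes n}}$ with $G(e_{\Phi_1(b),\,m_1^{\otimes n}})$ through $\gamma^G$ --- this is precisely the axiom \eqref{eqn:GammaAndHalfBriadings} for the $1$-morphism $(G,\gamma^G)$, and it is the content of the hexagon in the paper's pasting diagram; coherence of $\nu$ alone is not enough. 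Your fallback is also not valid: there is no separate ``compatibility with half-braidings'' condition on $\kappa$ (that axiom lives on $1$-morphisms, not on natural transformations), and Lemma~\ref{lem: it's secretly a 1-category} can only compare two transformations that are \emph{already known} to satisfy the axioms of a $2$-morphism in $\Mod$${}_*$; it cannot be used to certify an axiom for a candidate $\kappa$. The fix is just to invoke \eqref{eqn:GammaAndHalfBriadings} for $G$ inside the monoidality check, as the paper does --- the ingredient is available, but your proof as written omits it at exactly the point where it is needed.
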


\begin{proof}
As usual, it is enough to define $\kappa$ on those objects of $\cM_1'$ of the form $\Phi_1'(c)\otimes m_1'^{\otimes n}$.
Let $\gamma_c:\Phi_2(c) \rightarrow G\Phi_1(c)$ be the action coherence isomorphism associated to the functor $G$.
By definition (\eqref{eq:def of Delta -- objects} and \eqref{eq: def Psi -- objects}), we have:
\begin{alignat*}{1}
\Psi_2\circ G'\big(\Phi_1'(c)\otimes m_1'^{\otimes n}\big)&=\Psi_2(\Phi_2'(c)\otimes m_2'^{\otimes n})\,=\,\Phi_2(c)\otimes m_2^{\otimes n}
\\
\text{and}\qquad\quad
G\circ \Psi_1\big(\Phi_1'(c)\otimes m_1'^{\otimes n}\big)\,&=\,G(\Phi_1(c)\otimes m_1^{\otimes n}) \;\!\cong\;\! G\Phi_1(c)\otimes m_2^{\otimes n}.\qquad\qquad
\end{alignat*}
Using the above identifications, we define the components of $\kappa$ as follows:
\begin{equation}\label{eq: That's kappa!}
\begin{split}
\kappa_{\Phi_1'(c)\otimes m_1'^{\otimes n}} :\,
\Psi_2G'(\Phi_1'(c)\otimes m_1'^{\otimes n})=&\;\Phi_2(c)\otimes m_2^{\otimes n}\\
\xrightarrow{\,\gamma_c \otimes \id_{m_2^{\otimes n}}\,}\;&
G\Phi_1(c)\otimes m_2^{\otimes n}\cong G\Psi_1(\Phi_1'(c)\otimes m_1'^{\otimes n}).
\end{split}
\end{equation}
Clearly, this satisfies the condition $\kappa_{m'_1}=\id_{m_2}$ of being pointed (Definition \ref{def: pointed}).
We must show that $\kappa$ is natural with respect to morphisms in $\cM_1'$,
monoidal (Definition \ref{Def: natural transformation for functors of module tensor categories}),
and compatible with the action coherence isomorphisms (Definition \ref{Def: natural transformation for functors of module tensor categories}, Eq.~\eqref{eq: def nat transf 2}).

We start by the naturality of $\kappa$. Given a morphism
$
f \in \cM_1'(\Phi_1'(a)\otimes m_1'^{\otimes n} , \Phi_1'(b)\otimes m_1'^{\otimes k}) = \cC(a, b\otimes \Tr_\cC^1(m_1^{\otimes k+n}))
$,
we need to show that the following diagram commutes:
\[
\xymatrix{
\Psi_2G'( \Phi_1'(a)\otimes m_1'^{n}) 
\ar[rrr]^{\kappa_{ \Phi_1'(a)\otimes m_1'^{n}}}
\ar[d]^{\Psi_2G'(f)}
&&&
G\Psi_1( \Phi_1'(a)\otimes m_1'^{n})
\ar[d]^{G\Psi_1(f)}
\\
\Psi_2G'( \Phi_1'(b)\otimes m_1'^{k})
\ar[rrr]^{\kappa_{ \Phi_1'(b)\otimes m_1'^{k}}}
&&&
G\Psi_1( \Phi_1'(b)\otimes m_1'^{k})
}
\]
Recall that $G'=\Delta\Lambda (G)$. The composite\;\! $\tikz{\draw[-latex, scale=.38, baseline=6] (0,.8) -- (0,0) -- (1.5,0);}$\;\! is given by:
\begin{align*}
\!&\hspace{.53cm} (\gamma_b \otimes \id_{m_2^{\otimes k}})\circ (\id_{\Phi_2(b)}\otimes \id_{m_2^{\otimes k}} \otimes\;\! \bar\ev_{m_2^{\otimes n}})\circ (\id_{\Phi_2(b)}\otimes \varepsilon^2_{m_2^{\otimes k+n}} \otimes \id_{m_2^n})\circ (\Phi_2(G'(f))\otimes \id_{m_2^{\otimes n}}) 
\put(-62,-15){\footnotesize [Def.~\eqref{eq:Psi of f} of $\Psi_2$]}
\\\!&=
(\gamma_b \otimes \id_{m_2^{\otimes k}} \otimes\;\! \bar\ev_{m_2^{\otimes n}})\circ (\id_{\Phi_2(b)}\otimes \varepsilon^2_{m_2^{\otimes k+n}} \otimes \id_{m_2^{\otimes n}})\circ (\Phi_2[(\id_b \otimes \Lambda{G}[k+n]) \circ f]\otimes \id_{m_2^{\otimes n}})
\put(-60,-15){\footnotesize [Def.~\eqref{eq:def of Delta} of $\Delta\Lambda G$]}
\\\!&=
(\gamma_b \otimes \id_{m_2^{\otimes k}} \otimes\;\! \bar\ev_{m_2^{\otimes n}})\circ (\id_{\Phi_2(b)}\otimes\, [\varepsilon^2_{m_2^{\otimes k+n}}\circ \Phi_2( \zeta_{m_1^{\otimes k+n}})] \otimes \id_{m_2^{\otimes n}})\circ (\id_{\Phi_2(b)}\otimes \Phi_2(f)\otimes \id_{m_2^{\otimes n}}) 
\put(-70,-15){\footnotesize [Def.~\ref{def: morphism of APA associated to G:M-->M'} of $\Lambda G$]}
\\\!&=
(\gamma_b \otimes \id_{m_2^{\otimes k}} \otimes\;\! \bar\ev_{m_2^{\otimes n}}){\circ} (\id_{\Phi_2(b)}{\otimes} [G(\varepsilon^1_{m_1^{\otimes k+n}}) \circ \gamma_{\Tr_\cC^1(m_1^{\otimes k+n})}] {\otimes} \id_{m_2^{\otimes n}}){\circ} (\id_{\Phi_2(b)}\otimes \Phi_2(f)\;\!{\otimes} \id_{m_2^{\otimes n}}) 
\put(-54,-15){\footnotesize [Lemma \ref{lem:AttachingMapCompatible}]}
\\\!&=
(\id_{G\Phi_1(b)} \otimes \id_{m_2^{\otimes k}} \otimes\;\! \bar\ev_{m_2^{\otimes n}}){\circ} (\id_{G\Phi_1(b)}\otimes\;\! G(\varepsilon^1_{m_1^{\otimes k+n}}) \;\!{\otimes} \id_{m_2^{\otimes n}}){\circ} ([\gamma_{b \otimes \Tr_\cC^1(m_1^{\otimes k+n})}{\circ} \Phi_2(f)] \otimes \id_{m_2^{\otimes n}}) 
\put(-65,-15){\footnotesize [$\gamma$ is monoidal]}
\\\!&=
(\id_{G\Phi_1(b)} \otimes \id_{m_2^{\otimes k}} \otimes\;\! \bar\ev_{m_2^{\otimes n}})\circ (\id_{G\Phi_1(b)}\otimes\;\! G(\varepsilon^1_{m_1^{\otimes k+n}}) \otimes \id_{m_2^{\otimes n}})\circ ([G\Phi_1(f) \circ \gamma_a] \otimes \id_{m_2^{\otimes n}}) 
\put(-30,-15){\footnotesize [$\gamma$ is natural]}
\\\!&=
(\id_{G\Phi_1(b)} \otimes \id_{m_2^{\otimes k}} \otimes\;\! \bar\ev_{m_2^{\otimes n}}){\circ} (\id_{G\Phi_1(b)}\otimes\;\! G(\varepsilon^1_{m_1^{\otimes k+n}}) \;\!{\otimes} \id_{m_2^{\otimes n}}){\circ} (G\Phi_1(f)\otimes \id_{m_2^{\otimes n}}){\circ}  (\gamma_a \;\!{\otimes} \id_{m_2^{\otimes n}}).
\end{align*}
Using the definition \eqref{eq:Psi of f} of $\Psi_1$ and the fact that $G(\bar\ev_{m_1}^{\otimes n})=\bar\ev_{m_2}^{\otimes n}$ (coming from the fact that $G$ is a pivotal functor), one recognizes the last line as the composite 
$\tikz[scale=.38]{\useasboundingbox (.02,.8) -- (1.52,.8) -- (1.52,0); \draw[-latex]  (0,.8) -- (1.5,.8) -- (1.5,-.1);}$\;\!\;\! in the above diagram.


To show that $\kappa$ is monoidal,
we argue that for every pair of objects $X,Y\in\cM_1'$, the following diagram is commutative (Definition \ref{Def: natural transformation for functors of module tensor categories}, Eq.~\eqref{eq: def nat transf 1}):
\begin{equation}\label{eq:KappaMonoidal}
\begin{matrix}
\xymatrix{
\Psi_2G'(X\otimes Y)
\ar[d]^{\kappa_{X\otimes Y}}
\ar[rr]^(.45){\cong}
&&
\Psi_2(G'(X)\otimes G'(Y))
\ar[rr]^{\cong}
&&
\Psi_2G'(X)\otimes \Psi_2G'(X)
\ar[d]^{\kappa_X\otimes \kappa_Y}
\\
G\Psi_1(X\otimes Y)
\ar[rr]^(.45){\cong}
&&
G(\Psi_1(X)\otimes \Psi_1(Y))
\ar[rr]^{\cong}
&&
G\Psi_1(X)\otimes G\Psi_1(Y).\!
}
\end{matrix}
\end{equation}
Without loss of generality, we take $X=\Phi_1'(a)\otimes m_1'^{\otimes n}$ and $Y=\Phi_1'(b)\otimes m_1'^{\otimes k}$.
We rewrite \eqref{eq:KappaMonoidal} as the boundary of the following pasting diagram:
\[
\!
\xymatrix{
\scriptstyle
\Phi_2(a\otimes b)\otimes m_2^{n+k}   
\ar[d]^{\scriptscriptstyle\gamma_{a\otimes b} \otimes \id}
\ar[rr]^(.45){\scriptscriptstyle\cong}
&&
\scriptstyle
\Phi_2(a)\otimes \Phi_2(b)\otimes m_2^{n} \otimes m_2^{k}
\ar[d]^{\scriptscriptstyle\gamma_a\otimes \gamma_b \otimes \id\otimes \id}
\ar[rr]^{\scriptscriptstyle \id\otimes e_{\Phi_2(b),m_2^{n}}\otimes \id}
&&
\scriptstyle
\Phi_2(a)\otimes m_2^{n}\otimes \Phi_2(b) \otimes m_2^{k}
\ar[ddd]|{\qquad\quad\scriptscriptstyle\gamma_a \otimes \id \otimes \gamma_b \otimes \id}
\\
\scriptstyle
G\Phi_1(a\otimes b)\otimes m_2^{n+k}
\ar[dd]_{\scriptscriptstyle\cong}
\ar[rr]^(.4){\scriptscriptstyle\cong}
&&
\scriptstyle
G\Phi_1(a)\otimes G\Phi_1(b)\otimes G(m_1^n) \otimes G(m_1^k)
&&
\\
\\
\scriptstyle
G(\Phi_1(a)\otimes \Phi_1(b)\otimes m_1^n\otimes m_1^k)
\ar[rr]^{\scriptscriptstyle G{\scriptstyle(}\id\otimes e_{\Phi_1(b),m_1^{n}}\otimes \id{\scriptstyle)}}
\ar@{}[uurr]|{\,\,G\Phi_1(a)\otimes G(\Phi_1(b) \otimes m_1^n)\otimes  G(m_1^k)\,\,}="a"
&&
\scriptstyle
G(\Phi_1(a)\otimes m_1^n\otimes \Phi_1(b)\otimes m_1^k)
\ar[rr]^{\scriptscriptstyle\cong}
&&
\scriptstyle
G\Phi_1(a)\otimes m_2^{n}\otimes G\Phi_1(b) \otimes m_2^{k}
\ar@{}[uull]|{\,\,G\Phi_1(a)\otimes G(m_1^n\otimes \Phi_1(b))\otimes  G(m_1^k)\,\,}="b"
\ar "a";"b" ^{\scriptscriptstyle\id\otimes G(e_{\Phi_1(b),m_1^n})\otimes \id}
\ar [-2,-2];"a"_(.55){\scriptscriptstyle\cong}
\ar "a";[0,-4]_(.55){\scriptscriptstyle\cong}
\ar "b";[0,-2]_(.55){\scriptscriptstyle\cong}
\ar "b";[0,0]^(.55){\scriptscriptstyle\cong}
}
\]
The upper left rectangle commutes because $\gamma$ is monoidal.
The irregular hexagon commutes because $\gamma$ is compatible with the half-braidings \eqref{eqn:GammaAndHalfBriadings}.
The other three regions are visibly commutative.
The commutativity of \eqref{eq:KappaMonoidal} follows.

The unitality condition\vspace{-.5mm}
$\kappa_{1_{\cM_1'}}\circ i_{\Psi_2\circ G'}=i_{G\circ \Psi_1}$ for $\kappa$ (Definition \ref{Def: natural transformation for functors of module tensor categories})
is a consequence of the corresponding unitality condition $\gamma_{1_\cC}=G(i_{\Psi_1})\circ i_G\circ i_{\Phi_2}^{-1}$ of $\gamma$ (Definition~\ref{defn:ModuleTensorCategoryFunctor}).\vspace{.5mm}


At last, we show that $\kappa$ is
compatible with the action coherence isomorphisms (Definition~\ref{Def: natural transformation for functors of module tensor categories}, Eq.~\eqref{eq: def nat transf 2}):
\begin{equation}\label{eq: gamma stuff}
\begin{matrix}
\xymatrix@C=1.3cm{
\Phi_2(c) 
\ar[rr]^(.45){\gamma^{\Psi_2}_{c}}
\ar[d]^{=}
&&
\Psi_2\Phi_2'(c)
\ar[rr]^(.45){\Psi_2(\gamma^{G'}_c)}
&&
\Psi_2G'(\Phi'_1(c))
\ar[d]^{\kappa_{\Phi_1'(c)}}
\\
\Phi_2(c)
\ar[rr]^(.45){\gamma^G_c}
&&
G\Phi_1(c)
\ar[rr]^(.45){G(\gamma^{\Psi_1}_c)}
&&
G\Psi_1(\Phi_1'(c)).\!
}\end{matrix}
\end{equation}
Here, $\gamma^F$ refers to the action coherence isomorphism associated to the functor $F$.
By definition, $\gamma^{G'}_c=\id$ (line below \eqref{eq:def of Delta -- objects}) and $\gamma^{\Psi_i}_c=\id$ (line below \eqref{eq:Psi of f}).
The diagram \eqref{eq: gamma stuff} therefore reduces to the equation $\gamma_c^G=\kappa_{\Phi_1'(c)}$, which holds by the definition \eqref{eq: That's kappa!} of~$\kappa$.
\end{proof}

Combining all the results of Sections \ref{sec:CMGtoAPAtoCMG} and \ref{sec:CMGtoAPAtoCMG_Naturality}, we have proven:

\begin{thm}
The composite
\[
\Delta\circ\Lambda:\tau_{\le 1}(\Mod_*)\to\APA\to\tau_{\le 1}(\Mod_*)
\]
is naturally isomorphic to the identity functor $\id_{\tau_{\le 1}(\Mod_*)}$.\hfill $\square$
\end{thm}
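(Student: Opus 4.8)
The plan is to assemble the two preceding propositions into the required natural isomorphism. Recall that $\Lambda$ and $\Delta$ are functors (Theorems~\ref{thm: Here's Lambda!} and~\ref{thm: Here's Delta!}), that $\Lambda$ descends to a functor $\tau_{\le 1}(\Mod_*)\to\APA$, and that $\Delta$ composed with the projection $\Mod_*\to\tau_{\le 1}(\Mod_*)$ gives a functor $\APA\to\tau_{\le 1}(\Mod_*)$; hence $\Delta\circ\Lambda$ is a well-defined endofunctor of $\tau_{\le 1}(\Mod_*)$. For each object $(\cM,m)$, Proposition~\ref{prop: That's Psi!} produces an isomorphism $\Psi_{(\cM,m)}\colon\Delta\Lambda(\cM,m)\to(\cM,m)$ in $\Mod_*$; since $\tau_{\le 1}(\Mod_*)$ has the same objects as $\Mod_*$ and the projection is the identity on objects, each $\Psi_{(\cM,m)}$ descends to an isomorphism in $\tau_{\le 1}(\Mod_*)$. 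I would declare the family $\{\Psi_{(\cM,m)}\}$ to be the components of the candidate natural transformation $\Delta\circ\Lambda\Rightarrow\id_{\tau_{\le 1}(\Mod_*)}$.

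The one thing left to verify is naturality: for every $1$-morphism $G\colon(\cM_1,m_1)\to(\cM_2,m_2)$, the square with edges $\Delta\Lambda(G)$, $G$, $\Psi_{(\cM_1,m_1)}$, $\Psi_{(\cM_2,m_2)}$ must commute in $\tau_{\le 1}(\Mod_*)$. This is precisely where the second proposition of Section~\ref{sec:CMGtoAPAtoCMG_Naturality} is used: it furnishes an invertible $2$-morphism $\kappa\colon\Psi_2\circ G'\Rightarrow G\circ\Psi_1$ in $\Mod_*$, with $G'=\Delta\Lambda(G)$, and by the very construction of $\tau_{\le 1}(\Mod_*)$ any two $1$-morphisms linked by a $2$-morphism are identified in the quotient. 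Thus the two composites agree as $1$-morphisms of $\tau_{\le 1}(\Mod_*)$, which is the desired commutativity, and this holds for all $G$. Invertibility of each component is automatic, being inherited from $\Mod_*$ (equivalently, from Lemma~\ref{lem: it's secretly a 1-category}, by which every $2$-morphism of $\Mod_*$ is invertible). Since $\tau_{\le 1}(\Mod_*)$ and $\APA$ are ordinary $1$-categories, no further coherence among the various $\kappa$'s needs to be checked.

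I do not expect a genuine obstacle at this stage: all the substantive work has already been carried out, namely the construction of $\Psi$ and the verification that it is a functor of pointed pivotal module tensor categories (Lemmas~\ref{lem: yes, it's an equivalence of categories}--\ref{lem: Psi is a pivotal functor} and Proposition~\ref{prop: yes, it's a tensor functor}), together with the construction of the $2$-morphism $\kappa$ and the checks that it is monoidal, compatible with the half-braidings, and pointed. The only conceptual point to keep straight is that $\Mod_*$ is genuinely a $2$-category, so naturality of the family $\{\Psi_{(\cM,m)}\}$ holds only after passing to $\tau_{\le 1}$; by Lemma~\ref{lem: it's secretly a 1-category} this passage is harmless, which is exactly why the conclusion can be stated as an honest natural isomorphism of endofunctors of the $1$-category $\tau_{\le 1}(\Mod_*)$.
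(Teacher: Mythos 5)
Your proposal is correct and matches the paper's intended argument: the theorem is stated with no separate proof precisely because it is the assembly of Proposition~\ref{prop: That's Psi!} (the componentwise isomorphisms $\Psi_{(\cM,m)}$) with the $\kappa$-proposition of Section~\ref{sec:CMGtoAPAtoCMG_Naturality}, naturality holding in $\tau_{\le 1}(\Mod_*)$ exactly because $1$-morphisms linked by a $2$-morphism are identified there. Your handling of the descent to the quotient and of invertibility is the same as the paper's.
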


\subsection{Planar algebras to module tensor categories and back}\label{sec:APAtoCMGtoAPA}

In this section, we finish the proof that $\Lambda:\Mod_* \rightleftarrows \APA :\Delta$ form an equivalence of categories
by arguing that $\Lambda\circ\Delta$ is naturally isomorphic to $\id_{\APA}$.
If one takes \eqref{eq: that's how you define Tr!} as our definition of $\Tr_\cC$, then
we will prove that there is actually an equality $\Lambda\circ\Delta=\id_{\APA}$ on the nose.

Let $\cP$ be an anchored planar algebra and let $\cP'$ be its image under $\Lambda\circ\Delta$.
Let $(\cM, m):=\Delta(\cP)$ so that, following \eqref{eq:   cP[n]:=Tr_cC(m^otimes n)}, $\cP'[n]=\Tr_\cC(m^{\otimes n})$.
If one takes \eqref{eq: that's how you define Tr!} as our definition of $\Tr_\cC$ then, indeed, $\Tr_\cC(m^{\otimes n})=\cP[n]$,
and so we have $\cP[n]=\cP'[n]$.
We still need to check that the action of the generating tangles $u$, $a_i$, $\overline{a}_i$, $p_{ij}$ (Definition \ref{defn:GeneratingTangles}) given by $\cP$ and by $\cP'$ agree.
Specifically, we need to check:
\[
\eta=Z(u),\quad
\alpha_i=Z(a_i),\quad
\overline{\alpha}_i=Z(\overline{a}_i)\quad\,\text{and}\quad\,
\varpi_{i,j}=Z(p_{i,j}),
\]
where $Z(-)$ denotes the action of $\cP$,
and the maps $\eta$, $\alpha_i$, $\bar\alpha_i$, $\varpi_{ij}$ are as described in Theorem~\ref{thm: construct P from M and m}.
The value of any tangle being determined by the values of the generating tangles (Algorithm~\ref{alg:AssignMap}), the above conditions will then imply that $\cP$ and $\cP'$
are equal as anchored planar algebras.

Recall that a morphism
$
f\in \cM(\Phi(c)\otimes m^{\otimes n_1} , \Phi(d)\otimes m^{\otimes n_2}) := \cC(c, d\otimes \cP[n_2 + n_1])
$
is represented diagrammatically by 
\begin{equation}\label{nwrblsfgbowjr}
\begin{tikzpicture}[baseline=-.1cm]
	\draw (0,.8) -- (0,-.8);
	\draw (0,0) -- (2,0);
	\roundNbox{unshaded}{(0,0)}{.3}{0}{0}{$f$};
	\node at (-.2,.6) {\scriptsize{$d$}};
	\node at (1.1,.2) {\scriptsize{$\cP[n_2+n_1]$}};
	\node at (-.2,-.6) {\scriptsize{$c$}};
\end{tikzpicture}\,.
\end{equation}

\begin{lem}\label{LEM 1 of the end}
For all $i=1,\dots, n-1$
\begin{enumerate}[label=(\arabic*)]
\item
the map $\alpha_i: \cP[n+2]\to \cP[n]$ is equal to $Z(a_i):\cP[n+2]\to \cP[n]$, and
\item
the map $\overline{\alpha}_i : \cP[n]\to \cP[n+2]$ is equal to $Z(\overline{a}_i):\cP[n]\to \cP[n+2]$.
\end{enumerate}
\end{lem}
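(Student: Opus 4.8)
The plan is to unwind the definitions on both sides and check that they produce the same morphism in $\cC$. Recall that $\alpha_i$ is defined in Theorem~\ref{thm: construct P from M and m} as $\Tr_\cC(\id_{m^{\otimes i}}\otimes\bar\ev_m\otimes\id_{m^{\otimes n-i}})$, where here $\Tr_\cC$ and $\bar\ev_m$ refer to the structure of the module tensor category $\cM=\Delta(\cP)$ built in Section~\ref{sec:MTCfromAPA}. On the other hand $Z(a_i)$ is the action of the cap tangle $a_i$ in the original anchored planar algebra $\cP$, which by Example~\ref{ex:computation alpha's} equals the morphism denoted $\alpha_i$ \emph{there} (i.e.\ the structure morphism $\cP$ comes equipped with, entering Theorem~\ref{thm:ConstructAPA} via $\Delta$). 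So the content of the lemma is: the categorified-trace-of-a-cap inside $\Delta(\cP)$ reproduces the cap morphism $\alpha_i$ that was fed into the construction.

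First I would write out $\bar\ev_m=\ev_m\circ(\psi\otimes\id_m)$ using \eqref{eq: ev bar and coev bar}, together with the explicit formulas for $\ev_m$ and $\coev_m$ in $\cM$ given in \eqref{eq: ev and coev for m} (where $\ev_m$ is the action of the evaluation tangle on $\cP[2]$ with the $1_\cC$-strand dotted), and the fact established at the end of Section~\ref{sec:The pivotal structure} that $\psi=\id_m$ under the identifications in play. Thus $\id_{m^{\otimes i}}\otimes\bar\ev_m\otimes\id_{m^{\otimes n-i}}$ is, concretely, a morphism in $\cM_0$ of the form \eqref{nwrblsfgbowjr} whose coupon is the action of the tangle which caps the $(i+1)$-st and $(i+2)$-nd strands. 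Then I would apply the explicit formula \eqref{eq: That's Tr_cC(f) } for $\Tr_\cC$ on morphisms of $\cM_0$: this wraps the morphism with a multiplication coupon and closes a string around the back of the tube. Plugging in and simplifying — using the composition axiom \eqref{eq: composition of tangles} for $\cP$, the identity axiom, and the explicit description of the multiplication and identity tangles — the right-hand side collapses to $Z$ of a single planar tangle, which I would identify with $a_i$ up to isotopy (Proposition~\ref{prop:IsotopyInvariance}). The case of $\bar\alpha_i$ versus $Z(\bar a_i)$ is entirely parallel, using $\bar\coev_m$ and the coevaluation tangle in place of the evaluation tangle.

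The main obstacle I anticipate is bookkeeping: matching the diagrammatic composite produced by \eqref{eq: That's Tr_cC(f) } (a multiplication coupon composed with a cap coupon, with a trace-closing strand) against the single generating tangle $a_i$, and making sure the anchor lines line up correctly so that no stray braiding or twist is introduced. This is precisely the kind of gluing computation that the composition axiom \eqref{eq: composition of tangles} and the identity axiom are designed to handle — concretely, one uses \ref{reln:UnitMap}-type simplifications to absorb the identity tangle coming from the $m^{\otimes 0}$ factor and the multiplication tangle with a nullary input. Since $\Delta$ was designed so that the hom-sets $\cM_0(\Phi(c)\otimes m^{\otimes n_1},\Phi(d)\otimes m^{\otimes n_2})=\cC(c,d\otimes\cP[n_2+n_1])$ are given \emph{on the nose} by the box objects of $\cP$, I expect the two sides to be literally equal once everything is expanded, with no need for a natural isomorphism — this is the step where the equality $\Lambda\circ\Delta=\id_{\APA}$ is genuinely an equality. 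I would finally remark that the analogous statements for $\eta=Z(u)$ and $\varpi_{i,j}=Z(p_{i,j})$ are handled by the same method and would be stated as companion lemmas, after which Algorithm~\ref{alg:AssignMap} forces $\cP=\cP'$ as anchored planar algebras, and functoriality of the identification (i.e.\ naturality in morphisms $H:\cP_1\to\cP_2$) follows from the definition \eqref{eq:def of Delta} of $\Delta$ on morphisms together with Definition~\ref{def: morphism of APA associated to G:M-->M'}.
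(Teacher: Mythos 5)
Your proposal is correct and follows essentially the same route as the paper: express $\alpha_i=\Tr_\cC(\id\otimes\bar\ev_m\otimes\id)$ in $\Delta(\cP)$, use \eqref{eq: ev and coev for m} together with $\psi=\id_m$ and \eqref{eq:TensorProductInM} to write the cap as an $\cM_0$-morphism whose coupon is the cap tangle, then apply \eqref{eq: That's Tr_cC(f) } and the composition axiom so the multiplication tangle glued onto the cap tangle gives exactly $Z(a_i)$, with $\bar\alpha_i$ handled symmetrically. The only cosmetic difference is that you invoke isotopy invariance and \ref{reln:UnitMap}-type simplifications explicitly where the paper simply observes that the composite tangle is $a_i$.
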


\begin{proof}
We only prove the first statement, the other one is similar.
Recall from Theorem \ref{thm: construct P from M and m} that $\alpha_i$ is the image of $\id_{i} \otimes\,\bar\ev_m \otimes \id_{n-i} : m^{\otimes n+2} \to m^{\otimes n}$
under $\Tr_\cC$.

By the definition \eqref{eq: ev and coev for m} of $\ev_m$, the fact that $\ev_m=\bar\ev_m$ (paragraph after \eqref{eq: ev and coev for m}),
and the definition \eqref{eq:TensorProductInM} of the tensor product in $\cM$, we have

\[
\id_{i} \otimes\,\bar\ev_m \otimes \id_{n-i}
=
\begin{tikzpicture}[baseline=-.1cm]
	\node at (1.2,.2) {\scriptsize{$\cP[2n+2]$}};
	\draw (0,0) -- (2,0);
\draw[thick, dotted] (-1.1,-.8)node[left, xshift=1, yshift=4]{$\scriptstyle 1_\cC$} -- (-1.1,.8);
	\roundNbox{unshaded}{(-.2,0)}{.6}{0}{0}{}
	\draw[very thick] (-.2,0) circle (.5cm);	
	\filldraw[red] (-.7,0) circle (.05cm);
	\draw (-.5,-.38) -- (-.5,.38);
	\draw (.1,-.38) -- (.1,.38);
	\draw (-.35,-.47) -- (-.35,-.2) arc (180:0:.15cm) -- (-.05,-.47);
\end{tikzpicture}\,.
\]
Therefore, by \eqref{eq: That's Tr_cC(f) }, $\alpha_i:=\Tr_\cC(\id_{i} \otimes\,\bar\ev_m \otimes \id_{n-i})$ satisfies:
\newcommand{\multiplicationforhere}[5]{
	\draw ($ #1 + 5/6*(0,#2) $) -- ($ #1 - 5/6*(0,#2) $);
	\draw[thick, red] ($ #1 + 1/3*(0,#2) - 1/5*(#2,0) $) -- ($ #1 - 4/5*(#2,0) $);
	\draw[thick, red] ($ #1 - 1/3*(0,#2) - 1/5*(#2,0) $) -- ($ #1 - 4/5*(#2,0) $);
	\draw[very thick] #1 ellipse ({4/5*#2} and {5/6*#2});
	\filldraw[very thick, unshaded] ($ #1 + 1/3*(0,#2) $) circle (1/5*#2);
	\filldraw[very thick, unshaded] ($ #1 - 1/3*(0,#2) $) circle (1/5*#2);
	\node at ($ #1 + (.15,.05) + 5/8*(0,#2)$) {\scriptsize{$#5$}};
	\node at ($ #1 + (.15,0) $) {\scriptsize{$#4$}};
	\node at ($ #1 + (.15,-.02) - 5/8*(0,#2)$) {\scriptsize{$#3$}};
}
\[
\alpha_i=
\begin{tikzpicture}[baseline=-.8cm]
	\node at (.6,-2.1) {\scriptsize{$\cP[n+2]$}};
	\node at (3.6,-.5) {\scriptsize{$\cP[n]$}};
	\draw (0,0) -- (2,0);
	\draw (2,-.7) -- (4,-.7);
	\draw (0,-2.2) -- (0,-1.8) arc (180:90:.4cm) -- (2,-1.4);
	\multiplication{(2,-.7)}{1.2}{0}{\hspace{.37cm}n{+}2}{n}
\draw[thick, dotted] (-1.1,-2.15)node[left, xshift=1, yshift=2]{$\scriptstyle 1_\cC$} -- (-1.1,.7);
	\roundNbox{unshaded}{(-.2,0)}{.6}{0}{0}{}
	\draw[very thick] (-.2,0) circle (.5cm);	
	\filldraw[red] (-.7,0) circle (.05cm);
	\draw (-.5,-.38) -- (-.5,.38);
	\draw (.1,-.38) -- (.1,.38);
	\draw (-.35,-.47) -- (-.35,-.2) arc (180:0:.15cm) -- (-.05,-.47);
\end{tikzpicture}\,
=
Z\Bigg(\tikz[baseline=-2]{\multiplicationforhere{(0,0)}{1}{0}{\hspace{.37cm}n{+}2}{n}}\Bigg)
\circ_1
Z\bigg(\!\;\!\tikz[baseline=-2]{
	\draw[very thick] (-.2,0) circle (.5cm);	
	\filldraw[red] (-.7,0) circle (.05cm);
	\draw (-.5,-.38) -- (-.5,.38);
	\draw (.1,-.38) -- (.1,.38);
	\draw (-.35,-.47) -- (-.35,-.2) arc (180:0:.15cm) -- (-.05,-.47);}
\!\;\!\bigg)
=
Z(a_i).
\]
\end{proof}

\begin{lem}\label{LEM 2 of the end}
The map $\eta:1_\cC\to \cP[0]$ is equal to $Z(u)$.
\end{lem}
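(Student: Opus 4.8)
The plan is to unwind the definitions of both sides and observe that they produce the same morphism in $\cC$. On the one hand, $\eta$ is defined in Theorem~\ref{thm: construct P from M and m} to be the unit map $i : 1_\cC \to \Tr_\cC(1_\cM)$, i.e.\ the unit of the adjunction $\Phi\dashv\Tr_\cC$ evaluated at $1_\cC$. On the other hand, $Z(u)$ is the value assigned by $\cP$ to the unit tangle $u$ (the tangle with no input disks and no strings), which by Example~\ref{ex:Zero tangle} is computed by Algorithm~\ref{alg:AssignMap} to be $\eta$ --- but here ``$\eta$'' means the structure map which was fed into Theorem~\ref{thm:ConstructAPA} when building $\cP$, namely the map called $i$ coming from the adjunction unit in $\Delta(\cP) = (\cM,m)$. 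So the content of the lemma is precisely the assertion that the formal symbol $\eta$ appearing in the reconstruction data of $\cP$ coincides with the adjunction-unit map $i$ for the module tensor category $\cM = \Delta(\cP)$.

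First I would recall, from Section~\ref{sec:MTCfromAPA}, what the unit of the adjunction $\Phi\dashv\Tr_\cC$ in $\cM$ actually is. By Lemma~\ref{lem: exhibits Tr as the right adjoint}, the adjunction isomorphism $\cM(\Phi(a),\text{``}\Phi(c)\otimes m^{\otimes n}\text{''}) \cong \cC(a,\Tr_\cC(\text{``}\Phi(c)\otimes m^{\otimes n}\text{''}))$ is literally the identity map under the identifications $\cM(\Phi(a),\text{``}\Phi(c)\otimes m^{\otimes n}\text{''}) = \cC(a, c\otimes\cP[n])$ and $\Tr_\cC(\text{``}\Phi(c)\otimes m^{\otimes n}\text{''}) = c\otimes\cP[n]$. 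Consequently the unit $i : 1_\cC \to \Tr_\cC(\Phi(1_\cC)) = \Tr_\cC(1_\cM)$ is the morphism in $\cC$ corresponding to $\id_{\Phi(1_\cC)} = \id_{1_\cM} \in \cM(\Phi(1_\cC),\Phi(1_\cC))$. Now $\id_{1_\cM} = \id_{\text{``}\Phi(1_\cC)\otimes m^{\otimes 0}\text{''}}$, and by \eqref{eq: ...and that's how one defines identities} this identity morphism is represented by the identity-tangle-coupon applied to $\id_{1_\cC}$; tracking through, it is the element of $\cC(1_\cC, 1_\cC\otimes\cP[0]) = \cC(1_\cC,\cP[0])$ given by $Z$ of the identity tangle $\id_0$ precomposed with $\id_{1_\cC}$. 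The identity tangle $\id_0$ on $0$ strings is exactly the unit tangle $u$ (a circle with only the basepoint, no strings, no rotation), so this element is $Z(u)$.

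Thus the proof is a short diagram-chase: write $\eta = i$ = (image of $\id_{1_\cM}$ under the adjunction iso) = (image of $\id_{\text{``}\Phi(1_\cC)\otimes m^{\otimes 0}\text{''}}$) = $Z(\id_0) = Z(u)$, using Lemma~\ref{lem: exhibits Tr as the right adjoint} for the second equality, the definition \eqref{eq: ...and that's how one defines identities} of identities in $\cM_0$ together with \eqref{eq: that's how you define Tr!} for the third, and the identification $\id_0 = u$ of generating tangles (Definition~\ref{defn:GeneratingTangles}) together with Example~\ref{ex:Zero tangle} for the last. I expect the only mild subtlety --- and hence the ``main obstacle'' in an otherwise routine verification --- to be keeping the bookkeeping straight between the two meanings of the symbol $\eta$: the $\eta$ that is part of the \emph{input data} for Theorem~\ref{thm:ConstructAPA} when $\cP'=\Lambda(\Delta(\cP))$ is constructed (this is the $i$ of $\cM=\Delta(\cP)$, coming from Theorem~\ref{thm: construct P from M and m}), versus the $\eta = Z(u)$ that is extracted from the \emph{original} anchored planar algebra $\cP$ via its tangle action; the lemma says these agree, and the chase above is exactly what makes the identification, using that the adjunction unit in $\cM_0$ is ``the identity'' by Lemma~\ref{lem: exhibits Tr as the right adjoint}.
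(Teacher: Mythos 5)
Your proof is correct and follows essentially the same route as the paper: identify $\eta$ with the adjunction unit at $1_\cC$, use Lemma~\ref{lem: exhibits Tr as the right adjoint} to see that the adjunction bijection is the identity so that $\eta$ is the element $\id_{1_\cM}\in\cC(1_\cC,\cP[0])$, and then read off from the definition \eqref{eq: ...and that's how one defines identities} that this element is $Z(u)$. One small terminological slip: the tangle appearing in \eqref{eq: ...and that's how one defines identities} has \emph{no} input disc, so for $n=0$ it is literally $u$ rather than the generating identity tangle $\id_0$ of Definition~\ref{defn:GeneratingTangles} (which has one input circle and satisfies $Z(\id_0)=\id_{\cP[0]}$, not a morphism out of $1_\cC$); since the tangle you actually describe and use is $u$, this does not affect the argument.
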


\begin{proof}
By definition, $\eta$ is the unit of the adjunction $\Phi \dashv \Tr_\cC$ evaluated on $1_\cC\in\cC$ (Theorem~\ref{thm: construct P from M and m} and \ref{rel:UnitMap}).
Equivalently, it is the image of $\id_{\Phi(1_\cC)}$ under the bijection $\cM(\Phi(1_\cC),\Phi(1_\cC))\cong \cC(1_\cC,\Tr_\cC(\Phi(1_\cC)))$ provided by the adjunction.
The latter is described in Lemma \ref{lem: exhibits Tr as the right adjoint}. It is the identity map
\[
\cM(\Phi(1_\cC),\Phi(1_\cC)):=\cC(1_\cC,\cP[0]) \,\to\, \cC(1_\cC,\cP[0])=\cC(1_\cC,\Tr_\cC(\Phi(1_\cC))).
\]
Therefore $\eta=\id_{\Phi(1_\cC)}$.
Combining this with the definition \eqref{eq: ...and that's how one defines identities} of identities in $\cM$, it follows that $\eta=\id_{\Phi(1_\cC)}=Z(u)$.
\end{proof}

Before checking the relation $\varpi_{i,j}=Z(p_{i,j})$, we investigate the traciator \ref{rel:Traciator} and multiplication map \ref{rel:MultiplicationMap} in the category $\cM=\Delta(\cP)$.

\begin{lem}
\label{lem:ZPrimeOfRotation}
The traciator 
$$
\tau_{m^{\otimes i},m^{\otimes n-i}}\in \cC\big(\Tr_\cC(m^{\otimes i}\otimes m^{\otimes n-i}), \Tr_\cC(m^{\otimes n-i}\otimes m^{\otimes i})\big) =\cC\big(\cP[n],\cP[n]\big)
$$ 
is equal to $Z(t_i)$, where $t_i$ is the rotation tangle 
$$
t_i =
\begin{tikzpicture}[baseline=-.1cm]
	\draw (0,0) -- (120:.3cm) .. controls ++(120:.3cm) and ++(-120:.3cm) .. (60:1cm);
	\draw[thick, red] (180:.3cm) -- (180:1cm);
	\draw (0,0) -- (60:.3cm) .. controls ++(60:.3cm) and ++(90:.5cm) .. (0:.65cm) .. controls ++(270:.8cm) and ++(270:.8cm) .. (180:.65cm) .. controls ++(90:.6cm) and ++(-60:.5cm) .. (120:1cm);
	\draw[very thick] (0,0) circle (1cm);
	\draw[unshaded, very thick] (0,0) circle (.3cm);
	\node at (80:.8cm) {\scriptsize{$i$}};
	\node at (-90:.7cm) {\scriptsize{$n-i$}};
\end{tikzpicture}
\,.
$$
As similar result holds true for $\tau^{-1}_{m^{\otimes i},m^{\otimes n-i}}$ and the inverse rotation tangle.
\end{lem}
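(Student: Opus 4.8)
\textbf{Proof strategy for Lemma~\ref{lem:ZPrimeOfRotation}.}
The plan is to unwind both sides to explicit morphisms in $\cC$ and compare. On the left, recall from Theorem~\ref{thm: construct P from M and m} that in $\cM=\Delta(\cP)$ we have $\cP'[n]=\Tr_\cC(m^{\otimes n})= 1_\cC\otimes\cP[n]\cong\cP[n]$, and the traciator $\tau_{x,y}$ is characterized (item~\ref{rel:Traciator}) as the mate, under $\Phi\dashv\Tr_\cC$, of a specific morphism built from $\varepsilon_{x\otimes y}$. So the first step is to write out this mate explicitly when $x=m^{\otimes i}$, $y=m^{\otimes n-i}$, using the concrete formulas for $\varepsilon$, $\ev_m=\bar\ev_m$, $\coev_m$, and the half-braiding $e_{\Phi(c)}$ given in Sections~\ref{sec:DefinitionOfM}--\ref{sec:The central functor} (equations \eqref{eq: That's Tr_cC(f) }, \eqref{eq: ev and coev for m}, \eqref{eq: That's the half-braiding!}, and the description of $\varepsilon$ coming from Lemma~\ref{lem: exhibits Tr as the right adjoint}). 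The key point is that $\varepsilon$ for the anchored-planar-algebra model is essentially an identity map (it comes from the identity morphism under the adjunction bijection, just as in Lemma~\ref{LEM 2 of the end}), so the mate collapses to a composite of the generating-tangle morphisms $\varpi$, $\alpha$, $\bar\alpha$ together with $\beta$ and $\theta$.

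The second step is to compute $Z(t_i)$ directly via Algorithm~\ref{alg:AssignMap}: bring $t_i$ into standard form, decompose it into generating tangles, and read off the resulting composite of $\eta$, $\alpha_j$, $\bar\alpha_j$, $\varpi_{j,k}$, and a ribbon-braid factor $P(\sigma_{t_i})$. This is exactly the kind of computation already carried out in Examples~\ref{ex:2PiRotation}--\ref{ex: double twirl}, and in the proof of \ref{reln:RotationThetaMaps} inside Theorem~\ref{thm: construct P from M and m}. In fact, the cleanest route is probably to observe that $t_i$ is, up to isotopy, obtained from the multiplication/rotation tangles by capping off, so that $Z(t_i)$ can be expressed in terms of $\varpi_{i,n-i}$ precomposed with $\bar\alpha$'s and postcomposed with $\alpha$'s — mirroring the definition $\varpi_{i,j}:=\tau^-\circ\mu\circ(\tau^+\otimes\id)$ in Theorem~\ref{thm: construct P from M and m}, but now with $j$ such that the right input disk is ``empty'' (i.e.\ capped). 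Then the identity $\tau=Z(t_i)$ becomes a diagram-chase using relations \ref{rel:CategorifiedTrace}--\ref{rel:MultiplicationAssociative} of the tube calculus on the $\cM$-side, against the tangle relations \ref{reln:UnitMap}--\ref{reln:RotationThetaMaps} on the $\cP$-side; since both sets of relations are proven equivalent in the proof of Theorem~\ref{thm: construct P from M and m}, the two composites must agree.

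A slicker alternative, which I would pursue in parallel, is to avoid recomputing from scratch: we already know from Lemma~\ref{lem:MultiplicationCompatible}-style reasoning (and will verify in the companion lemma on $\varpi$) that $Z(p_{i,j})=\varpi_{i,j}$, and we know the tangle identities \ref{reln:CapCup} and the twist relation. Writing $t_i$ as an operadic composite of $p_{i,n-i}$ with cup and cap tangles and the unit tangle $u$, and applying Proposition~\ref{prop:Gluing} (or rather its model-independent analogue, the composition axiom of an anchored planar algebra), expresses $Z(t_i)$ as $\alpha$'s and $\bar\alpha$'s applied to $\varpi_{i,n-i}$; on the other side, using \ref{rel:TraciatorComposition}, \ref{rel:DualsAndTraciator}, and \ref{rel:StringOverCap}, the traciator $\tau_{m^{\otimes i},m^{\otimes n-i}}$ admits the same expression. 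The inverse-rotation statement follows by applying the same argument to the mirror-image tangle, or simply by noting both sides are invertible with the asserted inverses (using Example~\ref{ex:2PiRotation} and the identity $\tau^-_{x,y}=(\tau_{y,x})^{-1}$). The main obstacle I anticipate is bookkeeping: matching the precise indices and the ribbon-braid/twist correction factors between the tube-diagram normal form and the Algorithm~\ref{alg:AssignMap} normal form, since a single stray $\theta$ or $\beta$ would break the equality — this is where careful use of relation \ref{rel:ThetaAndTraciator} ($\tau_{1,x}=\theta_{\Tr_\cC(x)}$) and the naturality statements \ref{rel:NaturalityOfEpsilon}, \ref{The traciator is natural} will be essential.
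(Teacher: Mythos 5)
Your opening move is the same as the paper's: unwind $\tau_{m^{\otimes i},m^{\otimes n-i}}$ as the mate of the composite in \ref{rel:Traciator}, observe via Lemma~\ref{lem: exhibits Tr as the right adjoint} that the adjunction bijection $\cC(\cP[n],\cP[n])\cong\cM(\Phi(\cP[n]),m^{\otimes n})$ is the identity and can be suppressed, and substitute the explicit formulas for $\varepsilon$, $\bar\ev_m$, $\bar\coev_m$ and the half-braiding \eqref{eq: That's the half-braiding!} in $\cM=\Delta(\cP)$. Where your write-up goes wrong is in how it closes. The actual final step, which you never articulate, is elementary and purely internal to the given anchored planar algebra $\cP$: every morphism appearing in the unwound mate is by construction $Z$ of an explicit tangle ($\bar\ev$ and $\bar\coev$ are the evaluation/coevaluation tangles \eqref{eq: ev and coev for m}, the half-braiding $e_{\Phi(\cP[n]),m^{\otimes n-i}}=\Phi(\beta)\otimes\id$ is the identity tangle together with $\beta$ in $\cC$, $\varepsilon$ is a through-strand, and composition in $\cM$ is the multiplication tangle \eqref{eq: that's how one defines composition}), so by the composition axiom and isotopy invariance of $\cP$ the whole composite is $Z$ of a single tangle visibly isotopic to $t_i$. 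In contrast, your stated justification -- ``since both sets of relations are proven equivalent in Theorem~\ref{thm: construct P from M and m}, the two composites must agree'' -- is not an argument: that theorem only shows the data built from a pointed module tensor category satisfies \ref{reln:UnitMap}--\ref{reln:RotationThetaMaps}; it does not identify the traciator of $\Delta(\cP)$ with $Z(t_i)$. Likewise, invoking Algorithm~\ref{alg:AssignMap} and the maps $\eta,\alpha_j,\bar\alpha_j,\varpi_{j,k}$ of Theorem~\ref{thm: construct P from M and m} is misplaced in Section~\ref{sec:APAtoCMGtoAPA}: there those symbols denote the structure maps of $\Lambda\Delta(\cP)$, which are \emph{defined} from $\tau$ and $\mu$, so expressing the mate of $\tau$ through them is circular, and $Z(t_i)$ needs no decomposition at all.

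The ``slicker alternative'' has the same circularity in sharper form. The identity $Z(p_{i,j})=\varpi_{i,j}$ is Lemma~\ref{LEM 3 of the end}, whose proof in the paper uses the present lemma together with Lemma~\ref{lem:ZPrimeOfMultiplication}; and $\varpi_{i,j}:=\tau^-\circ\mu\circ(\tau^+\otimes\id)$ is built from the very traciator you are trying to compute. So writing $t_i$ as caps and cups applied to $p_{i,\cdot}$ and comparing with a $\varpi$-expression for $\tau$ reduces the claim to $Z(p_{i,j})=\varpi_{i,j}$, which cannot be established at this stage without the present lemma. Drop both detours and finish the first route with the gluing-of-tangles observation above; then the statement for $\tau^{-1}$ and the inverse rotation tangle follows by the same computation (or by invertibility), as you indicate.
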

\begin{proof}
Set $j=n-i$.
Recall from \ref{rel:Traciator} that $\tau_{m^{\otimes i},m^{\otimes j}}$ is the mate of 
\[
	(\id_{m^{\otimes n}}\otimes\, \bar\ev_{m^{\otimes j}})
\circ	(\id_{m^{\otimes j}}\otimes\, \varepsilon_{m^{\otimes n}}\otimes \id_{m^{\otimes j}})
\circ	(e_{\Phi(\cP[n]),m^{\otimes j}}\otimes \id_{m^{\otimes j}})
\circ	(\id_{\Phi(\cP[n])}\otimes\,\bar\coev_{m^{\otimes j}})
\]
under the bijection $\cC(\cP[n],\cP[n])\cong\cM(\Phi(\cP[n]),m^{\otimes n})$.
The latter is the identity map, and may therefore be suppressed (Lemma \ref{lem: exhibits Tr as the right adjoint}).
Using the definition \eqref{eq: That's the half-braiding!} of the half-braiding, we can then compute:
$$
\tau_{m^{\otimes i},m^{\otimes j}}\,=\!\!
\begin{tikzpicture}[baseline=-.3cm]
	\draw (0,1) -- (3,1);
	\draw (2,0) -- (4,0);
	\draw (1,-1) -- (3,-1);
	\draw (0,-.8) -- (0,-.4) arc (180:90:.4cm) -- (2,0);
	\evaluationMap{(0,1)}{.4}{j}
	\tensorLeftIdEv{(1.2,1)}{.4}{}{}{}
	\coevaluationMap{(1.2,-1)}{.4}{j}
	\roundNbox{}{(0,0)}{.4}{0}{0}{};
	\tensorLeftRightIdCoev{(1.2,0)}{.4}
	\node at (.25,-1.8) {\scriptsize{$1_\cC$}};
	\node at (-1,-1.8) {\scriptsize{$\cP[n]$}};
	\draw (-.6,-2) -- (-.6,-1.6);
	\draw[dotted] (0,-2) -- (0,-1.6);
	\draw[dotted] (-.6,-.8) -- (-.6,1.6);
	\draw[dotted] (0,-1.6)  .. controls ++(90:.3cm) and ++(270:.3cm) .. (-.6,-.8);
	\draw[super thick, white] (-.6,-1.6)  .. controls ++(90:.3cm) and ++(270:.3cm) .. (0,-.8);
	\draw (-.6,-1.6)  .. controls ++(90:.3cm) and ++(270:.3cm) .. (0,-.8);
	\draw[rounded corners=5pt, very thick, unshaded] (2,-1.5) rectangle (4.4,1.5);
	\coordinate (a) at (2.15,0);
	\draw[thick, red] (a) -- (2.9,.8);
	\draw[thick, red] (a) -- (2.9,0);
	\draw[thick, red] (a) -- (2.9,-.8);
	\draw (4.4,0) -- (5.2,0);
	\node at (4.8,.2) {\scriptsize{$\cP[n]$}};
	\draw[very thick] (3.2,0) ellipse (1.05 and 1.35);
	\draw (3.1,-.7) -- (3.1,1.35);
	\filldraw[very thick, unshaded] (3.1,.8) circle (.2cm);
	\filldraw[very thick, unshaded] (3.1,0) circle (.2cm);
	\filldraw[very thick, unshaded] (3.1,-.8) circle (.2cm);
	\node at (3.3,1.2) {\scriptsize{$n$}};
	\node at (3.6,.4) {\scriptsize{$n+2j$}};
	\node at (3.4,-.4) {\scriptsize{$2j$}};
\end{tikzpicture}
\,=
Z\Bigg(
\begin{tikzpicture}[scale=.9, baseline=-.1cm]
	\draw (0,0) -- (120:.3cm) .. controls ++(120:.3cm) and ++(-120:.3cm) .. (60:1cm);
	\draw[thick, red] (180:.3cm) -- (180:1cm);
	\draw (0,0) -- (60:.3cm) .. controls ++(60:.3cm) and ++(90:.5cm) .. (0:.65cm) .. controls ++(270:.8cm) and ++(270:.8cm) .. (180:.65cm) .. controls ++(90:.6cm) and ++(-60:.5cm) .. (120:1cm);
	\draw[very thick] (0,0) circle (1cm);
	\draw[unshaded, very thick] (0,0) circle (.3cm);
	\node at (80:.8cm) {\scriptsize{$i$}};
	\node at (-90:.78cm) {\scriptsize{$j$}};
\end{tikzpicture}
\Bigg).
$$
\end{proof}

\begin{lem}
\label{lem:ZPrimeOfMultiplication}
\newcommand{\tensorforhere}[5]{
	\draw ($ #1 - 1/3*(#2,0) $) -- ($ #1 - #4*16/23*(0,#2) - 1/3*(#2,0) $);
	\draw ($ #1 + 1/3*(#2,0) $)  -- ($ #1 - #4*16/23*(0,#2) + 1/3*(#2,0) $);
	\draw[thick, red] ($ #1 - 1/3*(#2,0) - 1/5*(#2,0) $) -- ($ #1 - 5/6*(#2,0) $);
	\draw[thick, red] ($ #1 + 1/3*(#2,0) - 1/5*(#2,0) $) .. controls ++(180:.2cm) and ++(0:.2cm) .. ($ #1 - 1/3*(#2,0) + 2/5*(0,#2) $) .. controls ++(180:.2cm) and ++(0:.2cm) .. ($ #1 - 5/6*(#2,0) $);
	\draw[very thick] #1 ellipse ( {5/6*#2} and {78/100*#2});
	\filldraw[very thick, unshaded] ($ #1 + 1/3*(#2,0) $) circle (1/5*#2);
	\filldraw[very thick, unshaded] ($ #1 - 1/3*(#2,0) $) circle (1/5*#2);
	\node at ($ #1 + (.2,0) - 1/3*(#2,0) - #4*.4*(0,#2) $) {\scriptsize{$#3$}};
	\node at ($ #1 + (.2,0) + 1/3*(#2,0) - #4*.4*(0,#2) $) {\scriptsize{$#5$}};
}

The multiplication map 
\[
\mu_{m^{\otimes i},m^{\otimes j}} 
\in\,
\cC\big(\Tr_\cC(m^{\otimes i}) \otimes \Tr_\cC(m^{\otimes j}), \Tr_\cC(m^{\otimes i+j})\big) 
= \cC\big(\cP[i] \otimes \cP[j], \cP[i+j]\big)
\]
is equal to
$Z\Bigg(\tikz[baseline=10, yscale=-.9, xscale=.9]{\tensorforhere{(3.2,-.5)}{1.2}{i\;}{1}{j\,}}\Bigg)$.
\end{lem}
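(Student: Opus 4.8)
The plan is to follow the same pattern as the proofs of Lemmas~\ref{LEM 1 of the end}, \ref{LEM 2 of the end} and especially Lemma~\ref{lem:ZPrimeOfRotation}: unwind the definition of the multiplication map, use that the adjunction $\Phi\dashv\Tr_\cC$ in $\cM=\Delta(\cP)$ is the tautological one, and then recognise the resulting morphism in $\cC$ as the action of a generating tangle.

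First I would recall from \ref{rel:MultiplicationMap} that $\mu_{x,y}$ is by definition the mate, under $\Phi\dashv\Tr_\cC$, of the morphism $\varepsilon_x\otimes\varepsilon_y:\Phi(\Tr_\cC(x)\otimes\Tr_\cC(y))\to x\otimes y$, where I use that $\Phi$ is a strict tensor functor (see \eqref{Phi M eq3}) so that $\Phi(\Tr_\cC(x))\otimes\Phi(\Tr_\cC(y))=\Phi(\Tr_\cC(x)\otimes\Tr_\cC(y))$. Specialising to $x=m^{\otimes i}$, $y=m^{\otimes j}$ and using \eqref{eq: that's how you define Tr!} together with \eqref{eq:   cP[n]:=Tr_cC(m^otimes n)}, one has $\Tr_\cC(m^{\otimes i})=\cP[i]$, $\Tr_\cC(m^{\otimes j})=\cP[j]$, $\Tr_\cC(m^{\otimes i+j})=\cP[i+j]$, and $m^{\otimes i}\otimes m^{\otimes j}=m^{\otimes i+j}$ in $\cM$. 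By Lemma~\ref{lem: exhibits Tr as the right adjoint}, the relevant adjunction bijection $\cM(\Phi(\cP[i]\otimes\cP[j]),m^{\otimes i+j})\cong\cC(\cP[i]\otimes\cP[j],\cP[i+j])$ is literally the identity map on $\cC(\cP[i]\otimes\cP[j],\cP[i+j])$; hence $\mu_{m^{\otimes i},m^{\otimes j}}$, viewed as an element of $\cC(\cP[i]\otimes\cP[j],\cP[i+j])$, simply \emph{equals} $\varepsilon_{m^{\otimes i}}\otimes\varepsilon_{m^{\otimes j}}$ viewed as such an element.

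It then remains to compute $\varepsilon_{m^{\otimes i}}\otimes\varepsilon_{m^{\otimes j}}$. As in the proof of Lemma~\ref{LEM 2 of the end}, the attaching map $\varepsilon_{m^{\otimes i}}:\Phi(\cP[i])\to m^{\otimes i}$ (see \ref{rel:AttachingMap}) is the counit of $\Phi\dashv\Tr_\cC$, hence, by Lemma~\ref{lem: exhibits Tr as the right adjoint}, the $\cM$-morphism corresponding to $\id_{\cP[i]}$; in the diagrammatic notation of \eqref{nwrblsfgbowjr}, with trivial output object $1_\cC$, it is just the bare wire $\id_{\cP[i]}$. Plugging $f_1=\varepsilon_{m^{\otimes i}}$ and $f_2=\varepsilon_{m^{\otimes j}}$ into the formula \eqref{eq:TensorProductInM} for the tensor product of morphisms in $\cM$, with parameters $n_1=n_2=0$, $p_1=i$, $p_2=j$, $b_1=b_2=1_\cC$, the two identity wires pass straight through and one is left with precisely $Z$ applied to the tensor tangle appearing in \eqref{eq:TensorProductInM} for those parameter values. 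Comparing this $n_1=n_2=0$ specialisation with the tangle drawn in the statement (whose vertical flip is only a drawing artifact, as in the proof of Lemma~\ref{LEM 1 of the end}), one checks that the two anchored planar tangles agree, which gives the claim; the statement for $\tau^{-}_{m^{\otimes i},m^{\otimes n-i}}$ and the mirrored tangle follows from the identical computation with the inverse braiding.

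The main obstacle I expect is bookkeeping rather than substance: keeping the identifications $\Tr_\cC(m^{\otimes n})=\cP[n]$ and $m^{\otimes i}\otimes m^{\otimes j}=m^{\otimes i+j}$ straight throughout, and, at the very end, verifying carefully that the anchor-line configuration produced by the $n_1=n_2=0$ specialisation of the tensor-product formula \eqref{eq:TensorProductInM} is isotopic, as an anchored planar tangle, to the tangle written in the statement — exactly the kind of tangle-matching carried out in the proofs of Lemmas~\ref{LEM 1 of the end} and \ref{lem:ZPrimeOfRotation}. Once this lemma is available, together with Lemma~\ref{lem:ZPrimeOfRotation} and the factorisation $\varpi_{i,j}=\tau^-\circ\mu\circ(\tau^+\otimes\id)$ from Theorem~\ref{thm: construct P from M and m}, the remaining relation $\varpi_{i,j}=Z(p_{i,j})$ will follow from compatibility of $Z$ with composition of tangles (Proposition~\ref{prop:Gluing}), after checking that the corresponding composite of generating tangles is isotopic to $p_{i,j}$.
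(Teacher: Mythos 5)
Your route is the same as the paper's: express $\mu_{m^{\otimes i},m^{\otimes j}}$ as the mate of $\varepsilon_{m^{\otimes i}}\otimes\varepsilon_{m^{\otimes j}}$ (\ref{rel:MultiplicationMap}), observe via Lemma~\ref{lem: exhibits Tr as the right adjoint} that the adjunction bijection is the identity, identify each $\varepsilon_{m^{\otimes k}}$ with the through-strand $\id_{\cP[k]}$, and then evaluate the tensor product using \eqref{eq:TensorProductInM}. Up to that point your argument matches the paper exactly.

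The gap is in your last step, and it is concrete. First, when you specialise \eqref{eq:TensorProductInM} to $f_1=\varepsilon_{m^{\otimes i}}$, $f_2=\varepsilon_{m^{\otimes j}}$, the wires do \emph{not} ``pass straight through'': the vertical wire of $f_2$, which below its coupon carries $\cP[j]$, crosses over the horizontal $\cP[i]$-output of $f_1$, so what the formula produces is $Z(\,\text{tensor tangle with labels }0,i,0,j\,)\circ\beta_{\cP[j],\cP[i]}^{-1}$, not the bare $Z$ of that tangle. Second, that tensor tangle is \emph{not} isotopic to the tangle in the statement (the vertical flip is indeed harmless, but the anchor lines are not): in the \eqref{eq:TensorProductInM} tangle the anchor line of the second input disc passes on the side of the first disc where its $i$ strands emerge, whereas in the statement's tangle it passes on the opposite side; the two paths differ by a loop around the first input disc, and although anchor lines may cross strands, an isotopy cannot change how they wind around an input circle. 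So your proposed ``check that the two anchored planar tangles agree'' would fail. The two $Z$-values differ by exactly $\beta_{\cP[j],\cP[i]}$, by the braiding anchor-dependence axiom (compare Example~\ref{ex:SwapAnchorDependence}), and this is precisely what cancels the crossing you dropped — which is how the paper finishes the computation. Your two inaccuracies compensate, so the conclusion is true, but the final step needs the anchor-dependence relation, not an isotopy of tangles. (Also, the closing remark about $\tau^-$ belongs to Lemma~\ref{lem:ZPrimeOfRotation}, not to this statement.)
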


\begin{proof}
Recall from \ref{rel:MultiplicationMap} that $\mu_{m^{\otimes i},m^{\otimes j}}$ is the mate of $\varepsilon_{m^{\otimes i}}\otimes \varepsilon_{m^{\otimes j}}$
under the adjunction $\cC(\Tr_\cC(m^{\otimes i}) \otimes \Tr_\cC(m^{\otimes j}), \Tr_\cC(m^{\otimes i+j}))
\cong \cM(\Phi(\Tr_\cC(m^{\otimes i}) \otimes \Tr_\cC(m^{\otimes j})), m^{\otimes i+j})$.
As in the previous proofs, we suppress that bijection and just write $\mu_{m^{\otimes i},m^{\otimes j}}=\varepsilon_{m^{\otimes i}}\otimes \varepsilon_{m^{\otimes j}}$.

\newcommand{\tensorforhere}[5]{
	\draw ($ #1 - 1/3*(#2,0) $) -- ($ #1 - #4*16/23*(0,#2) - 1/3*(#2,0) $);
	\draw ($ #1 + 1/3*(#2,0) $)  -- ($ #1 - #4*16/23*(0,#2) + 1/3*(#2,0) $);
	\draw[thick, red] ($ #1 - 1/3*(#2,0) - 1/5*(#2,0) $) -- ($ #1 - 5/6*(#2,0) $);
	\draw[thick, red] ($ #1 + 1/3*(#2,0) - 1/5*(#2,0) $) .. controls ++(180:.2cm) and ++(0:.2cm) .. ($ #1 - 1/3*(#2,0) + 2/5*(0,#2) $) .. controls ++(180:.2cm) and ++(0:.2cm) .. ($ #1 - 5/6*(#2,0) $);
	\draw[very thick] #1 ellipse ( {5/6*#2} and {78/100*#2});
	\filldraw[very thick, unshaded] ($ #1 + 1/3*(#2,0) $) circle (1/5*#2);
	\filldraw[very thick, unshaded] ($ #1 - 1/3*(#2,0) $) circle (1/5*#2);
	\node at ($ #1 + (.2,0) - 1/3*(#2,0) - #4*.4*(0,#2) $) {\scriptsize{$#3$}};
	\node at ($ #1 + (.2,0) + 1/3*(#2,0) - #4*.4*(0,#2) $) {\scriptsize{$#5$}};
}

Under the adjunction $\cC(\cP[i],\cP[i])\cong\cM(\Phi(\cP[i]),m^{\otimes i})$, the counit $\varepsilon_{m^{\otimes i}}$ corresponds to the identity $\id_{\cP[i]}$.
In the notation~\eqref{nwrblsfgbowjr}, we represented it by a through-strand:
\begin{equation}\label{it's a through-strand}
\varepsilon_{m^{\otimes i}} \,= \tikz[baseline=15]{\draw (0,0)node[left, yshift=5, xshift=1]{$\scriptstyle \cP[i]$} -- ++(0,.8) arc(180:90:.4) -- ++(.8,0);}\,\,\,\in\,\cM\big(\Phi(\cP[i]),m^{\otimes i}\big).
\end{equation}
We now use the definition \eqref{eq:TensorProductInM} of the tensor product in $\cM$
to compute $\mu_{m^{\otimes i},m^{\otimes j}}=\varepsilon_{m^{\otimes i}}\otimes \varepsilon_{m^{\otimes j}}$:
$$
\begin{tikzpicture}[baseline=-.3cm]
	\draw (-1,-1.6) -- (-1,-1) arc (180:90:.4cm) -- (2,-.6);
	\draw[super thick, white] (0,-1.6) -- (0,.2) arc (180:90:.4cm) -- (2,.6);
	\draw (0,-1.6) -- (0,.2) arc (180:90:.4cm) -- (2,.6);
	\draw (3,0) -- (4.2,0);
	\roundNbox{}{(0,.6)}{.4}{0}{0}{};
	\roundNbox{}{(-1,-.6)}{.4}{0}{0}{};
	\tensor{(2,0)}{1}{0\,}{i\,}{0\,}{j\,};
	\node at (3.6,.2) {\scriptsize{$\cP[i+j]$}};
	\node at (-1.3,-1.4) {\scriptsize{$\cP[i]$}};
	\node at (-.3,-1.4) {\scriptsize{$\cP[j]$}};
\end{tikzpicture}
\,=
Z\Bigg(\tikz[baseline=-16, scale=.9]{\tensorforhere{(3.2,-.5)}{1.2}{i\;}{-1}{j\,}}
\Bigg)\circ\beta_{\cP[j],\cP[i]}^{-1}
=\,
Z\Bigg(\tikz[baseline=10, yscale=-.9, xscale=.9]{\tensorforhere{(3.2,-.5)}{1.2}{i\;}{1}{j\,}}
\Bigg).
$$
\end{proof}

As a consequence of the Lemmas \ref{lem:ZPrimeOfRotation} and \ref{lem:ZPrimeOfMultiplication}, we get:
\begin{lem}\label{LEM 3 of the end}
The relation $\varpi_{i,j}=Z(p_{i,j})$ holds.
\end{lem}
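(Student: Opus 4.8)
The statement to prove is Lemma~\ref{LEM 3 of the end}: that $\varpi_{i,j} = Z(p_{i,j})$, where $Z$ is the action of the anchored planar algebra $\cP$, and $\varpi_{i,j}$ is the map attached to $p_{i,j}$ by the reconstructed planar algebra $\cP' = \Lambda(\Delta(\cP))$ via the formula in Theorem~\ref{thm: construct P from M and m}.

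The plan is to unwind the definition of $\varpi_{i,j}$ from Theorem~\ref{thm: construct P from M and m} and express each factor in terms of the action of $\cP$ on tangles, using the two lemmas just established. Recall that, writing $n$ for the number of strands on the first input disk (so the first box space is $\cP[n]$),
\[
\varpi_{i,j} = \tau^-_{m^{\otimes n-i},m^{\otimes i+j}} \circ \mu_{m^{\otimes n},m^{\otimes j}} \circ \big(\tau^+_{m^{\otimes i},m^{\otimes n-i}} \otimes 1_{\Tr_\cC(m^{\otimes j})}\big).
\]
By Lemma~\ref{lem:ZPrimeOfRotation}, $\tau^+_{m^{\otimes i},m^{\otimes n-i}}$ is the action $Z(t_i)$ of the rotation tangle $t_i$ of type $(n;n)$, and $\tau^-_{m^{\otimes n-i},m^{\otimes i+j}}$ is the action of the inverse rotation tangle of type $(n+j;n+j)$. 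By Lemma~\ref{lem:ZPrimeOfMultiplication}, $\mu_{m^{\otimes n},m^{\otimes j}}$ is the action of the (appropriately oriented) multiplication tangle of type $(n,j;n+j)$. Hence $\varpi_{i,j}$ equals $Z$ of the operadic composite of these three tangles. The remaining task is purely combinatorial-topological: verify that this operadic composite of (inverse rotation)$\,\circ\,$(multiplication)$\,\circ\,$(rotation on the first input) is isotopic, as an anchored planar tangle, to $p_{i,j}$. This is most naturally done by drawing the composite tangle and isotoping the anchor lines: the rotation $t_i$ on the first input cyclically moves $i$ strands around, the multiplication glues the two disks side by side, and the outer inverse rotation undoes an overall rotation, leaving exactly the anchor-line configuration of $p_{i,j}$ (the one with $i$ strands on the left of the input disk exiting to the top-left, $n-i$ on the right, and the $j$-strand disk threaded through as in Definition~\ref{defn:GeneratingTangles}). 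Since $Z$ is invariant under isotopy of anchored planar tangles (an axiom of anchored planar algebras) and compatible with operadic composition (axiom \eqref{eq: composition of tangles}), this yields $\varpi_{i,j} = Z(p_{i,j})$.

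Concretely, the steps in order are: (1) substitute Lemmas~\ref{lem:ZPrimeOfRotation} and \ref{lem:ZPrimeOfMultiplication} into the definition of $\varpi_{i,j}$, rewriting it as $Z(\bar t)\circ_1 [Z(\mathrm{mult})\circ_1 (Z(t_i)\otimes \id)]$ where $\bar t$ is the inverse rotation tangle; (2) use the composition axiom \eqref{eq: composition of tangles} (equivalently Remark~\ref{rem:operadic composition of morphisms}) to rewrite this as $Z$ of a single composite tangle $\bar t \circ_1 (\mathrm{mult}\circ_1 t_i)$, being careful that the $\otimes \id_{\cP[j]}$ corresponds to the operadic composition $\circ_1$ on the first input only; (3) exhibit an explicit isotopy of anchored planar tangles from this composite to $p_{i,j}$, tracking both the underlying planar tangle and the anchor lines, with a picture; (4) invoke isotopy invariance of $Z$ to conclude. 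One should also double-check the bookkeeping of which index ($i$ versus $n-i$) is rotated and in which direction, matching the sign conventions $\tau^\pm$ fixed in \ref{rel:Traciator} against the clockwise anchor-line numbering convention of Definition~\ref{defn:AnchoredPlanarTangle}.

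The main obstacle I expect is step (3): getting the isotopy of anchor lines exactly right. The underlying planar tangles are easily seen to match (both are just $n+j$ radial-ish strands with a $j$-strand input disk), so all the content is in the anchor lines, and anchor lines are only defined up to isotopy but must not cross each other or violate the clockwise-numbering condition. The composite of a rotation on the first input, a multiplication, and an inverse outer rotation produces an anchor-line picture that looks rather tangled, and one must carefully slide the inner disk's anchor line past the strands and verify no spurious twists or braidings remain — essentially the same kind of argument as in Remark~\ref{rem:Quadratics}, where the presence of anchor lines is exactly what distinguishes genuinely different tangles. A cleaner alternative, if the direct isotopy proves fiddly, is to instead verify the identity $\varpi_{i,j} = Z(p_{i,j})$ by checking that both sides satisfy the same defining relations against the already-identified generators $\eta = Z(u)$, $\alpha_i = Z(a_i)$, $\bar\alpha_i = Z(\bar a_i)$ (Lemmas~\ref{LEM 1 of the end}, \ref{LEM 2 of the end}), using the relations in Proposition~\ref{prop:AnchoredRelations}; but the isotopy argument is more direct and is what I would attempt first.
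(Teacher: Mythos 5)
Your proposal is correct and follows essentially the same route as the paper: substitute Lemmas~\ref{lem:ZPrimeOfRotation} and \ref{lem:ZPrimeOfMultiplication} into the definition of $\varpi_{i,j}$, rewrite the result as $Z$ of a single operadic composite via the composition axiom, and then verify by a picture that this composite anchored tangle is isotopic to $p_{i,j}$. The paper handles your step (3) exactly as you anticipate, by drawing the composite and checking the anchor lines.
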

\begin{proof}
\newcommand{\tensorforhere}[5]{
	\draw ($ #1 - 1/3*(#2,0) $) -- ($ #1 - #4*16/23*(0,#2) - 1/3*(#2,0) $);
	\draw ($ #1 + 1/3*(#2,0) $)  -- ($ #1 - #4*16/23*(0,#2) + 1/3*(#2,0) $);
	\draw[thick, red] ($ #1 - 1/3*(#2,0) - 1/5*(#2,0) $) -- ($ #1 - 5/6*(#2,0) $);
	\draw[thick, red] ($ #1 + 1/3*(#2,0) - 1/5*(#2,0) $) .. controls ++(180:.2cm) and ++(0:.2cm) .. ($ #1 - 1/3*(#2,0) + 2/5*(0,#2) $) .. controls ++(180:.2cm) and ++(0:.2cm) .. ($ #1 - 5/6*(#2,0) $);
	\draw[very thick] #1 ellipse ( {5/6*#2} and {78/100*#2});
	\filldraw[very thick, unshaded] ($ #1 + 1/3*(#2,0) $) circle (1/5*#2);
	\filldraw[very thick, unshaded] ($ #1 - 1/3*(#2,0) $) circle (1/5*#2);
	\node at ($ #1 + (.2,0) - 1/3*(#2,0) - #4*.4*(0,#2) $) {\scriptsize{$#3$}};
	\node at ($ #1 + (.2,0) + 1/3*(#2,0) - #4*.4*(0,#2) $) {\scriptsize{$#5$}};
}
By the definition of $\varpi_{i,j}$ (Theorem~\ref{thm: construct P from M and m})
and by the Lemmas \ref{lem:ZPrimeOfRotation} and \ref{lem:ZPrimeOfMultiplication}, we have:
\[
\begin{split}
\varpi_{i,j}
&= \tau^{-1}_{m^{\otimes i+j},m^{\otimes n-i}} \circ \mu_{m^{\otimes n},m^{\otimes j}} \circ ( \tau_{m^{\otimes i},m^{\otimes n-i}} \otimes  \id_{\cP[j]})\\
&= \tau^{-1}_{m^{\otimes i+j},m^{\otimes n-i}} \circ_{\scriptscriptstyle 1} \mu_{m^{\otimes n},m^{\otimes j}} \circ_{\scriptscriptstyle 1} \tau_{m^{\otimes i},m^{\otimes n-i}}\\
&= Z\Bigg(
\begin{tikzpicture}[baseline=-.1cm, xscale=-.9, yscale=.9]
	\draw (0,0) -- (120:.3cm) .. controls ++(120:.3cm) and ++(-120:.3cm) .. (60:1cm);
	\draw[thick, red] (0:.3cm) -- (0:1cm);
	\draw (0,0) -- (60:.3cm) .. controls ++(60:.3cm) and ++(90:.5cm) .. (0:.65cm) .. controls ++(270:.8cm) and ++(270:.8cm) .. (180:.65cm) .. controls ++(90:.6cm) and ++(-60:.5cm) .. (120:1cm);
	\draw[very thick] (0,0) circle (1cm);
	\draw[unshaded, very thick] (0,0) circle (.3cm);
	\node at (85:.79cm) {\scriptsize{$i{+}j$}};
	\node at (-90:.7cm) {\scriptsize{$n-i$}};
\end{tikzpicture}
\Bigg)
\circ_1
Z\Bigg(
\tikz[baseline=11, yscale=-.97, xscale=.97]{\tensorforhere{(3.2,-.5)}{1.2}{n\;}{1}{j\,}}
\Bigg)
\circ_1
Z\Bigg(
\begin{tikzpicture}[baseline=-.1cm, scale=.9]
	\draw (0,0) -- (120:.3cm) .. controls ++(120:.3cm) and ++(-120:.3cm) .. (60:1cm);
	\draw[thick, red] (180:.3cm) -- (180:1cm);
	\draw (0,0) -- (60:.3cm) .. controls ++(60:.3cm) and ++(90:.5cm) .. (0:.65cm) .. controls ++(270:.8cm) and ++(270:.8cm) .. (180:.65cm) .. controls ++(90:.6cm) and ++(-60:.5cm) .. (120:1cm);
	\draw[very thick] (0,0) circle (1cm);
	\draw[unshaded, very thick] (0,0) circle (.3cm);
	\node at (80:.8cm) {\scriptsize{$i$}};
	\node at (-90:.7cm) {\scriptsize{$n-i$}};
\end{tikzpicture}
\Bigg)
\\
&= Z\Bigg(
\begin{tikzpicture}[baseline=-.1cm, xscale=-.9, yscale=.9]
	\draw (0,0) -- (120:.3cm) .. controls ++(120:.3cm) and ++(-120:.3cm) .. (60:1cm);
	\draw[thick, red] (0:.3cm) -- (0:1cm);
	\draw (0,0) -- (60:.3cm) .. controls ++(60:.3cm) and ++(90:.5cm) .. (0:.65cm) .. controls ++(270:.8cm) and ++(270:.8cm) .. (180:.65cm) .. controls ++(90:.6cm) and ++(-60:.5cm) .. (120:1cm);
	\draw[very thick] (0,0) circle (1cm);
	\draw[unshaded, very thick] (0,0) circle (.3cm);
	\node at (85:.79cm) {\scriptsize{$i{+}j$}};
	\node at (-90:.7cm) {\scriptsize{$n-i$}};
\end{tikzpicture}
\circ_1
\tikz[baseline=11, yscale=-.97, xscale=.97]{\tensorforhere{(3.2,-.5)}{1.2}{n\;}{1}{j\,}}
\circ_1
\begin{tikzpicture}[baseline=-.1cm, scale=.9]
	\draw (0,0) -- (120:.3cm) .. controls ++(120:.3cm) and ++(-120:.3cm) .. (60:1cm);
	\draw[thick, red] (180:.3cm) -- (180:1cm);
	\draw (0,0) -- (60:.3cm) .. controls ++(60:.3cm) and ++(90:.5cm) .. (0:.65cm) .. controls ++(270:.8cm) and ++(270:.8cm) .. (180:.65cm) .. controls ++(90:.6cm) and ++(-60:.5cm) .. (120:1cm);
	\draw[very thick] (0,0) circle (1cm);
	\draw[unshaded, very thick] (0,0) circle (.3cm);
	\node at (80:.8cm) {\scriptsize{$i$}};
	\node at (-90:.7cm) {\scriptsize{$n-i$}};
\end{tikzpicture}
\Bigg)=Z(p_{i,j}).
\end{split}
\]
Here, the last equality is best checked by drawing:
\[\vspace{-1.2cm}
\begin{tikzpicture}[baseline=-.1cm]
	\coordinate (a) at (-2.3,0);
	\coordinate (b) at (-1.4,0);
	\draw[very thick] (1.2,0) ellipse (3.5 and 3);
	\draw[very thick] (1.2,0) ellipse (2.6 and 2.2);
	\draw[very thick] (0,0) circle (1cm);
	\draw[very thick] (2.4,0) circle (1cm);
	\draw (0,0) -- (120:.3cm) .. controls ++(120:.3cm) and ++(-120:.3cm) .. (60:1cm);
	\draw (0,0) -- (60:.3cm) .. controls ++(60:.3cm) and ++(90:.5cm) .. (0:.65cm) .. controls ++(270:.8cm) and ++(270:.8cm) .. (180:.65cm) .. controls ++(90:.6cm) and ++(-60:.5cm) .. (120:1cm);
	\node at (80:.8cm) {\scriptsize{$i$}};
	\node at (-90:.7cm) {\scriptsize{$n-i$}};
%
	\node at (2.05,1.3) {\scriptsize{$j$}};
	\draw[unshaded, very thick] (0,0) circle (.3cm);	
%
	\draw (2.4,1)  .. controls ++(90:.6cm) and ++(270:.6cm) .. (1.2,3) ;	
	\draw (60:1cm) .. controls ++(60:.6cm) and ++(-70:.3cm)  .. (0,2.8);
	\draw (120:1cm) .. controls ++(120:.8cm) and ++(90:2cm) .. (-1.8,0) .. controls ++(270:3.6cm) and ++(270:3.6cm) .. (4.2,0) .. controls ++(90:2cm) and ++(270:1cm) .. (2.4,2.8);
	\draw[thick, red] (180:.3cm) -- (a);
	\draw[thick, red] (1.4,0) .. controls ++(180:.4cm) and ++(0:1cm) .. (0,-1.2) .. controls ++(180:1cm) and ++(0:.4cm) .. (b);
\end{tikzpicture}
\,\,=\,\,
\begin{tikzpicture}[baseline = -.1cm]
	\draw (-.6,-.2) -- (-.6,1);
	\node at (-.6,1.2) {\scriptsize{$i$}};
	\draw (0,1) -- (0,.6);
	\node at (0,1.2) {\scriptsize{$j$}};
	\draw (.6,-.2) -- (.6,1);
	\node at (.6,1.2) {\scriptsize{$n-i$}};
	\draw[thick, red] (0,-.6) -- (0,-1);
	\draw[thick, red] (0,.2) arc (180:270:.2cm) -- (.6,0) arc (90:-90:.4cm) -- (.2,-.8) arc (90:180:.2cm);
	\roundNbox{}{(0,0)}{1}{.2}{.2}{}
	\roundNbox{unshaded}{(0,-.4)}{.2}{.6}{.6}{}
	\roundNbox{unshaded}{(0,.4)}{.2}{.2}{.2}{}
\end{tikzpicture}
\,\,=\,p_{i,j}.
\]
\end{proof}
\vspace{.2cm}

So far, we have checked that the functor $\Lambda\circ\Delta:\APA\to\APA$ is the identity on objects.
We now check that it is the identity on morphisms.
Let $H: \cP_1\to \cP_2$ be a map of anchored planar algebras, and let $H': \cP_1\to \cP_2$ be its image under $\Lambda\circ\Delta$:

\begin{lem}
\label{lem:APAmapEquality}
For every $n$, the two maps $H[n]:\cP_1[n]\to \cP_2[n]$ and $H'[n]:\cP_1[n]\to \cP_2[n]$ are equal.
\end{lem}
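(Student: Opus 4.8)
The statement to prove is that for a map of anchored planar algebras $H\colon\cP_1\to\cP_2$, with $H'=\Lambda\Delta(H)$, one has $H[n]=H'[n]$ for every $n$. The idea is to unwind the two constructions and see that they are literally the same morphism. First I would recall how $H'[n]$ is defined: by Definition~\ref{def: morphism of APA associated to G:M-->M'}, $H'[n]$ is the composite $\cP_1[n]=\Tr_\cC^1(m_1^{\otimes n})\xrightarrow{\zeta_{m_1^{\otimes n}}}\Tr_\cC^2(G(m_1^{\otimes n}))\xrightarrow{\cong}\Tr_\cC^2(m_2^{\otimes n})=\cP_2[n]$, where $G=\Delta(H)$ and $\zeta$ is the mate (Definition~\ref{def: mate of zeta}) of $G(\varepsilon^1_{m_1^{\otimes n}})\circ\gamma_{\Tr^1_\cC(m_1^{\otimes n})}$ under the adjunction $\Phi_2\dashv\Tr_\cC^2$. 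Now, in the category $\cM_1=\Delta(\cP_1)$ built in Section~\ref{sec:MTCfromAPA}, we have (by the identification at the end of Section~\ref{sec:DefinitionOfM} together with the discussion in Section~\ref{sec:APAtoCMGtoAPA}) $m_1^{\otimes n}=\Phi_1(1_\cC)\otimes m_1^{\otimes n}$ as an object of $\cM_1$, the counit $\varepsilon^1_{m_1^{\otimes n}}$ is the through-strand morphism of \eqref{it's a through-strand}, and $\Tr_\cC^1(m_1^{\otimes n})=\cP_1[n]$ by \eqref{eq: that's how you define Tr!}. Moreover $\gamma=\gamma^{\Delta(H)}$ is the identity natural transformation by the line below \eqref{eq:def of Delta -- objects}.

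The key computation is then to evaluate the mate explicitly. Using the description of the adjunction $\Phi_2\dashv\Tr^2_\cC$ from Lemma~\ref{lem: exhibits Tr as the right adjoint} — namely that under the identification $\cM_2(\Phi_2(a),\Phi_2(c)\otimes m_2^{\otimes k})=\cC(a,c\otimes\cP_2[k])$ the unit of the adjunction is the identity — the mate of a morphism $f\in\cM_2(\Phi_2(\cP_1[n]),m_2^{\otimes n})$ is just $f$ itself viewed inside $\cC(\cP_1[n],\cP_2[n])$ after the canonical identifications. So I need to identify the morphism $G(\varepsilon^1_{m_1^{\otimes n}})\circ\gamma_{\cP_1[n]}=\Delta(H)(\varepsilon^1_{m_1^{\otimes n}})$ in $\cM_2$. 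By the definition \eqref{eq:def of Delta} of $\Delta(H)$ on morphisms, $\Delta(H)$ post-composes the underlying morphism in $\cC$ with $H[\text{appropriate index}]$; since $\varepsilon^1_{m_1^{\otimes n}}$ corresponds to $\id_{\cP_1[n]}\in\cC(\cP_1[n],\cP_1[n])$ (the through-strand, cf.\ \eqref{it's a through-strand} and Lemma~\ref{LEM 2 of the end}-style reasoning), we get that $\Delta(H)(\varepsilon^1_{m_1^{\otimes n}})$ corresponds to $H[n]\circ\id_{\cP_1[n]}=H[n]$. Taking the mate back (which does nothing beyond the identifications) yields $\zeta_{m_1^{\otimes n}}=H[n]$ after composing with the final isomorphism $\Tr^2_\cC(G(m_1^{\otimes n}))\cong\Tr^2_\cC(m_2^{\otimes n})=\cP_2[n]$, which on the nose is the identity under our identifications since $G(m_1^{\otimes n})=m_2^{\otimes n}$. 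Hence $H'[n]=H[n]$.

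The main obstacle is bookkeeping: one must carefully track the several canonical identifications in play — the identification of the formal symbol ``$\Phi(1_\cC)\otimes m^{\otimes n}$'' with $\Phi(1_\cC)\otimes m^{\otimes n}$ and thence with $m^{\otimes n}$, the identification $\Tr_\cC(m^{\otimes n})=\cP[n]$ from \eqref{eq: that's how you define Tr!}, the fact that $\gamma^{\Delta(H)}=\id$, and the explicit form of the $\Phi_2\dashv\Tr^2_\cC$ adjunction — and verify that under all of them, $\zeta_{m_1^{\otimes n}}$ is literally $H[n]$ rather than merely naturally isomorphic to it. No genuinely hard mathematics is involved; the point is to show that the chain of definitions collapses. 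Once Lemma~\ref{lem:APAmapEquality} is established, combined with the object-level identities of Lemmas~\ref{LEM 1 of the end}, \ref{LEM 2 of the end}, \ref{LEM 3 of the end}, it follows that $\Lambda\circ\Delta=\id_\APA$ on the nose, completing the proof of the equivalence of categories of Theorem~\ref{thm:EquivalenceOfCategories2}.
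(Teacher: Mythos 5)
Your proposal is correct and follows essentially the same route as the paper's proof: unwind $H'[n]=\zeta_{m_1^{\otimes n}}$ as the mate of $G(\varepsilon^1_{m_1^{\otimes n}})\circ\gamma^G$, note $\gamma^{\Delta(H)}=\id$ and that the adjunction identification of Lemma~\ref{lem: exhibits Tr as the right adjoint} is the identity, recognize $\varepsilon^1_{m_1^{\otimes n}}$ as the through-strand of \eqref{it's a through-strand} corresponding to $\id_{\cP_1[n]}$, and apply \eqref{eq:def of Delta} to get $\Delta(H)(\varepsilon^1_{m_1^{\otimes n}})=H[n]$. The bookkeeping points you flag (the collapse of the canonical identifications, $G(m_1^{\otimes n})=m_2^{\otimes n}$ on the nose) are exactly the ones the paper's proof silently relies on.
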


\begin{proof}
Let $(\cM_i,m_i):=\Delta(\cP_i)$ and $G:=\Delta(H):\cM_1\to \cM_2$.
Recall from \eqref{eq: this is a morphism of APA} that 
$$
H'[n]=\Lambda(G)[n]=\zeta_{m_1^{\otimes n}}: \cP_1[n] = \Tr^1_\cC(m_1^{\otimes n}) \to \Tr_\cC^2(m_2^{\otimes n}) = \cP_2[n]
$$
is the mate of $G(\varepsilon^1_{m_1^{\otimes n}})\circ \gamma^G_{\Tr_\cC^1(m_1^{\otimes n})}=G(\varepsilon^1_{m^{\otimes n}_1})$,
where the last equality holds because $\gamma^G=\gamma^{\Delta(H)}=\id$ (see the line below \eqref{eq:def of Delta -- objects}).
As before, the adjunction 
$\cC(\cP_1[n],\cP_2[n]) \cong \cM_2($ $\Phi_2( \cP_1[n]), m_2^{\otimes n})$
being an identity map, we simply write $H'[n]=G(\varepsilon^1_{m^{\otimes n}_1})$.
Recall from \eqref{it's a through-strand} that
\[
\varepsilon^1_{m_1^{\otimes n}} =\! \tikz[baseline=15]{\draw (0,0)node[left, yshift=5, xshift=1]{$\scriptstyle \cP_1[n]$} -- ++(0,.8) arc(180:90:.4) -- ++(.8,0);}\,\,\,\in\,\cM_1\big(\Phi_1(\cP_1[n]),m_1^{\otimes n}\big).
\]
By \eqref{eq:def of Delta}, it follows that 
\[
H'[n]\;\!=\;\!\Delta(H)(\varepsilon^1_{m_1^{\otimes n}}) \;\!=\!
\begin{tikzpicture}[baseline=-.45cm]
	\draw (-.2,-1)node[left, yshift=5, xshift=1]{$\scriptstyle \cP_1[n]$} -- (-.2,-.4) arc (180:90:.4cm) -- (2.4,0);
	\roundNbox{unshaded}{(.85,0)}{.35}{.2}{.2}{$H[n]$};
	\node at (1.95,.22) {\scriptsize{$\cP_2[n]$}};
\end{tikzpicture}
=\;\! H[n]. \qedhere
\]
\end{proof}

Combining all the results of this section, have have proven:

\begin{thm}
The composite $$\Lambda\circ\Delta:\APA\to \Mod_*\to\APA$$ is the identity functor.\hfill $\square$
\end{thm}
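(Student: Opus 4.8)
The plan is to establish the equality $\Lambda\circ\Delta=\id_{\APA}$ in two stages, first on objects and then on morphisms, with essentially all the substantive work already done in the lemmas of this section.

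For the action on objects, fix an anchored planar algebra $\cP$ in $\cC$, set $(\cM,m):=\Delta(\cP)$, and let $\cP':=\Lambda(\cM,m)$. By Theorem~\ref{thm: construct P from M and m} the $n$-th box object of $\cP'$ is $\Tr_\cC(m^{\otimes n})$, and unwinding the definition \eqref{eq: that's how you define Tr!} of $\Tr_\cC$ shows that this is literally $\cP[n]$; so $\cP$ and $\cP'$ have the same underlying sequence of objects. It then remains to check that the two anchored planar algebra structures coincide. By Algorithm~\ref{alg:AssignMap} the value of $\cP'$ on an arbitrary anchored planar tangle is determined by its values on the generating tangles $u,a_i,\bar a_i,p_{i,j}$ of Definition~\ref{defn:GeneratingTangles}, so it suffices to verify the four identities $\eta=Z(u)$, $\alpha_i=Z(a_i)$, $\bar\alpha_i=Z(\bar a_i)$, $\varpi_{i,j}=Z(p_{i,j})$, where $Z$ denotes the action of $\cP$ and $\eta,\alpha_i,\bar\alpha_i,\varpi_{i,j}$ are the structure maps of $\cP'$ supplied by Theorem~\ref{thm: construct P from M and m}. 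These are exactly Lemmas~\ref{LEM 2 of the end}, \ref{LEM 1 of the end}, and \ref{LEM 3 of the end}.

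For the action on morphisms, let $H:\cP_1\to\cP_2$ be a morphism of anchored planar algebras, put $(\cM_i,m_i):=\Delta(\cP_i)$ and $G:=\Delta(H)$, and let $H':=\Lambda(G)$. I would trace through the definition \eqref{eq:def of Delta} of $\Delta$ on morphisms together with the description \eqref{eq: this is a morphism of APA} of $\Lambda$ on morphisms; using that the adjunction bijection defining mates is the identity map (Lemma~\ref{lem: exhibits Tr as the right adjoint}) and the through-strand description \eqref{it's a through-strand} of the counit $\varepsilon$, the component $H'[n]$ collapses to $H[n]$. This is precisely Lemma~\ref{lem:APAmapEquality}. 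Combining the two stages, $\Lambda\circ\Delta$ is the identity on objects and on morphisms, hence equals $\id_{\APA}$, and this completes the proof of Theorem~\ref{thm:EquivalenceOfCategories2}.

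The only genuine obstacle is the object-level step, and within it the identity $\varpi_{i,j}=Z(p_{i,j})$: the maps $\eta$ and $\alpha_i,\bar\alpha_i$ are built directly out of $\varepsilon$, the attaching map, and the modified caps/cups $\bar\ev_m,\bar\coev_m$, so Lemmas~\ref{LEM 2 of the end} and \ref{LEM 1 of the end} follow by unwinding definitions; but $\varpi_{i,j}$ is assembled from the traciator and the multiplication map, which must first be identified with the actions of the rotation tangle and the tensor tangle (Lemmas~\ref{lem:ZPrimeOfRotation} and \ref{lem:ZPrimeOfMultiplication}). Once those diagrammatic identifications are in place, assembling $\varpi_{i,j}$ and recognizing the result as $Z(p_{i,j})$ is a short picture computation, after which the present theorem is immediate.
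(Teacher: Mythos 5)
Your proposal is correct and follows essentially the same route as the paper: identify the box objects via \eqref{eq: that's how you define Tr!}, reduce the comparison of planar algebra structures to the generating tangles (Lemmas~\ref{LEM 1 of the end}, \ref{LEM 2 of the end}, \ref{LEM 3 of the end}, the last resting on Lemmas~\ref{lem:ZPrimeOfRotation} and \ref{lem:ZPrimeOfMultiplication}), and handle morphisms via Lemma~\ref{lem:APAmapEquality}. Nothing is missing.
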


This concludes the proof of Theorem \ref{thm:EquivalenceOfCategories2}.\hfill $\square$


\appendix 

\begin{landscape}
\section{An associativity type relation}
\label{appendix}
\thispagestyle{empty}

Let $\cC$ be a braided pivotal category and let $\Tr_\cC:\cM\to\cC$ be as in Section \ref{sec:InternalTrace}.
\begin{lem}\label{lem: Lemma A1}
For any $x,y,z,w\in\cM$, the following maps are equal in $\cC\big(\Tr_\cC(x\otimes y)\otimes \Tr_\cC(z)\otimes \Tr_\cC(w), \Tr_\cC(x\otimes z\otimes y\otimes w)\big)$:
\begin{align*}
\mu_{x\otimes z\otimes y,w}&\circ(\tau^-_{y,x\otimes z}\otimes \id_{\Tr_\cC(w)})\circ(\mu_{y\otimes x, z} \otimes \id_{\Tr_\cC(w)})\circ (\tau^+_{x,y}\otimes \id_{\Tr_\cC(z)}\otimes \id_{\Tr_\cC(w)})
\\
&
=\tau^-_{y\otimes w, x\otimes z}\circ \mu_{y\otimes w\otimes x,z} \circ (\tau^+_{x,y\otimes w}\otimes \id_{\Tr_\cC(z)}) \circ (\mu_{x\otimes y, w}\otimes \id_{\Tr_\cC(z)})\circ (\id_{\Tr_\cC(x\otimes y)}\otimes\beta_{\Tr_\cC(z),\Tr_\cC(w)}).
\end{align*}
\end{lem}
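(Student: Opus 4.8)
The plan is to reduce both sides to a single instance of the first equation of \ref{rel:MultiplicationAssociative} (the strong associativity of $\mu$), using only the naturality of $\mu$ (\ref{rel:MoveTensorThroughMultiplication}), the naturality of $\tau$ (\ref{The traciator is natural}), the cyclic invariance \ref{rel:TraciatorComposition}, and the invertibility of all traciators. Since the appendix exists in order to support the proof of the \emph{second} equation of \ref{rel:MultiplicationAssociative}, some care is needed to invoke only its first equation, so that no circularity arises.

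First I would put the left-hand side in a normal form. Using \ref{rel:TraciatorComposition} I would split $\tau^-_{y,\,x\otimes z}$ into traciators that move one tensor factor at a time, and then use the naturality of $\tau$ and of $\mu$ to slide every traciator to the outermost positions, turning each $\tau\otimes\id$ sitting between two multiplications into a traciator applied inside a single tensor leg — which is precisely the shape in which the factors $x,y,z,w$ enter \ref{rel:MultiplicationAssociative}. After this rewriting the left-hand side reads $T_1\circ M_L\circ T_0$, where $T_0,T_1$ are composites of traciator isomorphisms and $M_L$ is the left-hand side of the first equation of \ref{rel:MultiplicationAssociative} at a suitable relabelling of $x,y,z,w$.

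Then I would carry out the analogous normalisation of the right-hand side. The one genuinely new point is the braiding $\beta_{\Tr_\cC(z),\Tr_\cC(w)}$, which has no counterpart in \ref{rel:MultiplicationAssociative}: after sliding the multiplications and traciators past it via their naturality, I would use \ref{rel:TwistMultiplicationAndTraciators} — the only relation linking $\beta_{\Tr_\cC(-),\Tr_\cC(-)}$ to the $\mu,\tau$ structure — to exchange it for a traciator at the cost of a twist, the stray twist then cancelling against one produced by a cyclic shift past the unit object via \ref{rel:ThetaAndTraciator}, so that the right-hand side likewise takes the form $T_1'\circ M_R\circ T_0'$ with $M_R$ the right-hand side of the first equation of \ref{rel:MultiplicationAssociative}. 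A direct comparison of the sequences of elementary cyclic shifts shows $T_0=T_0'$ and $T_1=T_1'$, whence \ref{rel:MultiplicationAssociative} finishes the proof. Equivalently, one can bypass the algebra and argue in the calculus of strings on tubes: draw both sides as branching--braiding tube diagrams and connect them by the moves \ref{rel:CategorifiedTrace}--\ref{rel:MultiplicationAssociative}; this is the picture I would draw first to discover the identity, and then transcribe into the algebraic argument above.

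The main obstacle is the bookkeeping: tracking which of the four tensor legs of $x\otimes z\otimes y\otimes w$ each elementary traciator acts on, and which of $\tau,\tau^{-1}$ occurs, so that the cyclic shifts generated on the two sides cancel and land on literally the same instance of \ref{rel:MultiplicationAssociative}. This is the same index gymnastics used to reduce \ref{reln:EasyQuadraticMaps} and \ref{reln:HardQuadraticMaps} to \ref{rel:MultiplicationAssociative} in the proof of Theorem~\ref{thm: construct P from M and m}, but with four strands instead of three it is correspondingly more delicate, and is most reliably carried out by passing through the tube pictures.
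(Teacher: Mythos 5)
Your plan is essentially the paper's own proof: the appendix establishes the identity by a chain of tube-diagram moves that uses \ref{rel:TwistMultiplicationAndTraciators} to trade the braiding for traciator-and-twist data, naturality of the braiding in $\cC$, the first equation of \ref{rel:MultiplicationAssociative}, and \ref{rel:TraciatorComposition} — exactly the toolkit and the non-circular pivot you identify, presented diagrammatically rather than as an algebraic normal-form computation. The only point where you diverge is the disposal of the stray twist (the paper applies \ref{rel:TwistMultiplicationAndTraciators} a second time rather than invoking \ref{rel:ThetaAndTraciator}), which is a bookkeeping variant within the same argument.
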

\begin{proof}
Consider the following five morphisms, named $(a)$--$(e)$ from left to right.
$$
\begin{tikzpicture}[baseline=-1.1cm]	
	\pgfmathsetmacro{\voffset}{.08};
	\pgfmathsetmacro{\hoffset}{.15};
	\pgfmathsetmacro{\hoffsetTop}{.12};

	\coordinate (a1) at (-1,-5);
	\coordinate (a2) at ($ (a1) + (1.4,0)$);
	\coordinate (a3) at ($ (a1) + (2.8,0)$);
	\coordinate (b1) at ($ (a1) + (0,1)$);
	\coordinate (b2) at ($ (a2) + (0,1) $);
	\coordinate (b3) at ($ (a3) + (0,1) $);
	\coordinate (c1) at ($ (b1) + (.7,1.5)$);
	\coordinate (c2) at ($ (c1) + (1.4,0) $);
	\coordinate (d1) at ($ (c1) + (0,1)$);
	\coordinate (d2) at ($ (c2) + (0,1)$);
	\coordinate (e) at ($ (d1) + (.7,1.5)$);

	\bottomCylinder{(a1)}{.3}{1}
	\bottomCylinder{(a2)}{.3}{1}
	\bottomCylinder{(a3)}{.3}{1}
	\pairOfPants{(b1)}{}
	\LeftSlantCylinder{(b3)}{}
	\emptyCylinder{(c1)}{.3}{1}
	\emptyCylinder{(c2)}{.3}{1}
	\topPairOfPants{(d1)}{}
	
	\draw[thick, red] ($ (a1) + (\hoffset,0) + (0,-.1)$) .. controls ++(90:.4cm) and ++(270:.4cm) .. ($ (a1) + 3*(\hoffset,0) + (0,-\voffset) + (0,1) $);		
	\draw[thick, blue] ($ (a1) + 3*(\hoffset,0) + (0,-\voffset) $) .. controls ++(90:.2cm) and ++(225:.1cm) .. ($ (a1) + 4*(\hoffset,0) + (0,-\voffset) + (0,.45)$);
	\draw[thick, blue] ($ (a1) + (\hoffset,1) + (0,-\voffset) $) .. controls ++(270:.2cm) and ++(45:.1cm) .. ($ (a1) + (0,1) + (0,-\voffset) + (0,-.45)$);

	\draw[thick, DarkGreen] ($ (a3) + 2*(\hoffset,0) + (0,-.1) $) -- ($ (b3) + 2*(\hoffset,0) + (0,-.1) $) .. controls ++(90:.8cm) and ++(270:.6cm) .. ($ (c2) + 2*(\hoffset,0) + (0,-.1)$) -- ($ (d2) + 2*(\hoffset,0) + (0,-.1) $);
	\draw[thick, orange] ($ (a2) + 2*(\hoffset,0) + (0,-.1) $) -- ($ (b2) + 2*(\hoffset,0) + (0,-.1) $);

	\draw[thick, blue] ($ (b1) + (\hoffset,0) + (0,-\voffset) $) .. controls ++(90:.8cm) and ++(270:.7cm) .. ($ (c1) + (\hoffset,0) + (0,-\voffset)$);
	\draw[thick, red] ($ (b1) + 3*(\hoffset,0) + (0,-\voffset) $) .. controls ++(90:.8cm) and ++(270:.7cm) .. ($ (c1) + 2*(\hoffset,0) + (0,-.1)$);
	\draw[thick, orange] ($ (b2) + 2*(\hoffset,0) + (0,-.1) $) .. controls ++(90:.8cm) and ++(270:.7cm) .. ($ (c1) + 3*(\hoffset,0) + (0,-\voffset)$);

	\draw[thick, red] ($ (c1) + 2*(\hoffset,0) + (0,-.1)$) .. controls ++(90:.4cm) and ++(270:.4cm) .. ($ (c1) + (\hoffset,0) + (0,-\voffset) + (0,1) $);
	\draw[thick, orange] ($ (c1) + 3*(\hoffset,0) + (0,-\voffset) $) .. controls ++(90:.4cm) and ++(270:.4cm) .. ($ (c1) + 2*(\hoffset,0) + (0,.9) $);		
	\draw[thick, blue] ($ (c1) + (\hoffset,0) + (0,-\voffset) $) .. controls ++(90:.2cm) and ++(-45:.1cm) .. ($ (c1) + (0,-\voffset) + (0,.45)$);
	\draw[thick, blue] ($ (c1) + 3*(\hoffset,0) + (0,1) + (0,-\voffset) $) .. controls ++(270:.2cm) and ++(135:.1cm) .. ($ (c1) + 4*(\hoffset,0) + (0,-\voffset) + (0,-.45) + (0,1)$);
	
	\draw[thick, red] ($ (d1) + (\hoffset,0) + (0,-\voffset) $) .. controls ++(90:.8cm) and ++(270:.7cm) .. ($ (e) + 1*(\hoffsetTop,0) + (0,-\voffset)$);
	\draw[thick, orange] ($ (d1) + 2*(\hoffset,0) + (0,-.1) $) .. controls ++(90:.8cm) and ++(270:.7cm) .. ($ (e) + 2*(\hoffsetTop,0) + (0,-\voffset)$);
	\draw[thick, blue] ($ (d1) + 3*(\hoffset,0) + (0,-\voffset) $) .. controls ++(90:.8cm) and ++(270:.7cm) .. ($ (e) + 3*(\hoffsetTop,0) + (0,-\voffset)$);
	\draw[thick, DarkGreen] ($ (d2) + 2*(\hoffset,0) + (0,-.1) $) .. controls ++(90:.8cm) and ++(270:.7cm) .. ($ (e) + 4*(\hoffsetTop,0) + (0,-\voffset)$);

\end{tikzpicture}
=\,\,
\begin{tikzpicture}[baseline=-1.1cm]	
	\pgfmathsetmacro{\voffset}{.08};
	\pgfmathsetmacro{\hoffset}{.15};
	\pgfmathsetmacro{\hoffsetTop}{.12};

	\coordinate (a1) at (-1,-5);
	\coordinate (a2) at ($ (a1) + (1.4,0)$);
	\coordinate (a3) at ($ (a1) + (2.8,0)$);
	\coordinate (b1) at ($ (a1) + (0,1)$);
	\coordinate (b2) at ($ (a2) + (0,1) $);
	\coordinate (b3) at ($ (a3) + (0,1) $);
	\coordinate (c1) at ($ (b1) + (.7,1.5)$);
	\coordinate (c2) at ($ (c1) + (1.4,0) $);
	\coordinate (d1) at ($ (c1) + (0,2)$);
	\coordinate (d2) at ($ (c2) + (0,2)$);
	\coordinate (e1) at ($ (d1) + (0,1)$);
	\coordinate (e2) at ($ (d2) + (0,1)$);
	\coordinate (f) at ($ (e1) + (.7,1.5)$);

	\draw[thick, DarkGreen] ($ (a3) + 2*(\hoffset,0) + (0,-.1) $) .. controls ++(90:.2cm) and ++(-135:.1cm) .. ($ (a3) + 4*(\hoffset,0) + (0,-\voffset) + (0,.45)$);
	\draw[thick, DarkGreen] ($ (b3) + 2*(\hoffset,0) + (0,-.1) $) .. controls ++(270:.2cm) and ++(45:.1cm) .. ($ (b3) + (0,-\voffset) + (0,-.45) $);
	\draw[thick, DarkGreen] ($ (b3) + 2*(\hoffset,0) + (0,-.1) $) .. controls ++(90:.8cm) and ++(270:.6cm) .. ($ (c2) + 2*(\hoffset,0) + (0,-.1) $) .. controls ++(90:.8cm) and ++(270:.6cm) ..	($ (d1) + 2*(\hoffset,0) + (0,-.1) $) -- ($ (e1) + 2*(\hoffset,0) + (0,-.1) $);

	\bottomCylinder{(a1)}{.3}{1}
	\bottomCylinder{(a2)}{.3}{1}
	\bottomCylinder{(a3)}{.3}{1}
	\LeftSlantCylinder{(b3)}{}
	\pairOfPants{(b1)}{}
	\braid{(c1)}{.3}{2}
	\bottomCylinder{(d1)}{.3}{1}
	\bottomCylinder{(d2)}{.3}{1}
	\pairOfPants{(e1)}{}
	\topCylinder{(f)}{.3}{1}

	\draw[thick, red] ($ (a1) + (\hoffset,0) + (0,-.1)$) .. controls ++(90:.4cm) and ++(270:.4cm) .. ($ (a1) + 3*(\hoffset,0) + (0,-\voffset) + (0,1) $);		
	\draw[thick, blue] ($ (a1) + 3*(\hoffset,0) + (0,-\voffset) $) .. controls ++(90:.2cm) and ++(225:.1cm) .. ($ (a1) + 4*(\hoffset,0) + (0,-\voffset) + (0,.45)$);
	\draw[thick, blue] ($ (a1) + (\hoffset,1) + (0,-\voffset) $) .. controls ++(270:.2cm) and ++(45:.1cm) .. ($ (a1) + (0,1) + (0,-\voffset) + (0,-.45)$);

	\draw[thick, orange] ($ (a2) + 2*(\hoffset,0) + (0,-.1) $) -- ($ (b2) + 2*(\hoffset,0) + (0,-.1) $) .. controls ++(90:.8cm) and ++(270:.8cm) .. ($ (c1) + 3*(\hoffset,0) + (0,-\voffset)$) .. controls ++(90:.8cm) and ++(270:.6cm) .. ($ (d2) + 3*(\hoffset,0) + (0,-\voffset)$);
	\draw[thick, blue] ($ (b1) + (\hoffset,0) + (0,-\voffset) $) .. controls ++(90:.8cm) and ++(270:.7cm) .. ($ (c1) + (\hoffset,0) + (0,-\voffset)$) .. controls ++(90:.8cm) and ++(270:.6cm) ..  ($ (d2) + (\hoffset,0) + (0,-\voffset)$);
	\draw[thick, red] ($ (b1) + 3*(\hoffset,0) + (0,-\voffset) $) .. controls ++(90:.8cm) and ++(270:.7cm) .. ($ (c1) + 2*(\hoffset,0) + (0,-.1)$) .. controls ++(90:.8cm) and ++(270:.55cm) .. ($ (d2) + 2*(\hoffset,0) + (0,-.1)$);	
	
	\draw[thick, red] ($ (d2) + 2*(\hoffset,0) + (0,-.1)$) .. controls ++(90:.4cm) and ++(270:.4cm) .. ($ (d2) + (\hoffset,0) + (0,-\voffset) + (0,1) $);
	\draw[thick, orange] ($ (d2) + 3*(\hoffset,0) + (0,-\voffset) $) .. controls ++(90:.4cm) and ++(270:.4cm) .. ($ (d2) + 2*(\hoffset,0) + (0,.9) $);		
	\draw[thick, blue] ($ (d2) + (\hoffset,0) + (0,-\voffset) $) .. controls ++(90:.2cm) and ++(-45:.1cm) .. ($ (d2) + (0,-\voffset) + (0,.45)$);
	\draw[thick, blue] ($ (d2) + 3*(\hoffset,0) + (0,1) + (0,-\voffset) $) .. controls ++(270:.2cm) and ++(135:.1cm) .. ($ (d2) + 4*(\hoffset,0) + (0,-\voffset) + (0,-.45) + (0,1)$);

	\draw[thick, red] ($ (f) + 2*(\hoffsetTop,0) + (0,-.1)$) .. controls ++(90:.4cm) and ++(270:.4cm) .. ($ (f) + (\hoffsetTop,0) + (0,-\voffset) + (0,1) $);
	\draw[thick, orange] ($ (f) + 3*(\hoffsetTop,0) + (0,-\voffset) $) .. controls ++(90:.4cm) and ++(270:.4cm) .. ($ (f) + 2*(\hoffsetTop,0) + (0,.9) $);		
	\draw[thick, DarkGreen] ($ (f) + (\hoffsetTop,0) + (0,-\voffset) $) .. controls ++(90:.2cm) and ++(-45:.1cm) .. ($ (f) + (0,-\voffset) + (0,.45)$);
	\draw[thick, DarkGreen] ($ (f) + 4*(\hoffsetTop,0) + (0,1) + (0,-\voffset) $) .. controls ++(270:.2cm) and ++(135:.1cm) .. ($ (f) + 5*(\hoffsetTop,0) + (0,-\voffset) + (0,-.45) + (0,1)$);
	\draw[thick, blue] ($ (f) + 4*(\hoffsetTop,0) + (0,-\voffset) $) .. controls ++(90:.4cm) and ++(270:.4cm) .. ($ (f) + 3*(\hoffsetTop,0) + (0,-\voffset) + (0,1)$);

	\draw[thick, DarkGreen] ($ (e1) + 2*(\hoffset,0) + (0,-.1)$) .. controls ++(90:.8cm) and ++(270:.7cm) .. ($ (e1) + (\hoffsetTop,0) + (0,-\voffset) + (.7,1.5) $);	
	\draw[thick, red] ($ (e2) + (\hoffset,0) + (0,-\voffset)$) .. controls ++(90:.8cm) and ++(270:.7cm) .. ($ (e1) + 2*(\hoffsetTop,0) + (0,-\voffset) + (.7,1.5) $);	
	\draw[thick, orange] ($ (e2) + 2*(\hoffset,0) + (0,-.1)$) .. controls ++(90:.8cm) and ++(270:.7cm) .. ($ (e1) + 3*(\hoffsetTop,0) + (0,-\voffset) + (.7,1.5) $);	
	\draw[thick, blue] ($ (e2) + 3*(\hoffset,0) + (0,-\voffset)$) .. controls ++(90:.8cm) and ++(270:.7cm) .. ($ (e1) + 4*(\hoffsetTop,0) + (0,-\voffset) + (.7,1.5) $);
\end{tikzpicture}
\,\,=\,\,
\begin{tikzpicture}[baseline=-1.2cm, scale=.85]	
	\pgfmathsetmacro{\voffset}{.08};
	\pgfmathsetmacro{\hoffset}{.15};
	\pgfmathsetmacro{\hoffsetTop}{.12};

	\coordinate (a1) at (-1,-1);
	\coordinate (a2) at ($ (a1) + (1.4,0)$);
	\coordinate (a3) at ($ (a1) + (2.8,0)$);
	\coordinate (b1) at ($ (a1) + (0,1)$);
	\coordinate (b2) at ($ (b1) + (1.4,0) $);
	\coordinate (b3) at ($ (b2) + (1.4,0) $);
	\coordinate (c1) at ($ (b1) + (.7,1.5)$);
	\coordinate (c2) at ($ (b2) + (.7,1.5)$);
	\coordinate (d1) at ($ (c1) + (0,1)$);
	\coordinate (d2) at ($ (c2) + (0,1)$);
	\coordinate (e) at ($ (d1) + (.7,1.5)$);
	\coordinate (x1) at ($ (a1) + (0,-2)$);
	\coordinate (x2) at ($ (a2) + (0,-2)$);
	\coordinate (x3) at ($ (a3) + (0,-2)$);
	\coordinate (y1) at ($ (x1) + (0,-2)$);
	\coordinate (y2) at ($ (x2) + (0,-2)$);
	\coordinate (y3) at ($ (x3) + (0,-2)$);
	\coordinate (z1) at ($ (y1) + (0,-1)$);
	\coordinate (z2) at ($ (y2) + (0,-1)$);
	\coordinate (z3) at ($ (y3) + (0,-1)$);

	\draw[thick, DarkGreen] ($ (z3) + 2*(\hoffset,0) + (0,-.1) $) .. controls ++(90:.2cm) and ++(-135:.1cm) .. ($ (z3) + 4*(\hoffset,0) + (0,-\voffset) + (0,.45)$);
	\draw[thick, DarkGreen] ($ (y3) + 2*(\hoffset,0) + (0,-.1) $) .. controls ++(270:.2cm) and ++(45:.1cm) .. ($ (y3) + (0,-\voffset) + (0,-.45) $);
	\draw[thick, DarkGreen] ($ (y3) + 2*(\hoffset,0) + (0,-.1) $) .. controls ++(90:.8cm) and ++(270:.6cm) .. ($ (x2) + 2*(\hoffset,0) + (0,-.1) $) .. controls ++(90:.8cm) and ++(270:.6cm) ..	($ (a1) + 2*(\hoffset,0) + (0,-.1) $);

	\braid{(x1)}{.3}{2}
	\braid{(y2)}{.3}{2}
	\emptyCylinder{(y1)}{.3}{2}
	\emptyCylinder{(x3)}{.3}{2}
	\bottomCylinder{(z1)}{.3}{1}
	\bottomCylinder{(z2)}{.3}{1}
	\bottomCylinder{(z3)}{.3}{1}
	\halfDottedEllipse{(x1)}{.3}{.1}	
	\halfDottedEllipse{(x2)}{.3}{.1}	
	\halfDottedEllipse{(x3)}{.3}{.1}	
	\halfDottedEllipse{(y1)}{.3}{.1}	
	\halfDottedEllipse{(y2)}{.3}{.1}	
	\halfDottedEllipse{(y3)}{.3}{.1}	
	\bottomCylinder{(a1)}{.3}{1}
	\bottomCylinder{(a2)}{.3}{1}
	\bottomCylinder{(a3)}{.3}{1}
	\RightSlantCylinder{(b1)}{}
	\pairOfPants{(b2)}{}
	\emptyCylinder{(c1)}{.3}{1}
	\emptyCylinder{(c2)}{.3}{1}
	\pairOfPants{(d1)}{}
	\topCylinder{(e)}{.3}{1}

	\draw[thick, red] ($ (a2) + (\hoffset,0) + (0,-.1)$) .. controls ++(90:.4cm) and ++(270:.4cm) .. ($ (a2) + 3*(\hoffset,0) + (0,-\voffset) + (0,1) $);		
	\draw[thick, blue] ($ (a2) + 3*(\hoffset,0) + (0,-\voffset) $) .. controls ++(90:.2cm) and ++(225:.1cm) .. ($ (a2) + 4*(\hoffset,0) + (0,-\voffset) + (0,.45)$);
	\draw[thick, blue] ($ (a2) + (\hoffset,1) + (0,-\voffset) $) .. controls ++(270:.2cm) and ++(45:.1cm) .. ($ (a2) + (0,1) + (0,-\voffset) + (0,-.45)$);

	\draw[thick, red] ($ (c2) + 2*(\hoffset,0) + (0,-.1)$) .. controls ++(90:.4cm) and ++(270:.4cm) .. ($ (c2) + (\hoffset,0) + (0,-\voffset) + (0,1) $);
	\draw[thick, orange] ($ (c2) + 3*(\hoffset,0) + (0,-\voffset) $) .. controls ++(90:.4cm) and ++(270:.4cm) .. ($ (c2) + 2*(\hoffset,0) + (0,.9) $);		
	\draw[thick, blue] ($ (c2) + (\hoffset,0) + (0,-\voffset) $) .. controls ++(90:.2cm) and ++(-45:.1cm) .. ($ (c2) + (0,-\voffset) + (0,.45)$);
	\draw[thick, blue] ($ (c2) + 3*(\hoffset,0) + (0,1) + (0,-\voffset) $) .. controls ++(270:.2cm) and ++(135:.1cm) .. ($ (c2) + 4*(\hoffset,0) + (0,-\voffset) + (0,-.45) + (0,1)$);

	\draw[thick, red] ($ (e) + 2*(\hoffsetTop,0) + (0,-.1)$) .. controls ++(90:.4cm) and ++(270:.4cm) .. ($ (e) + (\hoffsetTop,0) + (0,-\voffset) + (0,1) $);
	\draw[thick, orange] ($ (e) + 3*(\hoffsetTop,0) + (0,-\voffset) $) .. controls ++(90:.4cm) and ++(270:.4cm) .. ($ (e) + 2*(\hoffsetTop,0) + (0,.9) $);		
	\draw[thick, DarkGreen] ($ (e) + (\hoffsetTop,0) + (0,-\voffset) $) .. controls ++(90:.2cm) and ++(-45:.1cm) .. ($ (e) + (0,-\voffset) + (0,.45)$);
	\draw[thick, DarkGreen] ($ (e) + 4*(\hoffsetTop,0) + (0,1) + (0,-\voffset) $) .. controls ++(270:.2cm) and ++(135:.1cm) .. ($ (e) + 5*(\hoffsetTop,0) + (0,-\voffset) + (0,-.45) + (0,1)$);
	\draw[thick, blue] ($ (e) + 4*(\hoffsetTop,0) + (0,-\voffset) $) .. controls ++(90:.4cm) and ++(270:.4cm) .. ($ (e) + 3*(\hoffsetTop,0) + (0,-\voffset) + (0,1)$);
		
	\draw[thick, DarkGreen] ($ (a1) + 2*(\hoffset,0) + (0,-.1)$) -- ($ (b1) + 2*(\hoffset,0) + (0,-.1)$) .. controls ++(90:.8cm) and ++(270:.8cm) .. ($ (c1) + 2*(\hoffset,0) + (0,-.1) $) -- ($ (d1) + 2*(\hoffset,0) + (0,-.1) $) ;	
	\draw[thick, blue] ($ (b2) + (\hoffset,0) + (0,-\voffset)$) .. controls ++(90:.8cm) and ++(270:.7cm) .. ($ (c2) + (\hoffset,0) + (0,-\voffset) $);	
	\draw[thick, red] ($ (b2) + 3*(\hoffset,0) + (0,-\voffset)$) .. controls ++(90:.8cm) and ++(270:.7cm) .. ($ (c2) + 2*(\hoffset,0) + (0,-.1) $);	
	\draw[thick, orange] ($ (a3) + 2*(\hoffset,0) + (0,-.1)$) -- ($ (b3) + 2*(\hoffset,0) + (0,-.1)$) .. controls ++(90:.8cm) and ++(270:.8cm) .. ($ (c2) + 3*(\hoffset,0) + (0,-\voffset) $);	

	\draw[thick, DarkGreen] ($ (d1) + 2*(\hoffset,0) + (0,-.1)$) .. controls ++(90:.8cm) and ++(270:.7cm) .. ($ (d1) + (\hoffsetTop,0) + (0,-\voffset) + (.7,1.5) $);	
	\draw[thick, red] ($ (d2) + (\hoffset,0) + (0,-\voffset)$) .. controls ++(90:.8cm) and ++(270:.7cm) .. ($ (d1) + 2*(\hoffsetTop,0) + (0,-\voffset) + (.7,1.5) $);	
	\draw[thick, orange] ($ (d2) + 2*(\hoffset,0) + (0,-.1)$) .. controls ++(90:.8cm) and ++(270:.7cm) .. ($ (d1) + 3*(\hoffsetTop,0) + (0,-\voffset) + (.7,1.5) $);	
	\draw[thick, blue] ($ (d2) + 3*(\hoffset,0) + (0,-\voffset)$) .. controls ++(90:.8cm) and ++(270:.7cm) .. ($ (d1) + 4*(\hoffsetTop,0) + (0,-\voffset) + (.7,1.5) $);
	
	\draw[thick, orange] ($ (z2) + 2*(\hoffset,0) + (0,-.1) $) -- ($ (y2) + 2*(\hoffset,0) + (0,-.1) $) .. controls ++(90:.8cm) and ++(270:.6cm) .. ($ (x3) + 2*(\hoffset,0) + (0,-.1)$) -- ($ (a3) + 2*(\hoffset,0) + (0,-.1)$);
	\draw[thick, red] ($ (z1) + (\hoffset,0) + (0,-\voffset) $) -- ($ (x1) + (\hoffset,0) + (0,-\voffset) $) .. controls ++(90:.8cm) and ++(270:.6cm) .. ($ (a2) + (\hoffset,0) + (0,-\voffset)$);
	\draw[thick, blue] ($ (z1) + 3*(\hoffset,0) + (0,-\voffset) $) -- ($ (x1) + 3*(\hoffset,0) + (0,-\voffset) $) .. controls ++(90:.8cm) and ++(270:.6cm) .. ($ (a2) + 3*(\hoffset,0) + (0,-\voffset)$);	
	
\end{tikzpicture}
=
\begin{tikzpicture}[baseline=-1.2cm, scale=.85]

	\pgfmathsetmacro{\voffset}{.08};
	\pgfmathsetmacro{\hoffset}{.15};
	\pgfmathsetmacro{\hoffsetTop}{.12};

	\coordinate (a1) at (-1,-1);
	\coordinate (a2) at ($ (a1) + (1.4,0)$);
	\coordinate (a3) at ($ (a2) + (1.4,0)$);
	\coordinate (b1) at ($ (a1) + (.7,1.5)$);
	\coordinate (b2) at ($ (b1) + (1.4,0)$);
	\coordinate (c1) at ($ (b1) + (0,1)$);
	\coordinate (c2) at ($ (c1) + (1.4,0)$);
	\coordinate (d) at ($ (c1) + (.7,1.5)$);	
	\coordinate (e) at ($ (d) + (0,1)$);

	\coordinate (x1) at ($ (a1) + (0,-2)$);
	\coordinate (x2) at ($ (a2) + (0,-2)$);
	\coordinate (x3) at ($ (a3) + (0,-2)$);
	\coordinate (y1) at ($ (x1) + (0,-2)$);
	\coordinate (y2) at ($ (x2) + (0,-2)$);
	\coordinate (y3) at ($ (x3) + (0,-2)$);
	\coordinate (z1) at ($ (y1) + (0,-1)$);
	\coordinate (z2) at ($ (y2) + (0,-1)$);
	\coordinate (z3) at ($ (y3) + (0,-1)$);

	\draw[thick, DarkGreen] ($ (z3) + 2*(\hoffset,0) + (0,-.1) $) .. controls ++(90:.2cm) and ++(-135:.1cm) .. ($ (z3) + 4*(\hoffset,0) + (0,-\voffset) + (0,.45)$);
	\draw[thick, DarkGreen] ($ (y3) + 2*(\hoffset,0) + (0,-.1) $) .. controls ++(270:.2cm) and ++(45:.1cm) .. ($ (y3) + (0,-\voffset) + (0,-.45) $);
	\draw[thick, DarkGreen] ($ (y3) + 2*(\hoffset,0) + (0,-.1) $) .. controls ++(90:.8cm) and ++(270:.6cm) .. ($ (x2) + 2*(\hoffset,0) + (0,-.1) $) .. controls ++(90:.8cm) and ++(270:.6cm) ..	($ (a1) + 2*(\hoffset,0) + (0,-.1) $);

	\braid{(x1)}{.3}{2}
	\braid{(y2)}{.3}{2}
	\emptyCylinder{(y1)}{.3}{2}
	\emptyCylinder{(x3)}{.3}{2}
	\bottomCylinder{(z1)}{.3}{1}
	\bottomCylinder{(z2)}{.3}{1}
	\bottomCylinder{(z3)}{.3}{1}
	\halfDottedEllipse{(x1)}{.3}{.1}	
	\halfDottedEllipse{(x2)}{.3}{.1}	
	\halfDottedEllipse{(x3)}{.3}{.1}	
	\halfDottedEllipse{(y1)}{.3}{.1}	
	\halfDottedEllipse{(y2)}{.3}{.1}	
	\halfDottedEllipse{(y3)}{.3}{.1}	
	\pairOfPants{(a1)}{}
	\LeftSlantCylinder{(a3)}{}
	\emptyCylinder{(b1)}{.3}{1}
	\emptyCylinder{(b2)}{.3}{1}
	\pairOfPants{(c1)}{}
	\emptyCylinder{(d)}{.3}{1}
	\halfDottedEllipse{(e)}{.3}{.1}		
	\topCylinder{(e)}{.3}{1}

	\draw[thick, DarkGreen] ($ (b1) + (\hoffset,-\voffset) $) .. controls ++(90:.4cm) and ++(270:.4cm) .. ($ (b1) + 2*(\hoffset,0) + (0,.9) $);
	\draw[thick, red] ($ (b1) + 2*(\hoffset,0) + (0,-.1) $) .. controls ++(90:.4cm) and ++(270:.4cm) .. ($ (b1) + 3*(\hoffset,0) + (0,.92) $);		
	\draw[thick, blue] ($ (b1) + 3*(\hoffset,0) + (0,-\voffset) $) .. controls ++(90:.2cm) and ++(225:.1cm) .. ($ (b1) + 4*(\hoffset,0) + (0,-\voffset) + (0,.45)$);
	\draw[thick, blue] ($ (b1) + (\hoffset,1) + (0,-\voffset) $) .. controls ++(270:.2cm) and ++(45:.1cm) .. ($ (b1) + (0,1) + (0,-\voffset) + (0,-.45)$);

	\draw[thick, DarkGreen] ($ (d) + 2*(\hoffsetTop,0) + (0,-.1)$) .. controls ++(90:.4cm) and ++(270:.4cm) .. ($ (d) + (\hoffsetTop,0) + (0,-\voffset) + (0,1) $);
	\draw[thick, red] ($ (d) + 3*(\hoffsetTop,0) + (0,-\voffset) $) .. controls ++(90:.4cm) and ++(270:.4cm) .. ($ (d) + 2*(\hoffsetTop,0) + (0,.9) $);		
	\draw[thick, blue] ($ (d) + (\hoffsetTop,0) + (0,-\voffset) $) .. controls ++(90:.2cm) and ++(-45:.1cm) .. ($ (d) + (0,-\voffset) + (0,.45)$);
	\draw[thick, blue] ($ (d) + 4*(\hoffsetTop,0) + (0,1) + (0,-\voffset) $) .. controls ++(270:.2cm) and ++(135:.1cm) .. ($ (d) + 5*(\hoffsetTop,0) + (0,-\voffset) + (0,-.45) + (0,1)$);
	\draw[thick, orange] ($ (d) + 4*(\hoffsetTop,0) + (0,-\voffset) $) .. controls ++(90:.4cm) and ++(270:.4cm) .. ($ (d) + 3*(\hoffsetTop,0) + (0,-\voffset) + (0,1)$);

	\draw[thick, red] ($ (e) + 2*(\hoffsetTop,0) + (0,-.1)$) .. controls ++(90:.4cm) and ++(270:.4cm) .. ($ (e) + (\hoffsetTop,0) + (0,-\voffset) + (0,1) $);
	\draw[thick, orange] ($ (e) + 3*(\hoffsetTop,0) + (0,-\voffset) $) .. controls ++(90:.4cm) and ++(270:.4cm) .. ($ (e) + 2*(\hoffsetTop,0) + (0,.9) $);		
	\draw[thick, DarkGreen] ($ (e) + (\hoffsetTop,0) + (0,-\voffset) $) .. controls ++(90:.2cm) and ++(-45:.1cm) .. ($ (e) + (0,-\voffset) + (0,.45)$);
	\draw[thick, DarkGreen] ($ (e) + 4*(\hoffsetTop,0) + (0,1) + (0,-\voffset) $) .. controls ++(270:.2cm) and ++(135:.1cm) .. ($ (e) + 5*(\hoffsetTop,0) + (0,-\voffset) + (0,-.45) + (0,1)$);
	\draw[thick, blue] ($ (e) + 4*(\hoffsetTop,0) + (0,-\voffset) $) .. controls ++(90:.4cm) and ++(270:.4cm) .. ($ (e) + 3*(\hoffsetTop,0) + (0,-\voffset) + (0,1)$);

	\draw[thick, DarkGreen] ($ (a1) + 2*(\hoffset,0) + (0,-.1)$) .. controls ++(90:.8cm) and ++(270:.8cm) .. ($ (b1) + (\hoffset,0) + (0,-\voffset) $);	
	\draw[thick, red] ($ (a2) + (\hoffset,0) + (0,-.1)$) .. controls ++(90:.8cm) and ++(270:.7cm) .. ($ (b1) + 2*(\hoffset,0) + (0,-\voffset) $);	
	\draw[thick, blue] ($ (a2) + 3*(\hoffset,0) + (0,-\voffset)$) .. controls ++(90:.8cm) and ++(270:.7cm) .. ($ (b1) + 3*(\hoffset,0) + (0,-\voffset) $);	
	\draw[thick, orange] ($ (a3) + 2*(\hoffset,0) + (0,-.1)$) .. controls ++(90:.8cm) and ++(270:.6cm) .. ($ (b2) + 2*(\hoffset,0) + (0,-.1) $) -- ($ (c2) + 2*(\hoffset,0) + (0,-.1) $);	
	
	\draw[thick, blue] ($ (c1) + (\hoffset,0) + (0,-\voffset)$) .. controls ++(90:.8cm) and ++(270:.7cm) .. ($ (d) + (\hoffsetTop,0) + (0,-\voffset) $);
	\draw[thick, DarkGreen] ($ (c1) + 2*(\hoffset,0) + (0,-.1)$) .. controls ++(90:.8cm) and ++(270:.7cm) .. ($ (d) + 2*(\hoffsetTop,0) + (0,-\voffset) $);	
	\draw[thick, red] ($ (c1) + 3*(\hoffset,0) + (0,-\voffset)$) .. controls ++(90:.8cm) and ++(270:.7cm) .. ($ (d) + 3*(\hoffsetTop,0) + (0,-\voffset) $);	
	\draw[thick, orange] ($ (c2) + 2*(\hoffset,0) + (0,-.1)$) .. controls ++(90:.8cm) and ++(270:.7cm) .. ($ (d) + 4*(\hoffsetTop,0) + (0,-\voffset) $);	

	\draw[thick, orange] ($ (z2) + 2*(\hoffset,0) + (0,-.1) $) -- ($ (y2) + 2*(\hoffset,0) + (0,-.1) $) .. controls ++(90:.8cm) and ++(270:.6cm) .. ($ (x3) + 2*(\hoffset,0) + (0,-.1)$) -- ($ (a3) + 2*(\hoffset,0) + (0,-.1)$);
	\draw[thick, red] ($ (z1) + (\hoffset,0) + (0,-\voffset) $) -- ($ (x1) + (\hoffset,0) + (0,-\voffset) $) .. controls ++(90:.8cm) and ++(270:.6cm) .. ($ (a2) + (\hoffset,0) + (0,-\voffset)$);
	\draw[thick, blue] ($ (z1) + 3*(\hoffset,0) + (0,-\voffset) $) -- ($ (x1) + 3*(\hoffset,0) + (0,-\voffset) $) .. controls ++(90:.8cm) and ++(270:.6cm) .. ($ (a2) + 3*(\hoffset,0) + (0,-\voffset)$);	

\end{tikzpicture}
=
\begin{tikzpicture}[baseline=-1.1cm]	
	\pgfmathsetmacro{\voffset}{.08};
	\pgfmathsetmacro{\hoffset}{.15};
	\pgfmathsetmacro{\hoffsetTop}{.12};

	\coordinate (a1) at (-1,-5);
	\coordinate (a2) at ($ (a1) + (1.4,0)$);
	\coordinate (a3) at ($ (a1) + (2.8,0)$);
	\coordinate (b1) at ($ (a1) + (0,2)$);
	\coordinate (b2) at ($ (a2) + (0,2) $);
	\coordinate (b3) at ($ (a3) + (0,2) $);
	\coordinate (c1) at ($ (b1) + (.7,1.5)$);
	\coordinate (c2) at ($ (c1) + (1.4,0) $);
	\coordinate (d1) at ($ (c1) + (0,1)$);
	\coordinate (d2) at ($ (c2) + (0,1)$);
	\coordinate (e) at ($ (d1) + (.7,1.5)$);

	\draw[thick, DarkGreen] ($ (a3) + 2*(\hoffset,0) + (0,-.1) $) .. controls ++(90:.8cm) and ++(270:.6cm) .. ($ (b2) + 2*(\hoffset,0) + (0,-.1) $) .. controls ++(90:.8cm) and ++(270:.8cm) .. ($ (c1) + 3*(\hoffset,0) + (0,-\voffset) $);

	\bottomCylinder{(a1)}{.3}{2}
	\braid{(a2)}{.3}{2}
	\halfDottedEllipse{(a2)}{.3}{.1}
	\halfDottedEllipse{(a3)}{.3}{.1}
	\pairOfPants{(b1)}{}
	\LeftSlantCylinder{(b3)}{}
	\emptyCylinder{(c1)}{.3}{1}
	\emptyCylinder{(c2)}{.3}{1}
	\pairOfPants{(d1)}{}
	\topCylinder{(e)}{.3}{1}
	
	\draw[thick, blue] ($ (d1) + 1*(\hoffset,0) + (0,-\voffset) $) .. controls ++(90:.8cm) and ++(270:.6cm) .. ($ (e) + 1*(\hoffsetTop,0) + (0,-\voffset)$);
	\draw[thick, DarkGreen] ($ (d1) + 2*(\hoffset,0) + (0,-.1) $) .. controls ++(90:.8cm) and ++(270:.6cm) .. ($ (e) + 2*(\hoffsetTop,0) + (0,-\voffset)$);
	\draw[thick, red] ($ (d1) + 3*(\hoffset,0) + (0,-\voffset) $) .. controls ++(90:.8cm) and ++(270:.6cm) .. ($ (e) + 3*(\hoffsetTop,0) + (0,-\voffset)$);
	\draw[thick, orange] ($ (d2) + 2*(\hoffset,0) + (0,-.1) $) .. controls ++(90:.8cm) and ++(270:.6cm) .. ($ (e) + 4*(\hoffsetTop,0) + (0,-\voffset)$);

	\draw[thick, red] ($ (c1) + (\hoffset,0) + (0,-\voffset)$) .. controls ++(90:.4cm) and ++(270:.4cm) .. ($ (c1) + 3*(\hoffset,0) + (0,-\voffset) + (0,1) $);
	\draw[thick, blue] ($ (c1) + 2*(\hoffset,0) + (0,-.1) $) .. controls ++(90:.2cm) and ++(-135:.1cm) .. ($ (c1) + 4*(\hoffset,0) + (0,-\voffset) + (0,.5)$);
	\draw[thick, blue] ($ (c1) + 1*(\hoffset,0) + (0,1) + (0,-\voffset) $) .. controls ++(270:.2cm) and ++(45:.1cm) .. ($ (c1) + (0,-\voffset) + (0,-.35) + (0,1)$);
	\draw[thick, DarkGreen] ($ (c1) + 3*(\hoffset,0) + (0,-\voffset) $) .. controls ++(90:.2cm) and ++(-135:.1cm) .. ($ (c1) + 4*(\hoffset,0) + (0,-\voffset) + (0,.35)$);
	\draw[thick, DarkGreen] ($ (c1) + 2*(\hoffset,0) + (0,1) + (0,-.1) $) .. controls ++(270:.2cm) and ++(45:.1cm) .. ($ (c1) + (0,-\voffset) + (0,-.5) + (0,1)$);	
	
	\draw[thick, orange] ($ (a2) + 2*(\hoffset,0) + (0,-.1) $) .. controls ++(90:.8cm) and ++(270:.6cm) .. ($ (b3) + 2*(\hoffset,0) + (0,-.1)$) .. controls ++(90:.8cm) and ++(270:.6cm) .. ($ (c2) + 2*(\hoffset,0) + (0,-.1)$) -- ($ (d2) + 2*(\hoffset,0) + (0,-.1)$);
	\draw[thick, red] ($ (a1) + (\hoffset,0) + (0,-\voffset) $) -- ($ (b1) + (\hoffset,0) + (0,-\voffset) $) .. controls ++(90:.8cm) and ++(270:.7cm) .. ($ (c1) + (\hoffset,0) + (0,-\voffset)$);
	\draw[thick, blue] ($ (a1) + 3*(\hoffset,0) + (0,-\voffset) $) -- ($ (b1) + 3*(\hoffset,0) + (0,-\voffset) $) .. controls ++(90:.8cm) and ++(270:.7cm) .. ($ (c1) + 2*(\hoffset,0) + (0,-.1)$);	

	\draw[thick, red] ($ (e) + 3*(\hoffsetTop,0) + (0,-\voffset)$) .. controls ++(90:.4cm) and ++(270:.4cm) .. ($ (e) + (\hoffsetTop,0) + (0,-\voffset) + (0,1) $);
	\draw[thick, orange] ($ (e) + 4*(\hoffsetTop,0) + (0,-\voffset) $) .. controls ++(90:.4cm) and ++(270:.4cm) .. ($ (e) + 2*(\hoffsetTop,0) + (0,.9) $);		
	\draw[thick, blue] ($ (e) + (\hoffsetTop,0) + (0,-\voffset) $) .. controls ++(90:.2cm) and ++(-45:.1cm) .. ($ (e) + (0,-\voffset) + (0,.35)$);
	\draw[thick, blue] ($ (e) + 3*(\hoffsetTop,0) + (0,1) + (0,-\voffset) $) .. controls ++(270:.2cm) and ++(135:.1cm) .. ($ (e) + 5*(\hoffsetTop,0) + (0,-\voffset) + (0,-.5) + (0,1)$);
	\draw[thick, DarkGreen] ($ (e) + 2*(\hoffsetTop,0) + (0,-\voffset) $) .. controls ++(90:.2cm) and ++(-45:.1cm) .. ($ (e) + (0,-\voffset) + (0,.5)$);
	\draw[thick, DarkGreen] ($ (e) + 4*(\hoffsetTop,0) + (0,1) + (0,-\voffset) $) .. controls ++(270:.2cm) and ++(135:.1cm) .. ($ (e) + 5*(\hoffsetTop,0) + (0,-\voffset) + (0,-.35) + (0,1)$);
	
\end{tikzpicture}
$$
We get the following equalities:
$(a)=(b)$
by relation \ref{rel:TwistMultiplicationAndTraciators},
$(b)=(c)$
by the braiding in $\cC$,
$(c)=(d)$
by the first relation in \ref{rel:MultiplicationAssociative}, and
$(d)=(e)$
by the relations \ref{rel:TraciatorComposition}, \ref{rel:TwistMultiplicationAndTraciators}, and the braiding in $\cC$.
\end{proof}
\end{landscape}



\section{Anchored planar tangles with coupons}
\label{sec:APAsWithCoupons}

Let $\cC$ be a braided tensor category, and let $\cM$ be a pivotal module tensor category over $\cC$.
The constructions in Sections \ref{sec:Constructing anchored planar algebras} and \ref{sec:APAfromMTC}
generalize to assign a morphism
\begin{equation}\label{gwblh0mg}
Z\left(
\begin{tikzpicture}[scale=.8, baseline =-.1cm]
	\coordinate (a) at (0,0);
	\coordinate (c) at (.6,-.6);         
	\coordinate (d) at (-.6,.6);         
	
	\ncircle{}{(a)}{1.6}{-115}{}
	\draw[thick, red] (c)+(235:.4) .. controls ++(230:.5cm) and ++(45:.4cm) .. (-115:1.6);
	\draw[thick, red] (d)+(235:.4) .. controls ++(250:.6cm) and ++(70:.4cm) .. (-115:1.6);
			
	\draw (60:1.6cm) arc (150:300:.4cm);
	\draw ($ (c) + (0,.4) $) arc (0:90:.8cm);
	\draw ($ (c) + (-.4,0) $) circle (.25cm);
	\draw ($ (d) + (0,.88) $) -- (d) -- ($ (d) + (-.88,0) $);
	\draw ($ (c) + (0,-.88) $) -- (c) -- ($ (c) + (.88,0) $);
	\ncircle{unshaded}{(d)}{.4}{235}{}
	\ncircle{unshaded}{(c)}{.4}{235}{}
\draw[<-] (-.6,1.2) -- +(90:.01);
\draw[->] (-1.25,.6) -- +(180:.01);
\draw[<-] (-.05,-.62) -- +(87:.01);
\draw[<-] (.365,.365) -- +(135:.01);
\draw[->] (.85,.91) -- +(139:.01);
\draw[<-] (.6,-1.28) -- +(90:.01);
\draw[<-] (1.28,-.6) -- +(180:.01);
\node[right] at (-.7,1.25) {$\scriptstyle a$};
\node[below] at (-1.21,.68) {$\scriptstyle a$};
\node[left] at (.05,-.55) {$\scriptstyle a$};
\node[left] at (.83,1.05) {$\scriptstyle c$};
\node at (.62,.42) {$\scriptstyle b$};
\node[right] at (.52,-1.2) {$\scriptstyle b$};
\node[above] at (1.23,-.65) {$\scriptstyle c$};
\useasboundingbox;
\node[scale=.7] at (-120:1.82) {\anchor};
\end{tikzpicture}
\right)
\,:\,
\begin{tabular}{l}
\\
$\Tr_\cC(a\otimes a^*\otimes b)\otimes \Tr_\cC(a^*\otimes a\otimes b^*\otimes c\otimes b)$
\\
\hspace{4.5cm}$\to\, \Tr_\cC(a\otimes a^*\otimes c\otimes c^*\otimes c\otimes b)$
\end{tabular}
\end{equation}
to every oriented anchored planar tangle $T$ with strands labeled by objects of $\cM$.
In this appendix, we further generalize the construction to assign a morphism $Z(T)$ to every anchored planar tangle $T$ with strands labeled by objects and coupons labeled by morphisms in~$\cM$.

\begin{defn}\label{def: labeled oriented anchored planar tangle with coupons}
Let $\cM$ be a pivotal tensor category. 
A \emph{labeled oriented anchored planar tangle with coupons} consists of the following data:
\begin{itemize}
\item A disc with holes $T=\mathbb D\setminus (\mathring D_1\cup \ldots \cup \mathring D_r)$, as in Definition \ref{def:planar tangle}.
\item Anchor points $Q=\{q_0,\ldots q_r\}$ and anchor lines as in Definition \ref{defn:AnchoredPlanarTangle}.
\item Disjoint closed discs $C_1,\ldots, C_s\subset \mathring T$ (the coupons) with marked points $c_i\in\partial C_i$.
The coupons are not ordered. The anchor lines are allowed to intersect the coupons.
\item A closed $1$-dimensional oriented submanifold $X\subset T\setminus (\mathring C_1\cup \ldots \cup \mathring C_s)$ (the strands).
The boundary of $X$ lies on $\partial T \cup \bigcup\partial C_i$ and it does not touch the $q_i$ and the $c_i$.
\item An object of $\cM$ for each connected component of $X$.
\item A morphism $f_i\in \cM(1, x_1\otimes \cdots \otimes x_k)$ for each coupon $C_i$, where the $x_1,\ldots,x_k\in\cM$ are as follows.
Let $m_j$ be the label of the $j$th strand that one encounters as one walks clockwise on $\partial C_i$, starting at $c_i$.
We set $x_j=m_j$ if that $j$th strand is oriented outwards, and $x_j=m_j^*$ otherwise.\vspace{-.2cm}
\end{itemize}
\end{defn}

We illustrate this notion with an example:\,\,
$
\begin{tikzpicture}[scale=1.1, baseline =.5cm]
	\coordinate (a) at (0,0);
	\coordinate (c) at (.6,-.6);         
	\coordinate (d) at (-.6,.6);         
	
	\ncircle{}{(a)}{1.6}{-155}{}

\node[circle, draw, double, inner sep=2.7] (x) at (-.5,-.6) {$\scriptstyle f_1\!$};
\draw (x) + (-23:.29) to[bend right=30] (.45,-.6);
\draw (x) + (-90:.29) -- (-.5,-1.5);
\draw (x) + (90:.29) to[bend right=25] (-.85,.6);
\fill (x) + (180:.28) circle (.05);
	\draw (72:1.6cm) arc (150:300:.73cm);
\node[fill=white, circle, draw, double, inner sep=2.7] (y) at (45:.88) {$\scriptstyle f_2\!$};
\fill (y) + (50:.28) circle (.05);
	\draw ($ (d) + (0,.88) $) -- (d) -- ($ (d) + (-.88,0) $);
	\draw ($ (c) + (0,-.88) $) -- (c) -- ($ (c) + (.88,0) $);
	\ncircle{unshaded}{(d)}{.4}{235}{}
	\ncircle{unshaded}{(c)}{.4}{135}{}
\draw[<-] (-.6,1.2) -- +(90:.01);
\draw[->] (-1.25,.6) -- +(180:.01);
\draw[<-] (.6,-1.28) -- +(90:.01);
\draw[<-] (1.28,-.6) -- +(180:.01);

\draw[->] (.4,1.25) --node[left, xshift=2, yshift=1, scale=1.1]{$\scriptstyle b$} +(85:.01);
\draw[->] (1.2,.425) --node[below, xshift=1, yshift=2, scale=1.1]{$\scriptstyle a$} +(0:.01);
\draw[->] (.05,-.77) --node[below, xshift=0, yshift=1, scale=1.1]{$\scriptstyle a$} +(0:.01);
\draw[->] (-.51,0) --node[right, xshift=-1.5, yshift=0, scale=1.1]{$\scriptstyle b$} +(100:.01);
\draw[->] (-.5,-1.15) --node[left, xshift=1.5, yshift=0, scale=1.1]{$\scriptstyle c$} +(90:.01);
\node[right, scale=1.1] at (-.68,1.25) {$\scriptstyle a$};
\node[below, scale=1.1] at (-1.21,.66) {$\scriptstyle a$};
\node[right, scale=1.1] at (.52,-1.2) {$\scriptstyle b$};
\node[above, scale=1.1] at (1.23,-.645) {$\scriptstyle d$};

	\draw[thick, red] (c)+(135:.4) .. controls ++(130:.45cm) and ++(0:.5cm) .. (-155:1.6);
	\draw[thick, red] (d)+(235:.4) .. controls ++(250:.6cm) and ++(40:.4cm) .. (-155:1.6);
\node[scale=.9] at (-155:1.82) {\anchor};
\node[right] at (1.9,-.6) {\begin{tabular}{l}
\small $\, a,b,c,d\in\cM$
\\[-.3mm]
\small $f_1:1_\cM\to b\otimes a\otimes c^*$
\\[-.3mm]
\small $f_2:1_\cM \to a\otimes b.$
\end{tabular}};
\end{tikzpicture}
$\bigskip\\
To an isotopy class of such tangles $T$, we wish to associate a morphism $Z(T)$.
For the above example, this will be a map
\[
Z(T):\Tr_\cC(a\otimes a^*\otimes b^*)\otimes\Tr_\cC(d\otimes b\otimes a^*)\to \Tr_\cC(a\otimes a^*\otimes b\otimes a\otimes d\otimes b\otimes c^*).
\]
We proceed as follows.
First, for each coupon $C$, labeled by some $f\in \cM(1, x_1\otimes \cdots \otimes x_k)$,
we associate the element
\[
Z(C):=\Tr_\cC(f)\circ i:1_\cC \to \Tr_\cC(x_1\otimes \cdots \otimes x_k).
\]
In the graphical calculus of Section~\ref{sec:TubeRelations}, this is represented by:
$$
C\,=
\begin{tikzpicture}[baseline=-.1cm]
\node[fill=white, circle, draw, double, inner sep=2.15, scale=1.1] (y) at (0,0) {$\scriptstyle f$};
\fill (y) + (180:.28) circle (.05);
	\draw[mid>] (-135:.3cm) -- (-135:1cm);
	\draw[mid>] (130:.3cm) -- (130:1cm);
	\draw[mid>] (100:.3cm) -- (100:1cm);
	\draw[mid>] (70:.3cm) -- (70:1cm);
	\node at (30:.6cm) {$\cdot$};
	\node at (10:.6cm) {$\cdot$};
	\node at (-10:.6cm) {$\cdot$};
	\node at (-30:.6cm) {$\cdot$};
	\node at (-50:.6cm) {$\cdot$};
	\node at (-70:.6cm) {$\cdot$};
	\node at (-90:.6cm) {$\cdot$};
	\node at (130:1.2cm) {\scriptsize{$x_1$}};
	\node at (100:1.2cm) {\scriptsize{$x_2$}};
	\node at (70:1.2cm) {\scriptsize{$x_3$}};
	\node at (-135:1.2cm) {\scriptsize{$x_k$}};
\end{tikzpicture}
\quad\longmapsto\quad
Z(C)\,=\,
\begin{tikzpicture}[baseline=.3cm]
	\coordinate (a1) at (0,0);
	\coordinate (b1) at (0,.4);
	\draw[thick] (a1) -- (b1);
	\draw[thick] ($ (a1) + (1,0) $) -- ($ (b1) + (1,0) $);
	\topCylinder{(b1)}{.5}{1}
	\draw[thick] (a1) arc (-180:0:.5cm);
	\draw[mid>] (.25,.5) -- (.25,1.25);
	\draw[mid>] (.75,.5) -- (.75,1.25);
	\node at (.52,1) {\scriptsize{$\cdots$}};
	\node at (.25,1.4) {\scriptsize{$x_1$}};
	\node at (.75,1.4) {\scriptsize{$x_k$}};
\node[fill=white, draw, very thick, inner ysep=3, inner xsep=6, scale=1.1] at (.5,.3) {$\scriptstyle f$};
\end{tikzpicture}
$$
Let $\check T$ be tangle obtained from $T$ by sliding the anchor lines off of coupons, removing the interiors of the coupons,
and adding new anchor lines that connect the marked points $c_i$ to anchor point $q_0$.
This is a tangle without coupons, and so it has an associated morphism $Z(\check T)$ as in \eqref{gwblh0mg}.
We define $Z(T)$ by:
\begin{equation}\label{sfwagiletjnad}
Z(T):=\big[\big[Z(\check T)\circ_{j_1}Z(C_1)\big]\circ_{j_2}Z(C_2)\big] \ldots \circ_{j_s}Z(C_s),
\end{equation}
where we write $j_i$ for the order of arrival at $q_0$ of the anchor line which connects it to $C_i$,
and the coupons have been renumbered so as to have $j_1>j_2>\ldots>j_s$.

\begin{thm} 
\label{thm:LabeledAnchoredPlanarTangles}
Let $T$ be a labeled oriented anchored planar tangle with coupons.
Then the morphism $Z(T)$ defined above only depends on the isotopy class of $T$.

Let $S$ and $T$ be labeled oriented anchored planar tangles with coupons such that the labels on the $i$th input circle of $T$ agree with those on the outer circle of $S$.
Then the familiar composition formula holds:
\[
Z(T \circ_i S) = Z(T) \circ_i Z(S).
\]
\end{thm}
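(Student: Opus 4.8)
\textbf{Proof proposal for Theorem~\ref{thm:LabeledAnchoredPlanarTangles}.}

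The plan is to reduce both assertions — isotopy invariance and the composition formula — to the corresponding statements for labeled \emph{oriented anchored planar tangles without coupons}, which are the generalizations of Theorems~\ref{thm:ConstructAPA}, \ref{prop:Gluing}, and \ref{prop:IsotopyInvariance} alluded to in the opening paragraph of this appendix (obtained from Sections~\ref{sec:Constructing anchored planar algebras} and~\ref{sec:APAfromMTC} by inserting orientations and the modifications of Section~\ref{sec:Generalisations}). I will take this coupon-free theory as established, and treat the coupon as a black box that, once its interior is removed, is just another input circle whose box object happens to be $1_\cC$ equipped with the distinguished element $Z(C)\in\cC(1_\cC,\Tr_\cC(x_1\otimes\cdots\otimes x_k))$ from~\eqref{rel:CategorifiedTrace} and~\eqref{rel:UnitMap}.

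First I would make precise the passage $T\mapsto \check T$: I would record that $\check T$ is a labeled oriented anchored planar tangle without coupons, whose outer circle and genuine input circles carry the same labels as those of $T$, and whose new input circles (one per coupon $C_i$) are each of type determined by the word $x_1\otimes\cdots\otimes x_k$ read clockwise from the marked point. The key point to check here is that $\check T$ is well defined up to isotopy once $T$ is — the choices involved are sliding the anchor lines off the coupons and choosing new anchor lines from the $c_i$ to $q_0$, and any two such choices are related by an isotopy of the coupon-free tangle $\check T$ (together, when the new anchor line is routed on the other side of the north pole of a coupon-turned-input-circle, with twist anchor-dependence moves, which are harmless because $Z(C_i)$ is a morphism \emph{out of} $1_\cC$ and $\theta_{1_\cC}=\id$, exactly as in the Remark after Definition~\ref{def: anchored planar algebra}). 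Then I would verify that the right-hand side of~\eqref{sfwagiletjnad} is independent of the numbering of the coupons: reordering the coupons changes $\check T$ only by a braiding anchor-dependence move that precomposes with a braiding $\beta_{\Tr_\cC(\cdots),\Tr_\cC(\cdots)}$ of two box objects of $\check T$, and the operadic composition~$\circ_{j}$ absorbs these braidings because operadic composition of morphisms commutes past such reindexing (this is the content of Remark~\ref{rem:operadic composition of morphisms} and the associativity/commutativity of iterated operadic insertion in $\cC$). Granting all this, isotopy invariance of $Z(T)$ follows: an isotopy of $T$ induces an isotopy of $\check T$ (possibly with the harmless twist moves above), so $Z(\check T)$ is unchanged by the coupon-free isotopy invariance theorem, and the $Z(C_i)$ do not change at all.

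For the composition formula, the strategy is to observe that plugging $S$ into the $i$th hole of $T$ commutes with the plug-in-the-coupons operation. Concretely, $\widecheck{T\circ_i S}$ is obtained from $\check T$ and $\check S$ by operadic composition at the appropriate slot (the slot of $\check T$ corresponding to the $i$th input circle of $T$), after routing $\check S$'s new coupon-anchor-lines out through that slot and down to $q_0$ — this is literally the operadic composition of coupon-free anchored tangles, so $Z(\widecheck{T\circ_i S}) = Z(\check T)\circ_i Z(\check S)$ by the coupon-free gluing theorem (the analogue of Proposition~\ref{prop:Gluing}). Then I expand both sides of $Z(T\circ_i S)=Z(T)\circ_i Z(S)$ using~\eqref{sfwagiletjnad}: the left side is $Z(\check T)\circ_i Z(\check S)$ with all coupons of $T$ and of $S$ inserted, and the right side is the same list of insertions performed in two batches; the two agree by the associativity of iterated operadic composition in $\cC$ (Remark~\ref{rem:operadic composition of morphisms}), taking care that the coupon slots of $S$ inside $T\circ_i S$ sit at the right indices relative to those of $T$ — this is a bookkeeping computation with the index shifts, not a conceptual difficulty.

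I expect the main obstacle to be the index bookkeeping in two places at once: first, making rigorous that $\check T$ is isotopy-well-defined, since the interaction between an anchor line that passes \emph{through} a coupon in $T$ and the route chosen for the new coupon-anchor-line after excision must be shown to be a composite of the elementary moves \ref{M:1}--\ref{M:6} for $\check T$; and second, keeping the numbering $j_1>\cdots>j_s$ consistent when $S$ is inserted into $T$, because the coupons of $S$ acquire new arrival orders at $q_0$ that interleave with those of $T$. Neither is deep — both come down to the fact that operadic composition of morphisms in $\cC$ is associative and that braidings/twists of box objects get absorbed by the anchor-dependence axioms — but the argument must be written carefully to avoid sign/ordering errors.
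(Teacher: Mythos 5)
Your proposal is correct and follows essentially the same route as the paper: excise the coupons, reduce isotopy invariance and gluing to the coupon-free theory of Sections~\ref{sec:Constructing anchored planar algebras}--\ref{sec:APAfromMTC}, check that the choices made in forming $\check T$ (sliding anchor lines off the coupons and routing the new lines to $q_0$) do not affect \eqref{sfwagiletjnad}, and then deduce the composition formula by expanding both sides via \eqref{sfwagiletjnad}, invoking the coupon-free gluing result for $Z(\check T)\circ_i Z(\check S)$, and reshuffling insertions with the index bookkeeping. One caution: the braidings and twists created by rerouting or reordering the coupon anchor lines are not killed by "operadic composition commuting past reindexing" nor by the anchor-dependence axioms as such, but — as you correctly note for the twist — by naturality of $\beta$ and $\theta$ together with their triviality on the source $1_\cC$ of each $Z(C_i)$, which is exactly the fact the paper uses "in an essential way" in each of its move-by-move computations.
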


\begin{proof}
In the construction of $\check T$, we had to move the anchor lines off the coupons, and add new anchor lines from $q_0$ to $c_i$.
We need to show that $Z(T)$ does not depend on these choices. 
The isotopy invariance of $Z(T)$ will then follow from the corresponding property of $Z(\check T)$ (proven in Sections \ref{sec:Constructing anchored planar algebras} and \ref{sec:APAfromMTC} modulo the fact that, in those sections, we were not working with orientations and labelings).

We first argue that $Z(T)$ does not depend on the choice of anchor lines from $q_0$ to $c_i$.
Fix a tangle $T$ whose anchor lines do not intersect the coupons, and let us assume that $\check T$ and $\check T'$ are
obtained from $T\setminus(\mathring C_1\cup\ldots\cup \mathring C_s)$ by adding the required new anchor lines in two different ways. 
We write $Z(T)$ and $Z(T)'$ for the quantity \eqref{sfwagiletjnad} computed using $\check T$ and $\check T'$, respectively.
Our goal is to show that $Z(T)=Z(T)'$.

The systems of anchor lines on $\check T$ and on $\check T'$ are related by a sequence of the following moves:

\[\begin{tikzpicture}[scale=1.1, baseline =-.1cm]
\coordinate (a) at (0,0);\coordinate (c) at (0,-.6);\ncircle{}{(a)}{1.5}{-90}{}\ncircle{unshaded}{(c)}{.4}{-90}{}\node[circle, draw, double, inner sep=3] (x) at (0,.6) {$\scriptstyle f$};\draw[thick, red, rounded corners=3] (x) + (-90:.29) -- (0,-.1) arc (90+2:-80-2:.5) -- +(0,-.37) -- (0,-1.48);\draw[thick, red] (c)+(-90:.4) -- (-90:1.5);\fill (x) + (-90:.28) circle (.05);\draw (x) + (-155:.29) -- + (-155:.5);\draw (x) + (-180:.29) -- + (-180:.5);\draw (x) + (155:.29) -- + (155:.5);\draw (x) + (130:.29) -- + (130:.5);\draw (x) + (-25:.29) -- + (-25:.5);\draw (x) + (0:.29) -- + (0:.5);\draw (x) + (25:.29) -- + (25:.5);\node at ($(x) + (55:.4)$) {$\cdot$};\node at ($(x) + (75:.4)$) {$\cdot$};\node at ($(x) + (95:.4)$) {$\cdot$};\draw (c) + (-155:.4) -- + (-155:.65);\draw (c) + (-180:.4) -- + (-180:.65);\draw (c) + (155:.4) -- + (155:.65);\draw (c) + (130:.4) -- + (130:.65);\draw (c) + (-25:.4) -- + (-25:.65);\draw (c) + (0:.4) -- + (0:.65);\draw (c) + (25:.4) -- + (25:.65);\node at ($(c) + (47:.58)$) {$\cdot$};\node at ($(c) + (60:.58)$) {$\cdot$};\node at ($(c) + (73:.58)$) {$\cdot$};
\end{tikzpicture}
\,\,\,\to\,\,\,\begin{tikzpicture}[scale=1.1, baseline =-.1cm]
\coordinate (a) at (0,0);\coordinate (c) at (0,-.6);\ncircle{}{(a)}{1.5}{-90}{}\ncircle{unshaded}{(c)}{.4}{-90}{}\node[circle, draw, double, inner sep=3] (x) at (0,.6) {$\scriptstyle f$};\draw[thick, red, rounded corners=3] (x) + (-90:.29) -- (0,-.1) arc (90-2:260+2:.5) -- +(0,-.37) -- (0,-1.48);\draw[thick, red] (c)+(-90:.4) -- (-90:1.5);\fill (x) + (-90:.28) circle (.05);\draw (x) + (-155:.29) -- + (-155:.5);\draw (x) + (-180:.29) -- + (-180:.5);\draw (x) + (155:.29) -- + (155:.5);\draw (x) + (130:.29) -- + (130:.5);\draw (x) + (-25:.29) -- + (-25:.5);\draw (x) + (0:.29) -- + (0:.5);\draw (x) + (25:.29) -- + (25:.5);\node at ($(x) + (55:.4)$) {$\cdot$};\node at ($(x) + (75:.4)$) {$\cdot$};\node at ($(x) + (95:.4)$) {$\cdot$};\draw (c) + (-155:.4) -- + (-155:.65);\draw (c) + (-180:.4) -- + (-180:.65);\draw (c) + (155:.4) -- + (155:.65);\draw (c) + (-25:.4) -- + (-25:.65);\draw (c) + (130:.4) -- + (130:.65);\draw (c) + (0:.4) -- + (0:.65);\draw (c) + (25:.4) -- + (25:.65);\node at ($(c) + (47:.55)$) {$\cdot$};\node at ($(c) + (60:.55)$) {$\cdot$};\node at ($(c) + (73:.55)$) {$\cdot$};
\end{tikzpicture}\,\,,
\]

\[\,\,\,\,\begin{tikzpicture}[scale=1.1, baseline =-.15cm]
\coordinate (a) at (0,0);\ncircle{}{(a)}{1.45}{-90}{}\node[circle, draw, double, inner sep=3.7] (c) at (0,-.6) {$\scriptstyle g$};\node[circle, draw, double, inner sep=3] (x) at (0,.6) {$\scriptstyle f$};\draw[thick, red, rounded corners=3] (x) + (-90:.29) -- (0,-.225) arc (90+4:180-256-4:.375) -- +(0,-.45+.05) -- (0,-1.49+.05);\draw[thick, red] (c)+(-90:.29) -- (-90:1.5-.05);\fill (x) + (-90:.28) circle (.05);\fill (c) + (-90:.28) circle (.05);\draw (x) + (-155:.29) -- + (-155:.5);\draw (x) + (-180:.29) -- + (-180:.5);\draw (x) + (155:.29) -- + (155:.5);\draw (x) + (130:.29) -- + (130:.5);\draw (x) + (-25:.29) -- + (-25:.5);\draw (x) + (0:.29) -- + (0:.5);\draw (x) + (25:.29) -- + (25:.5);\node at ($(x) + (55:.4)$) {$\cdot$};\node at ($(x) + (75:.4)$) {$\cdot$};\node at ($(x) + (95:.4)$) {$\cdot$};\draw (c) + (-155:.29) -- + (-155:.55);\draw (c) + (-180:.29) -- + (-180:.55);\draw (c) + (155:.29) -- + (155:.55);\draw (c) + (-25:.29) -- + (-25:.55);\draw (c) + (130:.29) -- + (130:.55);\draw (c) + (0:.29) -- + (0:.55);\draw (c) + (25:.29) -- + (25:.55);\node at ($(c) + (47:.45)$) {$\cdot$};\node at ($(c) + (60:.45)$) {$\cdot$};\node at ($(c) + (73:.45)$) {$\cdot$};
\end{tikzpicture}
\,\,\to\,\,\begin{tikzpicture}[scale=1.1, baseline =-.15cm]
\coordinate (a) at (0,0);\ncircle{}{(a)}{1.45}{-90}{}\node[circle, draw, double, inner sep=3.7] (c) at (0,-.6) {$\scriptstyle g$};\node[circle, draw, double, inner sep=3] (x) at (0,.6) {$\scriptstyle f$};\draw[thick, red, rounded corners=3] (x) + (-90:.29) -- (0,-.225) arc (90-4:256+4:.375) -- +(0,-.45+.05) -- (0,-1.49+.05);\draw[thick, red] (c)+(-90:.29) -- (-90:1.5-.05);\fill (x) + (-90:.28) circle (.05);\fill (c) + (-90:.28) circle (.05);\draw (x) + (-155:.29) -- + (-155:.5);\draw (x) + (-180:.29) -- + (-180:.5);\draw (x) + (155:.29) -- + (155:.5);\draw (x) + (130:.29) -- + (130:.5);\draw (x) + (-25:.29) -- + (-25:.5);\draw (x) + (0:.29) -- + (0:.5);\draw (x) + (25:.29) -- + (25:.5);\node at ($(x) + (55:.4)$) {$\cdot$};\node at ($(x) + (75:.4)$) {$\cdot$};\node at ($(x) + (95:.4)$) {$\cdot$};\draw (c) + (-155:.29) -- + (-155:.55);\draw (c) + (-180:.29) -- + (-180:.55);\draw (c) + (155:.29) -- + (155:.55);\draw (c) + (-25:.29) -- + (-25:.55);\draw (c) + (130:.29) -- + (130:.55);\draw (c) + (0:.29) -- + (0:.55);\draw (c) + (25:.29) -- + (25:.55);\node at ($(c) + (47:.45)$) {$\cdot$};\node at ($(c) + (60:.45)$) {$\cdot$};\node at ($(c) + (73:.45)$) {$\cdot$};
\end{tikzpicture}
\,\qquad \text{and}\qquad\begin{tikzpicture}[scale=1.1, baseline =-.75cm]
\node[circle, draw, double, inner sep=3] (c) at (0,-.6) {$\scriptstyle f$};\ncircle{}{(0,-.58)}{.85}{-90}{}\draw[thick, red] (c) + (-90:.29) -- (0,-1.49+.05);\fill (c) + (-90:.28) circle (.05);\draw (c) + (-155:.29) -- + (-155:.55);\draw (c) + (-180:.29) -- + (-180:.55);\draw (c) + (155:.29) -- + (155:.55);\draw (c) + (-25:.29) -- + (-25:.55);\draw (c) + (130:.29) -- + (130:.55);\draw (c) + (0:.29) -- + (0:.55);\draw (c) + (25:.29) -- + (25:.55);\node at ($(c) + (55:.42)$) {$\cdot$};\node at ($(c) + (75:.42)$) {$\cdot$};\node at ($(c) + (95:.42)$) {$\cdot$};
\end{tikzpicture}
\,\,\to\,\,
\begin{tikzpicture}[scale=1.1, baseline =-.75cm]\node[circle, draw, double, inner sep=3] (c) at (0,-.6) {$\scriptstyle f$};\ncircle{}{(0,-.58)}{.85}{-90}{}\draw[thick, red] (c) + (-90:.29) arc (-180:-90:.08)  arc (-80:90:.38) arc (90-4:256-5:.39) arc(80:0:.16) to[rounded corners=3]  (0,-1.49+.05);\fill (c) + (-90:.28) circle (.05);\draw (c) + (-155:.29) -- + (-155:.55);\draw (c) + (-180:.29) -- + (-180:.55);\draw (c) + (155:.29) -- + (155:.55);\draw (c) + (-25:.29) -- + (-25:.55);\draw (c) + (130:.29) -- + (130:.55);\draw (c) + (0:.29) -- + (0:.55);\draw (c) + (25:.29) -- + (25:.55);\node at ($(c) + (55:.45)$) {$\cdot$};\node at ($(c) + (75:.45)$) {$\cdot$};\node at ($(c) + (95:.45)$) {$\cdot$};
\end{tikzpicture}\,\,.
\]
(In the language of Section \ref{sec:RibbonBraidGroup}, these correspond to right-composition of a ribbon braid by one of the generators $\varepsilon_i$ or $\vartheta_i$.)
It is enough to prove $Z(T)=Z(T)'$ when $\check T$ differs from $\check T'$ by a single one of the above moves.
If $\check T$ and $\check T'$ differ by the first type of move, then we have:
\[
\begin{split}
Z(T)'
:\!\!
&=\big[\big[Z(\check T')\circ_{j'_1}\ldots\big]\circ_{j'_\ell}Z(C'_\ell)\big]\circ_{j'_{\ell+1}}\ldots\\
&=\big[\big[Z(\check T')\circ_{j_1}\ldots\big]\circ_{j_\ell-1}Z(C_\ell)\big]\circ_{j_{\ell+1}}\ldots\\
&=\big[\big[\big(Z(\check T)\circ_{j_\ell-1}\beta\big)\circ_{j_1}\ldots\big]\circ_{j_\ell-1}\big(Z(C_\ell)\otimes \id\big)\big]\circ_{j_{\ell+1}}\ldots\\
&=\big[\big[Z(\check T)\circ_{j_1}\ldots\big]\circ_{j_\ell-1}\big(\beta\circ(Z(C_\ell)\otimes \id)\big)\big]\circ_{j_{\ell+1}}\ldots\\
&=\big[\big[Z(\check T)\circ_{j_1}\ldots\big]\circ_{j_\ell-1}\big(\id\otimes Z(C_\ell)\big)\big]\circ_{j_{\ell+1}}\ldots\\
&=\big[\big[Z(\check T)\circ_{j_1}\ldots\big]\circ_{j_\ell}Z(C_\ell)\big]\circ_{j_{\ell+1}}\ldots = Z(T),
\end{split}
\]
where $C'_\ell=C_\ell$ is the coupon labelled by $f$.
If they differ by the second type of move, the computation is similar:
\[
\begin{split}
Z(T)'
&=\big[\big[Z(\check T')\circ_{j'_1}\ldots\big]\circ_{j'_{\ell-1}}Z(C'_{\ell-1})\big]\circ_{j'_{\ell}}Z(C'_{\ell})\big]\circ_{j'_{\ell+2}}\ldots\\
&=\big[\big[\big(Z(\check T)\circ_{j_\ell}\beta\big)\circ_{j_1}\ldots\big]\circ_{j_\ell}\big(Z(C'_{\ell})\otimes Z(C'_{\ell-1})\big)\big]\circ_{j_{\ell+2}}\ldots\\
&=\big[\big[Z(\check T)\circ_{j_1}\ldots\big]\circ_{j_\ell}\big(\beta\circ(Z(C_{\ell-1})\otimes Z(C_{\ell}))\big)\big]\circ_{j_{\ell+2}}\ldots\\
&=\big[\big[Z(\check T)\circ_{j_1}\ldots\big]\circ_{j_\ell}\big(Z(C_{\ell})\otimes Z(C_{\ell-1})\big)\big]\circ_{j_{\ell+2}}\ldots\\
&=\big[\big[Z(\check T)\circ_{j_1}\ldots\big]\circ_{j_{\ell-1}}Z(C_{\ell-1})\big]\circ_{j_{\ell}}Z(C_{\ell})\big]\circ_{j_{\ell+2}}\ldots = Z(T),
\end{split}
\]
where $C'_\ell=C_{\ell-1}$ is the coupon labelled by $f$ and $C'_{\ell-1}=C_\ell$ is the coupon labelled by $g$.
Finally, if $\check T$ and $\check T'$ differ by the third type of move, then:
\[
\begin{split}
Z(T)'
&=\big[\big[Z(\check T')\circ_{j'_1}\ldots\big]\circ_{j'_\ell}Z(C'_\ell)\big]\circ_{j'_{\ell+1}}\ldots\\
&=\big[\big[\big(Z(\check T)\circ_{j_\ell}\theta\big)\circ_{j_1}\ldots\big]\circ_{j_\ell}Z(C_\ell)\big]\circ_{j_{\ell+1}}\ldots\\
&=\big[\big[Z(\check T)\circ_{j_1}\ldots\big]\circ_{j_\ell}\big(\theta\circ Z(C_\ell)\big)\big]\circ_{j_{\ell+1}}\ldots\\
&=\big[\big[Z(\check T)\circ_{j_1}\ldots\big]\circ_{j_\ell}Z(C_\ell)\big]\circ_{j_{\ell+1}}\ldots = Z(T).
\end{split}
\]
In the above three computations, we have used in an essential way that the source of $Z(C'_\ell)$ (the unit object $1_\cC$) has trivial braiding and trivial twist.

We now show that $Z(T)$ does not depend on the way we slide the anchor lines off of the coupons.
Given the freedom that we have just established of choosing the anchor lines connecting $q_0$ and $c_i$,
any two ways of sliding the initial anchor lines off of the coupons differ by (a composition of) the following move:
\[
\begin{tikzpicture}[scale=1.1, baseline =-.1cm]
\coordinate (a) at (0,.1);\coordinate (c) at (0,.6);\ncircle{}{(a)}{1.5}{-90}{}\ncircle{unshaded}{(c)}{.4}{-90}{}\node[circle, draw, double, inner sep=2.7] (x) at (0,-.6) {$\scriptstyle f_i\!$};\draw[thick, red, rounded corners=3] (c) + (-90:.4) -- (0,-.225) arc (90+4:180-256-4:.375) -- +(0,-.45+.1) -- (0,-1.49+.1);\draw[thick, red] (x)+(-90:.29) -- (-90:1.5-.05);\fill (x) + (-90:.28) circle (.05);\draw (x) + (-155:.29) -- + (-155:.5);\draw (x) + (-180:.29) -- + (-180:.5);\draw (x) + (155:.29) -- + (155:.5);\draw (x) + (130:.29) -- + (130:.5);\draw (x) + (-25:.29) -- + (-25:.5);\draw (x) + (0:.29) -- + (0:.5);\draw (x) + (25:.29) -- + (25:.5);\node at ($(c) + (50+6:.55)$) {$\cdot$};\node at ($(c) + (70+2:.55)$) {$\cdot$};\node at ($(c) + (90-2:.55)$) {$\cdot$};\node at ($(c) + (110-6:.55)$) {$\cdot$};\draw (c) + (-155:.4) -- + (-155:.65);\draw (c) + (-180:.4) -- + (-180:.65);\draw (c) + (155:.4) -- + (155:.65);\draw (c) + (130:.4) -- + (130:.65);\draw (c) + (-25:.4) -- + (-25:.65);\draw (c) + (0:.4) -- + (0:.65);\draw (c) + (25:.4) -- + (25:.65);\node at ($(x) + (47:.45)$) {$\cdot$};\node at ($(x) + (60:.45)$) {$\cdot$};\node at ($(x) + (73:.45)$) {$\cdot$};
\end{tikzpicture}
\,\,\,\to\,\,\,
\begin{tikzpicture}[scale=1.1, baseline =-.1cm]
\coordinate (a) at (0,.1);\coordinate (c) at (0,.6);\ncircle{}{(a)}{1.5}{-90}{}\ncircle{unshaded}{(c)}{.4}{-90}{}\node[circle, draw, double, inner sep=2.7] (x) at (0,-.6) {$\scriptstyle f_i\!$};\draw[thick, red, rounded corners=3] (c) + (-90:.4) -- (0,-.225) arc (90-4:256+4:.375) -- +(0,-.45+.1) -- (0,-1.49+.1);\draw[thick, red] (x)+(-90:.29) -- (-90:1.5-.05);\fill (x) + (-90:.28) circle (.05);\draw (x) + (-155:.29) -- + (-155:.5);\draw (x) + (-180:.29) -- + (-180:.5);\draw (x) + (155:.29) -- + (155:.5);\draw (x) + (130:.29) -- + (130:.5);\draw (x) + (-25:.29) -- + (-25:.5);\draw (x) + (0:.29) -- + (0:.5);\draw (x) + (25:.29) -- + (25:.5);\node at ($(c) + (50+6:.55)$) {$\cdot$};\node at ($(c) + (70+2:.55)$) {$\cdot$};\node at ($(c) + (90-2:.55)$) {$\cdot$};\node at ($(c) + (110-6:.55)$) {$\cdot$};\draw (c) + (-155:.4) -- + (-155:.65);\draw (c) + (-180:.4) -- + (-180:.65);\draw (c) + (155:.4) -- + (155:.65);\draw (c) + (130:.4) -- + (130:.65);\draw (c) + (-25:.4) -- + (-25:.65);\draw (c) + (0:.4) -- + (0:.65);\draw (c) + (25:.4) -- + (25:.65);\node at ($(x) + (47:.45)$) {$\cdot$};\node at ($(x) + (60:.45)$) {$\cdot$};\node at ($(x) + (73:.45)$) {$\cdot$};
\end{tikzpicture}\,\,.
\]
If $\check T$ and $\check T'$ are related by the above move, then we have:
\[
\begin{split}
Z(T)'
&=\big[\big[Z(\check T')\circ_{j'_1}\ldots\big]\circ_{j'_\ell}Z(C'_\ell)\big]\circ_{j'_{\ell+1}}\ldots\\
&=\big[\big[Z(\check T')\circ_{j_1}\ldots\big]\circ_{j_\ell+1}Z(C_\ell)\big]\circ_{j_{\ell+1}}\ldots\\
&=\big[\big[\big(Z(\check T)\circ_{j_\ell}\beta\big)\circ_{j_1}\ldots\big]\circ_{j_\ell}\big(\id\otimes Z(C_\ell)\big)\big]\circ_{j_{\ell+1}}\ldots\\
&=\big[\big[Z(\check T)\circ_{j_1}\ldots\big]\circ_{j_\ell}\big(\beta\circ(\id\otimes Z(C_\ell))\big)\big]\circ_{j_{\ell+1}}\ldots\\
&=\big[\big[Z(\check T)\circ_{j_1}\ldots\big]\circ_{j_\ell}\big(Z(C_\ell)\otimes \id\big)\big]\circ_{j_{\ell+1}}\ldots\\
&=\big[\big[Z(\check T)\circ_{j_1}\ldots\big]\circ_{j_\ell}Z(C_\ell)\big]\circ_{j_{\ell+1}}\ldots = Z(T),
\end{split}
\]
where $C'_\ell=C_\ell$ is the coupon labelled by $f$.
This finishes the proof that $Z(T)$ only depends on the isotopy class of $T$.

\makeatletter
\DeclareRobustCommand\widecheck[1]{{\mathpalette\@widecheck{#1}}}
\def\@widecheck#1#2{%
    \setbox\z@\hbox{\m@th$#1#2$}%
    \setbox\tw@\hbox{\m@th$#1%
       \widehat{%
          \vrule\@width\z@\@height\ht\z@
          \vrule\@height\z@\@width\wd\z@}$}%
    \dp\tw@-\ht\z@
    \@tempdima\ht\z@ \advance\@tempdima2\ht\tw@ \divide\@tempdima\thr@@
    \setbox\tw@\hbox{%
       \raise\@tempdima\hbox{\scalebox{1}[-1]{\lower\@tempdima\box
\tw@}}}%
    {\ooalign{\box\tw@ \cr \box\z@}}}
\makeatother

Let now $S$ and $T$ be tangles with coupons whose $i$th operadic composition is defined
(the labels on $\partial^iT$ agree with those on $\partial^0 S$).
By \eqref{sfwagiletjnad},
$Z(T)=[[Z(\check T)\circ_{j_1}Z(C_1)] \ldots \circ_{j_s}Z(C_s)]$
and
$Z(S)=[[Z(\check S)\circ_{k_1}Z(D_1)] \ldots \circ_{k_t}Z(D_t)]$,
where the $C$'s are the coupons of $T$ and the $D$'s are the coupons of $S$.
If choose the new anchor lines on $\check T$ so as to arrive at $q_0$ after those of $T$ (with respect to the usual clockwise order \eqref{aflfmwbjsnnd}),
then we compute:
\[
\begin{split}
Z(T) \circ_i Z(S)
&=[[Z(\check T)\circ_{j_1}Z(C_1)] \ldots \circ_{j_s}Z(C_s)]\circ_i Z(S)\\
&=[[(Z(\check T)\circ_i Z(S))\circ_{j'_1}Z(C_1)] \ldots \circ_{j'_s}Z(C_s)]\\
&=[[(Z(\check T)\circ_i [[Z(\check S)\circ_{k_1}Z(D_1)] \ldots \circ_{k_t}Z(D_t)])\circ_{j'_1}Z(C_1)] \ldots \circ_{j'_s}Z(C_s)]\\
&=[[(Z(\check T)\circ_i Z(\check S))\circ_{k'_1}Z(D_1)] \ldots \circ_{k'_t}Z(D_t)]\circ_{j'_1}Z(C_1)] \ldots \circ_{j'_s}Z(C_s)]\\
&=[[(Z(\check T)\circ_i Z(\check S))\circ_{j''_1}Z(C_1)] \ldots \circ_{j''_s}Z(C_s)]\circ_{k'_1}Z(D_1)] \ldots \circ_{k'_t}Z(D_t)]\\
&=[[Z(\,\,\widecheck{\!\!T\circ_i S\!\!}\,\,)\circ_{j''_1}Z(C_1)] \ldots \circ_{j''_s}Z(C_s)]\circ_{k'_1}Z(D_1)] \ldots \circ_{k'_t}Z(D_t)]\\
&=Z(T\circ_i S),
\end{split}
\]
where  $k'_n=k_n+i-1$, $j'_n=j_n+a-1$, $j''_n=j_n+b-1$,
$a$ is the number of inputs of $Z(S)$, and $b$ is the number of inputs of $\check Z(S)$.
\end{proof}


\section{The tube string calculus for the categorified trace}
\label{sec:TubeCalculus}
In this appendix, we show that the calculus of tubes with strings introduced in our earlier paper \cite{1509.02937} and reviewed in Section~\ref{sec:TubeRelations} is invariant under all $3$-dimensional isotopies.\newpage

We illustrate the non-triviality of the problem with an example.
Consider the following two `tube string diagrams':
\begin{equation}\label{sbljsvcbflm}
\qquad\qquad\quad
\begin{tikzpicture}[baseline=1.15cm]
	\pgfmathsetmacro{\voffset}{.08};
	\pgfmathsetmacro{\hoffset}{.15};
	\pgfmathsetmacro{\hoffsetTop}{.12};
	\coordinate (a1) at (0,0);
	\coordinate (a2) at (1.4,0);
	\coordinate (b1) at (0,.5);
	\coordinate (b2) at (1.4,.5);
	\coordinate (c1) at (0,1.5);
	\coordinate (c2) at (1.4,1.5);
	\coordinate (d) at (.7,3);
	\coordinate (e) at (.7,3.7);
	\bottomCylinder{(a1)}{.3}{.5}
	\bottomCylinder{(a2)}{.3}{.5}
	\bottomCylinder{(b1)}{.3}{1}
	\bottomCylinder{(b2)}{.3}{1}
	\pairOfPants{(c1)}{}
	\topCylinder{(d)}{.3}{.7}
%
		\draw[thick, yString] ($ (a1) + (.3,-.1)$) .. controls ++(90:.2cm) and ++(270:.4cm) .. ($ (b1) + 1*(\hoffset,0) + (0,-\voffset) $);
		\draw[thick, zString] ($ (b1) + 2*(\hoffset,0) + (0,-.1)$) -- ($ (b1) + 2*(\hoffset,0) + (0,-.2)$) arc (-180:0:{.5*\hoffset}) -- ($ (b1) + 3*(\hoffset,0) + (0,-\voffset)$);	
%
	\draw[thick, yString] ($ (b1) + (\hoffset,0) + (0,-.1)$) .. controls ++(90:.4cm) and ++(270:.4cm) .. ($ (b1) + 2*(\hoffset,0) + (0,-\voffset) + (0,1) $);		
	\draw[thick, zString] ($ (b1) + 2*(\hoffset,0) + (0,-.1)$) .. controls ++(90:.4cm) and ++(270:.4cm) .. ($ (b1) + 3*(\hoffset,0) + (0,-\voffset) + (0,1) $);		
	\draw[thick, zString] ($ (b1) + 3*(\hoffset,0) + (0,-\voffset) $) .. controls ++(90:.2cm) and ++(225:.1cm) .. ($ (b1) + 4*(\hoffset,0) + (0,-\voffset) + (0,.45)$);
	\draw[thick, zString] ($ (b1) + (\hoffset,1) + (0,-\voffset) $) .. controls ++(270:.2cm) and ++(45:.1cm) .. ($ (b1) + (0,1) + (0,-\voffset) + (0,-.45)$);
%
	\draw[thick, zString] ($ (c1) + 1*(\hoffset,0) + (0,-\voffset) $) .. controls ++(90:.8cm) and ++(270:.6cm) .. ($ (d) + 1*(\hoffsetTop,0) + (0,-\voffset)$);
	\draw[thick, yString] ($ (c1) + 2*(\hoffset,0) + (0,-.1) $) .. controls ++(90:.8cm) and ++(270:.6cm) .. ($ (d) + 2*(\hoffsetTop,0) + (0,-\voffset)$);
	\draw[thick, zString] ($ (c1) + 3*(\hoffset,0) + (0,-\voffset) $) .. controls ++(90:.8cm) and ++(270:.6cm) .. ($ (d) + 3*(\hoffsetTop,0) + (0,-\voffset)$);
	\draw[thick, zString] ($ (c2) + 2*(\hoffset,0) + (0,-.1) $) .. controls ++(90:.8cm) and ++(270:.6cm) .. ($ (d) + 4*(\hoffsetTop,0) + (0,-\voffset)$);
%
		\draw[thick, zString] ($ (d) + 1*(\hoffsetTop,0) + (0,-.1)$) .. controls ++(90:.4cm) and ++(270:.2cm) .. ($ (e) + 1*(\hoffset,0) + (0,-\voffset) $);
		\draw[thick, yString] ($ (d) + 2*(\hoffsetTop,0) + (0,-.1)$) .. controls ++(90:.4cm) and ++(270:.2cm) .. ($ (e) + 3*(\hoffset,0) + (0,-\voffset) $);
		\draw[thick, zString] ($ (d) + 3*(\hoffsetTop,0) + (0,-.1)$) -- ($ (d) + 3*(\hoffsetTop,0) + (0,.1)$) arc (180:0:{.5*\hoffsetTop}) -- ($ (d) + 4*(\hoffsetTop,0) + (0,-\voffset)$);	
	\draw[thick, zString] ($ (a2) + (.3,-.1) $) -- ($ (c2) + (.3,-.1) $);
\draw[thick, zString, dotted] (.6,.88) -- +(-.6,.08);
	\node at ($ (a1) + (.3,-.3) $) {\scriptsize{$y$}};
	\node at ($ (a2) + (.3,-.3) $) {\scriptsize{$z$}};
\end{tikzpicture}
\quad\qquad\!\!\!\text{and}\qquad\!\!
\begin{tikzpicture}[baseline=1.15cm]
	\pgfmathsetmacro{\voffset}{.08};
	\pgfmathsetmacro{\hoffset}{.15};
	\pgfmathsetmacro{\hoffsetTop}{.12};
	\coordinate (a1) at (0,0);
	\coordinate (a2) at (1.4,0);
	\coordinate (b1) at (0,2);
	\coordinate (b2) at (1.4,2);
	\coordinate (c) at (.7,3.5);
	\draw[thick, zString] ($ (b1) + (.3,-.1) $) .. controls ++(270:.6cm) and ++(90:.8cm) .. ($ (a2) + (.3,-.1) $);		
	\braid{(a1)}{.3}{2}
	\topPairOfPants{(b1)}{}
	\halfDottedEllipse{(a1)}{.3}{.1}
	\halfDottedEllipse{(a2)}{.3}{.1}
	\draw[thick, yString] ($ (b2) + (.3,-.1) $) .. controls ++(270:.6cm) and ++(90:.8cm) .. ($ (a1) + (.3,-.1) $);	
	\draw[thick, yString] ($ (b2) + (.3,-.1) $) .. controls ++(90:.8cm) and ++(270:.8cm) .. ($ (c) + 3*(\hoffset,0) + (0, -\voffset) $);
	\draw[thick, zString] ($ (b1) + (.3,-.1) $) .. controls ++(90:.8cm) and ++(270:.8cm) .. ($ (c) + 1*(\hoffset,0) + (0, -\voffset) $);
	\node at ($ (a1) + (.3,-.3) $) {\scriptsize{$y$}};
	\node at ($ (a2) + (.3,-.3) $) {\scriptsize{$z$}};
\end{tikzpicture}\qquad \tikz[baseline=-.09cm]{\node[scale=.93]{$y,z\in\cM.$};}
\end{equation}
Here, as usual, $\cM$ is a module tensor category over some braided pivotal category $\cC$.
Following \cite{1509.02937}, we can associate morphisms $\Tr_\cC(y)\otimes \Tr_\cC(z)\to \Tr_\cC(z\otimes y)$ to the above two pictures.
These are:
\begin{equation*} 
\begin{split}
\Tr_\cC(\id_{z\otimes y}\otimes \ev_z)\circ \mu_{z\otimes y\otimes z^*, z}\circ\,&(\tau_{y\otimes z^*, z}\otimes \id_{\Tr_\cC(z)}) 
\circ (\Tr_\cC(\id_y\otimes \coev_{z^*})\otimes  \id_{\Tr_\cC(z)})\\
\text{and}&\qquad
\mu_{z,y}\circ\beta_{\Tr_\cC(y),\Tr_\cC(z)}.
\end{split}
\end{equation*}
Now, as it happens, the two pictures in \eqref{sbljsvcbflm} are isotopic, which raises the expectation that those two morphisms should be equal.
That is indeed true, but a direct proof using the identities in Section~\ref{sec:TubeRelations} as axioms is a rather non-trivial exercise\footnote{\emph{Hint:} the proof uses \ref{rel:StringOverCap}, \ref{rel:TwistMultiplicationAndTraciators} and \ref{rel:MultiplicationAssociative}.}.

The goal of this appendix is to show that whenever two string tube string diagrams are isotopic, 
the corresponding morphisms in $\cC$ are equal.
We prove this by establishing a dictionary between tube string diagrams and anchored planar tangles.

\begin{defn}\label{def:TubeStringDiagram}
A \emph{tube string diagram} consists of:
\begin{itemize}
\item 
An embedded 2-manifold $\Sigma\subset [-1,1]^3$ with no local maxima in its interior, which is isotopic to the following standard surface
\begin{equation}\label{eq: standard surface in box}
\begin{matrix}
\tikz[scale=.2]{
\draw
(-1,2) rectangle ++(24,7);
\filldraw[white, line width=5]  (15,1) arc (90:0:1) arc (-180:0:2 and .7) to[in=-90, out=90] ++(-8,6) -- ++ (-4,0) to[in=90, out=-90] ++(-8,-6)
arc (-180:0:2 and .7) arc (180:0:1) arc (-180:0:2 and .7) arc (180:90:1) -- cycle;
\draw[yellow, line width=3, line cap=round]  (16,0) arc (-180:0:2 and .7)  ++(-8,7) arc (0:-180:2 and .7)  ++(-8,-7)
arc (-180:0:2 and .7) ++(2,0) arc (-180:0:2 and .7);
\draw[thick]  (15,1) arc (90:0:1) arc (-180:0:2 and .7) to[in=-90, out=90] ++(-8,7) arc (0:360:2 and .7) ++ (-4,0) to[in=90, out=-90] ++(-8,-7)
arc (-180:0:2 and .7) arc (180:0:1) arc (-180:0:2 and .7) arc (180:90:1) node[right, yshift=-3] {$\ldots$};
\draw[dotted, thick] (0,0) arc (180:0:2 and .7) ++(2,0) arc (180:0:2 and .7) ++(6,0) arc (180:0:2 and .7) ++(1,0);
\draw[white, line width=5](-3,-2) rectangle ++(24,7);
\draw
(-3,-2) rectangle ++(24,7) 
(-3,-2) -- +(2,4) ++(0,7) -- +(2,4) (-2,-2) ++(23,0) -- +(2,4) (-3,-2) ++(24,7) -- +(2,4) ;
\node at (-16,2.7) {$\Sigma_{\mathrm{st}}\,:$};
\node[left] at (-2.5,-2) {$\scriptstyle (-1,-1,-1)$};
\node[left] at (-2.5,5) {$\scriptstyle (-1,-1,\,\,1)$};
\node[right] at (20.6,-2) {$\scriptstyle (1,-1,-1)$};
\node[right] at (22.5,9) {$\scriptstyle (1,\,\,1,\,\,1)$};
}
\qquad\qquad
\end{matrix}
\end{equation}
via an isotopy that fixes the boundary.
Here, $\partial \Sigma=\partial \Sigma_{\mathrm{st}}$ consists of $k\ge 0$ `input circles' $\partial^1\Sigma,\ldots,\partial^k\Sigma$ (on the $z=-1$ plane) whose centers have zero $y$-coordinate,
and of one `output circle' $\partial^0\Sigma$ (on the $z=1$ plane) centered at $(0,0,1)$.

We write $\partial_{\text{vis}}\Sigma$ for the part of $\partial\Sigma$ whose $y$-coordinate is negative:
the `visible' part of the boundary, highlighted in yellow in the above picture.
\item 
Disjoint closed discs $C_1,\ldots,C_s\subset \mathring\Sigma$ (the coupons) with marked points $c_i\in\partial C_i$.
\item 
A closed $1$-dimensional oriented submanifold $X\subset \Sigma\setminus (\mathring C_1\cup \ldots \cup \mathring C_s)$ (the strands);
the boundary of $X$ lies in $\partial_{\text{vis}}\Sigma \cup \bigcup\partial C_i$ and does not touch the marked points $c_i$.
\item An object of $\cM$ for each connected component of $X$.
\item A morphism $f_i\in \cM(1, x_1\otimes \cdots \otimes x_k)$ for each coupon $C_i$, as in Definition~\ref{def: labeled oriented anchored planar tangle with coupons}.
\end{itemize} 
\end{defn}

We now explain how to associate an oriented anchored planar tangle with coupons, well defined up to isotopy, to a tube string diagram.
This assignment will be denoted
\begin{equation*} 
\Sigma\,\mapsto\, T(\Sigma).
\end{equation*}
We refer the reader to Figure~\ref{fig: Construct T(Sig)} for an example.

Given a tube string diagram $\Sigma\subset [0,1]^3$, let us write $D_i\subset \partial [0,1]^3$ for the disc bounded by $\partial^i \Sigma$ (so that $\partial^i\Sigma=\partial D_i$),
and let $\Sigma^+:=\Sigma\cup(D_1\cup\ldots\cup D_k)$.
Pick an orientation preserving diffeomorphism $\varphi:\Sigma^+\to\mathbb D:=\{z\in\mathbb C:|z|\le 1\}$ with the property that $\varphi(C_i)$ and $\varphi(D_i)$ are round disks, and let
\[
T:=\mathbb D\setminus (\varphi(\mathring D_1)\cup\ldots\cup \varphi(\mathring D_k)).
\]
We can then use the diffeomorphism $\varphi:\Sigma\to T$ to transfer the strands and coupons from $\Sigma$ to $T$.
Our next task is to equip $T$ with anchor lines.
We will first define the anchor lines on $\Sigma$, and then transfer them to $T$ using $\varphi$.

Let
$
\Gamma:=\partial [0,1]^3\setminus(\mathring D_0\cup\ldots\cup \mathring D_k)$,
and let $W$ be the 3-manifold bounded by $\Gamma\cup\Sigma$ (the solid cube minus the stuff inside $\Sigma$).
Then
\[
\Gamma\hookrightarrow W\hookleftarrow\Sigma
\]
is a trivial cobordism\footnote{A cobordism isomorphic to $\Sigma\times\{0\}\hookrightarrow \Sigma\times[0,1] \cup_{\partial\Sigma\times [0,1]} \partial\Sigma\hookleftarrow \Sigma\times\{1\}$.}. It therefore induces a diffeomorphism $\psi:\Gamma\to\Sigma$, well defined up to isotopy (because surface diffeomorphisms are isotopic if and only if they are homotopic \cite[\S1.4]{MR2850125}).

Let $q_i\in\partial^i\Sigma=\partial D_i$ be the points on the back of the tubes (the points with maximal $y$-coordinate), and
consider the following system of standard lines, $A_\Gamma\subset\Gamma$, which connect $q_0$ to the other points $q_i$.
The $i$th line starts at $q_i$, travels straight to $[-1,1] \times \{1\} \times \{-1\}$, then straight to $[-1,1] \times \{1\} \times \{1\}$, and then straight to $q_0$.
The anchor lines on $\Sigma$ are the image of those lines under the diffeomorphism $\psi:\Gamma\to\Sigma$
\[
A_\Sigma:=\psi(A_\Gamma)
\]
and are characterised by the fact that $A_\Gamma\cup A_\Sigma$ bound $k$ disjoint disks in $W$.
At last, we transfer the anchor lines $A_\Sigma$ from $\Sigma$ to $T$ by means of the diffeomorphism $\varphi:\Sigma\to T$.
This construction establishes a bijection between isotopy classes of tube string diagrams and isotopy classes of labeled oriented anchored planar tangle with coupons.

\begin{figure}[!ht]
$$
\begin{tikzpicture}[baseline=1.6cm]
	\pgfmathsetmacro{\voffset}{.08};
	\pgfmathsetmacro{\hoffset}{.15};
	\pgfmathsetmacro{\hoffsetTop}{.12};
	\coordinate (z1) at (0,-1);  
	\coordinate (z2) at (1.4,-1);  
	\coordinate (a1) at (0,0);  
	\coordinate (a2) at (1.4,0);  
	\coordinate (b1) at (0,2);  
	\coordinate (b2) at (1.4,2);  
	\coordinate (c) at (.7,3.5);  
	\coordinate (d) at (.7,4.5);  
	\coordinate (anchor1) at ($ (z1) + (.45,.08) $);
	\coordinate (anchor2) at ($ (z2) + (.45,.08) $);
	\coordinate (anchor0) at ($ (d) + (.45,.08) $);
	\draw[thick, zString] ($ (b1) + (.23,0) $) .. controls ++(270:.7cm) and ++(90:.8cm) .. ($ (a2) + (.3,-.1) $) -- ($ (z2) + (.3,-.1) $);
	\draw[thick, red, densely dotted]  (anchor2) -- ($ (a2) + (.45,-.15) $) .. controls ++(90:.9cm) and ++(270:.7cm) .. ($ (b1) + 2.8*(\hoffset,0) + (0,-.15*\hoffset) $) .. controls ++(90:.65cm) and ++(270:.85cm) .. ($ (c) + 2*(\hoffsetTop,0) + (0,-\voffset) $) .. controls ++(90:.6cm) and ++(270:.6cm) .. ($ (d) + 3*(\hoffsetTop,0) + (0, -\voffset) $);
	\braid{(a1)}{.3}{2}
	\pairOfPantsNoCircles{(b1)}{}
	\bottomCylinder{(z1)}{.3}{1}
	\bottomCylinder{(z2)}{.3}{1}
	\topCylinder{(c)}{.3}{1}
	\draw[thick, yString] ($ (b2) + 2.7*(\hoffset,0) + (0,-.5*\voffset) $) .. controls ++(270:.6cm) and ++(90:.6cm) .. ($ (a1) + (.3,-.1) $) -- ($ (z1) + (.3,-.1) $);	
	\draw[thick, yString] ($ (b2) + 2.7*(\hoffset,0) + (0,-.5*\voffset) $) .. controls ++(90:.8cm) and ++(270:.8cm) .. ($ (c) + 4*(\hoffsetTop,0) + (0, -\voffset) $) -- ($ (d) + 4*(\hoffsetTop,0) + (0, -\voffset) $);
	\draw[thick, zString] ($ (b1) + (.23,0) $) .. controls ++(90:.7cm) and ++(270:.8cm) .. ($ (c) + 1*(\hoffsetTop,0) + (0, -\voffset) $) .. controls ++(90:.6cm) and ++(270:.6cm) .. ($ (d) + 2*(\hoffsetTop,0) + (0, -\voffset) $);
	\nbox{unshaded}{(1.7,-.5)}{.27}{-.1}{-.1}{$f$}
	\node at ($ (z1) + (.3,-.3) $) {\scriptsize{$y$}};
	\node at ($ (z2) + (.3,-.3) $) {\scriptsize{$z$}};
	\node at ($ (d) + (.6,.3) $) {\scriptsize{$q_0$}};
	\pgfmathsetmacro{\xZero}{-1-.5};
	\pgfmathsetmacro{\xOne}{2.5+.2};
	\pgfmathsetmacro{\yZero}{-1.5};
	\pgfmathsetmacro{\yOne}{-.5};
	\pgfmathsetmacro{\zZero}{-.5};
	\pgfmathsetmacro{\zOne}{4};
	\pgfmathsetmacro{\xZeroBack}{-.5+.1};
	\pgfmathsetmacro{\xOneBack}{3+.68};
	\pgfmathsetmacro{\yOneTop}{5};
	\pgfmathsetmacro{\convex}{.3};
	\coordinate (000) at (\xZero,\yZero);
	\coordinate (100) at (\xOne,\yZero);
	\coordinate (001) at (\xZero,\zOne);
	\coordinate (101) at (\xOne,\zOne);
	\coordinate (010) at (\xZeroBack,\yOne);
	\coordinate (110) at (\xOneBack,\yOne);
	\coordinate (011) at (\xZeroBack,\yOneTop);
	\coordinate (111) at (\xOneBack,\yOneTop);
	\node at ($ (000) + (-.9,0) $) {\scriptsize{$(-1,-1,-1)$}};
	\node at ($ (100) + (.8,0) $) {\scriptsize{$(1,-1,-1)$}};
	\node at ($ (001) + (-.8,0) $) {\scriptsize{$(-1,-1,\,1)$}};
	\node at ($ (111) + (.6,0) $) {\scriptsize{$(1,\,1,\,1)$}};
	\draw[dashed] ($ .5*(000) + .5*(010) $) -- ($ .5*(000) + .5*(010) + (.95,0) $);
	\draw[dashed] ($ .5*(000) + .5*(010) + (1.65,0) $) -- ($ .5*(000) + .5*(010) + (2.35,0) $);
	\draw[dashed] ($ .5*(000) + .5*(010) + (3.05,0) $) -- ($ .5*(100) + .5*(110) $);
	\draw[dashed] ($ .5*(001) + .5*(011) $) -- ($ .5*(001) + .5*(011) + (1.6,0) $);
	\draw[dashed] ($ .5*(001) + .5*(011) + (2.33,0) $) -- ($ .5*(101) + .5*(111) $);
%
	\draw (010) -- (-.1,-.5);
	\draw (.7,-.5) -- (1.3,-.5);
	\draw (2.1,-.5) -- (110);
	\draw (010) -- (011);
	\draw (111) -- (011);
	\draw (111) -- (110);
	\filldraw[red] (anchor1) circle (.05);
	\filldraw[red] (anchor2) circle (.05);
	\filldraw[red] (anchor0) circle (.05);
	\draw[thick, red, densely dotted] ($ (z1) + (0,.08) + (.45,0) $) -- ($ (z1) + (0,.08) + (.45,0) +\convex*(1,.5)- \convex*(0,.08) - \convex*(.45,0)  $);
	\draw[thick, red] ($ (z1) + (1,.5) $) -- ($ (z1) + (0,.08) + (.45,0) +\convex*(1,.5)- \convex*(0,.08) - \convex*(.45,0)  $);
	\draw[thick, red] (1,-.5) -- (1,.5);
	\draw[thick, red] (1,1.45) -- (1,2.5);
	\draw[thick, red] (1,4.7) -- (1,5);
	\draw[thick, red] (1,5) -- (anchor0);
	\draw[thick, red, densely dotted] ($ (z2) + (0,.08) + (.45,0) $) -- ($ (z2) + (0,.105) + (.45,0) +\convex*(1,.5)- \convex*(0,.08) - \convex*(.45,0)  $);
	\draw[thick, red] (2.3,-.5) -- ($ (z2) + (0,.105) + (.45,0) +\convex*(1,.5)- \convex*(0,.08) - \convex*(.45,0)  $);
	\draw[thick, red] (2.3,-.5) -- (2.3,5);
	\draw[thick, red] (anchor0) -- (2.3,5);
%
	\draw[thick, red, densely dotted] ($ (z1) + 3*(\hoffset,0) + (0,\voffset) $) .. controls ++(90:.3cm) and ++(-45:.1cm) .. ($ (z1) + (0,\voffset) + (0,.45)$);
	\draw[thick, red] ($ (a1) + (\hoffset,0) + (0,-\voffset) $) .. controls ++(270:.3cm) and ++(45:.1cm) .. ($ (z1) + (0,\voffset) + (0,.45)$);
%
	\draw[thick, red]  ($ (a1) + 1*(\hoffset,0) + (0,-\voffset) $) .. controls ++(90:.65cm) and ++(270:.65cm) .. ($ (b2) + 1.4*(\hoffset,0) + (0,-.1*\hoffset) $) .. controls ++(90:.65cm) and ++(270:.85cm) .. ($ (c) + 3*(\hoffsetTop,0) + (0,-\voffset) $) .. controls ++(90:.6cm) and ++(-45:.2cm) .. ($ (d) + (0, -.3) $);
%
	\draw[thick, red, densely dotted]  ($ (d) + 1*(\hoffsetTop,0) + (0,-\voffset) $) .. controls ++(270:.1cm) and ++(45:.1cm) .. ($ (d) + (0, -.3) $);
	\draw[thick, red] ($ (d) + 1*(\hoffsetTop,0) + (0,-\voffset) $) .. controls ++(90:.1cm) and ++(-135:.05cm) .. (anchor0);
	\draw[thick, red] ($ (d) + 3*(\hoffsetTop,0) + (0,-\voffset) $) .. controls ++(90:.1cm) and ++(-135:.05cm) .. (anchor0);
%
	\draw[line width=3, white] (001) -- (101);\draw[line width=5, white] (101) -- (100);
	\draw (111) -- (101);
	\draw (001) -- (101);
	\draw (001) -- (011);
	\draw (000) -- (100);
	\draw (000) -- (001);
	\draw (000) -- (010);
	\draw (100) -- (101);
	\draw (100) -- (110);
\node[scale=.98] at (.05,3.1) {$\Sigma$};
\end{tikzpicture}
\!\!\!
\longmapsto
\!\!\!
\qquad
\begin{tikzpicture}[scale=1.3, baseline =-.06cm]
	\coordinate (a) at (-.05,.05);
	\coordinate (c) at (.55,.3);         
	\coordinate (d) at (-.55,-.4);         
	\ncircle{}{(a)}{1.48}{185}{}
\draw[DarkGreen, thick] (d) --node[right, black, pos=.86, xshift=-1.5] {\scriptsize{$z$}} +(-.01,1.84);
\draw[blue, thick] (c) --node[right, black, pos=.63, xshift=-1.5] {\scriptsize{$y$}} +(.005,1.1);
\node[circle, draw, double, inner sep=3.1, scale=.9, fill=white] (x) at (-.55,.68) {$f$};
\fill (x) + (180:.26) circle (.045);
	\ncircle{unshaded}{(d)}{.4}{185}{}
	\ncircle{unshaded}{(c)}{.4}{178}{}

	\draw[thick, red] (c)+(178:.4) .. controls ++(159:1.2cm) and ++(28:.6cm) .. ($(a)+(185:1.5)$);
	\draw[thick, red] (d)+(185:.4) .. controls ++(168:.35cm) and ++(-23:.3cm) .. ($(a)+(185:1.5)$);
\node[scale=.9] at ($(a)+(185:1.72)$) {\anchor};
\node[scale=.98] at ($(a)+(-90:1.95)$) {$T(\Sigma)$};
\end{tikzpicture}
$$
\caption{The construction of an oriented anchored planar tangle with coupons $T(\Sigma)$ from a tube string diagram $\Sigma$
(the green and blue strands are oriented, even though this is not indicated in the picture).
}\label{fig: Construct T(Sig)}
\end{figure}
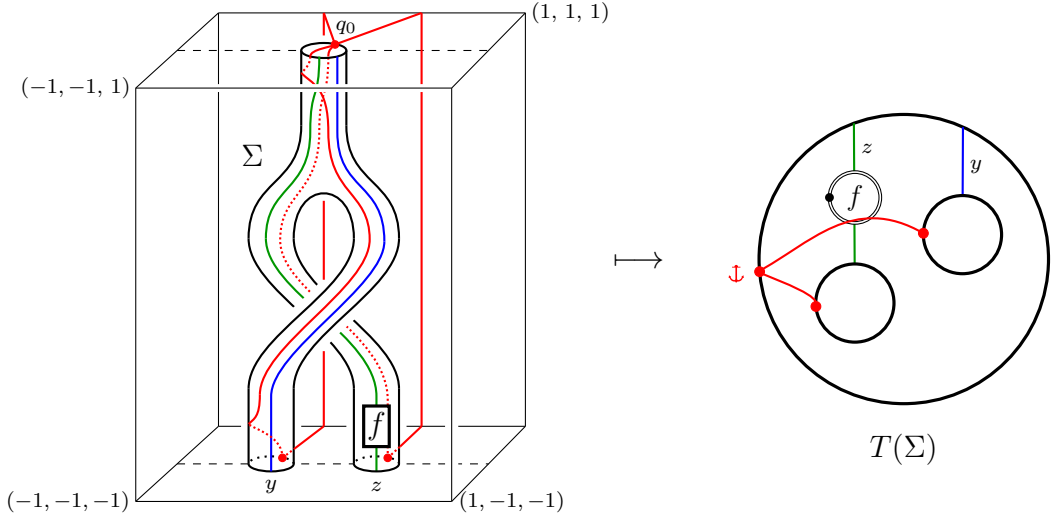

If $\Sigma$ and $\Sigma'$ are two tube string diagrams and if the labels along the $i$th input of $\Sigma$ agree with those along the output of $\Sigma'$, then we can glue them together
in the obvious way to form a new tube string diagram $\Sigma \circ_i \Sigma'$, well defined up to isotopy.

\begin{lem}\label{lem: kxjbksjsvhgvs}
The map $\Sigma\mapsto T(\Sigma)$ is compatible with the operadic composition of tangles and of tube string diagrams:\,
$T(\Sigma\circ_i\Sigma')=T(\Sigma)\circ_iT(\Sigma')$.
\end{lem}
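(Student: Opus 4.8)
\textbf{Proof plan for Lemma~\ref{lem: kxjbksjsvhgvs}.}
The statement asserts that the bijection $\Sigma\mapsto T(\Sigma)$ between isotopy classes of tube string diagrams and isotopy classes of labeled oriented anchored planar tangles with coupons intertwines the two operadic composition laws. My plan is to chase through the construction of $T(\Sigma)$ step by step and verify that each piece of data (the underlying disc-with-holes, the strands and coupons, and — crucially — the anchor lines) is produced compatibly with gluing.

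First I would handle the easy part: the underlying surface and its decoration. When we form $\Sigma\circ_i\Sigma'$, the glued surface $(\Sigma\circ_i\Sigma')^+$ is obtained from $\Sigma^+$ and $(\Sigma')^+$ in the evident way (cap off the $i$th input circle of $\Sigma$ with $\Sigma'$), and one may choose the diffeomorphism $\varphi$ to $\mathbb D$ compatibly with the chosen $\varphi$'s for $\Sigma$ and $\Sigma'$, since the operadic composition $T\circ_iT'$ of the underlying planar tangles is exactly glueing via the affine map $f$ from Definition~\ref{def:planar tangle}. The strands and coupons of $\Sigma\circ_i\Sigma'$ are literally the union of those of the two pieces, transported by $\varphi$; so they match the strands and coupons of $T(\Sigma)\circ_iT(\Sigma')$ on the nose. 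Thus the content of the lemma is concentrated entirely in the behaviour of the anchor lines under gluing.

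The key step, and the one I expect to be the main obstacle, is the compatibility of the anchor-line systems. Recall that the anchor lines on $\Sigma$ were defined via the trivial cobordism $\Gamma\hookrightarrow W\hookleftarrow\Sigma$, which produces a diffeomorphism $\psi:\Gamma\to\Sigma$ well-defined up to isotopy; the anchor lines are $\psi(A_\Gamma)$ for the standard system $A_\Gamma$ on $\Gamma$, equivalently they are characterized as the curves on $\Sigma$ such that $A_\Gamma\cup A_\Sigma$ bounds disjoint disks in $W$. The plan is to use precisely this intrinsic characterization rather than the explicit $\psi$. When we glue $\Sigma\circ_i\Sigma'$, the bounding region $W_{\Sigma\circ_i\Sigma'}$ is assembled from $W_\Sigma$ and $W_{\Sigma'}$ together with the solid tube connecting them along the $i$th input circle. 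I would argue that the system of disjoint bounding disks for $\Sigma\circ_i\Sigma'$ is obtained by: keeping the bounding disks of $W_{\Sigma'}$ for the inputs coming from $\Sigma'$; keeping the bounding disks of $W_\Sigma$ for the inputs of $\Sigma$ other than the $i$th; and, for the $i$th input of $\Sigma$, replacing its single bounding disk by the $s'$ parallel copies obtained by pushing the bounding disks of $\Sigma'$ through the glueing tube and then composing with the $i$th disk of $\Sigma$ (here $s'$ is the number of inputs of $\Sigma'$). This is exactly the recipe defining the operadic composition of anchor lines in Definition~\ref{defn:AnchoredPlanarTangle} — "replacing the line that connects $q_0$ to $q_i$ with $s$ parallel lines" — once one transports everything to the disc via the $\varphi$'s. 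The verification that these disks can be taken disjoint, and that the resulting curves on the glued surface are isotopic (rel boundary, inside the glued $W$) to the ones prescribed by $A_\Gamma$ for $\Sigma\circ_i\Sigma'$, is the technical heart; it amounts to the observation that the trivial cobordism for $\Sigma\circ_i\Sigma'$ decomposes as a glueing of the trivial cobordisms for $\Sigma$, $\Sigma'$, and the cylinder, so that $\psi_{\Sigma\circ_i\Sigma'}$ agrees up to isotopy with the composite of $\psi_\Sigma$, $\psi_{\Sigma'}$ and the cylinder's identity. I would spell this out by exhibiting the decomposition of $W_{\Sigma\circ_i\Sigma'}$ explicitly and invoking the uniqueness up to isotopy of diffeomorphisms induced by trivial cobordisms (as used already in the paragraph defining $\psi$).

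Finally, having matched all four pieces of data up to isotopy, I would conclude that $T(\Sigma\circ_i\Sigma')$ and $T(\Sigma)\circ_iT(\Sigma')$ are isotopic as labeled oriented anchored planar tangles with coupons, which is the assertion of the lemma. I expect the write-up to be short on the surface-decoration side and to require one careful picture (in the spirit of Figure~\ref{fig: Construct T(Sig)}) illustrating the decomposition of the bounding region and the splitting of the $i$th anchor line into parallel strands.
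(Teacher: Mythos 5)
Your strategy is viable, but it takes a genuinely different route from the paper, and the step you call the technical heart is exactly what the paper arranges to avoid. The paper's proof is a normalization trick: since both $\Sigma\mapsto T(\Sigma)$ and $\circ_i$ are isotopy invariant, one may assume without loss of generality that the underlying surfaces of $\Sigma$ and $\Sigma'$ are in the standard form \eqref{eq: standard surface in box}. In that case $T(\Sigma)$, $T(\Sigma')$ and the composite are completely explicit, and both $T(\Sigma\circ_i\Sigma')$ and $T(\Sigma)\circ_iT(\Sigma')$ arise by transferring the same pattern of strands and coupons onto the same disc-with-holes (with the anchor lines as drawn) along two diffeomorphisms $\Sigma\cup_i\Sigma'\to T$, which are visibly isotopic; no analysis of bounding disks or of the complement $3$-manifold is needed. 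Your route instead works with arbitrary $\Sigma,\Sigma'$ and uses the intrinsic characterization of $A_\Sigma$ as the curves which, together with $A_\Gamma$, bound disjoint disks in $W$. What this buys is an argument that never passes through a chosen normal form; what it costs is genuine three-dimensional bookkeeping that you have only sketched.

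If you carry out your route, two points need more care than your sketch indicates. First, $W_{\Sigma\circ_i\Sigma'}$ is not literally $W_\Sigma\cup W_{\Sigma'}$ glued along a single tube: when forming $\Sigma\circ_i\Sigma'$ the input tubes of $\Sigma$ other than the $i$th are prolonged through the region occupied by $\Sigma'$ down to the bottom face of the new cube, so the lower piece of the complement is $W_{\Sigma'}$ with these pass-through tubes removed, and the standard lines $A_\Gamma$ for those inputs change accordingly. Second, the disks you assemble (the $W_{\Sigma'}$-disk for the $j$th input, a parallel copy of the $i$th $W_\Sigma$-disk, and a connecting band near the glued circle) have boundary segments lying in the interior of $W_{\Sigma\circ_i\Sigma'}$ (along the interface plane), so they do not yet witness the characterizing property; you must isotope them so that their boundaries lie on $\Gamma\cup\Sigma$, and simultaneously take disjoint parallel copies over the $i$th leg of $\Sigma$ so as to match the clause of Definition~\ref{defn:AnchoredPlanarTangle} in which one anchor line is replaced by $s$ parallel lines. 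Both issues are fixable, and your appeal to the uniqueness up to isotopy of the diffeomorphism induced by a trivial cobordism is the right tool, but this is precisely the content that the paper's reduction to standard form discharges at essentially no cost.
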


\begin{proof}
Let $\Sigma$ and $\Sigma'$ be two tube string diagrams whose $i$th composition is well defined.
The operations $\Sigma\mapsto T(\Sigma)$ and $\circ_i$ being isotopy invariant, we may assume without loss of generality that the underlying surfaces of $\Sigma$ and $\Sigma'$ are in standard form \eqref{eq: standard surface in box}.
In that case, the construction of $T(\Sigma)$ from $\Sigma$ is very simple and we have:
\[
\begin{matrix}
\tikz[scale=.2, xscale=.7, baseline=16]{
\draw
(-1,2) rectangle ++(24,7);
\filldraw[white, line width=5]  (15,1) arc (90:0:1) arc (-180:0:2 and .7) to[in=-90, out=90] ++(-8,6) -- ++ (-4,0) to[in=90, out=-90] ++(-8,-6)
arc (-180:0:2 and .7) arc (180:90:1) ++(1,0) arc (90:0:1) arc (-180:0:2 and .7) arc (180:90:1) -- cycle;
\draw[thick]  (15,1) arc (90:0:1) arc (-180:0:2 and .7) to[in=-90, out=90] ++(-8,7) arc (0:360:2 and .7) ++ (-4,0) to[in=90, out=-90] ++(-8,-7)
arc (-180:0:2 and .7) arc (180:90:1) node[yshift=-5, xshift=4] {$\scriptstyle \ldots$} ++(2,0) arc (90:0:1) arc (-180:0:2 and .7) arc (180:90:1) node[yshift=-5, xshift=4] {$\scriptstyle \ldots$};
\draw[dotted, thick] (0,0) arc (180:0:2 and .7) ++(4,0) arc (180:0:2 and .7) ++(4,0) arc (180:0:2 and .7) ++(1,0);
\draw[white, line width=5](-3,-2) rectangle ++(24,7);
\draw
(-3,-2) rectangle ++(24,7) 
(-3,-2) -- +(2,4) ++(0,7) -- +(2,4) (-2,-2) ++(23,0) -- +(2,4) (-3,-2) ++(24,7) -- +(2,4) ;
\node at (10,0) {$\scriptstyle i$};
\node at (10,-4) {$\Sigma$};
}
\;\!+
\tikz[scale=.2, xscale=.7, baseline=16]{
\draw
(-1,2) rectangle ++(16,7);
\filldraw[white, line width=5]  (7,1) arc (90:0:1) arc (-180:0:2 and .7) to[in=-90, out=90] ++(-4,7) -- ++ (-4,0) to[in=90, out=-90] ++(-4,-7)
arc (-180:0:2 and .7) arc (180:90:1) -- cycle;
\draw[thick]  (7,1) arc (90:0:1) arc (-180:0:2 and .7) to[in=-90, out=90] ++(-4,7) arc (0:360:2 and .7) ++ (-4,0) to[in=90, out=-90] ++(-4,-7)
arc (-180:0:2 and .7) arc (180:90:1) node[yshift=-5, xshift=4] {$\scriptstyle \ldots$};
\draw[dotted, thick] (0,0) arc (180:0:2 and .7) ++(4,0) arc (180:0:2 and .7);
\draw[white, line width=5](-3,-2) rectangle ++(16,7);
\draw
(-3,-2) rectangle ++(16,7) 
(-3,-2) -- +(2,4) ++(0,7) -- +(2,4) (-2,-2) ++(15,0) -- +(2,4) (-3,-2) ++(16,7) -- +(2,4) ;
\node at (6.3,-4) {$\Sigma'$};
}
&\,\,\mapsto\,&
\begin{tikzpicture}[baseline=-.1cm,scale=.8]
	\draw[thick, red] (1.1,-.28) -- (0,-1.3) (0,-.28) -- (0,-1.3) (-1.1,-.28) -- (0,-1.3);
	\draw[very thick] (0,0) circle (1.6 and 1.3);
	\draw[unshaded, very thick] (1.1,0) circle (.28cm);
	\draw[unshaded, very thick] (0,0) circle (.3cm);
	\draw[unshaded, very thick] (-1.1,0) circle (.28cm);
	\fill[red] (1.1,-.28) circle (.06) (0,-.3) circle (.06) (-1.1,-.28) circle (.06) (0,-1.3) circle (.06);
\node at (0,0) {$\scriptstyle i$};
\node at (-.55,0) {$\;\!\scriptstyle \cdots$};
\node at (.55,0) {$\;\!\scriptstyle \cdots$};
\node at (0,-1.9) {$T(\Sigma)$};
\end{tikzpicture}
+
\begin{tikzpicture}[baseline=-.1cm,scale=.8]
	\draw[thick, red] (.55,-.28) -- (0,-1.1) (-.55,-.28) -- (0,-1.1);
	\draw[very thick] (0,0) circle (1.2 and 1.1);
	\draw[unshaded, very thick] (.55,0) circle (.28cm);
	\draw[unshaded, very thick] (-.55,0) circle (.28cm);
	\fill[red] (.55,-.28) circle (.06) (-.55,-.28) circle (.06) (0,-1.1) circle (.06);
\node at (0,0) {$\;\!\scriptstyle \cdots$};
\node at (0,-1.9) {$T(\Sigma')$};
\end{tikzpicture}
\\
\qquad\tikz[baseline=-2.5]{
\useasboundingbox (-.2,-.75) rectangle (.2,.25);
 \node[rotate=-90]{$\longmapsto$};}\circ_i & & \qquad\tikz[baseline=-2.5]{
 \useasboundingbox (-.2,-.75) rectangle (.2,.25);
 \node[rotate=-90]{$\longmapsto$};}\circ_i
\\
\tikz[scale=.2, xscale=.7, baseline=0]{
\draw
(-5,-5) rectangle ++(32,14) (-5,2) -- +(32,0);
\filldraw[white, line width=5]  (16,1) arc (90:0:2 and 1) to[in=90, out=-90] ++(1.5,-7) arc (-180:0:2 and .7) to[in=-90, out=90] ++(-1.5,7)  to[in=-90, out=90] ++(-10,7) -- ++ (-4,0) to[in=90, out=-90] ++(-10,-7) to[in=90, out=-90] ++(-1.5,-7) arc (-180:0:2 and .7) to[in=-90, out=90] ++(1.5,7) arc (180:90:2 and 1) -- ++(2,0) arc (90:0:2 and 1) to[in=90, out=-90] ++(-4,-7) arc (-180:0:2 and .7) arc (180:90:1) -- ++(2,0) arc (90:0:1) arc (-180:0:2 and .7) to[in=-90, out=90] ++(-4,7) arc (180:90:2 and 1) -- cycle;
\draw[thick] (16,1) arc (90:0:2 and 1) to[in=90, out=-90] ++(1.5,-7) arc (-180:0:2 and .7) to[in=-90, out=90] ++(-1.5,7)  to[in=-90, out=90] ++(-10,7) arc (0:360:2 and .7) ++ (-4,0) to[in=90, out=-90] ++(-10,-7) to[in=90, out=-90] ++(-1.5,-7) arc (-180:0:2 and .7) to[in=-90, out=90] ++(1.5,7) arc (180:90:2 and 1) node[yshift=-8, xshift=4] {$\scriptstyle \ldots$} ++(2,0) arc (90:0:2 and 1) to[in=90, out=-90] ++(-4,-7) arc (-180:0:2 and .7) arc (180:90:1) node[yshift=-5, xshift=4] {$\scriptstyle \ldots$} ++(2,0) arc (90:0:1) arc (-180:0:2 and .7) to[in=-90, out=90] ++(-4,7) arc (180:90:2 and 1) node[yshift=-8, xshift=4] {$\scriptstyle \ldots$};
\draw (-2,0) arc (-180:0:2 and .7) ++(6,0) arc (-180:0:2 and .7) ++(6,0) arc (-180:0:2 and .7);
\draw[dotted, thick] (-3.5,-7) arc (180:0:2 and .7) ++(3.5,0) arc (180:0:2 and .7) ++(4,0) arc (180:0:2 and .7) ++(3.5,0) arc (180:0:2 and .7);
\draw[white, line width=5](-7,-9) rectangle ++(32,14) (-7,-2) -- +(32,0);
\draw (-7,-9) rectangle ++(32,14) (-7,-2) -- +(32,0)
(-7,-9) -- +(2,4) ++ (32,0) -- +(2,4) (-7,-2) -- +(2,4) ++(0,7) -- +(2,4) (-2,-2) ++(27,0) -- +(2,4) (-3,-2) ++(28,7) -- +(2,4) ;
\node[scale=.9] at (10,3) {$\Sigma$};
\node[scale=.9] at (10,-4) {$\Sigma'$};
\node at (10.2,-11.1) {$\Sigma\circ_i\Sigma'$};
}
\hspace{-.2cm}
&\,\,\,\,\longmapsto\,\,\,&
\begin{tikzpicture}[baseline=-.1cm,scale=.9]
	\draw[thick, red] (1.65,-.28) -- (0,-1.9) (.45,-.25) -- (0,-1.9) (-.45,-.25) -- (0,-1.9) (-1.65,-.28) -- (0,-1.9);
	\draw[dashed] (0,0) circle (.85 and .7);
	\draw[very thick] (0,0) circle (2.2 and 1.9);
	\draw[unshaded, very thick] (1.65,0) circle (.28cm);
	\draw[unshaded, very thick] (.45,0) circle (.25cm);
	\draw[unshaded, very thick] (-.45,0) circle (.25cm);
	\draw[unshaded, very thick] (-1.65,0) circle (.28cm);
	\fill[red] (1.65,-.28) circle (.06) (-.45,-.25) circle (.06) (.45,-.25) circle (.06) (-1.65,-.28) circle (.06) (0,-1.9) circle (.06);
\node at (-1.1,0) {$\;\!\scriptstyle \cdots$};
\node[scale=.9] at (0,0) {$\;\!\scriptstyle \cdots$};
\node at (1.1,0) {$\;\!\scriptstyle \cdots$};
\node at (0,-2.5) {$\!\!T(\Sigma\circ_i\Sigma')\stackrel{?}=T(\Sigma)\circ_iT(\Sigma')\!\!$};
\end{tikzpicture}
\hspace{-.5cm}
\end{matrix}
\]
Let $T$ be the manifold depicted on the bottom right (a disc with holes), along with the anchor lines as drawn.
The tangle $T(\Sigma\circ_i\Sigma')$ is the result of taking $T$ and 
transferring onto it the pattern of strands and coupons from $\Sigma$ and $\Sigma'$ using the diffeomorphism
$\Sigma\cup_i\Sigma'\to \Sigma\circ_i\Sigma' \to T$.
Similarly, the tangle $T(\Sigma)\circ_iT(\Sigma')$ is obtained by transferring that same pattern of strands and coupons via the diffeomorphisms
$\Sigma\cup_i\Sigma'\to T(\Sigma)\cup_i T(\Sigma') \to T$.
Those diffeomorphisms are clearly isotopic, so $T(\Sigma\circ_i\Sigma')=T(\Sigma)\circ_iT(\Sigma')$.
\end{proof}

Given a ribbon braid $\sigma\in\cR\cB_n$ and a tube string diagram $\Sigma$ with $n$ inputs, one can form a new tube string diagram $\Sigma\cdot\sigma$ by gluing $\sigma$ on the bottom of $\Sigma$ .
For example:
\[
\begin{tikzpicture}[baseline=-.35cm, scale=.8, xscale=-1]
	\pgfmathsetmacro{\voffset}{.08};
	\pgfmathsetmacro{\hoffset}{.15};

	\coordinate (a) at (-1,-1);
	\coordinate (a1) at ($ (a) + (1.4,0)$);
	\coordinate (a2) at ($ (a1) + (1.4,0)$);
	\coordinate (b) at ($ (a) + (.7,1.5)$);
	\coordinate (b2) at ($ (b) + (1.4,0)$);
	\coordinate (c) at ($ (b) + (.7,1.5)$);	

	\topPairOfPants{(a)}{}
	\draw[thick, yString] ($ (a) + 2*(\hoffset,0) + (0,-.1)$) .. controls ++(90:.8cm) and ++(270:.8cm) .. ($ (b) + 1.3*(\hoffset,0) + (0,-\voffset) $);	
	\draw[thick, zString] ($ (a1) + 2*(\hoffset,0) + (0,-.1)$) .. controls ++(90:.8cm) and ++(270:.8cm) .. ($ (b) + 2.7*(\hoffset,0) + (0,-.1) $);	
\end{tikzpicture}
\,\cdot\, \varepsilon_1\,=\;
\begin{tikzpicture}[scale=.8, baseline=1.15cm]
	\pgfmathsetmacro{\voffset}{.08};
	\pgfmathsetmacro{\hoffset}{.15};
	\pgfmathsetmacro{\hoffsetTop}{.12};
	\coordinate (a1) at (0,0);
	\coordinate (a2) at (1.4,0);
	\coordinate (b1) at (0,2);
	\coordinate (b2) at (1.4,2);
	\coordinate (c) at (.7,3.5);
	\draw[thick, zString] ($ (b1) + (.3,-.1) $) .. controls ++(270:.6cm) and ++(90:.8cm) .. ($ (a2) + (.3,-.1) $);		
	\braid{(a1)}{.3}{2}
	\topPairOfPants{(b1)}{}
	\halfDottedEllipse{(a1)}{.3}{.1}
	\halfDottedEllipse{(a2)}{.3}{.1}
	\draw[thick, yString] ($ (b2) + (.3,-.1) $) .. controls ++(270:.6cm) and ++(90:.8cm) .. ($ (a1) + (.3,-.1) $);	
	\draw[thick, yString] ($ (b2) + (.3,-.1) $) .. controls ++(90:.8cm) and ++(270:.8cm) .. ($ (c) + 3*(\hoffset,0) + (0, -\voffset) $);
	\draw[thick, zString] ($ (b1) + (.3,-.1) $) .. controls ++(90:.8cm) and ++(270:.8cm) .. ($ (c) + 1*(\hoffset,0) + (0, -\voffset) $);
\end{tikzpicture}
\]
(we write $\varepsilon_i$ and $\vartheta_i$ for the generators of $\cR\cB_n$, as in Section \ref{sec:RibbonBraidGroup}).
This defines a right\footnote{Here, we read braids from top to bottom (e.g., $\varepsilon_1\varepsilon_2=
\tikz[scale=.3, baseline=1.5]{
\draw (0,1) circle (.3 and .1);
\draw (1,1) circle (.3 and .1);
\draw (2,1) circle (.3 and .1);
\draw (0,0) +(.3,0) arc (0:-180:.3 and .1);
\draw (1,0) +(.3,0) arc (0:-180:.3 and .1);
\draw (2,0) +(.3,0) arc (0:-180:.3 and .1);
\draw[densely dotted] (0,0) +(.3,0) arc (0:180:.3 and .1);
\draw[densely dotted] (1,0) +(.3,0) arc (0:180:.3 and .1);
\draw[densely dotted] (2,0) +(.3,0) arc (0:180:.3 and .1);
\draw (2-.3,0) .. controls ++(90:.4) and ++(-90:.6) .. (0-.3,1);
\draw (2+.3,0) .. controls ++(90:.6) and ++(-90:.4) .. (0+.3,1);
\fill[white] (.54,.85) -- (1.15,.65) -- (.68,.37) -- (.05,.58)  -- cycle;
\fill[white] (2-.54,1-.85) -- (2-1.15,1-.65) -- (2-.68,1-.37) -- (2-.05,1-.58)  -- cycle;
\draw (0-.3,0) .. controls ++(85:.6) and ++(-90:.3) .. (1-.3,1);
\draw (0+.3,0) .. controls ++(90:.4) and ++(-95:.45) .. (1+.3,1);
\draw (1-.3,0) .. controls ++(85:.45) and ++(-90:.4) .. (2-.3,1);
\draw (1+.3,0) .. controls ++(90:.3) and ++(-95:.6) .. (2+.3,1);
}$). This is the opposite of the convention used in Section~\ref{sec:RibbonBraidGroup}.} action
 of the ribbon braid group on the set of isotopy classes of tube string diagrams with $n$ inputs.

For a fixed planar tangle $T$ with $n$ inputs, there is also a right action of $\cR\cB_n$ on the set of systems of anchor lines on $T$.
We now describe that action.
Let $T_{\mathrm{st}}$ be the `standard tangle', with input circles along the $y$-axis, and standard system of anchor lines $A_{\mathrm{st}}$, as in Section~\ref{sec:RibbonBraidGroup}. 
Given a system of anchor lines $A$ on $T$, pick a diffeomorphism $\varphi:T_{\mathrm{st}}\to T$ (disregarding strands and coupons) that sends $A_{\mathrm{st}}$ to $A$.
We define:
\[
A\cdot\sigma\,:=\,\varphi(\sigma\cdot A_{\mathrm{st}}),
\]
where $\sigma\cdot A_{\mathrm{st}}$ is as described in Section \ref{sec:RibbonBraidGroup}.
Equivalently, if $\psi_\sigma$ is the diffeomorphism of $T_{\mathrm{st}}$ defined in the picture \eqref{eq: Here is an example of this process}, so that $\sigma\cdot A_{\mathrm{st}}=\psi_\sigma(A_{\mathrm{st}})$,
then the right action of $\sigma$ on $A=\varphi(A_{\mathrm{st}})$ is given by:
\[
\varphi(A_{\mathrm{st}})\cdot\sigma:=\varphi\circ\psi_\sigma(A_{\mathrm{st}}).
\]
Note that, by taking $\varphi=\psi_{\sigma'}$ in the above formula for some $\sigma'\in\cR\cB_n$, one gets the relation
$(\sigma'\cdot A_{\mathrm{st}})\cdot \sigma=\sigma'\,\sigma \cdot A_{\mathrm{st}}$.

\begin{lem}\label{lem: kxjbksjsvhgvs   hmmm.... }
The map $\Sigma\mapsto T(\Sigma)$ is equivarient with respect to the above right actions of the ribbon braid group:\,
$T(\Sigma\cdot\sigma)=T(\Sigma)\cdot\sigma$.
\end{lem}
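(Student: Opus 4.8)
The statement $T(\Sigma\cdot\sigma)=T(\Sigma)\cdot\sigma$ is an equivariance claim comparing two isotopy classes of oriented anchored planar tangles with coupons. My plan is to reduce it to a statement purely about systems of anchor lines by exploiting the fact that the assignment $\Sigma\mapsto T(\Sigma)$ is built in two stages: first transfer the pattern of strands and coupons from $\Sigma$ onto a disc-with-holes $T$ via a chosen orientation-preserving diffeomorphism $\varphi:\Sigma\to T$, and second equip $T$ with anchor lines $A_\Sigma$ obtained by pushing the standard lines $A_\Gamma$ across the trivial cobordism $W$ bounded by $\Gamma\cup\Sigma$. Since both $\Sigma\mapsto T(\Sigma)$ and the two actions in play are isotopy-invariant, I may assume the underlying surface of $\Sigma$ is in the standard form \eqref{eq: standard surface in box}, in which case the underlying disc-with-holes of $T(\Sigma)$ is literally the standard tangle $T_{\mathrm{st}}$ and the anchor lines of $T(\Sigma)$ are the standard lines $A_{\mathrm{st}}$ — exactly the configuration used in Section~\ref{sec:RibbonBraidGroup} and in the proof of Lemma~\ref{lem: kxjbksjsvhgvs}.

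First I would make precise the reduction: since $\Sigma\cdot\sigma$ differs from $\Sigma$ only by gluing the ribbon braid $\sigma$ (realized as a braided-and-twirled bunch of tubes) to the bottom, and since the underlying planar tangle is unchanged up to isotopy, the underlying disc-with-holes of $T(\Sigma\cdot\sigma)$ is again $T_{\mathrm{st}}$ and the pattern of strands and coupons transferred onto it agrees (up to the relevant isotopy) with that of $T(\Sigma)$. So the content of the lemma is entirely about what happens to the anchor lines: I must show that the system of anchor lines of $T(\Sigma\cdot\sigma)$ equals $A_{\mathrm{st}}\cdot\sigma$, which by the remark following the definition of the right action equals $\sigma^{-1}$ — wait, more precisely I must track the handedness carefully, since the action on tube string diagrams reads braids top-to-bottom while Section~\ref{sec:RibbonBraidGroup} reads them bottom-to-top; I would insert a footnote-level remark reconciling the two conventions, exactly as the paper does in the definition of the ribbon braid action on tube string diagrams.

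Next I would carry out the core geometric computation. The anchor lines of $T(\Sigma\cdot\sigma)$ are, by construction, the image under a diffeomorphism $\varphi':\Sigma\cdot\sigma\to T_{\mathrm{st}}$ of the curves obtained by pushing $A_\Gamma$ across the cobordism $W'$ bounded by $\Gamma\cup(\Sigma\cdot\sigma)$. The cobordism $W'$ is the composite of $W$ (for $\Sigma$) with the ``ribbon-braid cobordism'' $W_\sigma$ of the tubes implementing $\sigma$. Pushing $A_\Gamma$ across $W_\sigma$ first literally drags the input circles along the $i$-th ribbons while they braid and twist — which is exactly the geometric description of $\sigma\cdot A_{\mathrm{st}}$ given in and around \eqref{eq: Here is an example of this process} — and then pushing the result across $W$ transfers it by the same diffeomorphism that defines $A_\Sigma=A_{\mathrm{st}}$. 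Composing, I get precisely $\psi_\sigma(A_{\mathrm{st}})=\sigma\cdot A_{\mathrm{st}}$ (in the appropriate handedness convention), which is the anchor-line system of $T(\Sigma)\cdot\sigma$. I expect the main obstacle to be bookkeeping: keeping straight (a) the orientation/handedness mismatch between the two braid-reading conventions, and (b) the compatibility of the two diffeomorphism choices — the $\varphi$ used to straighten $\Sigma$ and the $\psi$-type diffeomorphism implementing the cobordism pushforward — so that their composite really does land on $A_{\mathrm{st}}\cdot\sigma$ and not on some conjugate of it. A clean way to dispatch this is to check the claim on the generators $\varepsilon_i$ and $\vartheta_i$ of $\cR\cB_n$, using the explicit pictures of the systems of anchor lines for $\varepsilon_i$ and $\vartheta_i$ displayed at the end of Section~\ref{sec:RibbonBraidGroup}, and then invoke that both sides are genuine right actions (the left side because it is defined as one, the right side by the relation $(\sigma'\cdot A_{\mathrm{st}})\cdot\sigma=\sigma'\sigma\cdot A_{\mathrm{st}}$ noted above) so that agreement on generators suffices. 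This is a short check: gluing $\varepsilon_i$ to the bottom of $\Sigma$ braids the $i$-th and $(i{+}1)$-st tubes, and pushing the standard anchor lines through records exactly the crossing of the corresponding two anchor lines, matching the picture for $\varepsilon_i$; similarly gluing $\vartheta_i$ twirls the $i$-th tube, matching the loop in the picture for $\vartheta_i$.
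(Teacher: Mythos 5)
Your overall strategy is the same as the paper's: reduce to the generators $\varepsilon_i,\vartheta_i$ (legitimate, as you say, because both operations are right actions), put $\Sigma$ in standard form, and then track what the cobordism construction does to the anchor lines under the flattening diffeomorphism. For $\varepsilon_i$ your computation is exactly the paper's: after shortening the braided legs the strands and coupons transfer to the same pattern as in $T(\Sigma)$, and the ambient data recorded by the anchor lines becomes $A_{\mathrm{st}}\cdot\varepsilon_i$.

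The place where your write-up is too quick is the blanket claim that ``the pattern of strands and coupons agrees with that of $T(\Sigma)$, so the content is entirely about the anchor lines,'' applied in particular to $\vartheta_i$. Whether the extra twist created by gluing $\vartheta_i$ shows up on the strands or on the anchor line depends on the choice of straightening diffeomorphism: with the naive leg-shortening map the strands attached to the $i$th input disc acquire an extra wrap around that disc while the anchor lines stay standard (this is how the paper describes $T(\Sigma\cdot\vartheta_i)$), whereas your description (strands unchanged, $i$th anchor line looping as in the $\vartheta_i$ picture of Section~\ref{sec:RibbonBraidGroup}) corresponds to the ``untwirling'' diffeomorphism, and requires an argument that the strand on the twirled tube is intrinsically unwound while the anchor line, being determined by the ambient cobordism, is not. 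You cannot isotope the strands and the anchor lines separately, so to finish the $\vartheta_i$ case you need either that intrinsic-winding argument for your chosen diffeomorphism, or the paper's reconciliation: the two presentations (twist on the strands versus twist on the anchor line) differ by a full $2\pi$ rotation of the $i$th input disc, i.e.\ a Dehn twist, which is an isotopy of anchored tangles precisely because it drags strands and anchor lines together (compare Examples~\ref{ex:2PiRotation} and~\ref{ex: double twirl}). Adding either of these observations closes the gap; everything else in your proposal, including the handedness caveat and the reduction to generators, is sound.
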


\begin{proof}
It is enough to prove the lemma for the generators $\varepsilon_i$ and $\vartheta_i$.
If $\sigma=\vartheta_i$, then $T(\Sigma\cdot\vartheta_i)$ differs from $T(\Sigma)$ be the fact that its strands make an extra twist around the $i$th input disc,
whereas $T(\Sigma)\cdot\vartheta_i$ differs from $T(\Sigma)$ be the fact that the $i$th anchor line makes an extra twist.
Those two anchored planar tangles are isotopic by means of a Dehn twist. 

We now treat the case $\sigma=\varepsilon_i$.
Let $\Sigma$ be a tube string diagram.
The operation $\Sigma\mapsto T(\Sigma)$ and the right actions of $\sigma$ being isotopy invariant, we may assume, as in Lemma~\ref{lem: kxjbksjsvhgvs}, that the underlying surface of $\Sigma$ is in standard form.
To compute $T(\Sigma\cdot\varepsilon_i)$, one starts with $T$, adds a braid between the $i$th and $(i+1)$st legs, and computes the anchor lines, as in Figure~\ref{fig: Construct T(Sig)}.
This yields the following picture:
\[
\Sigma\cdot\varepsilon_i\,:\qquad
\tikz[scale=.2, xscale=.7, baseline=-2]{
\draw[thick]
(7.8,1) arc (90:0:1) -- ++(0,-7) arc (-180:0:2 and .7) -- ++(0,7)  to[in=-90, out=90] ++(-10.8,7) arc (0:360:2 and .7) ++ (-4,0) to[in=90, out=-90] ++(-10.8,-7) -- ++(0,-7) arc (-180:0:2 and .7) -- ++(0,7) arc (180:90:1) node[yshift=-8, xshift=4] {$\scriptstyle \ldots$} ++(2,0) arc (90:0:1) 
.. controls ++(-90:4) and ++(90:3.2) ..
 ++(5.4,-7) arc (-180:0:2 and .7) 
.. controls ++(90:4) and ++(-90:3.2) ..
 ++(-5.4,7) arc (180:90:.8 and  1);
\draw[red, line width=.8, densely dotted]
(-10.8,-7)++(0,.7) -- ++(0,6.7) .. controls ++(90:3) and ++(-100:7) .. (0,7.7)
(10.8,-7)++(0,.7) -- ++(0,6.7) .. controls ++(90:3) and ++(-80:7) .. (0,7.7)
(2.7,-7)++(0,.7) .. controls ++(90:2) and ++(-90:4) .. (-2.8,.4) .. controls ++(90:3) and ++(-80:6) .. (0,7.7);
\filldraw[white, line width=5] (4.8,0)
.. controls ++(-90:4) and ++(90:3.2) ..
 ++(-5.4,-7) arc (0:-180:2 and .7) 
.. controls ++(90:4) and ++(-90:3.2) .. ++(5.4,7) -- cycle;
\draw[red, line width=.8, densely dotted]
(-2.7,-7)++(0,.7) .. controls ++(90:2) and ++(-90:4) .. (2.8,.7) arc (0:168:1.4 and .8);
\draw[red, line width=.8] (0,1) arc (15:170:2.8 and 1) coordinate (d);
\draw[red, line width=.8, densely dotted] (d) .. controls ++(60:3) and ++(-94:6) .. (0,7.7);
\draw[thick]
(0,1) arc (90:0:.8 and  1)
.. controls ++(-90:3.2) and ++(90:4) ..
 ++(-5.4,-7) arc (-180:0:2 and .7) 
.. controls ++(90:3.2) and ++(-90:4) ..
 ++(5.4,7) arc (180:90:1) node[yshift=-8, xshift=4] {$\scriptstyle \ldots$};
\draw
(-12.8,0) arc (-180:0:2 and .7)
(8.8,0) arc (-180:0:2 and .7)
(.8,0) arc (-180:0:2 and .7)
(-.8,0) arc (0:-180:2 and .7);
\draw[dotted, thick]
(-12.8,-7) arc (180:0:2 and .7)
(8.8,-7) arc (180:0:2 and .7)
(.7,-7) arc (180:0:2 and .7)
(-.7,-7) arc (0:180:2 and .7);
	\fill[red] 	(-12.8,-7)+(2,.7) circle (.3 and .21)
			(8.8,-7)+(2,.7) circle (.3 and .21)
			(.7,-7)+(2,.7) circle (.3 and .21)
			(-.7,-7)+(-2,.7) circle (.3 and .21)
			(0,7.7) circle (.3 and .21);	
\node[scale=.9] at (1,3) {$\Sigma$};
}
\]
Now, by `shortening the legs', one can map the above picture, via a diffeomorphism, back to $\Sigma$.
Note that the anchor lines are now twisted:
\[
\Sigma\,:\qquad
\tikz[scale=.2, xscale=.7, baseline=8]{
\useasboundingbox (-14,-1.5) rectangle (14,8.5);
\draw[thick]
(7.8,1) arc (90:0:1) arc (-180:0:2 and .7) to[in=-90, out=90] ++(-10.8,7) arc (0:360:2 and .7) ++ (-4,0) to[in=90, out=-90] ++(-10.8,-7)  arc (-180:0:2 and .7) arc (180:90:1) node[yshift=-4, xshift=4] {$\scriptstyle \ldots$} ++(2,0) arc (90:0:1) arc (-180:0:2 and .7)  arc (180:90:.8 and  1);
\draw[red, line width=.8, densely dotted]
(-10.8,-7)++(0,.7) ++(0,7) .. controls ++(90:2.5) and ++(-100:7) .. (0,7.7)
(10.8,-7)++(0,.7) ++(0,7) .. controls ++(90:2.5) and ++(-80:7) .. (0,7.7)
(-2.8,-7)++(0,.7) ++(0,7) .. controls ++(90:2.5) and ++(-80:6) .. (0,7.7);
\draw[red, line width=.8, densely dotted]
(2.7,-7)++(0,.7) ++(0,7) arc (0:160:1.4 and .8);
\draw[red, line width=.8] (0,1) arc (15:170:2.8 and 1) coordinate (d);
\draw[red, line width=.8, densely dotted] (d) .. controls ++(60:3) and ++(-94:6) .. (0,7.7);
\draw[thick]
(0,1) arc (90:0:.8 and  1)
arc (-180:0:2 and .7) 
arc (180:90:1) node[yshift=-4, xshift=4] {$\scriptstyle \ldots$};
\draw[dotted, thick]
(-12.8,0) arc (180:0:2 and .7)
(8.8,0) arc (180:0:2 and .7)
(.8,0) arc (180:0:2 and .7)
(-.8,0) arc (0:180:2 and .7);
	\fill[red] 	(-12.8,-7)+(2,7.7) circle (.3 and .21)
			(8.8,-7)+(2,7.7) circle (.3 and .21)
			(.7,-7)+(2.1,7.7) circle (.3 and .21)
			(-.7,-7)+(-2.1,7.7) circle (.3 and .21)
			(0,7.7) circle (.3 and .21);	
}
\]
One can then map these anchor lines onto the standard tangle $T_{\mathrm{st}}$, using the standard diffeomorphism $\varphi_{\mathrm{st}}:\Sigma\to T_{\mathrm{st}}$ which `flattens things out':
\begin{equation}\label{slgkmslgbsljf}
\quad\qquad
\begin{tikzpicture}[baseline=-.1cm,scale=.9]
	\draw[thick, red] (1.65,-.28) -- (0,-1.9) (-.45,-.25) -- (0,-1.9) (-1.65,-.28) -- (0,-1.9)
	(.45,-.25) arc(0:-165:.2) .. controls ++(110:.3cm) and ++(0:.4cm) .. ++ (-.52,.73) arc(90:120:.45) .. controls ++(210:.3cm) and ++(120:2cm) .. (0,-1.9);
	\draw[very thick] (0,0) circle (2.2 and 1.9);
	\draw[unshaded, very thick] (1.65,0) circle (.28cm);
	\draw[unshaded, very thick] (.45,0) circle (.25cm);
	\draw[unshaded, very thick] (-.45,0) circle (.25cm);
	\draw[unshaded, very thick] (-1.65,0) circle (.28cm);
	\fill[red] (1.65,-.28) circle (.06) (-.45,-.25) circle (.06) (.45,-.25) circle (.06) (-1.65,-.28) circle (.06) (0,-1.9) circle (.06);
\node at (-1.1,0) {$\;\!\scriptstyle \cdots$};
\node at (1.1,0) {$\;\!\scriptstyle \cdots$};
\end{tikzpicture}
\end{equation}
The tangle $T(\Sigma\cdot\varepsilon_i)$ is the above picture, along with the pattern of stands and coupons transferred from $\Sigma$ via the diffeomorphism $\varphi_{\mathrm{st}}$.

The anchor lines \eqref{slgkmslgbsljf} are equal to\,
\begin{tikzpicture}[baseline=-.1cm,scale=.5]
	\draw[thick, red] (1.65,-.28) -- (0,-1.9) (-.45,-.25) -- (0,-1.9) (-1.65,-.28) -- (0,-1.9)
	(.45,-.25) -- (0,-1.9);
	\draw[very thick] (0,0) circle (2.2 and 1.9);
	\draw[unshaded, very thick] (1.65,0) circle (.28cm);
	\draw[unshaded, very thick] (.45,0) circle (.25cm);
	\draw[unshaded, very thick] (-.45,0) circle (.25cm);
	\draw[unshaded, very thick] (-1.65,0) circle (.28cm);
	\fill[red] (1.65,-.28) circle (.06) (-.45,-.25) circle (.06) (.45,-.25) circle (.06) (-1.65,-.28) circle (.06) (0,-1.9) circle (.06);
\node at (-1.1,0) {$\;\!\scriptstyle \cdots$};
\node at (1.1,0) {$\;\!\scriptstyle \cdots$};
\end{tikzpicture}
${}\cdot\varepsilon_i$\,, and so $T(\Sigma\cdot\varepsilon_i)=T(\Sigma)\cdot\varepsilon_i$.
\end{proof}

Let $P(\sigma)$ is as in \eqref{action of ribbon braid on iterated tensor product}.

\begin{lem}\label{lem: kxjbksjsvhgvs   ouahahahaaaahahaaa!!!....!!.!!.... }
The assignment $T\mapsto Z(T)$ defined in \eqref{sfwagiletjnad} is compatible with the right action of the braid group:\, $Z(T\cdot\sigma)=Z(T)\circ P(\sigma)$.
\end{lem}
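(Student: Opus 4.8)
The plan is to reduce to tangles without coupons, where the statement becomes essentially a reformulation of Algorithm~\ref{alg:AssignMap}, and then to propagate it through \eqref{sfwagiletjnad}. Recall that the right action of $\sigma\in\cR\cB_n$ modifies only the $n$ anchor lines attached to the input circles: the underlying planar tangle, the strands, the coupons and the coupon morphisms of $T\cdot\sigma$ are literally those of $T$ (only the ordering of the input circles, which by Definition~\ref{defn:AnchoredPlanarTangle} is read off the anchor lines, gets permuted), and for a tangle with coupons $Z$ was defined in \eqref{sfwagiletjnad} by inserting the morphisms $Z(C_i)\colon 1_\cC\to\Tr_\cC(\cdots)$ into the coupon-free tangle $\check T$ at the slots $j_1>\cdots>j_s$.

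\emph{The coupon-free case.} Here $Z(T)=Z(T_{\mathrm{st}})\circ P(\sigma_T)$ by \eqref{eq: Z(T) = Z(T_st) o P(s_T)}, where $\sigma_T$ is the unique ribbon braid with $\sigma_T\cdot A_{\mathrm{st}}=A'$ and $A'$ is the system of anchor lines of $T$ after the standardization of Algorithm~\ref{alg:StandardForm}. Standardization is realized by a diffeomorphism onto a standard-form tangle, canonical up to isotopy, and the right action on anchor lines is natural under diffeomorphisms; hence the standardization of $T\cdot\sigma$ has the same underlying tangle as $T_{\mathrm{st}}$, now with anchor lines $A'\cdot\sigma$. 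By the relation $(\sigma_T\cdot A_{\mathrm{st}})\cdot\sigma=\sigma_T\sigma\cdot A_{\mathrm{st}}$ recorded just above, together with uniqueness of the ribbon braid, $\sigma_{T\cdot\sigma}=\sigma_T\sigma$, and therefore
\[
Z(T\cdot\sigma)=Z(T_{\mathrm{st}})\circ P(\sigma_T\sigma)=Z(T_{\mathrm{st}})\circ P(\sigma_T)\circ P(\sigma)=Z(T)\circ P(\sigma),
\]
using $P(\sigma\tau)=P(\sigma)\circ P(\tau)$ from Section~\ref{sec:RibbonBraidGroup}. The same computation applies verbatim to the oriented, labeled version of Algorithm~\ref{alg:AssignMap} used in this appendix.

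\emph{The general case, and the main obstacle.} Write $S:=T\cdot\sigma$. The passage $T\mapsto\check T$ consists only of sliding anchor lines off the coupons and adjoining the coupon anchor lines; these operations commute up to isotopy with the braid action (the action braids the input-circle anchor lines among themselves and leaves the isotopy classes of the coupon anchor lines unchanged), so $\check S=\check T\cdot\hat\sigma$, where $\hat\sigma\in\cR\cB_{n+s}$ acts as $\sigma$ on the strands of the original input circles and leaves the $s$ coupon strands twist-free with $\pi_{\hat\sigma}(j_i)=j_i$. By the coupon-free case, $Z(\check S)=Z(\check T)\circ P(\hat\sigma)$. Since strand $j_i$ is twist-free and fixed by the permutation, the insertion $\circ_{j_i}Z(C_i)$ of a morphism with source $1_\cC$ slides through $P(\hat\sigma)$: by naturality of the braiding $Z(C_i)$ passes any crossing that involves strand $j_i$, braiding with the monoidal unit being trivial, so that $[Z(\check T)\circ P(\hat\sigma)]\circ_{j_i}Z(C_i)=[Z(\check T)\circ_{j_i}Z(C_i)]\circ P(\hat\sigma^{(i)})$ with $\hat\sigma^{(i)}\in\cR\cB_{n+s-1}$ obtained from $\hat\sigma$ by deleting strand $j_i$. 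Iterating over $i=1,\dots,s$ and invoking \eqref{sfwagiletjnad} gives $Z(S)=Z(T)\circ P(\sigma)$, because $\hat\sigma$ with all $s$ coupon strands removed is $\sigma$. The main obstacle is precisely this paragraph: identifying $\check S$ with $\check T\cdot\hat\sigma$ up to isotopy, and establishing the sliding identity --- morally an instance of the operad compatibility \eqref{P(st)=P(s)oP(t)}, but requiring a careful bookkeeping of the $n+s$ strands --- whereas everything in the coupon-free case is immediate from what has already been established.
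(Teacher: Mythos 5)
Your coupon-free case is exactly the paper's own argument: put $T$ in standard form, observe $\sigma_{T\cdot\sigma}=\sigma_T\,\sigma$ from $(\sigma_T\cdot A_{\mathrm{st}})\cdot\sigma=\sigma_T\sigma\cdot A_{\mathrm{st}}$, and conclude via \eqref{eq: Z(T) = Z(T_st) o P(s_T)} and \eqref{P(st)=P(s)oP(t)}. That part is fine and matches the paper.

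For tangles with coupons you take a genuinely different route, and this is where your argument has a gap. The paper's proof is a short reduction: up to isotopy write $T=T'\circ_1 T''$ with $T'$ an annular tangle carrying all the coupons and $T''$ a coupon-free tangle carrying all the input circles; since $\sigma$ only touches the input anchor lines, $(T'\circ_1 T'')\cdot\sigma=T'\circ_1(T''\cdot\sigma)$, and then the composition formula of Theorem~\ref{thm:LabeledAnchoredPlanarTangles} plus the coupon-free case immediately give $Z(T\cdot\sigma)=Z(T')\circ Z(T'')\circ P(\sigma)=Z(T)\circ P(\sigma)$. Your route instead passes through the coupon-removal construction of \eqref{sfwagiletjnad}, and its load-bearing claim --- that $\check{(T\cdot\sigma)}$ is isotopic to $\check T\cdot\hat\sigma$ with $\hat\sigma$ equal to $\sigma$ padded by inert strands at the coupon slots $j_i$, fixed by the permutation and not crossing anything --- is asserted ("these operations commute up to isotopy with the braid action") rather than proved, and you yourself flag it as the main obstacle without closing it. It is not automatic: the coupon anchor lines of $\check{(T\cdot\sigma)}$ are chosen afresh after the input anchor lines have been braided, so a priori they need neither land at the same slots $j_i$ nor avoid winding relative to the $\sigma$-twisted input lines. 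To repair this you would have to invoke the independence of \eqref{sfwagiletjnad} from the choice and routing of the coupon anchor lines, i.e.\ exactly the moves analysed in the proof of Theorem~\ref{thm:LabeledAnchoredPlanarTangles}, where each re-routing costs a braiding or twist of the unit $1_\cC$ and is therefore harmless; with that in hand your sliding identity does follow from the interchange law, as you say. So your strategy can be completed, but as written the key identification is missing, and it amounts to redoing by hand bookkeeping that the paper's factorization $T=T'\circ_1 T''$ and its appeal to the already-established composition property make unnecessary.
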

\begin{proof}
We first prove the lemma when $T$ has no coupons. Let $T$ be such a tangle.
Put $T$ in standard form (Section~\ref{sec:StandardForm}) and write its anchor lines as $A=\sigma_T\cdot A_{\mathrm{st}}$, as in Algorithm~\ref{alg:AssignMap}.
By \eqref{eq: Z(T) = Z(T_st) o P(s_T)}, we then have $Z(T)=Z(T,A_{\mathrm{st}})\circ P(\sigma_T)$, and so:
\[
\begin{split}
Z(T\cdot\sigma)
&= Z((T,\sigma_T\cdot A_{\mathrm{st}})\cdot\sigma)\\
&=Z(T,\sigma_T\,\sigma\cdot A_{\mathrm{st}})\\
&=Z(T,A_{\mathrm{st}})\circ P(\sigma_T\,\sigma)\\
&=Z(T,A_{\mathrm{st}})\circ P(\sigma_T)\circ P(\sigma)\\
&=Z(T)\circ P(\sigma).
\end{split}
\]
If $T$ has coupons, then we can write it as a composite $T=T'\circ_1T''$ of an annular tangle $T'$ (with coupons), and a tangle without coupons $T''$.
We then have:
\[
\begin{split}
Z(T\cdot\sigma)
&=Z((T'\circ_1T'')\cdot\sigma)\\
&=Z(T'\circ_1(T''\cdot\sigma))\\
&=Z(T')\circ Z(T''\cdot\sigma)\\
&=Z(T')\circ Z(T'')\circ P(\sigma)\\
&=Z(T'\circ_1T'')\circ P(\sigma)\\
&=Z(T)\circ P(\sigma).\qedhere
\end{split}
\]
\end{proof}


Given a tube string diagram labelled by objects and morphisms of $\cM$, we now define its evaluation in $\cC$.
Consider the following `elementary tube string diagrams'
(all the strands are oriented, even though this is not visible in our pictures)
\begin{equation*} 
\begin{tikzpicture}[baseline=-.1cm, yscale=.9]
	\draw[thick] (-.5,-1) -- (-.5,1);
	\draw[thick] (.5,-1) -- (.5,1);
	\draw[thick] (0,1) ellipse (.5cm and .2cm);
	\halfDottedEllipse{(-.5,-1)}{.5}{.2}
	
	\draw[thick, wString] (-.35,-1.15) -- (-.35,.85);
	\draw[thick, xString] (0,-1.2) -- (0,0);
	\draw[thick, yString] (0,.8) -- (0,0);
	\draw[thick, zString] (.35,-1.15) -- (.35,.85);
	\nbox{unshaded}{(0,-.1)}{.3}{-.1}{-.1}{$f$}
	\node at (-.35,-1.35) {$\scriptstyle w$};
	\node at (0,-1.35) {$\scriptstyle x$};
	\node at (0,.95) {$\scriptstyle y$};
	\node at (.35,-1.35) {$\scriptstyle z$};
\end{tikzpicture}\;\;,\;\;\;
\quad\,\,\,\,
\begin{tikzpicture}[baseline=-.35cm, scale=1.2]
	\topPairOfPants{(-1,-1)}{}
	\draw[thick, xString] (-.7,-1.1) .. controls ++(90:.8cm) and ++(270:.8cm) .. (-.1,.42);		
	\draw[thick, yString] (.7,-1.1) .. controls ++(90:.8cm) and ++(270:.8cm) .. (.1,.42);		
	\node at (-.7,-1.28) {$\scriptstyle x$};
	\node at (.7,-1.3) {$\scriptstyle y$};
\end{tikzpicture}\,,\;
\quad\,\,\,\,\,\,\,\;\;
\begin{tikzpicture}[baseline=0cm, scale=1.2]
	\coordinate (a1) at (0,0);
	\coordinate (b1) at (0,.4);
	\draw[thick] (a1) -- (b1);
	\draw[thick] ($ (a1) + (.6,0) $) -- ($ (b1) + (.6,0) $);
	\draw[thick] ($ (b1) + (.3,0) $) ellipse (.3 and .1);
	\draw[thick] (a1) arc (-180:0:.3cm);
\end{tikzpicture}\;\;,
\quad\,\,\,\,\,\,\,\;\;\;
\begin{tikzpicture}[baseline=-.05cm, yscale=.9, xscale=1.1]
	\draw[thick] (-.3,-1) -- (-.3,1);
	\draw[thick] (.3,-1) -- (.3,1);
	\draw[thick] (0,1) ellipse (.3cm and .1cm);
	\halfDottedEllipse{(-.3,-1)}{.3}{.1}
	
	\draw[thick, xString] (-.1,-1.1) .. controls ++(90:.8cm) and ++(270:.8cm) .. (.1,.9);		
	\draw[thick, yString] (.1,-1.1) .. controls ++(90:.2cm) and ++(225:.2cm) .. (.3,-.2);		
	\draw[thick, yString] (-.1,.9) .. controls ++(270:.2cm) and ++(45:.2cm) .. (-.3,.2);
	\draw[thick, yString, dotted] (-.3,.2) -- (.3,-.2);	
	\node at (-.13,-1.28) {$\scriptstyle x$};
	\node at (.13,-1.3) {$\scriptstyle y$};
\end{tikzpicture}\;\;,
\end{equation*}
possibly modified by replacing each stand by many parallel strands (or no strands), changing the orientations of the strands, and replacing the coupon by a cup or cap.
(To conform to the definition, we should really have replaced the morphism $f:x\to y$ by its mate $\tilde f:1\to y\otimes x^*$, and equipped the coupon with a marked point on its left side, but we will ignore these details.)
Suppose that $\Sigma$ is a tube string diagram which is given to us as a composition of elementary tube string diagrams $\Sigma_i$ 
and a braid $\sigma\in\cB_n$:
\begin{equation}\label{eq: tube string diagram ready to be evaluated}
\Sigma\,=\,\big[\big[\big[\Sigma_1\circ_{j_1}\Sigma_2\big]\circ_{j_2}\Sigma_3\ldots \big]\circ_{j_{k-1}}\Sigma_k \big]\cdot \sigma.
\end{equation}
Then, following \cite{1509.02937}, we define
\begin{equation}\label{eq: definition of Z(tube string diagram)}
Z(\Sigma):=\big[\big[\big[Z(\Sigma_1)\circ_{j_1}Z(\Sigma_2)\big]\circ_{j_2}\ldots \big]\circ_{j_{k-1}}Z(\Sigma_k) \big]\circ P(\sigma),
\end{equation}
where the value of $Z(-)$ 
on the elementary tube string diagrams is given by
\[
Z\Bigg(\;\!\;\!\!\begin{tikzpicture}[baseline=-.1cm, xscale=1.1, scale=.6]

	\draw[thick] (-.5,-1) -- (-.5,1);
	\draw[thick] (.5,-1) -- (.5,1);
	\draw[thick] (0,1) ellipse (.5cm and .2cm);
	\halfDottedEllipse{(-.5,-1)}{.5}{.2}
	
	\draw[thick, wString] (-.35,-1.15) -- (-.35,.85);
	\draw[thick, xString] (0,-1.2) -- (0,0);
	\draw[thick, yString] (0,.8) -- (0,0);
	\draw[thick, zString] (.35,-1.15) -- (.35,.85);
	\nbox{unshaded}{(0,-.1)}{.3}{-.1}{-.1}{$\scriptstyle f$}
	\node at (-.35,-1.35) {$\scriptstyle w$};
	\node at (0,-1.35) {$\scriptstyle x$};
	\node at (0,.95) {$\scriptstyle y$};
	\node at (.35,-1.35) {$\scriptstyle z$};
\end{tikzpicture}\;\!\;\!\Bigg)\!:=\Tr_\cC(\id_w \otimes f \otimes \id_z),
\,\,\,\,
Z\Bigg(\,\begin{tikzpicture}[baseline=-.3cm, xscale=.95, scale=.8]
	\topPairOfPants{(-1,-1)}{}
	\draw[thick, xString] (-.7,-1.1) .. controls ++(90:.8cm) and ++(270:.8cm) .. (-.1,.42);		
	\draw[thick, yString] (.7,-1.1) .. controls ++(90:.8cm) and ++(270:.8cm) .. (.1,.42);		
	\node at (-.7,-1.28) {$\scriptstyle x$};
	\node at (.7,-1.3) {$\scriptstyle y$};
\end{tikzpicture}\,\Bigg)\!:=\mu_{x,y},
\,\,\,\,
Z\Big(\,\begin{tikzpicture}[baseline=0cm, scale=.8]
	\coordinate (a1) at (0,0);
	\coordinate (b1) at (0,.4);
	\draw[thick] (a1) -- (b1);
	\draw[thick] ($ (a1) + (.6,0) $) -- ($ (b1) + (.6,0) $);
	\draw[thick] ($ (b1) + (.3,0) $) ellipse (.3 and .1);
	\draw[thick] (a1) arc (-180:0:.3cm);
\end{tikzpicture}\,\Big)\!:=i,
\,\,\,\,
Z\Bigg(\,
\begin{tikzpicture}[baseline=-.1cm, scale=.8, yscale=.8]

	\draw[thick] (-.3,-1) -- (-.3,1);
	\draw[thick] (.3,-1) -- (.3,1);
	\draw[thick] (0,1) ellipse (.3cm and .1cm);
	\halfDottedEllipse{(-.3,-1)}{.3}{.1}
	
	\draw[thick, xString] (-.1,-1.1) .. controls ++(90:.8cm) and ++(270:.8cm) .. (.1,.9);		
	\draw[thick, yString] (.1,-1.1) .. controls ++(90:.2cm) and ++(225:.2cm) .. (.3,-.2);		
	\draw[thick, yString] (-.1,.9) .. controls ++(270:.2cm) and ++(45:.2cm) .. (-.3,.2);
	\draw[thick, yString, dotted] (-.3,.2) -- (.3,-.2);	
	\node at (-.13,-1.28) {$\scriptstyle x$};
	\node at (.13,-1.3) {$\scriptstyle y$};
\end{tikzpicture}\,\Bigg)\!:=\tau_{x,y},
\]
and $P(\sigma)$ is as in \eqref{action of ribbon braid on iterated tensor product}.
We can now state the main theorem of this section:

\begin{thm}
\label{thm:TubeStringCalculusWellDefined}
Let $\Sigma$ and $\Sigma'$ be two tube string diagrams which are presented
as composites of elementary tube string diagrams and a braid (as in~\eqref{eq: tube string diagram ready to be evaluated}),
and let $Z(\Sigma)$ and $Z(\Sigma')$ be as in \eqref{eq: definition of Z(tube string diagram)}.
Then
\[
\text{$\Sigma$ and $\Sigma'$ are isotopic}\quad\Longrightarrow\quad Z(\Sigma)=Z(\Sigma').
\]
\end{thm}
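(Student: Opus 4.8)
The plan is to reduce the statement to the already-established isotopy invariance of the assignment $T\mapsto Z(T)$ on oriented anchored planar tangles with coupons (Theorem~\ref{thm:LabeledAnchoredPlanarTangles}). The key is the dictionary $\Sigma\mapsto T(\Sigma)$ constructed above, together with the three compatibility lemmas: Lemma~\ref{lem: kxjbksjsvhgvs} ($T(\Sigma\circ_i\Sigma')=T(\Sigma)\circ_iT(\Sigma')$), Lemma~\ref{lem: kxjbksjsvhgvs   hmmm.... hmmm.... } ($T(\Sigma\cdot\sigma)=T(\Sigma)\cdot\sigma$), and Lemma~\ref{lem: kxjbksjsvhgvs   ouahahahaaaahahaaa!!!....!!.!!.... } ($Z(T\cdot\sigma)=Z(T)\circ P(\sigma)$). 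First I would record the ``base case'': for each of the four elementary tube string diagrams $\Sigma_i$ (and their variants with parallel/reversed strands and with caps/cups in place of coupons), one checks directly that the image $T(\Sigma_i)$ under the dictionary is, up to isotopy, precisely the generating tangle whose $Z$-value realizes $Z(\Sigma_i)$. Concretely, $T$ of the pair-of-pants is the tensor-product tangle (whose $Z$ is $\mu_{x,y}$ by Lemma~\ref{lem:ZPrimeOfMultiplication} or rather by the definition of $\varpi$), $T$ of the ``straw'' cup is the unit tangle $u$ (with $Z(u)=\eta=i$), $T$ of the twisted cylinder is the rotation tangle $t_i$ (with $Z(t_i)=\tau_{x,y}$ by Lemma~\ref{lem:ZPrimeOfRotation}), and $T$ of the coupon-cylinder is the identity-like tangle with a coupon inserted, whose $Z$ is $\Tr_\cC(\id\otimes f\otimes \id)$ by the construction \eqref{sfwagiletjnad}. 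This gives the identity
\[
Z(\Sigma_i)=Z(T(\Sigma_i))\qquad\text{for every elementary tube string diagram }\Sigma_i.
\]

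\textbf{From elementary pieces to all diagrams.} Next I would show, by induction on the number of elementary pieces in the presentation \eqref{eq: tube string diagram ready to be evaluated}, that $Z(\Sigma)=Z(T(\Sigma))$ for every tube string diagram $\Sigma$ given together with such a presentation. The inductive step combines the base case with the multiplicativity of both sides: the left side $Z(-)$ is built via operadic composition of morphisms in $\cC$ and the ribbon-braid action $P(-)$ by definition \eqref{eq: definition of Z(tube string diagram)}; the right side $Z(T(-))$ is multiplicative because $\Sigma\mapsto T(\Sigma)$ is (Lemmas~\ref{lem: kxjbksjsvhgvs} and~\ref{lem: kxjbksjsvhgvs   hmmm.... hmmm.... }), because $Z(T\circ_i S)=Z(T)\circ_i Z(S)$ (Theorem~\ref{thm:LabeledAnchoredPlanarTangles}), and because $Z(T\cdot\sigma)=Z(T)\circ P(\sigma)$ (Lemma~\ref{lem: kxjbksjsvhgvs   ouahahahaaaahahaaa!!!....!!.!!.... }). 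Unwinding \eqref{eq: tube string diagram ready to be evaluated} and \eqref{eq: definition of Z(tube string diagram)}, every operadic composition $\circ_{j_\ell}$ and the final $\cdot\sigma$ on the $\Sigma$-side is matched with the corresponding operation on the $T(\Sigma)$-side, and the base-case identities supply the leaves of the composition tree; hence $Z(\Sigma)=Z(T(\Sigma))$.

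\textbf{Conclusion.} Finally, suppose $\Sigma$ and $\Sigma'$ are isotopic tube string diagrams. Since the construction $\Sigma\mapsto T(\Sigma)$ sends isotopic tube string diagrams to isotopic labeled oriented anchored planar tangles with coupons (this is the content of the sentence establishing that $\Sigma\mapsto T(\Sigma)$ is a bijection on isotopy classes), the tangles $T(\Sigma)$ and $T(\Sigma')$ are isotopic. By the isotopy invariance established in Theorem~\ref{thm:LabeledAnchoredPlanarTangles}, $Z(T(\Sigma))=Z(T(\Sigma'))$. Combining this with $Z(\Sigma)=Z(T(\Sigma))$ and $Z(\Sigma')=Z(T(\Sigma'))$ from the previous paragraph gives $Z(\Sigma)=Z(\Sigma')$, as desired.

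\textbf{Main obstacle.} The delicate point is not any single lemma but the verification that the dictionary $\Sigma\mapsto T(\Sigma)$ genuinely intertwines \emph{all} the structure: the subtlety lies in checking that the choices made in building $T(\Sigma)$ — the diffeomorphism $\varphi\colon\Sigma^+\to\mathbb D$ and, more importantly, the identification $\psi\colon\Gamma\to\Sigma$ of the ``outside'' cobordism used to place the anchor lines — are well-defined up to isotopy and are compatible with gluing (Lemma~\ref{lem: kxjbksjsvhgvs}) and with the ribbon-braid actions (Lemma~\ref{lem: kxjbksjsvhgvs   hmmm.... hmmm.... }). This relies on the fact that homotopic surface diffeomorphisms are isotopic, and requires care tracking how a Dehn twist on $\Sigma$ translates into a twist of an anchor line (the $\vartheta_i$ case) versus how a braiding of two legs translates into a braiding of anchor lines (the $\varepsilon_i$ case). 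Granting these lemmas, which are stated and proved above, the reduction to Theorem~\ref{thm:LabeledAnchoredPlanarTangles} is then purely formal.
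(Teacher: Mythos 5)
Your proposal is correct and follows essentially the same route as the paper: reduce to the isotopy invariance of $Z$ on anchored planar tangles with coupons via the identity $Z(\Sigma)=Z(T(\Sigma))$, verified first on the elementary tube string diagrams and then propagated through operadic compositions and the ribbon-braid action using Lemmas~\ref{lem: kxjbksjsvhgvs}, \ref{lem: kxjbksjsvhgvs   hmmm.... }, \ref{lem: kxjbksjsvhgvs   ouahahahaaaahahaaa!!!....!!.!!.... } and Theorem~\ref{thm:LabeledAnchoredPlanarTangles}. The only place you are a bit quicker than the paper is the coupon-cylinder base case, where establishing $Z(T(\Sigma))=\Tr_\cC(\id_w\otimes f\otimes\id_z)$ takes slightly more than the construction \eqref{sfwagiletjnad} alone (the paper decomposes the tangle, invokes the composition formula and the analogs of Examples~\ref{ex:computation alpha's} and~\ref{ex:SwapAnchorDependence}, and then simplifies with the relations of Section~\ref{sec:TubeRelations}), but this is a matter of detail rather than a gap.
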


\begin{proof}
By Theorem~\ref{thm:LabeledAnchoredPlanarTangles}, it is enough to argue that for every tube string diagram as in~\eqref{eq: tube string diagram ready to be evaluated},
we have 
\begin{equation}\label{sldgbnskjqngb}
Z(\Sigma)=Z(T(\Sigma))
\end{equation}
(where the two sides are defined in \eqref{eq: definition of Z(tube string diagram)} and \eqref{sfwagiletjnad}, respectively).
Indeed, assuming that fact, if $\Sigma$ and $\Sigma'$ are isotopic tube string diagrams, then 
$Z(\Sigma)=Z(T(\Sigma)) = Z(T(\Sigma')) = Z(\Sigma')$,
where the middle equality holds by Theorem~\ref{thm:LabeledAnchoredPlanarTangles}.



If $\Sigma$ is equal to one of the following elementary tube string diagrams (potentially modified by replacing strands with some number of parallel strands, and changing orientations)
\[
\begin{tikzpicture}[baseline=-.35cm, scale=1.2]
	\topPairOfPants{(-1,-1)}{}
	\draw[thick, xString] (-.7,-1.1) .. controls ++(90:.8cm) and ++(270:.8cm) .. (-.1,.42);		
	\draw[thick, yString] (.7,-1.1) .. controls ++(90:.8cm) and ++(270:.8cm) .. (.1,.42);		
	\node at (-.7,-1.28) {$\scriptstyle x$};
	\node at (.7,-1.3) {$\scriptstyle y$};
\end{tikzpicture}\,,\;
\quad\,\,\,\,\,\,\,\;\;
\begin{tikzpicture}[baseline=0cm, scale=1.2]
	\coordinate (a1) at (0,0);
	\coordinate (b1) at (0,.4);
	\draw[thick] (a1) -- (b1);
	\draw[thick] ($ (a1) + (.6,0) $) -- ($ (b1) + (.6,0) $);
	\draw[thick] ($ (b1) + (.3,0) $) ellipse (.3 and .1);
	\draw[thick] (a1) arc (-180:0:.3cm);
\end{tikzpicture}\;\;,
\quad\,\,\,\,\,\,\,\;\;\;
\begin{tikzpicture}[baseline=-.05cm, yscale=.9, xscale=1.1]
	\draw[thick] (-.3,-1) -- (-.3,1);
	\draw[thick] (.3,-1) -- (.3,1);
	\draw[thick] (0,1) ellipse (.3cm and .1cm);
	\halfDottedEllipse{(-.3,-1)}{.3}{.1}
	
	\draw[thick, xString] (-.1,-1.1) .. controls ++(90:.8cm) and ++(270:.8cm) .. (.1,.9);		
	\draw[thick, yString] (.1,-1.1) .. controls ++(90:.2cm) and ++(225:.2cm) .. (.3,-.2);		
	\draw[thick, yString] (-.1,.9) .. controls ++(270:.2cm) and ++(45:.2cm) .. (-.3,.2);
	\draw[thick, yString, dotted] (-.3,.2) -- (.3,-.2);	
	\node at (-.13,-1.28) {$\scriptstyle x$};
	\node at (.13,-1.3) {$\scriptstyle y$};
\end{tikzpicture}\;\;,
\]
then $T(\Sigma)$ is given by:
\[\quad
\begin{tikzpicture}[baseline=-.1cm,scale=.8]
	\draw (.4,.3) -- +(0,.6)node[below, xshift=3.5, yshift=-1] {\scriptsize{$y$}} (-.4,.3) -- +(0,.6)node[below, xshift=3.5, yshift=-1] {\scriptsize{$x$}};
	\draw[thick, red] (.4,-.28) .. controls ++(-95:.3cm) and ++(70:.3cm) .. (0,-1) (-.4,-.28) .. controls ++(-85:.3cm) and ++(110:.3cm) .. (0,-1);
	\draw[very thick] (0,0) circle (1cm);
	\draw[unshaded, very thick] (.4,0) circle (.28cm);
	\draw[unshaded, very thick] (-.4,0) circle (.28cm);
	\fill[red] (.4,-.28) circle (.06) (-.4,-.28) circle (.06) (0,-1cm) circle (.06);
\end{tikzpicture}\;\,,
\qquad
\begin{tikzpicture}[baseline = -.1cm, scale=.8]
	\draw[very thick] (0,0) circle (.4cm);
	\filldraw[red] (0,-.4) circle (.05cm);
\end{tikzpicture}\;\;,
\qquad
\begin{tikzpicture}[baseline=-.1cm,scale=.8]
	\draw (0,0) -- (120:.3cm) .. controls ++(120:.3cm) and ++(-120:.3cm) .. (60:1cm);
	\draw[thick, red] (0,-.3cm) -- (0,-1cm);
	\draw (0,0) -- (60:.3cm) .. controls ++(60:.3cm) and ++(90:.5cm) .. (0:.65cm) .. controls ++(270:.8cm) and ++(270:.8cm) .. (180:.65cm) .. controls ++(90:.6cm) and ++(-60:.5cm) .. (120:1cm);
	\draw[very thick] (0,0) circle (1cm);
	\draw[unshaded, very thick] (0,0) circle (.3cm);
	\node at (80:.8cm) {\scriptsize{$x$}};
	\node at (-110:.8cm) {\scriptsize{$y$}};
	\fill[red] (0,-.3cm) circle (.06)  (0,-1cm) circle (.06);
\end{tikzpicture}\;\,.
\]
By the analog of Lemmas \ref{lem:ZPrimeOfMultiplication}, \ref{LEM 2 of the end}, and \ref{lem:ZPrimeOfRotation} for labeled oriented tangles, we then have
\begin{equation}
\label{eqn:APAofGenerators}
Z\Bigg(\;\begin{tikzpicture}[baseline=-.1cm,scale=.8]
	\draw (.4,.3) -- +(0,.6)node[below, xshift=3.5, yshift=-1] {\scriptsize{$y$}} (-.4,.3) -- +(0,.6)node[below, xshift=3.5, yshift=-1] {\scriptsize{$x$}};
	\draw[thick, red] (.4,-.28) .. controls ++(-95:.3cm) and ++(70:.3cm) .. (0,-1) (-.4,-.28) .. controls ++(-85:.3cm) and ++(110:.3cm) .. (0,-1);
	\draw[very thick] (0,0) circle (1cm);
	\draw[unshaded, very thick] (.4,0) circle (.28cm);
	\draw[unshaded, very thick] (-.4,0) circle (.28cm);
	\fill[red] (.4,-.28) circle (.06) (-.4,-.28) circle (.06) (0,-1cm) circle (.06);
\end{tikzpicture}\;\Bigg) 
= \mu_{x,y},
\qquad
Z\Big(\begin{tikzpicture}[baseline = -.1cm, scale=.8]
	\draw[very thick] (0,0) circle (.4cm);
	\filldraw[red] (0,-.4) circle (.05cm);
\end{tikzpicture}\Big)
= i,
\quad \mbox{ and } \quad
Z\Big(\begin{tikzpicture}[baseline=-.1cm,scale=.8]
	\draw (0,0) -- (120:.3cm) .. controls ++(120:.3cm) and ++(-120:.3cm) .. (60:1cm);
	\draw[thick, red] (0,-.3cm) -- (0,-1cm);
	\draw (0,0) -- (60:.3cm) .. controls ++(60:.3cm) and ++(90:.5cm) .. (0:.65cm) .. controls ++(270:.8cm) and ++(270:.8cm) .. (180:.65cm) .. controls ++(90:.6cm) and ++(-60:.5cm) .. (120:1cm);
	\draw[very thick] (0,0) circle (1cm);
	\draw[unshaded, very thick] (0,0) circle (.3cm);
	\node at (80:.8cm) {\scriptsize{$x$}};
	\node at (-110:.8cm) {\scriptsize{$y$}};
	\fill[red] (0,-.3cm) circle (.06)  (0,-1cm) circle (.06);
\end{tikzpicture}\Big)
= \tau_{x,y},
\end{equation}
and so equation \eqref{sldgbnskjqngb} holds for those tangles.
The same argument goes through when the strands labelled $x$ and $y$ are replaced by an arbitrary numbers of parallel strands.

If
\begin{equation}\label{lhgfsbmkgdf}
\Sigma \,=\, 
\begin{tikzpicture}[baseline=-.2cm, yscale=.9]
	\draw[thick] (-.5,-1) -- (-.5,1);
	\draw[thick] (.5,-1) -- (.5,1);
	\draw[thick] (0,1) ellipse (.5cm and .2cm);
	\halfDottedEllipse{(-.5,-1)}{.5}{.2}
	
	\draw[thick, wString] (-.35,-1.15) -- (-.35,.85);
	\draw[thick, xString] (0,-1.2) -- (0,0);
	\draw[thick, yString] (0,.8) -- (0,0);
	\draw[thick, zString] (.35,-1.15) -- (.35,.85);
	\nbox{unshaded}{(0,-.1)}{.3}{-.1}{-.1}{$f$}
	\node at (-.35,-1.35) {$\scriptstyle w$};
	\node at (0,-1.35) {$\scriptstyle x$};
	\node at (0,.95) {$\scriptstyle y$};
	\node at (.35,-1.35) {$\scriptstyle z$};
\end{tikzpicture}\;\,,
\qquad\text{then}\qquad
T(\Sigma) = 
\begin{tikzpicture}[baseline = -.1cm, scale=.9]
	\draw[orange] (-.6,-.2) -- (-.6,1);
	\draw[red] (.6,-.2) -- (.6,.6) arc (0:180:.2cm);
	\draw[blue] (-.15,1) -- (-.15,.6);
	\draw[DarkGreen] (1,-.2) -- (1,1);
	\draw[thick, red] (.2,-.6) -- (.2,-1);
	\roundNbox{}{(0,0)}{1}{.2}{.6}{}
	\roundNbox{unshaded}{(0,-.4)}{.2}{.6}{1}{}
\node[scale=.95] at (.04,.395) {$\scriptstyle \tilde f$};
\draw [rounded corners=5, draw, double] (.038,.4) +(-.35,-.23) rectangle +(.35,.21);
\fill (.038,.4) +(0,-.23) circle (.045);
	\node at (-.6,1.2) {\scriptsize{$w$}};
	\node at (.75,.4) {\scriptsize{$x$}};
	\node at (-.15,1.2) {\scriptsize{$y$}};
	\node at (1,1.2) {\scriptsize{$z$}};
\end{tikzpicture}\;\,,
\end{equation}
where $\tilde f:1\to y\otimes x^*$ is the mate of $f:x\to y$.
Applying the definition \eqref{sfwagiletjnad} of $Z(-)$ to the right hand side of \eqref{lhgfsbmkgdf}, we get:
\[
Z\big(T(\Sigma)\big) \,=\,
Z\Bigg(\;\begin{tikzpicture}[baseline = -.1cm, scale=.9]
	\draw[orange] (-.6,-.2) -- (-.6,1);
	\draw[red] (.6,-.2) -- (.6,.6) arc (0:180:.2cm);
	\draw[blue] (-.15,1) -- (-.15,.6);
	\draw[DarkGreen] (1,-.2) -- (1,1);
	\draw[thick, red] (.1,-.6) -- (.1,-1);
	\draw[thick, red] (.03,.2) arc (180:270:.2cm) -- (1,0) arc (90:-90:.4cm) -- (.3,-.8) arc (90:180:.2cm);
	\roundNbox{}{(0,0)}{1}{.2}{.6}{}
	\roundNbox{unshaded}{(0,-.4)}{.2}{.6}{1}{}
\draw [rounded corners=5, draw, very thick] (.038,.4) +(-.35,-.21) rectangle +(.35,.21);
	\node at (-.6,1.2) {\scriptsize{$w$}};
	\node at (.75,.4) {\scriptsize{$x$}};
	\node at (-.15,1.2) {\scriptsize{$y$}};
	\node at (1,1.2) {\scriptsize{$z$}};
\end{tikzpicture}\;\Bigg)\circ_2 \Big(\Tr_\cC(\tilde f)\circ i\;\!\Big).
\] 
By the second part of Theorem \ref{thm:LabeledAnchoredPlanarTangles}, 
this is then equal to:
\[
Z\Bigg(\;\begin{tikzpicture}[baseline = -.2cm, scale=.9]
	\draw[orange] (-.6,-.2) -- (-.6,1);
	\draw[red] (.6,-.2) -- (.6,.65) arc (0:180:.2cm) -- (.2,.6);
	\draw[blue] (-.15,1) -- (-.15,.6);
	\draw[DarkGreen] (1,-.2) -- (1,1);
	\draw[thick, red] (.1,-.6) -- (.1,-1);
	\roundNbox{}{(0,0)}{1}{.2}{.6}{}
	\roundNbox{unshaded}{(0,-.4)}{.2}{.6}{1}{}
\draw [rounded corners=5, draw, very thick] (.038,.4) +(-.35,-.21) rectangle +(.35,.21);
	\node at (-.6,1.2) {\scriptsize{$w$}};
	\node at (-.15,1.2) {\scriptsize{$y$}};
	\node at (1,1.2) {\scriptsize{$z$}};
\draw[very thick, rounded corners=5, fill=white] (-1.1,-.9) rectangle (1.5,.65);	
	\node at (.64,.42) {\scriptsize{$x$}};
	\node at (.25,.46) {\scriptsize{$x^*$}};
\end{tikzpicture}
\;\Bigg)
\circ_1
Z\Bigg(\;
\begin{tikzpicture}[baseline = -.18cm, scale=.9]
	\draw[orange] (-.6,-.2) -- (-.6,.7);
	\draw[red] (.6,-.2) -- (.6,.7) (.2,.7) -- (.2,.55);
	\draw[blue] (-.15,.7) -- (-.15,.55);
	\draw[DarkGreen] (1,-.2) -- (1,.7);
	\draw[thick, red] (.1,-.6) -- (.1,-.9);
	\draw[thick, red] (.03,.15) -- (.03,.1) arc (180:270:.15cm) -- (1,-.05) arc (90:-90:.35cm) -- (.3,-.75) arc (90:180:.2 and .15);
	\roundNbox{unshaded}{(0,-.4)}{.2}{.6}{1}{}
\draw [rounded corners=5, draw, very thick] (.038,.35) +(-.35,-.21) rectangle +(.35,.21);
	\node at (-.6,1.2-.3) {\scriptsize{$w$}};
	\node at (-.15,1.2-.32) {\scriptsize{$y$}};
	\node at (1,1.2-.3) {\scriptsize{$z$}};
\draw[very thick, rounded corners=5] (-1.1,-.9) rectangle (1.5,.7);	
	\node at (.64,1.2-.3) {\scriptsize{$x$}};
	\node at (.28,1.2-.26) {\scriptsize{$x^*$}};
\end{tikzpicture}\;\Bigg)\circ_2 \Big(\Tr_\cC(\tilde f)\circ i\;\!\Big).
\]
By the analog of Examples \ref{ex:computation alpha's} and \ref{ex:SwapAnchorDependence} for labeled oriented tangles (recall that $\alpha_i$ and $\varpi_{i,j}$ are defined in Theorem~\ref{thm: construct P from M and m}),
we then get:
\[
Z\big(T(\Sigma)\big)\,\,=\,\,
Z\Bigg(
\begin{tikzpicture}[baseline=.4cm, xscale=1.2]
	\pgfmathsetmacro{\voffset}{.08};
	\pgfmathsetmacro{\hoffset}{.15};
	\pgfmathsetmacro{\hoffsetTop}{.1};
	\coordinate (a) at (0,-2);
	\coordinate (b) at (0,-1);
	\coordinate (c) at (.7,.5);
	\coordinate (d) at (.7,1.5);
	\coordinate (e) at (.7,2.5);
	\pairOfPants{(b)}{}
	\bottomCylinder{(a)}{.3}{1}
	\emptyCylinder{(c)}{.3}{1}
	\halfDottedEllipse{(d)}{.3}{.1}
	\topCylinder{(d)}{.3}{1}
	\draw[thick] (1.4,-1) -- (1.4,-1.7) arc (-180:0:.3cm) -- (2,-1);
	\draw[thick, zString]  ($ (b) + 2*(\hoffset,0) + (0,-.1)$) .. controls ++(90:.8cm) and ++(270:.8cm) .. ($ (c) + 2*(\hoffsetTop,0) + (0,-\voffset) $);		
	\draw[thick, xString]  ($ (b) + 1*(\hoffset,0) + (0,-\voffset)$) .. controls ++(90:.8cm) and ++(270:.8cm) .. ($ (c) + 1*(\hoffsetTop,0) + (0,-\voffset) $);
	\draw[thick, wString]  ($ (b) + 3*(\hoffset,0) + (0,-\voffset)$) .. controls ++(90:.8cm) and ++(270:.8cm) .. ($ (c) + 3*(\hoffsetTop,0) + (0,-.1) $);
	\draw[thick, yString]  (1.6,-1.5) -- (1.6,-1.08) .. controls ++(90:.8cm) and ++(270:.8cm) .. ($ (c) + 4*(\hoffsetTop,0) + (0,-\voffset) $);
	\draw[thick, xString]  (1.8,-1.5) -- (1.8,-1.08) .. controls ++(90:.8cm) and ++(270:.8cm) .. ($ (c) + 5*(\hoffsetTop,0) + (0,-\voffset) $);
	\draw[thick, wString] ($ (a) + (\hoffset,0) + (0,-\voffset)$) .. controls ++(90:.4cm) and ++(270:.4cm) .. ($ (a) + 3*(\hoffset,0) + (0,-\voffset) + (0,1) $);
	\draw[thick, xString] ($ (a) + 2*(\hoffset,0) + (0,-.1) $) .. controls ++(90:.2cm) and ++(-135:.1cm) .. ($ (a) + 4*(\hoffset,0) + (0,-\voffset) + (0,.5)$);
	\draw[thick, xString] ($ (a) + 1*(\hoffset,0) + (0,1) + (0,-\voffset) $) .. controls ++(270:.2cm) and ++(45:.1cm) .. ($ (a) + (0,-\voffset) + (0,-.35) + (0,1)$);
	\draw[thick, zString] ($ (a) + 3*(\hoffset,0) + (0,-\voffset) $) .. controls ++(90:.2cm) and ++(-135:.1cm) .. ($ (a) + 4*(\hoffset,0) + (0,-\voffset) + (0,.35)$);
	\draw[thick, zString] ($ (a) + 2*(\hoffset,0) + (0,1) + (0,-.1) $) .. controls ++(270:.2cm) and ++(45:.1cm) .. ($ (a) + (0,-\voffset) + (0,-.5) + (0,1)$);
	\nbox{unshaded}{(1.7,-1.5)}{.28}{-.1}{-.1}{$\scriptstyle \tilde{f}$}
	\draw[thick, wString] ($ (c) + 3*(\hoffsetTop,0) + (0,-.1)$) .. controls ++(90:.4cm) and ++(270:.4cm) .. ($ (d) + (\hoffsetTop,0) + (0,-\voffset) $);
	\draw[thick, yString] ($ (c) + 4*(\hoffsetTop,0) + (0,-\voffset) $) .. controls ++(90:.4cm) and ++(270:.4cm) .. ($ (d) + 2*(\hoffsetTop,0) + (0,-\voffset) $);		
	\draw[thick, xString] ($ (c) + 5*(\hoffsetTop,0) + (0,-\voffset) $) .. controls ++(90:.4cm) and ++(270:.4cm) .. ($ (d) + 3*(\hoffsetTop,0) + (0,-.1) $);		
	\draw[thick, xString] ($ (c) + (\hoffsetTop,0) + (0,-\voffset) $) .. controls ++(90:.2cm) and ++(-45:.1cm) .. ($ (c) + (0,-\voffset) + (0,.35)$);
	\draw[thick, xString] ($ (c) + 4*(\hoffsetTop,0) + (0,1) + (0,-\voffset) $) .. controls ++(270:.2cm) and ++(135:.1cm) .. ($ (d) + 6*(\hoffsetTop,0) + (0,-\voffset) + (0,-.5) $);
	\draw[thick, DarkGreen] ($ (c) + 2*(\hoffsetTop,0) + (0,-\voffset) $) .. controls ++(90:.2cm) and ++(-45:.1cm) .. ($ (c) + (0,-\voffset) + (0,.5)$);
	\draw[thick, DarkGreen] ($ (c) + 5*(\hoffsetTop,0) + (0,1) + (0,-\voffset) $) .. controls ++(270:.2cm) and ++(135:.1cm) .. ($ (d) + 6*(\hoffsetTop,0) + (0,-\voffset) + (0,-.35) $);
	\draw[thick, wString] ($ (d) + 1*(\hoffsetTop,0) + (0,-\voffset)$) .. controls ++(90:.4cm) and ++(270:.4cm) .. ($ (e) + (\hoffset,0) + (0,-\voffset) $);
	\draw[thick, yString] ($ (d) + 2*(\hoffsetTop,0) + (0,-\voffset)$) .. controls ++(90:.4cm) and ++(270:.4cm) .. ($ (e) + 2*(\hoffset,0) + (0,-.1) $);
	\draw[thick, xString] ($ (d) + 3*(\hoffsetTop,0) + (0,-.1)$) -- ($ (d) + 3*(\hoffsetTop,0) + (0,.2) $) arc (180:0:.05cm) -- ($ (d) + 4*(\hoffsetTop,0) + (0,-\voffset) $);
	\draw[thick, zString] ($ (d) + 5*(\hoffsetTop,0) + (0,-\voffset)$) .. controls ++(90:.4cm) and ++(270:.4cm) .. ($ (e) + 3*(\hoffset,0) + (0,-\voffset) $);
	\node at (.1,-2.24) {$\scriptstyle w$};
	\node at (.3,-2.24) {$\scriptstyle x$};
	\node at (.5,-2.24) {$\scriptstyle z$};
	\node at (1.47,-.8) {$\scriptstyle y$};
\end{tikzpicture}\;\;\Bigg).
\]
It is now an easy exercise, using the identities in Section~\ref{sec:TubeRelations}, to check that the right hand side simplifies to $\Tr_\cC(\id_w \otimes f \otimes \id_z)$.
This finishes the proof of equation \eqref{sldgbnskjqngb} for the tube string diagram in~\eqref{lhgfsbmkgdf}.
The variants of \eqref{lhgfsbmkgdf} where the coupon is replaced by a cup or a cap can be treated by similar methods, and are left to the reader as an exercise.

So far, we have shown that equation \eqref{sldgbnskjqngb} holds true for all the elementary tube string diagrams.
We now consider the general case.
If $\Sigma = [[\Sigma_1\circ_{j_1}\Sigma_2]\circ_{j_2}\Sigma_3\ldots ]\cdot \sigma$ is 
a composite of elementary tube string diagrams $\Sigma_i$ and a braid $\sigma$,
then we have:
\[
\begin{split}
Z(T(\Sigma))
&=Z\big(T([[\Sigma_1\circ_{j_1}\Sigma_2]\circ_{j_2}\Sigma_3\ldots ]\cdot \sigma)\big)\\
&=Z\big(T([[\Sigma_1\circ_{j_1}\Sigma_2]\circ_{j_2}\Sigma_3\ldots )\cdot \sigma\big)\\
&=Z\big(T([[\Sigma_1\circ_{j_1}\Sigma_2]\circ_{j_2}\Sigma_3\ldots )\big)\circ P(\sigma)\\
&=Z\big([[T(\Sigma_1)\circ_{j_1}T(\Sigma_2)]\circ_{j_2}T(\Sigma_3)\ldots\big)\circ P(\sigma)\\
&=[[Z(T(\Sigma_1))\circ_{j_1}Z(T(\Sigma_2))]\circ_{j_2}Z(T(\Sigma_3))\ldots]\circ P(\sigma)\\
&=[[Z(\Sigma_1)\circ_{j_1}Z(\Sigma_2)]\circ_{j_2}Z(\Sigma_3)\ldots  ]\circ P(\sigma)\\
&=Z(\Sigma),
\end{split}
\]
where the second equality holds by Lemma~\ref{lem: kxjbksjsvhgvs   hmmm.... },
the third equality holds by Lemma~\ref{lem: kxjbksjsvhgvs   ouahahahaaaahahaaa!!!....!!.!!.... },
the fourth equality holds by Lemma~\ref{lem: kxjbksjsvhgvs},
the fifth equality holds by the second half of Theorem \ref{thm:LabeledAnchoredPlanarTangles},
the sixth equality holds because the $\Sigma_i$ are elementary tube string diagrams,
and the last equality is the definition~\eqref{eq: definition of Z(tube string diagram)} of $Z(\Sigma)$.
\end{proof}

\bibliographystyle{amsalpha}
{\footnotesize{
\bibliography{../../../../Documents/research/penneys/bibliography}
}}
\end{document}